\documentclass[11pt,letterpaper,reqno]{amsart}

\usepackage[T1]{fontenc}
\usepackage{lmodern}
\usepackage{microtype}
\usepackage{amsmath,amssymb,amsthm,amsfonts,mathtools,mathrsfs}
\usepackage[dvipsnames]{xcolor}
\usepackage{enumitem}
\usepackage{booktabs}
\usepackage{array}
\usepackage[colorlinks=true,
            linkcolor=MidnightBlue,
            citecolor=MidnightBlue,
            urlcolor=MidnightBlue]{hyperref}
\usepackage{bookmark}

\hypersetup{
  pdfstartview={FitH},
  pdftitle={Automorphisms of Bestvina--Brady Groups: IA Rigidity, Arithmetic Commensurability, and Finiteness},
  pdfauthor={Jialin Lei}
}

\numberwithin{equation}{section}

\newtheorem{theorem}{Theorem}[section]
\newtheorem{proposition}[theorem]{Proposition}
\newtheorem{lemma}[theorem]{Lemma}
\newtheorem{corollary}[theorem]{Corollary}
\theoremstyle{definition}
\newtheorem{definition}[theorem]{Definition}
\newtheorem{example}[theorem]{Example}
\theoremstyle{remark}
\newtheorem{remark}[theorem]{Remark}

\newcommand{\Aut}{\operatorname{Aut}}
\newcommand{\Out}{\operatorname{Out}}
\newcommand{\Inn}{\operatorname{Inn}}
\newcommand{\IAut}{\operatorname{IAut}}
\newcommand{\IOut}{\operatorname{IOut}}
\newcommand{\GL}{\operatorname{GL}}
\newcommand{\SL}{\operatorname{SL}}
\newcommand{\FR}{\operatorname{FR}}
\newcommand{\vcd}{\operatorname{vcd}}
\newcommand{\lk}{\operatorname{lk}}
\newcommand{\st}{\operatorname{st}}
\newcommand{\Jac}{\operatorname{Jac}}
\newcommand{\rank}{\operatorname{rank}}
\newcommand{\End}{\operatorname{End}}
\newcommand{\Q}{\mathbb Q}
\newcommand{\Z}{\mathbb Z}
\newcommand{\Free}{\mathbb F}
\newcommand{\CAlg}{\mathscr C_\Gamma}
\newcommand{\Order}{\mathcal O_\Gamma}
\newcommand{\Elementary}{E_\Gamma}
\newcommand{\HomImage}{\Lambda_\Gamma}

\allowdisplaybreaks
\title[Automorphisms of Bestvina--Brady Groups]
{Automorphisms of Bestvina--Brady Groups:\\
IA Rigidity, Arithmetic Commensurability, and Finiteness}
\author[J.~Lei]{Jialin Lei}
\address{Department of Mathematics, Binghamton University, Binghamton, NY 13902, USA}
\email{jlei15@binghamton.edu}
\subjclass[2020]{Primary 20F65, 20F28, 20F36; Secondary 20E36, 20J06, 11F06, 22E40}
\keywords{Bestvina--Brady groups; right-angled Artin groups; automorphism groups; IA subgroups; arithmetic groups}

\begin{document}

\begin{abstract}
Let $H_\Gamma$ be the Bestvina--Brady group associated to a finite
connected graph $\Gamma$.  For a biconnected defining graph we prove two
structure theorems.  First, restriction induces an isomorphism
$\IAut(A_\Gamma)\cong\IAut(H_\Gamma)$, compatible with the
Andreadakis--Johnson filtrations.  Second, the quadratic and cubic
lower-central relation spaces together with the separator arrangement
detected by the Bieri--Neumann--Strebel invariant determine a rational
associative algebra $\mathscr C_\Gamma$.  Every integral rank-one
square-zero element of this algebra is realized by an automorphism of
$H_\Gamma$, and the resulting all-root subgroup has finite index both in
the cohomological image of $\Aut(H_\Gamma)$ and in the unit group of an
integral order in $\mathscr C_\Gamma$.

For an arbitrary connected graph, the graph-block decomposition gives the
Grushko decomposition of $H_\Gamma$.  Relative free-product automorphism
theory then implies that $\Aut(H_\Gamma)$ and $\Out(H_\Gamma)$ are
finitely generated and satisfy the Tits alternative relative to virtually
polycyclic groups.  We prove that $\Aut(H_\Gamma)$ is finitely presented
if and only if $\Out(H_\Gamma)$ is finitely presented.  The higher
finiteness analog fails without additional hypotheses: for
$\Gamma_m=C_m\vee K_3$, $m\geq5$, one has
$\Out(H_{\Gamma_m})$ of type $F_\infty$ but
$\Aut(H_{\Gamma_m})$ of type $F_3$ and not $F_4$.  We also show that
$H_{C_n}$ is not finitely presented for $n\geq5$, whereas
$\Out(H_{C_n})$ is virtually infinite cyclic.
\end{abstract}

\maketitle
\tableofcontents
\clearpage
\section{Introduction}\label{sec:introduction}

\subsection{Bestvina--Brady groups and their automorphisms}

Let $\Gamma$ be a finite graph.  Its right-angled Artin group and
all-ones character are
\[
 A_\Gamma=
 \left\langle V(\Gamma)\ \middle|\
 [u,v]=1\text{ whenever }uv\in E(\Gamma)\right\rangle,
 \qquad
 \chi_\Gamma:A_\Gamma\longrightarrow\Z,\qquad
 \chi_\Gamma(v)=1,
\]
and the associated Bestvina--Brady group is
$H_\Gamma=\ker\chi_\Gamma$.
The finiteness properties of $H_\Gamma$ are governed by the topology of
the flag complex of $\Gamma$.  In particular, $H_\Gamma$ is finitely
generated exactly when $\Gamma$ is connected, and it is finitely
presented exactly when the flag complex is simply connected.  More
generally, for every $n\geq2$ there are finitely generated
Bestvina--Brady groups that are not of type $F_n$~\cite{BestvinaBrady}.

The results below give a uniform description of these automorphism
groups.  When $\Gamma$ is biconnected, $\Aut(H_\Gamma)$ is an extension
of $\IAut(A_\Gamma)$ by its cohomological image, and that image is
commensurable with the unit group of an explicit order in a rational
associative algebra.  For an arbitrary connected graph, the Grushko
decomposition of $H_\Gamma$ leads to the corresponding relative
free-product automorphism group.

The IA kernel and the cohomological quotient are determined by different
arguments.  Restriction identifies the IA kernel with
$\IAut(A_\Gamma)$.  The cohomological quotient is not generated solely
by restrictions of ambient RAAG automorphisms: it also contains
\emph{sector shears}, constructed directly from the all-power circuit
presentation of $H_\Gamma$.  The low-degree invariants determine the
possible integral rank-one square-zero operators, and
Theorem~\ref{thm:integral-root-realization} realizes each such operator
by a composition of height-preserving ambient automorphisms and sector
shears.  The resulting roots generate $E_\Gamma$, which is later proved to
have finite index in the actual cohomological image.

\subsection{The biconnected case: IA rigidity and arithmetic commensurability}

Assume that $\Gamma$ is biconnected and has at least three vertices.

\begin{theorem}
\label{thm:intro-IA-rigidity}
Let $\Gamma$ be a finite biconnected graph with at least three vertices.
Then restriction induces an isomorphism
\begin{equation}\label{eq:intro-Torelli-isomorphism}
 \IAut(A_\Gamma)\xrightarrow{\ \cong\ }\IAut(H_\Gamma).
\end{equation}
\end{theorem}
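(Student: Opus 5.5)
The plan is to realize the map \eqref{eq:intro-Torelli-isomorphism} concretely and then check that it is well defined, injective and surjective. Throughout, $\IAut(G)$ denotes the kernel of $\Aut(G)\to\Aut(H_1(G;\Z))$.

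\textbf{Well-definedness.} An element $\varphi\in\IAut(A_\Gamma)$ acts trivially on $H_1(A_\Gamma)=\Z^{V(\Gamma)}$. Hence $\chi_\Gamma\circ\varphi=\chi_\Gamma$, so $\varphi(H_\Gamma)=H_\Gamma$, and we may restrict $\varphi$ to $H_\Gamma$.

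Next I would use the Dicks--Leary description of $H_\Gamma$. Since $\Gamma$ is connected, $H_\Gamma$ is generated by the edge elements $uv^{-1}$ with $uv\in E(\Gamma)$, and $H_1(H_\Gamma)\cong\Z^{|V|-1}$. The image of $H_\Gamma$ in $A_\Gamma^{ab}$ is $\ker(\chi_\Gamma^{ab})\cong\Z^{|V|-1}$. The surjection $H_\Gamma/[H_\Gamma,H_\Gamma]\to H_\Gamma/[A_\Gamma,A_\Gamma]$ is therefore a surjection between free abelian groups of the same rank, hence an isomorphism. It follows that $[H_\Gamma,H_\Gamma]=[A_\Gamma,A_\Gamma]$.

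Consequently $\varphi(uv^{-1})\equiv uv^{-1}$ modulo $[H_\Gamma,H_\Gamma]$, so $\varphi|_{H_\Gamma}\in\IAut(H_\Gamma)$. The same identity shows that restriction is compatible with the Andreadakis--Johnson filtrations, at least in degree one.

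\textbf{Injectivity.} Fix a vertex $v_0$, so that $A_\Gamma=H_\Gamma\rtimes\langle v_0\rangle$. If $\varphi|_{H_\Gamma}=\mathrm{id}$, write $\varphi(v_0)=v_0h$ with $h\in H_\Gamma$. Since $\varphi$ is IA, $h\in[A_\Gamma,A_\Gamma]$. Applying $\varphi$ to $v_0xv_0^{-1}$ for $x\in H_\Gamma$ gives $hxh^{-1}=x$, so $h\in C_{A_\Gamma}(H_\Gamma)\cap[A_\Gamma,A_\Gamma]$.

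I would show this intersection is trivial using Servatius' centralizer theorem. The edge elements $uv^{-1}$ have cyclically reduced forms supported on edges. The centralizer of all of them together lies in the subgroup generated by vertices adjacent to every vertex of $\Gamma$; this is the central clique of $\Gamma$, which is abelian. An abelian subgroup generated by vertices meets $[A_\Gamma,A_\Gamma]$ trivially, so $h=1$. The complete graph, where $A_\Gamma$ is abelian, fits this argument without change.

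\textbf{Surjectivity.} This is the main obstacle. Given $\psi\in\IAut(H_\Gamma)$, I want an extension $\varphi(v_0)=v_0g$ with $g\in H_\Gamma$. Such an extension exists exactly when
\[
\psi\circ c_{v_0}\circ\psi^{-1}=c_{v_0g}\quad\text{on }H_\Gamma .
\]
By injectivity the extension is then unique and automatically IA.

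My first attempt would go through the vertex stars. Because $\psi$ is IA, it fixes each edge class in $H_1(H_\Gamma)$. I would try to prove that $\psi$ maps the subgroup $H_\Gamma\cap A_{\st(v)}$, which is $C_{H_\Gamma}(v)$ up to finite index, to a conjugate of itself. The tool is the BNS invariant: the separator arrangement is determined by $H_1$ and therefore preserved by the IA automorphism $\psi$.

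Biconnectedness should be exactly what makes these conjugators coherent. The graph has no cut vertex, so the star subgroups overlap along enough edge subgroups to force a single conjugating element $g$ for $v_0$. Once $g$ is found, the conjugation identity holds on the generators $uv^{-1}$, and $\varphi$ extends to all of $A_\Gamma$. If the BNS route proves awkward, the fallback is to check directly, via the Dicks--Leary relations $e_1e_2e_3=1=e_3e_2e_1$ on triangles, that $v_0\mapsto v_0g$ respects the relations of $A_\Gamma$.
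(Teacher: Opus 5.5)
Your well-definedness and injectivity arguments are correct and essentially coincide with the paper's. Injectivity reduces to $C_{A_\Gamma}(H_\Gamma)=Z(A_\Gamma)$, obtained by intersecting the edge centralizers $A_{\st(u)\cap\st(v)}$, together with $Z(A_\Gamma)\cap A_\Gamma'=1$.

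There is one small slip. Injectivity holds only inside $\IAut(A_\Gamma)$, so an extension of $\psi$ is neither unique nor automatically IA when $\Gamma$ has a universal vertex. For $\Gamma=K_n$ ($n\geq3$), every assignment $v_0\mapsto v_0w$ with $w\in H_\Gamma$ extends $\mathrm{id}_{H_\Gamma}$ to an automorphism of $A_\Gamma$. This is repairable by a central correction, but it has to be stated.

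The genuine gap is surjectivity: the sketch gives no argument, and the proposed route fails for three reasons.

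\emph{The BNS invariant carries no information about IA automorphisms.} $\Sigma^1(H_\Gamma)$ lives in the character sphere of $H^1(H_\Gamma;\mathbb R)$, on which every IA automorphism acts trivially. So ``preserved by $\psi$'' is a tautology and constrains nothing.

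\emph{The intermediate claim you aim for is false.} Take $\Gamma=\overline K_3*K_2$ with $\overline K_3=\{x,y,z\}$ and $K_2=\{p,q\}$. It is biconnected, and $x\preccurlyeq y$, $x\preccurlyeq z$. So the commutator transvection $\Phi\colon x\mapsto x[y,z]$ lies in $\IAut(A_\Gamma)$, and $\psi=\Phi|_{H_\Gamma}\in\IAut(H_\Gamma)$. Now $\psi(xp^{-1})=x[y,z]p^{-1}$. Its image under the retraction onto $A_{\{x,y,z\}}$ is the cyclically reduced word $xyzy^{-1}z^{-1}$, which is not conjugate into $\langle x\rangle$. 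Hence $\psi(H_\Gamma\cap A_{\st(x)})$ is not conjugate to $H_\Gamma\cap A_{\st(x)}$, even in $A_\Gamma$.

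\emph{Nothing actually produces $g$.} ``Coherence from biconnectedness'' is only asserted. The triangle fallback is unavailable in general: $C_n$ with $n\geq5$ has no triangles, and $H_{C_n}$ is not finitely presented. Moreover, in the splitting $A_\Gamma=H_\Gamma\rtimes\langle v_0\rangle$ there are no further relations to verify; the identity $\psi c_{v_0}\psi^{-1}=c_{v_0g}$ \emph{is} the whole problem.

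The missing idea is that the vertex images must be extracted from the nonlinear all-power relations, which no abelian or cohomological invariant detects. The paper does this edge by edge, in four steps.
\begin{enumerate}
\item Using $Z\bigl(C_{H_\Gamma}(\alpha(uv^{-1}))\bigr)\cap H_\Gamma'=1$ and Servatius decompositions, each $\alpha(uv^{-1})$ is shown to be either a single primitive block with abelianization $e_u-e_v$, or a product $a_ub_v^{-1}$ of commuting primitive blocks with abelianizations $e_u$ and $e_v$.
\item $\alpha$ is applied to the relations $d_1^N\cdots d_m^N=1$ on each simple circuit, and uniform period isolation is invoked (derived from Lohrey--Stober--Wei\ss). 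It says no period class occurs in exactly one powered slot, and adjacent slots $b^{-N}a^N$ with the same oriented period force $a=b$. This excludes the unsplit form and forces matching, giving a unique polygon frame with $\alpha(uv^{-1})=x_u^C(x_v^C)^{-1}$.
\item These frames are glued along a Whitney open-ear decomposition using edge-frame uniqueness; this is where biconnectedness enters.
\item One sets $\Phi(v)=x_v$ and concludes by the Hopf property of $A_\Gamma$.
\end{enumerate}
In your notation the sought element is $g=v_0^{-1}x_{v_0}$. Without an argument of this kind, surjectivity remains unproved.
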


To state the second result, put
\[
 W_\Gamma=H^1(H_\Gamma;\Z),
 \qquad
 \Lambda_\Gamma=
 \operatorname{im}\!\left(
  \Aut(H_\Gamma)\longrightarrow\GL(W_\Gamma)
 \right),
\]
where we use the homomorphic contragredient convention fixed in
Section~\ref{sec:preliminaries}.  From the quadratic and cubic
lower-central relations and the BNS separator arrangement we construct a
rational associative algebra
$\mathscr C_\Gamma\leq\End(W_\Gamma\otimes\Q)$
and its integral order
$\mathcal O_\Gamma= \mathscr C_\Gamma\cap\End(W_\Gamma)$.
Let $E_\Gamma$ be the subgroup generated by all $I+N$ with
$N\in\mathcal O_\Gamma$ integral, of rank one, and satisfying $N^2=0$.

\begin{theorem}
\label{thm:intro-arithmetic-commensurability}
Let $\Gamma$ be a finite biconnected graph with at least three vertices.
Then
\begin{equation}\label{eq:intro-common-root-subgroup}
 E_\Gamma\leq\Lambda_\Gamma,
 \qquad
 E_\Gamma\leq\mathcal O_\Gamma^\times,
 \qquad
 [\Lambda_\Gamma:E_\Gamma]<\infty,
 \qquad
 [\mathcal O_\Gamma^\times:E_\Gamma]<\infty.
\end{equation}
Thus the actual cohomological image $\Lambda_\Gamma$ and the arithmetic
group $\mathcal O_\Gamma^\times$ contain the same finite-index subgroup
$E_\Gamma$ inside $\GL(W_\Gamma)$.
\end{theorem}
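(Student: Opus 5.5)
The plan is to prove the four assertions in \eqref{eq:intro-common-root-subgroup} in order, using the structural results the introduction announces. Two of them are inclusions of generators, and the two index statements follow from a single arithmetic-group argument.

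\emph{Inclusion $E_\Gamma\leq\Lambda_\Gamma$.} Theorem~\ref{thm:integral-root-realization} realizes every integral rank-one square-zero $N\in\mathcal O_\Gamma$ by an automorphism of $H_\Gamma$ inducing $I+N$ on $W_\Gamma$. That automorphism is a composition of height-preserving ambient automorphisms and sector shears. Since $\Lambda_\Gamma$ is a group, all generators of $E_\Gamma$ lie in it, and the inclusion follows.

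\emph{Inclusion $E_\Gamma\leq\mathcal O_\Gamma^\times$.} Because $N^2=0$, we have $(I+N)^{-1}=I-N$. Both $I+N$ and $I-N$ lie in $\mathcal O_\Gamma=\mathscr C_\Gamma\cap\End(W_\Gamma)$: the algebra $\mathscr C_\Gamma$ is unital, and $N$ is integral. So every generator of $E_\Gamma$ is a unit of the order.

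\emph{Finite index.} First I show $\Lambda_\Gamma\leq\mathcal O_\Gamma^\times$. Any automorphism of $H_\Gamma$ preserves three pieces of data: the lower-central quotients, hence the quadratic and cubic relation spaces in $\Lambda^2 W^*$ and $\Lambda^3W^*$; the BNS invariant, hence the separator arrangement; and the integral lattice $W_\Gamma$. Since $\mathscr C_\Gamma$ is defined from these invariants, conjugation by $\Lambda_\Gamma$ preserves $\mathscr C_\Gamma$. I expect the construction to show that $\mathscr C_\Gamma$ is cut out as the algebra of endomorphisms compatible with the invariant data, so the elements of $\Lambda_\Gamma$ themselves lie in $\mathscr C_\Gamma$, and being integral and invertible over $\Z$ they lie in $\mathcal O_\Gamma^\times$. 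Granting this, the chain $E_\Gamma\leq\Lambda_\Gamma\leq\mathcal O_\Gamma^\times$ reduces both index statements to the single claim $[\mathcal O_\Gamma^\times:E_\Gamma]<\infty$.

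\emph{Main step: $[\mathcal O_\Gamma^\times:E_\Gamma]<\infty$.} I would compute the Wedderburn structure of $\mathscr C_\Gamma$. From the separator arrangement I expect the semisimple quotient to be a product of matrix algebras $\prod_i M_{n_i}(\Q)$, indexed by blocks of separator classes, and the radical to be spanned by rank-one square-zero maps between different blocks. Then $\mathcal O_\Gamma^\times$ is commensurable with $\bigl(\prod_i \GL_{n_i}(\Z)\bigr)\ltimes U(\Z)$, where $U$ is unipotent. The two factors are handled separately.
\begin{itemize}
\item \emph{Unipotent part.} The integral radical elements of rank one generate a finite-index subgroup of $U(\Z)$. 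This is standard nilpotent-lattice reasoning: rational generators of $\mathfrak u$ span, and integral multiples generate a lattice of finite index.
\item \emph{Levi part with $n_i\ge3$.} The elementary matrices $I+E_{jk}$ are rank-one square-zero, so $E_\Gamma$ contains $E_{n_i}(\Z)=\SL_{n_i}(\Z)$. This is a finite-index subgroup of $\GL_{n_i}(\Z)$.
\item \emph{Levi part with $n_i\le2$.} I still get $\SL_2(\Z)$ or a finite group.
\end{itemize}
The delicate point, which I expect to be the main obstacle, is showing there are no $\Q$-anisotropic or noncentral toral pieces: every simple factor must be split over $\Q$, not a division algebra or a number-field matrix algebra. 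Otherwise the unit group would have infinite rank-one-free parts not generated by unipotents. To establish this I would use the explicit description of $\mathscr C_\Gamma$ through coordinate projections onto the vertex-indexed lattice, which forces all idempotents to be rational coordinate projections, so the factors are split.

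Finally, the finite-index claim must hold for the whole unit group, not just the identity component. I would check that the center of $\mathscr C_\Gamma$ is a product of copies of $\Q$. Then its integral units are $\{\pm1\}^k$, which is finite, and this closes the argument.
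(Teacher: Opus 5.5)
Your two inclusions are correct and match the paper. The finite-index part, however, rests on a false reduction and passes over the two steps that actually carry the difficulty.

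\emph{The false reduction.} The inclusion $\Lambda_\Gamma\le\mathcal O_\Gamma^\times$ is false in general; the paper deliberately asserts no containment between these groups. The algebra $\mathscr C_\Gamma$ is the \emph{infinitesimal} stabilizer of $I_2$, $I_3$ and the individual separator spaces. A group element preserving $I_2,I_3$ need not lie in it, the stabilizer $\mathbf G_\Gamma$ may be disconnected, and automorphisms of $H_\Gamma$ only \emph{permute} $\mathcal D_\Gamma$. Concretely, for $\Gamma=C_5$ every $\mathcal P_i^{(3)}$ equals $\{\{i\},V\setminus\{i\}\}$. Hence $T_{V\setminus\{i\},i}=\mathbf 1e_i^{\mathsf T}-I$ acts as $-I$ on $W$, so $\mathscr C_{C_5}=\Q I$ and $\mathcal O_{C_5}^\times=\{\pm I\}$. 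Yet a rotation of the pentagon induces a non-scalar permutation matrix in $\Lambda_{C_5}$. The correct replacement is the finite-index subgroup $\Lambda_\Gamma^\circ=\Lambda_{\Gamma,\mathrm{sep}}\cap\mathbf G_\Gamma^\circ(\Q)$. To obtain it, kill the permutation of the finite family $\mathcal D_\Gamma$, then the finite component group of $\mathbf G_\Gamma$. Then use $\mathbf G_\Gamma^\circ=\mathscr C_\Gamma^\times$ (both connected with Lie algebra $\mathscr C_\Gamma$) to get $E_\Gamma\le\Lambda_\Gamma^\circ\le\mathcal O_\Gamma^\times$. Only then does $[\Lambda_\Gamma:E_\Gamma]<\infty$ follow from $[\mathcal O_\Gamma^\times:E_\Gamma]<\infty$.

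\emph{The main step.} Splitness of the Wedderburn factors, which you flag as the main obstacle, is the easy part. A rank-one basic root acting nontrivially on $k$ copies of the simple module of $M_m(D)$ forces $k\dim_\Q D\le1$, and the idempotents $-T_{C,i}$ handle the commutative factors. The real obstacles are the following.

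(a) $\mathcal O_\Gamma$ need not contain integral matrix units of the Wedderburn factors, since an order in a split product need not be a product of standard orders. From Levi lifts $N_{r,ab}$ (whose rank one on all of $W$ itself needs the multiplicity-one splitting) you only get root groups $I+cnN_{r,ab}$ at a common level $c$. For $m_r\ge3$, Tits's theorem rescues finite index. For $m_r=2$, however, two opposite level-$c$ root groups can generate a thin subgroup (e.g.\ level $3$ in $\SL_2(\Z)$), so ``I still get $\SL_2(\Z)$'' is exactly what must be proved. The paper chooses two basic roots with nonzero trace pairing in that factor. Their center--axis incidences are $\pm1$ by Proposition~\ref{prop:primitive-component-roots}, so they generate the full $\SL_2(\Z)$ on a primitive rank-two summand. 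It then checks that the Zariski closure maps isomorphically onto that factor and trivially to the others.

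(b) Nothing shows $\Jac(\mathscr C_\Gamma)$ is spanned by rank-one elements. Zariski density of $E_\Gamma$ in $\mathbf D$ does not by itself make $E_\Gamma\cap\mathbf U$ large: $E_\Gamma$ could be the graph of a homomorphism from its semisimple image into a central vector quotient of $\mathbf U$. The paper rules this out in three steps. Normality of $E_\Gamma$ in $\mathcal O_\Gamma^\times$ gives $[N_U,E_\Gamma]\subseteq E_\Gamma\cap\mathbf U(\Q)$, hence $[\mathbf U,\mathbf D]\le\mathbf U_0$. Finiteness of $\Pi^{\mathrm{ab}}$, which needs the $\SL_2(\Z)$ result from (a), then kills the graph homomorphism. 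A Hirsch-length comparison finishes.

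Finally, the torus step concerns $\mathbf G/\mathbf D$, with Lie algebra $\mathscr C_\Gamma/[\mathscr C_\Gamma,\mathscr C_\Gamma]$, not the center of $\mathscr C_\Gamma$. It is handled by bounded denominators of characters on integral points, not by units of an integral center.
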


\begin{corollary}
\label{cor:intro-biconnected-two-step-structure}
Let $\Gamma$ be a finite biconnected graph with at least three vertices.
There are short exact sequences
\begin{equation}\label{eq:intro-biconnected-Aut-sequence}
 1\longrightarrow\IAut(A_\Gamma)
 \longrightarrow\Aut(H_\Gamma)
 \longrightarrow\Lambda_\Gamma
 \longrightarrow1
\end{equation}
and
\begin{equation}\label{eq:intro-biconnected-Out-sequence}
 1\longrightarrow\IOut(H_\Gamma)
 \longrightarrow\Out(H_\Gamma)
 \longrightarrow\Lambda_\Gamma
 \longrightarrow1,
\end{equation}
where the outer IA group is a central extension
\begin{equation}\label{eq:intro-outer-Torelli-sequence}
 1\longrightarrow
 \frac{A_\Gamma}{H_\Gamma Z(A_\Gamma)}
 \longrightarrow\IOut(H_\Gamma)
 \longrightarrow\IOut(A_\Gamma)
 \longrightarrow1.
\end{equation}
In both automorphism sequences, the cohomological quotient
$\Lambda_\Gamma$ shares the finite-index subgroup $E_\Gamma$ with the
arithmetic group $\mathcal O_\Gamma^\times$.
\end{corollary}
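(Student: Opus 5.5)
The corollary follows by combining Theorem~\ref{thm:intro-IA-rigidity} and Theorem~\ref{thm:intro-arithmetic-commensurability} with some bookkeeping about inner automorphisms.

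\textbf{The $\Aut$ sequence.} By definition, $\Lambda_\Gamma$ is the image of $\Aut(H_\Gamma)\to\GL(W_\Gamma)$, so the map onto $\Lambda_\Gamma$ is surjective. Its kernel consists of the automorphisms acting trivially on $H^1(H_\Gamma;\Z)$. Since $H_1(H_\Gamma)$ is free abelian in the biconnected case, this is the same as acting trivially on $H_1(H_\Gamma)$, so the kernel is $\IAut(H_\Gamma)$. I would verify the torsion-freeness claim from the preliminaries; the universal coefficient theorem identifies $W_\Gamma$ with the dual of $H_1$ modulo torsion, and the contragredient convention makes the map a homomorphism. Theorem~\ref{thm:intro-IA-rigidity} then replaces $\IAut(H_\Gamma)$ by $\IAut(A_\Gamma)$, which gives \eqref{eq:intro-biconnected-Aut-sequence}.

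\textbf{The $\Out$ sequence.} Inner automorphisms act trivially on homology, so $\Inn(H_\Gamma)\le\IAut(H_\Gamma)$. Passing to the quotient by $\Inn(H_\Gamma)$ gives \eqref{eq:intro-biconnected-Out-sequence} with $\IOut(H_\Gamma)=\IAut(H_\Gamma)/\Inn(H_\Gamma)$.

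\textbf{The central extension \eqref{eq:intro-outer-Torelli-sequence}.} Under the isomorphism $\IAut(A_\Gamma)\cong\IAut(H_\Gamma)$, the group $\Inn(H_\Gamma)$ corresponds to conjugations by elements of $H_\Gamma$. These sit inside $\Inn(A_\Gamma)\cong A_\Gamma/Z(A_\Gamma)$, and every element of $\Inn(A_\Gamma)$ is IA, since conjugation acts trivially on $H_1(A_\Gamma)$ and, via restriction, on $H_1(H_\Gamma)$. Hence
\[
\IOut(H_\Gamma)=\IAut(A_\Gamma)/\mathrm{im}(H_\Gamma),
\]
and this surjects onto $\IAut(A_\Gamma)/\Inn(A_\Gamma)=\IOut(A_\Gamma)$. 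The kernel is
\[
\Inn(A_\Gamma)/\mathrm{im}(H_\Gamma)\cong A_\Gamma/H_\Gamma Z(A_\Gamma).
\]

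This kernel is cyclic, being a quotient of $A_\Gamma/H_\Gamma\cong\Z$. For centrality I would check that conjugation by $\phi\in\IAut(A_\Gamma)$ sends $\mathrm{conj}_g$ to $\mathrm{conj}_{\phi(g)}$. Since $\phi$ is IA, $\phi(g)g^{-1}\in[A_\Gamma,A_\Gamma]\le H_\Gamma$, so the class modulo $H_\Gamma Z$ is fixed. This step is the one real point to get right: the identification of $\Inn(H_\Gamma)$ inside $\IAut(A_\Gamma)$ must be injective modulo the center. That holds because the restriction map is an isomorphism, so an automorphism that restricts to the identity on $H_\Gamma$ is trivial.

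\textbf{Final claim.} The last sentence, that the cohomological quotient shares the finite-index subgroup $E_\Gamma$ with $\mathcal O_\Gamma^\times$, is exactly Theorem~\ref{thm:intro-arithmetic-commensurability}.
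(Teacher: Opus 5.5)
Your proposal is correct and follows the paper's proof step for step. The paper proves this by combining Theorem~\ref{thm:biconnected-structure} with Theorem~\ref{thm:torelli-outer-exact-sequence}: restriction identifies the kernel, you pass to the quotient by $\Inn(H_\Gamma)$, and centrality comes from $\Psi(a)a^{-1}\in A_\Gamma'\le H_\Gamma$.

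The only cosmetic difference is how you get $\ker\bigl(A_\Gamma\to\Aut(H_\Gamma)\bigr)=Z(A_\Gamma)$. You deduce it from injectivity of restriction on $\IAut(A_\Gamma)$. The paper instead cites the centralizer identity $C_{A_\Gamma}(H_\Gamma)=Z(A_\Gamma)$ (Proposition~\ref{prop:torelli-centralizer-of-H}), which is exactly what that injectivity rests on.
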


The component partitions defining $\mathscr C_\Gamma$ also determine its
Jacobson radical and its split Wedderburn quotient
\begin{equation}\label{eq:intro-Wedderburn}
 \mathscr C_\Gamma/\Jac(\mathscr C_\Gamma)
 \cong\prod_r M_{m_r}(\Q).
\end{equation}
These data are computable from $\Gamma$ by finite graph operations and
rational linear algebra, and Theorem~\ref{thm:integral-root-realization}
gives an explicit ambient--sector factorization for every specified
integral rank-one root.  Here ``explicit'' has a precise, limited meaning:
one can compute $\mathscr C_\Gamma$, its Jacobson radical and Wedderburn
block sizes, and one can construct an automorphism realizing any prescribed
integral root.  We do not claim an algorithm, or a uniform bound, producing
a finite root-generating family for $E_\Gamma$.

It follows that, for biconnected $\Gamma$, both automorphism groups of
$H_\Gamma$ are finitely generated.  More precisely, Day's generators~\cite[Theorem~B]{Day}, together with
Wade's normalization and abelianization calculation
~\cite[Definition~2.4, Theorems~2.5 and~4.2,
Corollary~4.3]{WadeJohnson}, restrict to a minimum-cardinality generating
set for $\IAut(H_\Gamma)$, with the same freely generated
abelianization.  The corresponding residual torsion-free nilpotence and orderability
results also hold for $\IAut(H_\Gamma)$ and $\IOut(H_\Gamma)$.  The IA
identification moreover holds filtered: restriction identifies the
Andreadakis--Johnson filtrations of $\IAut(A_\Gamma)$ and
$\IAut(H_\Gamma)$ term by term
(Theorem~\ref{thm:filtered-IA-rigidity}).

\subsection{From graph blocks to the global group}

Let $\mathcal B_2(\Gamma)$ be the set of graph blocks with at least three
vertices, and let $b(\Gamma)$ be the number of bridges.  The graph-block
decomposition induces the following free-product decomposition.

\begin{theorem}
\label{thm:intro-global-structure}
For every finite connected graph $\Gamma$,
\begin{equation}\label{eq:intro-Grushko}
 H_\Gamma\cong
 \left(*_{B\in\mathcal B_2(\Gamma)}H_B\right)
 *\Free_{b(\Gamma)},
\end{equation}
and this is the Grushko decomposition: every $H_B$ is noncyclic and
freely indecomposable.  A finite-index pure subgroup of
$\Out(H_\Gamma)$ fits into a relative free-product restriction sequence whose
quotient is the product of finite-index subgroups of the
$\Out(H_B)$.  In particular, both $\Aut(H_\Gamma)$ and
$\Out(H_\Gamma)$ are finitely generated.
\end{theorem}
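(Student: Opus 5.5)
We prove Theorem~\ref{thm:intro-global-structure} in three stages: first the free splitting \eqref{eq:intro-Grushko}, then the fact that it is a Grushko decomposition, and finally the consequences for $\Out(H_\Gamma)$ and $\Aut(H_\Gamma)$.

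\textbf{Step 1: the free splitting.} We induct on the number of blocks. If $\Gamma$ is a single block there is nothing to prove; a bridge block $K_2$ gives $H_{K_2}\cong\Z=\Free_1$. Otherwise write $\Gamma=\Gamma_1\cup_v\Gamma_2$ as a one-vertex union at a cut vertex $v$. Then $A_\Gamma\cong A_{\Gamma_1}*_{\langle v\rangle}A_{\Gamma_2}$, and $\chi_\Gamma$ restricts to $\chi_{\Gamma_i}$ on each factor and is nonzero on the amalgamated $\langle v\rangle\cong\Z$. We let $H_\Gamma$ act on the Bass--Serre tree $T$ of this splitting. Edge stabilizers in $A_\Gamma$ are conjugates of $\langle v\rangle$, and these meet $H_\Gamma=\ker\chi_\Gamma$ trivially. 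Vertex stabilizers are the groups $gH_{\Gamma_i}g^{-1}$. Since $\chi_\Gamma(v)=1$, the subgroup $\langle v\rangle$ maps onto $\Z$, so $A_{\Gamma_i}\,\cdot\,H_\Gamma=A_\Gamma$. Hence $H_\Gamma$ acts on $T$ with exactly one orbit of vertices of each type and one orbit of edges. The quotient graph is therefore a single edge with trivial edge group, and
\[
 H_\Gamma\cong H_{\Gamma_1}*H_{\Gamma_2}.
\]
Iterating over the block tree gives \eqref{eq:intro-Grushko}.

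\textbf{Step 2: the Grushko property.} Let $B$ be a block with at least three vertices; it is biconnected. Then $H_B$ is finitely generated and noncyclic, since its abelianization has rank $|V(B)|-1\geq2$. To show $H_B$ is freely indecomposable, the plan is to use the BNS/separator machinery from the biconnected analysis: a nontrivial free splitting would force the complement of the BNS invariant to contain a character not associated to any separator. Equivalently, a free splitting would produce a cut vertex via the Meier--VanWyk description of $\Sigma^1(H_\Gamma)$, and biconnectedness rules this out. As a fallback, one can show directly that $H_B$ is one-ended, using the known criterion that $H_\Gamma$ has more than one end only when $\Gamma$ has a cut vertex. This step is the \textbf{main obstacle}, because a free splitting of a subgroup is not automatically visible in the ambient RAAG. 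I would first try the ends criterion together with the fact that the flag complex of a biconnected graph has connected vertex links after suspension.

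\textbf{Step 3: automorphism groups.} With the Grushko decomposition $G=G_1*\cdots*G_k*\Free_b$ in hand, we apply relative free-product automorphism theory (Guerch--Levitt, extending Fouxe-Rabinovitch and Collins--Zieschang). This gives a finite-index subgroup of $\Out(G)$, namely the subgroup preserving each conjugacy class of factors, which maps onto
\[
 \prod_i \Out(G_i)
\]
modulo finite index. The kernel of this map is generated by finitely many partial conjugations and transvections and is finitely generated, provided each $G_i$ is finitely generated, which holds here. For the quotient, each $\Out(H_B)$ is finitely generated by Corollary~\ref{cor:intro-biconnected-two-step-structure}. Indeed, $\Lambda_B$ contains $E_B$ with finite index, and $E_B$ has finite index in $\mathcal O_B^\times$, which is finitely generated as an arithmetic group (Borel--Harish-Chandra). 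The group $\IAut(A_B)$ is finitely generated by Day's theorem. Extensions of finitely generated groups are finitely generated, and finite-index passages preserve finite generation. This gives finite generation of $\Out(H_\Gamma)$. Finally, $\Aut(H_\Gamma)$ is an extension of $\Out(H_\Gamma)$ by $\Inn(H_\Gamma)$, and $\Inn(H_\Gamma)$ is a quotient of the finitely generated group $H_\Gamma$, so $\Aut(H_\Gamma)$ is finitely generated as well.
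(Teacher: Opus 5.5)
Your Steps~1 and~3 are sound, and Step~1 takes a different route from the paper. The paper splits the Dicks--Leary presentation along graph blocks: every closed edge path reduces to simple circuits, and each circuit lies in a single block. You instead let $H_\Gamma$ act on the Bass--Serre tree of $A_{\Gamma_1}*_{\langle v\rangle}A_{\Gamma_2}$. Normality of $H_\Gamma$, $\chi_\Gamma(v)=1$ and $\langle v\rangle\cap H_\Gamma=1$ do give a one-edge quotient with trivial edge group, and this identifies the factors with the standard subgroups $H_{\Gamma_i}$.

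Step~3 matches the paper up to inessential choices. The paper uses the smaller pure subgroup $\Out^\dagger$ and Gilbert's relative Whitehead generators; your subgroup $\Out^0$ together with Borel--Harish-Chandra for $\Order^\times$ also works. However, Step~3 assumes the splitting is the Grushko decomposition. The finite index of the subgroup fixing the factor conjugacy classes rests on Grushko uniqueness, and Step~2 does not establish this.

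The gap is Step~2: you never prove that $H_B$ is freely indecomposable. Your preferred fallback does not work. The criterion ``more than one end only if $\Gamma$ has a cut vertex'' is known (Chang--Ruffoni) only when the flag complex is simply connected, i.e.\ when $H_\Gamma$ is finitely presented. It says nothing about, for example, $B=C_n$ with $n\geq4$, which is exactly where an argument is needed. In general, one-endedness is a consequence of the indecomposability you want, not an independent input. Likewise, Meier--VanWyk describe $\Sigma^1(A_\Gamma)$, not $\Sigma^1(H_\Gamma)$.

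The missing argument is short. It uses Theorem~\ref{thm:BNS-separator-arrangement}, which holds with no finiteness hypothesis via Kochloukova--Mendon\c{c}a.
\begin{itemize}
\item By that theorem, $\Sigma^1(H_B)^c$ is the finite union of the spheres $\mathbb S(\mathcal W_{S,\mathbb R})$.
\item Since $B$ has no cut vertex, every separator has at least two vertices, so each $\mathcal W_S$ is a proper subspace.
\item A real vector space is not a finite union of proper subspaces, so $\Sigma^1(H_B)\neq\varnothing$.
\item A nontrivial free product of finitely generated groups has empty $\Sigma^1$.
\end{itemize}
Together these rule out any splitting $H_B\cong G_1*G_2$. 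Your remark about a character ``not associated to any separator'' points this way, but the decisive input is missing: that $\Sigma^1$ of a free product is empty. As written, Step~2 is a plan rather than a proof, and Step~3 inherits the gap.
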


The proof of~\eqref{eq:intro-Grushko} starts from the Dicks--Leary
presentation: every simple circuit lies in one graph block, so the edge
generators and all-power relations separate as a free product.  It remains
to prove that a biconnected block factor admits no further free splitting.
Gilbert's relative Whitehead generating theorem~\cite[Theorem~2.20 and Section~1]{Gilbert} supplies the kernel
generators used below, while the broader relative outer-space framework
is due to Guirardel--Levitt~\cite{GuirardelLevitt}.  Section~\ref{sec:global-structure}
derives from these results the finite wreath quotient and pure restriction
sequence used here.  Combining
Corollary~\ref{cor:intro-biconnected-two-step-structure} with that
sequence yields a finite subnormal series and a finite generating set for
a finite-index subgroup.

We also prove two global consequences.  For $|V(\Gamma)|\geq3$, the
group $H_\Gamma$ is one-ended if and only if $\Gamma$ is biconnected,
and it has infinitely many ends if and only if $\Gamma$ has a cut vertex
(Corollary~\ref{cor:ends-classification}).  The outer IA groups fit into
the short exact sequence
\[
 1\longrightarrow\operatorname{IAFR}_\Gamma
 \longrightarrow\IOut(H_\Gamma)
 \longrightarrow\prod_i\IOut(H_{B_i})
 \longrightarrow1.
\]

\subsection{Subgroups, torsion, and higher-rank rigidity}

The structural decomposition has consequences beyond finite generation.

\begin{theorem}[Tits alternative relative to virtually polycyclic groups]
\label{thm:intro-strong-Tits}
Let $\Gamma$ be a finite connected graph.  Every subgroup of
$\Aut(H_\Gamma)$ or $\Out(H_\Gamma)$ is virtually polycyclic or contains
a subgroup isomorphic to $\Free_2$.
\end{theorem}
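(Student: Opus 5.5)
The strategy is to reduce the statement to the Tits alternative for the pieces of the structure theory already set up, and then use closure of the dichotomy ``virtually polycyclic or contains $\Free_2$'' under extensions and finite-index passage. Call a group \emph{good} if each of its subgroups is virtually polycyclic or contains $\Free_2$. Goodness passes to subgroups, and to any group containing a good subgroup of finite index: an $\Free_2$ in the subgroup is one in the group, and virtually polycyclic is stable under finite-index overgroups. Goodness is also closed under extensions $1\to N\to G\to Q\to1$ with $N,Q$ good. To see this, let $S\le G$, with image $\bar S\le Q$ and kernel $S\cap N$. If $S\cap N$ contains $\Free_2$ we are done. If $\bar S$ contains $\Free_2$, lift free generators to get $\Free_2\le S$. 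Otherwise both are virtually polycyclic, and an extension of a virtually polycyclic group by a virtually polycyclic group is virtually polycyclic. Since $\Inn(H_\Gamma)\cong H_\Gamma/Z(H_\Gamma)$ is a quotient of a subgroup of the RAAG $A_\Gamma$, the short exact sequence $1\to\Inn(H_\Gamma)\to\Aut(H_\Gamma)\to\Out(H_\Gamma)\to1$ means it suffices to treat $\Out(H_\Gamma)$ and $\Inn(H_\Gamma)$ separately.

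\textbf{Step 1: $\Inn(H_\Gamma)$.} RAAGs satisfy the strong Tits alternative: every subgroup is free abelian or contains $\Free_2$. Hence $H_\Gamma$ is good. For the quotient by the center I will use the standard description $Z(A_\Gamma)$ as generated by the cone vertices. The quotient $H_\Gamma/Z(H_\Gamma)$ then embeds in a RAAG on the non-cone vertices, up to a free abelian direct factor, and is therefore good.

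\textbf{Step 2: the biconnected case.} Corollary~\ref{cor:intro-biconnected-two-step-structure} expresses $\Out(H_B)$ as an extension of $\Lambda_B$ by $\IOut(H_B)$. The group $\Lambda_B$ is a subgroup of $\GL(W_B)\le\GL_n(\Q)$, so Tits' original theorem gives that its finitely generated subgroups are virtually solvable or contain $\Free_2$. Virtually solvable subgroups of $\GL_n(\Z)$ are virtually polycyclic by Mal'cev, and every subgroup of $\GL_n(\Z)$ is finitely generated. So $\Lambda_B$ is good. $\IOut(H_B)$ is a central extension of $\IOut(A_B)$ by a finitely generated abelian group. The group $\IOut(A_B)$ is residually torsion-free nilpotent, being a quotient-compatible Torelli group. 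For it I will invoke the known Tits alternative for $\Out(A_\Gamma)$ (Horbez; Charney--Vogtmann for the virtually polycyclic version), so that its subgroup $\IOut(A_B)$ is good. The extension lemma then makes $\Out(H_B)$ good, and $\Aut(H_B)$ follows from Step 1.

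\textbf{Step 3: the general case.} This is the main obstacle. By Theorem~\ref{thm:intro-global-structure} a finite-index pure subgroup $P\le\Out(H_\Gamma)$ fits into $1\to K\to P\to\prod_B P_B\to1$, where the $P_B$ are finite-index subgroups of the good groups $\Out(H_B)$. A finite product of good groups is good by iterating the extension lemma. The kernel $K$ is the relative free-product (Fouxe-Rabinovitch) group of the Grushko decomposition~\eqref{eq:intro-Grushko}. For $K$, or directly for the relative outer automorphism group of the free product, I will use the relative Tits alternative of Horbez (\emph{Tits alternative for automorphisms of free products}). It says that if each factor $H_B$ and each group $\Out(H_B)$ satisfies the Tits alternative relative to virtually polycyclic groups, then so does $\Out$ of the free product, and in particular so does its subgroup $K$. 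Its hypotheses are exactly the goodness of the factors from Step 1 and of the $\Out(H_B)$ from Step 2, and the Grushko factors are finitely generated and freely indecomposable as Theorem~\ref{thm:intro-global-structure} requires. Checking that Horbez's ``virtually polycyclic'' class matches ours, and that his statement covers the factor-automorphism quotient, is where I expect care to be needed. If needed, Horbez's theorem can be applied directly to $\Out(H_\Gamma)$, which makes the extension bookkeeping unnecessary. Finite index then upgrades goodness from $P$ to $\Out(H_\Gamma)$, and Step 1 upgrades it to $\Aut(H_\Gamma)$.
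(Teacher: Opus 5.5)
Your overall route is the paper's. You use closure of the dichotomy under extensions and finite-index overgroups. For a block you handle $\Out(H_B)$ through $1\to\IOut(H_B)\to\Out(H_B)\to\Lambda_B\to1$ and the central extension of $\IOut(A_B)$. For the global outer group you apply Horbez's free-product theorem (Theorem~\ref{thm:Horbez-free-product}) to the Grushko decomposition, and you recover $\Aut$ from $\Inn(H_\Gamma)$. Your fallback in Step~3 is exactly what the paper does. The detour through the pure subgroup and its kernel $K$ is redundant: getting the alternative for $K$ from Horbez's theorem already means you have it for the whole outer group.

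Three local justifications need repair.
\begin{itemize}
\item The claim that every subgroup of $\GL_n(\Z)$ is finitely generated is false for $n\ge2$: $\SL_2(\Z)$ contains $\Free_2$, which contains infinitely generated free subgroups. You do not need this claim. Tits' theorem in characteristic zero applies to arbitrary, not necessarily finitely generated, linear groups, and Mal'cev's theorem then makes a solvable subgroup of $\GL_n(\Z)$ polycyclic. This is how the paper argues.
\item In Step~1, knowing that $Z(A_\Gamma)$ is generated by the cone vertices does not by itself determine $Z(H_\Gamma)$. To embed $H_\Gamma/Z(H_\Gamma)$ in $A_\Gamma/Z(A_\Gamma)\cong A_{\Gamma-U}$ you need $Z(H_\Gamma)=H_\Gamma\cap Z(A_\Gamma)$. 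The paper gets this from the centralizer identity $C_{A_\Gamma}(H_\Gamma)=Z(A_\Gamma)$ (Proposition~\ref{prop:torelli-centralizer-of-H}). Alternatively, you can bypass it: a quotient of a group with the property by a central subgroup again has the property. A rank-two free subgroup of the preimage meets the central subgroup trivially, so it maps injectively to the quotient.
\item For $\Out(A_B)$, cite a Tits alternative relative to virtually polycyclic groups; the paper uses Sale--Susse. A ``virtually solvable'' version alone would not feed into your extension lemma.
\end{itemize}
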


We refer to this property as the \emph{strong Tits alternative}; the
qualifier always means the version relative to virtually polycyclic groups,
not merely the alternative ``virtually solvable or contains $\Free_2$.''
For a biconnected block we give a criterion for when
$\Out(H_\Gamma)$ is virtually polycyclic.  Its IA condition is the absence of a separating
intersection of links, while its arithmetic condition is that every matrix block in
\eqref{eq:intro-Wedderburn} have size one.

For biconnected $\Gamma$, the cohomological representation is faithful
on finite subgroups.  A level-three congruence kernel is therefore torsion-free,
and both $\Aut(H_\Gamma)$ and $\Out(H_\Gamma)$ are virtually
torsion-free with finite virtual cohomological dimension.  The
cohomological image has the explicit formula
\begin{equation}\label{eq:intro-cohomological-vcd}
 \vcd(\Lambda_\Gamma)
 =\dim_\Q\Jac(\mathscr C_\Gamma)
  +\sum_r\binom{m_r}{2}.
\end{equation}
The same two-step description yields higher-rank rigidity.  For a finite
biconnected $\Gamma$, we define a cubic rank bound
$R_{\mathrm{cub}}(\Gamma)$ from the largest $m_r$ in
\eqref{eq:intro-Wedderburn}, with a lower bound of two.  If $G$ is a real
semisimple Lie group with finite center and no compact factors,
$\Delta<G$ is an irreducible lattice, and
$\rank_{\mathbb R}G\geq R_{\mathrm{cub}}(\Gamma)$,
then every homomorphism from $\Delta$ to either $\Aut(H_\Gamma)$ or
$\Out(H_\Gamma)$ has finite image.  In particular, for such a $\Gamma$
and all sufficiently large $d$, every homomorphism from $\SL_d(\Z)$ to
either $\Aut(H_\Gamma)$ or $\Out(H_\Gamma)$ has finite image.  The
rank bound is sharp in general: for $\Gamma=K_n$, one has
$H_{K_n}\cong\Z^{n-1}$ and $R_{\mathrm{cub}}(K_n)=n-1$, while the
standard inclusion
$\SL_{n-1}(\Z)\hookrightarrow
\Out(H_{K_n})\cong\GL_{n-1}(\Z)$ has infinite image. Thus the
hypothesis $\rank_{\mathbb R}G\geq R_{\mathrm{cub}}(\Gamma)$ cannot
uniformly be weakened by one.

\subsection{Finiteness comparisons and examples}

Finite presentability is more delicate: finite generation of the kernel
and quotient does not by itself imply finite presentability of the
extension group.

\begin{theorem}
\label{thm:intro-finite-presentation}
For every finite connected graph $\Gamma$,
\[
 \Aut(H_\Gamma)\text{ is finitely presented}
 \quad\Longleftrightarrow\quad
 \Out(H_\Gamma)\text{ is finitely presented}.
\]
If $\Gamma$ is biconnected and has no separating intersection of links,
both groups are finitely presented.
\end{theorem}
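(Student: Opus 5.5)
We will use the short exact sequence
\[
 1\longrightarrow\Inn(H_\Gamma)\longrightarrow\Aut(H_\Gamma)\longrightarrow\Out(H_\Gamma)\longrightarrow1,
\qquad \Inn(H_\Gamma)\cong H_\Gamma/Z(H_\Gamma).
\]
The plan rests on the standard facts about finite presentability in extensions. If $1\to N\to G\to Q\to1$ is exact with $N$ finitely generated, then $G$ finitely presented implies $Q$ finitely presented. If moreover $N$ is finitely presented, then $Q$ finitely presented implies $G$ finitely presented. So the direction ``$\Aut$ f.p.\ $\Rightarrow$ $\Out$ f.p.'' is immediate, because $H_\Gamma$ is finitely generated ($\Gamma$ connected) and hence so is $\Inn(H_\Gamma)$. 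The converse is the real content, since $H_\Gamma$ itself need not be finitely presented; for instance, $H_{C_n}$ with $n\ge5$ is not.

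For the converse we split into cases. If $H_\Gamma$ has trivial center, then $\Inn\cong H_\Gamma$. By Theorem~\ref{thm:intro-global-structure} this is a free product of the block groups $H_B$ and a free group; it is nontrivially split unless $\Gamma$ is a single block. When $\Gamma$ is biconnected and $\Gamma\neq K_n$, we want to avoid needing $H_\Gamma$ to be finitely presented. The idea is to replace the extension by a different one with a finitely presented kernel.

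From Corollary~\ref{cor:intro-biconnected-two-step-structure} we have $1\to\IAut(A_\Gamma)\to\Aut(H_\Gamma)\to\Lambda_\Gamma\to1$ and $1\to\IOut(H_\Gamma)\to\Out(H_\Gamma)\to\Lambda_\Gamma\to1$. These are compatible, with $\IAut(A_\Gamma)\to\IOut(H_\Gamma)$ surjective and kernel $\Inn(H_\Gamma)$. The intended route is as follows:
\begin{enumerate}
\item $\Out(H_\Gamma)$ f.p.\ implies $\IOut(H_\Gamma)$ is ``f.p.\ relative to'' $\Lambda_\Gamma$. Since $\Lambda_\Gamma$ is arithmetic-commensurable (Theorem~\ref{thm:intro-arithmetic-commensurability}), it is finitely presented. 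Using the finitely generated normal subgroup $\IOut(H_\Gamma)$, finite presentability of $\Out$ transfers to finite presentability of $\Aut$ exactly when the relations among the lifts of the $\Lambda_\Gamma$-relators and the conjugation action are captured by finitely many relations.
\item Concretely, build a presentation of $\Aut(H_\Gamma)$ from a finite presentation of $\Out(H_\Gamma)$ together with the generators of $\Inn(H_\Gamma)$. It then suffices to show that the kernel $\Inn(H_\Gamma)\cong H_\Gamma$ is \emph{finitely normally presented} in $\Aut(H_\Gamma)$: its relations are consequences, under the $\Aut$-action, of finitely many relations.
\end{enumerate}

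For step~2 we use the Dicks--Leary presentation, whose relators are all-power circuit relations indexed by simple circuits. The $\Aut$-action includes the restrictions of ambient RAAG automorphisms, in particular conjugations by vertex generators $v\in A_\Gamma$. This action permutes and shifts circuit relators, so finitely many orbit representatives suffice. That is the key lemma: the kernel is finitely generated as a group, and its relators fall into finitely many $\Aut(H_\Gamma)$-orbits, so the standard extension argument gives $\Aut$ f.p.\ from $\Out$ f.p. For general connected $\Gamma$, we reduce to the blocks through the free-product structure, using that conjugation by elements of $A_\Gamma\setminus H_\Gamma$ acts on each factor.

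For the second sentence of the statement, assume $\Gamma$ is biconnected with no separating intersection of links. Then $\IAut(A_\Gamma)$ is finitely presented by Day's presentation, since in that case it is generated by partial conjugations with finitely many relations. Also $\Lambda_\Gamma$ is commensurable with an arithmetic group, hence finitely presented. The extension $1\to\IAut(A_\Gamma)\to\Aut(H_\Gamma)\to\Lambda_\Gamma\to1$ then gives that $\Aut(H_\Gamma)$ is f.p., and by the first part so is $\Out(H_\Gamma)$. The main obstacle is the key lemma in step~2, namely finite normal generation of the relations of $H_\Gamma$ under the automorphism action when $H_\Gamma$ is not finitely presented. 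I would first try to show directly that conjugation by a single vertex $v$ (acting via $\Aut$) together with finitely many short circuit relators generates all all-power relators.
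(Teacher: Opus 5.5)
Your easy direction ($\Aut(H_\Gamma)$ f.p.\ $\Rightarrow$ $\Out(H_\Gamma)$ f.p.) is fine. The converse, which is the real content, has a genuine gap: it rests entirely on your ``key lemma'' that the relations of $\Inn(H_\Gamma)$ are normally generated, under the $\Aut(H_\Gamma)$-action, by finitely many relators, and you do not prove it.

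The mechanism you suggest does not work as stated. An automorphism $\phi$ sends $d_1^N\cdots d_m^N$ to $\phi(d_1)^N\cdots\phi(d_m)^N$ with the same $N$. So vertex conjugation neither permutes nor shifts the all-power family, and there is no evident finite set of orbit representatives. What is true is weaker: the all-power relators are \emph{consequences} of finitely many relators together with their translates under $\tau=c_t|_{H_\Gamma}$. The reason is an input you never invoke, namely that $A_\Gamma=H_\Gamma\rtimes\langle t\rangle$ is finitely presented (Reidemeister--Schreier). Your step~1 (``$\IOut$ f.p.\ relative to $\Lambda_\Gamma$''), the case split on the center, and the reduction to blocks are not needed. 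The block reduction would not assemble the automorphism group of a free product from blockwise data in any case.

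The missing idea, which is how the paper argues, is to take the full preimage in $\Aut(H_\Gamma)$ of the cyclic deck subgroup $C=\langle[\tau]\rangle\leq\Out(H_\Gamma)$. Because $A_\Gamma=H_\Gamma\langle t\rangle$ and $C_{A_\Gamma}(H_\Gamma)=Z(A_\Gamma)$, this preimage is
\[
\Inn(H_\Gamma)\langle\tau\rangle=\kappa(A_\Gamma)\cong A_\Gamma/Z(A_\Gamma)\cong A_{\Gamma-U},
\]
where $\kappa$ is conjugation and $U$ is the set of universal vertices. It is therefore a finitely presented RAAG, and its finite presentability is exactly the content of your key lemma.

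One then uses a relative extension lemma. Suppose $1\to N\to P\xrightarrow{\pi}Q\to1$ with $N$ finitely generated, $P_0=\pi^{-1}(Q_0)$ finitely presented, and $Q$ finitely presented relative to $Q_0$ (automatic when $Q$ is finitely presented and $Q_0$ finitely generated). Then $P$ is finitely presented. The proof takes $P_0*F(x_1,\ldots,x_m)$ modulo finitely many conjugation relations for a generating set of $N$ and the lifted relators. It checks that the resulting group maps isomorphically onto $P$, using only that $P_0\to P$ and $N\to P$ are injective. Applied with $N=\Inn(H_\Gamma)$ and $Q_0=C$, this handles every connected $\Gamma$ uniformly. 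It uses neither $\Lambda_\Gamma$, the blocks, nor finite presentability of $H_\Gamma$.

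For the SIL-free sentence your route differs from the paper's and is valid once the justification is repaired: Day's Theorem~B gives generators, not a presentation. Without a SIL there are no commutator transvections at all, so $\IOut(A_\Gamma)$ is finitely generated abelian. Since $\Inn(A_\Gamma)\cong A_{\Gamma-U}$ is finitely presented, $\IAut(A_\Gamma)$ is finitely presented; alternatively, cite Toinet's finite presentation of the pure symmetric automorphism group. Also, $\Lambda_\Gamma$ is finitely presented because it shares the finite-index subgroup $E_\Gamma$ with the arithmetic group $\mathcal O_\Gamma^\times$. The sequence $1\to\IAut(A_\Gamma)\to\Aut(H_\Gamma)\to\Lambda_\Gamma\to1$ then gives $\Aut(H_\Gamma)$ directly, and the easy direction gives $\Out(H_\Gamma)$.

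The paper goes the other way. It shows $\IOut(H_\Gamma)$ is finitely generated nilpotent of class at most two, deduces that $\Out(H_\Gamma)$ is finitely presented, and then invokes the hard direction. Your ordering has the small advantage of not depending on the hard direction.
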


The equivalence in the first assertion is specific to degree two.  In
higher degrees, Theorem~\ref{thm:higher-Aut-Out-equivalence} gives an
$F_n$-equivalence when $H_\Gamma$ is itself of type $F_n$, and
Theorem~\ref{thm:biconnected-all-degree-equivalence} removes that
hypothesis for biconnected graphs.  Neither qualification can be dropped:
for $m\geq5$, Theorem~\ref{thm:higher-degree-sharpness} gives
\[
 \Out(H_{C_m\vee K_3})\in F_\infty,
 \qquad
 \Aut(H_{C_m\vee K_3})\in F_3\setminus F_4.
\]
Thus there is no unconditional Aut--Out equivalence in type $F_n$ for
connected defining graphs once $n\geq4$.

The cycle and $\overline K_3*P_4$ examples show that finite presentability of
$H_\Gamma$ and of its automorphism groups can differ in either direction.
If $n\geq5$, then the flag complex of $C_n$ is a circle, so
$H_{C_n}$ is not finitely presented.  Nevertheless,
\begin{equation}\label{eq:intro-cycle-result}
 \IOut(H_{C_n})\cong\Z,
 \qquad
 |\Lambda_{C_n}|<\infty,
\end{equation}
and therefore $\Out(H_{C_n})$ is virtually infinite cyclic.  In the
opposite direction, for the explicit graph
$\Gamma_*=\overline K_3*P_4$,
where $\overline K_3$ is the edgeless three-vertex graph and $P_4$ is the
four-vertex path, the flag complex is contractible and hence
$H_{\Gamma_*}$ is of type $F_\infty$, but both
$\Aut(H_{\Gamma_*})$ and $\Out(H_{\Gamma_*})$ are finitely generated
and not finitely presented.

For every nonempty finite graph $\Delta$, the join construction gives
\begin{equation}\label{eq:intro-join-universality}
 H_{\Delta*K_2}\cong A_\Delta\times\Z,
\end{equation}
and the defining graph is biconnected.  Hence $\Aut(A_\Delta)$ and
$\Out(A_\Delta)$ embed in the corresponding automorphism groups of
$H_{\Delta*K_2}$; when $\Delta$ has no universal vertex, these embeddings
are retractions (Theorem~\ref{thm:centerless-retract}).  Thus the full
automorphism groups can be nonlinear even though their cohomological image
is arithmetic up to finite index.  Their finiteness behavior is summarized in
Table~\ref{tab:finiteness-reversal}.

\subsection{Ideas of the proof}

The proof is divided into three parts.  The logical dependence of the two
biconnected structure theorems and the global consequences is summarized by
\[
\begin{array}{c}
 (I_2,I_3,\Sigma^1)
 \longrightarrow \mathscr C_\Gamma
 \longrightarrow \text{integral-root realization}
 \longrightarrow
 \Lambda_\Gamma\ \text{and}\ \mathcal O_\Gamma^\times
 \text{ share }E_\Gamma,\\[2mm]
 \text{uniform period isolation}
 \longrightarrow \text{polygon frames}
 \longrightarrow
 \IAut(A_\Gamma)\cong\IAut(H_\Gamma),\\[2mm]
 \text{the two block theorems}
 \longrightarrow \text{Grushko and relative free-product consequences}.
\end{array}
\]
The two biconnected block theorems are the principal new results of the
paper.  The global structure, subgroup, and finiteness theorems are obtained
by combining them with relative free-product automorphism theory.

\paragraph{The cohomological-image part.}
Write
$L_{\Gamma,\Q}=H_1(H_\Gamma;\Q)$.
The lower-central relation spaces
\[
 I_d=\ker\!\left(
 \mathbb L_d(L_{\Gamma,\Q})
 \longrightarrow
 \operatorname{gr}_d(H_\Gamma)\otimes\Q
 \right)
\]
are intrinsic.  The quadratic space $I_2$ is the embedded graph cycle space.  The quotient
$I_3/[L,I_2]$ is spanned by the classes of the cubic coefficients
$\rho_3(C)$ arising from simple circuits $C$.  Separately, the BNS
invariant recovers the subspaces associated to minimal vertex separators.

Linearizing these three invariants gives row equations.  Degree two makes
the row locally constant on the components of
$\Gamma-\{a,v\}$.  A repeated-letter cubic term
$(B_{au}-B_{aw})[e_v,[e_a,e_v]]$
forces those local constants to agree across the different components, while a
three-letter term propagates equality along edges remote from $a$.  The
result is the cubic component algebra $\mathscr C_\Gamma$.  Its derived
algebra is spanned by basic commutators that are primitive integral
rank-one square-zero matrices with pairwise center--axis incidences in
$\{0,\pm1\}$.

The low-degree calculation determines $\mathscr C_\Gamma$, but does not
show that its integral roots occur in the cohomological image.  Roots
satisfying the domination condition are realized by height-preserving
ambient automorphisms, and the remaining roots are realized by sector
shears
\[
 d_{uv}\longmapsto z^{-h_u}d_{uv}z^{h_v}.
\]
Their validity is checked directly on every all-power circuit relation;
the powers of $z$ telescope around the circuit.  Finally, the unit
incidences $0,\pm1$ rule out thin subgroups in the $\SL_2$ factors.  An
arithmetic argument then proves finite index successively in the
semisimple quotient, the unipotent radical, and the split torus quotient.

\paragraph{The IA part.}
Let $\alpha\in\IAut(H_\Gamma)$.  The Servatius block decomposition restricts
the possible form of each edge image
$\alpha(uv^{-1})$.  Applying $\alpha$ to
$d_1^N\cdots d_m^N=1$
around a simple circuit and isolating primitive period classes shows that
no period class can occur only once.  It forces a unique colored polygon frame
$\alpha(uv^{-1})=x_u^C(x_v^C)^{-1}$
on every circuit.  The period-isolation theorem is derived from the power-word shortening
theorem of Lohrey--Stober--Wei\ss~\cite{LSW}; a blow-up
turns single-letter periods into composite periods without changing their
classes.

Whitney's open-ear decomposition~\cite{Whitney} then identifies the
circuit frames along the ears and produces a global family
$(x_v)_{v\in V(\Gamma)}$.  Sending $v$ to $x_v$
defines an IA automorphism of $A_\Gamma$ extending $\alpha$.  An edge
centralizer intersection proves uniqueness, yielding
\eqref{eq:intro-Torelli-isomorphism}.

\paragraph{The global part.}
The Dicks--Leary presentation separates over graph blocks because every
simple circuit lies in a unique block.  The intrinsic BNS invariant computed
in Part~1 shows that the factors of the biconnected blocks are freely
indecomposable.  The resulting Grushko decomposition places the problem within the
relative Whitehead theory of Gilbert~\cite[Theorem~2.20 and
Section~1]{Gilbert} and the relative outer-space framework of
Guirardel--Levitt~\cite{GuirardelLevitt}.  Horbez's free-product Tits
alternative~\cite[Theorem~6.1]{Horbez} supplies the subgroup dichotomy.
Combining the block exact sequences with the relative restriction sequence
gives the global finite-generation and subgroup results.

\subsection{Previous work and outline}

Bestvina and Brady introduced the groups $H_\Gamma$ and related their
finiteness properties to the topology of flag complexes~\cite{BestvinaBrady}.  Dicks and Leary supplied the all-power edge
presentation used throughout this paper~\cite{DicksLeary}.  On the
ambient side, Servatius established the centralizer structure used here~\cite{Servatius}, while Laurence proved the standard generating theorem
for $\Aut(A_\Gamma)$~\cite{Laurence}.  The separating-intersection-of-links
condition used below was introduced for graph-product automorphisms by
Charney--Ruane--Stambaugh--Vijayan~\cite[Definition~3.2]{CharneyRuaneStambaughVijayan}.  Day supplies the IA generators~\cite[Theorem~B]{Day}, and Wade supplies the normalization,
abelianization, Johnson-filtration, and higher-rank-lattice results used
below~\cite[Definition~2.4, Theorems~2.5, 4.2, 4.6 and~7.3,
Corollary~4.3]{WadeJohnson}.  The Bieri--Neumann--Strebel invariant was
introduced in~\cite{BieriNeumannStrebel}; the BNS argument here uses the
RAAG criterion of Meier--VanWyk~\cite{MeierVanWyk} and the unrestricted Bestvina--Brady
reconstruction theorem of Kochloukova--Mendon\c{c}a~\cite[Corollary~1.3]{KochloukovaMendonca}.  Chang--Ruffoni obtain
related reconstruction and one-endedness results in the
finite-presentability range~\cite[Proposition~4.10,
Corollaries~4.21--4.22]{ChangRuffoni}.  The present paper
proves that these low-degree and BNS invariants determine an explicit associative algebra and that every integral root of
this algebra is realized by an automorphism of $H_\Gamma$.

Part~1 identifies the cohomological image.  Section~\ref{sec:intrinsic-invariants}
computes the intrinsic relation spaces and separator arrangement;
Section~\ref{sec:cubic-algebra} proves the row-component theorem;
Section~\ref{sec:root-realization} realizes every integral root; and
Section~\ref{sec:arithmeticity} proves finite-index arithmeticity.
Part~2 proves IA rigidity in Section~\ref{sec:torelli}, combines the
IA and cohomological descriptions in
Section~\ref{sec:biconnected-synthesis}, and concludes with
the cycle family in Subsection~\ref{sec:cycles}.

Part~3 begins with the Grushko decomposition and the global relative
automorphism structure in Sections~\ref{sec:grushko} and
\ref{sec:global-structure}.  Sections~\ref{sec:tits},
\ref{sec:vcd}, and~\ref{sec:lattices} establish the Tits alternative
relative to virtually polycyclic groups, the torsion and virtual-cohomological-dimension results, and
higher-rank rigidity.  Section~\ref{sec:finite-presentation} gives the Aut--Out finiteness
comparisons and the SIL-free sufficient condition, and
Section~\ref{sec:examples} concludes with finite-presentation
counterexamples, higher-degree sharpness, universality, and nonlinearity.

Subsection~\ref{subsec:seven-vertex-cubic} gives a seven-vertex example
showing that the cubic condition is independent of the quadratic and BNS
separator conditions.  Appendix~\ref{app:period-isolation} derives the
period-isolation proposition used in the IA argument, including uniform
preprocessing and successive shortening.
\section{Preliminaries}\label{sec:preliminaries}

In this paper, all graphs are finite and simple.  For group
elements we use
$[x,y]=xyx^{-1}y^{-1}$.
Lie brackets are written with the same notation; the meaning is determined
by context.  The letters $A_\Gamma$ and $H_\Gamma$ always denote the
ambient right-angled Artin group and its all-ones Bestvina--Brady group,
while matrices always act on cohomology through the homomorphic
contragredient convention described below.

\subsection{Graphs, separators, and blocks}

For a vertex $v$ of a graph $\Gamma$, write
\[
 \lk(v)=\{u:uv\in E(\Gamma)\},
 \qquad
 \st(v)=\lk(v)\cup\{v\}.
\]
If $S\subseteq V(\Gamma)$, then $\Gamma-S$ denotes the induced subgraph on
$V(\Gamma)\setminus S$.  A \emph{vertex separator} is a set $S$ for which
$\Gamma-S$ is disconnected, and it is \emph{minimal} when no proper subset
of $S$ is a separator.  Minimality here is inclusion-minimality, not
minimal cardinality.

A connected graph with at least three vertices is \emph{biconnected} if
$\Gamma-\{v\}$ is connected for every vertex $v$.  Equivalently, it has no
cut vertex.  In particular, every vertex of a biconnected graph has degree
at least two, and every edge lies on a simple circuit.  An edge is a
\emph{bridge} if deleting it disconnects the graph.

For a connected graph, a \emph{graph block} is a maximal subgraph that is
either biconnected with at least three vertices or consists of a single
bridge and its endpoints.  Distinct blocks meet in at most one vertex, and
every simple circuit lies in a unique biconnected block.  We write $\mathcal B_2(\Gamma)$ for the set of blocks with at least three vertices and $b(\Gamma)$ for the
number of bridges.  This terminology is unrelated to the Servatius blocks
of a cyclically reduced RAAG element used in Section~\ref{sec:torelli}.

The join $\Gamma_1*\Gamma_2$ is obtained from the disjoint union by adding
every edge between $V(\Gamma_1)$ and $V(\Gamma_2)$.  It satisfies
$A_{\Gamma_1*\Gamma_2}\cong A_{\Gamma_1}\times A_{\Gamma_2}$.

\subsection{Right-angled Artin and Bestvina--Brady groups}

The right-angled Artin group associated to $\Gamma$ is
\begin{equation}\label{eq:def-raag}
 A_\Gamma=
 \left\langle V(\Gamma)\ \middle|\
 [u,v]=1\ \text{whenever }uv\in E(\Gamma)\right\rangle.
\end{equation}
The all-ones character is
\[
 \chi_\Gamma:A_\Gamma\longrightarrow\Z,
 \qquad
 \chi_\Gamma(v)=1\quad(v\in V(\Gamma)).
\]
We call $\chi_\Gamma(g)$ the \emph{height} of $g\in A_\Gamma$ and refer to
$\chi_\Gamma$ as the \emph{height character}.  The associated Bestvina--Brady
group is
$H_\Gamma=\ker\chi_\Gamma$.
When $\Gamma$ is connected, $H_\Gamma$ is generated by the oriented edge
differences
$d_{uv}=uv^{-1}$ for $uv\in E(\Gamma)$.
Indeed, fix a vertex $t$.  Along an edge path from $t$ to $v$, the
corresponding edge differences multiply to $vt^{-1}$.  These elements
together with $t$ generate $A_\Gamma$, and the quotient obtained by killing
all edge differences identifies every vertex with $t$, hence is infinite
cyclic.  Thus their normal closure is $\ker\chi_\Gamma$; the
Dicks--Leary presentation below shows that they already generate it as a
group.

We use the following theorem of Dicks and Leary~\cite{DicksLeary}.

\begin{theorem}[Dicks--Leary]\label{thm:dicks-leary-presentation}
Let $\Gamma$ be connected.  The group $H_\Gamma$ has one generator
$d_{uv}$ for each oriented edge $(u,v)$, with
\[
 d_{vu}=d_{uv}^{-1},
\]
and, for every directed closed edge path
\[
 C=(v_0,v_1,\ldots,v_m=v_0)
\]
and every $n\in\Z$, the all-power relation
\begin{equation}\label{eq:dicks-leary-all-power}
 d_{v_0v_1}^{\,n}d_{v_1v_2}^{\,n}\cdots
 d_{v_{m-1}v_m}^{\,n}=1.
\end{equation}
\end{theorem}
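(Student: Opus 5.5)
The plan is to compare $H_\Gamma$ with the group $G$ given by the stated presentation, using the map $\phi:G\to H_\Gamma$ that sends $d_{uv}\mapsto uv^{-1}$, and to show that $\phi$ is an isomorphism.

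\emph{Well-definedness.} The relation $d_{vu}=d_{uv}^{-1}$ holds in $A_\Gamma$ by inspection. For the all-power relations, note that $u$ and $v$ commute whenever $uv\in E(\Gamma)$. Hence $(uv^{-1})^n=u^nv^{-n}$, and the product around a closed path $v_0,\dots,v_m=v_0$ telescopes:
\[
 u_0^nv_1^{-n}\,v_1^nv_2^{-n}\cdots v_{m-1}^nv_0^{-n}=1 .
\]
The image of $\phi$ lies in $\ker\chi_\Gamma$, and by the normal-closure computation in the preceding paragraph it generates $H_\Gamma$ once surjectivity is checked.

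\emph{Surjectivity.} I would prove this through a semidirect decomposition. Fix a vertex $t$ and define an automorphism $\tau$ of $G$ by
\[
 \tau(d_{uv})=d_{ut}\,d_{uv}\,d_{ut}^{-1},
\]
where $d_{ut}$ denotes the product of the edge generators along some path from $u$ to $t$. This is meant to model conjugation by $t$. The cleaner route is to build $A_\Gamma$ as $G\rtimes\Z$ directly. Let $t$ act on $G$ by the rule that formally encodes conjugation. Using the all-power relations, one checks that this rule sends relations to relations. One then verifies that $v\mapsto d_{vt}\,t$ satisfies the RAAG relations in $G\rtimes\langle t\rangle$. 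The key point is the edge relation $[u,v]=1$ for $uv\in E(\Gamma)$: it reduces to the relations with $n=1$ and $n=-1$ on the triangle-shaped closed path $u\to v\to t$-paths.

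Once this is done, we obtain maps $A_\Gamma\to G\rtimes\Z$ and $G\rtimes\Z\to A_\Gamma$ that are mutually inverse on generators. Hence $G\cong\ker(\chi_\Gamma)=H_\Gamma$.

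\emph{Main obstacle.} The difficult step is defining the $\Z$-action on $G$ correctly, since conjugation by $t$ is not visibly expressible in the edge generators. The approach I would try first follows Dicks--Leary: realize $H_\Gamma$ as $\pi_1$ of a level set of the Bestvina--Brady Morse function on the universal cover of the Salvetti complex, quotiented by $H_\Gamma$. I would then read off generators from edges and relations from 2-cells in the level set. Each square $uv$ contributes a relation of the form $d_{uv}$ commuting in a level-shifted way, and inducting on level yields the all-power relations for every $n$. The subtle point is exactly the appearance of every power $n$. This comes from the fact that the level set meets the squares at all heights, so loops at height $n$ give the $n$-th power relation.

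Since the statement is quoted as a theorem of Dicks and Leary~\cite{DicksLeary}, the paper may simply cite it. My proof would follow their Morse-theoretic argument in outline.
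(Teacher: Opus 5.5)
Your check that the all-power relators die in $A_\Gamma$ (telescoping via $(uv^{-1})^n=u^nv^{-n}$) is exactly what the paper does. The paper proves nothing more and quotes completeness from Dicks--Leary. The self-contained completeness argument you sketch, however, does not work as written.

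First, your formula for $\tau$ has the conjugator backwards. With $d_{ut}$ the path product from $u$ to $t$ (which represents $ut^{-1}$), your $\tau(d_{uv})=d_{ut}d_{uv}d_{ut}^{-1}$ is conjugation by $ut^{-1}$, not by $t$. It does not even respect $d_{vu}=d_{uv}^{-1}$: on the graph with edges $uv,vt$ only, that relation would force $u$ to commute with $t^{-1}u^{-1}t$. The correct model is $\tau(d_{uv})=P_ud_{uv}P_u^{-1}$, with $P_u$ the path product from $t$ to $u$. It represents $tu^{-1}$, so contrary to your ``main obstacle'', conjugation by $t$ \emph{is} visibly expressible in edge generators. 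It is also path-independent in $G$ by the $n=1$ relations.

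Second, the step you merely assert---that $\tau$ kills every relator---is the entire content of the theorem and the only place where all powers are used. Writing $d_i=d_{v_{i-1}v_i}$ and using $P_{v_i}=P_{v_{i-1}}d_i$, one gets
\[
 \tau\bigl(d_1^n\cdots d_m^n\bigr)
 =P_{v_0}\,\bigl(d_1^{n+1}\cdots d_m^{n+1}\bigr)\bigl(d_1\cdots d_m\bigr)^{-1}P_{v_0}^{-1},
\]
so the $n$-relation goes to the $(n+1)$-relation.

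Third, you never show that $\tau$ is an automorphism, and this matters. If $\tau$ were not injective, $G$ would not embed in $\langle G,s\mid sgs^{-1}=\tau(g)\rangle$, and injectivity of $G\to H_\Gamma$ is exactly what is at stake. An inverse is $\sigma(d_{uv})=Q_ud_{uv}Q_u^{-1}$, where $Q_u$ is the product of the inverses $d_i^{-1}$ along a path from $t$ to $u$ (it represents $t^{-1}u$). It sends the $n$-relation to the $(n-1)$-relation, and the identities $\tau(Q_u)=P_u^{-1}$, $\sigma(P_u)=Q_u^{-1}$ give $\sigma\tau=\tau\sigma=\mathrm{id}$.

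With these repairs, $v\mapsto P_v^{-1}s$ defines $A_\Gamma\to G\rtimes_\tau\langle s\rangle$. The edge relation $[u,v]=1$ follows formally from $P_v=P_ud_{uv}$; no triangle argument and no $n=-1$ relation is needed there. The map $d_{uv}\mapsto uv^{-1}$, $s\mapsto t$ is an inverse homomorphism, so $G\cong\ker\chi_\Gamma$.

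This construction is what yields \emph{injectivity}. Surjectivity, which you attach to it, is elementary: $td_{uv}t^{-1}=(tu^{-1})d_{uv}(tu^{-1})^{-1}$ shows that the subgroup generated by edge differences is normal in $A_\Gamma=\langle H_\Gamma,t\rangle$, hence equals the normal closure $H_\Gamma$.

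Finally, drop the Morse-theoretic fallback as stated. The level set in the universal cover of the Salvetti complex is simply connected only when the flag complex is, so the fundamental group of its quotient by $H_\Gamma$ is in general a proper extension of $H_\Gamma$, not $H_\Gamma$. Making that picture rigorous requires coning off the copies of the flag complex at vertices of every height, which is where the $n$-th power relations actually come from. The repaired algebraic route above is the complete argument.
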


The all-power relation~\eqref{eq:dicks-leary-all-power} already holds
in $A_\Gamma$: adjacent endpoints commute, so
$d_{v_{i-1}v_i}^{\,n}=v_{i-1}^{\,n}v_i^{-n}$, and the product telescopes
around the closed edge path.  The nontrivial content of the Dicks--Leary
theorem~\cite{DicksLeary} is that the oriented-edge inverse relations and
these all-power relations are complete.

The use of every integer power is essential.  Sections
\ref{sec:intrinsic-invariants} and~\ref{sec:torelli} extract, respectively,
the quadratic and cubic lower-central coefficients and the nonlinear
period-matching information from the same family of relations.

\subsection{The homology and cohomology lattices}

Let $\Z^{V(\Gamma)}$ have standard basis $(e_v)_{v\in V(\Gamma)}$ and let
\[
 \epsilon:\Z^{V(\Gamma)}\longrightarrow\Z,
 \qquad
 \epsilon\!\left(\sum_vn_ve_v\right)=\sum_vn_v
\]
be the augmentation, which records the height on $\Z^{V(\Gamma)}$.
For connected $\Gamma$, put
$L_\Gamma=H_1(H_\Gamma;\Z)$ and
$W_\Gamma=H^1(H_\Gamma;\Z)$.

\begin{proposition}\label{prop:standard-LW-lattices}
If $\Gamma$ is connected, the inclusion $H_\Gamma\hookrightarrow A_\Gamma$
induces canonical identifications
\begin{equation}\label{eq:standard-homology-lattice}
 L_\Gamma\cong
 \ker(\epsilon:\Z^{V(\Gamma)}\to\Z),
 \qquad
 [d_{uv}]\longmapsto e_u-e_v,
\end{equation}
and
\begin{equation}\label{eq:standard-cohomology-lattice}
 W_\Gamma\cong
 \Z^{V(\Gamma)}/\Z\mathbf1,
 \qquad
 \mathbf1=\sum_{v\in V(\Gamma)}e_v.
\end{equation}
In particular, both lattices have rank $|V(\Gamma)|-1$.
\end{proposition}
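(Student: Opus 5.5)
The plan is to compute $H_1(H_\Gamma;\Z)$ from the Dicks--Leary presentation (Theorem~\ref{thm:dicks-leary-presentation}) and then compare the result with the map induced by the inclusion $H_\Gamma\hookrightarrow A_\Gamma$. Abelianizing, the relation $d_{vu}=d_{uv}^{-1}$ together with the all-power relations around a closed edge path $C$ becomes $n\sum_i[d_{v_{i-1}v_i}]=0$ for every $n$, and it suffices to take $n=1$. So $L_\Gamma$ is the quotient of the free abelian group $C_1(\Gamma;\Z)$ on the oriented edges (the simplicial $1$-chains of the graph) by the subgroup spanned by all closed edge paths. That subgroup is exactly the cycle group $Z_1(\Gamma;\Z)=\ker\partial$: every closed path is a $1$-cycle, and conversely $Z_1$ of a graph is generated by the fundamental cycles of a spanning tree, each of which is a simple closed path.

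The boundary map therefore gives an isomorphism
\[
 L_\Gamma\cong C_1/\ker\partial\cong\operatorname{im}\partial\subseteq C_0(\Gamma;\Z)=\Z^{V(\Gamma)},
\]
with $[d_{uv}]\mapsto e_u-e_v$ under the paper's orientation convention. Since $\Gamma$ is connected, $\operatorname{im}\partial=\ker\epsilon$, because $\widetilde H_0(\Gamma)=0$. This identification is also the one induced by inclusion. Under the abelianization $A_\Gamma\to\Z^{V(\Gamma)}$, the element $d_{uv}=uv^{-1}$ goes to $e_u-e_v$. Hence the composite $H_1(H_\Gamma)\to H_1(A_\Gamma)=\Z^{V(\Gamma)}$ agrees with the map above: it is injective with image $\ker\epsilon$. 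This settles \eqref{eq:standard-homology-lattice}.

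For the cohomology, $L_\Gamma$ is free of rank $|V(\Gamma)|-1$, so the universal coefficient theorem gives $W_\Gamma=\operatorname{Hom}(L_\Gamma,\Z)$. The sequence
\[
 0\to\ker\epsilon\to\Z^{V(\Gamma)}\xrightarrow{\epsilon}\Z\to0
\]
splits. Dualizing it identifies $\operatorname{Hom}(\ker\epsilon,\Z)$ with $\operatorname{Hom}(\Z^{V(\Gamma)},\Z)/\Z\epsilon$, since the restriction map is surjective precisely because $\ker\epsilon$ is a direct summand. Writing $\operatorname{Hom}(\Z^{V(\Gamma)},\Z)=\Z^{V(\Gamma)}$ via the dual basis, $\epsilon$ corresponds to $\mathbf 1$. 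The resulting map is exactly restriction along $H^1(A_\Gamma)\to H^1(H_\Gamma)$, which gives \eqref{eq:standard-cohomology-lattice}. Both lattices have rank $|V(\Gamma)|-1$.

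The main point needing care is the identification of the relation subgroup with $Z_1(\Gamma;\Z)$, that is, checking that closed edge paths span all integral cycles. The spanning-tree argument handles this. Everything else is routine linear algebra.
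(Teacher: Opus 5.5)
Your proof is correct, but it takes a different route from the paper's.

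\textbf{The paper's route.} The paper never abelianizes the Dicks--Leary presentation. It fixes a vertex $t$ and writes $A_\Gamma=H_\Gamma\rtimes\langle t\rangle$. Adjacent vertices differ by the element $uv^{-1}\in H_\Gamma$, so conjugation by any vertex induces one common automorphism of $H_\Gamma^{\mathrm{ab}}$, and $[u,v]=1$ shows this automorphism fixes every edge class. Since edge differences generate $H_\Gamma$, the deck generator acts trivially on $H_\Gamma^{\mathrm{ab}}$. Abelianizing the semidirect product then gives $\Z^{V(\Gamma)}=A_\Gamma^{\mathrm{ab}}\cong H_\Gamma^{\mathrm{ab}}\oplus\Z$, with second projection $\epsilon$.

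\textbf{Your route.} You identify $L_\Gamma$ with $C_1(\Gamma;\Z)/Z_1(\Gamma;\Z)$ and use the boundary map together with $\widetilde H_0(\Gamma)=0$. This makes the link with the graph cycle space explicit, which is exactly what the paper later needs for $I_2$. Your argument also does not need completeness of the Dicks--Leary relations, only the same input the paper uses. The cycle relations already hold in $A_\Gamma$, and the composite $C_1/Z_1\to L_\Gamma\to\ker\epsilon$ is an isomorphism. So surjectivity of $C_1\to L_\Gamma$, i.e.\ generation by edge differences, already forces $C_1/Z_1\cong L_\Gamma$.

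\textbf{What each buys.} The paper's version instead yields the triviality of the deck action on $H_\Gamma^{\mathrm{ab}}$. That fact is reused almost verbatim in the proof of Proposition~\ref{prop:lower-central-comparison}.

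\textbf{A sign.} With the paper's convention $\partial_\Gamma[u,v]=e_v-e_u$, the map induced by inclusion is $-\partial$ rather than $\partial$. This does not affect any kernel or image.
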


\begin{proof}
Choose a vertex $t$.  Since $\chi_\Gamma(t)=1$, the homomorphism
$s:\Z\longrightarrow A_\Gamma$ defined by $s(n)=t^n$
is a section of $\chi_\Gamma$.  Explicitly, if $a\in A_\Gamma$ and
$n=\chi_\Gamma(a)$, then $h=at^{-n}$ lies in $H_\Gamma$, so
$a=ht^n$.  This expression is unique: applying $\chi_\Gamma$ to
$ht^n=h't^{n'}$ gives $n=n'$, and then $h=h'$.  Therefore
$A_\Gamma=H_\Gamma\rtimes\langle t\rangle$.
If $u$ and $v$ are adjacent, then $uv^{-1}\in H_\Gamma$, so conjugation
by $u$ and by $v$ induce the same map on $H_\Gamma^{\mathrm{ab}}$.
Connectivity makes the actions induced by all vertices equal.  This common
action fixes every edge difference: for $uv\in E(\Gamma)$, compute it using
conjugation by $u$ and use $[u,v]=1$.  Since edge differences generate
$H_\Gamma$, the deck generator $t$ acts trivially on
$H_\Gamma^{\mathrm{ab}}$.

Abelianizing the semidirect product therefore gives
\[
 A_\Gamma^{\mathrm{ab}}
 \cong H_\Gamma^{\mathrm{ab}}\oplus\Z.
\]
Under the usual identification
$A_\Gamma^{\mathrm{ab}}\cong\Z^{V(\Gamma)}$, the second projection is
$\epsilon$.  This proves~\eqref{eq:standard-homology-lattice}; dualizing
gives~\eqref{eq:standard-cohomology-lattice}.
\end{proof}

We abbreviate
\[
 L_{\Gamma,\Q}=L_\Gamma\otimes\Q,
 \qquad
 W_{\Gamma,\Q}=W_\Gamma\otimes\Q.
\]
A class in $W_{\Gamma,\Q}$ may be represented by a rational labeling of the
vertices, with two labelings identified when they differ by a global
constant.  The natural pairing is
\[
 \left\langle [a],\sum_vn_ve_v\right\rangle
 =\sum_va_vn_v,
 \qquad
 \sum_vn_v=0.
\]

\subsection{Automorphisms and the matrix convention}

For a group $G$, set
\[
 \IAut(G)=
 \ker\!\left(\Aut(G)\to\GL(H_1(G;\Z))\right),
 \qquad
 \IOut(G)=\operatorname{im}\bigl(\IAut(G)\to\Out(G)\bigr).
\]
Since inner automorphisms act trivially on abelianization,
$\Inn(G)\leq\IAut(G)$ and
$\IOut(G)=\IAut(G)/\Inn(G)$.

We call $\IAut(G)$ the \emph{IA subgroup} of $\Aut(G)$---the automorphisms
acting trivially on the abelianization---and $\IOut(G)$ the corresponding
\emph{outer IA group}; in the RAAG literature these are also called the
Torelli subgroup and the outer Torelli group~\cite{Day,WadeJohnson}, by
analogy with the mapping class group.

We use the standard finiteness notation: type $F_n$ means that a group has a
classifying space with finite $n$-skeleton, type $F_\infty$ means type
$F_n$ for every $n$, and type $F$ means that the group admits a finite
classifying space~\cite[Ch.~7]{Geoghegan}.

Matrices in this paper act on cohomology.  For
$\alpha\in\Aut(H_\Gamma)$, define
\begin{equation}\label{eq:cohomology-representation-convention}
 \rho_W(\alpha)=(\alpha^{-1})^*\in\GL(W_\Gamma).
\end{equation}
This is a homomorphism rather than an antihomomorphism.  We write
\[
 \HomImage=
 \operatorname{im}\!\left(
  \rho_W:\Aut(H_\Gamma)\to\GL(W_\Gamma)
 \right).
\]
Relative to dual bases, the action $\alpha_*$ on $L_\Gamma$ is
$\rho_W(\alpha)^{-\mathsf T}$.
Every later transpose or inverse is governed by this convention.

\subsection{Orders, roots, and commensurability}

If $B$ is a finite-dimensional unital rational algebra acting on a lattice
$M$, an \emph{order} in $B$ is a unital subring that is a full
$\Z$-lattice in $B$.  Its unit group is an arithmetic subgroup of
$B^\times$ with respect to this integral model~\cite{BorelHC}.  Two groups are \emph{commensurable} if they contain isomorphic
finite-index subgroups; in this paper the commensurability statement will
be stronger, because the relevant groups share the same finite-index
subgroup inside $\GL(W_\Gamma)$.

Section~\ref{sec:cubic-algebra} constructs the cubic component
algebra
$\CAlg\subseteq\End(W_{\Gamma,\Q})$.
Its integral order is
$\Order=\CAlg\cap\End(W_\Gamma)$,
and its unit group is $\Order^\times$.
If a nonzero rank-one endomorphism is written as
$R=\alpha\sigma^{\mathsf T}$,
we call $\alpha$ a \emph{center vector}, $\sigma$ an \emph{axis
covector}, and $\sigma(\beta)$ the \emph{center--axis incidence} with
another center vector $\beta$.  A nonzero element $N\in\Order$ with
$\rank_\Q N=1$ and $N^2=0$ is called an \emph{integral root}.  The all-root subgroup is generated by
the corresponding unipotent elements:
\begin{equation}\label{eq:def-elementary-root-group-prelim}
 \Elementary=
 \left\langle I+N:
 N\in\Order,\ \rank_\Q N=1,\ N^2=0\right\rangle.
\end{equation}
Since $(I+N)^{-1}=I-N$, every displayed generator is a unit.

The three groups $\Elementary$, $\HomImage$, and $\Order^\times$
must not be conflated.  Section~\ref{sec:root-realization} proves
$\Elementary\leq\HomImage$, while the definition gives
$\Elementary\leq\Order^\times$.  No global containment between
$\HomImage$ and $\Order^\times$ is asserted.  The only inclusions used are
\[
 \Elementary\leq\HomImage,
 \qquad
 \Elementary\leq\Order^\times;
\]
Section~\ref{sec:arithmeticity} proves that both indices are finite.  Thus
commensurability is obtained through the common subgroup $\Elementary$,
not by placing either of the other two groups inside the other.

\part{The cohomological representation of a biconnected block}
\section{Intrinsic quadratic, cubic, and separator invariants}
\label{sec:intrinsic-invariants}

Let $\Gamma$ be a finite connected graph.  In this section, write
$H=H_\Gamma$ and $U=\Q^{V(\Gamma)}$, and put
\[
 L=L_{\Gamma,\Q}=\ker(\epsilon:U\to\Q),
 \qquad
 \epsilon(e_v)=1 \quad (v\in V(\Gamma)).
\]
No finite-presentability assumption is made.  We first recover from the
abstract group $H$ its graphic cycle space and its exact cubic relation
space, and then combine these lower-central invariants with the separator
arrangement detected by $\Sigma^1(H)$.

\subsection{Intrinsic lower-central relation spaces}

For a group $G$, let $\gamma_1G=G$ and $\gamma_{d+1}G=[G,\gamma_dG]$ denote
the lower central series, and write
$\operatorname{gr}_\gamma(G)=\bigoplus_{d\geq1}\gamma_dG/\gamma_{d+1}G$ for the
associated graded Lie algebra.  Let
$\mathbb L(L)=\bigoplus_{d\geq 1}\mathbb L_d(L)$ be the free graded Lie
algebra over $\Q$.  The identification of $L$ with
$H^{\mathrm{ab}}\otimes\Q$ induces a graded epimorphism
\[
 \pi:\mathbb L(L)\longrightarrow
       \operatorname{gr}_\gamma(H)\otimes\Q.
\]
We write
\begin{equation}\label{eq:def-Id}
 I_d=\ker\bigl(\pi:\mathbb L_d(L)\to
              (\gamma_dH/\gamma_{d+1}H)\otimes\Q\bigr).
\end{equation}
Thus every $I_d$ is intrinsic to $H$.  More explicitly, if
$\alpha\in\Aut(H)$ induces $g=\alpha_*\otimes\Q$ on $L$, then, writing
$\pi_d$ for the degree-$d$ component of $\pi$, naturality of the lower
central series gives
$\pi_d\circ g^{(d)} =\operatorname{gr}_d(\alpha)\circ\pi_d$.
Here $g^{(d)}$ is the automorphism induced on $\mathbb L_d(L)$ and
$\operatorname{gr}_d(\alpha)$ is the induced automorphism of
$(\gamma_dH/\gamma_{d+1}H)\otimes\Q$.  Since these two maps are
invertible, the identity implies
$g^{(d)}(I_d)=I_d$.

The comparison with the ambient RAAG is valid in every degree at least two.

\begin{proposition}\label{prop:lower-central-comparison}
For every finite connected graph $\Gamma$,
\[
 \gamma_q(A_\Gamma)=\gamma_q(H_\Gamma)\quad(q\geq2).
\]
\end{proposition}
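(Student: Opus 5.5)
We use the splitting $A_\Gamma=H_\Gamma\rtimes\langle t\rangle$ from the proof of Proposition~\ref{prop:standard-LW-lattices}, where $t$ is a fixed vertex and, as shown there, $t$ acts trivially on $H_\Gamma^{\mathrm{ab}}$. The inclusion $\gamma_q(H_\Gamma)\subseteq\gamma_q(A_\Gamma)$ is automatic, so everything rests on the reverse inclusion. We prove it by induction on $q\geq2$.

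\emph{Base case $q=2$.} The group $A_\Gamma$ is generated by $H_\Gamma$ together with $t$, so $\gamma_2(A_\Gamma)$ is the normal closure in $A_\Gamma$ of three kinds of commutators: $[h,h']$, $[t,h]$ and $[h,t]$ with $h,h'\in H_\Gamma$.
\begin{itemize}
\item The commutators $[h,h']$ lie in $\gamma_2(H_\Gamma)$.
\item Since $t$ acts trivially on $H_\Gamma^{\mathrm{ab}}$, we have $tht^{-1}\equiv h$ modulo $\gamma_2(H_\Gamma)$. Hence $[t,h]=tht^{-1}h^{-1}\in\gamma_2(H_\Gamma)$, and likewise $[h,t]=[t,h]^{-1}\in\gamma_2(H_\Gamma)$.
\item $\gamma_2(H_\Gamma)$ is characteristic in $H_\Gamma$, and $H_\Gamma$ is normal in $A_\Gamma$. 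So $\gamma_2(H_\Gamma)$ is normal in $A_\Gamma$, and the normal closure of these commutators stays inside $\gamma_2(H_\Gamma)$.
\end{itemize}
This gives $\gamma_2(A_\Gamma)=\gamma_2(H_\Gamma)$. Equivalently, $A_\Gamma/\gamma_2(H_\Gamma)$ is abelian: it is $H_\Gamma^{\mathrm{ab}}\rtimes\Z$ with trivial action.

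\emph{Inductive step.} Assume $\gamma_q(A_\Gamma)=\gamma_q(H_\Gamma)$ for some $q\geq2$. Then
\[
\gamma_{q+1}(A_\Gamma)=[A_\Gamma,\gamma_q(H_\Gamma)]
\]
is generated as a normal subgroup by $[h,g]$ and $[t,g]$ with $h\in H_\Gamma$ and $g\in\gamma_q(H_\Gamma)$. The $[h,g]$ lie in $\gamma_{q+1}(H_\Gamma)$, which is normal in $A_\Gamma$ for the same reason as before.

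The real content is the claim that $t$ acts trivially on $\gamma_q(H_\Gamma)/\gamma_{q+1}(H_\Gamma)$. This follows because that quotient is generated by images of iterated commutators $[h_1,[h_2,\dots,h_q]]$. Conjugating such a commutator by $t$ gives $[th_1t^{-1},[\dots]]$, and each $th_it^{-1}=h_ic_i$ with $c_i\in\gamma_2(H_\Gamma)$. By multilinearity of the graded Lie bracket $\operatorname{gr}_1\times\operatorname{gr}_{q-1}\to\operatorname{gr}_q$, inserting the correction terms $c_i$ changes the commutator only by an element of $\gamma_{q+1}(H_\Gamma)$. Hence $[t,g]\in\gamma_{q+1}(H_\Gamma)$, and $\gamma_{q+1}(A_\Gamma)=\gamma_{q+1}(H_\Gamma)$.

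The only point needing care is to make this multilinearity precise. The cleanest route is the standard fact that if an automorphism acts trivially on $G^{\mathrm{ab}}$, then it acts trivially on every $\gamma_qG/\gamma_{q+1}G$. This holds because $\operatorname{gr}_\gamma(G)$ is generated in degree one and the induced map is a graded Lie algebra endomorphism that is the identity in degree one.

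Note that we never use finite presentability or connectivity beyond the triviality of the $t$-action on $H_\Gamma^{\mathrm{ab}}$. That triviality is where connectedness of $\Gamma$ entered, through the generation of $H_\Gamma$ by edge differences.
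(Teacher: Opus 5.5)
Your proof is correct and follows essentially the same route as the paper: you use the splitting $A_\Gamma=H_\Gamma\rtimes\langle t\rangle$ and the triviality of the $t$-action on $H_\Gamma^{\mathrm{ab}}$ to get the degree-two case, then the triviality of the induced action on $\operatorname{gr}_\gamma(H_\Gamma)$ (generated in degree one) to get $[t,\gamma_qH_\Gamma]\subseteq\gamma_{q+1}H_\Gamma$, which drives the induction. The only cosmetic difference is that the paper re-derives the triviality on $H_\Gamma^{\mathrm{ab}}$ inside the proof via $c_u=c_{d_{uv}}\circ c_v$, whereas you cite it from the proof of Proposition~\ref{prop:standard-LW-lattices}.
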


\begin{proof}
Choose a vertex $t$.
By Proposition~\ref{prop:standard-LW-lattices},
$A_\Gamma=H\rtimes\langle t\rangle$.

We first determine the action of the deck generator on
$H^{\mathrm{ab}}$.  If $uv\in E(\Gamma)$ and
$d_{uv}=uv^{-1}\in H$, then, on $H$,
$c_u=c_{d_{uv}}\circ c_v$,
where $c_x$ denotes conjugation by $x$.  Since $c_{d_{uv}}$ is inner in
$H$, conjugation by $u$ and conjugation by $v$ induce the same
automorphism of $H^{\mathrm{ab}}$.  Connectivity of $\Gamma$ therefore
implies that conjugation by every vertex induces one common automorphism
of $H^{\mathrm{ab}}$.

For an oriented edge $uv$, the vertices $u$ and $v$ commute, and hence
$c_u(d_{uv})=d_{uv}$.
The common vertex-conjugation action therefore fixes the class of every
oriented edge difference.  These edge differences generate $H$, so their
classes generate $H^{\mathrm{ab}}$.  The common action is therefore the
identity.  In particular, conjugation by $t$ acts trivially on
$H/\gamma_2H$, and hence
$[t,H]\subseteq\gamma_2H$.

The subgroup $\gamma_2H$ is characteristic in $H$ and hence normal in
$A_\Gamma$.  The preceding inclusion implies that the quotient $A_\Gamma/\gamma_2H$ is
abelian: the image of $H$ is abelian and the image of $t$ centralizes it.
Therefore $\gamma_2A_\Gamma\leq\gamma_2H$, while the reverse inclusion
$\gamma_2H\leq\gamma_2A_\Gamma$ is automatic.  Thus
$\gamma_2A_\Gamma=\gamma_2H$.

Let $\tau=c_t|_{H_\Gamma}$.  The automorphism induced by $\tau$ on
$\operatorname{gr}_\gamma(H)$ is the identity.  Indeed,
$\operatorname{gr}_\gamma(H)$ is generated as a graded Lie algebra by its
degree-one part: the quotient $\gamma_qH/\gamma_{q+1}H$ is generated by
the images of iterated $q$-fold commutators of elements of $H$.  Since
$\tau$ is the identity on $H/\gamma_2H$, it fixes all of these generators.
Equivalently, for every $q\geq1$ and $x\in\gamma_qH$,
$txt^{-1}x^{-1}\in\gamma_{q+1}H$.  Hence
\begin{equation}\label{eq:t-lower-central}
 [t,\gamma_qH]\subseteq\gamma_{q+1}H\quad(q\geq1).
\end{equation}

We now argue by induction on $q\geq2$.  The equality in degree two is the
base case.  Suppose that $\gamma_qA_\Gamma=\gamma_qH$.  Since
$\gamma_{q+1}H$ is characteristic in the normal subgroup $H$, it is
normal in $A_\Gamma$, so we may pass to $A_\Gamma/\gamma_{q+1}H$.  In this quotient the
image of $\gamma_qH$ commutes with the image of $H$, because
$[H,\gamma_qH]=\gamma_{q+1}H$, and it commutes with the image of $t$ by
\eqref{eq:t-lower-central}.  Since $A_\Gamma$ is generated by $H$ and $t$, the image of
$\gamma_qH$ is central.  More explicitly,
$[H,\gamma_qH]=\gamma_{q+1}H$ by definition, while
$[t,\gamma_qH]\leq\gamma_{q+1}H$ by
\eqref{eq:t-lower-central}.  Hence
\[
 \begin{aligned}
 \gamma_{q+1}A_\Gamma
 &= [A_\Gamma,\gamma_qA_\Gamma]\\
 &= [A_\Gamma,\gamma_qH]\\
 &= \langle[H,\gamma_qH],[t,\gamma_qH]\rangle\\
 &\leq\gamma_{q+1}H.
 \end{aligned}
\]
The reverse inclusion follows from $H\leq A_\Gamma$.  Thus
$\gamma_{q+1}A_\Gamma=\gamma_{q+1}H$, completing the induction.
\end{proof}

\subsection{All-power circuit relations}

Let $C=(v_0,v_1,\ldots,v_m=v_0)$ be an oriented closed edge path and set
$\ell_i=e_{v_{i-1}}-e_{v_i}\in L$.
Let
$\widehat T(L)=\prod_{r\geq0}L^{\otimes r}$
be the degree-completed tensor algebra, and let $z$ be a central formal
variable.  In this subsection, $\exp$ and $\log$ denote formal power
series in $\widehat T(L)[[z]]$.  Since multiplication in
$\widehat T(L)$ is noncommutative, the logarithm of a product of
exponentials is not generally the sum of the exponents.  For example, for
$x,y\in L$,
\[
 \log\!\bigl(\exp(zx)\exp(zy)\bigr)
 =z(x+y)+\frac{z^2}{2}[x,y]+O(z^3).
\]
Thus the higher-degree terms measure the failure of the factors to
commute; when $x$ and $y$ commute, all commutator corrections vanish.
Iterating this formula gives
\[
 \log\!\left(\prod_{i=1}^m\exp(z\ell_i)\right)
 =z\sum_i\ell_i
  +\frac{z^2}{2}\sum_{i<j}[\ell_i,\ell_j]+O(z^3).
\]
The closed-path condition gives $\sum_i\ell_i=0$, so the linear term
vanishes.  We therefore define $\rho_d(C)$ by
\begin{equation}\label{eq:rho-definition}
 \log\!\left(\prod_{i=1}^m\exp(z\ell_i)\right)
   =\sum_{d\geq2}z^d\rho_d(C).
\end{equation}
Each $\exp(z\ell_i)$ is group-like in the completed tensor Hopf algebra;
therefore their product is group-like and its logarithm is primitive.
Thus $\rho_d(C)\in\mathbb L_d(L)$ for every $d$.  In degree two,
\begin{equation}\label{eq:rho2-formula}
 \rho_2(C)=\frac12\sum_{i<j}[\ell_i,\ell_j].
\end{equation}
Since $\prod_i\exp(z\ell_i)=1+O(z^2)$, its third-degree tensor
component is also the third-degree component of its logarithm.  Expanding
the product gives
\begin{equation}\label{eq:rho3-tensor-formula}
 \begin{split}
 \rho_3(C)={}&\frac16\sum_i\ell_i^3
  +\frac12\sum_{i<j}(\ell_i^2\ell_j+\ell_i\ell_j^2)\\
  &+\sum_{i<j<k}\ell_i\ell_j\ell_k.
 \end{split}
\end{equation}
Equivalently,
\begin{equation}\label{eq:rho3-lie-formula}
 \begin{split}
 \rho_3(C)={}&\frac1{12}\sum_{i<j}
  \bigl([\ell_i,[\ell_i,\ell_j]]
       +[\ell_j,[\ell_j,\ell_i]]\bigr)\\
 &+\frac16\sum_{i<j<k}
  \bigl([\ell_i,[\ell_j,\ell_k]]
       +[\ell_k,[\ell_j,\ell_i]]\bigr).
 \end{split}
\end{equation}

The appearance of both $\rho_2$ and $\rho_3$ is forced by the full-power
relations, rather than by a finite-presentation hypothesis.  In the full
Dicks--Leary presentation~\cite{DicksLeary}, every directed closed edge path
$d_1\cdots d_m$ contributes
\begin{equation}\label{eq:all-power-relators}
 d_1^N d_2^N\cdots d_m^N=1\quad(N\in\Z).
\end{equation}
To pass from the group relations to the associated graded Lie algebra,
work in the completion $\widehat{\Q[H]}$ of the rational group algebra with
respect to its augmentation ideal.  Equivalently, one may work in the rational Mal'cev completion of $H$
and its associated Lie algebra~\cite[Chapter~II]{Raghunathan}.  The
compatibility between the augmentation filtration and the lower-central
filtration is Quillen's comparison theorem~\cite{QuillenGroupRing}.  Put
$X_i=\log(d_i)$.  Its degree-one component is the homology class $\ell_i$,
and the induced degree-$d$ map from the free Lie algebra is precisely the
natural epimorphism
\[
 \pi_d:\mathbb L_d(L)\longrightarrow
 (\gamma_dH/\gamma_{d+1}H)\otimes\Q.
\]
Since $d_i^N=\exp(NX_i)$, modulo filtration degree at least four one has
\[
 P_C(N):=
 \log\!\left(\prod_{i=1}^m\exp(NX_i)\right)
 =NA_1+N^2A_2+N^3A_3,
\]
with coefficients independent of $N$.  In filtration degree $d$, the
coefficient of the highest power $N^d$ is obtained by replacing every
$X_i$ by its degree-one component $\ell_i$; its image in the associated
graded quotient is $\pi_d(\rho_d(C))$.

The relation~\eqref{eq:all-power-relators} gives $P_C(N)=0$ for every
integer $N$.  Evaluating at three distinct nonzero values and inverting the
Vandermonde matrix separates the coefficients.  In degrees two and three
this yields $\pi_2(\rho_2(C))=0$ and
$\pi_3(\rho_3(C))=0$, and hence $\rho_2(C)\in I_2$ and
$\rho_3(C)\in I_3$.
Bracketing a quadratic relation with $L$ also gives
$[L,I_2]\subseteq I_3$.

It is enough here to use simple circuits.  Indeed, if a closed edge path splits
at a repeated vertex as a concatenation $C=C_1C_2$, then the logarithms of
the two factors in~\eqref{eq:rho-definition} begin in degree two.  Their BCH
commutator begins in degree four, and therefore
$\rho_d(C)=\rho_d(C_1)+\rho_d(C_2)$ for $d=2,3$.
Iterating the splitting reduces every closed edge path to simple circuits.  It remains to prove that these relations span the entire intersection
in degree three; that calculation will be carried out below and does not
assume that the presentation is finite.

\subsection{The quadratic cycle space}

Choose an orientation on the edges of $\Gamma$.  Let
$C_1(\Gamma;\Q)$ be the $\Q$-vector space with the oriented edges as a
basis, let $C_0(\Gamma;\Q)=U$, and define the graph boundary map by
\[
 \partial_\Gamma[u,v]=e_v-e_u.
\]
The rational graph cycle space is
$Z_1(\Gamma;\Q)=\ker\partial_\Gamma$.  This is the cycle space of the
underlying one-dimensional graph, not the first homology of its flag
complex.

Inside $\mathbb L_2(U)=\bigwedge^2U$, put
\[
 R_\Gamma=\operatorname{span}_{\Q}
       \{e_u\wedge e_v:uv\in E(\Gamma)\}.
\]
The assignment $[u,v]\mapsto e_u\wedge e_v$ identifies
$C_1(\Gamma;\Q)$ with $R_\Gamma$.  Under this identification, the graph
boundary is the restriction of contraction by the all-ones covector,
\begin{equation}\label{eq:edge-contraction}
 \partial:\bigwedge^2U\longrightarrow L,\qquad
 \partial(u\wedge v)=\epsilon(u)v-\epsilon(v)u,
\end{equation}
because $\partial(e_u\wedge e_v)=e_v-e_u$.

Choose $\tau\in U$ with $\epsilon(\tau)=1$.  Then
$U=\Q\tau\oplus L$ and
\[
 \bigwedge^2U=(\tau\wedge L)\oplus\bigwedge^2L,\qquad
 \partial(\tau\wedge x)=x,\qquad
 \partial(\textstyle\bigwedge^2L)=0.
\]
Thus $\ker\partial=\bigwedge^2L$.  Since $\Gamma$ is connected,
$\partial_\Gamma$ is onto $L$, and
\[
 Z_1(\Gamma;\Q)
 =\ker(\partial|_{R_\Gamma})
 =R_\Gamma\cap\bigwedge^2L.
\]

\begin{theorem}\label{thm:quadratic-relations}
For a finite connected graph,
\[
 I_2=R_\Gamma\cap\bigwedge^2L=Z_1(\Gamma;\Q).
\]
Under this embedding, an oriented circuit
$C=(v_0,v_1,\ldots,v_m=v_0)$ represents
\begin{equation}\label{eq:kappa-circuit-edge-form}
 \kappa_C=\sum_{i=1}^m e_{v_{i-1}}\wedge e_{v_i}.
\end{equation}
\end{theorem}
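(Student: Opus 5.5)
The plan is to compute $I_2$ by passing to the ambient right-angled Artin group, where the degree-two graded piece is known explicitly. By Proposition~\ref{prop:lower-central-comparison}, $\gamma_2H_\Gamma=\gamma_2A_\Gamma$ and $\gamma_3H_\Gamma=\gamma_3A_\Gamma$. Hence
\[
 (\gamma_2H/\gamma_3H)\otimes\Q=(\gamma_2A_\Gamma/\gamma_3A_\Gamma)\otimes\Q .
\]
The inclusion $H_\Gamma\hookrightarrow A_\Gamma$ induces $L\hookrightarrow U$ on rational abelianizations, which is Proposition~\ref{prop:standard-LW-lattices}. Naturality of the bracket map $\mathbb L_2\to\operatorname{gr}_2$ then gives a commutative square. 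In it, $\pi_2:\bigwedge^2L\to\operatorname{gr}_2(H)\otimes\Q$ equals the composite
\[
 \bigwedge^2L\hookrightarrow\bigwedge^2U\xrightarrow{\ \pi_2^A\ }\operatorname{gr}_2(A_\Gamma)\otimes\Q .
\]
Consequently $I_2=\bigwedge^2L\cap\ker\pi_2^A$.

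The key input, and the step I expect to need the most care, is the identification $\ker\pi_2^A=R_\Gamma$. Equivalently, $\gamma_2A_\Gamma/\gamma_3A_\Gamma$ is free abelian on the classes $[u,v]$ with $uv\notin E(\Gamma)$. The inclusion $R_\Gamma\subseteq\ker\pi_2^A$ is immediate from the defining relations. For the reverse inclusion I would exhibit a class-two nilpotent quotient. Let $N_\Gamma$ be the group generated by $V(\Gamma)$ and central symbols $c_{uv}$ for the nonedges $uv\notin E(\Gamma)$, $u<v$. Its multiplication is given by the cocycle defined by the alternating form $\omega=\sum_{uv\notin E}e_u^*\wedge e_v^*$, followed by the rule that the $c_{uv}$ are free. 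Adjacent generators commute in $N_\Gamma$, so there is a surjection $A_\Gamma\to N_\Gamma$. In $N_\Gamma$ the commutators $[u,v]=c_{uv}$ for nonedges are linearly independent, which forces independence in $\operatorname{gr}_2(A_\Gamma)\otimes\Q$. Alternatively one may cite the Duchamp--Krob description of the graded Lie algebra of a RAAG as the free Lie algebra modulo the edge brackets; I would note this as a citation and give the $N_\Gamma$ argument as a self-contained check. This yields $I_2=R_\Gamma\cap\bigwedge^2L$, and the equality $R_\Gamma\cap\bigwedge^2L=Z_1(\Gamma;\Q)$ was already established just before the statement via the contraction~\eqref{eq:edge-contraction}.

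For the circuit formula I would compute $\rho_2(C)$ from~\eqref{eq:rho2-formula}. Write the partial sums
\[
 s_j=\sum_{i<j}\ell_i=e_{v_0}-e_{v_{j-1}} .
\]
Then
\[
 \sum_{i<j}\ell_i\wedge\ell_j=\sum_j s_j\wedge\ell_j
 =e_{v_0}\wedge\Bigl(\sum_j\ell_j\Bigr)-\sum_j e_{v_{j-1}}\wedge(e_{v_{j-1}}-e_{v_j}).
\]
The first term vanishes because $\sum_j\ell_j=0$ for a closed path. The second reduces to $\sum_j e_{v_{j-1}}\wedge e_{v_j}=\kappa_C$. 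Thus $\rho_2(C)=\tfrac12\kappa_C$. So the circuit relation represents $\kappa_C$ up to the harmless scalar $\tfrac12$, and this is visibly the image of the edge cycle $\sum_i[v_{i-1},v_i]$ under $[u,v]\mapsto e_u\wedge e_v$. Finally, the classes $\kappa_C$ of simple circuits span $Z_1(\Gamma;\Q)$, by the standard fact that the cycle space of a graph is spanned by its circuits. This confirms that the relations $\rho_2(C)$ from the all-power presentation already exhaust $I_2$.
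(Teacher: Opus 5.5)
Your proposal is correct and is essentially the paper's argument: the paper cites Duchamp--Krob for $\operatorname{gr}_2(A_\Gamma)\otimes\Q=\bigwedge^2U/R_\Gamma$, transfers this to $H_\Gamma$ by Proposition~\ref{prop:lower-central-comparison}, and then uses the contraction to identify $R_\Gamma\cap\bigwedge^2L$ with $Z_1(\Gamma;\Q)$; your class-two quotient $N_\Gamma$ is a valid self-contained check of that degree-two input (only a homomorphism $A_\Gamma\to N_\Gamma$ is needed, so surjectivity does not matter even if you literally use the alternating form as the cocycle). Your computation $\rho_2(C)=\tfrac12\kappa_C$ agrees with the identity $\kappa_C=2\rho_2(C)$ that the paper records immediately afterwards, and your final step that circuits span the cycle space is the same as the paper's.
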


\begin{proof}
The Duchamp--Krob identification of the lower-central Lie algebra of a free
partially commutative group~\cite{DuchampKrob} gives, in degree two,
$\operatorname{gr}_2(A_\Gamma)\otimes\Q =\bigwedge^2U/R_\Gamma$,
because the defining commutators for graph edges give precisely the
subspace $R_\Gamma$.  Proposition~\ref{prop:lower-central-comparison}
therefore identifies the kernel of
$\bigwedge^2L\to\operatorname{gr}_2(H)\otimes\Q$ with
$R_\Gamma\cap\bigwedge^2L$.  The contraction calculation above identifies
this intersection with $\ker(\partial|_{R_\Gamma})=Z_1(\Gamma;\Q)$.
Finally, applying the boundary map to
\eqref{eq:kappa-circuit-edge-form} telescopes to zero, and the usual circuit
flows span the graph cycle space.
\end{proof}

\subsection{The cubic polygon identity}

Fix $\tau\in U$ with $\epsilon(\tau)=1$ and, for an oriented circuit $C$,
put
\[
 x_i=e_{v_i}-\tau,\qquad x_m=x_0,
 \qquad \delta_i=x_i-x_{i-1}=-\ell_i.
\]
The quadratic circuit vector has the equivalent forms
\begin{equation}\label{eq:kappa-three-forms}
 \kappa_C=2\rho_2(C)
  =\sum_{i=1}^m x_{i-1}\wedge x_i
  =\sum_{i<j}\delta_i\wedge\delta_j\in I_2.
\end{equation}
Indeed, $x_i=x_0+\sum_{r\leq i}\delta_r$ and $\sum_i\delta_i=0$, so expanding the
middle expression gives the last; the last is $2\rho_2(C)$ because
$\delta_i=-\ell_i$.

\begin{proposition}
\label{prop:cubic-polygon-identity}
In $\mathbb L_3(L)$ one has
\begin{equation}\label{eq:cubic-polygon-exact}
 \rho_3(C)=-\frac16\sum_{i=1}^m
 [x_{i-1}+x_i,[x_{i-1},x_i]]
 +\frac12[x_0,\kappa_C].
\end{equation}
Therefore, in $\mathbb L_3(L)/[L,I_2]$,
\begin{equation}\label{eq:cubic-polygon-mod}
 \overline{\rho_3(C)}=-\frac16\sum_{i=1}^m
 [x_{i-1}+x_i,[x_{i-1},x_i]].
\end{equation}
\end{proposition}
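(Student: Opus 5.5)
The plan is an induction along the circuit, closing each partial path by its chord from the current vertex back to $v_0$, and verifying the inductive step with the Baker--Campbell--Hausdorff formula truncated at degree three. Throughout, $\tau$ is fixed, $x_i=e_{v_i}-\tau\in L$ and $\ell_i=x_{i-1}-x_i$. For $0\le k\le m$ put
\[
 S_k=\log\Bigl(\prod_{i=1}^k\exp(z\ell_i)\Bigr)\in\widehat{\mathbb L}(L)[[z]].
\]
The linear part of $S_k$ is $z(x_0-x_k)$, since the $\ell_i$ telescope. I will prove a closed formula for the $z^2$ and $z^3$ coefficients $q_k$ and $c_k$ of $S_k$. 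Specializing to $k=m$, where $x_m=x_0$, will give \eqref{eq:cubic-polygon-exact}.

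\textbf{Inductive claim.} Write $\kappa_k=\sum_{i\le k}x_{i-1}\wedge x_i+x_k\wedge x_0$ for the ``closed-up'' quadratic vector. The claim is
\[
 q_k=\tfrac12\kappa_k,\qquad
 c_k=-\tfrac16\sum_{i=1}^{k}[x_{i-1}+x_i,[x_{i-1},x_i]]
 -\tfrac16[x_k+x_0,[x_k,x_0]]+\tfrac12[x_0,\kappa_k].
\]
The constant in front of $[x_0,\kappa_k]$ will be fixed in the base case and the first inductive step. I expect it to be $\tfrac12$, but I will adjust the ansatz if necessary: any correction term must vanish at $k=m$ or be absorbed into $[x_0,\kappa_k]$.

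\textbf{Base case.} For $k=0$ the sum is empty and $x_k=x_0$, so every term vanishes. For $k=1$ the quadratic part is $0$ because $x_0\wedge x_1+x_1\wedge x_0=0$. The cubic part is $-\tfrac16[x_0+x_1,[x_0,x_1]]-\tfrac16[x_1+x_0,[x_1,x_0]]+0=0$, which is consistent with $S_1=z\ell_1$.

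\textbf{Inductive step.} Apply BCH to $S_{k+1}=\log(e^{S_k}e^{z\ell_{k+1}})$ with $X=S_k$ and $Y=z\ell_{k+1}$:
\[
 X+Y+\tfrac12[X,Y]+\tfrac1{12}\bigl([X,[X,Y]]+[Y,[Y,X]]\bigr)+O(z^4).
\]
Only the following contributions survive to order $z^3$:
\begin{itemize}
 \item the $z^2$ term $\tfrac12[x_0-x_k,\ell_{k+1}]$;
 \item the $z^3$ term $\tfrac12[q_k,\ell_{k+1}]$;
 \item the two degree-three iterated brackets built from $x_0-x_k$ and $\ell_{k+1}=x_k-x_{k+1}$.
\end{itemize}
For the quadratic identity, expand $\tfrac12[x_0-x_k,x_k-x_{k+1}]$ bilinearly; this is exactly $\tfrac12(\kappa_{k+1}-\kappa_k)$. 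For the cubic identity, $\tfrac12[q_k,\ell_{k+1}]=\tfrac14[\kappa_k,x_k-x_{k+1}]$ must be matched against the change in the right-hand side. That change consists of $\tfrac12[x_0,\kappa_{k+1}-\kappa_k]$, the new polygon term, and the swap of the chord term from $x_k$ to $x_{k+1}$. This matching is the main obstacle. The three vectors $x_0,x_k,x_{k+1}$ span a subspace of $L$ of dimension at most three. After subtracting $[x_0,\cdot\,]$-terms, however, $[\kappa_k,\cdot\,]$ involves all earlier $x_i$. So I first rewrite $[\kappa_k,x_k-x_{k+1}]$ using the Jacobi identity. This makes the dependence on earlier vertices cancel against $\tfrac12[x_0,\kappa_k]$-type terms, because the bracket with $\kappa_k$ appears on both sides with compatible coefficients. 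The residual identity involves only $x_0,x_k,x_{k+1}$ and is checked in $\mathbb L_3$ of a three-dimensional space, in the Hall basis. If the $\kappa_k$-dependence does not cancel, I will modify the ansatz by adding a multiple of $[x_k,\kappa_k]$, which vanishes modulo $[x_0,\kappa_k]$ at $k=m$.

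\textbf{Conclusion.} At $k=m$ the chord terms vanish and $\kappa_m=\kappa_C$ by \eqref{eq:kappa-three-forms}, so $c_m=\rho_3(C)$ gives \eqref{eq:cubic-polygon-exact}. Finally, $\kappa_C\in I_2$ by Theorem~\ref{thm:quadratic-relations} and $x_0\in L$. Hence $[x_0,\kappa_C]\in[L,I_2]$, and \eqref{eq:cubic-polygon-mod} follows. As a consistency check, the right-hand side of \eqref{eq:cubic-polygon-mod} is independent of the choice of $\tau$ modulo $[L,I_2]$: shifting every $x_i$ by a common vector $w$ changes the sum by $[w,\kappa_C]$-type terms.
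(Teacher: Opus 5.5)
Your inductive formula for $c_k$ is false, and the cancellation you expect cannot happen. Take $\tau=e_{v_0}$, so $x_0=0$. Since $S_1=z\ell_1$, one BCH step gives $c_2=\tfrac1{12}\bigl([\ell_1,[\ell_1,\ell_2]]+[\ell_2,[\ell_2,\ell_1]]\bigr)$ with $\ell_1=-x_1$ and $\ell_2=x_1-x_2$, that is,
\[
 c_2=-\tfrac16[x_1,[x_1,x_2]]+\tfrac1{12}[x_2,[x_1,x_2]].
\]
Your formula instead gives $-\tfrac16[x_1+x_2,[x_1,x_2]]$. The difference is $\tfrac14[x_2,[x_1,x_2]]$, which is nonzero because $x_1$ and $x_2$ are independent. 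The cases $k=0,1$ cannot detect this, since $\kappa_0=\kappa_1=0$, so no coefficient can be ``fixed in the base case.''

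The structural reason is as follows. The increment of $\tfrac12[x_0,\kappa_k]$ is $\tfrac12[x_0,\kappa_{k+1}-\kappa_k]$, which involves only $x_0,x_k,x_{k+1}$. The BCH term $\tfrac14[\kappa_k,x_k-x_{k+1}]$, however, involves every earlier vertex, and no Jacobi rewriting turns it into a three-vertex expression. Your ansatz is really the polygon formula for the closed-up path $(v_0,\dots,v_k,v_0)$. But $S_k$ is not the logarithm of that closed product: closing it up adds the BCH term $\tfrac12[q_k,x_k-x_0]=\tfrac14[\kappa_k,x_k-x_0]$, and this is exactly what your ansatz omits.

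Your fallback points the right way, but both coefficients must change. The correct statement is
\[
 c_k=-\tfrac16\sum_{i=1}^{k}[x_{i-1}+x_i,[x_{i-1},x_i]]
 -\tfrac16[x_k+x_0,[x_k,x_0]]+\tfrac14[x_0+x_k,\kappa_k].
\]
The increment of $\tfrac14[x_k,\kappa_k]$ contains $\tfrac14[x_{k+1}-x_k,\kappa_k]$, which is precisely the BCH term $\tfrac14[\kappa_k,x_k-x_{k+1}]$. What remains is an identity in $x_0,x_k,x_{k+1}$ alone, namely the triangle ($m=3$) case of the proposition. Both sides of it are invariant under a common shift of the $x$'s. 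At $x_0=0$, $x_k=p$, $x_{k+1}=q$, both sides equal $-\tfrac16[p,[p,q]]+\tfrac1{12}[q,[p,q]]$.

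At $k=m$ the last term becomes $\tfrac14[x_0+x_m,\kappa_C]=\tfrac12[x_0,\kappa_C]$. So the coefficient $\tfrac12$ arises as $\tfrac14+\tfrac14$; it is not carried unchanged through the induction.

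With this repair your argument is a valid alternative to the paper's. The paper substitutes $\ell_i=x_{i-1}-x_i$ directly into \eqref{eq:rho3-tensor-formula} and collects words by vertex support: three-vertex words cancel, side words give the polygon terms, and wrap-around words give $3[x_0,\kappa_C]$. Your induction instead reduces the $m$-gon to the triangle and shows $\tfrac12[x_0,\kappa_C]$ as the accumulated chord corrections. Your final passage to $\mathbb L_3(L)/[L,I_2]$ is fine as written.
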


\begin{proof}
Because $\sum_i\ell_i=0$, the product in
\eqref{eq:rho-definition} has no degree-one term.  Its degree-three part is
therefore also the degree-three part of its logarithm, namely
\eqref{eq:rho3-tensor-formula}.  Substitute
$\ell_i=x_{i-1}-x_i$ in that formula.  Collecting the words in the tensor
algebra gives
\begin{equation}\label{eq:cubic-polygon-tensor-check}
 6\rho_3(C)=
 -\sum_i[x_{i-1}+x_i,[x_{i-1},x_i]]
 +3[x_0,\kappa_C].
\end{equation}
For clarity, this collection is coefficientwise: every word involving
three distinct polygon vertices cancels between the three sums in
\eqref{eq:rho3-tensor-formula}; the two-vertex contribution from an
oriented side $(p,q)=(x_{i-1},x_i)$ is
\[
 -[p+q,[p,q]]
 =-p^2q+2pqp+pq^2-2qpq+q^2p-qp^2,
\]
and the terms crossing the chosen starting point collect to
$3[x_0,\sum_i[x_{i-1},x_i]]=3[x_0,\kappa_C]$.  These are all possible
word supports in degree three, proving
\eqref{eq:cubic-polygon-tensor-check}.  Dividing by six proves
\eqref{eq:cubic-polygon-exact}; the last term lies in $[L,I_2]$, which gives
\eqref{eq:cubic-polygon-mod}.
\end{proof}

Formula~\eqref{eq:kappa-three-forms} is the exterior-algebra analogue of
oriented polygon area, while~\eqref{eq:cubic-polygon-mod} gives the
corresponding cubic polygonal term modulo $[L,I_2]$.  A fixed global choice of $\tau$ requires the last term in
\eqref{eq:cubic-polygon-exact}; it disappears before quotienting only when
$\tau=e_{v_0}$, so that $x_0=0$.

\subsection{The exact cubic relation space}

We now prove that the relations obtained from the all-power presentation are
all the cubic relations.  This is a direct intersection calculation, not a
dimension count and not an appeal to finite presentability.  The proof first
writes $I_3$ as $\mathbb L_3(L)\cap[U,R]$.  After choosing
$U=\Q\tau\oplus L$ and a section of the edge-boundary map, we separate
this intersection by the number of occurrences of $\tau$.  The terms of
$\tau$-degree two and one determine the cancellation constraints; modulo
$[L,I_2]$, the remaining datum is a section-defect map evaluated on circuit
generators of $I_2$.

\begin{theorem}\label{thm:cubic-relations}
For every finite connected graph,
\begin{equation}\label{eq:exact-I3}
 I_3=[L,I_2]+\operatorname{span}_{\Q}
 \{\rho_3(C):C\text{ is a simple circuit of }\Gamma\}.
\end{equation}
\end{theorem}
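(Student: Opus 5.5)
The inclusion ``$\supseteq$'' is already established: $[L,I_2]\subseteq I_3$, and $\rho_3(C)\in I_3$ by the all-power argument above. For the reverse inclusion, the plan is to identify $I_3$ inside the RAAG Lie algebra and then decompose by the number of occurrences of a transverse vector $\tau$.

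\textbf{Step 1 (identify $I_3$).} By Proposition~\ref{prop:lower-central-comparison}, $\gamma_3H=\gamma_3A_\Gamma$ and $\gamma_4H=\gamma_4A_\Gamma$. Hence $(\gamma_3H/\gamma_4H)\otimes\Q=\operatorname{gr}_3(A_\Gamma)\otimes\Q$. By Duchamp--Krob this is $\mathbb L_3(U)/[U,R_\Gamma]$, where $R_\Gamma\subseteq\mathbb L_2(U)$ is the edge span. The map $\mathbb L_3(L)\to\operatorname{gr}_3(H)\otimes\Q$ factors through $\mathbb L_3(L)\hookrightarrow\mathbb L_3(U)$. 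Therefore
\[
 I_3=\mathbb L_3(L)\cap[U,R_\Gamma].
\]

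\textbf{Step 2 (decompose by $\tau$-degree).} Write $U=\Q\tau\oplus L$. Choose a linear section $s:L\to R_\Gamma$ of the surjection $\partial|_{R_\Gamma}$; for instance, use a spanning tree, sending $e_v-e_t$ to the tree path of edges. Then $R_\Gamma=s(L)\oplus Z_1$, where $Z_1=I_2\subseteq\bigwedge^2L$. Each $s(x)$ has the form $\tau\wedge x+\sigma(x)$ with $\sigma:L\to\bigwedge^2L$ linear; this $\sigma$ is the \emph{section defect}. A general element of $[U,R_\Gamma]$ is
\[
 [\tau,s(a)+z]+\sum_k[y_k,s(b_k)+z_k],\qquad y_k\in L,\ z,z_k\in Z_1.
\]

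Expand this element in $\mathbb L_3(U)$ and sort by $\tau$-degree.
\begin{itemize}
 \item The $\tau$-degree-two part is $[\tau,[\tau,a]]$. Membership in $\mathbb L_3(L)$ forces it to vanish, so $a=0$.
 \item The $\tau$-degree-one part is $[\tau,z]+\sum_k\bigl([y_k,[\tau,b_k]]\bigr)$ up to sign conventions. Requiring it to vanish gives a linear constraint of the form $z=\sum_k y_k\wedge b_k$ inside $\bigwedge^2L$, by the free-Lie identification $[\tau,\cdot]$ on $\bigwedge^2L$ and the Jacobi identity. Thus the tensor $T=\sum_k y_k\otimes b_k\in L\otimes L$ has antisymmetrization lying in $Z_1$.
 \item The surviving $\tau$-free part is
 \[
 \sum_k[y_k,\sigma(b_k)]+\sum_k[y_k,z_k].
 \]
 The second sum lies in $[L,I_2]$.
\end{itemize}
So, modulo $[L,I_2]$, the relation space is the image of the map
\[
 \Phi:T\longmapsto\sum_k[y_k,\sigma(b_k)],
\]
defined on tensors $T$ whose antisymmetrization lies in $Z_1$.

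\textbf{Step 3 (reduce to circuit generators).} Split $L\otimes L=S^2L\oplus\bigwedge^2L$. I expect to show that on symmetric tensors $\Phi$ lands in $[L,I_2]$. The reason is that for symmetric $T$ the expression is a derivation-type defect of $\sigma$; changing the section $s$ alters $\sigma$ by a map into $Z_1$, and I plan to exhibit a choice of section for which the symmetric part is a sum of brackets $[L,Z_1]$. If this fails in general, I will compute it on the basis $y\otimes y$. The antisymmetric part of $T$ ranges over $Z_1$, which is spanned by circuit vectors $\kappa_C$ by Theorem~\ref{thm:quadratic-relations}. Hence $I_3/[L,I_2]$ is spanned by the classes $\Phi(\kappa_C)$.

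\textbf{Step 4 (compare with $\rho_3(C)$).} For a simple circuit $C$, take $\tau=e_{v_0}$ (allowed, since the class mod $[L,I_2]$ does not depend on $\tau$ by Proposition~\ref{prop:cubic-polygon-identity}). Write $\kappa_C=\sum x_{i-1}\wedge x_i$ and use the tree-path section. Then compute $\Phi(\kappa_C)$ and compare it with \eqref{eq:cubic-polygon-mod}: the target is to show that it equals a nonzero multiple of $\overline{\rho_3(C)}$ plus terms in $[L,I_2]$. The tree paths contribute exactly the two-vertex side terms $[x_{i-1}+x_i,[x_{i-1},x_i]]$ after telescoping along the polygon.

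\textbf{Main obstacle.} I expect the hardest parts to be the bookkeeping in Steps 3 and 4: showing that the symmetric part and the section-dependence are absorbed into $[L,I_2]$, and that the polygon telescoping produces exactly the coefficient $-\tfrac16$. I would verify both first on a triangle and a square, then argue the general case by additivity of $\Phi$ in $\kappa_C$.
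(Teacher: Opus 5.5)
Your framework is the paper's own: $I_3=\mathbb L_3(L)\cap[U,R_\Gamma]$, a section $s(y)=[\tau,y]+\sigma(y)$ of the contraction $R_\Gamma\to L$, sorting by $\tau$-degree, and evaluating the section defect on circuit vectors with $\tau=e_{v_0}$. However, your $\tau$-degree-one step is wrong, and the error sends Step~3 after a false statement.

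The map $L\otimes L\to\mathbb L_3(U)$, $x\otimes y\mapsto[x,[\tau,y]]$, is injective on \emph{all} of $L\otimes L$: in the tensor expansion, the words ending in $\tau$ carry coefficient $-x\otimes y$. By Jacobi, $[\tau,[a,b]]$ is its value on the alternating tensor $a\otimes b-b\otimes a$. Hence $[\tau,z]+\sum_k[y_k,[\tau,b_k]]=0$ is equivalent to the full tensor identity $\sum_k y_k\otimes b_k=-z$, with $z$ read as an alternating tensor. It is not merely a condition on the antisymmetrization of $T$. So $T$ has no symmetric part. Modulo $[L,I_2]$, the $\tau$-free part is then exactly $-\theta(z)$, where $\theta(\sum a\wedge b)=\sum([a,\sigma(b)]-[b,\sigma(a)])$. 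This gives $I_3/[L,I_2]=\theta(I_2)$ directly, spanned by the $\theta(\kappa_C)$, and Step~3 becomes unnecessary.

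As you stated it, Step~3 is false. Take the path $1-2-3$ with $\tau=e_1$. The section is unique since $I_2=0$, and it gives $\sigma(x_3)=[x_2,x_3]$. So $x_3\otimes x_3$ would contribute $[x_3,[x_2,x_3]]\neq0$, whereas $[L,I_2]=0=I_3$ because $H$ is free. For $P_4$ no choice of $\tau$ rescues the claim: $\sigma$ vanishes only when $\tau$ is supported on universal vertices, and in a free Lie algebra $[y,\sigma(y)]=0$ forces $\sigma(y)=0$. The real point is that $[x_3,s(x_3)]$ does not lie in $\mathbb L_3(L)$ at all. Its $\tau$-degree-one part $[x_3,[\tau,x_3]]$ is nonzero even though $x_3\wedge x_3=0$, and this is exactly the information your weakened constraint discards.

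Two smaller points in Step~4. First, what must be independent of $\tau$ is $\theta(\kappa_C)$ modulo $[L,I_2]$, not the formula of Proposition~\ref{prop:cubic-polygon-identity}. This independence holds because replacing $\tau$ by $\tau+w$ replaces $\sigma$ by $\sigma-[w,\cdot]$, and hence $\theta(k)$ by $\theta(k)-[w,k]$. Second, use the section along the circuit rather than a global tree section: $s(x_i)=\sum_{r\le i}e_{v_{r-1}}\wedge e_{v_r}$ for $1\le i<m$. Then $\sigma(x_i)=\sum_{j\le i}[x_{j-1},x_j]$, and telescoping gives $\theta(\kappa_C)\equiv\sum_i[x_{i-1}+x_i,[x_{i-1},x_i]]\equiv-6\rho_3(C)$. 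A tree section agrees with this only modulo $[L,I_2]$.
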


\begin{proof}
Write $R=R_\Gamma\leq\mathbb L_2(U)$ and let $J$ be the ideal of
$\mathbb L(U)$ generated by $R$.  The Duchamp--Krob
theorem~\cite{DuchampKrob} identifies the rational lower-central Lie algebra of
$A_\Gamma$ with the partially commutative graph Lie algebra
$\mathbb L(U)/J$.  Proposition~\ref{prop:lower-central-comparison} therefore
gives
\begin{equation}\label{eq:I3-as-intersection}
 I_2=\mathbb L_2(L)\cap J_2,\qquad
 I_3=\mathbb L_3(L)\cap J_3,\qquad J_3=[U,R].
\end{equation}
Put $K=I_2$.  The contraction map~\eqref{eq:edge-contraction} restricts to
the exact sequence
\begin{equation}\label{eq:R-contraction-exact}
 0\longrightarrow K\longrightarrow R
   \overset{\partial}{\longrightarrow}L\longrightarrow0.
\end{equation}
Choose a linear section $s:L\to R$.  Relative to
$U=\Q\tau\oplus L$, it has a unique expression
\begin{equation}\label{eq:section-sigma}
 s(y)=[\tau,y]+\sigma(y),\qquad
 \sigma(y)\in\mathbb L_2(L).
\end{equation}

Using $R=s(L)\oplus K$ and $U=\Q\tau\oplus L$, every element of $[U,R]$
can be written
\begin{equation}\label{eq:general-J3-element}
 [\tau,s(a)]+[\tau,k]+\sum_\nu[x_\nu,s(y_\nu)]+q,
 \qquad a,x_\nu,y_\nu\in L,\qquad k\in K,\qquad q\in[L,K].
\end{equation}
Grade the free Lie algebra by the number of occurrences of $\tau$.  If
\eqref{eq:general-J3-element} belongs to $\mathbb L_3(L)$, its
$\tau$-degree-two part is $[\tau,[\tau,a]]$.  Its tensor expansion begins
with the word $\tau\tau a$, so the map $a\mapsto[\tau,[\tau,a]]$ is
injective and $a=0$.

For the part of $\tau$-degree one, consider
\[
 \Phi:L\otimes L\longrightarrow\mathbb L_3(U)_{(\tau\text{-degree }1)},
 \qquad \Phi(x\otimes y)=[x,[\tau,y]].
\]
This map is injective: in the tensor expansion of $\Phi(Q)$, the
coefficient of the words ending in $\tau$ is $-Q$.  By antisymmetry and
Jacobi, the degree-three component with exactly one occurrence of $\tau$ is
spanned by monomials of the forms $[x,[\tau,y]]$ and
$[\tau,[x,y]]$.  The first form is in the image of $\Phi$ by definition,
and the second is as well because Jacobi gives
$[\tau,[x,y]]=\Phi(x\otimes y-y\otimes x)$.
Thus $\Phi$ is onto.
Viewing $k\in\bigwedge^2L$ as its alternating tensor, the
$\tau$-degree-one part of~\eqref{eq:general-J3-element} is
$\Phi\!\left(k+\sum_\nu x_\nu\otimes y_\nu\right)$.
It vanishes exactly when
\begin{equation}\label{eq:tensor-constraint}
 \sum_\nu x_\nu\otimes y_\nu=-k.
\end{equation}

Define an alternating linear map, with target taken modulo $[L,K]$, by
\begin{equation}\label{eq:theta-sigma}
 \begin{split}
 \theta_\sigma:K&\longrightarrow\mathbb L_3(L)/[L,K],\\
 \theta_\sigma\!\left(\sum_\mu a_\mu\wedge b_\mu\right)
  &=\sum_\mu\bigl([a_\mu,\sigma(b_\mu)]
                  -[b_\mu,\sigma(a_\mu)]\bigr)
       \pmod{[L,K]}.
 \end{split}
\end{equation}
By~\eqref{eq:tensor-constraint}, the remaining $\tau$-degree-zero part of
\eqref{eq:general-J3-element}, modulo $[L,K]$, is
$-\theta_\sigma(k)$.  Thus the intersection in
\eqref{eq:I3-as-intersection} is described exactly by
\begin{equation}\label{eq:I3-theta-image}
 I_3/[L,K]=\operatorname{im}(\theta_\sigma).
\end{equation}
Changing the section replaces $\sigma$ by $\sigma+h$ for a map
$h:L\to K$; the resulting change in~\eqref{eq:theta-sigma} lies in
$[L,K]$.  Hence the displayed image modulo $[L,K]$ is independent of the
section.

It remains to identify this image.  Circuit vectors $\kappa_C$ span $K$.
For a simple circuit choose $\tau=e_{v_0}$, so $x_0=x_m=0$, and choose the
section along the circuit by
\[
 s(x_i)=\sum_{r=1}^i e_{v_{r-1}}\wedge e_{v_r}
 \quad(1\leq i<m),
\]
then extend it linearly to $L$.  Since the $x_i$ on a simple circuit are
linearly independent, this is legitimate, and contraction gives
$\partial s(x_i)=x_i$.  From~\eqref{eq:section-sigma},
$\sigma(x_i)=\sum_{r=1}^i x_{r-1}\wedge x_r$.
We make the telescoping explicit.  Put
$r_i=[x_{i-1},x_i]$ and $S_i=\sum_{j=1}^i r_j$.  For $i<m$ the preceding
formula says $\sigma(x_i)=S_i$, while $S_m=\kappa_C$ and
$\sigma(x_m)=\sigma(0)=0$.  Replacing the last value by $S_m$ changes
\eqref{eq:theta-sigma} by an element of $[L,K]$.  Since
\[
 [x_i,S_i]-[x_{i-1},S_{i-1}]
 =[x_i-x_{i-1},S_{i-1}]+[x_i,r_i],
\]
summing from $1$ to $m$ gives zero on the left and hence
\[
 -\sum_i[x_i-x_{i-1},S_{i-1}]=\sum_i[x_i,r_i].
\]
Substitution in~\eqref{eq:theta-sigma} now gives
\begin{equation}\label{eq:theta-cubic-polygon}
 \theta_\sigma(\kappa_C)
 \equiv\sum_{i=1}^m
 [x_{i-1}+x_i,[x_{i-1},x_i]]
 \equiv-6\rho_3(C)\pmod{[L,K]},
\end{equation}
where the second congruence is
Proposition~\ref{prop:cubic-polygon-identity}.  Since the $\kappa_C$ span
$K$, equations~\eqref{eq:I3-theta-image} and
\eqref{eq:theta-cubic-polygon} show that the right side of
\eqref{eq:exact-I3} contains every element of $I_3$.  The reverse inclusion
was obtained from the all-power relations and bracket closure above.  This
proves equality, including the assertion that there are no further cubic
relations.
\end{proof}

In particular, every automorphism of $H$ preserves both $I_2$ and $I_3$.
Because an IA automorphism acts trivially on the associated graded Lie
algebra, composition with an IA automorphism cannot change whether a
linear map preserves $I_2$ or $I_3$.

\subsection{The BNS separator arrangement}

We use the Bieri--Neumann--Strebel invariant in its original
character-sphere convention~\cite{BieriNeumannStrebel}.  For a
finite-dimensional real vector space $V$, write
$\mathbb S(V)=(V\setminus\{0\})/\mathbb R_{>0}$.
This is the character-sphere convention: opposite rays are not identified.
For a nonzero subspace $Y\leq V$, let $\mathbb S(Y)$ denote its image in
$\mathbb S(V)$.
The rational character space is
$W_{\Gamma,\Q}=L^* \cong U/\Q\mathbf1$.
For a vertex set $S\subseteq V(\Gamma)$, define
\begin{equation}\label{eq:separator-dual-spaces}
 \begin{split}
 D_S&=\operatorname{span}_{\Q}
       \{e_v-e_w:v,w\in S\}\leq L,\\
 \mathcal W_S&=\{[x]\in U/\Q\mathbf1:
                   x_v=x_w\text{ for all }v,w\in S\}.
 \end{split}
\end{equation}
Thus $\mathcal W_S=D_S^\perp$; write
$\mathcal W_{S,\mathbb R}=\mathcal W_S\otimes_{\Q}\mathbb R$.  A vertex
separator is a set $S$ for which the full subgraph on
$V(\Gamma)\setminus S$ is disconnected.

\begin{lemma}
\label{lem:finite-union-linear}
Let $\mathbb F$ be an infinite field.  If a finite-dimensional
$\mathbb F$-vector space $Z$ is contained in a finite union of linear
subspaces $Y_1,\ldots,Y_r$ of an ambient vector space, then
$Z\subseteq Y_i$ for some $i$.
\end{lemma}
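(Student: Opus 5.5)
The proof is a standard induction on $r$, the number of subspaces. Replacing each $Y_i$ by $Y_i\cap Z$, we may assume every $Y_i$ is a subspace of $Z$ and that $Z=Y_1\cup\cdots\cup Y_r$. The goal is then to show $Z=Y_i$ for some $i$. The case $r=1$ is immediate. Discarding any $Y_i$ contained in another does not change the union, so we may also assume that no $Y_i$ is contained in another $Y_j$.

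For the inductive step, argue by contradiction: suppose $r\geq2$ and no $Y_i$ equals $Z$. Apply the induction hypothesis to the union of $Y_2,\dots,Y_r$. If $Z$ were covered by $Y_2,\dots,Y_r$, the induction would already give $Z=Y_j$ for some $j\geq2$. So we may choose $z\in Z$ lying outside $Y_2\cup\cdots\cup Y_r$; necessarily $z\in Y_1$. Since $Y_1\neq Z$, we may also choose $w\in Z\setminus Y_1$.

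Now consider the affine line $\{w+\lambda z:\lambda\in\mathbb F\}$, which lies in $Z$. No point of this line lies in $Y_1$: if $w+\lambda z\in Y_1$, then since $z\in Y_1$ we would get $w\in Y_1$. For each $j\geq2$, the line meets $Y_j$ in at most one point. Indeed, if two distinct values $\lambda\neq\mu$ both gave points of $Y_j$, subtracting would put $(\lambda-\mu)z$ in $Y_j$, hence $z\in Y_j$, which is false. So the line meets $Y_2\cup\cdots\cup Y_r$ in at most $r-1$ points. Because $\mathbb F$ is infinite, some point $w+\lambda z$ avoids every $Y_i$, contradicting $Z=\bigcup_i Y_i$.

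The only place any care is needed is the counting step. Infiniteness of $\mathbb F$ is used exactly there, to find a $\lambda$ outside a finite set of excluded values. Finite-dimensionality of $Z$ is not actually needed. In the applications one would take $\mathbb F=\mathbb Q$ or $\mathbb R$, applied to the separator subspaces $\mathcal W_S$.
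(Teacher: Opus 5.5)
Your proof is correct, but it follows a different route from the paper's.

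The paper argues in one step and without induction. If every $Z\cap Y_i$ were a proper subspace of $Z$, choose nonzero linear functionals $f_i$ on $Z$ with $f_i|_{Z\cap Y_i}=0$. Then $f_1\cdots f_r$ is a nonzero polynomial function on the finite-dimensional space $Z$. It vanishes at every point of $Z$ because $Z\subseteq\bigcup_i Y_i$, and that is impossible over an infinite field.

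You instead induct on $r$ and restrict to the affine line $\{w+\lambda z\}$, with $z\in Y_1\setminus(Y_2\cup\cdots\cup Y_r)$ and $w\in Z\setminus Y_1$. This line misses $Y_1$ entirely and meets each $Y_j$ with $j\geq2$ for at most one value of $\lambda$.

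What each approach gives:
\begin{itemize}
\item Your argument is more elementary: it uses no fact about polynomial functions.
\item As you note, it never uses $\dim Z<\infty$.
\item It only needs $|\mathbb F|\geq r$ rather than an infinite field, since at most $r-1$ values of $\lambda$ are excluded.
\item The paper's argument avoids the induction and the choice of witnesses. In exchange it invokes the nonvanishing of nonzero polynomials on $\mathbb F^n$, which as written relies on finite-dimensionality.
\end{itemize}
Either version suffices for the paper's uses over $\mathbb Q$ and $\mathbb R$.

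Your preliminary step discarding any $Y_i$ contained in another $Y_j$ is harmless but never used, so you can drop it.
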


\begin{proof}
Otherwise every $Z\cap Y_i$ is a proper subspace of $Z$.  Choose a nonzero
linear functional $f_i$ on $Z$ that vanishes on $Z\cap Y_i$.  The product
$f_1\cdots f_r$ is a nonzero polynomial on $Z$.  A nonzero polynomial on a
finite-dimensional vector space over an infinite field cannot vanish
identically, whereas the assumed covering would make this product vanish at
every point of $Z$.  This contradiction proves the lemma.
\end{proof}

\begin{theorem}
\label{thm:BNS-separator-arrangement}
For every finite connected graph $\Gamma$,
\begin{equation}\label{eq:BNS-complement-union}
 \Sigma^1(H_\Gamma)^c
   =\bigcup_{\substack{S\subseteq V(\Gamma)\\
              \Gamma[V(\Gamma)\setminus S]\ {\rm disconnected}}}
      \mathbb S(\mathcal W_{S,\mathbb R}).
\end{equation}
If $\Gamma$ has no cut vertex, the maximal linear members of this union are
the $\mathcal W_S$ belonging to inclusion-minimal vertex separators.  In
that case the abstract group $H_\Gamma$ intrinsically determines the finite
family
\begin{equation}\label{eq:minimal-separator-family}
 \mathcal D_\Gamma
  =\{D_S:S\text{ is an inclusion-minimal vertex separator}\}.
\end{equation}
Every automorphism of $H_\Gamma$ permutes this family under its action on
$L$.  If $\Gamma$ has a cut vertex, then $\Sigma^1(H_\Gamma)=\varnothing$;
no recovery of the other minimal separators is asserted in that case.
\end{theorem}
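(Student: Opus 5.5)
The approach is to transport a known description of $\Sigma^1$ of the ambient group down to $H_\Gamma$, and then read off the combinatorial consequences.

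\textbf{Step 1: the formula \eqref{eq:BNS-complement-union}.} Every character of $H_\Gamma$ extends to $A_\Gamma$, since $H_\Gamma^{\mathrm{ab}}$ is a direct summand of $A_\Gamma^{\mathrm{ab}}$ by Proposition~\ref{prop:standard-LW-lattices}. A real character of $H_\Gamma$ is therefore represented by a labeling $x\in\mathbb R^{V(\Gamma)}$, defined modulo $\mathbb R\mathbf1$. I would invoke the Bestvina--Brady reconstruction of Kochloukova--Mendon\c{c}a \cite[Corollary~1.3]{KochloukovaMendonca}, which holds without a finite-presentability hypothesis, possibly checked against the Meier--VanWyk criterion \cite{MeierVanWyk}. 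In our notation it says the following: $[x]\notin\Sigma^1(H_\Gamma)$ if and only if there is a value $c$ such that the level set $S=\{v:x_v=c\}$ has the property that $\Gamma-S$ is disconnected. Here the level set is taken after normalizing by the height, so that the living subgraph is determined by $x-c\mathbf1$.

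This condition says precisely that $[x]\in\mathcal W_{S,\mathbb R}$ for some separator $S$. Conversely, if $[x]\in\mathcal W_{S,\mathbb R}$ with $S$ a separator, then $x$ is constant on $S$, and the separating vertices form exactly the dead set of $x-c\mathbf1$. The main obstacle is matching conventions at this point: the paper uses the character-sphere convention with no antipodal identification, the KM statement is phrased for restrictions of RAAG characters, and the reduction from all of $\mathbb R^{V}$ to the subspaces $\mathcal W_S$ needs care. Note also that $\mathcal W_S$ is symmetric under negation, which makes the union well defined on the sphere. I would verify that the living-subgraph criterion, applied to $x$ shifted by the constant value on $S$, gives exactly disconnection of $\Gamma-S$.

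\textbf{Step 2: maximal members.} The union \eqref{eq:BNS-complement-union} is finite. Since $S\subseteq S'$ implies $\mathcal W_{S'}\subseteq\mathcal W_S$, the members that are maximal under inclusion come from inclusion-minimal separators. When $\Gamma$ has no cut vertex, every separator has $|S|\geq2$, so each $\mathcal W_S$ is a proper subspace. I must show that distinct minimal separators $S\neq S'$ give incomparable subspaces, that is, $\mathcal W_S\not\subseteq\mathcal W_{S'}$. Inclusion $\mathcal W_S\subseteq\mathcal W_{S'}$ means $D_{S'}\subseteq D_S$, which forces $S'\subseteq S$ as vertex sets because $|S'|\geq2$. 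Minimality of $S$ then gives $S=S'$.

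\textbf{Step 3: intrinsic recovery.} Any linear subspace $Z$ contained in the union lies in a single member, by Lemma~\ref{lem:finite-union-linear} applied over $\mathbb R$ (applied to the cone over $\Sigma^1(H_\Gamma)^c$ together with $0$). So the maximal linear subspaces contained in the cone are exactly the $\mathcal W_{S,\mathbb R}$ for minimal $S$. These are rational, and their annihilators are the $D_S$, so $\mathcal D_\Gamma$ is determined by the subset $\Sigma^1(H_\Gamma)$ of the character sphere. An automorphism $\alpha$ maps $\Sigma^1$ to itself under the induced action on characters, because $\Sigma^1$ is an invariant of the group. It therefore permutes the maximal subspaces, and taking annihilators, $\alpha_*$ permutes $\mathcal D_\Gamma$.

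\textbf{Step 4: the cut-vertex case.} If $v$ is a cut vertex, then $S=\{v\}$ gives $\mathcal W_S=W_{\Gamma,\mathbb R}$, which is the whole space. The complement of $\Sigma^1$ is then the entire sphere, so $\Sigma^1(H_\Gamma)=\varnothing$.
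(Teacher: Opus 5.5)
Your route is the paper's: Kochloukova--Mendon\c{c}a plus Meier--VanWyk for the union formula, Lemma~\ref{lem:finite-union-linear} and the support argument $D_T\subseteq D_S\Rightarrow T\subseteq S$ for maximality, invariance of $\Sigma^1$ for the permutation, and $\mathcal W_{\{v\}}=W_{\Gamma,\Q}$ for a cut vertex. Steps~2--4 are fine. The gap is in Step~1, where the criterion you write down is false.

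Meier--VanWyk puts a character of $A_\Gamma$ in $\Sigma^1(A_\Gamma)$ exactly when its living subgraph is connected \emph{and dominating}. You kept only connectedness, and that is how you arrived at ``some level set $\{v:x_v=c\}$ is itself a separator.'' Here is a counterexample. Take $C_4$ with vertices $v_1,v_2,v_3,v_4$ in cyclic order and $x=(0,0,1,0)$. The only nonempty level sets are $\{v_1,v_2,v_4\}$ and $\{v_3\}$, and both have connected complements, so your criterion puts $[x]$ in $\Sigma^1(H_{C_4})$. But $x$ is constant on the separator $\{v_2,v_4\}$, and the living subgraph $\{v_3\}$ of $x$ does not dominate $v_1$, so $[x]\notin\Sigma^1(H_{C_4})$. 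For the path $P_3$ with labeling $(0,0,1)$, your criterion would even produce a point of $\Sigma^1(H_{P_3})=\Sigma^1(\Free_2)=\varnothing$.

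Your converse direction has the matching error. If $x\equiv c$ on a separator $S$, the dead set of $x-c\mathbf 1$ is the whole level set, which can be strictly larger than $S$ and need not separate. So the claim that ``the separating vertices form exactly the dead set'' is false, and the verification you propose would fail.

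The fix is to use the criterion in the form the paper does: a nonzero character of $A_\Gamma$ lies outside $\Sigma^1(A_\Gamma)$ if and only if its zero set contains a vertex separator.
\begin{itemize}
\item If the living subgraph is disconnected, the zero set itself separates.
\item If a dead vertex $w$ has an entirely dead neighbourhood, then $\lk(w)$ is a separator inside the zero set, since it cuts $w$ off from the nonempty living set.
\item Conversely, suppose the zero set contains a separator $S$. Either the living vertices meet two components of $\Gamma-S$, or some component of $\Gamma-S$ is entirely dead and its vertices are undominated.
\end{itemize}
This condition is monotone in the zero set. So Kochloukova--Mendon\c{c}a gives $[x]\notin\Sigma^1(H_\Gamma)$ if and only if, for some $r$, the zero set of $x+r\mathbf 1$ contains a separator $S$. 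That holds if and only if $x$ is constant on some separator, which is exactly \eqref{eq:BNS-complement-union}. With this substitution the rest of your argument goes through unchanged.
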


\begin{proof}
A real character of $H$ is represented by a vertex labeling
$x\in\mathbb R^{V(\Gamma)}$, modulo addition of a constant labeling.  Its
extensions to $A_\Gamma$ are precisely $x+r\mathbf1$, with
$r\in\mathbb R$.  The
Bestvina--Brady reconstruction theorem
\cite[Corollary~1.3]{KochloukovaMendonca} (see also
\cite[Proposition~4.10]{ChangRuffoni}) gives
\begin{equation}\label{eq:BNS-reconstruction}
 [x]\in\Sigma^1(H)
 \quad\Longleftrightarrow\quad
 [x+r\mathbf1]\in\Sigma^1(A_\Gamma)
       \text{ for every }r\in\mathbb R.
\end{equation}
By the Meier--VanWyk RAAG criterion~\cite{MeierVanWyk}, a character of
$A_\Gamma$ lies outside $\Sigma^1(A_\Gamma)$ exactly when its zero-labeled
vertices contain a full vertex separator.  For a fixed separator $S$, some
$r$ makes $x_v+r=0$ for every $v\in S$ if and only if $x$ is constant on
$S$, equivalently $[x]\in\mathcal W_{S,\mathbb R}$.  Substituting this
observation
in~\eqref{eq:BNS-reconstruction} proves
\eqref{eq:BNS-complement-union}.

If $v$ is a cut vertex, then $S=\{v\}$ occurs in the union and
$\mathcal W_{\{v\}}=W_{\Gamma,\Q}$.  Thus
\eqref{eq:BNS-complement-union} is the whole character sphere and
$\Sigma^1(H)=\varnothing$.  The union then has only the whole character
space as a maximal linear member, so it need not detect any other minimal
separator.

Suppose now that $\Gamma$ has no cut vertex.  Every minimal separator then
has at least two vertices.  By Lemma~\ref{lem:finite-union-linear}, every
linear subspace contained in the finite union
\eqref{eq:BNS-complement-union} is contained in one of the displayed
$\mathcal W_T$.  Every separator $T$ contains an inclusion-minimal separator
$S$, and $S\subseteq T$ gives
$\mathcal W_T\subseteq\mathcal W_S$.  Thus every linear member of the union
is contained in a member indexed by a minimal separator.

It remains only to see that two such members do not contain one another.
For $|S|\geq2$, the root space $D_S$ determines $S$: the union of the
supports of its nonzero vectors is exactly $S$.  Hence
$\mathcal W_S\subseteq\mathcal W_T$ implies
$D_T\subseteq D_S$ and therefore $T\subseteq S$.  If both $S$ and $T$ are
minimal separators, this forces $S=T$.  The maximal linear members are
therefore exactly those claimed.

The BNS invariant is intrinsic and an automorphism acts linearly on the
character sphere.  It therefore permutes these maximal members; taking
annihilators gives the permutation of~\eqref{eq:minimal-separator-family}.
\end{proof}

\begin{remark}\label{rem:biconnected-separators}
The union formula requires only connectedness.  Recovery and permutation of
the full family $\mathcal D_\Gamma$ require the absence of a cut vertex.  In
the remainder of Part~1 they are used only for finite biconnected graphs
with at least three vertices.
\end{remark}

\subsection{The combined intrinsic stabilizer}

For the remainder of this section, assume that $\Gamma$ is a finite
biconnected graph with at least three vertices.  In particular, it has no
cut vertex, so Theorem~\ref{thm:BNS-separator-arrangement} recovers and
permutes the entire family $\mathcal D_\Gamma$.  We now put the three
intrinsic invariants together without identifying homology and cohomology.
For $g\in\GL(W_{\Gamma,\Q})$, let
$\check g=(g^{-1})^*\in\GL(L)$;
in dual bases, $\check g=g^{-\mathsf T}$, as fixed in
Section~\ref{sec:preliminaries}.  Let $\check g^{(d)}$ denote its induced
action on $\mathbb L_d(L)$, and define
\begin{equation}\label{eq:combined-stabilizer}
 \mathbf G_\Gamma=
 \left\{g\in\GL(W_{\Gamma,\Q}):
 \begin{array}{l}
  \check g^{(2)}I_2=I_2,\qquad
  \check g^{(3)}I_3=I_3,\\
  \check gD_S=D_S\quad
       \text{for every inclusion-minimal separator }S
 \end{array}\right\}.
\end{equation}
This is a rational algebraic subgroup: each condition is the stabilizer of
a rational subspace in a finite-dimensional rational representation, and
there are only finitely many separator spaces.

Let
\[
 \Lambda_{\Gamma,\mathrm{sep}}
 =\ker\!\left(\HomImage\longrightarrow
       \operatorname{Sym}(\mathcal D_\Gamma)\right),
\]
where the permutation is computed through the contragredient homology
action.  Functoriality of the lower-central series implies that this
homology action preserves $I_2$ and $I_3$, while
Theorem~\ref{thm:BNS-separator-arrangement} shows that it permutes the
finite family $\mathcal D_\Gamma$.  Therefore
\begin{equation}\label{eq:actual-image-upper-bound}
 [\HomImage:\Lambda_{\Gamma,\mathrm{sep}}]<\infty,\qquad
 \Lambda_{\Gamma,\mathrm{sep}}
 \leq \mathbf G_\Gamma(\Q)\cap\GL(W_\Gamma).
\end{equation}
Equivalently, the full image lies in the finite setwise extension obtained
from~\eqref{eq:combined-stabilizer} by allowing a permutation of
$\mathcal D_\Gamma$.

Equation~\eqref{eq:actual-image-upper-bound} is only an
invariant-theoretic upper bound; finite generation will instead follow
from the root-realization and arithmeticity theorems.

\begin{remark}[The cubic condition is essential]
For the seven-vertex graph of Section~\ref{subsec:seven-vertex-cubic},
the elements $g_n=I+nY$ preserve $I_2$ and every separator space but move
$I_3$ for $n\neq0$; see
\eqref{eq:seven-vertex-false-positive-root} and
\eqref{eq:seven-vertex-cubic-obstruction}.  Thus the quadratic and
separator data alone have a strictly larger stabilizer.
\end{remark}
\section{The cubic component algebra}\label{sec:cubic-algebra}

Throughout this section, $\Gamma=(V,E)$ is a finite biconnected simple graph
with at least three vertices.  We first derive the exact infinitesimal row
equations forced by the intrinsic quadratic, cubic, and separator invariants
of Section~\ref{sec:intrinsic-invariants}.  The calculation is carried out on
homology; the matrix convention relating it to rational cohomology is the
contragredient convention fixed in Section~\ref{sec:preliminaries}.

\subsection{Row lifts and the component partitions}

Retain
\[
 U=\Q^V,\qquad \epsilon(e_v)=1,\qquad
 L=\ker\epsilon,\qquad K=I_2.
\]
For every inclusion-minimal vertex separator $S$, put
$D_S=\operatorname{span}_{\Q}\{e_u-e_v:u,v\in S\}\leq L$.
If $X\in\End(L)$, write $X^{[d]}$ for the derivation induced by $X$ on
$\mathbb L_d(L)$, and define the simultaneous infinitesimal stabilizer
\begin{equation}\label{eq:homological-infinitesimal-stabilizer}
 \mathfrak g^{\mathrm h}_\Gamma=
 \left\{X\in\End(L):
 \begin{array}{l}
  X^{[2]}I_2\subseteq I_2,\qquad X^{[3]}I_3\subseteq I_3,\\
  XD_S\subseteq D_S\quad\text{for every inclusion-minimal separator }S
 \end{array}
 \right\}.
\end{equation}

Every $X\in\End(L)$ has a \emph{row lift}: an endomorphism
$B\in\End(U)$ such that $B|_L=X$ and
$\epsilon B=\lambda\epsilon$
for some $\lambda\in\Q$.  Indeed, choose a height-one vector and extend
$X$ arbitrarily on its span.  If $B'$ is a second such lift, then
$B'-B$ vanishes on $L=\ker\epsilon$, and hence
\begin{equation}\label{eq:row-lift-difference}
 B'-B=c\epsilon
\end{equation}
for a unique $c\in U$.  Thus $B'_{au}-B_{au}=c_a$ is independent of the
column $u$.  Every equality between two entries in a fixed row is therefore
independent of the lift.  For calculations we use the unique doubly centered
lift
\begin{equation}\label{eq:doubly-centered-lift}
 B\mathbf 1=0,\qquad \epsilon B=0,
\end{equation}
obtained from the decomposition $U=L\oplus\Q\mathbf1$.

For $a\in V$, define a hypergraph $\mathcal H_a^{(3)}$ on the vertex
set $V$ by taking the following nonempty subsets of $V$ as its hyperedges:
\begin{equation}\label{eq:cubic-row-hyperedges}
 \begin{array}{ll}
  \lk(v),&a\ne v\text{ and }av\notin E,\\
  S,&S\text{ is an inclusion-minimal separator and }a\notin S,\\
  \{u,v\},&uv\in E\text{ and }au,av\notin E.
 \end{array}
\end{equation}
Two vertices belong to the same component of $\mathcal H_a^{(3)}$ if
there is a chain of hyperedges $E_1,\ldots,E_k$ with the first vertex in
$E_1$, the second in $E_k$, and $E_j\cap E_{j+1}\ne\varnothing$ for every
$j$.  Isolated vertices form singleton components.  Let
$\mathcal P_a^{(3)}$ be this component partition.  Equivalently,
$\mathcal P_a^{(3)}$ is the transitive closure of the three families of
row equalities encoded by~\eqref{eq:cubic-row-hyperedges}.  None of the
listed hyperedges contains $a$, so $\{a\}$ is always a singleton
component.

\begin{theorem}
\label{thm:cubic-row-component}
Let $\Gamma$ be a finite biconnected graph with at least three vertices,
let $X\in\End(L)$, and let $B$ be any row lift of $X$.  Then
$X\in\mathfrak g^{\mathrm h}_\Gamma$ if and only if, for every $a\in V$,
row $a$ of $B$ is constant on each component of
$\mathcal P_a^{(3)}$.  Equivalently, the complete list of row equations is
\begin{equation}\label{eq:exact-cubic-row-equations}
 \begin{aligned}
  B_{au}&=B_{aw}
   &&(u,w\in \lk(v),\ a\ne v,\ av\notin E),\\
  B_{au}&=B_{av}
   &&(uv\in E,\ au,av\notin E),\\
  B_{au}&=B_{av}
   &&(u,v\in S,\ a\notin S,\ S\text{ inclusion-minimal}).
 \end{aligned}
\end{equation}
\end{theorem}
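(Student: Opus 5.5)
We treat the three intrinsic conditions defining $\mathfrak g^{\mathrm h}_\Gamma$ one at a time and translate each into linear equations on a row lift $B$. The equations are lift-independent by \eqref{eq:row-lift-difference}, since changing the lift adds $c_a$ to every entry of row $a$. Throughout we therefore work with the doubly centered lift \eqref{eq:doubly-centered-lift}, extended as a derivation $B^{[d]}$ of $\mathbb L(U)$. The idea is to compare with the ambient ideal $J$: by \eqref{eq:I3-as-intersection}, $I_2=\mathbb L_2(L)\cap R$ and $I_3=\mathbb L_3(L)\cap[U,R]$.

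\emph{Separator condition.} $XD_S\subseteq D_S$ says that $B(e_u-e_v)$ lies in $D_S$ modulo $\Q\mathbf 1$ for all $u,v\in S$. Reading off coordinate $a\notin S$, this holds if and only if $B_{au}-B_{av}$ is independent of $a\notin S$. We then need to show it actually vanishes. For that, combine $\epsilon B=0$ with the freedom coming from the $\mathbf 1$-component. The clean way is to observe that $D_S$-invariance is equivalent to $B(e_u-e_v)\in \operatorname{span}\{e_s\}+\Q\mathbf1$. Because $\Gamma$ is biconnected, $S\neq V$ and $|V\setminus S|\geq 2$, so the coefficient pattern pins the constant: choose the $\mathbf 1$-shift to vanish on a component outside $S$. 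This gives exactly the third family in \eqref{eq:exact-cubic-row-equations}.

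\emph{Quadratic condition.} $X^{[2]}K\subseteq K$, where $K=Z_1(\Gamma;\Q)=R\cap\bigwedge^2L$. Evaluate the derivation on a circuit vector $\kappa_C=\sum e_{v_{i-1}}\wedge e_{v_i}$. Expand $B e_u=\sum_a B_{au}e_a$ and test membership in $R$ by checking that the coefficient of $e_a\wedge e_v$ vanishes for every non-edge $av$. The coefficient of $e_a\wedge e_v$ in $B^{[2]}\kappa_C$ is a sum over the two circuit edges at $v$, giving $B_{a,v_{\mathrm{prev}}}-B_{a,v_{\mathrm{next}}}$. Taking circuits through two chosen neighbours $u,w$ of $v$ is possible by biconnectivity, since $\Gamma-v$ is connected and $u,w$ are joined by a path avoiding $v$. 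We then get $B_{au}=B_{aw}$ for $u,w\in\lk(v)$ whenever $a\neq v$ and $av\notin E$. The terms with $a=v$, or with $av\in E$, impose nothing. Conversely, these equalities make every $B^{[2]}\kappa_C$ lie in $R$, and it lies automatically in $\bigwedge^2L$ because $\epsilon B=0$. This gives the first family, so the quadratic and separator conditions together account for all hyperedges except the edge-type ones.

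\emph{Cubic condition (main obstacle).} By Theorem~\ref{thm:cubic-relations}, and since $[L,I_2]$ is preserved once $I_2$ is, it suffices to require $B^{[3]}\rho_3(C)\in[U,R]$ modulo $[L,I_2]$ for simple circuits $C$. We will use the polygon form \eqref{eq:cubic-polygon-mod} together with a $\tau$-grading argument as in the proof of Theorem~\ref{thm:cubic-relations}: the obstruction to lying in $J_3$ is detected by coefficients of brackets $[e_a,[e_v,e_w]]$ with $vw$ a non-edge, projected modulo $J_3$. We will compute the derivation on $-\tfrac16\sum[x_{i-1}+x_i,[x_{i-1},x_i]]$ and extract two types of terms. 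The first is the repeated-letter term $(B_{au}-B_{aw})[e_v,[e_a,e_v]]$, with $u,w$ neighbours of $v$ on $C$. It is nonzero modulo $J_3$ when $av\notin E$, and it reproves the first family; it also ties equalities across different components of $\Gamma-\{a,v\}$. The second is the three-letter term $[e_a,[e_u,e_v]]$-type coefficient $B_{au}-B_{av}$ for a circuit edge $uv$, which is nonzero modulo $[U,R]$ precisely when $au,av\notin E$. This yields the second family.

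The difficult bookkeeping is twofold. First, one must verify that $\{[e_v,[e_a,e_v]],[e_a,[e_u,e_v]]\}$, modulo $J_3$ and $[L,I_2]$, contributes independent obstructions; we would use the Duchamp--Krob basis of $\mathbb L_3(U)/J_3$ to certify this. Second, one must show that no other coefficients arise, i.e.\ that whenever \eqref{eq:exact-cubic-row-equations} holds, all remaining terms telescope into $[U,R]$. For this converse we plan to write $B=\sum_a e_a\beta_a^{\mathsf T}$ with $\beta_a$ constant on $\mathcal P^{(3)}_a$ components. Row by row, the constancy lets us rewrite $B^{[3]}\rho_3(C)$ as a sum of brackets with edge commutators $[e_u,e_v]\in R$, plus brackets involving $\kappa$-type elements of $K$. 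This completes the equivalence.
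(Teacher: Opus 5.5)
Your architecture matches the paper's: the separator equation, the quadratic condition, and then the repeated-letter and remote-edge components of the cubic condition in the multigraded graph Lie algebra. However, two steps do not work as written.

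\emph{The quadratic step is wrong.} In general the coefficient of $e_a\wedge e_v$ in $B^{[2]}\kappa_C$ is not $B_{au}-B_{aw}$.
\begin{itemize}
\item If $a$ also lies on $C$, with circuit neighbours $a^-\to a\to a^+$, the two circuit edges at $a$ also contribute. The coefficient is then $B_{au}-B_{aw}+B_{va^+}-B_{va^-}$.
\item You cannot always choose $C$ to avoid $a$. The pair $\{a,v\}$ may separate $u$ from $w$, and then every $u$--$w$ path in $\Gamma-v$ passes through $a$.
\item Already in $C_4=(0,1,2,3)$ with $a=0$ and $v=2$, the unique circuit gives the coefficient $B_{01}-B_{03}+B_{21}-B_{23}$. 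So $B_{01}=B_{23}=1$, with all other off-diagonal entries $0$, preserves $I_2$ although $B_{01}\neq B_{03}$.
\end{itemize}
What the quadratic condition actually yields is the cut--cochain system of Proposition~\ref{prop:quadratic-cut-cochain}. Row $a$ is constant only on each $\lk(v)\cap C$, for $C\in\pi_0(\Gamma-\{a,v\})$, and these constants are coupled to those of row $v$.

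The separator equations do not repair this in general. Take the hexagon $a\,p_1\,u_1\,v\,u_2\,p_2$. Add a nonadjacent twin $x_1$ of $u_1$ and a nonadjacent twin $q_2$ of $p_2$. Then add $y_1$ adjacent exactly to $u_1,x_1$, and $z_2$ adjacent exactly to $p_2,q_2$. This graph is biconnected. The lift whose only nonzero off-diagonal entries are $B_{au_1}=B_{ax_1}=B_{vp_2}=B_{vq_2}=1$ satisfies every cut--cochain and every separator equation. Yet $B_{au_1}\neq B_{au_2}$ with $u_1,u_2\in\lk(v)$. (In both examples, choose the diagonal so that all column sums agree; this makes $B$ a row lift.)

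So your accounting, that the quadratic and separator conditions give every hyperedge except the edge-type ones, is false. The full-neighbourhood family is genuinely cubic. Necessity can be repaired by taking the first family from the repeated-letter term alone, but then everything rests on the cubic step.

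\emph{The cubic step, which carries the theorem, is only announced.} The mechanism you name is not the one that works.
\begin{itemize}
\item Differentiating the polygon form in the variables $x_i=e_{v_i}-\tau$ produces terms that are not vertex-multihomogeneous.
\item The $\tau$-grading of Theorem~\ref{thm:cubic-relations} computes $I_3$. It does not isolate the $[e_v,[e_a,e_v]]$- or $[e_u,[e_a,e_v]]$-components.
\end{itemize}
The missing device is the translation identity $\sum_{(u,v)\in C}[x_u+x_v,[x_u,x_v]]=G_C-3[\tau,\kappa_C]$, where $G_C=\sum_{(u,v)\in C}[e_u+e_v,[e_u,e_v]]$. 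Its derivative differs from $D_BG_C$ by $3[B\tau,\kappa_C]+3[\tau,D_B\kappa_C]$. This difference lies in $[U,R]$ once $X$ preserves $K$. Hence $\pi_\Gamma(X^{[3]}\rho_3(C))=-\tfrac16F_C(B)$, with $F_C(B)=\sum_{(u,v)\in C}[e_u+e_v,[Be_u,e_v]+[e_u,Be_v]]$. Consequently $X^{[3]}I_3\subseteq I_3$ if and only if $F_C(B)=0$ for every simple circuit.

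Both directions then become multidegree bookkeeping in $\mathfrak L_\Gamma$.
\begin{itemize}
\item The $\mathbf e_a+2\mathbf e_v$ component receives contributions only from the two circuit edges at $v$, even when $a\in C$. It equals $(B_{au}-B_{aw})[e_v,[e_a,e_v]]$.
\item A three-distinct-letter component receives contributions only from circuit edges joining two of the letters. Each gives $(B_{au}-B_{av})[e_u,[e_a,e_v]]$. It is this bracket that matters, not $[e_a,[e_u,e_v]]$, which vanishes for an edge $uv$.
\item For sufficiency, each such three-letter term dies by the remote-edge equation when $\{a,u,v\}$ spans exactly one edge, and by partial commutation otherwise.
\end{itemize}
Your converse sketch, rewriting $B^{[3]}\rho_3(C)$ as brackets with edge commutators plus $\kappa$-type brackets, merely restates membership in $[U,R]$. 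It is not yet an argument.

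A minor point on the separator step: on the homology side $L=\ker\epsilon$ is a subspace of $U$, so there is no ``modulo $\Q\mathbf1$''. The vector $B(e_u-e_v)$ has height zero, so it lies in $D_S$ if and only if it is supported on $S$. This gives the third family directly, without using biconnectivity.
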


\begin{example}[The five-cycle]
Let $\Gamma=C_5$ with vertices $0,1,2,3,4$ in cyclic order and take
$a=0$.  The neighborhood hyperedges in $\mathcal H_0^{(3)}$ are
$\{1,3\}$ and $\{2,4\}$; the minimal-separator hyperedges not containing
$0$ include $\{1,3\},\{1,4\},\{2,4\}$; and the unique edge remote
from $0$ is $\{2,3\}$.  These hyperedges connect all four vertices
$1,2,3,4$, while $\{0\}$ remains isolated.  Hence
$\mathcal P_0^{(3)}=\bigl\{\{0\},\{1,2,3,4\}\bigr\}$,
so row $0$ of a homological lift is constant away from its diagonal entry.
By cyclic symmetry the same description holds at every vertex.  Thus the cubic propagation relations identify the quadratic local
constants on all four vertices distinct from $0$.
\end{example}

The proof occupies the next three subsections.  It first derives the exact
quadratic cut--cochain equations, then proves the two cubic propagation
rules, and finally checks that the resulting row equations are sufficient.

\subsection{The quadratic cut--cochain equations}

Use the doubly centered lift~\eqref{eq:doubly-centered-lift}.  By
Theorem~\ref{thm:quadratic-relations}, identify $K=I_2$ with the space of
skew-symmetric edge-supported matrices $\Omega=(\omega_{pq})$ satisfying
$\Omega\mathbf1=0$.  These are precisely the rational circuit flows.  The
infinitesimal exterior action is
$\Omega\longmapsto B\Omega+\Omega B^{\mathsf T}$.
Its row sums vanish because $B\mathbf1=0$ and
$\epsilon B=0$.  It therefore preserves $K$ exactly when every coefficient
on a nonedge vanishes for every circuit flow.

Fix a nonedge $ij$.  Its coefficient is
\begin{equation}\label{eq:nonedge-cut-functional}
 (B\Omega+\Omega B^{\mathsf T})_{ij}
 =\sum_{k\sim i}B_{jk}\omega_{ik}
   -\sum_{k\sim j}B_{ik}\omega_{jk}.
\end{equation}
Thus it is the pairing with the edge cochain $f_{ij}^B$ defined, with the
displayed orientations, by
\begin{equation}\label{eq:quadratic-cut-cochain}
 f_{ij}^B(i,k)=B_{jk},\qquad
 f_{ij}^B(j,k)=-B_{ik},\qquad
 f_{ij}^B(p,q)=0
\end{equation}
on all other edges, extending antisymmetrically under orientation reversal.
The annihilator of the graphic cycle space is the cut
space.  Therefore~\eqref{eq:nonedge-cut-functional} vanishes on $K$ if
and only if $f_{ij}^B$ is a coboundary.

Write $\mathcal C_{ij}=\pi_0(\Gamma\setminus\{i,j\})$.  Every
$C\in\mathcal C_{ij}$ meets both $\lk(i)$ and $\lk(j)$.  For if, say, $C$ had
no neighbor of $i$, then deleting $j$ would separate $C$ from $i$ and
from all other components, contrary to biconnectedness; the other assertion
is symmetric.  A potential for~\eqref{eq:quadratic-cut-cochain} is constant
on each $C$, because that cochain vanishes on every edge not incident to
$i$ or $j$.  Comparing its values at $i$, $j$, and $C$ gives the following
exact criterion.

\begin{proposition}
\label{prop:quadratic-cut-cochain}
For the doubly centered lift $B$, the condition
$X^{[2]}I_2\subseteq I_2$ is equivalent, for every nonedge $ij$, to the
existence of scalars $a_C^{ij},b_C^{ij}$ for
$C\in\mathcal C_{ij}$ and a scalar $c_{ij}$ such that
\begin{equation}\label{eq:quadratic-component-constants}
 \begin{aligned}
  B_{ik}&=a_C^{ij} &&(k\in \lk(j)\cap C),\\
  B_{jk}&=b_C^{ij} &&(k\in \lk(i)\cap C),\\
  a_C^{ij}+b_C^{ij}&=c_{ij} &&(C\in\mathcal C_{ij}).
 \end{aligned}
\end{equation}
These equalities, and hence the criterion, are independent of the chosen
row lift.
\end{proposition}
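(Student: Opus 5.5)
The plan is to reduce the statement, one nonedge $ij$ at a time, to the coboundary criterion already established: the coefficient \eqref{eq:nonedge-cut-functional} vanishes on $K$ if and only if the edge cochain $f_{ij}^B$ of \eqref{eq:quadratic-cut-cochain} is a coboundary. Since $X^{[2]}$ sends $I_2$ into $\bigwedge^2L$ with vanishing row sums, $X^{[2]}I_2\subseteq I_2$ holds exactly when every nonedge coefficient vanishes on $K$. By Theorem~\ref{thm:quadratic-relations}, $K$ is the graph cycle space, so this is exactly the condition that every $f_{ij}^B$ is exact. It then remains to show that, for a fixed nonedge $ij$, exactness of $f_{ij}^B$ is equivalent to \eqref{eq:quadratic-component-constants}.

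For the forward direction, I suppose $f_{ij}^B=\delta\phi$ for a vertex potential $\phi$, using the convention $(\delta\phi)(p,q)=\phi(q)-\phi(p)$ so that signs match $\partial_\Gamma$. The cochain vanishes on every edge with both ends in $\Gamma\setminus\{i,j\}$, so $\phi$ is constant, say $\phi_C$, on each component $C\in\mathcal C_{ij}$. For $k\in\lk(i)\cap C$ one gets $B_{jk}=\phi_C-\phi(i)$. For $k\in\lk(j)\cap C$ one gets $-B_{ik}=\phi_C-\phi(j)$, so $B_{ik}=\phi(j)-\phi_C$. I then set $b_C^{ij}=\phi_C-\phi(i)$, $a_C^{ij}=\phi(j)-\phi_C$ and $c_{ij}=\phi(j)-\phi(i)$. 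Because $ij$ is a nonedge, no edge joins $i$ to $j$ directly, so these are all the constraints.

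For the converse, given the scalars I define $\phi(i)=0$, $\phi(j)=c_{ij}$ and $\phi\equiv b_C^{ij}$ on $C$. Checking $\delta\phi=f_{ij}^B$ edge by edge then uses $a_C^{ij}=c_{ij}-b_C^{ij}$ on edges incident to $j$. Edges inside components, and edges between components, carry zero on both sides; the latter do not exist anyway, since the $C$ are components. The fact, proved just above, that every $C$ meets both $\lk(i)$ and $\lk(j)$ is what makes $a_C^{ij}$ and $b_C^{ij}$ genuinely determined by $B$. It is not needed for the equivalence itself, but it matters for the lift-independence claim.

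For lift independence, I use \eqref{eq:row-lift-difference}. Another lift changes row $i$ by the constant $c_i$ and row $j$ by the constant $c_j$. This replaces $a_C^{ij}$ by $a_C^{ij}+c_i$ and $b_C^{ij}$ by $b_C^{ij}+c_j$ uniformly in $C$, so \eqref{eq:quadratic-component-constants} persists with $c_{ij}$ replaced by $c_{ij}+c_i+c_j$. The only delicate points are sign and orientation bookkeeping in the coboundary check, and confirming that the vanishing row sums justify reducing to nonedge coefficients. I expect no real obstacle beyond these.
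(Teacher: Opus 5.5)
Your proposal is correct and follows the paper's proof: reduce to exactness of $f_{ij}^B$, read off $b_C^{ij}=\phi|_C-\phi(i)$, $a_C^{ij}=\phi(j)-\phi|_C$ and $c_{ij}=\phi(j)-\phi(i)$, and reverse this with $\phi(i)=0$, $\phi(j)=c_{ij}$, $\phi|_C=b_C^{ij}$. Lift independence then follows from \eqref{eq:row-lift-difference}; your explicit shift $c_{ij}\mapsto c_{ij}+c_i+c_j$ just writes out what the paper leaves implicit, and that shift works without the fact that each $C$ meets both $\lk(i)$ and $\lk(j)$, which only ensures the scalars are determined by $B$.
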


\begin{proof}
If $f_{ij}^B=\delta\phi$, use the convention
$\delta\phi(p,q)=\phi(q)-\phi(p)$.  Put $r_C=\phi|_C$.
Equations~\eqref{eq:quadratic-cut-cochain} give
$b_C^{ij}=r_C-\phi(i)$ and $a_C^{ij}=\phi(j)-r_C$,
so their sum is the constant $\phi(j)-\phi(i)$.  Conversely, given
\eqref{eq:quadratic-component-constants}, set $\phi(i)=0$, $\phi(j)=c_{ij}$, and
$\phi|_C=b_C^{ij}$.
There are no edges between distinct components of
$\Gamma\setminus\{i,j\}$, and the three equations in
\eqref{eq:quadratic-component-constants} show directly that
$f_{ij}^B=\delta\phi$.  This proves the equivalence.  Lift independence
follows from~\eqref{eq:row-lift-difference}.
\end{proof}

The separator part has an equally explicit row form:
\begin{equation}\label{eq:separator-row-equation}
 BD_S\subseteq D_S
 \quad\Longleftrightarrow\quad
 B_{au}=B_{av}\quad(a\notin S,\ u,v\in S).
\end{equation}
Indeed, the displayed equalities say exactly that $B(e_u-e_v)$ is supported
on $S$; it has height zero, and the height-zero vectors supported on $S$
are precisely $D_S$.

\subsection{Differentiating the cubic polygon identity}

Let
\[
 \mathfrak L_\Gamma=
 \mathbb L(U)\big/
 \bigl\langle [e_p,e_q]:pq\in E\bigr\rangle
\]
be the rational partially commutative graph Lie algebra, and write
$\pi_\Gamma$ for the quotient map.  For an oriented simple circuit $C$,
with $(u,v)$ running over its oriented edges, define
\begin{equation}\label{eq:circuit-cubic-polynomial}
 G_C=\sum_{(u,v)\in C}[e_u+e_v,[e_u,e_v]]
 \in\mathbb L_3(U).
\end{equation}
The derivative of~\eqref{eq:circuit-cubic-polynomial} in the direction of a
row lift $B$ has image
\begin{equation}\label{eq:cubic-circuit-equation}
 F_C(B)=\sum_{(u,v)\in C}
 [e_u+e_v,[Be_u,e_v]+[e_u,Be_v]]
 \in\mathfrak L_{\Gamma,3},
\end{equation}
because the other product-rule term contains $[e_u,e_v]$ and hence vanishes
in $\mathfrak L_\Gamma$.  We now justify why this derivative is exactly the
linearized cubic relation, rather than merely a formal derivative of graph
commutations that $B$ need not preserve.

Choose $\tau\in U$ with $\epsilon(\tau)=1$, put $x_v=e_v-\tau$, and write
$\kappa_C=\sum_{(u,v)\in C}[e_u,e_v]\in K$.
Expansion in the free Lie algebra gives the translation identity
\begin{equation}\label{eq:cubic-translation-identity}
 \sum_{(u,v)\in C}[x_u+x_v,[x_u,x_v]]
 =G_C-3[\tau,\kappa_C].
\end{equation}
Here is the complete cancellation.  The summand associated with an oriented
edge $(u,v)$ is
\begin{align*}
 &[e_u+e_v-2\tau,
     [e_u,e_v]+[\tau,e_u-e_v]]\\
 &=[e_u+e_v,[e_u,e_v]]
   +[e_u+e_v,[\tau,e_u-e_v]]
   -2[\tau,[e_u,e_v]]
   -2[\tau,[\tau,e_u-e_v]].
\end{align*}
By Jacobi,
\[
 [e_u+e_v,[\tau,e_u-e_v]]
 =[e_u,[\tau,e_u]]-[e_v,[\tau,e_v]]
   -[\tau,[e_u,e_v]].
\]
The first two terms telescope around $C$, as does the sum of
$e_u-e_v$ in the last term of the preceding expansion.  The remaining
three copies of $-[\tau,[e_u,e_v]]$ give
\eqref{eq:cubic-translation-identity}.

Suppose now that $X^{[2]}K\subseteq K$.  Then $X^{[3]}$ induces an
endomorphism of
$\mathbb L_3(L)/[L,K]$,
because
$X^{[3]}[y,k]=[Xy,k]+[y,X^{[2]}k]\in[L,K]$.
Thus the differentiation of the cubic polygon congruence
\eqref{eq:cubic-polygon-mod} is well defined.  If $D_B$ denotes the
derivation of $\mathbb L(U)$ induced by $B$, differentiating
\eqref{eq:cubic-translation-identity} gives
\begin{equation}\label{eq:derivative-translation-identity}
 D_B\!\left(\sum_{(u,v)\in C}[x_u+x_v,[x_u,x_v]]\right)
 =D_BG_C-3[B\tau,\kappa_C]-3[\tau,D_B\kappa_C].
\end{equation}
Because $B|_L=X$, one has
$D_B\kappa_C=X^{[2]}\kappa_C\in K$.  Both correction terms on the right
of~\eqref{eq:derivative-translation-identity} therefore vanish after
applying $\pi_\Gamma$: the degree-two elements $\kappa_C$ and
$D_B\kappa_C$ lie in $K\subseteq R_\Gamma$, hence in the graph Lie ideal.
The product rule for $D_BG_C$ then gives
\begin{equation}\label{eq:cubic-polygon-derivative-is-F}
 \pi_\Gamma\bigl(X^{[3]}\rho_3(C)\bigr)
 =-\frac16F_C(B).
\end{equation}

This also proves all well-definedness assertions in the cubic calculation.
Indeed, the left side of~\eqref{eq:cubic-polygon-derivative-is-F} is intrinsic
to $X$.  Hence $F_C(B)$ is unchanged if the lift is replaced by
$B+c\epsilon$, even though the individual terms in
\eqref{eq:cubic-circuit-equation} change.  Moreover, by
Theorem~\ref{thm:cubic-relations} and
\eqref{eq:I3-as-intersection}, preservation of $K$ implies
\begin{equation}\label{eq:cubic-preservation-iff-F}
 X^{[3]}I_3\subseteq I_3
 \quad\Longleftrightarrow\quad
 F_C(B)=0\text{ in }\mathfrak L_{\Gamma,3}
 \quad\text{for every simple circuit }C.
\end{equation}
Indeed, $X^{[3]}$ already preserves $[L,K]$, the circuit classes
$\rho_3(C)$ generate the remaining quotient, and the kernel of
$\pi_\Gamma$ on $\mathbb L_3(L)$ is exactly $I_3$.

\subsection{Cubic propagation and necessity}

The graph Lie algebra is multigraded by vertex multiplicity.  We isolate
two kinds of homogeneous component of~\eqref{eq:cubic-circuit-equation}.
Together they turn the local constants in
Proposition~\ref{prop:quadratic-cut-cochain} into the first two families of
hyperedges in~\eqref{eq:cubic-row-hyperedges}.

\begin{lemma}
\label{lem:repeated-letter-gluing}
If $a\ne v$ and $av\notin E$, then every row lift of an element of
$\mathfrak g^{\mathrm h}_\Gamma$ satisfies
\begin{equation}\label{eq:neighborhood-row-constancy}
 B_{au}=B_{aw}\quad(u,w\in \lk(v)).
\end{equation}
\end{lemma}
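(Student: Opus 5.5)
We will prove this directly from the cubic criterion~\eqref{eq:cubic-preservation-iff-F}, using one suitable circuit for each pair $u,w$ and one multihomogeneous component of $F_C(B)$. The quadratic constants of Proposition~\ref{prop:quadratic-cut-cochain} are not needed. Since $B_{au}-B_{aw}$ is unchanged under $B\mapsto B+c\epsilon$ by~\eqref{eq:row-lift-difference}, we may work with any row lift $B$. Fix distinct $u,w\in\lk(v)$; the case $u=w$ is trivial. Because $av\notin E$, neither $u$ nor $w$ equals $a$. Since $\Gamma$ is biconnected, $\Gamma-\{v\}$ is connected, so there is a simple path $P$ from $u$ to $w$ avoiding $v$. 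Then $C=(v,u,P,w,v)$ is an oriented simple circuit containing the consecutive edges $(w,v)$ and $(v,u)$. The path $P$ is allowed to pass through $a$.

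Because $X\in\mathfrak g^{\mathrm h}_\Gamma$, we have $X^{[2]}K\subseteq K$ and $X^{[3]}I_3\subseteq I_3$. Hence~\eqref{eq:cubic-preservation-iff-F} gives $F_C(B)=0$ in $\mathfrak L_{\Gamma,3}$. The defining relations $[e_p,e_q]$ of $\mathfrak L_\Gamma$ are multihomogeneous in the vertex letters. So $\mathfrak L_\Gamma$ is $\Z_{\ge0}^V$-multigraded, and each multihomogeneous component of $F_C(B)$ vanishes separately.

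We take the component of multidegree $2e_v+e_a$, that is, two letters $v$ and one letter $a$. In~\eqref{eq:cubic-circuit-equation} every term has the form $[e_p+e_q,[Be_p,e_q]+[e_p,Be_q]]$ for an oriented edge $(p,q)$ of $C$. Expanding $Be_x=\sum_y B_{yx}e_y$, each monomial carries the letters $p$ and $q$ with total multiplicity three. It reaches the multidegree $2e_v+e_a$ only if $v\in\{p,q\}$; the third letter is then either the other endpoint, which is adjacent to $v$ and hence is not $a$, or a letter coming from $B$.

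So only the two edges $(v,u)$ and $(w,v)$ contribute:
\begin{itemize}
\item The edge $(v,u)$ contributes only through the term $[e_v,[e_v,Be_u]]$, which gives $B_{au}[e_v,[e_v,e_a]]$.
\item The edge $(w,v)$ contributes only through $[e_v,[Be_w,e_v]]$, which gives $-B_{aw}[e_v,[e_v,e_a]]$.
\end{itemize}
Any other choice of terms either introduces a letter $u$ or $w$, or places only one $v$ in the monomial. Edges of $P$, including those incident to $a$, contain $v$ at most through a single $B$-letter, so they produce at most one $v$. The component therefore equals $(B_{au}-B_{aw})[e_v,[e_v,e_a]]$.

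It remains to check that $[e_v,[e_v,e_a]]\neq0$ in $\mathfrak L_\Gamma$. The multidegree-$(2e_v+e_a)$ part of $\mathfrak L_\Gamma$ involves only the letters $v$ and $a$. Since $av\notin E$, no defining relation lives in this multidegree, so it coincides with the corresponding part of the free Lie algebra on $e_v,e_a$. There $[e_v,[e_v,e_a]]$ is a nonzero Hall basis element. Alternatively, one can use the Duchamp--Krob identification~\cite{DuchampKrob} and project onto the free Lie algebra on the two letters. Either way we conclude $B_{au}=B_{aw}$.

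I expect the main obstacle to be the bookkeeping in the projection. One must make sure that no term from an edge of $P$ incident to $a$ can produce a second letter $v$ through the $B$-entries. One must also make sure that the multigrading is honestly inherited by the quotient $\mathfrak L_\Gamma$ with rational coefficients.
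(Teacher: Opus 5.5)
Your proof is correct and follows the paper's argument: you use the same circuit through $v$ obtained from biconnectivity, you extract the same multidegree-$(\mathbf e_a+2\mathbf e_v)$ component $(B_{au}-B_{aw})[e_v,[e_v,e_a]]$ of $F_C(B)$, and you conclude with the same appeal to~\eqref{eq:cubic-preservation-iff-F}; your orientation $w\to v\to u$ only changes the sign convention relative to the paper's $u\to v\to w$. Your nonvanishing argument, that the graph ideal has no component in a multidegree supported on the nonadjacent pair $\{a,v\}$, is an equally valid variant of the paper's retraction onto the free Lie algebra on $e_a,e_v$.
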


\begin{proof}
The assertion is immediate when $u=w$, so suppose that they are distinct.
Since $\Gamma$ is biconnected, $\Gamma\setminus\{v\}$ is connected.  Choose
a simple path from $w$ to $u$ in this deleted graph.  Together with the
edges $uv$ and $vw$, it gives a simple circuit $C$, oriented so that its two
edges at $v$ are
$u\longrightarrow v\longrightarrow w$.
In $F_C(B)$, the component of multidegree
$\mathbf e_a+2\mathbf e_v$ is
\begin{equation}\label{eq:repeated-letter-component}
 (B_{au}-B_{aw})[e_v,[e_a,e_v]].
\end{equation}
Indeed, the edge $(u,v)$ contributes the first term and $(v,w)$ contributes
the second.  No other circuit edge can supply two copies of $e_v$.  This
remains true if $a$ is another vertex of $C$: an edge incident to $a$ would
bring its other endpoint into the multidegree, and that endpoint cannot be
$v$ because $a$ and $v$ are distinct and nonadjacent.

The Lie monomial in~\eqref{eq:repeated-letter-component} is nonzero.  The
projection killing every generator except $e_a,e_v$ is a retraction of
$\mathfrak L_\Gamma$ onto the free Lie algebra on these two generators,
where $[e_v,[e_a,e_v]]\neq0$.  Now
\eqref{eq:cubic-preservation-iff-F} makes
\eqref{eq:repeated-letter-component} zero, and
\eqref{eq:neighborhood-row-constancy} follows.
\end{proof}

\begin{lemma}
\label{lem:remote-edge-propagation}
If $uv\in E$ and $a$ is adjacent to neither $u$ nor $v$, then every row
lift of an element of $\mathfrak g^{\mathrm h}_\Gamma$ satisfies
\begin{equation}\label{eq:remote-edge-row-constancy}
 B_{au}=B_{av}.
\end{equation}
\end{lemma}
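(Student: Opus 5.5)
The plan mirrors Lemma~\ref{lem:repeated-letter-gluing}, but isolates a three-letter multidegree instead of a repeated-letter one. Since $uv$ is an edge and $\Gamma$ is biconnected, $uv$ lies on a simple circuit. Choose such a circuit $C$ and orient it so that it contains the oriented edge $(u,v)$. By \eqref{eq:cubic-preservation-iff-F}, $F_C(B)=0$ in $\mathfrak L_{\Gamma,3}$. The task is to show that the homogeneous component of multidegree $\mathbf e_a+\mathbf e_u+\mathbf e_v$ equals $(B_{au}-B_{av})$ times a nonzero Lie element.

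For an oriented edge $(p,q)$ of $C$, the summand of \eqref{eq:cubic-circuit-equation} is
\[
 [e_p+e_q,[Be_p,e_q]+[e_p,Be_q]].
\]
Its multidegree-$(\mathbf e_a+\mathbf e_u+\mathbf e_v)$ part comes from picking the $e_a$-coefficient of $Be_p$ or $Be_q$. This forces $\{p,q\}=\{u,v\}$. The only other candidate is an edge incident to $a$ whose other endpoint is $u$ or $v$, and such edges are excluded because $au,av\notin E$. Since $C$ is simple, $(u,v)$ is traversed exactly once.

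The contribution of $(u,v)$ is
\[
 B_{au}[e_u+e_v,[e_a,e_v]]+B_{av}[e_u+e_v,[e_u,e_a]],
\]
restricted to multidegree $\mathbf e_a+\mathbf e_u+\mathbf e_v$. In $\mathfrak L_\Gamma$ we have $[e_u,e_v]=0$. Jacobi then gives $[e_u,[e_a,e_v]]=[e_v,[e_a,e_u]]$. Using this, the component becomes
\[
 B_{au}[e_u,[e_a,e_v]]-B_{av}[e_v,[e_a,e_u]]
 =(B_{au}-B_{av})[e_u,[e_a,e_v]].
\]
The terms with $e_v$ or $e_u$ repeated have other multidegrees and are ignored. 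I would recheck the signs carefully, since a sign slip would turn the conclusion into $B_{au}+B_{av}=0$.

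It remains to show $[e_u,[e_a,e_v]]\neq0$ in $\mathfrak L_\Gamma$. I would use the retraction killing all generators except $e_a,e_u,e_v$. It maps $\mathfrak L_\Gamma$ onto the graph Lie algebra of the induced subgraph on $\{a,u,v\}$, which has the single edge $uv$. That algebra is $\langle e_a\rangle * \mathrm{ab}(e_u,e_v)$, and there $[e_u,[e_a,e_v]]$ is nonzero. One way to see this is to identify the ideal generated by $e_a$ as free on the elements $\mathrm{ad}(e_u)^i\mathrm{ad}(e_v)^j e_a$. Then $[e_u,[e_v,e_a]]$ is a free generator. Consequently $B_{au}=B_{av}$.

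The main obstacle is the bookkeeping in the multidegree extraction. It must be verified that no other circuit edge contributes, including edges at $a$ when $a\in C$ (excluded since $a$ is adjacent to neither $u$ nor $v$), and that the Jacobi manipulation yields a difference rather than a sum. Lift independence follows as in the earlier lemmas from \eqref{eq:row-lift-difference}.
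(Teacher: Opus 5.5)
Your proposal is correct and follows the paper's proof: you take a simple circuit through $uv$, extract the multidegree $\mathbf e_a+\mathbf e_u+\mathbf e_v$ component $(B_{au}-B_{av})[e_u,[e_a,e_v]]$ of $F_C(B)$ using Jacobi and $[e_u,e_v]=0$, and then apply \eqref{eq:cubic-preservation-iff-F}. The only difference is in how you show $[e_u,[e_a,e_v]]\neq0$. You retract onto the graph Lie algebra of $\{a,u,v\}$ and use elimination in the free product $\Q e_a*\mathrm{ab}(e_u,e_v)$, whereas the paper expands the bracket into four distinct trace classes in the partially commutative tensor algebra; both arguments are valid.
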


\begin{proof}
A biconnected graph has no bridge, so $uv$ lies on a simple circuit $C$;
orient that circuit through $u\to v$.  Among $a,u,v$, the only graph edge
is $uv$.  Hence the only circuit edge that can contribute to multidegree
$\mathbf e_a+\mathbf e_u+\mathbf e_v$ in $F_C(B)$ is $uv$ itself.  Its
component in that multidegree is
\begin{align*}
 &B_{au}[e_u,[e_a,e_v]]
   +B_{av}[e_v,[e_u,e_a]]\\
 &\hspace{4em}=(B_{au}-B_{av})[e_u,[e_a,e_v]],
\end{align*}
where the equality is Jacobi together with $[e_u,e_v]=0$.

The remaining bracket is nonzero.  In the partially commutative tensor
algebra its expansion is
\begin{equation}\label{eq:remote-edge-trace-expansion}
 e_ue_ae_v-e_ue_ve_a-e_ae_ve_u+e_ve_ae_u.
\end{equation}
When $e_u$ and $e_v$ commute but neither commutes with $e_a$, the four words
in~\eqref{eq:remote-edge-trace-expansion} represent four distinct trace
classes.  The standard embedding of the graph Lie algebra into this trace
algebra~\cite{DuchampKrob} therefore shows that the bracket is nonzero.  Equation
\eqref{eq:cubic-preservation-iff-F} now gives
\eqref{eq:remote-edge-row-constancy}.
\end{proof}

We can already read off the necessity in
Theorem~\ref{thm:cubic-row-component}.  Lemma
\ref{lem:repeated-letter-gluing} makes row $a$ constant on every hyperedge
$\lk(v)$ with $a\ne v$ and $av\notin E$;
Lemma~\ref{lem:remote-edge-propagation} does the
same on every remote edge $\{u,v\}$; and
\eqref{eq:separator-row-equation} gives constancy on every minimal separator
not containing $a$.  Equality propagates along intersecting hyperedges, so
row $a$ is constant on every component of $\mathcal P_a^{(3)}$.

\subsection{Sufficiency and exactness}

Conversely, suppose that a row lift $B$ satisfies
\eqref{eq:exact-cubic-row-equations}.  Since those equations are
lift-independent, replace $B$ by the doubly centered lift without changing
them.  If $ij$ is a nonedge, the first family in
\eqref{eq:exact-cubic-row-equations}, applied with $(a,v)=(i,j)$, makes row
$i$ constant on all of $\lk(j)$; call its value $a^{ij}$.  Symmetrically, row
$j$ has a single value $b^{ij}$ on $\lk(i)$.  For every
$C\in\mathcal C_{ij}$, set $a_C^{ij}=a^{ij}$, $b_C^{ij}=b^{ij}$, and
$c_{ij}=a^{ij}+b^{ij}$.
These values satisfy~\eqref{eq:quadratic-component-constants}, so
Proposition~\ref{prop:quadratic-cut-cochain} gives
$X^{[2]}I_2\subseteq I_2$.  The third family of row equations and
\eqref{eq:separator-row-equation} give $BD_S\subseteq D_S$ for every
minimal separator $S$.

It remains to prove $F_C(B)=0$ for every simple circuit.  We do this in the
vertex multigrading.  At a circuit vertex $v$, orient the incident circuit
edges as $u\to v\to w$.  For any $a\in V$, the component of multidegree
$\mathbf e_a+2\mathbf e_v$ is precisely
$(B_{au}-B_{aw})[e_v,[e_a,e_v]]$,
by the calculation in Lemma~\ref{lem:repeated-letter-gluing}.  If $a=v$,
the bracket is zero.  If $av\in E$, it is zero by partial commutation.
Finally, if $a\ne v$ and $av\notin E$, the first row equation makes its
scalar coefficient zero.
Thus every repeated-letter component vanishes; this also covers the
three-equal-letter case.

Now take three distinct vertices $a,u,v$.  A contribution supported on
these vertices must come from a circuit edge having two of them as its
endpoints.  For an oriented circuit edge $u\to v$, its component in this
multidegree is
\begin{equation}\label{eq:general-distinct-component}
 (B_{au}-B_{av})[e_u,[e_a,e_v]],
\end{equation}
up to the simultaneous sign change caused by reversing the orientation.
This follows exactly as in Lemma~\ref{lem:remote-edge-propagation}: the two
terms with three distinct letters are related by Jacobi and
$[e_u,e_v]=0$.

There are now three exhaustive graph-theoretic cases.
\begin{enumerate}[label=\textup{(\roman*)}]
\item If the graph induced on $\{a,u,v\}$ has exactly one edge, that edge is
 $uv$.  The second row equation makes $B_{au}=B_{av}$, so
 \eqref{eq:general-distinct-component} vanishes.
\item If the induced graph has at least two edges, then in addition to
 $uv$ at least one of $au,av$ is an edge.  If $av\in E$, the inner bracket
 in~\eqref{eq:general-distinct-component} is zero.  If $au\in E$, the
 derivation form of Jacobi gives
 $[e_u,[e_a,e_v]] =[[e_u,e_a],e_v]+[e_a,[e_u,e_v]]=0$.
 Hence each contributing circuit edge gives zero, whether the induced
 graph is a two-edge path or a triangle.
\item The induced graph cannot be edgeless: every summand of
 $F_C(B)$ contains the two endpoints of the circuit edge that produced it.
\end{enumerate}
Every homogeneous component of degree three is either repeated-letter or
three-distinct-letter, so these cases exhaust $F_C(B)$.  Therefore
$F_C(B)=0$ for every simple circuit, and
\eqref{eq:cubic-preservation-iff-F} gives
$X^{[3]}I_3\subseteq I_3$.  We have proved that the row equations imply all
three conditions in~\eqref{eq:homological-infinitesimal-stabilizer}.

Together with the necessity proved above, this completes the proof of
Theorem~\ref{thm:cubic-row-component}.  In particular, the partition
$\mathcal P_a^{(3)}$ is exact: its components are generated by, and only by,
the neighborhood hyperedges, minimal-separator hyperedges, and remote-edge
pairs in~\eqref{eq:cubic-row-hyperedges}; no three-connectivity hypothesis
or additional row relation is used.

\subsection{The cohomological form of the algebra}

We now return to the cohomology side of the convention fixed in
Section~\ref{sec:preliminaries}.  Put
$W=W_{\Gamma,\Q}=W_\Gamma\otimes\Q \cong U/(\Q\mathbf1)$.
If $A$ is tangent to the cohomological stabilizer $\mathbf G_\Gamma$, then
the corresponding homology tangent is $-A^{\mathsf T}|_L$.  The sign is
irrelevant because~\eqref{eq:homological-infinitesimal-stabilizer} is a
vector space.  We therefore define the cubic component algebra by
\begin{equation}\label{eq:def-cubic-component-algebra}
 \CAlg=\operatorname{Lie}(\mathbf G_\Gamma^\circ)
 =\operatorname{Lie}(\mathbf G_\Gamma)
 =\{A\in\End(W):A^{\mathsf T}|_L\in
                    \mathfrak g^{\mathrm h}_\Gamma\}.
\end{equation}
Here a matrix representing $A$ on $U$ satisfies
$A\mathbf1=q\mathbf1$ for some $q\in\Q$; its transpose is then a row lift
on homology.  Theorem~\ref{thm:cubic-row-component} says precisely that
column $i$ of $A$ is constant on each component of
$\mathcal P_i^{(3)}$.  Changing the matrix lift adds a matrix with image in
$\Q\mathbf1$, so this column criterion represents a well-defined condition
on $W$.

For $i\in V$ and a component $C\in\mathcal P_i^{(3)}$ other than
$\{i\}$, put
\begin{equation}\label{eq:def-component-generator}
 P_C=\sum_{k\in C}E_{kk},\qquad
 T_{C,i}=\mathbf1_Ce_i^{\mathsf T}-P_C.
\end{equation}
Since $i\notin C$, one has
\begin{equation}\label{eq:component-generator-columns}
 T_{C,i}\mathbf1=0,\qquad
 T_{C,i}e_i=\mathbf1_C,\qquad
 T_{C,i}e_k=-e_k\ (k\in C),
\end{equation}
and all other columns are zero.  Thus $T_{C,i}$ descends to an integral
endomorphism of $W$.

\begin{proposition}
\label{prop:component-algebra-products}
For every finite biconnected graph with at least three vertices,
\begin{equation}\label{eq:cubic-component-algebra-span}
 \CAlg=\Q I+
 \sum_{\substack{i\in V\\
       C\in\mathcal P_i^{(3)},\ C\ne\{i\}}}
       \Q T_{C,i}.
\end{equation}
This vector space is closed under ordinary matrix multiplication and hence
is a finite-dimensional unital associative $\Q$-algebra.
\end{proposition}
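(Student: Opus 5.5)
The span formula is a reformulation of the column criterion stated just before the proposition. Closure under multiplication is a short case analysis on products $T_{C,i}T_{D,j}$, and it rests on one combinatorial property of the partitions $\mathcal P^{(3)}$.

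\emph{Span.} Work with matrix lifts $A$ on $U$ satisfying $A\mathbf1=q\mathbf1$, modulo matrices with image in $\Q\mathbf1$. By~\eqref{eq:def-cubic-component-algebra} and Theorem~\ref{thm:cubic-row-component}, $A\in\CAlg$ exactly when column $i$ of $A$ is constant on each component of $\mathcal P_i^{(3)}$.

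The generators qualify. Column $i$ of $T_{C,i}$ is $\mathbf1_C$, and column $k\in C$ is $-e_k$. Every other column is zero. This $-e_k$ is constant on the components of $\mathcal P_k^{(3)}$ only because $\{k\}$ is a singleton component of $\mathcal P_k^{(3)}$ and $e_k$ vanishes elsewhere. Hence $T_{C,i}\in\CAlg$, and $I\in\CAlg$ trivially.

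The generators span. Given $A\in\CAlg$, subtract $q I$ so that $A\mathbf1=0$. Then column $i$ equals $A_{ii}e_i+\sum_{C\ne\{i\}}c_{C,i}\mathbf1_C$. Form $A'=A-\sum_{i,C}c_{C,i}T_{C,i}$. The off-diagonal entries of $A'$ vanish, since each $T_{C,i}$ reproduces $\mathbf1_C$ in column $i$ and contributes only diagonal entries elsewhere. So $A'$ is diagonal, and $A'\mathbf1=0$ still holds, which forces $A'=0$.

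\emph{Closure.} I will show that every product $T_{C,i}T_{D,j}$ lies in the span. Using~\eqref{eq:component-generator-columns}, the product $T_{C,i}T_{D,j}$ has column $j$ equal to $T_{C,i}\mathbf1_D$ and column $k\in D$ equal to $-T_{C,i}e_k$; all other columns vanish.

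The key combinatorial input I expect to need is a \emph{nesting} property: for $j\in C$ and $D\in\mathcal P_j^{(3)}$ with $i\notin D$, either $D\supseteq C\setminus\{j\}$-type containments hold, or $D$ is disjoint from $C$. More precisely, the components of $\mathcal P_j^{(3)}$ not containing $i$ should refine, or be comparable with, $\mathcal P_i^{(3)}$ in a controlled way. I would attempt to prove this directly from the hyperedge families~\eqref{eq:cubic-row-hyperedges}.

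Rather than building it combinatorially, I would derive closure from the algebraic structure. The set $\CAlg$ is the Lie algebra of the stabilizer $\mathbf G_\Gamma$, which is a subgroup of $\GL(W)$ cut out by subspace stabilizers. Suppose I show that $\CAlg$ equals the full linear span of $\mathbf G_\Gamma(\Q)$. A stabilizer of subspaces is closed under composition, so the linear span of its points is an associative algebra, and the result would follow.

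To get that spanning, I would check that $\mathbf G_\Gamma$ contains $I+tN$ for enough elements. For a generator $T_{C,i}$, compute $T_{C,i}^2=-T_{C,i}$ when $i\notin C$. Then $I+tT_{C,i}$ is invertible for $t\ne1$ and equals $\exp(sT_{C,i})$ for a suitable $s$. This uses $T^2=-T$, so that $\exp(sT)=I+(1-e^{-s})T$. Since $\exp$ of a Lie-algebra element of an algebraic group lies in the group over $\mathbb R$, $I+tT_{C,i}$ belongs to $\mathbf G_\Gamma(\mathbb R)$ and, being rational, to $\mathbf G_\Gamma(\Q)$.

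The products of such elements lie in $\mathbf G_\Gamma$ and hence in the span of $\mathbf G_\Gamma$. Expanding $(I+sT)(I+tT')$ in $s,t$ and taking the coefficient of $st$ shows that $TT'$ lies in the linear span of $\mathbf G_\Gamma$.

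\emph{Main obstacle.} This reduces the proposition to showing that the span of $\mathbf G_\Gamma$ is contained in $\CAlg$, i.e.\ that every point of $\mathbf G_\Gamma$ satisfies the column criterion. That is the hard step, since $\CAlg$ is only known to be the Lie algebra. My first attempt would be to note that each defining condition is linear in the matrix: preserving $I_2$, preserving $I_3$ (cubic, a priori nonlinear), and preserving $D_S$.

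If the nonlinear cubic condition obstructs this argument, I would fall back to the direct computation. In that route I would verify $T_{C,i}T_{D,j}\in\CAlg$ by checking the column criterion for each column $j$ and each $k\in D$, splitting into cases according to whether $j\in C$, $j=i$, or $j\notin C\cup\{i\}$, and whether $D\ni i$. I expect that case analysis, and in particular the required containment between components of $\mathcal P_i^{(3)}$ and $\mathcal P_j^{(3)}$, to be the real difficulty.
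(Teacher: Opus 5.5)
Your span argument is correct and matches the paper's. The gap is in closure under multiplication, which the proposal never proves. The group-theoretic detour is circular. Showing that the linear span of $\mathbf G_\Gamma^\circ(\Q)$ lies in $\CAlg$ is essentially equivalent to the closure statement itself. Given that inclusion, your $st$-coefficient trick yields closure. Conversely, closure makes $\CAlg^\times$ a connected group with Lie algebra $\CAlg$, hence equal to $\mathbf G_\Gamma^\circ$; this is how the paper later obtains \eqref{eq:connected-stabilizer-is-unit-group}, using this proposition. Nothing available to you gives the inclusion directly. Contrary to your remark, not only the cubic condition but already $\check g^{(2)}I_2=I_2$ is nonlinear in $g$, since the entries of $\check g^{(2)}=\bigwedge^2\check g$ are $2\times2$ minors of $\check g=g^{-\mathsf T}$. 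The fallback case analysis is not carried out, and the nesting property you would need is neither stated precisely nor derived from the hyperedges \eqref{eq:cubic-row-hyperedges}.

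The missing idea is to use the bracket on $\CAlg$ directly rather than through the group. $\CAlg$ is closed under commutators: it is the Lie algebra of $\mathbf G_\Gamma$, or directly, $X\mapsto X^{[d]}$ is a Lie homomorphism, so the infinitesimal stabilizer is a Lie subalgebra. For $i\neq j$, put $\alpha=\mathbf 1_{i\in D}$ and $\beta=\mathbf 1_{j\in C}$. Expanding the four terms gives
\[
 T_{C,i}T_{D,j}=\alpha\mathbf 1_C(e_j-e_i)^{\mathsf T}-\mathbf 1_{C\cap D}e_j^{\mathsf T}+P_{C\cap D},
 \qquad
 [T_{C,i},T_{D,j}]=w\,(e_j-e_i)^{\mathsf T},
\]
with $w=\alpha\mathbf 1_C+\beta\mathbf 1_D-\mathbf 1_{C\cap D}$. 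The product kills $\mathbf 1$, so your column criterion applies to it. Its columns $k\in C\cap D$ are $e_k$, which are admissible because $\{k\}$ is a singleton component of $\mathcal P_k^{(3)}$. Its only other nonzero columns are column $i$, equal to $-\alpha\mathbf 1_C$ and visibly admissible, and column $j$, equal to $\alpha\mathbf 1_C-\mathbf 1_{C\cap D}$. Since the bracket lies in $\CAlg$, its column $j$, namely $w$, is admissible for $\mathcal P_j^{(3)}$. The product's column $j$ differs from $w$ by $-\beta\mathbf 1_D$, which is admissible because $D$ is a component. Together with $T_{C,i}^2=-T_{C,i}$ and $T_{C,i}T_{D,i}=0$ for $C\neq D$, this proves closure. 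The nesting you were looking for is this: if $i\notin D$, then $D\subseteq C$ or $D\cap C=\varnothing$. It is exactly the admissibility of $w$ on $D$, so it comes for free from the Lie structure and needs no separate graph argument.
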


\begin{proof}
The columns in~\eqref{eq:component-generator-columns} satisfy the component
criterion: column $i$ is the indicator of one component of
$\mathcal P_i^{(3)}$, while every other nonzero column has only its diagonal
entry nonzero.  Thus $I,T_{C,i}\in\CAlg$.

Conversely, represent $A\in\CAlg$ by a matrix on $U$ with
$A\mathbf1=q\mathbf1$.  For every $i$ and every component
$C\in\mathcal P_i^{(3)}$ other than $\{i\}$, let $a_{C,i}$ be the common
value of $A_{ki}$ for $k\in C$.  The matrix
$A-\sum_{i,C}a_{C,i}T_{C,i}$
has no off-diagonal entries.  Its value on $\mathbf1$ is
$q\mathbf1$, because every $T_{C,i}$ annihilates $\mathbf1$; hence all of its
diagonal entries equal $q$.  This proves~\eqref{eq:cubic-component-algebra-span}.

It remains to prove closure under multiplication.  Components belonging to
one partition are disjoint, and direct multiplication gives
\begin{equation}\label{eq:same-column-products}
 T_{C,i}^2=-T_{C,i},\qquad
 T_{C,i}T_{D,i}=0\quad(C\ne D).
\end{equation}
Now let $i\ne j$, let $C\in\mathcal P_i^{(3)}\setminus\{\{i\}\}$ and
$D\in\mathcal P_j^{(3)}\setminus\{\{j\}\}$, and put $\alpha=\mathbf1_{i\in D}$ and
$\beta=\mathbf1_{j\in C}$.
Expanding the four terms in each product gives the complete formulas
\begin{align}
 T_{C,i}T_{D,j}
  &=\alpha\mathbf1_C(e_j-e_i)^{\mathsf T}
     -\mathbf1_{C\cap D} e_j^{\mathsf T}+P_{C\cap D},
     \label{eq:cross-product-forward}\\
 T_{D,j}T_{C,i}
  &=\beta\mathbf1_D(e_i-e_j)^{\mathsf T}
     -\mathbf1_{C\cap D} e_i^{\mathsf T}+P_{C\cap D}.
     \label{eq:cross-product-reverse}
\end{align}
Subtracting them yields
\begin{equation}\label{eq:basic-component-commutator}
 [T_{C,i},T_{D,j}]
  =w_{C,D}^{i,j}(e_j-e_i)^{\mathsf T},
\end{equation}
where
\begin{equation}\label{eq:basic-component-center}
 w_{C,D}^{i,j}
  =\alpha\mathbf1_C+\beta\mathbf1_D-\mathbf1_{C\cap D}.
\end{equation}

The commutator in~\eqref{eq:basic-component-commutator} belongs to
$\CAlg$ because~\eqref{eq:def-cubic-component-algebra} is a Lie algebra.
We use this fact only to check the one column in
\eqref{eq:cross-product-forward} that is not visibly a component column.
Indeed, apart from diagonal entries, that product has only the following
two columns:
\[
 \operatorname{col}_i=-\alpha\mathbf1_C,\qquad
 \operatorname{col}_j=\alpha\mathbf1_C-\mathbf1_{C\cap D}.
\]
The first is allowed in column $i$.  The second differs from column $j$ of
the commutator by
$-\beta\mathbf1_D$,
which is an allowed component column for $\mathcal P_j^{(3)}$.  Finally,
$T_{C,i}T_{D,j}$ annihilates $\mathbf1$.  The span characterization already
proved therefore puts this product in $\CAlg$.  Together with
\eqref{eq:same-column-products}, this proves closure for all products of
the spanning generators, and hence proves the proposition.
\end{proof}

\subsection{Primitive commutator roots}

Put
$W_\Z=\Z^V/(\Z\mathbf1)\subset W$.
Since the identity is central and the matrices $T_{C,i}$ span
$\CAlg$ with it, equations~\eqref{eq:same-column-products} and
\eqref{eq:basic-component-commutator} give
\begin{equation}\label{eq:derived-algebra-basic-roots}
 [\CAlg,\CAlg]
 =\operatorname{span}_{\Q}
 \bigl\{[T_{C,i},T_{D,j}]:i\ne j\bigr\}.
\end{equation}
We call the nonzero commutators in this spanning family the
\emph{basic roots}.

\begin{proposition}
\label{prop:primitive-component-roots}
Every basic root is an integral primitive rank-one endomorphism of $W_\Z$
whose square is zero.  Its matrix entries lie in $\{0,1,-1\}$.  Moreover,
every center--axis incidence between two basic roots belongs to
$\{0,1,-1\}$.
\end{proposition}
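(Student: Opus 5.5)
The plan is to prove everything from the explicit formulas \eqref{eq:basic-component-commutator} and \eqref{eq:basic-component-center}. Fix $i\ne j$, components $C\in\mathcal P_i^{(3)}\setminus\{\{i\}\}$ and $D\in\mathcal P_j^{(3)}\setminus\{\{j\}\}$, and put $\alpha=\mathbf1_{i\in D}$, $\beta=\mathbf1_{j\in C}$. A basic root is then
\[
N=w(e_j-e_i)^{\mathsf T},\qquad w=w_{C,D}^{i,j}=\alpha\mathbf1_C+\beta\mathbf1_D-\mathbf1_{C\cap D},
\]
assumed nonzero. The axis covector $e_j-e_i$ kills $\mathbf1$, so it is a well-defined covector on $W$. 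It is a primitive vector of the dual lattice $L_\Gamma=\ker\epsilon$ (Proposition~\ref{prop:standard-LW-lattices}), since its coordinates are $\pm1$. The center is the class of $w$ in $W_\Z=\Z^V/\Z\mathbf1$. Hence $N$ is integral of rank one as soon as it is nonzero, and the remaining claims reduce to statements about the values of $w$.

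\emph{Entries and square zero.} Evaluate $w$ vertexwise. On $C\cap D$ it equals $\alpha+\beta-1\in\{-1,0,1\}$; on $C\setminus D$ it equals $\alpha$; on $D\setminus C$ it equals $\beta$; and it is $0$ elsewhere. So $w$ has entries in $\{0,\pm1\}$. Since columns $i$ and $j$ are distinct, every matrix entry of $N$ is $\pm w_k$ or $0$. For $N^2=0$ it suffices to show $(e_j-e_i)^{\mathsf T}w=w_j-w_i=0$. Because $\{i\}$ is a singleton component of $\mathcal P_i^{(3)}$, we have $i\notin C$, so $w_i=\beta\,\mathbf1_{i\in D}=\alpha\beta$. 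Symmetrically $j\notin D$, so $w_j=\alpha\,\mathbf1_{j\in C}=\alpha\beta$. Hence $w_i=w_j$.

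\emph{Sign structure and primitivity.} The key observation, which I expect to be the only delicate point, is that $w$ never takes both values $1$ and $-1$. A negative entry occurs only on $C\cap D$ with $\alpha=\beta=0$, and then the formula above shows every entry is $0$ or $-1$. Otherwise all entries lie in $\{0,1\}$. Since $N\ne0$, $w$ is nonconstant, so $w$ takes the value $0$ together with exactly one of $\pm1$. Now suppose $w\equiv my\pmod{\Z\mathbf1}$ with $m\in\Z$ and $y$ integral. Then some difference $w_k-w_l=\pm1$ is divisible by $m$, forcing $m=\pm1$. So the class of $w$ is primitive in $W_\Z$. Together with primitivity of the axis, this makes $N$ primitive.

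\emph{Incidences.} For two basic roots with centers $w,w'$ and axis $e_j-e_i$ of the first, the incidence is $w'_j-w'_i$; it is independent of the representative of $w'$ because the axis kills $\mathbf1$. By the sign structure, the entries of $w'$ lie entirely in $\{0,1\}$ or entirely in $\{0,-1\}$. Therefore $w'_j-w'_i\in\{0,\pm1\}$, and the same holds with the roles of the two roots exchanged. This completes the plan; the main obstacle is ruling out the incidence $\pm2$ (and the non-primitive class $2\mathbf1_P-\mathbf1$), and both are resolved by the one-sign property of $w$.
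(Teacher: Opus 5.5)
Your proposal is correct and follows essentially the paper's argument. The paper uses the same computation $w_i=w_j=\alpha\beta$ for square-zero, the same case analysis showing that $w$ is, up to sign, the indicator $\mathbf1_S$ of a nonempty proper subset, the same coordinate-difference argument for primitivity of $\bar w$, and the same one-sign observation for the incidences. The one step you compress is the passage from primitivity of $\bar w$ and of the axis to primitivity of $N$. The paper makes it explicit by evaluating on $\bar e_j$, where the axis takes the value $1$: then $N=nQ$ gives $\bar w=nQ(\bar e_j)$, and primitivity of $\bar w$ forces $n=\pm1$.
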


\begin{proof}
Retain the notation $\alpha,\beta$ from the proof of
Proposition~\ref{prop:component-algebra-products}, and write
\[
 R=[T_{C,i},T_{D,j}]
   =w(e_j-e_i)^{\mathsf T},\qquad
 w=w_{C,D}^{i,j}.
\]
Because $i\notin C$ and $j\notin D$, formula
\eqref{eq:basic-component-center} gives
$w_i=w_j=\alpha\beta$.
Therefore
$R^2=(w_j-w_i)R=0$.
If $R\ne0$ on $W$, then both its center class $\bar w\in W$ and its
axis $(e_j-e_i)^{\mathsf T}$ are nonzero, so $R$ has rank one.

The center has a more precise integral form.  According as
$(\alpha,\beta)$ is $(0,0),(1,0),(0,1)$, or $(1,1)$, one has
\begin{equation}\label{eq:four-basic-centers}
 w=-\mathbf1_{C\cap D},\qquad
 w=\mathbf1_{C\setminus D},\qquad
 w=\mathbf1_{D\setminus C},\qquad
 w=\mathbf1_{C\cup D},
\end{equation}
respectively.  Thus a nonzero center is, up to sign, the class of
$\mathbf1_S$ for a nonempty proper subset $S\subsetneq V$: the empty-set
indicator is zero, while the full-set indicator is zero in
$W_\Z=\Z^V/(\Z\mathbf1)$.

The class of $\mathbf1_S$ is primitive in $W_\Z$.  Indeed, if
$[\mathbf1_S]=n[z]$, then
$\mathbf1_S-nz=m\mathbf1$ for some $m\in\Z$.  Choosing
$s\in S$ and $t\notin S$ and subtracting the corresponding coordinates
gives
$1=n(z_s-z_t)$,
so $n=\pm1$.  Formula~\eqref{eq:basic-component-commutator} therefore
defines an integral endomorphism of $W_\Z$.  It is primitive even as a
lattice endomorphism: if $R=nQ$ with
$Q\in\End(W_\Z)$, then evaluation on the class of $e_j$ gives
$\bar w=nQ(\bar e_j)$, because
$(e_j-e_i)^{\mathsf T}e_j=1$; primitivity of $\bar w$ again forces
$n=\pm1$.  Formula~\eqref{eq:four-basic-centers} also shows directly that
all entries of $R$ lie in $\{0,1,-1\}$.

Finally, let $R'=w'(e_\ell-e_k)^{\mathsf T}$ be another basic root.
Its axis evaluates on the center of $R$ as
$(e_\ell-e_k)^{\mathsf T}w=w_\ell-w_k$.
By~\eqref{eq:four-basic-centers}, the coordinates of $w$ lie either in
$\{0,1\}$ or in $\{0,-1\}$.  Hence this difference, and symmetrically the
opposite center--axis incidence, belongs to $\{0,1,-1\}$.
\end{proof}

\subsection{The split Wedderburn quotient}

Put $A=\CAlg$ and $J=\Jac(A)$.  The inclusion
$A\subseteq\End(W)$ makes $W$ a faithful finite-dimensional $A$-module.
Choose a composition series
\[
 0=W_0\subset W_1\subset\cdots\subset W_r=W,
 \qquad
 W_{\mathrm{ss}}=\operatorname{gr}W
   =\bigoplus_{\nu=1}^r W_\nu/W_{\nu-1}.
\]
We first establish the module-theoretic lemma needed for the
Artin--Wedderburn argument.

\begin{lemma}
\label{lem:faithful-semisimplification}
The induced action of $A/J$ on $W_{\mathrm{ss}}$ is faithful:
\begin{equation}\label{eq:faithful-semisimplification}
 A/J\hookrightarrow\End(W_{\mathrm{ss}}).
\end{equation}
\end{lemma}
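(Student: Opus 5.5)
The plan is to show that the kernel of the action of $A$ on $W_{\mathrm{ss}}$ is exactly $J$. Since $J$ acts by zero on every simple module, it acts by zero on each factor $W_\nu/W_{\nu-1}$, because each factor is a simple $A$-module. Hence $J$ is contained in the kernel $N$ of $A\to\End(W_{\mathrm{ss}})$, and the map from $A/J$ is well defined. It then suffices to prove $N\subseteq J$.

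Let $a\in N$. Then $aW_\nu\subseteq W_{\nu-1}$ for every $\nu$. Consequently any product of $r$ elements of $N$ carries $W=W_r$ into $W_0=0$. Because $A\subseteq\End(W)$ acts faithfully, such a product is zero as an element of $A$, so $N^r=0$. Moreover, $N$ is a two-sided ideal of $A$, since it is the kernel of an algebra homomorphism.

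A nilpotent two-sided ideal of a finite-dimensional algebra lies in the Jacobson radical. To see this, note that for $n\in N$ and $x\in A$ the element $xn$ is nilpotent, so $1-xn$ is invertible with inverse $\sum_k (xn)^k$. This is the standard characterization of $J$. Therefore $N\subseteq J$, and combined with the first paragraph, $N=J$.

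Two points need care, and neither is a genuine obstacle. First, the argument uses faithfulness of $W$ as an $A$-module, which holds because $A=\CAlg$ is literally a subalgebra of $\End(W)$. Second, the composition series must consist of $A$-submodules, so that each $W_\nu/W_{\nu-1}$ is a simple $A$-module and the inclusion $J\subseteq N$ applies.
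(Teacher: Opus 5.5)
Your proof is correct and follows the same route as the paper. You show that the kernel $N$ of the action on $W_{\mathrm{ss}}$ satisfies $N^r=0$ by faithfulness, so $N\subseteq J$, and that $J$ annihilates each simple factor $W_\nu/W_{\nu-1}$, so $J\subseteq N$. The one step the paper cites rather than proves, that a nilpotent two-sided ideal lies in $J$, you justify directly through the invertibility of $1-xn$.
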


\begin{proof}
Let $N$ be the kernel of the action of $A$ on $W_{\mathrm{ss}}$.
Every element of $N$ sends $W_\nu$ into $W_{\nu-1}$, so a product of
$r$ elements of $N$ annihilates $W$.  Faithfulness of the original action gives
$N^r=0$.  Thus $N$ is a nilpotent two-sided ideal and $N\subseteq J$.
Conversely, the Jacobson radical annihilates every simple $A$-module, hence
every quotient $W_\nu/W_{\nu-1}$~\cite[Ch.~2]{Lam}; therefore
$J\subseteq N$.  Hence $N=J$, proving
\eqref{eq:faithful-semisimplification}.
\end{proof}

\begin{lemma}
\label{lem:associated-graded-rank}
If $T\in A$ preserves the displayed composition series, then
\begin{equation}\label{eq:associated-graded-rank}
 \rank(\operatorname{gr}T)\leq\rank(T).
\end{equation}
In particular, every basic root has rank at most one on
$W_{\mathrm{ss}}$.
\end{lemma}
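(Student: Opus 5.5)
The plan is to prove the rank inequality by induction on the length $r$ of the composition series, using the elementary fact that passing to a subspace or to a quotient cannot increase rank. If $T$ preserves every $W_\nu$, then $\operatorname{gr}T=\bigoplus_\nu T_\nu$, where $T_\nu$ is the induced map on $W_\nu/W_{\nu-1}$. So $\rank(\operatorname{gr}T)=\sum_\nu\rank T_\nu$, and the goal is to bound this sum by $\rank T$.

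The key step is to write $T$ in block upper-triangular form with respect to a basis adapted to the flag $W_0\subset\cdots\subset W_r$. The diagonal blocks are the $T_\nu$. The claim is that the rank of a block upper-triangular matrix is at least the sum of the ranks of its diagonal blocks. For two blocks I would argue directly. Restriction to $W_{r-1}$ gives
\[
\rank T\ \ge\ \rank(T|_{W_{r-1}}).
\]
For the top block, I would choose vectors in $W$ whose images span a complement to $\ker T_r$ in $W/W_{r-1}$. Their $T$-images are linearly independent modulo $W_{r-1}$. Together with a basis of $\operatorname{im}(T|_{W_{r-1}})\subseteq W_{r-1}$, they give an independent set in $\operatorname{im}T$. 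Hence
\[
\rank T\ \ge\ \rank(T|_{W_{r-1}})+\rank T_r.
\]
Induction on $r$, applied to $T|_{W_{r-1}}$ with the truncated series, then gives $\rank T\ge\sum_\nu\rank T_\nu=\rank(\operatorname{gr}T)$.

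For the final clause, every element of $A$ preserves each $W_\nu$, because the $W_\nu$ form a composition series of $A$-submodules. In particular each basic root $R$ preserves the series. By Proposition~\ref{prop:primitive-component-roots}, $\rank R\le1$, so the inequality gives $\rank(\operatorname{gr}R)\le1$ on $W_{\mathrm{ss}}$.

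I expect no real obstacle. The only point needing care is the independence argument in the two-block step, namely lifting a basis of $\operatorname{im}T_r$ and combining it with $\operatorname{im}(T|_{W_{r-1}})$. A clean alternative, if preferred, is to note that the rank of a matrix is at least the rank of any block-diagonal part obtained from block upper-triangular form by row and column elimination.
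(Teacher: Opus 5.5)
Your proof is correct, but it takes a different route from the paper. You work on the image side and induct on the length of the series. The paper works on the kernel side in a single step. It filters $\ker T$ by $\ker T\cap W_\nu$ and notes that each quotient $(\ker T\cap W_\nu)/(\ker T\cap W_{\nu-1})$ injects into $W_\nu/W_{\nu-1}$ and lands in $\ker(\operatorname{gr}T)$. This gives $\dim\ker(\operatorname{gr}T)\geq\dim\ker T$, and rank--nullity, using $\dim W_{\mathrm{ss}}=\dim W$, gives the inequality.

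The two arguments are dual. Your two-block estimate $\rank T\geq\rank(T|_{W_{r-1}})+\rank T_r$ is the image-side counterpart of the paper's kernel injection. You prove the block-triangular inequality $\rank T\geq\sum_\nu\rank T_\nu$ one layer at a time, which costs an induction and a lifting-and-independence check. The paper's filtration obtains the same inequality at once, with no induction. Your treatment of the final clause matches the paper's: each $W_\nu$ is $A$-stable, and basic roots have rank one by Proposition~\ref{prop:primitive-component-roots}.

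One small caution concerns your suggested alternative. The inequality it states is true, but the justification "by row and column elimination" is not. Off-diagonal blocks cannot in general be cleared by rank-preserving operations. A nilpotent $2\times2$ Jordan block with $1\times1$ diagonal blocks has rank one, while its block-diagonal part is zero. That remark should therefore be stated as the inequality itself, not as the result of an elimination.
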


\begin{proof}
Filter $\ker T$ by $\ker T\cap W_\nu$.  The induced map
$\operatorname{gr}(\ker T)\to W_{\mathrm{ss}}$ lands in
$\ker(\operatorname{gr}T)$, so
$\dim\ker(\operatorname{gr}T)\geq\dim\ker T$.
Rank--nullity gives~\eqref{eq:associated-graded-rank}.  The last assertion
uses Proposition~\ref{prop:primitive-component-roots}.
\end{proof}

\begin{proposition}
\label{prop:noncommutative-Wedderburn-factors}
Every noncommutative simple factor of $A/J$ is a full matrix algebra
$M_m(\Q)$, and its isotypic part in $W_{\mathrm{ss}}$ consists of one
copy of the natural $M_m(\Q)$-module.
\end{proposition}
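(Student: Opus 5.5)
The approach is to combine Artin--Wedderburn with the rank bound of Lemma~\ref{lem:associated-graded-rank}. Since $A/J$ is a finite-dimensional semisimple $\Q$-algebra, it is a product of simple factors $M_{m}(D)$ with $D$ a division algebra over $\Q$. By Lemma~\ref{lem:faithful-semisimplification} it acts faithfully on $W_{\mathrm{ss}}$, so each factor acts faithfully on its isotypic part, which is a direct sum of $k\geq1$ copies of the simple module $D^m$. The goal is to show $D=\Q$ and $k=1$ whenever the factor is noncommutative. A noncommutative factor has $m\geq2$ or $D$ noncommutative.

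The key input is that the factor is spanned, modulo $J$ and the center, by images of basic roots. First, $A=\Q I+[A,A]+(\text{span of the idempotent-like }T_{C,i})$. More usefully, by \eqref{eq:derived-algebra-basic-roots}, the derived algebra $[A,A]$ is spanned by basic roots. The image of $[A,A]$ in a simple factor $M_m(D)$ is $[M_m(D),M_m(D)]$. This is nonzero when the factor is noncommutative, and it generates $M_m(D)$ as an algebra, because it is a nonzero Lie ideal whose generated associative ideal is two-sided, hence everything. Consequently some basic root $R$ has a nonzero image $\bar R$ in the factor $M_m(D)$.

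We first show $R$ can be taken to preserve the composition series. I would choose the composition series as an $A$-module series, so every $T\in A$ preserves it automatically and Lemma~\ref{lem:associated-graded-rank} applies. Then $\operatorname{gr}R$ has rank $\leq1$ on $W_{\mathrm{ss}}$. The element $\bar R\neq0$ acts on the isotypic part $(D^m)^{k}$ diagonally, as the same matrix in each copy. Its $\Q$-rank there is therefore $k\cdot\rank_\Q(\bar R\text{ on }D^m)\geq k\dim_\Q D$, because a nonzero $D$-linear map has image a nonzero $D$-subspace. Since this rank is at most $1$, we get $k=1$ and $\dim_\Q D=1$, that is, $D=\Q$. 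This gives the factor $M_m(\Q)$ with a single natural module.

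The main obstacle is the step asserting that a basic root has nonzero image in a given noncommutative factor. I would argue it as follows. The projection $e(A/J)$ onto that factor, with $e$ its central idempotent, sends $[A/J,A/J]$ onto $[M_m(D),M_m(D)]$. This target is nonzero because the factor is noncommutative: if $m=1$ then $D$ is noncommutative, and if $m\geq2$ then $E_{12}=[E_{11},E_{12}]$. Since $[A/J,A/J]$ is the image of $[A,A]$, it is spanned by images of basic roots, so at least one of them survives.
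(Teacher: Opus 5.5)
Your proposal is correct and is essentially the paper's own argument. Artin--Wedderburn together with faithfulness on $W_{\mathrm{ss}}$ gives $k\geq1$ copies of $D^m$ and a rank lower bound $k\dim_\Q D$ for every nonzero element of the factor. Some basic root survives in the factor, because basic roots span $[A,A]$ and $[A,A]$ maps onto $[S,S]\neq0$. Lemma~\ref{lem:associated-graded-rank} then caps that root's rank on $W_{\mathrm{ss}}$ at one, forcing $k=1$ and $D=\Q$.

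The side remarks about $A=\Q I+[A,A]+\cdots$ and about $[S,S]$ generating $M_m(D)$ as an algebra are never used, so drop them. The second remark's justification also conflates the two-sided ideal generated by $[S,S]$ with the subalgebra it generates.
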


\begin{proof}
By the Artin--Wedderburn theorem~\cite[Ch.~1]{Lam}, write
\[
 A/J\cong\prod_\lambda S_\lambda,
 \qquad
 S_\lambda=M_{m_\lambda}(D_\lambda),
\]
where each $D_\lambda$ is a finite-dimensional division algebra over
$\Q$.  Fix a noncommutative factor $S=M_m(D)$.  Faithfulness in
\eqref{eq:faithful-semisimplification} implies that the $S$-isotypic part
of $W_{\mathrm{ss}}$ is a direct sum of $k\geq1$ copies of the simple
$S$-module $D^m$.  Therefore a nonzero $s\in S$ has rational rank at
least
\[
 k\,\dim_\Q(D)\,\rank_D(s)
 \geq k\,\dim_\Q(D)
\]
on that isotypic part.

The projection $A\twoheadrightarrow S$ maps $[A,A]$ onto
$[S,S]\neq0$.  Since the basic roots span $[A,A]$ by
\eqref{eq:derived-algebra-basic-roots}, some basic root $R$ has nonzero
image in $S$.  Lemma~\ref{lem:associated-graded-rank} bounds the rational
rank of its action on $W_{\mathrm{ss}}$ by one.  Hence
$k\dim_\Q(D)\leq1$, so $k=1$ and $D=\Q$.  This proves both the split
matrix form and the multiplicity-one assertion.
\end{proof}

\begin{lemma}
\label{lem:commutative-Wedderburn-factors}
Every commutative simple factor of $A/J$ is $\Q$.
\end{lemma}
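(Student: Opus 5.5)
Every commutative simple factor of $A/J$ is a field $F$, and I will show $F=\Q$ by exhibiting a spanning set of $A/J$ whose images in $F$ all lie in $\Q$. By Proposition~\ref{prop:component-algebra-products}, $A$ is spanned by $I$ and the generators $T_{C,i}$. So it suffices to show that each $T_{C,i}$ maps to a rational scalar in every commutative factor $F$, where the scalar may depend on the factor. Then the image of $A$ in $F$, which is all of $F$ because the projection is surjective, equals $\Q\cdot 1_F$.

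The key input is the idempotent relation \eqref{eq:same-column-products}, $T_{C,i}^2=-T_{C,i}$. Put $e_{C,i}=-T_{C,i}$; this is an idempotent of $A$. Its image $\bar e$ in the field $F$ satisfies $\bar e^2=\bar e$, and so $\bar e\in\{0,1\}$. Hence $T_{C,i}$ maps to $0$ or $-1$ in $F$, which is rational. Consequently the projection $A\to F$ has image $\Q\cdot1_F$, and surjectivity gives $F=\Q$.

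There is essentially no obstacle. The only points to check are that the projection $A\to A/J\to F$ is a surjective unital ring map and that $F$ has no zero divisors. Both are standard: commutative simple Artinian rings are fields, by Artin--Wedderburn with $m=1$ and $D$ commutative. I would also note, for use in \eqref{eq:intro-Wedderburn}, that this argument does not use the faithfulness lemmas. It relies only on the fact that $A$ is generated as a vector space by $I$ and idempotents, up to sign.
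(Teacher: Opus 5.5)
Your proposal is correct and follows essentially the same argument as the paper. The paper likewise combines the span description of Proposition~\ref{prop:component-algebra-products} with the idempotent relation $T_{C,i}^2=-T_{C,i}$ to force every generator to map to $0$ or $1$ in a field factor, and then concludes $F=\Q$ by surjectivity.
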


\begin{proof}
A commutative simple factor is a finite field extension $F/\Q$.
By~\eqref{eq:cubic-component-algebra-span}, the algebra $A$ is spanned by
$I$ and the elements $T_{C,i}$, while
\eqref{eq:same-column-products} shows that every $-T_{C,i}$ is an
idempotent.  Under a homomorphism $A\to F$, each such idempotent has image
zero or one.  The image of $A$ is therefore contained in the scalar copy
of $\Q$, and surjectivity onto the simple factor forces $F=\Q$.
\end{proof}

\begin{theorem}
\label{thm:split-component-wedderburn}
For every finite biconnected graph with at least three vertices, there are
positive integers $m_1,\ldots,m_s$ such that
\begin{equation}\label{eq:split-component-wedderburn}
 \CAlg/\Jac(\CAlg)\cong
 \prod_{r=1}^s M_{m_r}(\Q).
\end{equation}
Every noncommutative factor acts on the semisimplification of $W$ through
one copy of its natural module and trivially on the simple constituents
belonging to the other factors.
\end{theorem}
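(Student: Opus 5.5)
The plan is to assemble the theorem from the structural facts already proved in this subsection.

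First, $\CAlg$ is a finite-dimensional unital $\Q$-algebra by Proposition~\ref{prop:component-algebra-products}. Hence $\CAlg/\Jac(\CAlg)$ is semisimple, and Artin--Wedderburn writes it as a finite product $\prod_\lambda M_{m_\lambda}(D_\lambda)$ with each $D_\lambda$ a finite-dimensional division algebra over $\Q$.

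I would then sort the simple factors into two kinds and treat each by an earlier result:
\begin{itemize}
\item A commutative factor is a field, and Lemma~\ref{lem:commutative-Wedderburn-factors} shows it is $\Q=M_1(\Q)$.
\item A noncommutative factor is handled by Proposition~\ref{prop:noncommutative-Wedderburn-factors}: it is split, equal to $M_m(\Q)$, and its isotypic component in $W_{\mathrm{ss}}$ is a single copy of $\Q^m$.
\end{itemize}
These two cases cover every factor, since a simple algebra $M_m(D)$ is commutative exactly when $m=1$ and $D$ is a field. Listing the factors as $M_{m_1}(\Q),\dots,M_{m_s}(\Q)$ then gives \eqref{eq:split-component-wedderburn}.

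For the module assertion I would use Lemma~\ref{lem:faithful-semisimplification}: $W_{\mathrm{ss}}$ is a faithful semisimple $A/J$-module.
\begin{itemize}
\item Each simple constituent $W_\nu/W_{\nu-1}$ is a simple module over the product algebra, so exactly one factor $S_\lambda$ acts nontrivially on it.
\item Because the central idempotents of the other factors act as zero, every other factor acts trivially on that constituent.
\item The multiplicity-one part for noncommutative factors is exactly the second clause of Proposition~\ref{prop:noncommutative-Wedderburn-factors}.
\end{itemize}

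I do not expect a real obstacle, since the earlier results already do the substantive work. The step needing the most care is that Lemma~\ref{lem:commutative-Wedderburn-factors} really captures every commutative simple quotient. The point is that $A$ is spanned by $I$ and the idempotents $-T_{C,i}$, so any algebra map $A\to F$ to a field lands in $\Q$, and surjectivity then forces $F=\Q$.
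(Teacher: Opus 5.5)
Your proposal is correct and follows the paper's proof. Both apply Artin--Wedderburn, then use Lemma~\ref{lem:commutative-Wedderburn-factors} for the commutative factors and Proposition~\ref{prop:noncommutative-Wedderburn-factors} for the noncommutative ones, and read off the module statement. Your explicit derivation of the ``trivial on the other constituents'' clause, via Lemma~\ref{lem:faithful-semisimplification} and the central idempotents of $\CAlg/\Jac(\CAlg)$, only spells out what the paper leaves implicit.
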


\begin{proof}
Proposition~\ref{prop:noncommutative-Wedderburn-factors} treats every
noncommutative factor, and
Lemma~\ref{lem:commutative-Wedderburn-factors} treats every commutative
factor.  Their conclusions give~\eqref{eq:split-component-wedderburn}
and the stated module decomposition.
\end{proof}
\subsection{A seven-vertex cubic obstruction}
\label{subsec:seven-vertex-cubic}

We conclude this section with a biconnected example showing that the cubic relation space is
not determined by the quadratic cycle space and the BNS separator
arrangement.

Let $V(\Gamma)=\{0,1,\ldots,6\}$ and
\begin{equation}\label{eq:seven-vertex-edge-set}
 E(\Gamma)=
 \{02,04,05,06,12,14,15,16,23,24,25,34,35\}.
\end{equation}
A direct deletion check shows that $\Gamma$ is biconnected and that its
inclusion-minimal vertex separators are exactly
\begin{equation}\label{eq:seven-vertex-separators}
 S_1=\{0,1\},\qquad S_2=\{2,4,5\}.
\end{equation}
Neither separator is a clique, so $\Gamma$ has no separating clique.  In
the basis $f_i=e_i-e_6$ for $0\leq i\leq5$, put
\begin{equation}\label{eq:seven-vertex-false-positive-root}
 Y=(e_2-e_3)\otimes(f_0^*+f_1^*),
 \qquad g_n=I+nY.
\end{equation}
Thus
\[
 Y(f_0)=Y(f_1)=e_2-e_3,\qquad
 Y(f_i)=0\quad(2\leq i\leq5),\qquad Y^2=0.
\]

The following calculation shows that the quadratic and separator
conditions are preserved by this root.  Relative to the spanning tree $\{02,12,23,34,05,06\}$, the seven fundamental circuits
\begin{equation}\label{eq:seven-vertex-cycle-basis}
 \begin{gathered}
 C_{04}=(0,4,3,2,0),\quad
 C_{14}=(1,4,3,2,1),\quad
 C_{15}=(1,5,0,2,1),\\
 C_{16}=(1,6,0,2,1),\quad
 C_{24}=(2,4,3,2),\quad
 C_{25}=(2,5,0,2),\quad
 C_{35}=(3,5,0,2,3)
 \end{gathered}
\end{equation}
give a basis of $K$.  Write $\kappa_{ij}=\kappa_{C_{ij}}$.  Direct
application of the exterior derivation $Y^{[2]}$ gives
\begin{equation}\label{eq:seven-vertex-quadratic-action}
 \begin{array}{c|ccccccc}
 k&\kappa_{04}&\kappa_{14}&\kappa_{15}&\kappa_{16}
   &\kappa_{24}&\kappa_{25}&\kappa_{35}\\ \hline
 Y^{[2]}k&\kappa_{24}&\kappa_{24}&0&0&0
   &\kappa_{35}-\kappa_{25}&\kappa_{35}-\kappa_{25}.
 \end{array}
\end{equation}
Since $Y$ has rank one, the term
$Yu\wedge Yv$ vanishes and
$g_n^{(2)}=I+nY^{[2]}$ on $\bigwedge^2L$.  Equation
\eqref{eq:seven-vertex-quadratic-action}, together with
$g_n^{-1}=g_{-n}$, therefore gives $g_n^{(2)}K=K$.

The separator spaces are
\begin{equation}\label{eq:seven-vertex-separator-spaces}
 D_{S_1}=\operatorname{span}_{\Q}\{e_0-e_1\},\qquad
 D_{S_2}=\operatorname{span}_{\Q}
       \{e_2-e_4,e_2-e_5\}.
\end{equation}
The map $Y$ annihilates both: it takes $f_0-f_1$ to zero and annihilates
$f_2,f_4,f_5$.  Hence every $g_n$ fixes both separator spaces.  Thus
$g_n$ passes the quadratic and BNS-separator conditions.

The cubic relation on the circuit $C_{16}=(1,6,0,2,1)$ is not preserved.
Let
\[
 \pi_{\Gamma,3}:\mathbb L_3(L)\longrightarrow
 \mathfrak L_{\Gamma,3}
\]
be the restriction of the quotient to the degree-three partially
commutative graph Lie algebra used in Section~\ref{sec:cubic-algebra}.
We record the calculation, since this example is the point at which the
cubic invariant is shown to be independent of the quadratic and separator
data.  Choose $\tau=e_6$ in Proposition~\ref{prop:cubic-polygon-identity}
and write
\[
 f_i=e_i-e_6,
 \qquad d=e_2-e_3.
\]
Along $C_{16}=(1,6,0,2,1)$ the four translated vertices are
$f_1,0,f_0,f_2,f_1$, and
\[
 g_n(f_0)=f_0+nd,
 \qquad
 g_n(f_1)=f_1+nd,
 \qquad
 g_n(f_2)=f_2.
\]
The two sides incident to the translated zero vertex contribute nothing.
For $F(a,b)=[a+b,[a,b]]$, the remaining two sides give
\begin{align*}
 F(f_0+nd,f_2)
 &=F(f_0,f_2)
   +n\bigl([d,[f_0,f_2]]+[f_0+f_2,[d,f_2]]\bigr)
   +n^2[d,[d,f_2]],\\
 F(f_2,f_1+nd)
 &=F(f_2,f_1)
   +n\bigl([d,[f_2,f_1]]+[f_2+f_1,[f_2,d]]\bigr)
   +n^2[d,[f_2,d]].
\end{align*}
The constant terms vanish because $\rho_3(C_{16})\in I_3$, and the
quadratic terms cancel.  Expanding the coefficient of $n$ and using only
\[
 [e_0,e_2]=[e_1,e_2]=[e_0,e_6]=[e_1,e_6]=[e_2,e_3]=0
\]
gives
\[
 [d,[f_0,f_2]]+[f_0+f_2,[d,f_2]]
 +[d,[f_2,f_1]]+[f_2+f_1,[f_2,d]]
 =-[[e_3,e_0-e_1],e_6].
\]
Substitution into~\eqref{eq:cubic-polygon-mod} therefore gives the exact
formula
\begin{equation}\label{eq:seven-vertex-cubic-obstruction}
 \pi_{\Gamma,3}\!\left(
    g_n^{(3)}\rho_3(C_{16})\right)
 =\frac n6[[e_3,e_0-e_1],e_6].
\end{equation}
The numerator expands in the partially commutative
tensor algebra as
\begin{equation}\label{eq:seven-vertex-trace-expansion}
 \begin{split}
 {}&e_3e_0e_6-e_0e_3e_6-e_6e_3e_0+e_0e_6e_3\\
 &\quad{}-e_3e_1e_6+e_1e_3e_6+e_6e_3e_1-e_1e_6e_3.
 \end{split}
\end{equation}
Among the vertices $0,1,3,6$, the only graph commutations are
$e_0e_6=e_6e_0$ and $e_1e_6=e_6e_1$; vertex $3$ commutes with none of the
other three.  The eight terms in
\eqref{eq:seven-vertex-trace-expansion} therefore belong to distinct trace
classes and cannot cancel.  By the standard embedding of the graph Lie
algebra in the partially commutative tensor algebra~\cite{DuchampKrob},
the bracket in~\eqref{eq:seven-vertex-cubic-obstruction} is nonzero.

It follows that, for $n\ne0$, $g_n$ does not preserve $I_3$ and cannot be
the homology action of an automorphism of $H_\Gamma$.  Since IA
automorphisms act trivially on the lower-central associated graded Lie
algebra, composing with one cannot change this conclusion.

By contrast, the following root is realized by ambient RAAG
automorphisms.  Put
\begin{equation}\label{eq:seven-vertex-ambient-root}
 X=(e_2-e_0)\otimes f_3^*.
\end{equation}
Here
$\lk(3)=\{2,4,5\} \subseteq\st(2)\cap\st(0)$.
The two dominated Laurence--Servatius transvections
\[
 3\longmapsto3\,2^n,\qquad
 3\longmapsto3\,0^{-n},
\]
with all other vertices fixed, are therefore defined
~\cite{Laurence,Servatius}.  Their product preserves the all-ones height and restricts to $H_\Gamma$; its action on
$L$ is $I+nX$.  Thus the cubic condition excludes $Y$ while preserving the realizable
root $X$.

\section{Realization of integral rank-one roots}
\label{sec:root-realization}

Throughout this section, $\Gamma=(V,E)$ is a finite biconnected graph with
at least three vertices.  We write
\[
 L_{\Z}=L_\Gamma=\ker(\epsilon:\Z^V\to\Z),
 \qquad W_{\Z}=W_\Gamma=\Z^V/(\Z\mathbf1),
\]
and use the cohomological matrix convention fixed in
Section~\ref{sec:preliminaries}.  The goal of the section is to realize the
integral rank-one square-zero elements of the cubic component algebra
$\CAlg$ by actual automorphisms of $H_\Gamma$.  This is stronger than merely
checking that a matrix preserves the quadratic, cubic, and separator
invariants.

Let $R\in\CAlg\cap\End(W_{\Z})$ be nonzero and of rank one.  Since
$W_{\Z}$ is a free abelian group, choose a primitive generator $\alpha$ of
the saturation of $\operatorname{im}R$ and absorb the integral content of
$R$ into the dual factor.  Thus
\begin{equation}\label{eq:integral-root-factorization}
 R=\alpha\sigma^{\mathsf T},
 \qquad \alpha\in W_{\Z},\qquad
 \sigma\in\operatorname{Hom}(W_{\Z},\Z)=L_{\Z}.
\end{equation}
We use the same letter $\alpha$ for an integral representative in $\Z^V$;
changing that representative by a multiple of $\mathbf1$ does not change
the class in $W_{\Z}$.  Direct multiplication gives
\begin{equation}\label{eq:rank-one-square}
 R^2=\sigma(\alpha)R.
\end{equation}
Therefore $R^2=0$ if and only if $\sigma(\alpha)=0$.  No
denominator is introduced in~\eqref{eq:integral-root-factorization}.

\begin{lemma}
\label{lem:orthogonal-rank-one-products}
Let
\[
 R_i=\alpha_i\sigma_i^{\mathsf T}\in\End(W_{\Z})
 \quad(1\leq i\leq k)
\]
be integral rank-one operators.  If
\begin{equation}\label{eq:rank-one-pairwise-orthogonality}
 \sigma_i(\alpha_j)=0
 \quad(1\leq i,j\leq k),
\end{equation}
then $R_iR_j=0$ for all $i,j$ and
\begin{equation}\label{eq:rank-one-exact-product}
 \prod_{i=1}^k(I+R_i)=I+\sum_{i=1}^kR_i
\end{equation}
for every ordering of the factors.  In particular, the hypothesis holds
when all $R_i$ have one center $\alpha$ and every axis $\sigma_i$ satisfies
$\sigma_i(\alpha)=0$.
\end{lemma}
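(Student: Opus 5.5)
The argument is a direct computation with the rank-one factorizations, using only associativity of matrix multiplication and the definition $R_i=\alpha_i\sigma_i^{\mathsf T}$. The key identity, generalizing \eqref{eq:rank-one-square}, is
\[
 R_iR_j=\alpha_i\,\sigma_i^{\mathsf T}\alpha_j\,\sigma_j^{\mathsf T}
 =\sigma_i(\alpha_j)\,\alpha_i\sigma_j^{\mathsf T},
\]
where $\sigma_i^{\mathsf T}\alpha_j=\sigma_i(\alpha_j)$ is a scalar. This scalar is the integer pairing of $\sigma_i\in L_\Z$ with the class $\alpha_j\in W_\Z$, so it does not depend on the choice of integral representative of $\alpha_j$. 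Under hypothesis \eqref{eq:rank-one-pairwise-orthogonality}, the identity gives $R_iR_j=0$ for all $i,j$, including $i=j$, so each $R_i$ is also square-zero.

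For \eqref{eq:rank-one-exact-product}, fix any ordering $\pi$ of $\{1,\ldots,k\}$ and expand $\prod_{t=1}^k(I+R_{\pi(t)})$ as the sum over subsets of ordered products. Every ordered product containing at least two factors $R_{\pi(s)}R_{\pi(t)}$ has an adjacent pair of $R$'s and therefore vanishes. Only the empty product $I$ and the single terms $R_{\pi(t)}$ remain, so the product equals $I+\sum_iR_i$. This expression is independent of $\pi$. Equivalently, one can argue by induction on $k$: $(I+\sum_{i<k}R_i)(I+R_k)=I+\sum_{i\le k}R_i+\sum_{i<k}R_iR_k$, and the last sum is zero.

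The final assertion is the special case $\alpha_j=\alpha$ for all $j$. Then $\sigma_i(\alpha_j)=\sigma_i(\alpha)=0$ by assumption, which is exactly \eqref{eq:rank-one-pairwise-orthogonality}.

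There is no real obstacle here. The only point needing care is that the products are computed in $\End(W_\Z)$ and the pairing is well defined on the quotient $W_\Z=\Z^V/\Z\mathbf1$. This holds because $\sigma_i\in L_\Z$ annihilates $\mathbf1$.
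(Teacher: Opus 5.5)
Your proposal is correct and takes the same approach as the paper: the identity $R_iR_j=\sigma_i(\alpha_j)\,\alpha_i\sigma_j^{\mathsf T}$ kills every pairwise product, so expanding the product leaves only the constant and linear terms, and the final assertion is the special case of a common center. Your extra remarks are harmless refinements: the pairing is well defined modulo $\Z\mathbf1$, and the inductive version gives the same conclusion.
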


\begin{proof}
Direct multiplication gives
\[
 R_iR_j
 =\alpha_i\sigma_i^{\mathsf T}\alpha_j\sigma_j^{\mathsf T}
 =\sigma_i(\alpha_j)\,\alpha_i\sigma_j^{\mathsf T}.
\]
Thus~\eqref{eq:rank-one-pairwise-orthogonality} makes every product of two
roots zero.  Expanding the product on the left of
\eqref{eq:rank-one-exact-product} leaves only its constant and linear
terms.  The last assertion is immediate.
\end{proof}

The basic group-level construction used in the realization theorem is a
sector shear.  We isolate it before classifying the roots to which it will
be applied.

\subsection{Sector shears}

Let $D\subseteq V$ be a nonempty clique, let
\begin{equation}\label{eq:sector-axis-and-centralizer}
 \sigma=\sum_{r\in D}\sigma_r e_r\in L_{\Z},
 \qquad
 z=\prod_{r\in D}r^{\sigma_r}\in H_\Gamma,
 \qquad
 C_D=\bigcap_{r\in D}\st(r).
\end{equation}
The product defining $z$ is independent of its ordering because $D$ is a
clique.  Every vertex of $C_D$ commutes with $z$.  Given an integral vertex
function $h=(h_v)_{v\in V}\in\Z^V$, call an edge $uv$ a \emph{jump edge}
when $h_u\neq h_v$.  Thus $h$ is locally constant away from the common
centralizer $C_D$; level changes are allowed only on edges whose endpoints
both commute with the conjugating element $z$.

\begin{lemma}\label{lem:sector-shear}
Suppose that
\begin{equation}\label{eq:sector-data-conditions}
 \sigma(h)=\sum_{r\in D}\sigma_rh_r=0,
 \qquad
 h_u=h_v\quad\text{whenever }uv\in E
                    \text{ is not wholly contained in }C_D.
\end{equation}
For a directed edge $u\to v$, recall the Dicks--Leary generator
$d_{uv}=uv^{-1}$ and set
\begin{equation}\label{eq:sector-shear-substitution}
 \Phi_h(d_{uv})=z^{-h_u}d_{uv}z^{h_v}.
\end{equation}
Then~\eqref{eq:sector-shear-substitution} defines an automorphism of
$H_\Gamma$, with inverse $\Phi_{-h}$.  Its action on $L_{\Z}$ is
\begin{equation}\label{eq:sector-homology-action}
 (\Phi_h)_*=I-\sigma h^{\mathsf T},
\end{equation}
and, with the convention of Section~\ref{sec:preliminaries}, its action on
$W_{\Z}$ is
\begin{equation}\label{eq:sector-cohomology-action}
 \rho_W(\Phi_h)=I+\bar h\,\sigma^{\mathsf T},
\end{equation}
where $\bar h$ is the class of $h$ in $W_{\Z}$.
\end{lemma}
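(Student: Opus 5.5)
The plan is to use the Dicks--Leary presentation (Theorem~\ref{thm:dicks-leary-presentation}): to define an endomorphism of $H_\Gamma$ it suffices to assign images to the generators $d_{uv}$ and check two families of relations, namely $\Phi_h(d_{vu})=\Phi_h(d_{uv})^{-1}$ and the all-power circuit relations. The first is immediate from~\eqref{eq:sector-shear-substitution}: $z^{-h_v}vu^{-1}z^{h_u}$ is the inverse of $z^{-h_u}uv^{-1}z^{h_v}$. Note also that $z\in H_\Gamma$ because $\chi_\Gamma(z)=\sum_r\sigma_r=\epsilon(\sigma)=0$, so the images do lie in $H_\Gamma$.

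For the all-power relations, first observe that on every edge $uv$ the element $z^{-h_u}d_{uv}^{\,n}z^{h_v}$ equals $\Phi_h(d_{uv})^n$. If $h_u=h_v$ this is just conjugation by $z^{h_u}$. If $h_u\neq h_v$, then by~\eqref{eq:sector-data-conditions} both $u,v\in C_D$, so $u$ and $v$ commute with $z$. Hence $d_{uv}$ commutes with $z$ and $\Phi_h(d_{uv})=z^{h_v-h_u}d_{uv}$, whose $n$-th power is $z^{n(h_v-h_u)}d_{uv}^n$. This is \emph{not} $z^{-h_u}d_{uv}^nz^{h_v}$ unless $n=1$. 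So the cleaner route is to work in $A_\Gamma$ and write $d_{uv}^n=u^nv^{-n}$. For any edge, $\Phi_h(d_{uv})^n=z^{-h_u}u^n\cdot v^{-n}z^{h_v}$ when $h_u=h_v$ (conjugation). When both endpoints commute with $z$, we get $(z^{-h_u}u\,v^{-1}z^{h_v})^n=(u z^{-h_u})^n(z^{h_v}v^{-1})^n$, using that $u,v,z$ pairwise commute. Then the product around a circuit $(v_0,\dots,v_m)$ telescopes provided we define a vertex factor $y_v(n)$ consistently: $y_v(n)=z^{-h_v}v^nz^{h_v}$. On a jump edge, $u^nz^{-h_u}=z^{-h_u}u^n=z^{-h_u}u^nz^{h_u}\cdot z^{-h_u}$, so care is needed. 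Concretely, I will show for every edge that $\Phi_h(d_{uv})^n=y_u(n)\,y_v(n)^{-1}$. This holds trivially when $u,v$ commute with $z$ (all factors commute and the $z$-powers cancel in $y_uy_v^{-1}$ up to $z^{h_v-h_u}$ appearing with exponent independent of $n$ — this is exactly the point to check). If the exponent of $z$ on a jump edge turns out to be $n(h_v-h_u)$ rather than $h_v-h_u$, I will instead use vertex factors $y_v(n)=z^{-nh_v}v^n$ when $v\in C_D$. In either case the product around $C$ telescopes to $1$, with the mismatch at vertices not in $C_D$ handled because all edges there have equal $h$. Reconciling these two vertex conventions at vertices of $C_D$ incident to non-jump edges is the main obstacle; I expect it to be resolved by noting that such vertices commute with $z$, so both conventions agree up to commuting factors.

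Bijectivity then follows because $\Phi_{-h}$ is also defined, the conditions~\eqref{eq:sector-data-conditions} being symmetric in $h\mapsto-h$. Moreover $\Phi_{-h}\Phi_h(d_{uv})=\Phi_{-h}(z)^{-h_u}\,z^{h_u}d_{uv}z^{-h_v}\,\Phi_{-h}(z)^{h_v}$. Hence it suffices that $\Phi_{\pm h}(z)=z$. Writing $z$ as a product of edge differences inside the clique $D$ (which lies in $C_D$), the shear multiplies by commuting powers of $z$ with total exponent $\sum_r\sigma_rh_r=\sigma(h)=0$. Finally, for the homology action, $[\Phi_h(d_{uv})]=e_u-e_v+(h_v-h_u)\sigma$, which is $(I-\sigma h^{\mathsf T})(e_u-e_v)$. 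This gives~\eqref{eq:sector-homology-action}, and~\eqref{eq:sector-cohomology-action} follows from the contragredient convention $\rho_W=(\alpha_*)^{-\mathsf T}$, using $(I-\sigma h^{\mathsf T})^{-1}=I+\sigma h^{\mathsf T}$ since $h^{\mathsf T}\sigma=\sigma(h)=0$.
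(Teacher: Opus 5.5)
Your handling of the inverse-edge relations, of $\Phi_{\pm h}(z)=z$ (hence $\Phi_h^{-1}=\Phi_{-h}$), and of the homology and cohomology matrices is correct and matches the paper. The gap is in the one substantive step, the all-power relations, which your proposal leaves open.

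You rightly note that on a jump edge $\Phi_h(d_{uv})^n=u^nv^{-n}z^{n(h_v-h_u)}$, so the vertex factor $y_v(n)=z^{-h_v}v^nz^{h_v}$ does not telescope there. However, the repair you suggest breaks the non-jump edges that leave $C_D$. That repair replaces $y_v(n)$ by $z^{-nh_v}v^n$ only at vertices of $C_D$. Suppose $u\in C_D$, $v\notin C_D$ and $h_u=h_v=c$. Then $y_u(n)y_v(n)^{-1}=z^{-nc}u^n\,z^{-c}v^{-n}z^{c}=z^{-nc}\,\Phi_h(d_{uv})^n$. This stray factor is nontrivial whenever $nc\neq0$ and $\sigma\neq0$, and $z$ need not commute with $v$. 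So the two conventions at a vertex of $C_D$ do not ``agree up to commuting factors'' in any usable sense. They differ by $z^{-nh_v}$, which would have to cancel against a matching factor at the neighbouring vertex outside $C_D$, and your first convention supplies none.

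The missing idea is that the correction must depend only on the level and be applied at every vertex. Put $y_v(n)=z^{-h_v}v^nz^{h_v}\,z^{-nh_v}$ for all $v$.
\begin{itemize}
\item On a non-jump edge at level $c$, the inner factors $z^{-nc}z^{nc}$ cancel, so $y_u(n)y_v(n)^{-1}=z^{-c}u^nv^{-n}z^c=\Phi_h(d_{uv})^n$.
\item On a jump edge both endpoints commute with $z$, so $y_u(n)=u^nz^{-nh_u}$ and $y_v(n)=v^nz^{-nh_v}$. Hence $y_u(n)y_v(n)^{-1}=u^nv^{-n}z^{n(h_v-h_u)}=\Phi_h(d_{uv})^n$.
\end{itemize}
Every closed-path product then telescopes to $1$ for every $n\in\Z$.

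The paper reaches the same conclusion differently. It cuts the closed path at its jump edges. A maximal level-$c$ subpath from $u$ to $v$ contributes $z^{-c}u^Nv^{-N}z^c=u^Nv^{-N}$, because its endpoints lie in $C_D$. A jump edge contributes $d_{uv}^Nz^{N(h_v-h_u)}$. All resulting factors commute with $z$, so the $z$-powers collect with total exponent $N\sum_i(h_{v_i}-h_{v_{i-1}})=0$. A path with no jump edge is simply conjugated by $z^c$. With either argument inserted, the rest of your proposal goes through.
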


\begin{proof}
We first check that the substitution respects the inverse-edge relations.
For the reversed orientation one has
\[
 \Phi_h(d_{vu})=z^{-h_v}d_{uv}^{-1}z^{h_u}
  =\bigl(z^{-h_u}d_{uv}z^{h_v}\bigr)^{-1}.
\]
It remains to check every all-power closed-edge-path relation in the
Dicks--Leary presentation.  Let $P=(v_0,v_1,\ldots,v_m=v_0)$ be a directed closed edge path, put
$d_i=d_{v_{i-1}v_i}$, and fix an arbitrary $N\in\Z$.  We must prove
\begin{equation}\label{eq:sector-image-closed-edge-path}
 \prod_{i=1}^m\Phi_h(d_i)^N=1.
\end{equation}

If an edge $u\to v$ is not a jump edge, with common level
$h_u=h_v=c$, then
\begin{equation}\label{eq:sector-constant-edge-power}
 \Phi_h(d_{uv})^N=z^{-c}d_{uv}^Nz^c.
\end{equation}
If it is a jump edge, condition~\eqref{eq:sector-data-conditions} forces
$u,v\in C_D$.  Both vertices, and hence $d_{uv}=uv^{-1}$, commute with
$z$.  Therefore, for every positive, zero, or negative $N$,
\begin{equation}\label{eq:sector-jump-edge-power}
 \Phi_h(d_{uv})^N=d_{uv}^N z^{N(h_v-h_u)}.
\end{equation}

Suppose first that $P$ has at least one jump edge.  Cut the cyclic edge path at
the jump edges.  Between two successive jump edges there is a maximal
constant-level path
$Q=(u=w_0,w_1,\ldots,w_s=v)$
at some level $c$.  Repeated cancellation in
\eqref{eq:sector-constant-edge-power} gives
\begin{align}
 \prod_{j=1}^s
   \Phi_h(d_{w_{j-1}w_j})^N
 &=z^{-c}\left(\prod_{j=1}^s d_{w_{j-1}w_j}^N\right)z^c
 \notag\\
 &=z^{-c}u^Nv^{-N}z^c.
 \label{eq:sector-constant-path}
\end{align}
The second equality uses the graph-edge commutations
$d_{ab}^N=a^Nb^{-N}$ and then telescopes along $Q$.  The endpoints $u,v$
are incident to jump edges, so they lie in $C_D$ and commute with $z$.
Thus the right side of~\eqref{eq:sector-constant-path} is simply
$u^Nv^{-N}$, the original all-power path word.

After replacing every constant-level path by its original path word and
using~\eqref{eq:sector-jump-edge-power} on the remaining edges, the left
side of~\eqref{eq:sector-image-closed-edge-path} becomes the original ordered
product $\prod_i d_i^N$ together with powers of $z$.  Every factor at a
cut point is supported on vertices of $C_D$, so these powers of $z$ can be
collected without changing the order of the original path words.  Their
total exponent is
\[
 N\sum_{\text{jump }(u,v)}(h_v-h_u)
 =N\sum_{i=1}^m(h_{v_i}-h_{v_{i-1}})=0;
\]
the nonjump summands omitted from the first sum are zero.  What remains is
$\prod_i d_i^N$, which is the original Dicks--Leary relator and hence is
trivial.

If $P$ has no jump edge, all its vertices have one level $c$, and the
whole left side of~\eqref{eq:sector-image-closed-edge-path} is instead
$z^{-c}\left(\prod_{i=1}^m d_i^N\right)z^c=1$.
We have therefore checked every directed closed edge path and every integer
power.  The Dicks--Leary presentation gives an endomorphism
$\Phi_h:H_\Gamma\to H_\Gamma$.

We next show that this endomorphism fixes the conjugating element $z$.
Choose $q\in D$.  Since $D$ is a clique and
$\sum_{r\in D}\sigma_r=0$, the element $z$ has the Dicks--Leary
expression
\begin{equation}\label{eq:sector-z-edge-expression}
 z=\prod_{r\in D\setminus\{q\}}(rq^{-1})^{\sigma_r}.
\end{equation}
Every factor $rq^{-1}$ commutes with $z$.  Applying~\eqref{eq:sector-shear-substitution}
to~\eqref{eq:sector-z-edge-expression} therefore gives
\begin{align}
 \Phi_h(z)
 &=z\,z^{\sum_{r\ne q}\sigma_r(h_q-h_r)}
  =z\,z^{-\sigma(h)}=z.
 \label{eq:sector-fixes-z}
\end{align}
The function $-h$ satisfies the same hypotheses, and the preceding
calculation also gives $\Phi_{-h}(z)=z$.  Therefore, on every directed
edge generator,
\[
 \Phi_{-h}\Phi_h(d_{uv})
 =z^{-h_u}\bigl(z^{h_u}d_{uv}z^{-h_v}\bigr)z^{h_v}
 =d_{uv},
\]
and the symmetric calculation gives $\Phi_h\Phi_{-h}(d_{uv})=d_{uv}$.
Thus $\Phi_h$ is an automorphism and $\Phi_h^{-1}=\Phi_{-h}$.

Finally, the class of $z$ in $L_{\Z}=H_\Gamma^{\mathrm{ab}}$ is
$\sigma$.  Hence~\eqref{eq:sector-shear-substitution} sends the class
$e_u-e_v$ of a directed edge to
\[
 (e_u-e_v)+(h_v-h_u)\sigma
 =\bigl(I-\sigma h^{\mathsf T}\bigr)(e_u-e_v).
\]
The directed edge classes generate $L_{\Z}$, proving
\eqref{eq:sector-homology-action}.  Since
$h^{\mathsf T}\sigma=\sigma(h)=0$, the inverse of this homology matrix is
$I+\sigma h^{\mathsf T}$.  Taking inverse transpose, as required by our
cohomological convention, yields
\[
 \rho_W(\Phi_h)
 =\bigl(I-\sigma h^{\mathsf T}\bigr)^{-\mathsf T}
 =I+\bar h\,\sigma^{\mathsf T},
\]
which is~\eqref{eq:sector-cohomology-action}.  Notice also that replacing
$h$ by $h+c\mathbf1$ changes $\Phi_h$ only by conjugation by $z^{-c}$ and
does not change its cohomology action.
\end{proof}

\subsection{Local equations and ambient domination roots}

Return to the integral square-zero root
$R=\alpha\sigma^{\mathsf T}\in\CAlg$ from
\eqref{eq:integral-root-factorization}, and put
$S_\sigma=\operatorname{supp}(\sigma)=\{i\in V:\sigma_i\neq0\}$.
For $i\in S_\sigma$, column $i$ of $R$ is the nonzero scalar multiple
$\sigma_i\alpha$.  The column form of the row-component theorem in
Section~\ref{sec:cubic-algebra} therefore gives the following two equations:
\begin{align}
 &\alpha\text{ is constant on }\lk(y)
       &&\text{if }i\in S_\sigma,\ i\neq y,\text{ and }iy\notin E,
       \label{eq:root-neighborhood-constancy}\\
 &\alpha_x=\alpha_y
       &&\text{if }i\in S_\sigma,\ xy\in E,
                    \ ix,iy\notin E.
       \label{eq:root-remote-edge-constancy}
\end{align}
These statements do not depend on the chosen representative of
$\alpha\in W_{\Z}$.  The full-neighborhood assertion in
\eqref{eq:root-neighborhood-constancy}, rather than constancy separately
on components of $\Gamma\setminus\{i,y\}$, is exactly where the cubic
repeated-letter equation is used.

We use the Laurence--Servatius domination preorder on the vertices of
$\Gamma$~\cite{Laurence,Servatius}:
\begin{equation}\label{eq:raag-domination-preorder}
 k\preceq i
 \quad\Longleftrightarrow\quad
 \lk(k)\subseteq\st(i).
\end{equation}
Thus $i$ dominates $k$, and the elementary matrix $I+nE_{ik}$ is induced
on $A_\Gamma^{\mathrm{ab}}$ by an $n$th power of a dominated
Laurence--Servatius transvection.  Two vertices $i$ and $k$ are
domination-equivalent when both $k\preceq i$ and $i\preceq k$.

\begin{lemma}
\label{lem:integral-ambient-realization}
Let $\alpha\in\Z^V$ and $\sigma\in L_{\Z}$ satisfy
$\sigma(\alpha)=0$.  Suppose that
\begin{equation}\label{eq:ambient-domination-condition}
 \alpha_k\sigma_i\neq0\quad\Longrightarrow\quad k\preceq i.
\end{equation}
Then a height-preserving automorphism of $A_\Gamma$ restricts to an
automorphism of $H_\Gamma$ whose cohomology action is
$I+\bar\alpha\sigma^{\mathsf T}$.
\end{lemma}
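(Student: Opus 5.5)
The plan is to write down a single explicit endomorphism of $A_\Gamma$ that multiplies each vertex by a power of one fixed height-zero word. Then check that it is a height-preserving automorphism and read off its action. First translate the target into homology. By the convention \eqref{eq:cohomology-representation-convention}, the required cohomology action $I+\bar\alpha\sigma^{\mathsf T}$ corresponds on $L_\Z$ to
\[
(I+\bar\alpha\sigma^{\mathsf T})^{-\mathsf T}=I-\sigma\alpha^{\mathsf T},
\]
since $\sigma(\alpha)=0$, exactly as in the proof of Lemma~\ref{lem:sector-shear}. So on $A_\Gamma^{\mathrm{ab}}=\Z^V$ we want $e_k\mapsto e_k-\alpha_k\sigma$. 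Fix an ordering of $S_\sigma$ and put $w=\prod_{i\in S_\sigma}i^{\sigma_i}\in A_\Gamma$. Then $\chi_\Gamma(w)=\epsilon(\sigma)=0$ and the class of $w$ is $\sigma$. Define $\phi(k)=k\,w^{-\alpha_k}$ for every vertex $k$. This preserves heights, so if it is an automorphism it restricts to $H_\Gamma$ and acts on $L_\Z$ by $I-\sigma\alpha^{\mathsf T}$, as required. Changing $\alpha$ by a multiple of $\mathbf1$ only composes $\phi$ with conjugation by a power of $w$, so the representative is immaterial.

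Next check that $\phi$ respects the RAAG relations, using \eqref{eq:ambient-domination-condition} in the usual Laurence--Servatius way. If $\alpha_k\neq0$, then every $i\in S_\sigma$ satisfies $k\preceq i$, that is, $\lk(k)\subseteq\st(i)$. Hence every neighbour $l$ of $k$ commutes with every letter of $w$, so $l$ commutes with $w$. Now let $kl\in E$. If $\alpha_k=\alpha_l=0$ there is nothing to check. If exactly one of them, say $\alpha_k$, is nonzero, then $l$ commutes with $w$ and with $k$, so $[\phi(k),\phi(l)]=[kw^{-\alpha_k},l]=1$. If both are nonzero, then each of $k,l$ commutes with $w$ by the same argument applied in both directions, and again $\phi(k)$ and $\phi(l)$ commute. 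So $\phi$ is a well-defined endomorphism of $A_\Gamma$.

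The main obstacle is invertibility. My first attempt will be to prove $\phi(w)=w$, after which $\phi_{-\alpha}$, defined by $k\mapsto kw^{\alpha_k}$, is a two-sided inverse by the computation used for $\Phi_{-h}$ in Lemma~\ref{lem:sector-shear}. For this I would split $S_\sigma$ into the letters $i$ with $\alpha_i=0$, which $\phi$ fixes, and those with $\alpha_i\neq0$. For a letter of the second kind, domination by all of $S_\sigma$ should force it either to commute with $w$ or to be domination-equivalent to the other such letters. I would then try to show that the inserted powers of $w$ can be slid to the end, giving $\phi(w)=w\cdot w^{-\sum_i\sigma_i\alpha_i}=w^{1-\sigma(\alpha)}=w$. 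If this commutation bookkeeping becomes unmanageable, the fallback is to show that $\phi$ is surjective and use that $A_\Gamma$ is residually finite, hence Hopfian. For surjectivity, $\phi_{-\alpha}\circ\phi$ fixes every $k$ with $\alpha_k=0$ and sends the remaining vertices into the subgroup generated by them together with $w$. An induction on the domination classes should then recover each vertex.
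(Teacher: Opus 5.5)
Your relation check is correct, so $\phi(k)=kw^{-\alpha_k}$ is an endomorphism of $A_\Gamma$ with the right abelianization. The gap is invertibility, and for this particular map invertibility is not merely unproved but false in general.

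Take $\Gamma=C_4$ with vertices $t,x,s,y$ in cyclic order. Then $\lk(t)=\lk(s)=\{x,y\}$, so $t$ and $s$ are domination-equivalent. Put $\sigma=2(e_t-e_s)$ and $\alpha=e_t+e_s$. Then $\sigma\in L_\Z$, $\sigma(\alpha)=0$, and $\alpha_k\sigma_i\neq0$ only for $k,i\in\{t,s\}$, so every hypothesis of the lemma holds. With $w=t^2s^{-2}$, your map fixes $x$ and $y$ and is
\[
 t\longmapsto ts^2t^{-2},\qquad s\longmapsto s^3t^{-2}.
\]
Because $A_{C_4}=F(t,s)\times F(x,y)$, this map is surjective only if these two words generate $F(t,s)$. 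Consider the epimorphism $F(t,s)\to\operatorname{Sym}(3)$ with $t\mapsto(2\,3)$ and $s\mapsto(1\,2\,3)$. The second word maps to $1$ and the first to an element of order two, so the two words do not generate. The other ordering $w=s^{-2}t^2$ fails in the same way.

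Hence $\phi$ is not an automorphism, and $\phi(w)\neq w$ (otherwise $\phi\circ\phi_{-\alpha}=\mathrm{id}$). Neither your sliding argument nor the Hopfian fallback can succeed. The sliding heuristic breaks down exactly here: letters of $\operatorname{supp}\alpha\cap\operatorname{supp}\sigma$ that are domination-equivalent but nonadjacent do not commute with $w$, so the inserted powers of $w$ cannot be moved past them.

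What is missing is a way to realize an arbitrary integral unit whose off-diagonal entries lie in domination positions. On the domination class containing $\operatorname{supp}\alpha\cap\operatorname{supp}\sigma$, the matrix $I-\sigma\alpha^{\mathsf T}$ can be a fairly general element of $\GL$ of that class. In the example above it is $\left(\begin{smallmatrix}-1&-2\\2&3\end{smallmatrix}\right)$ on $e_t,e_s$. Such an element is a product of several dominated transvections and inversions, not a single ``multiply by a power of $w$'' substitution.

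This is why the paper writes down no closed formula. It shows that the unit group of the integral structural matrix ring is generated by full $\GL$-blocks on domination-equivalence classes and elementary matrices in strict domination positions. It then lifts $M=I-\sigma\alpha^{\mathsf T}$ to a word in Laurence--Servatius generators, and obtains height preservation from $\mathbf 1^{\mathsf T}M=\mathbf 1^{\mathsf T}$.

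Your construction does work when $\operatorname{supp}\alpha\cap\operatorname{supp}\sigma=\varnothing$: then $\phi$ fixes every letter of $w$, so $\phi(w)=w$ and $\phi_{-\alpha}$ is a two-sided inverse. Write $\alpha=\alpha'+\alpha''$, where $\alpha''$ is the restriction of $\alpha$ to $\operatorname{supp}\sigma$. Since $\sigma(\alpha')=0$, one has $I-\sigma\alpha^{\mathsf T}=\bigl(I-\sigma(\alpha')^{\mathsf T}\bigr)\bigl(I-\sigma(\alpha'')^{\mathsf T}\bigr)$, and your map realizes the first factor. The second factor, however, requires exactly the block-generation argument you are missing.
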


\begin{proof}
Consider the ambient homology matrix
$M=I-\sigma\alpha^{\mathsf T}\in M_V(\Z)$.
Since $\alpha^{\mathsf T}\sigma=0$, its inverse is
$M^{-1}=I+\sigma\alpha^{\mathsf T}$.  Both matrices have a nonzero
$(i,k)$-entry away from the identity only when $k\preceq i$, by
\eqref{eq:ambient-domination-condition}.

Let $P(\Z)$ be the group of units in the integral
structural matrix ring whose $(i,k)$-entry may be nonzero only when
$k\preceq i$.  We show that $P(\Z)$ is generated by full integral
$\GL$-groups on domination-equivalence classes and the elementary matrices
$I+nE_{ik}$ in strict domination positions.  Order the domination-equivalence
classes by a linear extension of the quotient partial order.  With a
maximal class placed last, every structural unit has block form
\[
 \begin{pmatrix}A&0\\ B&D\end{pmatrix}
 =
 \begin{pmatrix}I&0\\ BA^{-1}&I\end{pmatrix}
 \begin{pmatrix}A&0\\0&D\end{pmatrix}.
\]
The diagonal blocks $A$ and $D$ are integral units: their nonzero integral
determinants have product $\pm1$.  The inverse $A^{-1}$ belongs to the
smaller structural ring, so transitivity of domination implies that every
entry of $BA^{-1}$ is still in a strict allowed position.  The elementary
matrices belonging to that last block row commute with one another and
factor the first displayed matrix.  Induction on the number of classes
proves the claim.  Inside a domination-equivalence class, inversions and the two
directions of dominated transvections generate its full integral
$\GL$-group.  Hence every element of $P(\Z)$ is induced on abelianization
by a word in the Laurence--Servatius generators
\cite{Laurence,Servatius}.

The matrix $M$ lies in $P(\Z)$.  Moreover
$\mathbf1^{\mathsf T}M=\mathbf1^{\mathsf T}$
because $\mathbf1^{\mathsf T}\sigma=0$.  Thus an ambient automorphism
inducing $M$ preserves the height character and restricts to
$H_\Gamma$.  On $L_{\Z}$ the restriction has matrix $M|_{L_{\Z}}$, and the
cohomological convention gives
$(M|_{L_{\Z}})^{-\mathsf T} =I+\bar\alpha\sigma^{\mathsf T}$.
This proves the lemma.
\end{proof}

For distinct nonadjacent vertices $p,q$, define
\begin{equation}\label{eq:double-domination-set}
 \mathcal D^+_{pq}
 =\{x\in V:\lk(x)\subseteq\st(p)\cap\st(q)\}.
\end{equation}
The axes themselves are allowed in this set: for example,
$p\in\mathcal D^+_{pq}$ exactly when $q$ dominates $p$.

\begin{lemma}
\label{lem:two-column-nonedge-propagation}
Let $p,q\in S_\sigma$ be distinct and nonadjacent.  There is an integer $c$ such
that
\begin{equation}\label{eq:double-domination-support}
 \operatorname{supp}(\alpha-c\mathbf1)
 \subseteq\mathcal D^+_{pq}.
\end{equation}
\end{lemma}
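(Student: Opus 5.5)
The plan is to exhibit one connected family of "constancy sets" for $\alpha$ whose union is exactly $V\setminus\mathcal D^+_{pq}$, and then to take $c$ to be the common value of $\alpha$ on it. First I rewrite the complement. A vertex $x$ lies outside $\mathcal D^+_{pq}$ exactly when it has a neighbour in
\[
 N=V\setminus(\st(p)\cap\st(q)).
\]
Thus $V\setminus\mathcal D^+_{pq}=\bigcup_{w\in N}\lk(w)$. Since $p$ and $q$ are nonadjacent, $p\notin\st(q)$ and $q\notin\st(p)$, so $p,q\in N$. The complement $M=V\setminus N=\st(p)\cap\st(q)$ is the set of common neighbours of $p$ and $q$; in particular $M\subseteq\lk(p)\cap\lk(q)$.

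Next I check that every $\lk(w)$ with $w\in N$ is a constancy set for $\alpha$. If $w\notin\st(p)$, apply \eqref{eq:root-neighborhood-constancy} with $i=p\in S_\sigma$ and $y=w$. If $w\notin\st(q)$, use $i=q$ instead. This covers $w=p$ (via $i=q$) and $w=q$ (via $i=p$). So $\alpha$ is constant on each such link.

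The main step is connectivity: the values on the sets $\lk(w)$, $w\in N$, must all agree. I will form the graph $\mathcal G$ on $N$ in which $w\sim w'$ when the two constancy sets are forced to share a value, and show that $\mathcal G$ is connected.

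\begin{enumerate}[label=\textup{(\alph*)}]
\item If $w,w'\in N$ have a common neighbour, that vertex lies in $\lk(w)\cap\lk(w')$, so $w\sim w'$.
\item If $w,w'\in N$ are adjacent and both lie outside $\st(p)$, then \eqref{eq:root-remote-edge-constancy} with $i=p$ gives $\alpha_w=\alpha_{w'}$. Since $w'\in\lk(w)$ and $w\in\lk(w')$, the two constant values coincide. The same holds with $q$ in place of $p$.
\item In the remaining adjacent case, $w\notin\st(p)$ but $w\in\st(q)$, and $w'\notin\st(q)$ but $w'\in\st(p)$.
 \begin{itemize}
 \item If $w\neq q$, then $w\in\lk(q)\cap\lk(w')$, so $w'\sim q$ and $w\sim q$.
 \item If $w'\neq p$, the symmetric argument gives $w\sim p$ and $w'\sim p$.
 \item If $w=q$ and $w'=p$, they would be adjacent, which is impossible.
 \end{itemize}
\item It remains to link $p$ and $q$, and to link $p$ or $q$ to arbitrary $w$. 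Take any path in $\Gamma$ between two vertices of $N$. Consecutive $N$-vertices along it are either adjacent, handled by (b) and (c), or separated by a run of $M$-vertices. Each $M$-vertex $m$ lies in $\lk(p)$. A neighbour $w\in N$ of $m$ satisfies $m\in\lk(w)\cap\lk(p)$, so $w\sim p$.
\end{enumerate}

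Since $\Gamma$ is connected, these cases together make $\mathcal G$ connected. I expect this case analysis to be the main obstacle; if some mixed case fails, I would fall back on the biconnectedness of $\Gamma$ to reroute the path.

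Finally, the common value $c$ is an entry of the integral vector $\alpha$, so $c\in\Z$. Then $\alpha-c\mathbf1$ vanishes on $\bigcup_{w\in N}\lk(w)=V\setminus\mathcal D^+_{pq}$, which is \eqref{eq:double-domination-support}.
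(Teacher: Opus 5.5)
Your proof is correct, and it is organized differently from the paper's. The paper works with individual vertex values. It sets $D_0=\mathcal D^+_{pq}\setminus\{p,q\}$, proves that $\Gamma\setminus D_0$ is connected, and uses a local propagation step: every $y\notin M_{pq}\cup D_0\cup\{p,q\}$ has a neighbour $x$ with $\alpha_x=\alpha_y$, and $\alpha$ is constant on $\lk(y)$. This step pushes the common value of $\alpha$ on $\lk(p)\cup\lk(q)$ along paths avoiding $D_0$. The paper then needs a separate case $M_{pq}=\varnothing$, which uses minimum degree two to force $D_0=\varnothing$ and an edge-by-edge argument with its own mixed subcase. It also treats the axis vertices $p,q$ afterwards. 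You instead observe that $V\setminus\mathcal D^+_{pq}=\bigcup_{w\in N}\lk(w)$ is covered exactly by constancy sets, so the lemma reduces to connectivity of the overlap graph on $N$. This removes the case split on $M_{pq}$ and handles $p$ and $q$ automatically. It also needs no connectivity of $\Gamma\setminus D_0$, because $D_0\subseteq N$ and such vertices are just further $N$-vertices in your path argument. It uses only connectivity of $\Gamma$ and nonemptiness of links. The paper's version, in exchange, records explicit local equalities between vertex values, in the style reused in the later edge-axis and clique-axis lemmas.

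Two small points. In case (c), the relation $w\sim q$ follows from (b) applied with axis $p$ to the edge $wq$, since both endpoints lie outside $\st(p)$; you should say this. The subcases $w=q$ or $w'=p$ cannot occur there at all: $w=q$ would put $w'\in\lk(q)\subseteq\st(q)$, and symmetrically for $w'=p$. So the fallback to biconnectedness you mention is never needed.
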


\begin{proof}
Put
\[
 M_{pq}=\lk(p)\cap \lk(q),
 \qquad D_0=\mathcal D^+_{pq}\setminus\{p,q\}.
\]
Every vertex of $D_0$ is adjacent to neither $p$ nor $q$, the set $D_0$
is independent, and every neighbor of a $D_0$-vertex lies in $M_{pq}$.
Indeed, all three statements follow from
$\lk(x)\subseteq\st(p)\cap\st(q)$ and the nonedge $pq$.  The graph
$\Gamma\setminus D_0$ is connected: in a path in $\Gamma$, replace every
subpath $m-x-m'$ with $x\in D_0$ by $m-p-m'$.  Its two neighboring
vertices $m,m'$ lie in $M_{pq}$, so the replacement is a graph path.

We use the following local propagation argument.  Let
$y\notin M_{pq}\cup D_0\cup\{p,q\}$.
Because $y\notin M_{pq}$, at least one axis $r\in\{p,q\}$ is distinct from and
nonadjacent to $y$.  There is a neighbor $x\in \lk(y)$ distinct from and
nonadjacent to the same axis $r$.  If $y$ is adjacent to the other axis,
use that other axis
as $x$.  If $y$ is adjacent to neither and no such choice were possible, then
$\lk(y)\subseteq\st(p)\cap\st(q)$, which would put $y$ in $D_0$.
Equation~\eqref{eq:root-neighborhood-constancy} makes $\alpha$ constant
on $\lk(y)$, while~\eqref{eq:root-remote-edge-constancy}, applied to the
edge $xy$, gives
\begin{equation}\label{eq:local-nonedge-transfer}
 \alpha_y=\alpha_x.
\end{equation}
In particular, if any neighbor of $y$ has a prescribed value, then the
neighborhood equation gives that value to $x$, and
\eqref{eq:local-nonedge-transfer} then gives the same value to $y$.

Suppose first that $M_{pq}\neq\varnothing$.  Applying
\eqref{eq:root-neighborhood-constancy} with the eligible columns $q$ at
$p$, and $p$ at $q$, shows that $\alpha$ is constant on $\lk(p)$ and on
$\lk(q)$.  Their nonempty intersection makes the two constants equal; call
the common value $c$.  Thus $\alpha=c$ on $\lk(p)\cup \lk(q)$.

Let $y$ be a nonaxis vertex outside $D_0$.  Choose a path in the connected
graph $\Gamma\setminus D_0$ from $y$ to $\lk(p)\cup \lk(q)$.  It may be chosen
with no internal occurrence of $p$ or $q$: if a path first reaches, say,
$p$, the preceding vertex already lies in $\lk(p)$ and is an earlier
endpoint.  Start at the endpoint of value $c$ and move backwards
along this path; the local propagation argument above shows successively
that every
vertex has value $c$.  Hence every nonaxis vertex outside $D_0$ has value
$c$.  If $p\notin\mathcal D^+_{pq}$, choose
$v\in \lk(p)\setminus\st(q)$.  Then $p$ and $v$ are distinct from and
nonadjacent to the eligible axis $q$, so
\eqref{eq:root-remote-edge-constancy} on the edge $pv$ gives
$\alpha_p=\alpha_v=c$.  If $p\in\mathcal D^+_{pq}$ no assertion about
its value is needed.  The same argument treats $q$.

It remains to consider $M_{pq}=\varnothing$.  Then $D_0=\varnothing$, because
every $D_0$-vertex would have all its neighbors in $M_{pq}$, whereas every
vertex of a biconnected graph has degree at least two.  We claim that the
endpoints of every edge $xy$ have equal $\alpha$-values.  If the edge is
incident to $p$, then $q$ is distinct from and nonadjacent to both
endpoints, since $M_{pq}$ is empty, and
\eqref{eq:root-remote-edge-constancy} applies; edges incident to $q$ are
symmetric.  Suppose neither endpoint is an axis.  If one of $p,q$ is
distinct from and nonadjacent to both endpoints, the same remote-edge
equation applies.
Otherwise, after interchanging the axes and endpoints if necessary, $x$
is adjacent to $p$ and $y$ to $q$.  Empty intersection forces $xq,yp$ to
be nonedges.  The remote-edge equation on $px$, with eligible axis $q$,
gives $\alpha_p=\alpha_x$.  Symmetrically, the remote-edge equation on
$qy$, with eligible axis $p$, gives $\alpha_q=\alpha_y$.  Finally, the
neighborhood equation for eligible axis $p$ at $y$ makes $\alpha$
constant on $\lk(y)$, which contains $x$ and $q$.  Thus
$\alpha_p=\alpha_x=\alpha_q=\alpha_y$.
Connectivity now makes $\alpha$ globally constant.  Taking that constant
as $c$ proves~\eqref{eq:double-domination-support} also in this case.
\end{proof}

\begin{proposition}
\label{prop:distinct-level-nonedge-ambient}
Suppose that $p,q\in S_\sigma$ are distinct and nonadjacent and that
$\alpha_p\neq\alpha_q$.  Then $I+R$ is realized by the restriction of a
height-preserving ambient automorphism of $A_\Gamma$.
\end{proposition}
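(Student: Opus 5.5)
The aim is to verify the domination hypothesis \eqref{eq:ambient-domination-condition} of Lemma~\ref{lem:integral-ambient-realization} for a suitably normalized representative of $\alpha$. Since $\alpha$ is only defined modulo $\Z\mathbf1$ and $\sigma(\mathbf1)=0$, we may replace $\alpha$ by $\alpha-c\mathbf1$, where $c$ is the integer supplied by Lemma~\ref{lem:two-column-nonedge-propagation} for the pair $p,q$. This changes neither $R$ nor $\sigma(\alpha)=0$, and afterwards $\operatorname{supp}(\alpha)\subseteq\mathcal D^+_{pq}$. Thus every $k$ with $\alpha_k\neq0$ satisfies $\lk(k)\subseteq\st(p)\cap\st(q)$, that is, $k\preceq p$ and $k\preceq q$.

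It remains to show $k\preceq i$ for every $i\in S_\sigma$, not only for $i\in\{p,q\}$. I will use the hypothesis $\alpha_p\neq\alpha_q$ here. Let $i\in S_\sigma$ be arbitrary. If $i$ is nonadjacent to $p$ (and $i\ne p$), apply Lemma~\ref{lem:two-column-nonedge-propagation} to the pair $(i,p)$. This gives a constant $c'$ with $\operatorname{supp}(\alpha-c'\mathbf1)\subseteq\mathcal D^+_{ip}$. The same argument works with $q$ in place of $p$.

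The key point is that the two normalizations must agree, i.e.\ $c'=0$. Because $\alpha_p\neq\alpha_q$, at least one of $\alpha_p,\alpha_q$ is nonzero. I expect to show that vertices outside $\mathcal D^+_{pq}\cap\mathcal D^+_{ip}$ exist and have $\alpha$-value $0$, which forces $c'=0$. Biconnectivity should supply such vertices, for example neighbours of $p$ not in $\st(i)$, or vertices of $\lk(p)$ (which lie outside $\mathcal D^+_{pq}$ because $p\notin\lk(p)$, since $p$ and $q$ are nonadjacent). Once $c'=0$, every $k\in\operatorname{supp}\alpha$ satisfies $k\preceq i$.

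If instead $i$ is adjacent to both $p$ and $q$, I would argue directly. For $k\in\operatorname{supp}\alpha\subseteq\mathcal D^+_{pq}$ we have $\lk(k)\subseteq\st(p)\cap\st(q)$, and we want $\lk(k)\subseteq\st(i)$. The plan is to use the neighbourhood equation \eqref{eq:root-neighborhood-constancy} for the column $i$ at a vertex $y$ nonadjacent to $i$, together with \eqref{eq:root-remote-edge-constancy}. If some neighbour $m$ of $k$ were outside $\st(i)$, then, since $m\in\lk(p)\cap\lk(q)$ and $k$ lies in $\lk(m)$, constancy of $\alpha$ on $\lk(m)$ would give $\alpha_k=\alpha_p=\alpha_q$. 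I will also need $k\ne i$ and $ik\notin E$, which should follow from $\lk(k)\subseteq\st(p)\cap\st(q)$ or be handled separately. This contradicts $\alpha_p\neq\alpha_q$. The point is that $p,q\in\lk(m)$ because $m$ is a common neighbour.

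The cases $i=k$ and $i\in\{p,q\}$ are trivial or already covered, and the case where $i$ is adjacent to exactly one of $p,q$ reduces to the nonadjacent-pair argument with the other axis. With \eqref{eq:ambient-domination-condition} established, Lemma~\ref{lem:integral-ambient-realization} realizes $I+\bar\alpha\sigma^{\mathsf T}=I+R$.

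The main obstacle is the uniform step showing $k\preceq i$ for arbitrary $i\in S_\sigma$, in particular reconciling the normalizing constants $c$ and $c'$ for different nonadjacent pairs. I would first try the direct contradiction argument through $\alpha_p\neq\alpha_q$ for all $i$ at once. The nonadjacent-pair recursion would serve as a fallback.
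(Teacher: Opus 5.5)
Your proposal is correct, and its decisive step is exactly the paper's proof. After normalizing so that $\operatorname{supp}(\alpha)\subseteq\mathcal D^+_{pq}$, any $m\in\lk(k)\setminus\st(i)$ lies in $\lk(p)\cap\lk(q)$, and the neighborhood equation \eqref{eq:root-neighborhood-constancy} for column $i$ at $m$ then forces $\alpha_p=\alpha_q$; that equation needs only $m\notin\st(i)$, not $k\neq i$ or $ik\notin E$.

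Since this works for every $i\in S_\sigma$ at once, the case split and the second normalization through the pair $(i,p)$ are unnecessary. As phrased, that detour would also need a vertex outside $\mathcal D^+_{pq}\cup\mathcal D^+_{ip}$ rather than merely outside their intersection, though any vertex of $\lk(p)$ supplies one.
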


\begin{proof}
Replace the representative $\alpha$ by
$\alpha-c\mathbf1$, with $c$ supplied by
Lemma~\ref{lem:two-column-nonedge-propagation}.  This does not change
$R$ on $W_{\Z}$.  Every $k\in\operatorname{supp}(\alpha)$ is then
dominated by both $p$ and $q$.

Fix $i\in S_\sigma$.  If $i$ failed to dominate such a vertex $k$, there would
be a vertex $y\in \lk(k)\setminus\st(i)$.  Domination of $k$ by $p$ and
$q$ puts $y$ in both $\st(p)$ and $\st(q)$.  It cannot equal either axis,
because $pq$ is a nonedge, and hence
$k,p,q\in \lk(y)$.
The vertices $i$ and $y$ are distinct and nonadjacent, so
\eqref{eq:root-neighborhood-constancy} for the eligible column $i$ makes
$\alpha$ constant on $\lk(y)$.  This contradicts
$\alpha_p\neq\alpha_q$.  Thus, whenever $\alpha_k\sigma_i\neq0$, one has $k\preceq i$.
Lemma~\ref{lem:integral-ambient-realization} now gives the claimed lift.
\end{proof}

\subsection{Coordinate-axis roots}

We first treat coordinate axes supported on a nonedge.  This case is
entirely ambient; sector shears enter only when the two axis vertices are
adjacent.

\begin{proposition}
\label{prop:nonedge-coordinate-axis}
Let $p,q\in V$ be distinct and nonadjacent, and suppose that
\[
 R=\alpha(e_p-e_q)^{\mathsf T}\in\CAlg
\]
is integral and square-zero.  Then $I+R$ is realized by the restriction of
a height-preserving automorphism of $A_\Gamma$.
\end{proposition}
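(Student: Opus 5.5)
Here $\sigma=e_p-e_q$, so $S_\sigma=\{p,q\}$ is a nonadjacent pair, and the square-zero condition reads $\sigma(\alpha)=\alpha_p-\alpha_q=0$. The plan is to combine Lemma~\ref{lem:two-column-nonedge-propagation} with Lemma~\ref{lem:integral-ambient-realization}. Since $\alpha_p=\alpha_q$, Proposition~\ref{prop:distinct-level-nonedge-ambient} does not apply. Lemma~\ref{lem:two-column-nonedge-propagation} still does, because it needs only $p,q\in S_\sigma$ distinct and nonadjacent. It supplies an integer $c$ with $\operatorname{supp}(\alpha-c\mathbf1)\subseteq\mathcal D^+_{pq}$.

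First we replace $\alpha$ by $\alpha'=\alpha-c\mathbf1$. This leaves $R$ unchanged on $W_\Z$. The replacement also keeps $\sigma(\alpha')=0$, since $\sigma(\mathbf1)=0$.

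Next we check the domination hypothesis \eqref{eq:ambient-domination-condition}. The only nonzero axis coordinates are $\sigma_p=1$ and $\sigma_q=-1$. We therefore need $k\preceq p$ and $k\preceq q$ for every $k\in\operatorname{supp}(\alpha')$. Any such $k$ lies in $\mathcal D^+_{pq}$, so $\lk(k)\subseteq\st(p)\cap\st(q)$. This is exactly domination by both axis vertices, including the cases $k=p$ or $k=q$ noted after \eqref{eq:double-domination-set}.

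Lemma~\ref{lem:integral-ambient-realization} then gives a height-preserving automorphism of $A_\Gamma$. Its restriction to $H_\Gamma$ has cohomology action $I+\bar\alpha'\sigma^{\mathsf T}=I+R$.

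I expect no real obstacle here, because all the graph-theoretic work sits in Lemma~\ref{lem:two-column-nonedge-propagation}. The one point to verify is that its hypotheses are met. It requires $R\in\CAlg$, so that the local equations \eqref{eq:root-neighborhood-constancy} and \eqref{eq:root-remote-edge-constancy} hold for the columns $i=p,q$. That lemma covers both the case $M_{pq}\neq\varnothing$ and the case $M_{pq}=\varnothing$, so no further case split is needed.

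A small bookkeeping point is that \eqref{eq:integral-root-factorization} normalizes $\alpha$ to be primitive. In the present statement the axis $e_p-e_q$ is fixed and $\alpha$ is integral but possibly imprimitive. Neither lemma uses primitivity, only integrality of $\alpha$ and $\sigma$, so this causes no difficulty.
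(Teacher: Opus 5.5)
Your proof is correct and follows the paper's argument: you gauge $\alpha$ by the integer $c$ from Lemma~\ref{lem:two-column-nonedge-propagation} so that its support lies in $\mathcal D^+_{pq}$, and then you invoke Lemma~\ref{lem:integral-ambient-realization}. The only difference is cosmetic. The paper handles the axis entries by a separate case split that uses $\alpha'_p=\alpha'_q$, while you note directly that $p\in\mathcal D^+_{pq}$ (or $q\in\mathcal D^+_{pq}$) already gives domination by both axes, which is valid because $\lk(p)\subseteq\st(p)$ holds automatically.
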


\begin{proof}
Put $\sigma=e_p-e_q$.  Since
$R^2=(\alpha_p-\alpha_q)R$
and $R\neq0$, square-zero gives $\alpha_p=\alpha_q$.  Apply
Lemma~\ref{lem:two-column-nonedge-propagation} and replace the integral
representative $\alpha$ by $\alpha'=\alpha-c\mathbf1$ with $c\in\Z$
so that
\begin{equation}\label{eq:nonedge-coordinate-support}
 \operatorname{supp}(\alpha')\subseteq\mathcal D^+_{pq}.
\end{equation}
This gauge change does not alter $R$ on $W_{\Z}$, and it preserves
$\alpha'_p=\alpha'_q$.

Every nonaxis vertex $k\in\operatorname{supp}(\alpha')$ belongs to
$\mathcal D^+_{pq}$, so
\[
 \lk(k)\subseteq\st(p)\cap\st(q),
 \quad\text{hence}\quad k\preceq p\ \text{ and }\ k\preceq q.
\]
It remains only to check possible axis entries.  If either $p$ or $q$
does not belong to $\mathcal D^+_{pq}$, then
\eqref{eq:nonedge-coordinate-support} makes the corresponding coefficient
of $\alpha'$ zero; the equality $\alpha'_p=\alpha'_q$ then makes both axis
coefficients zero.  Otherwise $p,q\in\mathcal D^+_{pq}$.  By definition, $\lk(p)\subseteq\st(q)$ and
$\lk(q)\subseteq\st(p)$, so $p\preceq q$ and $q\preceq p$: the two axes form a
domination-equivalence class.  Together with reflexivity of $\preceq$, this
shows in every case that
\[
 \alpha'_k\sigma_i\neq0
 \quad\Longrightarrow\quad
 k\preceq i
 \quad(k\in V,\ i\in\{p,q\}).
\]

Thus the integral pair $(\alpha',\sigma)$ satisfies the ambient domination
condition~\eqref{eq:ambient-domination-condition}.  Moreover
$\sigma(\alpha')=\alpha'_p-\alpha'_q=0$.  Lemma
\ref{lem:integral-ambient-realization} therefore supplies a
height-preserving ambient automorphism whose cohomology action is
\[
 I+\bar\alpha'\sigma^{\mathsf T}
 =I+\bar\alpha(e_p-e_q)^{\mathsf T}
 =I+R.
\]
No denominator has been introduced: $c$, $\alpha'$, and $\sigma$ are all
integral, while a possible nonzero axis block is contained in the integral
$\GL$-block of the domination-equivalence class $[p]=[q]$.
\end{proof}

We now turn to an axis supported on an edge.  The part of the center that
deviates from its local neighbor value is ambient; after removing it, the
remaining center satisfies the hypotheses of the sector-shear lemma.

Let $p,q\in V$ be distinct adjacent vertices and put
\begin{equation}\label{eq:edge-axis-centralizer}
 \sigma=e_p-e_q,
 \qquad
 C_{pq}=\st(p)\cap\st(q),
 \qquad
 z=pq^{-1}.
\end{equation}
Suppose that $R=\alpha\sigma^{\mathsf T}\in\CAlg$ is integral and
square-zero.  Thus
\begin{equation}\label{eq:edge-axis-square-zero}
 \alpha_p=\alpha_q.
\end{equation}
For $x\notin C_{pq}$, at least one axis $r\in\{p,q\}$ is distinct from
and nonadjacent to $x$.  Equation
\eqref{eq:root-neighborhood-constancy}, with eligible column $r$, makes
$\alpha$ constant on $\lk(x)$.  Since a biconnected graph has minimum degree
at least two, this neighborhood is nonempty.  We may therefore define the
integers
\begin{equation}\label{eq:edge-axis-deviation}
 \beta_x=\alpha_y\quad(y\in \lk(x)),
 \qquad
 \delta_x=\alpha_x-\beta_x.
\end{equation}
The value $\beta_x$ is independent of both $y$ and the choice of eligible
axis: if both axes are distinct from and nonadjacent to $x$, each of the
two applicable column equations asserts constancy on the same set $\lk(x)$.

\begin{lemma}
\label{lem:edge-axis-dominated-deviation}
If $x\notin C_{pq}$ and $\delta_x\neq0$, then
\begin{equation}\label{eq:edge-axis-deviation-dominated}
 \lk(x)\subseteq C_{pq}.
\end{equation}
Equivalently, both $p$ and $q$ dominate $x$.
\end{lemma}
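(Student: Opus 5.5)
We prove the contrapositive. Assume $x\notin C_{pq}$ and that some neighbor $y\in\lk(x)$ lies outside $C_{pq}$. We show $\alpha_x=\alpha_y$. By \eqref{eq:edge-axis-deviation} we have $\alpha_y=\beta_x$, so this gives $\delta_x=0$. The tools are the two column equations \eqref{eq:root-neighborhood-constancy} and \eqref{eq:root-remote-edge-constancy}, applied with axis columns $p,q\in S_\sigma$, together with the square-zero identity \eqref{eq:edge-axis-square-zero}.

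First, $x,y\notin\{p,q\}$. Since $pq\in E$, both $p$ and $q$ lie in $C_{pq}=\st(p)\cap\st(q)$. So for each of $x,y$ some axis is distinct from it and nonadjacent to it. There are two cases.

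\emph{Common axis.} Suppose a single axis $r\in\{p,q\}$ is nonadjacent to both $x$ and $y$. Then the remote-edge equation \eqref{eq:root-remote-edge-constancy} for the edge $xy$ with column $r$ gives $\alpha_x=\alpha_y$ directly.

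\emph{No common axis.} Otherwise, after interchanging $p$ and $q$ if necessary, $x\sim q$, $x\not\sim p$, $y\sim p$ and $y\not\sim q$. In this crossed configuration no remote-edge equation is available, since every relevant edge ($xy$, $xq$, $yp$) meets a neighbor of each axis. We use the neighborhood equations and the square-zero condition instead.
\begin{itemize}
\item Column $p$ at the vertex $x$: since $p\neq x$ and $px\notin E$, \eqref{eq:root-neighborhood-constancy} makes $\alpha$ constant on $\lk(x)\ni q,y$. Hence $\alpha_q=\alpha_y$.
\item Column $q$ at the vertex $y$: since $q\neq y$ and $qy\notin E$, \eqref{eq:root-neighborhood-constancy} makes $\alpha$ constant on $\lk(y)\ni p,x$. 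Hence $\alpha_p=\alpha_x$.
\item Square-zero: $\alpha_p=\alpha_q$ by \eqref{eq:edge-axis-square-zero}.
\end{itemize}
Chaining these equalities gives $\alpha_x=\alpha_p=\alpha_q=\alpha_y$.

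In both cases $\alpha_x=\alpha_y=\beta_x$, so $\delta_x=0$, which proves the contrapositive. For the stated equivalence, note that $x\neq p,q$. Then $\lk(x)\subseteq\st(p)\cap\st(q)$ says exactly that $x\preceq p$ and $x\preceq q$.

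The main obstacle is the crossed case, where the remote-edge equation is unavailable. The key point is to play the two neighborhood equations against each other and close the chain with the square-zero condition $\alpha_p=\alpha_q$; this is the only place the hypothesis $R^2=0$ enters.
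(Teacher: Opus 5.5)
Your proof is correct and is essentially the paper's argument. Your ``common axis'' case is the paper's first step, where the remote-edge equation forces $\lk(x)\subseteq\st(r)$ for every axis $r$ distinct from and nonadjacent to $x$; your crossed case is exactly the paper's remaining case, where the neighborhood equation at the neighbor $y$, combined with $\beta_x=\alpha_p=\alpha_q$, gives $\alpha_x=\beta_x$. The only difference is organizational: you argue by contrapositive and split on the configuration of the edge $xy$, whereas the paper argues by contradiction and splits on whether $x$ is nonadjacent to one axis or to both.
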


\begin{proof}
First let $r\in\{p,q\}$ be distinct from and nonadjacent to $x$.  If a
vertex $y\in \lk(x)$ were also distinct from and nonadjacent to $r$, then
the remote-edge equation~\eqref{eq:root-remote-edge-constancy}, applied
with eligible column $r$ to the edge $xy$, would give
$\alpha_x=\alpha_y=\beta_x$,
contrary to $\delta_x\neq0$.  Hence
\begin{equation}\label{eq:edge-axis-one-domination}
 \lk(x)\subseteq\st(r).
\end{equation}

If $x$ is distinct from and nonadjacent to both axes, applying
\eqref{eq:edge-axis-one-domination} first with $r=p$ and then with $r=q$
proves~\eqref{eq:edge-axis-deviation-dominated}.  It remains to treat the
case in which $x$ is adjacent to exactly one axis.  After interchanging
$p$ and $q$, suppose that $xp\in E$ and that $x,q$ are distinct and
nonadjacent.  Equation~\eqref{eq:edge-axis-one-domination} gives
$\lk(x)\subseteq\st(q)$.  Since $p\in \lk(x)$, the definition of $\beta_x$
and~\eqref{eq:edge-axis-square-zero} give
$\beta_x=\alpha_p=\alpha_q$.

Suppose, towards a contradiction, that some $y\in \lk(x)$ is distinct from
and nonadjacent to $p$.  The containment $\lk(x)\subseteq\st(q)$ and the
nonedge $xq$ show that $y\neq q$ and $yq\in E$.  The neighborhood
equation~\eqref{eq:root-neighborhood-constancy}, with eligible column
$p$ at the distinct nonadjacent vertex $y$, makes $\alpha$ constant on
$\lk(y)$.  Both $x$ and $q$ lie in $\lk(y)$, and hence
$\alpha_x=\alpha_q=\beta_x$,
again contradicting $\delta_x\neq0$.  Thus $\lk(x)\subseteq\st(p)$ as
well, proving~\eqref{eq:edge-axis-deviation-dominated}.  The case with
$p$ and $q$ interchanged is identical.
\end{proof}

\begin{proposition}
\label{prop:edge-coordinate-axis}
Let $p,q\in V$ be distinct and adjacent, and let
\[
 R=\alpha(e_p-e_q)^{\mathsf T}\in\CAlg
\]
be integral and square-zero.  Then $I+R$ is a product of an integral
ambient root automorphism and an integral sector shear.  In particular,
$I+R$ is realized by an automorphism of $H_\Gamma$.
\end{proposition}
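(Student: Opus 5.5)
The plan is to split the center $\alpha$ into an ambient part, supported on the dominated deviation vertices, and a sector part, which is locally constant away from $C_{pq}$. Define an integral vector $\gamma\in\Z^V$ by $\gamma_x=\delta_x$ for $x\notin C_{pq}$ with $\delta_x\neq0$, and $\gamma_x=0$ otherwise. Put $h=\alpha-\gamma$. Then $R=\bar\gamma\sigma^{\mathsf T}+\bar h\sigma^{\mathsf T}$, and I will realize the two summands separately.

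\textbf{The ambient summand.} Since $p,q\in C_{pq}$, we have $\gamma_p=\gamma_q=0$, so $\sigma(\gamma)=0$. By Lemma~\ref{lem:edge-axis-dominated-deviation}, every $k\in\operatorname{supp}(\gamma)$ satisfies $\lk(k)\subseteq C_{pq}\subseteq\st(p)\cap\st(q)$, so $k\preceq p$ and $k\preceq q$. Thus the pair $(\gamma,\sigma)$ satisfies the domination condition~\eqref{eq:ambient-domination-condition}. Lemma~\ref{lem:integral-ambient-realization} then gives a height-preserving ambient automorphism with cohomology action $I+\bar\gamma\sigma^{\mathsf T}$.

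\textbf{The sector summand.} Take $D=\{p,q\}$, which is a clique, and $z=pq^{-1}$. Then $C_D=C_{pq}$. Since $h_p=\alpha_p=\alpha_q=h_q$ by~\eqref{eq:edge-axis-square-zero}, we get $\sigma(h)=0$. It remains to check that $h_x=h_y$ for every edge $xy$ not wholly contained in $C_{pq}$.

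Say $x\notin C_{pq}$. By construction $h_x=\beta_x$, both when $\delta_x=0$ and when $\delta_x\neq0$. The main case analysis concerns the other endpoint $y$.
\begin{enumerate}[label=\textup{(\roman*)}]
\item If $\delta_x=0$, then $\alpha_x=\beta_x=\alpha_y$. Here I must show $h_y=\alpha_y$, that is, $\gamma_y=0$. If $\gamma_y\neq0$, Lemma~\ref{lem:edge-axis-dominated-deviation} gives $\lk(y)\subseteq C_{pq}$, so $x\in C_{pq}$, a contradiction.
\item If $\delta_x\neq0$, then $\lk(x)\subseteq C_{pq}$, so $y\in C_{pq}$. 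Hence $\gamma_y=0$ and $h_y=\alpha_y=\beta_x=h_x$.
\item If both $x,y\notin C_{pq}$, then $\delta_x=\delta_y=0$ by the dominated-deviation lemma, and the two values agree directly.
\end{enumerate}
Lemma~\ref{lem:sector-shear} then realizes $I+\bar h\sigma^{\mathsf T}$ by the sector shear $\Phi_h$.

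\textbf{Combining the two.} Both summands share the axis $\sigma$, and $\sigma$ vanishes on both centers. Lemma~\ref{lem:orthogonal-rank-one-products} therefore gives
\[
(I+\bar\gamma\sigma^{\mathsf T})(I+\bar h\sigma^{\mathsf T})=I+R,
\]
which proves the proposition. The step I expect to be the main obstacle is the edge-by-edge verification that $h$ jumps only on edges inside $C_{pq}$, in particular the consistency of $\beta$ across the boundary of $C_{pq}$. If that verification fails, I would fall back on refining $\gamma$ so that it absorbs the offending vertices, using Lemma~\ref{lem:edge-axis-dominated-deviation} again.
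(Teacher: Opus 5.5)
Your proposal is correct and follows the paper's proof essentially verbatim. Your $\gamma$ is the paper's $a$; the ambient factor comes from Lemma~\ref{lem:edge-axis-dominated-deviation} together with Lemma~\ref{lem:integral-ambient-realization}; the sector factor comes from Lemma~\ref{lem:sector-shear}; and the product is exact because $\sigma(\gamma)=\sigma(h)=0$. Your three-case edge check is just an unpacked form of the paper's single observation that $\gamma_y=0$ whenever $xy\in E$ and $x\notin C_{pq}$. Case (iii) is already covered by (i), and the fallback you mention is never needed.
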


\begin{proof}
Use the notation~\eqref{eq:edge-axis-centralizer}--
\eqref{eq:edge-axis-deviation}.  Define integral vertex functions
\begin{equation}\label{eq:edge-axis-center-splitting}
 a_x=
 \begin{cases}
   \delta_x,&x\notin C_{pq},\\
   0,&x\in C_{pq},
 \end{cases}
 \qquad
 h=\alpha-a.
\end{equation}
We verify separately that $a\sigma^{\mathsf T}$ is ambient and that
$h\sigma^{\mathsf T}$ is a sector root.

If $a_x\neq0$, then $x\notin C_{pq}$ and $\delta_x\neq0$.
Lemma~\ref{lem:edge-axis-dominated-deviation} gives
\[
 \lk(x)\subseteq\st(p)\cap\st(q),
 \quad\text{so}\quad x\preceq p\ \text{ and }\ x\preceq q.
\]
Moreover $p,q\in C_{pq}$ because $pq\in E$, and hence
\begin{equation}\label{eq:edge-axis-a-pairing}
 a_p=a_q=0,
 \qquad
 \sigma(a)=0.
\end{equation}
Thus
\[
 a_x\sigma_i\neq0
 \quad\Longrightarrow\quad
 x\preceq i
 \quad(i\in\{p,q\}),
\]
and Lemma~\ref{lem:integral-ambient-realization} realizes
$I+a\sigma^{\mathsf T}$ by a height-preserving ambient automorphism.

Next consider an edge $xy\in E$ that is not wholly contained in
$C_{pq}$.  Relabel its endpoints so that $x\notin C_{pq}$.  From
\eqref{eq:edge-axis-deviation} and
\eqref{eq:edge-axis-center-splitting},
\begin{equation}\label{eq:edge-axis-h-outside}
 h_x=\alpha_x-\delta_x=\beta_x=\alpha_y.
\end{equation}
We claim that $a_y=0$.  This is immediate if $y\in C_{pq}$.  If
$y\notin C_{pq}$ and $a_y=\delta_y\neq0$, then
Lemma~\ref{lem:edge-axis-dominated-deviation} would put
$\lk(y)\subseteq C_{pq}$, contradicting
$x\in \lk(y)\setminus C_{pq}$.  Hence $a_y=0$ in either case, and
\eqref{eq:edge-axis-h-outside} becomes
\begin{equation}\label{eq:edge-axis-sector-constancy}
 h_x=h_y
 \quad\text{for every edge }xy\in E
       \text{ not wholly contained in }C_{pq}.
\end{equation}

Because $a_p=a_q=0$, equations
\eqref{eq:edge-axis-square-zero} and
\eqref{eq:edge-axis-center-splitting} also give
\begin{equation}\label{eq:edge-axis-h-pairing}
 \sigma(h)=h_p-h_q=\alpha_p-\alpha_q=0.
\end{equation}
The set $D=\{p,q\}$ is a clique,
$C_D=C_{pq}$, and the axis element in
\eqref{eq:sector-axis-and-centralizer} is $z=pq^{-1}$.  Thus
\eqref{eq:edge-axis-sector-constancy} and
\eqref{eq:edge-axis-h-pairing} are precisely the hypotheses of
Lemma~\ref{lem:sector-shear}.  That lemma realizes
$I+\bar h\sigma^{\mathsf T}$ by the integral sector shear
\[
 d_{uv}\longmapsto z^{-h_u}d_{uv}z^{h_v}.
\]

It remains to verify that these two realized factors multiply to the
target, rather than merely adding infinitesimally.  Put
\[
 R_a=\bar a\sigma^{\mathsf T},
 \qquad
 R_h=\bar h\sigma^{\mathsf T}.
\]
Equations~\eqref{eq:edge-axis-a-pairing} and
\eqref{eq:edge-axis-h-pairing} imply
\[
 R_aR_h=\sigma(h)\,\bar a\sigma^{\mathsf T}=0,
 \qquad
 R_hR_a=\sigma(a)\,\bar h\sigma^{\mathsf T}=0.
\]
Therefore the order of the two cohomological matrix factors is
immaterial (no commutativity assertion about the realizing automorphisms is
needed), and
\[
 (I+R_a)(I+R_h)
 =I+R_a+R_h
 =I+(\bar a+\bar h)\sigma^{\mathsf T}
 =I+\bar\alpha(e_p-e_q)^{\mathsf T}
 =I+R.
\]
All exponents and matrix entries are integral by construction.  Composing
the ambient automorphism with the sector shear therefore realizes $I+R$
exactly.
\end{proof}

\subsection{Level decomposition of a general axis}

We return to an arbitrary integral square-zero root
\[
 R=\alpha\sigma^{\mathsf T}\in\CAlg,
 \qquad
 S_\sigma=\operatorname{supp}(\sigma).
\]
If $S_\sigma$ contains distinct nonadjacent vertices $p,q$ with
$\alpha_p\neq\alpha_q$, Proposition
\ref{prop:distinct-level-nonedge-ambient} realizes $I+R$ directly.  We may
therefore assume for the remainder of this subsection that
\begin{equation}\label{eq:no-distinct-level-nonedge}
 p,q\in S_\sigma,\ p\neq q,\ pq\notin E
 \quad\Longrightarrow\quad
 \alpha_p=\alpha_q.
\end{equation}

For every value $c$ taken by $\alpha$ on $S_\sigma$, put
\begin{equation}\label{eq:axis-levels-and-masses}
 S_c=\{i\in S_\sigma:\alpha_i=c\},
 \qquad
 m_c=\sum_{i\in S_c}\sigma_i,
\end{equation}
choose a representative $r_c\in S_c$, and define
\begin{equation}\label{eq:level-axis-decomposition}
 \tau_c
 =\sum_{i\in S_c}\sigma_i e_i-m_ce_{r_c}
 =\sum_{i\in S_c\setminus\{r_c\}}
       \sigma_i(e_i-e_{r_c}),
 \qquad
 \sigma_0=\sum_c m_ce_{r_c}.
\end{equation}
These are integral vectors and
\begin{equation}\label{eq:level-axis-identities}
 \sigma=\sigma_0+\sum_c\tau_c,
 \qquad
 \epsilon(\tau_c)=\tau_c(\alpha)=0,
 \qquad
 \epsilon(\sigma_0)=\sigma_0(\alpha)=0.
\end{equation}
Indeed, the two assertions for $\tau_c$ follow because $\alpha$ has the
constant value $c$ on $S_c$.  Summing the masses gives
\[
 \epsilon(\sigma_0)=\sum_c m_c
 =\sum_{i\in S_\sigma}\sigma_i=\epsilon(\sigma)=0,
\]
while square-zero gives
\[
 \sigma_0(\alpha)=\sum_c m_cc
 =\sum_{i\in S_\sigma}\sigma_i\alpha_i
 =\sigma(\alpha)=0.
\]

For $i\in S_c\setminus\{r_c\}$, set
\begin{equation}\label{eq:level-coordinate-root}
 R_{c,i}
 =\sigma_i\alpha(e_i-e_{r_c})^{\mathsf T}.
\end{equation}
Each $R_{c,i}$ is an integral square-zero element of $\CAlg$.  To see
membership, recall the column criterion preceding Proposition
\ref{prop:component-algebra-products}.  Since both $i$ and $r_c$ belong
to $S_\sigma$, the corresponding nonzero columns of $R$ show that $\alpha$ is an
allowed component-constant column for both
$\mathcal P_i^{(3)}$ and $\mathcal P_{r_c}^{(3)}$.  The only nonzero
columns of~\eqref{eq:level-coordinate-root} are scalar multiples of those
two allowed columns; all other columns vanish.  The covector
$e_i-e_{r_c}$ has coordinate sum zero, so the matrix also annihilates
$\mathbf1$ and descends to $W$.  Moreover
$R_{c,i}^2 =\sigma_i(\alpha_i-\alpha_{r_c})R_{c,i}=0$
because $i,r_c\in S_c$.  Proposition
\ref{prop:nonedge-coordinate-axis} realizes this root if $i$ and $r_c$
are distinct and nonadjacent, while Proposition
\ref{prop:edge-coordinate-axis} realizes it if they are adjacent.

Define the remaining root
\begin{equation}\label{eq:level-clique-root}
 R_0=\alpha\sigma_0^{\mathsf T}
 =R-\sum_c\sum_{i\in S_c\setminus\{r_c\}}R_{c,i}.
\end{equation}
It is integral and belongs to $\CAlg$ because every term on the right
does.  Equation~\eqref{eq:level-axis-identities} gives $R_0^2=0$.
If $\sigma_0=0$, there is no remaining factor.  Otherwise, after the zero
masses $m_c$ are omitted, its axis support
$D=\operatorname{supp}(\sigma_0) =\{r_c:m_c\neq0\}$
is a clique.  Indeed, two representatives belonging to different levels
have different $\alpha$-values, so
\eqref{eq:no-distinct-level-nonedge} forces them to be adjacent.

Finally, the decomposition is exact at the group-matrix level.  Every
summand in~\eqref{eq:level-coordinate-root} and
\eqref{eq:level-clique-root} has the form
$\alpha\xi^{\mathsf T}$ with $\xi(\alpha)=0$.  Thus the product of any two
of these roots, in either order, is zero.  Therefore
\begin{equation}\label{eq:level-exact-product}
 I+R
 =(I+R_0)
   \prod_c\prod_{i\in S_c\setminus\{r_c\}}(I+R_{c,i}),
\end{equation}
and the order of the displayed matrix factors is immaterial.  Hence
realization of a general axis has been reduced, integrally and without
denominator, to the case in which its support $D$ is a clique.

\subsection{Clique-axis induction}

Let
\begin{equation}\label{eq:clique-axis-data}
 R=\alpha\sigma^{\mathsf T}\in\CAlg,
 \qquad
 D=\operatorname{supp}(\sigma)\text{ a clique},
 \qquad
 C_D=\bigcap_{r\in D}\st(r),
 \qquad
 z=\prod_{r\in D}r^{\sigma_r}.
\end{equation}
The vector $\sigma$ lies in $L_{\Z}$, so $z\in H_\Gamma$.  Since $D$ is
a clique, one has $D\subseteq C_D$, the product defining $z$ is
unambiguous, and every vertex of $C_D$ commutes with $z$.

For $x\notin C_D$, choose an axis $r\in D$ distinct from and nonadjacent
to $x$.  The neighborhood equation
\eqref{eq:root-neighborhood-constancy} makes $\alpha$ constant on the
nonempty set $\lk(x)$ (indeed, $\Gamma$ has minimum degree at least two).
Define
\begin{equation}\label{eq:clique-axis-deviation}
 \beta_x=\alpha_y\quad(y\in \lk(x)),
 \qquad
 \delta_x=\alpha_x-\beta_x.
\end{equation}
As before, this does not depend on the chosen neighbor or eligible axis,
because every eligible column asserts constancy on the same
neighborhood.

\begin{lemma}
\label{lem:clique-remote-axis-domination}
Suppose that $x\notin C_D$ and $\delta_x\neq0$.  If $r\in D$ is distinct
from and nonadjacent to $x$, then $r$ dominates $x$:
\begin{equation}\label{eq:clique-remote-axis-domination}
 \lk(x)\subseteq\st(r).
\end{equation}
\end{lemma}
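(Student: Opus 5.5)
This is the clique analogue of the first paragraph of the proof of Lemma~\ref{lem:edge-axis-dominated-deviation}, and I would prove it the same way, by contradiction using the remote-edge equation. Fix $x\notin C_D$ with $\delta_x\neq0$, and fix an axis $r\in D$ that is distinct from and nonadjacent to $x$. Since $r\in S_\sigma=D$, column $r$ of $R$ is the nonzero multiple $\sigma_r\alpha$. Hence both the neighborhood equation~\eqref{eq:root-neighborhood-constancy} and the remote-edge equation~\eqref{eq:root-remote-edge-constancy} are available with eligible column $i=r$.

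Suppose, toward a contradiction, that \eqref{eq:clique-remote-axis-domination} fails. Then there is a vertex $y\in\lk(x)\setminus\st(r)$, so $y$ is distinct from and nonadjacent to $r$. Now $xy$ is an edge, and neither endpoint is adjacent to $r$; the vertex $x$ is not adjacent to $r$ by the choice of $r$, and $y$ is not by its choice. Applying \eqref{eq:root-remote-edge-constancy} with $i=r$ to the edge $xy$ therefore gives $\alpha_x=\alpha_y$. By the definition \eqref{eq:clique-axis-deviation}, $\alpha_y=\beta_x$, because $y\in\lk(x)$ and $\beta_x$ is independent of both the chosen neighbor and the chosen eligible axis, as remarked just before the lemma. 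Thus $\delta_x=\alpha_x-\beta_x=0$, which is a contradiction. So every neighbor of $x$ lies in $\st(r)$, and $r$ dominates $x$.

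The only step that needs care is confirming that the remote-edge equation really applies. This requires $r\in S_\sigma$, which holds because $D=\operatorname{supp}(\sigma)$, and requires both $rx\notin E$ and $ry\notin E$. I do not expect any real obstacle here, and in particular no case analysis over several axes is needed, unlike the adjacent-axis case in Lemma~\ref{lem:edge-axis-dominated-deviation}. The harder work presumably comes later, when handling axes $r\in D$ that are adjacent to $x$.
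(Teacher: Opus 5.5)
Your proposal is correct and is essentially the paper's own proof: pick $y\in\lk(x)\setminus\st(r)$, apply the remote-edge equation~\eqref{eq:root-remote-edge-constancy} with eligible column $r$ to the edge $xy$, and get $\alpha_x=\alpha_y=\beta_x$, which contradicts $\delta_x\neq0$. Your check that $r\in D=\operatorname{supp}(\sigma)$ and that $r$ is distinct from and nonadjacent to both $x$ and $y$ is exactly the verification the paper makes.
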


\begin{proof}
Otherwise choose $y\in \lk(x)\setminus\st(r)$.  The vertices $r,x$ are
distinct and nonadjacent, as are $r,y$.  The remote-edge equation
\eqref{eq:root-remote-edge-constancy}, applied with eligible column $r$
to the edge $xy$, gives
\[
 \alpha_x=\alpha_y=\beta_x,
\]
contrary to $\delta_x\neq0$.
\end{proof}

\begin{proposition}
\label{prop:clique-axis-realization}
Every integral square-zero root~\eqref{eq:clique-axis-data} is realized by
an automorphism of $H_\Gamma$.
\end{proposition}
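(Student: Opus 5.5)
We follow the pattern of Proposition~\ref{prop:edge-coordinate-axis}: split the center $\alpha$ into an ambient deviation part and a sector part.

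Define $a_x=\delta_x$ for $x\notin C_D$ and $a_x=0$ for $x\in C_D$, and put $h=\alpha-a$. Since $D\subseteq C_D$, we have $a_r=0$ for all $r\in D$, hence $\sigma(a)=0$ and $\sigma(h)=\sigma(\alpha)=0$. The root then factors exactly as
\[
 I+R=(I+\bar a\sigma^{\mathsf T})(I+\bar h\sigma^{\mathsf T}),
\]
because both factors share the axis $\sigma$ and both centers pair to zero with it. Lemma~\ref{lem:orthogonal-rank-one-products} applies.

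\emph{Ambient factor.} We must verify the domination condition~\eqref{eq:ambient-domination-condition}: if $a_x\neq0$ and $r\in D$, then $x\preceq r$. If $r$ is distinct from and nonadjacent to $x$, this is Lemma~\ref{lem:clique-remote-axis-domination}.

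If $r$ is adjacent to $x$, we need $\lk(x)\subseteq\st(r)$, and this is the main obstacle. Since $x\notin C_D$, there is an axis $r'\in D$ nonadjacent to $x$, and Lemma~\ref{lem:clique-remote-axis-domination} gives $\lk(x)\subseteq\st(r')$. In particular $\beta_x=\alpha_r$, since $r\in\lk(x)$. Suppose some $y\in\lk(x)$ is distinct from and nonadjacent to $r$. Then $y\in\st(r')$, and $y\neq r'$ because $xr'\notin E$. So $y$ is adjacent to $r'$.

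We now use the repeated-letter equation~\eqref{eq:root-neighborhood-constancy} with eligible column $r$ at $y$. It makes $\alpha$ constant on $\lk(y)\ni x,r'$, so $\alpha_x=\alpha_{r'}$. This mimics the second half of Lemma~\ref{lem:edge-axis-dominated-deviation}, and to finish it I would need $\alpha_{r'}=\beta_x$. One route is to note that $r'$ is adjacent to $r\in\lk(x)$ and also to $y\in\lk(x)$. Since $\alpha$ is constant on $\lk(x)$, this gives $\beta_x=\alpha_y=\alpha_r$. Then I would try to show $\alpha_{r'}=\alpha_r$, or argue directly with the equation at $y$: $\lk(y)$ contains $x$ and $r'$, but possibly not a vertex of $\lk(x)$ whose value is known.

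If this fails, the fallback is to apply~\eqref{eq:root-neighborhood-constancy} with column $r'$ at $x$ itself to relate $\alpha_y$ and $\alpha_r$, both equal to $\beta_x$. Combining that with constancy on $\lk(y)$ through a common neighbor of $x$ and $y$ should force $\alpha_x=\beta_x$, contradicting $\delta_x\neq0$. The case $x=r$ cannot occur, because $D\subseteq C_D$. Once domination holds, Lemma~\ref{lem:integral-ambient-realization} realizes $I+\bar a\sigma^{\mathsf T}$ by a height-preserving ambient automorphism.

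\emph{Sector factor.} We must check the hypotheses of Lemma~\ref{lem:sector-shear}. The condition $\sigma(h)=0$ is already established. For an edge $xy$ not contained in $C_D$, take $x\notin C_D$. Then $h_x=\beta_x=\alpha_y$. Moreover $a_y=0$: either $y\in C_D$, or else $\delta_y\neq0$ would, by the domination just proved, give $\lk(y)\subseteq C_D$, contradicting $x\in\lk(y)\setminus C_D$. Hence $h_y=\alpha_y=h_x$. Lemma~\ref{lem:sector-shear} then gives the shear $d_{uv}\mapsto z^{-h_u}d_{uv}z^{h_v}$ with cohomology action $I+\bar h\sigma^{\mathsf T}$. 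Composing the two automorphisms realizes $I+R$.
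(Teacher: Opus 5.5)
Your proof has a genuine gap at the step you flag as the main obstacle. The claim that every axis $r\in D$ adjacent to a deviating vertex $x$ (with $x\notin C_D$ and $\delta_x\neq0$) dominates $x$ is false in general, so neither of your fallback routes can work.

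Here is a counterexample. Take the $4$-clique $D=\{i_1,i_2,r_1,r_2\}$ and add vertices $x,y$ with edges $xi_1,xi_2,xy,yr_1,yr_2$.
\begin{itemize}
\item This graph is biconnected. Its minimal separators are $\{i_1,i_2,y\}$ and $\{x,r_1,r_2\}$, and $C_D=D$.
\item Put $\alpha=\mathbf1_{\{r_1,r_2,x\}}$ and $\sigma=e_{i_1}-e_{i_2}+e_{r_1}-e_{r_2}$. Then $\epsilon(\sigma)=\sigma(\alpha)=0$.
\item The column criterion holds. For the columns $i_1,i_2$ the only nontrivial component of $\mathcal P^{(3)}$ is $\{x,r_1,r_2\}$, coming from $\lk(y)$ and a separator. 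For the columns $r_1,r_2$ it is $\{i_1,i_2,y\}$. In both cases $\alpha$ is constant on that component.
\end{itemize}
So $R=\alpha\sigma^{\mathsf T}\in\CAlg$ is an integral square-zero root with clique support.

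In this example $\beta_x=0$ and $\delta_x=1$. But $x\in\lk(i_1)$ while $y\in\lk(x)\setminus\st(i_1)$, so $x\not\preceq i_1$.
\begin{itemize}
\item Your ambient factor $I+\bar a\sigma^{\mathsf T}$, with $a=e_x-e_y$, violates the hypothesis of Lemma~\ref{lem:integral-ambient-realization}.
\item Your sector factor also fails: $h=\alpha-a$ has $h_x=0\neq1=h_y$ on the edge $xy\not\subseteq C_D$.
\end{itemize}
The equality you hoped to derive, $\alpha_{r'}=\beta_x$, is exactly what fails: $\alpha_{r_1}=1\neq0=\alpha_{i_1}$.

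Your template, Lemma~\ref{lem:edge-axis-dominated-deviation}, closes only because for $|D|=2$ the square-zero condition forces the two axis values to be equal. For larger cliques, square-zero gives only one linear relation among the axis values.

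The missing idea is a splitting step combined with induction on $|D|$. Suppose such a configuration occurs, and put $A=D\cap\lk(x)$ and $B=D\setminus A$.
\begin{itemize}
\item On $A$ one has $\alpha\equiv\beta_x$.
\item Every $r\in B$ dominates $x$ by Lemma~\ref{lem:clique-remote-axis-domination}, so $r\in\lk(y)$. Your own computation (the neighborhood equation at $y$ with eligible column $i$) then gives $\alpha\equiv\alpha_x$ on all of $B$.
\item Write $s_A,s_B$ for the sums of $\sigma$ over $A,B$. The conditions $\epsilon(\sigma)=0$ and $\sigma(\alpha)=0$ become $s_A+s_B=0$ and $\beta_xs_A+\alpha_xs_B=0$. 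Since $\delta_x\neq0$, this forces $s_A=s_B=0$.
\end{itemize}
Hence $R=\alpha\sigma_A^{\mathsf T}+\alpha\sigma_B^{\mathsf T}$. Both summands are integral square-zero roots in $\CAlg$, each has strictly smaller clique support, and they annihilate each other. So $I+R$ is the product of two smaller realizations; in the example these are two edge-axis roots handled by Proposition~\ref{prop:edge-coordinate-axis}.

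Only when no such configuration remains does your ambient--sector factorization apply. In that terminal case your sector-factor check is correct, because it uses exactly the domination that the absence of splitting supplies.

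The same mass computation shows the configuration cannot occur when the axis vertices have pairwise distinct $\alpha$-values, as for the root $R_0$ of the level decomposition. But the proposition covers arbitrary clique roots, and even in that special case your argument needs this mass computation, which it never uses.
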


\begin{proof}
We induct on $|D|$.  A nonzero vector of $L_{\Z}$ cannot have support of
size one.  If $D=\{p,q\}$, then
$\sigma=n(e_p-e_q)$ for some nonzero $n\in\Z$; absorbing $n$ into the
center reduces the case $|D|=2$ to Proposition
\ref{prop:edge-coordinate-axis}.  Assume $|D|\geq3$ and that the result is
known for smaller clique supports.

We first identify the recursive splitting case.  Suppose there are
$x\notin C_D$ with $\delta_x\neq0$ and an adjacent axis
$i\in D\cap \lk(x)$ that does not dominate $x$.  Put $A=D\cap \lk(x)$ and $B=D\setminus A$.
The set $A$ is nonempty because it contains $i$, while $B$ is nonempty
because $x\notin C_D$.  Every vertex of $A$ lies in $\lk(x)$, so
\begin{equation}\label{eq:clique-splitting-level-A}
 \alpha_r=\beta_x\quad(r\in A).
\end{equation}

Since $i$ does not dominate $x$, choose
$y\in \lk(x)\setminus\st(i)$.  For every $r\in B$, the vertices $r,x$ are
distinct and nonadjacent.  Lemma
\ref{lem:clique-remote-axis-domination} gives
$\lk(x)\subseteq\st(r)$.  Hence $y$ is adjacent to $r$: indeed,
$y\neq r$ because $y\in \lk(x)$ whereas $r\notin \lk(x)$.  Thus
$B\subseteq \lk(y)$, and also $x\in \lk(y)$.  The vertices $i,y$ are distinct
and nonadjacent, so the neighborhood equation for eligible column $i$
makes $\alpha$ constant on $\lk(y)$.  It follows that
\begin{equation}\label{eq:clique-splitting-level-B}
 \alpha_r=\alpha_x\quad(r\in B).
\end{equation}

Let $s_A=\sum_{r\in A}\sigma_r$ and
$s_B=\sum_{r\in B}\sigma_r$.
The conditions $\epsilon(\sigma)=0$ and $\sigma(\alpha)=0$, together
with~\eqref{eq:clique-splitting-level-A} and
\eqref{eq:clique-splitting-level-B}, give $s_A+s_B=0$ and
$\beta_xs_A+\alpha_xs_B=0$.
Because $\alpha_x-\beta_x=\delta_x\neq0$, these equations force
\begin{equation}\label{eq:clique-splitting-zero-masses}
 s_A=s_B=0.
\end{equation}

Define the integral restrictions
\[
 \sigma_A=\sum_{r\in A}\sigma_re_r,
 \qquad
 \sigma_B=\sum_{r\in B}\sigma_re_r,
 \qquad
 R_A=\alpha\sigma_A^{\mathsf T},
 \qquad
 R_B=\alpha\sigma_B^{\mathsf T}.
\]
Equation~\eqref{eq:clique-splitting-zero-masses} puts
$\sigma_A,\sigma_B$ in $L_{\Z}$, so both child matrices annihilate
$\mathbf1$ and descend to $W$.  Their nonzero columns are exactly the
corresponding columns of $R$, while all omitted columns are zero; hence the
component-column criterion puts both child roots in $\CAlg$.  Equations
\eqref{eq:clique-splitting-level-A} and
\eqref{eq:clique-splitting-level-B} give
\[
 \sigma_A(\alpha)=\beta_xs_A=0,
 \qquad
 \sigma_B(\alpha)=\alpha_xs_B=0.
\]
Thus both child roots are integral and square-zero.  Their supports are
the proper nonempty subcliques $A$ and $B$, so induction realizes
$I+R_A$ and $I+R_B$.  Moreover
\[
 R_AR_B=\sigma_A(\alpha)\alpha\sigma_B^{\mathsf T}=0,
 \qquad
 R_BR_A=\sigma_B(\alpha)\alpha\sigma_A^{\mathsf T}=0.
\]
Therefore
\begin{equation}\label{eq:clique-splitting-exact-product}
 (I+R_A)(I+R_B)=I+R_A+R_B=I+R
\end{equation}
exactly.  This completes the induction in the splitting case.

It remains to treat the case in which no such splitting configuration
exists.  If $x\notin C_D$ and $\delta_x\neq0$, every axis distinct from
and nonadjacent to $x$ dominates it by
Lemma~\ref{lem:clique-remote-axis-domination}, while
every axis adjacent to $x$ dominates it by the absence of a splitting
configuration.  Hence
\begin{equation}\label{eq:clique-terminal-domination}
 \delta_x\neq0,\ x\notin C_D
 \quad\Longrightarrow\quad
 \lk(x)\subseteq C_D.
\end{equation}

Define integral vertex functions
\begin{equation}\label{eq:clique-terminal-splitting}
 a_x=
 \begin{cases}
  \delta_x,&x\notin C_D,\\
  0,&x\in C_D,
 \end{cases}
 \qquad
 h=\alpha-a.
\end{equation}
Since $D\subseteq C_D$, one has $a_r=0$ for every $r\in D$, and therefore
\begin{equation}\label{eq:clique-terminal-pairings}
 \sigma(a)=0,
 \qquad
 \sigma(h)=\sigma(\alpha)-\sigma(a)=0.
\end{equation}
If $a_x\neq0$, then~\eqref{eq:clique-terminal-domination} gives
$\lk(x)\subseteq C_D\subseteq\st(r)$ for every $r\in D$.
Thus every axis in the support of $\sigma$ dominates every vertex in the
support of $a$.  Lemma~\ref{lem:integral-ambient-realization} realizes
$I+\bar a\sigma^{\mathsf T}$ by a height-preserving ambient
automorphism.

For an edge $xy$ not wholly contained in $C_D$, relabel the endpoints so
that $x\notin C_D$.  Equations~\eqref{eq:clique-axis-deviation} and
\eqref{eq:clique-terminal-splitting} give
$h_x=\alpha_x-\delta_x=\beta_x=\alpha_y$.
If $y\in C_D$, then $a_y=0$ by definition.  If $y\notin C_D$ and
$a_y=\delta_y\neq0$, equation~\eqref{eq:clique-terminal-domination}
would put $\lk(y)\subseteq C_D$, contradicting
$x\in \lk(y)\setminus C_D$.  Hence $a_y=0$ in either case, and therefore
\begin{equation}\label{eq:clique-terminal-sector-constancy}
 h_x=h_y
 \quad\text{whenever }xy\in E
       \text{ is not wholly contained in }C_D.
\end{equation}
Together with~\eqref{eq:clique-terminal-pairings}, this is precisely the
hypothesis of Lemma~\ref{lem:sector-shear} for the clique $D$ and the
element $z$ in~\eqref{eq:clique-axis-data}.  The lemma realizes
$I+\bar h\sigma^{\mathsf T}$ by an integral sector shear.

Finally, put
$R_a=\bar a\sigma^{\mathsf T}$ and
$R_h=\bar h\sigma^{\mathsf T}$.  Equation
\eqref{eq:clique-terminal-pairings} gives $R_aR_h=0$ and $R_hR_a=0$.
Hence
\[
 (I+R_a)(I+R_h)
 =I+R_a+R_h
 =I+\bar\alpha\sigma^{\mathsf T}
 =I+R.
\]
This realizes the terminal root exactly over the integers and completes
the induction.
\end{proof}

\subsection{Completion of the realization theorem}

\begin{theorem}
\label{thm:integral-root-realization}
Let $\Gamma$ be finite and biconnected with at least three vertices.  If
\[
 R\in\CAlg\cap\End(W_{\Z}),
 \qquad
 \rank_{\Q}R=1,
 \qquad
 R^2=0,
\]
then there exists $\Phi\in\Aut(H_\Gamma)$ such that
$\rho_W(\Phi)=I+R$.
\end{theorem}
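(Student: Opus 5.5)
The proof assembles the case analysis already carried out in this section. We will write $R=\alpha\sigma^{\mathsf T}$ as in \eqref{eq:integral-root-factorization}, with $\alpha$ primitive in $W_{\Z}$ and $\sigma\in L_{\Z}$ integral. Then \eqref{eq:rank-one-square} turns $R^2=0$ into $\sigma(\alpha)=0$. Membership $R\in\CAlg$ gives the column equations \eqref{eq:root-neighborhood-constancy} and \eqref{eq:root-remote-edge-constancy} for every $i\in S_\sigma$. These are the only inputs used by the preceding propositions.

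The first dichotomy depends on whether \eqref{eq:no-distinct-level-nonedge} fails. Suppose $S_\sigma$ contains distinct nonadjacent vertices $p,q$ with $\alpha_p\neq\alpha_q$. Then Proposition~\ref{prop:distinct-level-nonedge-ambient} produces a height-preserving ambient automorphism of $A_\Gamma$. Its restriction $\Phi$ satisfies $\rho_W(\Phi)=I+R$, and this case is finished.

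Otherwise \eqref{eq:no-distinct-level-nonedge} holds, and we apply the level decomposition \eqref{eq:level-axis-decomposition}--\eqref{eq:level-clique-root}. It writes $I+R$ exactly as the product \eqref{eq:level-exact-product} of the coordinate factors $I+R_{c,i}$ and the clique factor $I+R_0$. The product is exact because all factors share the center $\alpha$ and every axis annihilates $\alpha$ (Lemma~\ref{lem:orthogonal-rank-one-products}). We realize each factor separately.

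\emph{Coordinate factors.} When $i$ and $r_c$ are nonadjacent, $I+R_{c,i}$ is realized by Proposition~\ref{prop:nonedge-coordinate-axis}. When they are adjacent, it is realized by Proposition~\ref{prop:edge-coordinate-axis}.

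\emph{Clique factor.} If $\sigma_0\neq0$, its support is a clique, so $I+R_0$ is realized by Proposition~\ref{prop:clique-axis-realization}.

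Since $\rho_W$ is a homomorphism under the convention \eqref{eq:cohomology-representation-convention}, composing the realizing automorphisms in the displayed order gives $\Phi$ with $\rho_W(\Phi)=I+R$.

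The only point needing care is that each factor meets the hypotheses of the proposition applied to it. Each factor must be a nonzero integral square-zero element of $\CAlg$, and for the coordinate factors the center must be used in the normalized form those propositions expect. Integrality, membership and square-zero were verified when the decomposition was constructed. Zero factors, for instance $R_0=0$ or $\sigma_i=0$, are simply omitted.

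There is one small wrinkle. The factor $R_{c,i}=\sigma_i\alpha(e_i-e_{r_c})^{\mathsf T}$ carries the scalar $\sigma_i$. The nonedge and edge coordinate-axis propositions require only that the matrix have the form $\alpha'(e_p-e_q)^{\mathsf T}$ with $\alpha'$ integral. We therefore take $\alpha'=\sigma_i\alpha$, which keeps everything integral and introduces no denominators. Rank one holds because $\alpha\neq0$ in $W_{\Z}$ and $\sigma_i\neq0$.

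We expect no further obstacle. The substantive work, namely the sector-shear relation check and the clique induction, is already contained in Lemma~\ref{lem:sector-shear} and Proposition~\ref{prop:clique-axis-realization}.
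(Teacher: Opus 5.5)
Your proposal is correct and follows the paper's proof. It uses the same first dichotomy via Proposition~\ref{prop:distinct-level-nonedge-ambient}, then the level decomposition into coordinate-axis roots (Propositions~\ref{prop:nonedge-coordinate-axis} and~\ref{prop:edge-coordinate-axis}) and the clique-axis root (Proposition~\ref{prop:clique-axis-realization}), with exactness from \eqref{eq:level-exact-product}, equivalently Lemma~\ref{lem:orthogonal-rank-one-products}. Your explicit step of absorbing $\sigma_i$ into the center $\alpha'=\sigma_i\alpha$ is left implicit in the paper and is handled correctly.
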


\begin{proof}
Use the integral factorization
$R=\alpha\sigma^{\mathsf T}$ from
\eqref{eq:integral-root-factorization}.  If the axis support contains a
distinct nonadjacent pair at different $\alpha$-levels, Proposition
\ref{prop:distinct-level-nonedge-ambient} realizes the whole root by one
height-preserving ambient automorphism.

Otherwise the level decomposition
\eqref{eq:level-axis-decomposition} expresses $R$ as the sum of integral
coordinate-axis roots and the clique-axis root $R_0$.  Every coordinate
root is realized by Proposition~\ref{prop:nonedge-coordinate-axis} or
Proposition~\ref{prop:edge-coordinate-axis}.  If $R_0\neq0$, it is
realized by Proposition~\ref{prop:clique-axis-realization}; if $R_0=0$,
the factor is omitted.  Equation
\eqref{eq:level-exact-product}, or equivalently
Lemma~\ref{lem:orthogonal-rank-one-products}, shows that the corresponding
cohomological matrices multiply exactly to $I+R$.  Inside the clique
induction, every recursive split is likewise exact by
\eqref{eq:clique-splitting-exact-product}, and each terminal factor is an
exact ambient--sector product.  All centers, axes, and shear exponents
remain integral throughout.  Composing the realizing automorphisms therefore
gives the required $\Phi$.
\end{proof}

The following consequence will be used in
Section~\ref{sec:arithmeticity}.  Let
$\Order=\CAlg\cap\End(W_{\Z})$
and define the all-root subgroup
\begin{equation}\label{eq:all-root-subgroup}
 \Elementary
 =
 \left\langle
  I+N:
  N\in\Order,\ 
  \rank_{\Q}N=1,\ 
  N^2=0
 \right\rangle
 \leq\Order^\times.
\end{equation}
The displayed generators are units because $(I+N)^{-1}=I-N$.
Theorem~\ref{thm:integral-root-realization}, with the homomorphic
cohomological convention fixed in Section~\ref{sec:preliminaries}, gives
\begin{equation}\label{eq:all-roots-in-cohomological-image}
 \Elementary\leq\HomImage.
\end{equation}
Thus $\Elementary$ lies in the actual cohomological image of
$\Aut(H_\Gamma)$; preservation of the quadratic, cubic, and separator
invariants alone would give only membership in the integral stabilizer.
\section{Arithmeticity of the cohomological image}
\label{sec:arithmeticity}

Throughout this section, $\Gamma$ is finite and biconnected with at least
three vertices.  We first place the all-root subgroup in the derived group
of the cubic algebraic unit group, prove its Zariski density there, and
prove arithmeticity of its image in the semisimple quotient, including the
rank-two factors.  We then prove Zariski density and finite index in the
integral points of the unipotent radical, analyze the split torus quotient,
and compare the resulting finite-index root subgroup with the actual
cohomological image.

Put $W_{\Z}=W_\Gamma$ and
$\Order=\CAlg\cap\End(W_{\Z})$.
Its unit group is $\Order^\times$.
Because $\CAlg$ is a rational unital subalgebra of $\End(W)$,
$\Order$ is a full lattice in $\CAlg$, contains $I$, and is closed under
multiplication; thus it is an order.  Let
$\mathbf G=\CAlg^\times$ be the algebraic unit group.  In this section
$\mathbf G$ denotes this connected unit group, whereas
$\mathbf G_\Gamma$ continues to denote the full combined stabilizer from
Section~\ref{sec:intrinsic-invariants}.  It is closed in
$\GL(W)$; being the nonvanishing locus of the determinant, it is a nonempty
principal open subset of the irreducible affine space underlying $\CAlg$,
hence irreducible and in particular Zariski connected, and its Lie algebra is
$\CAlg$.  Section~\ref{sec:cubic-algebra} gives the same Lie algebra for
$\mathbf G_\Gamma^\circ$.  In characteristic zero, Zariski-connected algebraic
matrix groups with the same Lie algebra coincide~\cite{BorelLAG}:
the identity component of their intersection has the common tangent space,
hence full dimension in each group, and a closed subgroup of full dimension
in a connected algebraic group is the whole group.  Therefore
\begin{equation}\label{eq:connected-stabilizer-is-unit-group}
 \mathbf G_\Gamma^\circ=\mathbf G=\CAlg^\times.
\end{equation}
Moreover,
\begin{equation}\label{eq:integral-unit-points}
 \Order^\times
 =\mathbf G(\Q)\cap\GL(W_{\Z}).
\end{equation}
Indeed, membership in the right side puts both a matrix and its inverse in
$\Order$, and the converse is immediate.  Recall from
\eqref{eq:all-root-subgroup} that
\begin{equation}\label{eq:arithmetic-all-root-subgroup}
 \Elementary
 =\left\langle
 I+N:\ N\in\Order,\ \rank_{\Q}N=1,\ N^2=0
 \right\rangle
 \leq\Order^\times.
\end{equation}
Section~\ref{sec:root-realization} separately proves
$\Elementary\leq\HomImage$.

The proof uses the split natural Wedderburn decomposition, the primitive
basic roots with unit incidences, and the component idempotents
$-T_{C,i}$ constructed above.

\subsection{Basic roots, normality, and the derived group}

Let $\mathscr X_\Gamma$ be the finite family of nonzero basic roots from
Section~\ref{sec:cubic-algebra}:
\begin{equation}\label{eq:arithmetic-basic-root-family}
 X_{C,D;i,j}
 =[T_{C,i},T_{D,j}]
 =w_{C,D}^{i,j}(e_j-e_i)^{\mathsf T}.
\end{equation}
Proposition~\ref{prop:primitive-component-roots} and
\eqref{eq:derived-algebra-basic-roots} give primitive integral
factorizations
\begin{equation}\label{eq:arithmetic-balanced-root-factorization}
 X=x_X\otimes f_X,
 \qquad x_X\in W_{\Z},\qquad
 f_X\in\operatorname{Hom}(W_{\Z},\Z),
\end{equation}
such that
\begin{equation}\label{eq:arithmetic-root-properties}
 \begin{gathered}
  \rank X=1,\qquad X^2=0,\qquad
  [\CAlg,\CAlg]
  =\operatorname{span}_{\Q}\mathscr X_\Gamma,\\
  f_X(x_Y)\in\{0,1,-1\}
  \quad(X,Y\in\mathscr X_\Gamma).
 \end{gathered}
\end{equation}
In particular,
\begin{equation}\label{eq:integer-basic-root-points}
 I+nX\in\Elementary
 \quad(X\in\mathscr X_\Gamma,\ n\in\Z).
\end{equation}

Let
\begin{equation}\label{eq:derived-algebraic-group}
 \mathbf D=(\CAlg^\times)^{\mathrm{der}}
            =\mathbf G^{\mathrm{der}}.
\end{equation}
The derived subgroup of a Zariski-connected algebraic group is
Zariski connected, and in characteristic zero its Lie algebra is the derived
algebra~\cite[I.2]{BorelLAG}.  Since $\mathbf G$ is connected with Lie algebra
$\CAlg$, the group $\mathbf D$ is therefore connected and
\begin{equation}\label{eq:derived-group-lie-algebra}
 \operatorname{Lie}(\mathbf D)=[\CAlg,\CAlg].
\end{equation}

\begin{lemma}
\label{lem:all-root-normality}
The subgroup $\Elementary$ is normal in $\Order^\times$.
\end{lemma}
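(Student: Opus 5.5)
The plan is to prove that the generating set of $\Elementary$ is stable under conjugation by $\Order^\times$; normality then follows at once. Let $g\in\Order^\times$ and let $N\in\Order$ satisfy $\rank_\Q N=1$ and $N^2=0$. We have
\[
 g(I+N)g^{-1}=I+gNg^{-1},
\]
so it suffices to check that $N'=gNg^{-1}$ is again an integral root.

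First, $N'\in\Order$. Both $g$ and $g^{-1}$ lie in $\Order$, because $\Order^\times$ is by definition the unit group of the ring $\Order$; equivalently, use \eqref{eq:integral-unit-points}. Since $\Order$ is closed under multiplication (Proposition~\ref{prop:component-algebra-products} together with the fact that $\End(W_\Z)$ is a ring), the product $gNg^{-1}$ lies in $\Order$.

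Second, conjugation by an invertible rational matrix preserves rank, so $\rank_\Q N'=1$. It also gives $(N')^2=gN^2g^{-1}=0$. Thus $I+N'$ is one of the displayed generators in \eqref{eq:arithmetic-all-root-subgroup}.

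Since conjugation by $g$ permutes the generating set, it maps $\Elementary$ into itself. Applying the same argument to $g^{-1}$ shows the image is all of $\Elementary$. Hence $\Elementary\trianglelefteq\Order^\times$.

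The argument has no real obstacle. The only point needing care is that $\Order$ must be multiplicatively closed, so that conjugation does not leave $\CAlg$; this is exactly the closure statement in Proposition~\ref{prop:component-algebra-products}. Note also that integrality of $N'$ requires $g^{-1}$ itself to be integral, not merely rational, which holds because $g$ is a unit of the order.
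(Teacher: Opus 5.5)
Your proof is correct and follows the paper's argument exactly: the paper also observes that $g,g^{-1}\in\Order$ with $\Order$ a ring, so $g(I+N)g^{-1}=I+gNg^{-1}$ is again a defining generator because conjugation preserves rank and square-zero. Your extra step with $g^{-1}$ to get equality rather than inclusion is harmless but not needed, since $g\Elementary g^{-1}\subseteq\Elementary$ for all $g\in\Order^\times$ already gives normality.
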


\begin{proof}
Let $g\in\Order^\times$ and let $I+N$ be a defining generator in
\eqref{eq:arithmetic-all-root-subgroup}.  Since $g,g^{-1}\in\Order$ and
$\Order$ is a ring, one has $gNg^{-1}\in\Order$.  Conjugation preserves
rank and square-zero, so
$g(I+N)g^{-1}=I+gNg^{-1}$
is another defining generator.  Thus conjugation by every element of
$\Order^\times$ preserves $\Elementary$.
\end{proof}

\begin{proposition}
\label{prop:split-quotient-and-root-density}
The quotient $\mathbf G/\mathbf D$ is a split rational torus.  Moreover,
\begin{equation}\label{eq:all-roots-in-derived-group}
 \Elementary\leq\mathbf D(\Q),
 \qquad
 \overline{\Elementary}^{\,\mathrm{Zar}}=\mathbf D.
\end{equation}
\end{proposition}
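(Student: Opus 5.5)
We will prove the three assertions in order: first that $\mathbf G/\mathbf D$ is a split torus, then that $\Elementary\leq\mathbf D(\Q)$, and finally that $\Elementary$ is Zariski dense in $\mathbf D$.

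\textbf{The quotient $\mathbf G/\mathbf D$.} Write $J=\Jac(\CAlg)$. By Theorem~\ref{thm:split-component-wedderburn}, the reduction map gives a surjection
\[
 \mathbf G\twoheadrightarrow(\CAlg/J)^\times\cong\prod_r\GL_{m_r}.
\]
Its kernel is the unipotent group $1+J$. Hence $\mathbf G/\mathbf D$ is a quotient of $\mathbf G/\mathbf G^{\mathrm{der}}$. Each unipotent element of $1+J$ lies in $\mathbf D$: the Lie algebra $J$ is spanned by commutators modulo $[\CAlg,\CAlg]$. The cleanest route is a dimension count. The abelianization $\CAlg/[\CAlg,\CAlg]$ is the Lie algebra of $\mathbf G/\mathbf D$ by \eqref{eq:derived-group-lie-algebra}. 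The quotient of $\mathbf G$ by $\mathbf D$ is a connected commutative group, so it is a product of a torus and a vector group. Consider its abelian Lie algebra $\CAlg/[\CAlg,\CAlg]$. This space is spanned by the images of $I$ and the elements $T_{C,i}$. The $-T_{C,i}$ are idempotents by \eqref{eq:same-column-products}, and idempotents commute modulo commutators. So the image of $\CAlg$ in the commutative algebra $\CAlg/([\CAlg,\CAlg]\text{-ideal})$ is spanned by idempotents, hence is a split étale algebra $\Q^k$. The unit group of $\Q^k$ is the split torus $\mathbb G_m^k$, and $\mathbf G\to(\Q^k)^\times$ has kernel $\mathbf D$, by comparing Lie algebras and using connectedness. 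The one point needing care is that the two-sided ideal generated by $[\CAlg,\CAlg]$ has the same span as $[\CAlg,\CAlg]$ at the level of the unit-group quotient. I would argue this via $\det$ on each Wedderburn block together with the radical, checking that $1+J\subseteq\mathbf D$.

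\textbf{Inclusion $\Elementary\leq\mathbf D(\Q)$.} Let $N\in\Order$ with rank one and $N^2=0$. Then $t\mapsto I+tN$ is a one-parameter unipotent subgroup of $\mathbf G$, so it maps trivially to the torus $\mathbf G/\mathbf D$: a unipotent group has trivial image in a torus. Hence $I+N\in\mathbf D(\Q)$.

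\textbf{Zariski density.} Let $\mathbf E$ be the Zariski closure of $\Elementary$. For each basic root $X\in\mathscr X_\Gamma$, \eqref{eq:integer-basic-root-points} puts $I+nX$ in $\Elementary$ for all $n\in\Z$. The Zariski closure of $\{I+nX:n\in\Z\}$ is the full line $\{I+tX\}$, so $X\in\operatorname{Lie}(\mathbf E)$. By \eqref{eq:arithmetic-root-properties}, the basic roots span $[\CAlg,\CAlg]=\operatorname{Lie}(\mathbf D)$. So $\mathbf E^\circ\leq\mathbf D$ has full Lie algebra, and connectedness of $\mathbf D$ gives $\mathbf E^\circ=\mathbf D$. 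Combined with the inclusion $\mathbf E\subseteq\mathbf D$ from the previous step, this gives $\mathbf E=\mathbf D$.

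I expect the main obstacle to be the first step, namely identifying $\mathbf G/\mathbf D$ as split rather than merely a torus times a unipotent part. The idempotent spanning argument of Lemma~\ref{lem:commutative-Wedderburn-factors} is the tool I would try first.
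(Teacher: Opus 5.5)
Your second and third steps are essentially the paper's arguments. However, the second step depends on the first, and the first step has a genuine gap.

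\textbf{The identification of $\mathbf G/\mathbf D$ is false.} You identify $\mathbf G/\mathbf D$ with the unit group of the commutative quotient $\CAlg/\mathfrak c$, where $\mathfrak c=\CAlg[\CAlg,\CAlg]\CAlg$. The kernel of $\mathbf G\to(\CAlg/\mathfrak c)^\times$ has Lie algebra $\mathfrak c$, whereas $\operatorname{Lie}(\mathbf D)=[\CAlg,\CAlg]$. These differ as soon as a Wedderburn factor $M_m(\Q)$ with $m\geq2$ occurs. For example, for $\Gamma=K_3$ every partition $\mathcal P_a^{(3)}$ consists of singletons, so $\CAlg=M_2(\Q)$. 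Then $\mathfrak c=\CAlg$ and $(\CAlg/\mathfrak c)^\times$ is trivial, while $\mathbf G/\mathbf D=\GL_2/\SL_2\cong\mathbb G_m$. So the ``one point needing care'' you flag is not a technicality: the claim fails.

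\textbf{The determinant repair is circular.} Taking determinants on the Wedderburn blocks gives a map $\mathbf G\to\mathbb G_m^t$ whose kernel is $(1+J)\mathbf D$. This identifies $\mathbf G/\mathbf D$ only once you know $1+J\leq\mathbf D$, i.e.\ $J\subseteq[\CAlg,\CAlg]$. That inclusion is not a general fact: for $\Q[x]/(x^2)$ the derived group is trivial, while the unit group is $\mathbb G_m\times\mathbb G_a$. Your justification (``$J$ is spanned by commutators modulo $[\CAlg,\CAlg]$'') assumes what is to be proved. In the paper this inclusion is Corollary~\ref{cor:radical-in-derived-algebra}, which is deduced \emph{from} the proposition. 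Without it, your second step also fails: if $\mathbf G/\mathbf D$ had a vector-group part, $t\mapsto I+tN$ could map nontrivially into it.

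\textbf{The missing idea.} Use the idempotents directly as cocharacters instead of passing to a commutative quotient algebra.
\begin{enumerate}
\item With $p_{C,i}=-T_{C,i}$, the identity $p_{C,i}^2=p_{C,i}$ from \eqref{eq:same-column-products} makes $\lambda_{C,i}(t)=I+(t^{-1}-1)p_{C,i}$ a rational cocharacter of $\mathbf G$ with differential $T_{C,i}$. The scalar cocharacter $t\mapsto tI$ has differential $I$.
\item These differentials span $\CAlg$, so their images span $\operatorname{Lie}(\mathbf G/\mathbf D)=\CAlg/[\CAlg,\CAlg]$.
\item Since $\mathbf G/\mathbf D$ is commutative, the product of these cocharacters is a homomorphism from a split torus with surjective differential.
\item Its image is a closed torus of full dimension in the connected group $\mathbf G/\mathbf D$, so the map is onto. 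A quotient of a split torus is a split torus.
\end{enumerate}
Equivalently, in your framework: each such cocharacter has trivial projection to the vector-group part $V$ of $\mathbf G/\mathbf D$, so $\operatorname{Lie}V=0$. You had the right ingredient, the idempotent spanning of $\CAlg$, but applied it to the wrong object.
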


\begin{proof}
For every component
$C\in\mathcal P_i^{(3)}\setminus\{\{i\}\}$, put
$p_{C,i}=P_C-\mathbf1_Ce_i^{\mathsf T}=-T_{C,i}$.
Equation~\eqref{eq:same-column-products} gives $p_{C,i}^2=p_{C,i}$.
Therefore
\begin{equation}\label{eq:arithmetic-component-cocharacter}
 \lambda_{C,i}(t)
 =I+(t^{-1}-1)p_{C,i}
 \quad(t\in\mathbb G_m)
\end{equation}
is a rational cocharacter of $\mathbf G$: its inverse is
$I+(t-1)p_{C,i}$, and its derivative at $t=1$ is $T_{C,i}$.  The scalar
cocharacter $t\mapsto tI$ has derivative $I$.  By
Proposition~\ref{prop:component-algebra-products}, these derivatives span
$\CAlg$, and therefore their images span
\begin{equation}\label{eq:abelianized-unit-lie-algebra}
 \CAlg/[\CAlg,\CAlg]
 =\operatorname{Lie}(\mathbf G/\mathbf D).
\end{equation}

The images of the finitely many cocharacters commute in
$\mathbf G/\mathbf D$.  Their product defines a homomorphism from a split
rational torus to $\mathbf G/\mathbf D$ with surjective differential.  The image of a torus is a closed torus~\cite{BorelLAG}; it has
the full dimension of the connected target, so it is the whole target.
Thus $\mathbf G/\mathbf D$ is a quotient of a split torus and is itself
split.

Now let $N\in\Order$ have rank one and square zero.  The formula
$u_N(t)=I+tN$, whose inverse is $u_N(t)^{-1}=I-tN$, defines a rational
homomorphism $\mathbb G_a\to\mathbf G$.  Its composite
with the split torus $\mathbf G/\mathbf D$ is trivial, since there is no
nontrivial algebraic-group homomorphism from $\mathbb G_a$ to a torus
\cite{BorelLAG}.
Therefore every defining generator of $\Elementary$ belongs to
$\mathbf D(\Q)$, proving the first assertion in
\eqref{eq:all-roots-in-derived-group}.

Let $\mathbf K$ be the Zariski closure of $\Elementary$ in $\mathbf D$.
For each $X\in\mathscr X_\Gamma$, equation
\eqref{eq:integer-basic-root-points} and the Zariski density of $\Z$ in
$\mathbb A^1$ show that $\mathbf K$ contains the full root group
$\{I+tX:t\in\mathbb G_a\}$.  Hence
\[
 \operatorname{Lie}(\mathbf K)
 \supseteq\operatorname{span}_{\Q}\mathscr X_\Gamma
 =[\CAlg,\CAlg]
 =\operatorname{Lie}(\mathbf D).
\]
Since $\mathbf K\leq\mathbf D$ and $\mathbf D$ is connected, equality of
Lie algebras forces $\mathbf K=\mathbf D$.
\end{proof}

\begin{corollary}
\label{cor:radical-in-derived-algebra}
Let $J=\Jac(\CAlg)$.  Then
\begin{equation}\label{eq:radical-in-derived-group-and-algebra}
 1+J\leq\mathbf D,
 \qquad
 J\subseteq[\CAlg,\CAlg].
\end{equation}
Equivalently, the graph-specific abelian algebraic quotient
$\mathbf G/\mathbf D$ has no unipotent part.
\end{corollary}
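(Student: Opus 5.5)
The corollary follows from the structure of $\mathbf G=\CAlg^\times$ established in Proposition~\ref{prop:split-quotient-and-root-density}, together with the fact that a unipotent group has no nontrivial homomorphism to a torus. The plan is to prove first the group statement $1+J\leq\mathbf D$ and then to read off the Lie-algebra inclusion.

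\textbf{Step 1: $1+J$ is a connected unipotent subgroup of $\mathbf G$.} Since $J$ is a nilpotent two-sided ideal of the finite-dimensional algebra $\CAlg$, say $J^r=0$, every element $1+y$ with $y\in J$ is invertible. Its inverse is $\sum_{k<r}(-y)^k$, which again lies in $1+J$. The set $1+J$ is closed under products because $J$ is an ideal. Hence $1+J$ is a closed algebraic subgroup of $\mathbf G$. It is isomorphic as a variety to the affine space $J$, so it is Zariski connected, and it consists of unipotent matrices. Its Lie algebra is $J$.

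\textbf{Step 2: $1+J$ dies in $\mathbf G/\mathbf D$.} Compose the inclusion $1+J\hookrightarrow\mathbf G$ with the quotient map $\mathbf G\to\mathbf G/\mathbf D$. By Proposition~\ref{prop:split-quotient-and-root-density}, the target is a split rational torus. The image of a unipotent group under an algebraic homomorphism is unipotent, and a torus contains no nontrivial unipotent elements. Therefore the composite is trivial, and $1+J\leq\mathbf D$. An alternative is to filter $1+J$ by the subgroups $1+J^k$, whose successive quotients are vector groups, and apply the same $\mathbb G_a\to$ torus triviality already used in the proposition. Either way this gives the first inclusion in \eqref{eq:radical-in-derived-group-and-algebra}.

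\textbf{Step 3: the Lie-algebra statement.} Differentiate at the identity. We have $\operatorname{Lie}(1+J)=J$, and $\operatorname{Lie}(\mathbf D)=[\CAlg,\CAlg]$ by \eqref{eq:derived-group-lie-algebra}. So $J\subseteq[\CAlg,\CAlg]$. The "equivalently" clause is a restatement. The image of $\operatorname{Lie}(1+J)=J$ in $\operatorname{Lie}(\mathbf G/\mathbf D)=\CAlg/[\CAlg,\CAlg]$ is zero, and $\mathbf G/\mathbf D$, being a torus, has trivial unipotent radical.

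As a sanity check independent of the group theory, one can argue purely algebraically. By Theorem~\ref{thm:split-component-wedderburn}, $\CAlg/J\cong\prod M_{m_r}(\Q)$. On the other hand, the algebra $\CAlg/[\CAlg,\CAlg]$ is spanned by the images of $I$ and of the idempotents $-T_{C,i}$. Either route gives the inclusion, but the algebraic-group route is cleaner.

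The only point needing care is Step 2's claim that $\mathbf G/\mathbf D$ is genuinely a torus, with no unipotent factor hiding in it. That is exactly what Proposition~\ref{prop:split-quotient-and-root-density} proves: $\mathbf G/\mathbf D$ is the image of a split torus. I therefore expect no real obstacle beyond citing the standard facts about images of unipotent groups in tori.
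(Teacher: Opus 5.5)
Your Steps~1--3 are correct and are essentially the paper's own proof. The paper argues the same way: $1+J$ is a connected rational unipotent subgroup of $\mathbf G$, so its image in the torus $\mathbf G/\mathbf D$ of Proposition~\ref{prop:split-quotient-and-root-density} is trivial, and differentiating with \eqref{eq:derived-group-lie-algebra} yields $J\subseteq[\CAlg,\CAlg]$. Your closing ``sanity check'' paragraph does not actually derive the inclusion, and $[\CAlg,\CAlg]$ is only a linear span, not an ideal, so $\CAlg/[\CAlg,\CAlg]$ is not an algebra; either drop that paragraph or complete it. One way to complete it is to show that every functional vanishing on commutators factors through $\CAlg/J$, because $\CAlg$ is spanned by idempotents and each is conjugate under $1+J$ to an idempotent of a Wedderburn--Malcev complement.
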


\begin{proof}
The group $1+J$ is a connected rational unipotent subgroup of
$\mathbf G$.  Proposition~\ref{prop:split-quotient-and-root-density}
identifies $\mathbf G/\mathbf D$ as a torus, and a torus has no nontrivial
unipotent subgroup~\cite{BorelLAG}.  Hence the image of $1+J$ in that quotient is trivial,
which proves $1+J\leq\mathbf D$.  Taking Lie algebras and using
\eqref{eq:derived-group-lie-algebra} gives
$J\subseteq[\CAlg,\CAlg]$.
\end{proof}

\subsection{The semisimple quotient}

Put
\begin{equation}\label{eq:semisimple-quotient-notation}
 \mathbf U=\operatorname{R_u}(\mathbf D),
 \qquad
 \mathbf G_{\mathrm{ss}}=\mathbf D/\mathbf U,
 \qquad
 q_{\mathrm{ss}}:\mathbf D\longrightarrow\mathbf G_{\mathrm{ss}}.
\end{equation}
Let $J=\Jac(\CAlg)$.  The split Wedderburn theorem
\ref{thm:split-component-wedderburn} gives
\begin{equation}\label{eq:arithmetic-wedderburn-quotient}
 \CAlg/J\cong\prod_{r=1}^sM_{m_r}(\Q).
\end{equation}
Passing to units gives a surjection
\begin{equation}\label{eq:unit-wedderburn-quotient}
 \mathbf G\longrightarrow\prod_{r=1}^s\GL_{m_r}
\end{equation}
with unipotent kernel $1+J$.  By
Corollary~\ref{cor:radical-in-derived-algebra}, this kernel is contained in
$\mathbf D$.  The restriction to $\mathbf D$ has image the derived group
$\prod_r\SL_{m_r}$ and kernel $1+J$, so $1+J\leq\mathbf U$.  Conversely,
the image of $\mathbf U$ in the reductive group
$\prod_r\SL_{m_r}$ is a connected normal unipotent subgroup and is
trivial.  Thus $\mathbf U=1+J$ and
\begin{equation}\label{eq:semisimple-product-identification}
 \mathbf G_{\mathrm{ss}}\cong
  \prod_{\{r:m_r\geq2\}}\SL_{m_r}.
\end{equation}
In particular, no symplectic or nonsplit factor occurs.

By rational Levi decomposition~\cite{Mostow}, choose a rational Levi
subgroup $\mathbf L\leq\mathbf D$; the restriction of $q_{\mathrm{ss}}$
identifies $\mathbf L$ with $\mathbf G_{\mathrm{ss}}$.  Let $\nu$ satisfy
$J^\nu=0$ and use the $J$-adic filtration
\begin{equation}\label{eq:J-adic-module-filtration}
 W\supseteq JW\supseteq J^2W\supseteq\cdots\supseteq J^\nu W=0.
\end{equation}
It is $\mathbf L$-stable because $J$ is a two-sided ideal of $\CAlg$.
Moreover, $\mathbf U=1+J$ acts trivially on every quotient
$J^kW/J^{k+1}W$.  Since $\mathbf L$ is reductive in characteristic zero,
complete reducibility gives an $\mathbf L$-equivariant splitting
\cite{BorelLAG}, and hence
$W\cong\operatorname{gr}_J(W)$ as an $\mathbf L$-module.

\begin{lemma}
\label{lem:pure-Levi-root-lifts}
For every factor $\SL_{m_r}$ with $m_r\geq2$, there is an
$\mathbf L$-stable decomposition
\[
 W=V_r\oplus V_r',
\]
where the $r$th factor acts on $V_r$ as its natural module and trivially
on $V_r'$, while every other simple factor acts trivially on $V_r$.
For each $a\neq b$ there is a unique element
\[
 N_{r,ab}\in\operatorname{Lie}(\mathbf L)\subseteq\CAlg
\]
whose projection to the $r$th factor is $E_{ab}$ and whose projection to
every other factor is zero.  On the full space $W$ this endomorphism has
rank one and square zero.  After clearing one common denominator for the
finite family of all such elements, there is an integer $c\geq1$ such
that
\begin{equation}\label{eq:common-level-pure-root-groups}
 I+cnN_{r,ab}\in\Elementary
 \quad(n\in\Z,\ a\neq b,\ m_r\geq2).
\end{equation}
\end{lemma}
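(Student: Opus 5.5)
The argument splits into three parts: the $\mathbf L$-module decomposition, the rank-one/square-zero property of the pure root $N_{r,ab}$, and the integrality statement, which I expect to be the main obstacle.

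\emph{Decomposition.} Since $W\cong\operatorname{gr}_J(W)$ as an $\mathbf L$-module, $W$ is $\mathbf L$-isomorphic to the semisimplification $W_{\mathrm{ss}}$. Here $\mathbf L\cong\prod_r\SL_{m_r}$ acts through the Wedderburn quotient \eqref{eq:arithmetic-wedderburn-quotient}. Theorem~\ref{thm:split-component-wedderburn} says the $r$th noncommutative factor occurs in $W_{\mathrm{ss}}$ as exactly one copy of its natural module and acts trivially on the other constituents. Taking the $\SL_{m_r}$-isotypic component gives $V_r$, and taking the $\SL_{m_r}$-fixed vectors gives $V_r'$. Complete reducibility of the reductive group $\mathbf L$ makes both $\mathbf L$-stable with $W=V_r\oplus V_r'$, and multiplicity one shows that the other factors act trivially on $V_r$.

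\emph{The pure root.} $q_{\mathrm{ss}}$ identifies $\operatorname{Lie}(\mathbf L)$ with $\prod\mathfrak{sl}_{m_r}$. Hence there is a unique element $N_{r,ab}\in\operatorname{Lie}(\mathbf L)$ with projections $E_{ab}$ in the $r$th factor and $0$ elsewhere. It acts as the elementary matrix $E_{ab}$ on $V_r$ and as $0$ on $V_r'$. It is therefore rank one and square zero on $W$, since $a\neq b$.

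\emph{Integrality, first attempt.} Choose $c_0$ with $c_0N_{r,ab}\in\Order$ for the finitely many $(r,a,b)$. Then each $c_0nN_{r,ab}$ is an integral root, so $I+c_0nN_{r,ab}$ is literally a defining generator of $\Elementary$ in \eqref{eq:arithmetic-all-root-subgroup}. The only remaining check is that $\operatorname{Lie}(\mathbf L)\subseteq\CAlg$. This holds because $\mathbf L\leq\mathbf D\leq\mathbf G=\CAlg^\times$, so its Lie algebra lies in $\CAlg$. On this route the claim is almost definitional and one may take $c=c_0$.

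\emph{Where this might fail.} The obstacle is that the generators of $\Elementary$ are required to satisfy $\rank_\Q N=1$ on all of $W$, not merely on the semisimplification. The pure Levi elements achieve this only because of the $\mathbf L$-equivariant splitting. If rank one of $N_{r,ab}$ on $W$ itself is in doubt, I would fall back on the basic roots and unipotent-radical corrections. By Proposition~\ref{prop:noncommutative-Wedderburn-factors}, some basic root $X\in\mathscr X_\Gamma$ maps to a nonzero rank-one nilpotent in the $r$th factor. Conjugating by $\SL_{m_r}(\Z)$-type elements already in $\Elementary$, using Lemma~\ref{lem:all-root-normality}, then gives elements of $\Elementary$ whose semisimple image is $E_{ab}$. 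These differ from $I+nN_{r,ab}$ by elements of $\mathbf U(\Q)$. The $\mathbf U$-part would be removed by conjugating with suitable $I+J$ elements at a common level, accepting a larger constant $c$.
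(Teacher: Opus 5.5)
Your first three paragraphs are correct and are essentially the paper's proof. You transport the multiplicity-one Wedderburn description through the $\mathbf L$-equivariant splitting $W\cong\operatorname{gr}_J(W)$, define $N_{r,ab}$ via the isomorphism $q_{\mathrm{ss}}|_{\mathbf L}$, read off rank one and square zero on $W$ from $W=V_r\oplus V_r'$, and clear one common denominator so that $I+cnN_{r,ab}$ is, for $n\neq0$, literally a defining generator of $\Elementary$.

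The closing fallback paragraph is unnecessary, since rank one on all of $W$ is already secured by that splitting, exactly as in the paper. As sketched, the fallback would also lean on integral $\SL_{m_r}$-type elements of $\Elementary$, and for $m_r\geq3$ this lemma is what the paper uses to produce them.
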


\begin{proof}
Theorem~\ref{thm:split-component-wedderburn}, applied to the
semisimplification and transported through the chosen
$\mathbf L$-equivariant splitting of
\eqref{eq:J-adic-module-filtration}, gives one natural constituent for
each noncommutative factor and no second copy.  The central idempotents of
$\CAlg/J$ separate these constituents, so the other factors act trivially
on $V_r$ and the $r$th factor acts trivially on the complementary simple
constituents.  Since
$q_{\mathrm{ss}}|_{\mathbf L}:\mathbf L\to\mathbf G_{\mathrm{ss}}$
is an isomorphism, the element with the stated factor projections exists
and is unique.  It acts as the standard matrix unit $E_{ab}$ on $V_r$ and
as zero on $V_r'$, hence has rank one and square zero on $W$.

The family is finite and rational.  Choose one positive integer $c$ for
which every $cN_{r,ab}$ preserves $W_{\Z}$.  Then
$cN_{r,ab}\in\Order$, and for $n\neq0$ the multiple $cnN_{r,ab}$ is
again rank one and square zero.  The case $n=0$ gives the identity, so the
defining property of $\Elementary$ yields
\eqref{eq:common-level-pure-root-groups} for every $n\in\Z$.
\end{proof}

Relative to the same splitting,
$\operatorname{Lie}(\mathbf U)=J$ strictly raises the $J$-adic
filtration, whereas $\operatorname{Lie}(\mathbf L)$ is block diagonal.
For a basic root $X$, write
\begin{equation}\label{eq:basic-root-levi-decomposition}
 X=X^{\mathrm{ss}}+X^{\mathrm u},
 \qquad
 X^{\mathrm{ss}}\in\operatorname{Lie}(\mathbf L),\qquad
 X^{\mathrm u}\in\operatorname{Lie}(\mathbf U).
\end{equation}
The action of $X$ on the associated graded is $X^{\mathrm{ss}}$, so
Lemma~\ref{lem:associated-graded-rank} gives rank at most one.  The
multiplicity-one decomposition in
Lemma~\ref{lem:pure-Levi-root-lifts} then implies that
$X^{\mathrm{ss}}$ is nonzero in at most one simple factor.  Since the
basic roots span $\operatorname{Lie}(\mathbf D)$, their projections
active in any fixed factor span the Lie algebra of that factor.

\begin{lemma}
\label{lem:higher-rank-semisimple-factors}
If $m_r\geq3$, the projection of $\Elementary$ to the factor
$\SL_{m_r}$ contains a finite-index elementary arithmetic subgroup with
finite abelianization.
\end{lemma}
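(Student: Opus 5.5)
The plan is to exhibit, inside the projection of $\Elementary$ to the factor $\SL_{m_r}$, the congruence-type elementary subgroup generated by all elementary matrices of a fixed level. Then we invoke the Bass--Milnor--Serre / Tits normal subgroup theorem for $m_r\geq3$: such a group has finite index in $\SL_{m_r}(\Z)$, and finite-index subgroups of $\SL_{m}(\Z)$, $m\geq3$, have finite abelianization by the congruence subgroup property (equivalently, property (T) or Margulis normal subgroup theorem). The natural integral structure on the factor is the one induced by $\Order$ acting on $V_r$, which is commensurable with $\SL_{m_r}(\Z)$ in a suitable basis of $V_r$. This is because any two arithmetic subgroups of $\SL_{m_r}(\Q)$ are commensurable.

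Concretely, I start from Lemma~\ref{lem:pure-Levi-root-lifts}. For each ordered pair $a\neq b$ it gives elements $I+cnN_{r,ab}\in\Elementary$ whose image under $q_{\mathrm{ss}}$, followed by projection to the $r$th factor, is the elementary matrix $I+cnE_{ab}$, and whose projection to every other factor is trivial. Hence the projection of $\Elementary$ contains $E_{m_r}(c\Z)$, the subgroup generated by all $I+cnE_{ab}$. Here the basis of $V_r$ is chosen so that $N_{r,ab}$ acts by $E_{ab}$, which is possible by the natural-module identification in Lemma~\ref{lem:pure-Levi-root-lifts}. By the solution of the congruence subgroup problem for $\SL_m$, $m\geq3$ (Bass--Milnor--Serre, Mennicke), the group $E_{m}(c\Z)$ contains the principal congruence subgroup $\Gamma(c^2)$ and so has finite index in $\SL_m(\Z)$. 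Any subgroup containing it, in particular the projection restricted to the integral points, therefore contains a finite-index elementary arithmetic subgroup, namely $E_{m_r}(c\Z)$ itself.

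For finite abelianization, I use the commutator identity $[I+xE_{ab},I+yE_{bd}]=I+xyE_{ad}$ for distinct $a,b,d$, which is available because $m_r\geq3$. It shows that $E_{m}(c^2\Z)\leq[E_m(c\Z),E_m(c\Z)]$. Consequently the abelianization of $E_m(c\Z)$ is a quotient of $E_m(c\Z)/E_m(c^2\Z)$. That quotient is finite, since $E_m(c^2\Z)$ has finite index in $\SL_m(\Z)$ by the same congruence result, so the abelianization is finite.

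The main obstacle is bookkeeping rather than depth. One must make sure the basis of $V_r$ in which $N_{r,ab}=E_{ab}$ is compatible with a lattice, so that "arithmetic" is meaningful. One must also check that the projection is well defined on $\Elementary\leq\mathbf D(\Q)$, which follows from Proposition~\ref{prop:split-quotient-and-root-density}. I would handle the first point by taking the lattice spanned by the $\mathbf L$-translates of a fixed vector, or simply by working in $\SL_{m_r}(\Q)$ with the standard lattice. Commensurability of arithmetic subgroups then makes the choice irrelevant for finite-index statements.
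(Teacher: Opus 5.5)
Your proposal is correct and is essentially the paper's argument: the common-level elements $I+cnN_{r,ab}$ of Lemma~\ref{lem:pure-Levi-root-lifts} project to $I+cnE_{ab}$, Tits's theorem gives finite index in $\SL_{m_r}(\Z)$, and the identity $[I+cuE_{ak},I+cvE_{kb}]=I+c^2uvE_{ab}$ gives finite abelianization. The only difference is cosmetic. You bound $[E:[E,E]]$ by the finite index of the level-$c^2$ elementary subgroup; state this as an index bound rather than a quotient, since that subgroup need not be normal. The paper instead notes directly that each of the finitely many generating root groups has image of order dividing $c$ in the abelianization.
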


\begin{proof}
In a rational natural basis, the elements in
\eqref{eq:common-level-pure-root-groups} project to $I+cnE_{ab}$ for $a\neq b$ and $n\in\Z$.
Tits's common-power theorem~\cite{Tits} says that the group generated by
these common-level root groups has finite index in the corresponding
integral $\SL_{m_r}$.

Its abelianization is finite as well.  For three distinct indices
$a,k,b$, the elementary commutator identity gives
\begin{equation}\label{eq:higher-rank-root-commutator}
 [I+cuE_{ak},I+cvE_{kb}]
 =I+c^2uvE_{ab}.
\end{equation}
Thus the commutator subgroup contains the parameter-$c^2\Z$ subgroup of
every root group.  The group is generated by finitely many parameter-$c\Z$
root groups, each of whose images in the abelianization is therefore finite.
\end{proof}

\subsection{The unit-incidence argument for
  \texorpdfstring{$\SL_2$}{SL2}}

The common-level conclusion alone is not sufficient in rank one: two
opposite high-level cusp groups in $\SL_2$ can generate a thin subgroup,
meaning a Zariski-dense subgroup of infinite index in the relevant
arithmetic lattice.
We now use the integral unit incidences in
\eqref{eq:arithmetic-root-properties} to produce unit parameters.

\begin{lemma}
\label{lem:sl2-opposite-basic-roots}
Fix a factor $\mathbf G_{\mathrm{ss},r}\cong\SL_2$ in the semisimple quotient and write
$\mathfrak h_r\cong\mathfrak{sl}_2$ for its Lie algebra.  There are basic roots
$X=x\otimes f$ and $Y=y\otimes h$ whose semisimple projections are
supported only in the $r$th factor and
satisfy
\begin{equation}\label{eq:sl2-opposite-trace}
 \operatorname{tr}(X_rY_r)\neq0.
\end{equation}
Moreover,
\begin{equation}\label{eq:sl2-unit-incidence-data}
 f(x)=h(y)=0,
 \qquad
 f(y)=\varepsilon\in\{1,-1\},
 \qquad
 h(x)=\eta\in\{1,-1\},
\end{equation}
and $X_r,Y_r$ generate $\mathfrak h_r$ as a Lie algebra.
\end{lemma}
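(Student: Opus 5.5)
Fix the factor $r$ with $m_r=2$.  Let $\mathscr X_r\subseteq\mathscr X_\Gamma$ be the set of basic roots whose semisimple projection $X^{\mathrm{ss}}$ in \eqref{eq:basic-root-levi-decomposition} is nonzero in the $r$th factor.  By the discussion following that decomposition, each such $X^{\mathrm{ss}}$ is supported only in the $r$th factor.  The projections $X_r$ of the elements of $\mathscr X_r$ span $\mathfrak h_r$.  Each $X_r$ is a nonzero rank-one square-zero element of $\mathfrak{sl}_2$, acting on the natural constituent $V_r$ of Lemma~\ref{lem:pure-Levi-root-lifts}; so it is a nonzero nilpotent.

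The plan is to pick $X,Y\in\mathscr X_r$ with $\operatorname{tr}(X_rY_r)\neq0$.  Nilpotents in $\mathfrak{sl}_2$ pair trivially under the trace form exactly when they are proportional.  Since the $X_r$ span the three-dimensional $\mathfrak h_r$, two of them are non-proportional, and this gives \eqref{eq:sl2-opposite-trace}.  In a natural basis, two non-proportional nilpotents $X_r,Y_r$ have distinct kernel lines.  After conjugation they become multiples of $E_{12}$ and of a nilpotent with nonzero $(2,1)$ entry.  Their bracket is then a nonzero semisimple element, so $X_r$, $Y_r$ and $[X_r,Y_r]$ span $\mathfrak h_r$.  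This proves the generation assertion.

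For \eqref{eq:sl2-unit-incidence-data}, $f(x)=h(y)=0$ is just $X^2=Y^2=0$ together with rank one.  Proposition~\ref{prop:primitive-component-roots} (see \eqref{eq:arithmetic-root-properties}) gives $f(y),h(x)\in\{0,\pm1\}$, so it remains to show both are nonzero.  Compute
\[
 \operatorname{tr}(XY)=\operatorname{tr}(x f^{\mathsf T} y h^{\mathsf T})=f(y)h(x).
\]
I would then show that $\operatorname{tr}_W(XY)$ equals a nonzero multiple of $\operatorname{tr}(X_rY_r)$.  The trace on $W$ can be computed on $\operatorname{gr}_J W$, where $X,Y$ act through their semisimple parts.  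Via the $\mathbf L$-equivariant splitting $W=V_r\oplus V_r'$, both $X^{\mathrm{ss}}$ and $Y^{\mathrm{ss}}$ act as $X_r,Y_r$ on $V_r$ and as zero on $V_r'$.  Nilpotent summands coming from $J$ do not change the trace of a product when it is taken on the associated graded, since $J$ strictly raises the filtration.  Hence $\operatorname{tr}_W(XY)=\operatorname{tr}_{V_r}(X_rY_r)\neq0$, forcing $f(y)h(x)\neq0$ and hence $\varepsilon,\eta\in\{\pm1\}$.

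The main obstacle is the trace comparison.  One must check that $\operatorname{tr}_W(XY)=\operatorname{tr}(X^{\mathrm{ss}}Y^{\mathrm{ss}})$ with no contribution from the unipotent parts.  I would argue that all of $\CAlg$ preserves the $J$-adic filtration, so traces of products are computed on $\operatorname{gr}_J W$.  On $\operatorname{gr}_J W$, $X^{\mathrm u}$ and $Y^{\mathrm u}$ act by zero.  The step also uses the multiplicity-one statement of Theorem~\ref{thm:split-component-wedderburn}, so that the natural module occurs once and the trace is not multiplied by a multiplicity.  Even a multiplicity would only scale the trace by a nonzero constant.
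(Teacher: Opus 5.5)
Your proof is correct and follows the paper's route. You pick two active basic roots with nonzero trace pairing in $\mathfrak h_r$, remove the radical contributions to $\operatorname{tr}(XY)$ using the $J$-adic filtration, and deduce $f(y)h(x)\neq0$ with both factors in $\{\pm1\}$ from the unit incidences; the paper does the same thing with its block-triangular Levi-splitting argument. The only cosmetic difference is how the non-orthogonal pair is found: you use the fact that nonzero nilpotents in $\mathfrak{sl}_2$ are trace-orthogonal exactly when proportional, whereas the paper appeals directly to nondegeneracy of the trace form on the spanning family.
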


\begin{proof}
The semisimple projections of the basic roots active in the $r$th factor
span $\mathfrak h_r$.  If the trace product of every two members of this
spanning family were zero, the nondegenerate trace form on
$\mathfrak{sl}_2$ would vanish on all of $\mathfrak h_r$, a contradiction.
Choose $X,Y$ satisfying~\eqref{eq:sl2-opposite-trace}.  The semisimple-support statement preceding
\eqref{eq:common-level-pure-root-groups} says that their semisimple
projections vanish in every other simple factor.

Relative to the Levi filtration, radical parts are strictly upper block
triangular and Levi parts are block diagonal.  Mixed products and products
of two radical parts therefore have zero trace.  Hence
\[
 0\neq\operatorname{tr}(X_rY_r)
 =\operatorname{tr}(X^{\mathrm{ss}}Y^{\mathrm{ss}})
 =\operatorname{tr}(XY)
 =f(y)h(x).
\]
Square-zero gives $f(x)=h(y)=0$, while
Proposition~\ref{prop:primitive-component-roots} puts both remaining
incidences in $\{0,1,-1\}$.  Their nonzero product proves
\eqref{eq:sl2-unit-incidence-data}.  Finally, two nonzero nilpotent
matrices in $\mathfrak{sl}_2$ with nonzero trace product are opposite
nilpotents; together with their bracket they span $\mathfrak{sl}_2$.
\end{proof}

\begin{lemma}
\label{lem:sl2-integral-rank-two-summand}
Under the hypotheses and notation of
Lemma~\ref{lem:sl2-opposite-basic-roots}, the subgroup
$\Z x+\Z y$ is a primitive rank-two direct summand of $W_{\Z}$, with
\begin{equation}\label{eq:sl2-integral-direct-sum}
 W_{\Z}
 =(\Z x+\Z y)
 \oplus(W_{\Z}\cap\ker f\cap\ker h).
\end{equation}
In the basis $(x,y)$ of the first summand,
\[
 I+X=\begin{pmatrix}1&\varepsilon\\0&1\end{pmatrix},
 \qquad
 I+Y=\begin{pmatrix}1&0\\\eta&1\end{pmatrix},
\]
and both transformations are the identity on the complementary summand.
\end{lemma}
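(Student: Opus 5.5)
The argument is integral linear algebra with two rank-one maps and the incidence data \eqref{eq:sl2-unit-incidence-data}. Throughout, $x,y\in W_{\Z}$ and $f,h\in\operatorname{Hom}(W_{\Z},\Z)$ are the primitive integral factors from \eqref{eq:arithmetic-balanced-root-factorization}, and $X=x\otimes f$ acts by $X(w)=f(w)x$.

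\emph{Direct-sum decomposition.} I will use the homomorphism
\[
 \Psi=(f,h):W_{\Z}\longrightarrow\Z^2,
 \qquad
 \Psi(x)=(0,\eta),\qquad \Psi(y)=(\varepsilon,0).
\]
Since $\varepsilon,\eta\in\{\pm1\}$, the restriction of $\Psi$ to $\Z x+\Z y$ has matrix
\[
 \begin{pmatrix}0&\varepsilon\\ \eta&0\end{pmatrix},
\]
whose determinant is $-\varepsilon\eta=\pm1$. Hence $\Psi|_{\Z x+\Z y}$ is an isomorphism onto $\Z^2$. Two consequences follow. First, $x$ and $y$ are linearly independent, so $\Z x+\Z y$ has rank two. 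Second, $\Psi$ is surjective, and the inverse isomorphism gives a splitting
\[
 W_{\Z}=(\Z x+\Z y)\oplus\ker\Psi,
 \qquad
 \ker\Psi=W_{\Z}\cap\ker f\cap\ker h.
\]
Concretely, for $w\in W_{\Z}$ the component $w-(h(w)\eta x+f(w)\varepsilon y)$ lies in $\ker\Psi$; I will check this using $\varepsilon^2=\eta^2=1$. A direct summand is automatically primitive (saturated), which proves \eqref{eq:sl2-integral-direct-sum}.

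\emph{Matrices.} On the complement, $f$ and $h$ both vanish, so $X$ and $Y$ are zero there and $I+X$, $I+Y$ act as the identity. On the first summand:
\begin{itemize}
\item $X(x)=f(x)x=0$ and $X(y)=f(y)x=\varepsilon x$;
\item $Y(x)=h(x)y=\eta y$ and $Y(y)=h(y)y=0$.
\end{itemize}
With column convention in the basis $(x,y)$, these give the displayed matrices for $I+X$ and $I+Y$.

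The only point needing care is the determinant of $\Psi|_{\Z x+\Z y}$. This is exactly where the unit incidences replace a mere rational nonvanishing, which would give only a finite-index sublattice. I expect no real obstacle beyond keeping track of the matrix convention.
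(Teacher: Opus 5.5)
Your proposal is correct and is essentially the paper's argument. The paper writes down the integral idempotent $P(z)=\eta^{-1}h(z)x+\varepsilon^{-1}f(z)y$, which is exactly the projector $(\Psi|_{\Z x+\Z y})^{-1}\circ\Psi$ that you obtain from the unimodular matrix of $(f,h)$ on $\Z x+\Z y$. It then reads off the two matrices from $X(z)=f(z)x$ and $Y(z)=h(z)y$, just as you do.
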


\begin{proof}
Define
\begin{equation}\label{eq:sl2-integral-projector}
 P(z)=\eta^{-1}h(z)x+\varepsilon^{-1}f(z)y.
\end{equation}
The unit incidences~\eqref{eq:sl2-unit-incidence-data} give
$P(x)=x$, $P(y)=y$, and $P(W_{\Z})\subseteq\Z x+\Z y$.  Hence $P^2=P$,
which proves~\eqref{eq:sl2-integral-direct-sum}.  The displayed matrices
follow from $X(z)=f(z)x$ and $Y(z)=h(z)y$.
\end{proof}

\begin{lemma}[Arithmetic images and integral models]
\label{lem:arithmetic-images-and-models}
The following standard facts will be used below.
\begin{enumerate}[label=\textup{(\roman*)}]
\item A rational isomorphism of linear algebraic $\Q$-groups carries
arithmetic subgroups to arithmetic subgroups.
\item A finite direct product of arithmetic subgroups is arithmetic in
the corresponding product group.
\item If $\rho:\mathbf H\to\GL(T_{\Q})$ is a faithful rational
representation and $T_0\subset T_{\Q}$ is a lattice, then
\[
 \rho(\mathbf H(\Q))\cap\GL(T_0)
\]
is an arithmetic subgroup of the rational group $\rho(\mathbf H)$.
\item Any two arithmetic subgroups of one rational algebraic group are
commensurable.
\end{enumerate}
\end{lemma}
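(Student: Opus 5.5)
These four facts are standard in the theory of arithmetic groups (Borel's \emph{Introduction aux groupes arithm\'etiques} \cite{BorelHC}), and the plan is to reduce each to the definition: an arithmetic subgroup of a linear algebraic $\Q$-group $\mathbf H\leq\GL_n$ is a subgroup commensurable with $\mathbf H(\Q)\cap\GL_n(\Z)$, and the notion is independent of the chosen faithful rational representation and lattice. I will prove (iv) and the independence statement first, because (i) and (iii) then follow quickly.

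For (iv), if $\Delta_1,\Delta_2$ are arithmetic in $\mathbf H$, both are commensurable with the single group $\mathbf H(\Q)\cap\GL_n(\Z)$ attached to a fixed embedding, and commensurability is transitive. The substantive ingredient is independence of the integral model. Given two lattices $T_0,T_1\subset T_\Q$, choose an integer $m\geq1$ with $mT_0\subseteq T_1\subseteq m^{-1}T_0$. The stabilizer of $T_0$ contains the kernel of the reduction map to $\GL(T_0/m^2T_0)$; this kernel has finite index and also preserves $T_1$. Hence the two stabilizers are commensurable. For a second faithful representation $\rho'$, one passes through the graph of $\rho'\circ\rho^{-1}$ and uses Borel's lemma that a rational homomorphism carries $\mathbf H(\Q)\cap\GL_n(\Z)$ into a group stabilizing some lattice. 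This is the only step with real content, and I would cite \cite[7.12--7.13]{BorelHC} for it rather than reprove it.

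Given this, (iii) is immediate: $\rho(\mathbf H(\Q))\cap\GL(T_0)$ is exactly the defining integral-points group of the $\Q$-group $\rho(\mathbf H)\leq\GL(T_\Q)$ in the model $T_0$. Here one needs $\rho(\mathbf H(\Q))=\rho(\mathbf H)(\Q)$. Because $\rho$ is faithful and we are in characteristic zero, $\rho$ is an isomorphism onto its closed image, so this equality holds. Part (i) follows by composing: if $\phi:\mathbf H\to\mathbf H'$ is a rational isomorphism and $\rho'$ embeds $\mathbf H'$, then $\rho'\circ\phi$ is a faithful representation of $\mathbf H$. By independence of the model, $\phi(\Delta)$ is commensurable with $\mathbf H'(\Q)\cap\GL(T_0')$.

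For (ii), embed $\prod_i\mathbf H_i$ block-diagonally in $\GL(\bigoplus_iT_{i,\Q})$ with the lattice $\bigoplus_iT_{i,0}$. Then the integral points are exactly $\prod_i\bigl(\mathbf H_i(\Q)\cap\GL(T_{i,0})\bigr)$. A finite product of subgroups, each commensurable with its factor's integral points, is commensurable with the product of those integral points.
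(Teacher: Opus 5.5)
Your proposal is correct and follows essentially the same route as the paper. Everything is reduced to independence of the integral model under a change of lattice and a change of faithful rational representation, and (ii) comes from the block-diagonal product lattice. The differences are only in packaging. You verify the lattice change directly through the level-$m^2$ congruence kernel, use the elementary bounded-denominator containment in both directions of an isomorphism, and make explicit that $\rho(\mathbf H(\Q))=\rho(\mathbf H)(\Q)$ in characteristic zero. The paper instead simply cites Witte Morris, Propositions~5.4.5 and~9.4.5.
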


\begin{proof}
These assertions are standard consequences of the
representation-independent definition of arithmeticity.  In the concrete
lattice formulation used here, changing the lattice changes the integral
stabilizer only up to commensurability
\cite[Proposition~5.4.5]{WitteMorris}, while a rational surjection carries
integral points to a subgroup commensurable with the integral points of the
target~\cite[Proposition~9.4.5]{WitteMorris}.  Applying the latter result
to a rational isomorphism and its inverse proves~\textup{(i)};
\textup{(ii)} follows by taking the product lattice; and
Proposition~5.4.5(1) gives~\textup{(iii)}.  Finally, after choosing one
faithful rational representation of $\mathbf H$, every arithmetic
subgroup is commensurable with the stabilizer of a lattice, and
Proposition~5.4.5(2) proves~\textup{(iv)}.  See also the general framework
in~\cite[\S6]{BorelHC}.
\end{proof}

\begin{proposition}
\label{prop:sl2-unit-incidence}
Fix an $\SL_2$ factor of the semisimple quotient.  There are a primitive
rank-two direct summand of $W_{\Z}$ and two basic-root generators in
$\Elementary$ that generate the full $\SL_2(\Z)$ on that summand.  The
Zariski closure of this integral subgroup is a rational $\SL_2$ whose
projection to the selected semisimple factor is a rational isomorphism
and whose projection to every other factor is trivial.  Therefore its
image in the selected factor is arithmetic and has finite abelianization;
in particular it is neither diagonal across factors nor thin.
\end{proposition}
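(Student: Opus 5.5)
The plan is to assemble the statement from Lemmas~\ref{lem:sl2-opposite-basic-roots} and~\ref{lem:sl2-integral-rank-two-summand}.

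\emph{Step 1: the integral $\SL_2$.} Take the basic roots $X=x\otimes f$ and $Y=y\otimes h$ supplied for the selected factor by Lemma~\ref{lem:sl2-opposite-basic-roots}. By \eqref{eq:integer-basic-root-points}, both $I+X$ and $I+Y$ lie in $\Elementary$. Lemma~\ref{lem:sl2-integral-rank-two-summand} gives the primitive rank-two summand $\Z x+\Z y$ and the complement $W_{\Z}\cap\ker f\cap\ker h$. In the basis $(x,y)$ of the summand, the two elements act as
\[
\begin{pmatrix}1&\varepsilon\\0&1\end{pmatrix},
\qquad
\begin{pmatrix}1&0\\\eta&1\end{pmatrix},
\]
and both are the identity on the complement. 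Since $\varepsilon,\eta\in\{\pm1\}$, these are $E_{12}^{\pm1}$ and $E_{21}^{\pm1}$, which generate $\SL_2(\Z)$. The subgroup $\Theta=\langle I+X,I+Y\rangle$ is therefore $\SL_2(\Z)$ acting on the summand and trivially on the complement. The point of the unit incidences is exactly here: the parameters $\varepsilon,\eta$ are units, so no congruence level appears and the thinness issue never arises.

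\emph{Step 2: the Zariski closure.} The closure of $\SL_2(\Z)$ is $\SL_2$, so the closure $\mathbf S$ of $\Theta$ is the rational group $\SL_2(\Q x\oplus\Q y)\times\{I\}$. Its Lie algebra is generated by $X$ and $Y$. By Proposition~\ref{prop:split-quotient-and-root-density}, $\mathbf S\le\mathbf D$. Consider the composite $\mathbf S\to\mathbf D\to\mathbf G_{\mathrm{ss}}\to\mathbf G_{\mathrm{ss},s}$ for each simple factor $s$.
\begin{itemize}
\item On Lie algebras the differential sends $X\mapsto X^{\mathrm{ss}}_s$ and $Y\mapsto Y^{\mathrm{ss}}_s$.
\item For $s\ne r$ both images vanish by Lemma~\ref{lem:sl2-opposite-basic-roots}, so the differential is zero. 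Since $\mathbf S$ is connected, the projection to every other factor is trivial.
\item For $s=r$ the images $X_r,Y_r$ generate $\mathfrak h_r$, so the differential is surjective, hence injective because the dimensions agree.
\end{itemize}
So $\mathbf S\to\SL_2^{(r)}$ is an isogeny. Its kernel is central in $\SL_2$, so it is $\{\pm I\}$ or trivial.

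\emph{Main obstacle: excluding the kernel $\{\pm I\}$.} The plan is to use faithfulness on the natural module. By Lemma~\ref{lem:pure-Levi-root-lifts}, the factor $r$ acts on one copy $V_r\subseteq W$ of its natural module and trivially on the rest. Consider $-I$ on $\Q x\oplus\Q y$, extended by $I$ on the complement. It lies in the unipotent radical $\mathbf U$ only if it is unipotent, which it is not. Hence its image in $\mathbf G_{\mathrm{ss}}$ is nontrivial. That image also lies in the $r$th factor, because the projections to the other factors are trivial. So $-I\notin\ker$, and $\mathbf S\to\SL_2^{(r)}$ is a rational isomorphism.

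If this route becomes delicate, the fallback is to observe that a central isogeny onto $\SL_2$ from $\SL_2$ cannot be the map $\SL_2\to\mathrm{PGL}_2$, since the target here is simply connected. This forces the kernel to be trivial directly.

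\emph{Step 3: arithmeticity and finite abelianization.} $\Theta=\mathbf S(\Q)\cap\GL(W_{\Z})$ is arithmetic in $\mathbf S$ by Lemma~\ref{lem:arithmetic-images-and-models}(iii). By part (i) of the same lemma, its image in the selected factor is arithmetic. The abelianization of $\SL_2(\Z)$ is $\Z/12$, which is finite, and this passes to the image. The image is not diagonal across factors because all other projections are trivial. It is not thin because it is arithmetic, hence of finite index in the integral points.
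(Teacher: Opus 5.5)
Your proposal is correct and follows essentially the paper's own proof. The unit transvections of Lemma~\ref{lem:sl2-integral-rank-two-summand} generate the full $\SL_2(\Z)$, and the differential computation makes the selected projection a central isogeny while every other projection is trivial; your observation that $-I$ on the summand is not unipotent is exactly the paper's argument that the kernel lies in $\mathbf S\cap\mathbf U$, which has no nontrivial finite subgroup. Your explicit check that $\Theta=\mathbf S(\Q)\cap\GL(W_{\Z})$, and your alternative simple-connectedness argument, only spell out details the paper leaves implicit.
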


\begin{proof}
Choose $X,Y$ from Lemma~\ref{lem:sl2-opposite-basic-roots} and use the
direct summand from Lemma~\ref{lem:sl2-integral-rank-two-summand}.  The two unit elementary transvections displayed there generate the full
$\SL_2(\Z)$ on that summand: taking inverses changes either parameter
$-1$ to $1$, and the standard unit upper and lower transvections generate
$\SL_2(\Z)$~\cite[Ch.~I]{SerreTrees}.  Each of the two integral root
subgroups is Zariski dense in its one-parameter unipotent root group, so
their Zariski closure is a rational subgroup
$\mathbf S\cong\SL_2$.  The differential of the projection
$\mathbf S\to\mathbf G_{\mathrm{ss},r}$ contains $X_r$ and $Y_r$ and hence is all of
$\mathfrak h_r$; the resulting map is a central isogeny.

For every $s\neq r$, the differential of
\[
 \operatorname{pr}_s\circ q_{\mathrm{ss}}|_{\mathbf S}:
 \mathbf S\longrightarrow\mathbf G_{\mathrm{ss},s}
\]
is zero: the Lie algebra of $\mathbf S$ is generated by the two opposite
nilpotents $X$ and $Y$, whose semisimple projections vanish in the $s$th
factor.  Since $\mathbf S$ is connected, the image of this morphism is a
connected zero-dimensional algebraic subgroup and is therefore trivial.
Therefore the kernel of the projection to the selected factor lies in
\[
 \mathbf S\cap\ker q_{\mathrm{ss}}
 =\mathbf S\cap\mathbf U.
\]
The kernel is finite because the selected projection is a central isogeny,
whereas a rational unipotent group in characteristic zero has no nontrivial
finite subgroup.  Hence the kernel is trivial.  The selected projection is
therefore a rational isomorphism, and the projections to all other factors
are trivial.
The standard $\SL_2(\Z)$ on the primitive summand is arithmetic in
$\mathbf S$, so Lemma~\ref{lem:arithmetic-images-and-models}\textup{(i)}
makes its image arithmetic in the selected factor.  Its abelianization is
finite because it is a quotient of the displayed $\SL_2(\Z)$.
\end{proof}

\subsection{The semisimple arithmetic image}

Set
\begin{equation}\label{eq:derived-integral-and-Pi}
 S_{\mathbf D}
 =\Order^\times\cap\mathbf D(\Q),
 \qquad
 \Pi=q_{\mathrm{ss}}(\Elementary)\leq\mathbf G_{\mathrm{ss}}(\Q).
\end{equation}

\begin{lemma}
\label{lem:bounded-denominator-invariant-lattice}
Let $\mathbf H\leq\GL(V)$ be a rational algebraic group, let
$L\subset V$ be a lattice, and let
$\Gamma_0\leq\mathbf H(\Q)\cap\GL(L)$.  For every rational
representation
$\rho:\mathbf H\to\GL(T_{\Q})$ and every lattice
$T\subset T_{\Q}$, there is a $\rho(\Gamma_0)$-invariant lattice
$T_0\subset T_{\Q}$.  Therefore
\[
 \rho(\Gamma_0)
 \leq \rho(\mathbf H(\Q))\cap\GL(T_0).
\]
\end{lemma}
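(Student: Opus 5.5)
The plan is to produce the invariant lattice as a finite sum of translates of $T$, using the fact that $\Gamma_0$ has bounded denominators in any rational representation. First reduce to the case where $\rho(\mathbf H(\Q))\cap\GL(T)$ is an arithmetic subgroup of $\rho(\mathbf H)$. Then compare the arithmetic group $\mathbf H(\Q)\cap\GL(L)$, which contains $\Gamma_0$, with the preimage of the integral stabilizer of $T$.

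The cleanest route is through the coordinate ring. Since $\rho$ is a rational representation, its matrix coefficients (in a basis of $T$) are polynomials in the matrix entries of $g\in\mathbf H\subseteq\GL(V)$ and in $\det(g)^{-1}$, with rational coefficients. Choose a basis of $L$. For $g\in\GL(L)$, every entry of $g$ is an integer and $\det g=\pm1$. Hence each coefficient of $\rho(g)$ is the value of a fixed rational polynomial at integer arguments. Let $d\geq1$ be a common denominator of all coefficients of these finitely many polynomials. Then $d\rho(g)T\subseteq T$ for every $g\in\mathbf H(\Q)\cap\GL(L)$, and in particular for every $g\in\Gamma_0$. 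The key point is that this bound is uniform in $g$.

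With the uniform bound in hand, define
\[
 T_0=\sum_{g\in\Gamma_0}\rho(g)T\subseteq d^{-1}T .
\]
This sum contains $T$ (take $g=1$), so it has full rank. It is a finitely generated $\Z$-module because it is a submodule of the lattice $d^{-1}T$. So $T_0$ is a lattice, and it is $\rho(\Gamma_0)$-invariant by construction. The stated inclusion $\rho(\Gamma_0)\leq\rho(\mathbf H(\Q))\cap\GL(T_0)$ is then immediate, since $\rho(\Gamma_0)$ is a group preserving $T_0$ and both $\rho(\gamma)$ and $\rho(\gamma)^{-1}=\rho(\gamma^{-1})$ map $T_0$ into itself.

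The main obstacle is the precise justification that the coefficients of $\rho$ are polynomial in the entries and $\det^{-1}$ with bounded denominators. Concretely, $\rho$ is a morphism $\mathbf H\to\GL(T_\Q)$ defined over $\Q$, so its coefficients lie in the coordinate ring $\Q[\mathbf H]$. That ring is a quotient of $\Q[x_{ij},\det^{-1}]$. I would lift each of the finitely many coefficient functions to a polynomial in this ambient ring. Evaluation at points of $\mathbf H(\Q)\cap\GL(L)$ does not depend on the chosen lift, and $\det^{-1}=\pm1$ there. This gives the uniform denominator $d$ without needing any arithmeticity theory beyond the definitions.
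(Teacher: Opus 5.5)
Your argument is correct and is essentially the paper's proof. You lift the finitely many matrix coefficients of $\rho$ to $\Q[x_{ij},\det^{-1}]$, use integrality on $L$ (with $\det=\pm1$) to get one denominator $d$, and take $T_0=\sum_{\gamma\in\Gamma_0}\rho(\gamma)T$, which lies between $T$ and $d^{-1}T$. The reduction to arithmeticity in your first paragraph is never used and can be dropped.
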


\begin{proof}
Choose bases of $L$ and $T$.  The matrix coefficients of $\rho$ are
regular functions on $\mathbf H$ and hence are restrictions of elements
of the rational coordinate ring
$\Q[x_{ij},\det(x)^{-1}]$ of the ambient $\GL(V)$.  Choose one lift for
each of the finitely many coefficients and clear all rational
coefficients once.  If $\gamma\in\Gamma_0$, then both $\gamma$ and
$\gamma^{-1}$ have integral matrices on $L$; evaluating the chosen lifts
therefore shows that there is one integer $d\geq1$, independent of
$\gamma$, for which
\[
 \rho(\gamma)T\subseteq d^{-1}T.
\]
Set
\[
 T_0=\sum_{\gamma\in\Gamma_0}\rho(\gamma)T.
\]
The identity belongs to $\Gamma_0$, so $T\subseteq T_0$, while the
uniform denominator bound gives $T_0\subseteq d^{-1}T$.  A subgroup
between two commensurable lattices is again a lattice.  Left multiplication
by $\rho(\Gamma_0)$ permutes the summands, and the same is true for
inverses; hence $T_0$ is invariant.
\end{proof}

\begin{proposition}
\label{prop:semisimple-arithmetic-image}
The image $\Pi$ is an arithmetic subgroup of $\mathbf G_{\mathrm{ss}}$; in particular it
has finite index in the semisimple image of the integral derived points,
and it has finite abelianization:
\begin{equation}\label{eq:semisimple-index-and-abelianization}
 [q_{\mathrm{ss}}(S_{\mathbf D}):\Pi]<\infty,
 \qquad
 |\Pi^{\mathrm{ab}}|<\infty.
\end{equation}
\end{proposition}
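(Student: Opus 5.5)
The plan is to work factor by factor in $\mathbf G_{\mathrm{ss}}\cong\prod_{\{r:m_r\geq2\}}\SL_{m_r}$ from \eqref{eq:semisimple-product-identification}, build a subgroup of $\Pi$ that is a product of arithmetic subgroups of the factors, and then sandwich $\Pi$ between this product and $q_{\mathrm{ss}}(S_{\mathbf D})$.

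First, the factor subgroups. For $m_r\geq3$, let $\Pi_r$ be the group generated by the common-level root elements $I+cnN_{r,ab}$ of Lemma~\ref{lem:pure-Levi-root-lifts}. These lie in $\Elementary$, and by Lemma~\ref{lem:pure-Levi-root-lifts} their $q_{\mathrm{ss}}$-images are supported only in the $r$th factor. Lemma~\ref{lem:higher-rank-semisimple-factors} shows that this image is a finite-index elementary subgroup of an integral $\SL_{m_r}$ with finite abelianization. For $m_r=2$, let $\Pi_r$ be the $q_{\mathrm{ss}}$-image of the $\SL_2(\Z)$ generated by the two unit basic roots of Proposition~\ref{prop:sl2-unit-incidence}. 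That proposition says $\Pi_r$ is arithmetic in the $r$th factor, trivial in the other factors, and has finite abelianization. In every case $\Pi_r\leq\Pi$, the $\Pi_r$ for distinct $r$ commute because they sit in distinct direct factors, and their product $\Pi_0=\prod_r\Pi_r$ is arithmetic in $\mathbf G_{\mathrm{ss}}$ by Lemma~\ref{lem:arithmetic-images-and-models}(i),(ii).

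Next, the upper bound. Since $S_{\mathbf D}\leq\mathbf D(\Q)\cap\GL(W_{\Z})$, Lemma~\ref{lem:bounded-denominator-invariant-lattice} applies. Take $\mathbf H=\mathbf D$, let $\rho$ be $q_{\mathrm{ss}}$ followed by a faithful representation of $\mathbf G_{\mathrm{ss}}$, for example $\rho=\bigoplus_r\rho_r$ with $\rho_r$ the natural representation of $\SL_{m_r}$. The lemma then gives a lattice $T_0$ with $q_{\mathrm{ss}}(S_{\mathbf D})$ contained in the stabilizer of $T_0$. By Lemma~\ref{lem:arithmetic-images-and-models}(iii) that stabilizer is arithmetic in $\mathbf G_{\mathrm{ss}}$, and by (iv) it is commensurable with $\Pi_0$. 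So $\Pi_0$ is contained in a group $\Delta$ in which it has finite index, and we have
\[
 \Pi_0\leq\Pi\leq q_{\mathrm{ss}}(\Elementary)\leq q_{\mathrm{ss}}(S_{\mathbf D})\leq\Delta .
\]
The inclusion $\Elementary\leq S_{\mathbf D}$ comes from Proposition~\ref{prop:split-quotient-and-root-density} together with $\Elementary\leq\Order^\times$. It follows that $\Pi$ is arithmetic and $[q_{\mathrm{ss}}(S_{\mathbf D}):\Pi]<\infty$.

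The delicate point is making $\Pi_0\leq\Delta$ with finite index rigorous. The map $q_{\mathrm{ss}}$ is not injective on $\mathbf D$, but this does not matter: $\Pi_0$ already lies in $\mathbf G_{\mathrm{ss}}(\Q)$, and commensurability of arithmetic subgroups of the same group (iv) gives $[\Delta:\Delta\cap\Pi_0]<\infty$. Since $\Pi_0\leq q_{\mathrm{ss}}(S_{\mathbf D})\leq\Delta$, this gives $[\Delta:\Pi_0]<\infty$.

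For the abelianization, $\Pi_0$ is a direct product of finitely many groups with finite abelianization, so $\Pi_0^{\mathrm{ab}}$ is finite. Finite abelianization does not in general pass to finite-index overgroups. Here, however, $\Pi$ is finitely generated: it contains the finite-index subgroup $\Pi_0$, which is finitely generated. Any homomorphism $\Pi\to\Q/\Z$ or $\Pi\to\Z$ restricts on $\Pi_0$ to a map with finite image, hence has finite image on all of $\Pi$. A finitely generated abelian group all of whose maps to $\Z$ vanish is finite, so $\Pi^{\mathrm{ab}}$ is finite. I expect the main care to be needed in checking that the $\SL_2$ and higher-rank pieces really land in distinct direct factors so that they generate their product. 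This follows from the single-factor support statements in Lemma~\ref{lem:pure-Levi-root-lifts} and Proposition~\ref{prop:sl2-unit-incidence}.
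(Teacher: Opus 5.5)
Your proposal is correct and follows the paper's proof essentially step for step. A product of factorwise arithmetic subgroups (the common-level elementary groups for $m_r\geq3$ and the unit-incidence $\SL_2(\Z)$ images for $m_r=2$) lies in $\Pi$, and the bounded-denominator invariant lattice together with commensurability of arithmetic subgroups gives $\Pi$ finite index in $q_{\mathrm{ss}}(S_{\mathbf D})$. The only deviation is the abelianization step: you use finite generation of $\Pi$ and the vanishing of $\operatorname{Hom}(\Pi,\Z)$, whereas the paper notes that $\Pi^{\mathrm{ab}}$ contains the image of $\Pi_0$ (a quotient of the finite group $\Pi_0^{\mathrm{ab}}$) with finite quotient $\Pi/[\Pi,\Pi]\Pi_0$. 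Both arguments are valid.
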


\begin{proof}
For each factor with $m_r\geq3$, take the common-level elementary subgroup
from Lemma~\ref{lem:higher-rank-semisimple-factors}; for each factor with
$m_r=2$, take the arithmetic factor subgroup furnished by the integral
$\SL_2(\Z)$ of Proposition~\ref{prop:sl2-unit-incidence}.  Because each $A_r$ is contained in the $r$th simple factor, their
product
$A_{\mathrm{ss}}=\prod_{\{r:m_r\geq2\}}A_r$
is an arithmetic subgroup of $\mathbf G_{\mathrm{ss}}$ contained in
$\Pi$, by Lemma~\ref{lem:arithmetic-images-and-models}\textup{(i)--(ii)}.

We compare this subgroup with the entire image of
$S_{\mathbf D}$.  Choose a faithful rational representation
$\rho:\mathbf G_{\mathrm{ss}}\to\GL(T_{\Q})$ and a lattice
$T\subseteq T_{\Q}$.  Apply
Lemma~\ref{lem:bounded-denominator-invariant-lattice} to the rational
representation $\rho\circ q_{\mathrm{ss}}$ of $\mathbf D$ and the
integral subgroup $S_{\mathbf D}$.  It gives an invariant lattice $T_0$
with
\begin{equation}\label{eq:semisimple-invariant-lattice}
 \rho(q_{\mathrm{ss}}(S_{\mathbf D}))
 \leq
 \mathcal B:=\rho(\mathbf G_{\mathrm{ss}}(\Q))\cap\GL(T_0).
\end{equation}
By Lemma~\ref{lem:arithmetic-images-and-models}\textup{(iii)},
$\mathcal B$ is an arithmetic subgroup of
$\rho(\mathbf G_{\mathrm{ss}})$, and the rational isomorphism onto the
faithful image carries the arithmetic group $A_{\mathrm{ss}}$ to an
arithmetic subgroup $\rho(A_{\mathrm{ss}})$.  The same lemma therefore
makes these two subgroups commensurable.  Since
\[
 \rho(A_{\mathrm{ss}})
 \leq\rho(\Pi)
 \leq\rho(q_{\mathrm{ss}}(S_{\mathbf D}))
 \leq\mathcal B,
\]
the subgroup $\rho(A_{\mathrm{ss}})$ has finite index in $\mathcal B$.
Faithfulness of $\rho$ now shows that $\Pi$ has finite index in
$q_{\mathrm{ss}}(S_{\mathbf D})$.

It remains to check the abelianization assertion.  The higher-rank factors
$A_r$ have finite abelianization by
\eqref{eq:higher-rank-root-commutator}.  For a rank-two factor, the standard
presentation~\cite[Ch.~I]{SerreTrees}
\[
 \SL_2(\Z)=\langle s,t\mid s^4=1,\ s^2=(st)^3\rangle
\]
gives $4s=0$ and $2s=3s+3t$ after abelianization, hence $s=-3t$ and
$12t=0$.  Thus every factor, and therefore $A_{\mathrm{ss}}$, has finite
abelianization.  Finally, $A_{\mathrm{ss}}$ has finite index in $\Pi$.
Its image in $\Pi^{\mathrm{ab}}$ is a quotient of the finite group
$A_{\mathrm{ss}}^{\mathrm{ab}}$.  The quotient by that image is
\[
 \Pi/\bigl([\Pi,\Pi]\langle\!\langle
 A_{\mathrm{ss}}\rangle\!\rangle\bigr),
\]
which is finite because the normal closure of the finite-index subgroup
$A_{\mathrm{ss}}$ still has finite index in $\Pi$.  Therefore
$\Pi^{\mathrm{ab}}$ is finite.
\end{proof}

\subsection{Integral points in the unipotent radical}

Retain the notation $\mathbf U=\operatorname{R_u}(\mathbf D)$ from
\eqref{eq:semisimple-quotient-notation}, and put
\begin{equation}\label{eq:unipotent-root-intersection}
 E_U=\Elementary\cap\mathbf U(\Q),
 \qquad
 \mathbf U_0=\overline{E_U}^{\,\mathrm{Zar}},
 \qquad
 N_U=S_{\mathbf D}\cap\mathbf U(\Q)
     =(\Order^\times\cap\mathbf D(\Q))\cap\mathbf U(\Q).
\end{equation}
The closure $\mathbf U_0$ is a rational subgroup because $E_U$ consists of
rational points.

\begin{lemma}
\label{lem:unipotent-integral-model}
Put
\[
 J_{\Z}=J\cap\End(W_{\Z}).
\]
Then
\begin{equation}\label{eq:unipotent-integral-model}
 N_U=1+J_{\Z}.
\end{equation}
This is a finitely generated torsion-free nilpotent group, it is Zariski
dense in $\mathbf U=1+J$, and
\begin{equation}\label{eq:unipotent-integral-Hirsch}
 \operatorname{hl}(N_U)=\dim_{\Q}J=\dim\mathbf U.
\end{equation}
\end{lemma}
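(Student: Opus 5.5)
We will prove \eqref{eq:unipotent-integral-model} by a two-sided inclusion, and then read off the remaining assertions from standard facts about arithmetic subgroups of unipotent groups.

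\textbf{The identity \eqref{eq:unipotent-integral-model}.} First we check $1+J_{\Z}\subseteq N_U$. For $j\in J_{\Z}$, the element $j$ is nilpotent, so the inverse $(1+j)^{-1}=\sum_k(-j)^k$ is a finite sum. It lies in $\End(W_{\Z})\cap\CAlg$, because $J$ is an ideal, and hence in $1+J_{\Z}$. Thus $1+j\in\Order^\times$. By the identification $\mathbf U=1+J$ established before \eqref{eq:semisimple-product-identification}, we also have $1+j\in\mathbf U(\Q)\leq\mathbf D(\Q)$.

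Conversely, let $g\in N_U$. Then $g\in\mathbf U(\Q)=1+J$, so $g-1\in J$. Moreover $g\in\Order^\times\subseteq\End(W_{\Z})$, so $g-1\in\End(W_{\Z})$. Together these give $g-1\in J_{\Z}$.

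\textbf{Structure of $N_U$.} Since $J_{\Z}$ is a full lattice in $J$ (it is the intersection of $J$ with the lattice $\End(W_{\Z})$), the group $1+J_{\Z}$ is the stabilizer of $W_{\Z}$ in $\mathbf U(\Q)$. It is therefore an arithmetic subgroup of the unipotent $\Q$-group $\mathbf U$.

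The standard theory of lattices in unipotent groups (Malcev; see also Raghunathan, Ch.~II) then gives three facts:
\begin{itemize}
\item the group is finitely generated, torsion-free and nilpotent;
\item it is Zariski dense in $\mathbf U$;
\item its Hirsch length equals $\dim\mathbf U=\dim_{\Q}J$.
\end{itemize}

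If one prefers a self-contained argument, filter by the subgroups $1+(J^k\cap\End(W_{\Z}))$. These are normal in $N_U$, and each successive quotient embeds, via $1+j\mapsto j$, into the lattice $(J^k\cap\End W_{\Z})/(J^{k+1}\cap\End W_{\Z})$. This quotient is a free abelian group of rank $\dim J^k/J^{k+1}$, and the embedding has finite-index image because a power of $1+j$ realizes the multiple $nj$ modulo $J^{k+1}$. Summing the ranks gives \eqref{eq:unipotent-integral-Hirsch}.

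Zariski density also follows directly. For each $j\in J_{\Z}$ the integer points $(1+j)^n=\exp(n\log(1+j))$ are dense in the one-parameter subgroup $\exp(t\log(1+j))$. Since $\log(1+J_{\Z})$ spans $J$ over $\Q$, the Lie algebra of the Zariski closure is all of $J$, and connectedness of $\mathbf U$ gives equality.

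\textbf{Expected obstacle.} The one point needing care is the Hirsch-length count, specifically that the graded pieces have full rank rather than merely embedding. The power argument handles this: if $j\in J^k\cap\End(W_{\Z})$, then $(1+j)^n\equiv1+nj$ modulo $J^{k+1}$, which exhibits the full lattice in each graded piece.
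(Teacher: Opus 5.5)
Your proof is correct, and for the identity $N_U=1+J_{\Z}$ and the Hirsch-length count it follows the paper's argument. The paper likewise filters by $N_k=1+L_k$, where $L_k=J^k\cap\End(W_{\Z})$, and sums the ranks of $N_k/N_{k+1}\cong L_k/L_{k+1}$. It also records $[N_i,N_j]\le N_{i+j}$, which is what actually gives nilpotence; normality together with abelian quotients would only give polycyclicity, so your self-contained version relies on your citation for that point.

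The obstacle you flag is not one. The map $N_k/N_{k+1}\to L_k/L_{k+1}$, $1+x\mapsto x$, is surjective on the nose, because $1+x\in N_k$ for every $x\in L_k$. No power argument is needed.

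The one genuinely different step is Zariski density.
\begin{itemize}
\item You go through one-parameter subgroups and the Lie algebra of the closure. This needs the fact that $\log(1+J_{\Z})$ spans $J$, which you assert without proof. It is true, by the same filtration induction, using $\log(1+x)\equiv x\pmod{J^{k+1}}$ for $x\in L_k$.
\item The paper instead notes that the affine isomorphism of varieties $J\to 1+J$, $j\mapsto 1+j$, carries the full lattice $J_{\Z}$ onto $N_U$. A full lattice is Zariski dense in its rational span, so density follows at once.
\end{itemize}
The paper's route is shorter and avoids $\exp$ and $\log$ entirely.
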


\begin{proof}
If $u\in N_U$, then $u=1+j$ for a unique $j\in J$ and the integrality
of $u$ gives $j\in J_{\Z}$.  Conversely, if $j\in J_{\Z}$, nilpotence
of $J$ gives
\[
 (1+j)^{-1}=1-j+j^2-\cdots,
\]
a finite polynomial with integral coefficients.  Thus $1+j\in N_U$,
proving~\eqref{eq:unipotent-integral-model}.

Let $J^\nu=0$ and put
\[
 L_k=J^k\cap\End(W_{\Z}),
 \qquad
 N_k=1+L_k
 \quad(1\leq k\leq\nu).
\]
Each $L_k$ is a full lattice in the rational space $J^k$, and
$[N_i,N_j]\leq N_{i+j}$.  Moreover,
\[
 N_k/N_{k+1}\cong L_k/L_{k+1},
 \qquad
 1+x\longmapsto x\bmod L_{k+1},
\]
because multiplication is addition modulo $J^{k+1}$.  Hence the series
$N_1\geq N_2\geq\cdots\geq N_\nu=1$ has finitely generated abelian
factors.  It follows that $N_U=N_1$ is finitely generated and nilpotent.
A unipotent matrix of finite order in characteristic zero is the identity,
so $N_U$ is torsion-free.  Summing the ranks of the displayed factors
gives
\[
 \operatorname{hl}(N_U)
 =\sum_k\dim_{\Q}(J^k/J^{k+1})
 =\dim_{\Q}J.
\]
Finally, the affine isomorphism $J\to1+J$, $j\mapsto1+j$, carries the
full lattice $J_{\Z}$ to $N_U$.  A full lattice is Zariski dense in its
ambient rational vector space, so $N_U$ is Zariski dense in $\mathbf U$.
\end{proof}

\begin{lemma}
\label{lem:unipotent-central-quotient}
With the notation~\eqref{eq:unipotent-root-intersection}, one has
\begin{equation}\label{eq:unipotent-commutator-capture}
 [\mathbf U,\mathbf D]\leq\mathbf U_0.
\end{equation}
Therefore $\mathbf U_0\triangleleft\mathbf D$ and
$\mathbf V=\mathbf U/\mathbf U_0$
is a central rational vector group in $\mathbf D/\mathbf U_0$.  For a
rational Levi subgroup $\mathbf M_0$ of this quotient there is a rational
direct product
\begin{equation}\label{eq:central-Levi-direct-product}
 \mathbf D/\mathbf U_0
 \cong\mathbf V\times\mathbf M_0,
 \qquad
 \mathbf M_0\cong\mathbf G_{\mathrm{ss}},
\end{equation}
and the image of $\Elementary$ is the graph of a homomorphism
\begin{equation}\label{eq:unipotent-graph-homomorphism}
 c:\Pi\longrightarrow\mathbf V(\Q).
\end{equation}
\end{lemma}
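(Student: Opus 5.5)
The plan is to prove the key inclusion \eqref{eq:unipotent-commutator-capture} by a Zariski-density argument applied to integral commutators, and then to read off the remaining assertions formally.

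\emph{Step 1: integral commutators lie in $E_U$.} Let $n\in N_U$ and $e\in\Elementary$. Since $N_U\leq\Order^\times$, Lemma~\ref{lem:all-root-normality} gives $nen^{-1}\in\Elementary$, so the commutator $[n,e]=(nen^{-1})e^{-1}$ lies in $\Elementary$. By Proposition~\ref{prop:split-quotient-and-root-density}, $\Elementary\leq\mathbf D(\Q)$. The unipotent radical $\mathbf U$ is normal in $\mathbf D$, so $[n,e]=n\cdot(en^{-1}e^{-1})$ is a product of two elements of $\mathbf U(\Q)$. Hence
\[
 [N_U,\Elementary]\leq\Elementary\cap\mathbf U(\Q)=E_U\leq\mathbf U_0(\Q).
\]

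\emph{Step 2: pass to Zariski closures.} The commutator morphism $\mathbf U\times\mathbf D\to\mathbf D$ is regular, and $\mathbf U_0$ is closed. Therefore the preimage of $\mathbf U_0$ is a closed subset. By Step 1 it contains $N_U\times\Elementary$. By Lemma~\ref{lem:unipotent-integral-model}, $N_U$ is Zariski dense in $\mathbf U$, and by Proposition~\ref{prop:split-quotient-and-root-density}, $\Elementary$ is Zariski dense in $\mathbf D$. The product of two dense sets is dense in the product variety, so the closed preimage is all of $\mathbf U\times\mathbf D$. This proves $[\mathbf U,\mathbf D]\leq\mathbf U_0$.

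\emph{Step 3: normality of $\mathbf U_0$.} Since $\mathbf U$ is normal in $\mathbf D$, the group $E_U$ is normalized by $\Elementary$. Hence the normalizer of $\mathbf U_0$, which is a closed subgroup, contains $\Elementary$ and therefore equals $\mathbf D$. (Alternatively, for $g\in\mathbf D$ and $u\in\mathbf U_0$ one has $gug^{-1}=[g,u]u\in\mathbf U_0$ directly from Step 2.) Step 2 also gives $[\mathbf U,\mathbf U]\leq\mathbf U_0$. So $\mathbf V=\mathbf U/\mathbf U_0$ is a commutative unipotent rational group, hence a vector group in characteristic zero, and it is central in $\mathbf D/\mathbf U_0$.

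\emph{Step 4: the direct product.} The group $\mathbf V$ is the unipotent radical of $\mathbf D/\mathbf U_0$. The quotient $(\mathbf D/\mathbf U_0)/\mathbf V\cong\mathbf D/\mathbf U=\mathbf G_{\mathrm{ss}}$ is semisimple. Choose a rational Levi subgroup $\mathbf M_0$ by Mostow's theorem~\cite{Mostow}. Then $\mathbf M_0\cong\mathbf G_{\mathrm{ss}}$ via the quotient map, and $\mathbf D/\mathbf U_0=\mathbf V\rtimes\mathbf M_0$. Since $\mathbf V$ is central, the semidirect product is direct, which gives \eqref{eq:central-Levi-direct-product}.

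\emph{Step 5: the graph homomorphism.} Let $\overline{\Elementary}$ be the image of $\Elementary$ in $\mathbf D/\mathbf U_0$. Its intersection with $\mathbf V$ is the image of $\Elementary\cap\mathbf U(\Q)=E_U\leq\mathbf U_0$, which is trivial. Hence the projection $\overline{\Elementary}\to\mathbf M_0\cong\mathbf G_{\mathrm{ss}}$ is injective. This composite is induced by $q_{\mathrm{ss}}$, so its image is $\Pi$. Define
\[
 c=\operatorname{pr}_{\mathbf V}\circ\bigl(\operatorname{pr}_{\mathbf M_0}|_{\overline{\Elementary}}\bigr)^{-1}:\Pi\longrightarrow\mathbf V(\Q).
\]
This is a homomorphism because both projections of a direct product are homomorphisms. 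By construction, $\overline{\Elementary}$ is exactly the graph of $c$, which is \eqref{eq:unipotent-graph-homomorphism}.

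The only substantive point is Step 1. It relies on normality of $\Elementary$ in the full unit group, so that conjugation by the integral unipotent points $N_U$ preserves it, together with the Zariski density of $N_U$ in $\mathbf U$. Once these are combined, the rest is formal algebraic-group bookkeeping.
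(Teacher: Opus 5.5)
Your proof is correct and follows the paper's argument essentially step for step. You place the integral commutators $[N_U,\Elementary]$ in $E_U$ using normality of $\Elementary$ in $\Order^\times$ and of $\mathbf U$ in $\mathbf D$, then obtain $[\mathbf U,\mathbf D]\leq\mathbf U_0$ from Zariski density of $N_U$ in $\mathbf U$ and of $\Elementary$ in $\mathbf D$ (your product-density step is the paper's two-variable closure argument compressed into one line), and the central Levi splitting and graph description then follow formally exactly as in the paper.
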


\begin{proof}
Lemma~\ref{lem:unipotent-integral-model} shows directly that $N_U$ is
Zariski dense in $\mathbf U$.  If $u\in N_U$ and
$e\in\Elementary$, normality
$\Elementary\triangleleft\Order^\times$ and normality of $\mathbf U$ in
$\mathbf D$ give
$[u,e]\in\Elementary\cap\mathbf U(\Q)=E_U$.
Fixing $e$ and taking the closure in the first variable gives
$[\mathbf U,e]\leq\mathbf U_0$.  Fixing an element of $\mathbf U$ and
using the Zariski density of $\Elementary$ in $\mathbf D$ then gives
\eqref{eq:unipotent-commutator-capture}.  This inclusion implies normality
of $\mathbf U_0$ and centrality of $\mathbf V$.  A central connected
unipotent group in characteristic zero is a rational vector group
\cite{BorelLAG}.

Mostow's rational Levi decomposition~\cite{Mostow} gives $\mathbf M_0$, and centrality turns the
semidirect product into~\eqref{eq:central-Levi-direct-product}.  The image
of $\Elementary$ meets $\mathbf V$ trivially: an element in the
intersection maps trivially to $\mathbf G_{\mathrm{ss}}$, hence is represented by an
element of $E_U$, which vanishes modulo $\mathbf U_0$.  Its projection to
$\mathbf M_0$ is $\Pi$, so it is the graph of the homomorphism in
\eqref{eq:unipotent-graph-homomorphism}.
\end{proof}

\begin{lemma}
\label{lem:unipotent-density-eliminates-quotient}
One has
\begin{equation}\label{eq:unipotent-root-density}
 \mathbf U_0=\mathbf U.
\end{equation}
\end{lemma}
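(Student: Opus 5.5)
The plan is to combine the graph description of Lemma~\ref{lem:unipotent-central-quotient} with the finite abelianization of $\Pi$ from Proposition~\ref{prop:semisimple-arithmetic-image}, and then use the Zariski density of $\Elementary$ in $\mathbf D$ from Proposition~\ref{prop:split-quotient-and-root-density}. The point is that $\Elementary$ has no room to produce a nontrivial abelian, torsion-free unipotent quotient. Density then forces that quotient to vanish.

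First, I would show that the homomorphism $c:\Pi\to\mathbf V(\Q)$ of \eqref{eq:unipotent-graph-homomorphism} is trivial. The target $\mathbf V(\Q)$ is a rational vector space, hence abelian and torsion-free, so $c$ factors through $\Pi^{\mathrm{ab}}$. That group is finite by \eqref{eq:semisimple-index-and-abelianization}. Its image in $\mathbf V(\Q)$ is therefore a finite subgroup of a torsion-free group, which is trivial. Consequently the image of $\Elementary$ in $\mathbf D/\mathbf U_0\cong\mathbf V\times\mathbf M_0$ lies in $\{0\}\times\mathbf M_0$.

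Next, I would use density. The quotient morphism $\mathbf D\to\mathbf D/\mathbf U_0$ is a surjective morphism of algebraic groups, and it carries Zariski-dense subsets to Zariski-dense subsets of the image, since the preimage of a closed set is closed. By \eqref{eq:all-roots-in-derived-group}, the image of $\Elementary$ is therefore dense in $\mathbf V\times\mathbf M_0$. This image lies in the closed subgroup $\{0\}\times\mathbf M_0$, so $\{0\}\times\mathbf M_0$ must equal $\mathbf V\times\mathbf M_0$. Hence $\mathbf V$ is trivial, that is $\mathbf U/\mathbf U_0=1$, which is \eqref{eq:unipotent-root-density}.

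I do not expect a serious obstacle here; the nontrivial input, namely the finite abelianization of $\Pi$ (including the $\SL_2$ unit-incidence argument), was already established. The only points needing care are these:
\begin{enumerate}[label=\textup{(\roman*)}]
\item checking that closedness of $\{0\}\times\mathbf M_0$ uses the rational direct product \eqref{eq:central-Levi-direct-product} rather than just a semidirect one;
\item verifying that the image of the full closure is the closure of the image, which here only requires the inclusion direction just described.
\end{enumerate}
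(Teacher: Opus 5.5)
Your proposal is correct and matches the paper's proof: $c$ factors through the finite group $\Pi^{\mathrm{ab}}$ into the torsion-free group $\mathbf V(\Q)$, so $c=0$, and if $\mathbf V\neq0$ the image of $\Elementary$ would lie in the proper closed subgroup $0\times\mathbf M_0$, contradicting Zariski density of $\Elementary$ in $\mathbf D$. One small correction to your caveat (i): $\mathbf M_0$ would be closed in a semidirect product as well, and what the direct product in \eqref{eq:central-Levi-direct-product} actually buys is that the image of $\Elementary$ is the graph of a homomorphism $c$ rather than of a cocycle.
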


\begin{proof}
The target of~\eqref{eq:unipotent-graph-homomorphism} is abelian and
torsion-free, while $\Pi^{\mathrm{ab}}$ is finite by
Proposition~\ref{prop:semisimple-arithmetic-image}.  Hence $c=0$.  If
$\mathbf V$ were nonzero, the image of $\Elementary$ in
\eqref{eq:central-Levi-direct-product} would lie in the proper algebraic
subgroup $0\times\mathbf M_0$.  This contradicts the Zariski density of
$\Elementary$ in $\mathbf D$.  Therefore $\mathbf V=0$.
\end{proof}

For a polycyclic group $P$, write $\operatorname{hl}(P)$ for its Hirsch
length, the number of infinite cyclic factors in a polycyclic series.

\begin{lemma}
\label{lem:nilpotent-full-Hirsch-index}
Let $A\leq N$ be finitely generated nilpotent groups.  If
$\operatorname{hl}(A)=\operatorname{hl}(N)$, then $[N:A]<\infty$.
\end{lemma}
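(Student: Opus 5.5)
The plan is to run induction along a polycyclic series of $N$, comparing it with the series that $A$ inherits.  Two standard facts about finitely generated nilpotent groups will be used: they are polycyclic, so Hirsch length is defined, and every subgroup of such a group is again finitely generated.  Hirsch length is additive on extensions, and a polycyclic group has Hirsch length zero exactly when it is finite.

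The argument proceeds as follows.  Choose a central series
\[
 N=N_0\geq N_1\geq\cdots\geq N_k=1,
\]
for instance the lower central series, so that each factor $N_j/N_{j+1}$ is a finitely generated abelian group.  Intersect it with $A$ to obtain $A_j=A\cap N_j$.  The factor $A_j/A_{j+1}$ embeds in $N_j/N_{j+1}$: the kernel of $A_j\to N_j/N_{j+1}$ is $A\cap N_{j+1}=A_{j+1}$.  For a subgroup $B$ of a finitely generated abelian group $M$ one has $\rank B\leq\rank M$, with equality if and only if $[M:B]<\infty$, since $M/B$ is finitely generated and has rank $\rank M-\rank B$.  This gives
\[
 \operatorname{hl}(A)=\sum_j\rank(A_j/A_{j+1})
 \leq\sum_j\rank(N_j/N_{j+1})=\operatorname{hl}(N).
\]
Because the two Hirsch lengths are equal, every inequality in the sum is an equality.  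Hence every index $[N_j/N_{j+1}:A_j/A_{j+1}]$ is finite.

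To assemble the indices, note that $A_jN_{j+1}/N_{j+1}$ is the image of $A_j$ in $N_j/N_{j+1}$.  Then prove by downward induction on $j$ that $[N_j:A_j]<\infty$.  The case $j=k$ is trivial.  For the induction step,
\[
 [N_j:A_j]=[N_j:A_jN_{j+1}]\cdot[A_jN_{j+1}:A_j].
\]
The first factor is the finite index of the image in $N_j/N_{j+1}$.  For the second factor, the cosets of $A_j$ in $A_jN_{j+1}$ correspond to cosets of $A_j\cap N_{j+1}=A_{j+1}$ in $N_{j+1}$, so it is at most $[N_{j+1}:A_{j+1}]$, which is finite by induction.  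Taking $j=0$ gives the lemma.

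There is no serious obstacle.  The only step needing care is the coset bijection $A_jN_{j+1}/A_j\leftrightarrow N_{j+1}/(A_j\cap N_{j+1})$, which holds because $N_{j+1}$ is normal, so that $A_jN_{j+1}$ is a subgroup.
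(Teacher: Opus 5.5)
Your proof is correct and is essentially the paper's argument. The paper inducts on the nilpotency class by splitting off $Z=Z(N)$, uses additivity of Hirsch length to force equality on both layers, and concludes with $[AZ:A]=[Z:A\cap Z]$; you instead intersect one fixed central series with $A$ and assemble the same layerwise finite-index conclusions by downward induction using the same index identity (and your version needs only a normal series with finitely generated abelian factors, so it also works for polycyclic groups).
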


\begin{proof}
Induct on the nilpotency class of $N$.  The abelian case is the full-rank
lattice statement for finitely generated abelian groups.  Otherwise put
$Z=Z(N)$.  Additivity of Hirsch length in short exact sequences~\cite[Chapter~1]{SegalPolycyclic} gives
\[
 \operatorname{hl}(A)
 =\operatorname{hl}(A\cap Z)+\operatorname{hl}(AZ/Z),
 \qquad
 \operatorname{hl}(N)
 =\operatorname{hl}(Z)+\operatorname{hl}(N/Z).
\]
The two terms in the first equality are bounded above by the corresponding
terms in the second.  Equality of the totals forces equality in both
bounds.  The abelian case gives $[Z:A\cap Z]<\infty$, while induction in
$N/Z$ gives $[N/Z:AZ/Z]<\infty$.  Since
$[AZ:A]=[Z:A\cap Z]$ and $[N:AZ]=[N/Z:AZ/Z]$, the result follows.
\end{proof}

\begin{proposition}
\label{prop:unipotent-radical-capture}
One has
\begin{equation}\label{eq:unipotent-density-and-index}
 \mathbf U_0=\mathbf U,
 \qquad
 [N_U:E_U]<\infty.
\end{equation}
\end{proposition}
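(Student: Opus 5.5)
The first equality $\mathbf U_0=\mathbf U$ is exactly Lemma~\ref{lem:unipotent-density-eliminates-quotient}, so only the index statement needs an argument. I will prove $[N_U:E_U]<\infty$ by comparing Hirsch lengths and then invoking Lemma~\ref{lem:nilpotent-full-Hirsch-index}.

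First, $E_U\leq N_U$. Indeed, $\Elementary\leq\Order^\times$ by definition, and $\Elementary\leq\mathbf D(\Q)$ by Proposition~\ref{prop:split-quotient-and-root-density}. Intersecting with $\mathbf U(\Q)$ gives the inclusion. Lemma~\ref{lem:unipotent-integral-model} says $N_U$ is finitely generated, torsion-free, nilpotent, and has $\operatorname{hl}(N_U)=\dim\mathbf U$. Subgroups of finitely generated nilpotent groups are finitely generated, so $E_U$ is finitely generated, torsion-free and nilpotent with $\operatorname{hl}(E_U)\leq\operatorname{hl}(N_U)$.

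It remains to show $\operatorname{hl}(E_U)\geq\dim\mathbf U$. Here I would use Mal'cev theory for finitely generated torsion-free nilpotent subgroups of a unipotent $\Q$-group. Such a subgroup $\Delta$ has a Zariski closure whose dimension equals $\operatorname{hl}(\Delta)$. The reason is that $\log\Delta$ spans a rational Lie subalgebra whose exponential is the closure, and $\Delta$ is a lattice, i.e.\ cocompact, in the real points of that closure. A concrete route goes by induction along the lower central series of $\mathbf U$, or along the filtration $1+J^k$. At each step the image of $E_U$ in $(1+J^k)/(1+J^{k+1})\cong J^k/J^{k+1}$ is a finitely generated abelian group. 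Its Zariski closure, which is a $\Q$-linear span, has dimension equal to its rank. Summing these ranks gives $\operatorname{hl}(E_U)=\dim\overline{E_U}^{\mathrm{Zar}}$, provided density is also checked step by step. I expect this compatibility between Zariski density and graded pieces to be the delicate point. The cleaner path is to cite the standard fact directly: for a unipotent $\Q$-group, the Zariski closure of a finitely generated subgroup of rational points has dimension equal to its Hirsch length (Mal'cev; see \cite[Chapter~II]{Raghunathan}).

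With $\mathbf U_0=\mathbf U$, this gives $\operatorname{hl}(E_U)=\dim\mathbf U_0=\dim\mathbf U=\operatorname{hl}(N_U)$. Lemma~\ref{lem:nilpotent-full-Hirsch-index} applied to $E_U\leq N_U$ then yields $[N_U:E_U]<\infty$, completing~\eqref{eq:unipotent-density-and-index}.
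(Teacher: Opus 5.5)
Your proposal is correct and follows the paper's proof: you cite Lemma~\ref{lem:unipotent-density-eliminates-quotient} for $\mathbf U_0=\mathbf U$, use the Mal'cev fact that a finitely generated subgroup of $\mathbf U(\Q)$ has Hirsch length equal to the dimension of its Zariski closure to get $\operatorname{hl}(E_U)=\dim\mathbf U=\operatorname{hl}(N_U)$, and conclude with Lemma~\ref{lem:nilpotent-full-Hirsch-index}. Your explicit check that $E_U\leq N_U$ (from $\Elementary\leq\Order^\times$ and $\Elementary\leq\mathbf D(\Q)$) spells out a step the paper leaves implicit. You were also right to rely on the cited Mal'cev fact rather than the graded-piece induction, where density need not pass to each layer.
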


\begin{proof}
The first assertion is
Lemma~\ref{lem:unipotent-density-eliminates-quotient}.
Lemma~\ref{lem:unipotent-integral-model} makes $N_U$ finitely generated
torsion-free nilpotent.  Its subgroup $E_U$ is therefore finitely
generated~\cite[Chapter~1]{SegalPolycyclic} and torsion-free.

For a finitely generated torsion-free nilpotent subgroup
$P\leq\mathbf U(\Q)$, Mal'cev completion identifies the rational
completion of $P$ with the connected rational unipotent subgroup generated
by $P$~\cite{Malcev}; see also
\cite[Chapter~II]{Raghunathan}.  Along an isolated central series, rank and
Lie dimension agree, so
\begin{equation}\label{eq:Hirsch-Malcev-dimension}
 \operatorname{hl}(P)
 =\dim(\text{the rational Mal'cev completion of }P).
\end{equation}
The completion of $N_U$ is $\mathbf U$, and
\eqref{eq:unipotent-root-density} says the same for $E_U$.  Hence
\begin{equation}\label{eq:equal-unipotent-Hirsch-length}
 \operatorname{hl}(E_U)=\dim\mathbf U=\operatorname{hl}(N_U).
\end{equation}
Lemma~\ref{lem:nilpotent-full-Hirsch-index} now gives the finite-index
assertion.
\end{proof}

\begin{corollary}
\label{cor:derived-integral-root-index}
The all-root subgroup has finite index in the integral derived points:
\begin{equation}\label{eq:derived-integral-root-index}
 [S_{\mathbf D}:\Elementary]<\infty.
\end{equation}
\end{corollary}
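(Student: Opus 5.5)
The plan is to combine the two finite-index results already in hand, one for the unipotent radical and one for the semisimple quotient, through the exact sequence
\[
 1\longrightarrow N_U\longrightarrow S_{\mathbf D}
 \xrightarrow{\ q_{\mathrm{ss}}\ } q_{\mathrm{ss}}(S_{\mathbf D})\longrightarrow 1 .
\]
Here $N_U=S_{\mathbf D}\cap\mathbf U(\Q)$ is precisely the kernel of $q_{\mathrm{ss}}$ restricted to $S_{\mathbf D}$, since $\mathbf U=\ker q_{\mathrm{ss}}$. First, Proposition~\ref{prop:split-quotient-and-root-density} gives $\Elementary\leq\mathbf D(\Q)$. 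The definition in \eqref{eq:arithmetic-all-root-subgroup} gives $\Elementary\leq\Order^\times$. Together these put $\Elementary$ inside $S_{\mathbf D}$, and also give $E_U=\Elementary\cap\mathbf U(\Q)=\Elementary\cap N_U$.

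Next I would use the elementary index inequality for a subgroup $E\leq S$ and a homomorphism $q$ with kernel $N$:
\[
 [S:E]\leq[q(S):q(E)]\cdot[N:E\cap N].
\]
To prove it, take $s\in S$. Modulo a finite set of coset representatives for $q(E)$ in $q(S)$, we may choose $e\in E$ with $q(s)=q(r e)$. Then $e^{-1}r^{-1}s\in N$, and it lies in one of finitely many cosets of $E\cap N$. Hence $s\in rE\,n_j(E\cap N)\subseteq rEn_j$. Because $E\cap N\leq E$, the sets $r_i E n_j$ are finitely many, and each is a union of at most $[N:E\cap N]$ left $E$-cosets. The cleanest way to see this is to count left $E$-cosets contained in $q^{-1}(q(r_i)q(E))=r_iEN$: the group $N$ is normal in $S$, so $EN$ is a subgroup, and $[EN:E]=[N:E\cap N]$.

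Then I would apply the inequality with $S=S_{\mathbf D}$, $E=\Elementary$ and $q=q_{\mathrm{ss}}$. The first factor is $[q_{\mathrm{ss}}(S_{\mathbf D}):\Pi]$, which is finite by Proposition~\ref{prop:semisimple-arithmetic-image}. The second factor is $[N_U:E_U]$, which is finite by Proposition~\ref{prop:unipotent-radical-capture}. So $[S_{\mathbf D}:\Elementary]<\infty$.

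No step here should be a real obstacle, since all the substantive work is already in the two propositions. The only point needing care is the bookkeeping: checking that $q_{\mathrm{ss}}(\Elementary)$ is exactly $\Pi$ as defined in \eqref{eq:derived-integral-and-Pi}, and that $E_U$ is the intersection with the kernel rather than something smaller. Both follow directly from the definitions.
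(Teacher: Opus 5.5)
Your proposal is correct and follows the paper's proof. Both arguments factor the index through $1\to N_U\to S_{\mathbf D}\to q_{\mathrm{ss}}(S_{\mathbf D})\to 1$ and combine Propositions~\ref{prop:semisimple-arithmetic-image} and~\ref{prop:unipotent-radical-capture}. The paper invokes normality of $\Elementary$ in $S_{\mathbf D}$ (Lemma~\ref{lem:all-root-normality}) to identify $q_{\mathrm{ss}}^{-1}(\Pi)/\Elementary$ with $N_U/E_U$, whereas your count $[S:E]=[S:EN]\,[EN:E]$ gives the same factorization without needing normality. One caution: your first sketch with the sets $rEn_j$ mixes left and right cosets, but the argument you then give through $r_iEN$ is the correct one.
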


\begin{proof}
Let $S_{\mathbf D}'$ be the inverse image of $\Pi$ under
$q_{\mathrm{ss}}:S_{\mathbf D}\to
q_{\mathrm{ss}}(S_{\mathbf D})$.  Proposition
\ref{prop:semisimple-arithmetic-image} gives
\[
 [S_{\mathbf D}:S_{\mathbf D}']
 =[q_{\mathrm{ss}}(S_{\mathbf D}):\Pi]<\infty.
\]
The subgroup $\Elementary$ is normal in $S_{\mathbf D}$ by
Lemma~\ref{lem:all-root-normality}, and it maps onto $\Pi$.  The kernel of
$q_{\mathrm{ss}}$ on $S_{\mathbf D}$ is $N_U$, so the isomorphism
theorem gives
$S_{\mathbf D}'/\Elementary \cong N_U/E_U$.
Proposition~\ref{prop:unipotent-radical-capture} now yields the exact index
factorization
\begin{equation}\label{eq:derived-index-factorization}
 [S_{\mathbf D}:\Elementary]
 =[q_{\mathrm{ss}}(S_{\mathbf D}):\Pi]
  [N_U:E_U]
 <\infty.
\end{equation}
\end{proof}

\subsection{Integral points in the split torus}

\begin{theorem}
\label{thm:all-root-finite-index}
Let $\Gamma$ be a finite biconnected graph with at least three vertices.
The all-root subgroup has finite index in the unit group of the integral
order:
\begin{equation}\label{eq:all-root-finite-index}
 [\Order^\times:\Elementary]<\infty.
\end{equation}
\end{theorem}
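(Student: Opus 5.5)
Corollary~\ref{cor:derived-integral-root-index} already gives $[S_{\mathbf D}:\Elementary]<\infty$ with $S_{\mathbf D}=\Order^\times\cap\mathbf D(\Q)$. So it suffices to show $[\Order^\times:S_{\mathbf D}]<\infty$, i.e.\ that the image of $\Order^\times$ in the split torus $\mathbf T=\mathbf G/\mathbf D$ is finite. By \eqref{eq:integral-unit-points}, $\Order^\times$ is an arithmetic subgroup of $\mathbf G=\CAlg^\times$. Let $\pi:\mathbf G\to\mathbf T$ be the quotient map. A rational surjection carries arithmetic subgroups to subgroups commensurable with arithmetic subgroups of the target; this is the Witte~Morris fact quoted in Lemma~\ref{lem:arithmetic-images-and-models}, or alternatively Lemma~\ref{lem:bounded-denominator-invariant-lattice} applied to a faithful representation of $\mathbf T$. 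Hence $\pi(\Order^\times)$ lies in an arithmetic subgroup of $\mathbf T$. Since $\mathbf T$ is a $\Q$-split torus (Proposition~\ref{prop:split-quotient-and-root-density}), its arithmetic subgroups are commensurable with $\mathbf T(\Z)\cong\{\pm1\}^{\dim\mathbf T}$, which is finite. Therefore $\pi(\Order^\times)$ is finite.

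To make the split-torus step concrete, I would identify $\mathbf T$ through rational characters. Choose a $\Q$-basis of the character group $X^*(\mathbf T)\cong\Z^d$, say $\chi_1,\dots,\chi_d$, so that $(\chi_1,\dots,\chi_d):\mathbf T\to\mathbb G_m^d$ is a rational isomorphism. Each $\chi_k\circ\pi$ is a rational character of $\mathbf G$, hence a regular function $\mathbf G\to\mathbb G_m$ defined over $\Q$. The key claim is that $\chi_k\circ\pi$ takes only finitely many values on $\Order^\times$. The bounded-denominator argument of Lemma~\ref{lem:bounded-denominator-invariant-lattice} shows that $\chi_k(\pi(g))$ and its inverse have denominators bounded uniformly in $g\in\Order^\times$. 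So both $\chi_k(\pi(g))$ and $\chi_k(\pi(g))^{-1}$ lie in $d^{-1}\Z$ for one fixed $d$. The nonzero rationals $t$ with $t,t^{-1}\in d^{-1}\Z$ form a finite set, because $|t|\le d$ and $|t|\ge 1/d$ while the numerator of $t$ divides $d$. Hence the image of $\Order^\times$ in $\Q^{\times d}$ is finite. Then $\ker(\pi|_{\Order^\times})=S_{\mathbf D}$ has finite index in $\Order^\times$.

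Combining the two steps gives
\[
 [\Order^\times:\Elementary]
 =[\Order^\times:S_{\mathbf D}]\,[S_{\mathbf D}:\Elementary]<\infty .
\]
Here normality of $\Elementary$ (Lemma~\ref{lem:all-root-normality}) is not even needed.

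I expect the main subtlety to be the step showing that the image in the torus is finite rather than merely discrete. This is exactly where splitness of $\mathbf G/\mathbf D$ over $\Q$ matters. For a nonsplit or anisotropic torus, the integral points could be an infinite group of units. The bounded-denominator argument together with the explicit character coordinates handles this cleanly. One can also see it directly: the characters of $\mathbf G$ come from determinants of the Wedderburn blocks and from the idempotent cocharacters $\lambda_{C,i}$, and the determinant of an integral unit is $\pm1$.
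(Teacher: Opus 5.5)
Your proposal is correct and follows essentially the same route as the paper. Both reduce, via Corollary~\ref{cor:derived-integral-root-index}, to showing that $\Order^\times$ has finite image in the split torus $\mathbf G/\mathbf D$, and both get that finiteness by a uniform denominator bound for a character basis and its inverses on integral units, leaving finitely many possible rational values (one small wording point: the character basis should be a $\Z$-basis of $X^*(\mathbf T)$, not a $\Q$-basis).
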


\begin{proof}
Let
\begin{equation}\label{eq:arithmetic-torus-quotient}
 \pi_T:\mathbf G\longrightarrow
 \mathbf T=\mathbf G/\mathbf D
\end{equation}
be the split rational torus quotient from Proposition
\ref{prop:split-quotient-and-root-density}, and choose a basis
$\chi_1,\ldots,\chi_r$ of $X^*(\mathbf T)$.  Each
$\chi_j\circ\pi_T$ and its inverse character is a rational regular function
on $\mathbf G$.  Lift these finitely many functions to the coordinate ring
of the ambient $\GL(W)$ and clear all rational coefficient denominators
once.  Because every $g\in\Order^\times$ and $g^{-1}$ has an integral matrix,
there is one integer $d_T\geq1$ such that
\begin{equation}\label{eq:character-basis-bounded-denominators}
 \chi_j(\pi_T(g)),\qquad
 \chi_j(\pi_T(g))^{-1}
 \in d_T^{-1}\Z
 \quad(g\in\Order^\times,\ 1\leq j\leq r).
\end{equation}

There are only finitely many rational numbers with this property.  Indeed,
write one of them as $a/b\in\Q^\times$ in lowest terms with $b>0$.
Membership of $a/b$ in $d_T^{-1}\Z$ forces $b\mid d_T$, while membership
of $b/a$ in $d_T^{-1}\Z$ forces $|a|\mid d_T$.  Thus every character in the
chosen basis takes only finitely many values on $\pi_T(\Order^\times)$.  A
character basis identifies the rational points of a split torus with a
subgroup of $(\Q^\times)^r$, so it separates points.  Therefore
\begin{equation}\label{eq:finite-integral-torus-image}
 |\pi_T(\Order^\times)|<\infty.
\end{equation}
The kernel of $\pi_T$ on $\Order^\times$ is
$\Order^\times\cap\mathbf D(\Q)=S_{\mathbf D}$.  Hence
\begin{equation}\label{eq:torus-integral-index}
 [\Order^\times:S_{\mathbf D}]<\infty.
\end{equation}
Combining this with Corollary~\ref{cor:derived-integral-root-index} gives
\[
 [\Order^\times:\Elementary]
 =[\Order^\times:S_{\mathbf D}]
 [S_{\mathbf D}:\Elementary]
 <\infty.
\]
\end{proof}

\subsection{Comparison with the actual cohomological image}

Return to the full combined stabilizer $\mathbf G_\Gamma$ from
\eqref{eq:combined-stabilizer} and the finite-index separator-fixing group
$\Lambda_{\Gamma,\mathrm{sep}}$ from
\eqref{eq:actual-image-upper-bound}.  Define
\begin{equation}\label{eq:connected-actual-image-subgroup}
 \Lambda_\Gamma^\circ
 =\Lambda_{\Gamma,\mathrm{sep}}
   \cap\mathbf G_\Gamma^\circ(\Q).
\end{equation}

\begin{theorem}
\label{thm:cohomological-image-arithmeticity}
Let $\Gamma$ be a finite biconnected graph with at least three vertices.
The all-root group is a common finite-index subgroup of the integral unit
group and the actual cohomological image:
\begin{equation}\label{eq:actual-image-common-root-index}
 [\Order^\times:\Elementary]<\infty,
 \qquad
 [\HomImage:\Elementary]<\infty.
\end{equation}
\end{theorem}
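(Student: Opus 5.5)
The first index bound is Theorem~\ref{thm:all-root-finite-index}, so only $[\HomImage:\Elementary]<\infty$ needs proof. Since \eqref{eq:all-roots-in-cohomological-image} already gives $\Elementary\leq\HomImage$, we sandwich the actual image between $\Elementary$ and the integral unit group, after passing to finite-index subgroups. Concretely, we aim to show that $\Lambda_\Gamma^\circ$ has finite index in $\HomImage$ and is contained in $\Order^\times$. Granting both,
\[
 \Elementary\cap\Lambda_\Gamma^\circ\leq\Lambda_\Gamma^\circ\leq\Order^\times ,
\]
and $[\Order^\times:\Elementary]<\infty$ then gives $[\Lambda_\Gamma^\circ:\Elementary\cap\Lambda_\Gamma^\circ]<\infty$. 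Hence $[\HomImage:\Elementary\cap\Lambda_\Gamma^\circ]<\infty$, and since $\Elementary\cap\Lambda_\Gamma^\circ\leq\Elementary\leq\HomImage$, we get $[\HomImage:\Elementary]<\infty$.

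For the containment, \eqref{eq:actual-image-upper-bound} gives $\Lambda_{\Gamma,\mathrm{sep}}\leq\mathbf G_\Gamma(\Q)\cap\GL(W_\Z)$. Intersecting with the identity component and using \eqref{eq:connected-stabilizer-is-unit-group}, every element of $\Lambda_\Gamma^\circ$ lies in $\CAlg^\times(\Q)\cap\GL(W_\Z)$. By \eqref{eq:integral-unit-points} this is exactly $\Order^\times$.

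For the finite index, recall that $[\HomImage:\Lambda_{\Gamma,\mathrm{sep}}]<\infty$ by \eqref{eq:actual-image-upper-bound}. The algebraic group $\mathbf G_\Gamma$ has finitely many Zariski components, so $\mathbf G_\Gamma^\circ(\Q)$ has finite index in $\mathbf G_\Gamma(\Q)$. Intersecting the subgroup $\Lambda_{\Gamma,\mathrm{sep}}\leq\mathbf G_\Gamma(\Q)$ with this finite-index normal subgroup therefore has finite index in $\Lambda_{\Gamma,\mathrm{sep}}$.

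The main obstacle is making sure the reduction to $\mathbf G_\Gamma^\circ$ is legitimate. The equality $\mathbf G_\Gamma^\circ=\CAlg^\times$ rests on the Lie algebra computation of Section~\ref{sec:cubic-algebra}, namely \eqref{eq:def-cubic-component-algebra} combined with Theorem~\ref{thm:cubic-row-component}. That computation identified $\operatorname{Lie}(\mathbf G_\Gamma)$ with $\{A:A^{\mathsf T}|_L\in\mathfrak g^{\mathrm h}_\Gamma\}$, where the separator condition is imposed as \emph{preservation of each} $D_S$, not merely of the family. This is precisely why we pass through $\Lambda_{\Gamma,\mathrm{sep}}$ before comparing with $\mathbf G_\Gamma$. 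Once that is in place, the remainder is formal index bookkeeping.
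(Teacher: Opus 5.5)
Your proof is correct and follows the paper's argument. You use the same chain of finite-index subgroups $\Lambda_\Gamma^\circ\leq\Lambda_{\Gamma,\mathrm{sep}}\leq\HomImage$, the containment $\Lambda_\Gamma^\circ\leq\Order^\times$ via $\mathbf G_\Gamma^\circ=\CAlg^\times$, and Theorem~\ref{thm:all-root-finite-index}. The only difference is cosmetic: the paper checks directly that $\Elementary\leq\Lambda_\Gamma^\circ$ (because $\Elementary\leq\Order^\times\leq\mathbf G_\Gamma^\circ(\Q)$ and elements of $\mathbf G_\Gamma$ fix every $D_S$), whereas you pass to $\Elementary\cap\Lambda_\Gamma^\circ$, which makes that check unnecessary.
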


\begin{proof}
The first assertion is Theorem~\ref{thm:all-root-finite-index}.  For the
second, Section~\ref{sec:intrinsic-invariants} gives
\begin{equation}\label{eq:separator-image-index-recalled}
 [\HomImage:\Lambda_{\Gamma,\mathrm{sep}}]<\infty,
 \qquad
 \Lambda_{\Gamma,\mathrm{sep}}\leq\mathbf G_\Gamma(\Q).
\end{equation}
The algebraic component group
$\mathbf G_\Gamma/\mathbf G_\Gamma^\circ$ is finite
\cite{BorelLAG}.  Mapping
$\Lambda_{\Gamma,\mathrm{sep}}$ to this component group and using
\eqref{eq:connected-actual-image-subgroup} therefore gives
\begin{equation}\label{eq:connected-actual-image-index}
 [\Lambda_{\Gamma,\mathrm{sep}}:\Lambda_\Gamma^\circ]<\infty.
\end{equation}

The connected-group identification
\eqref{eq:connected-stabilizer-is-unit-group}, integrality of the actual
cohomological action, and~\eqref{eq:integral-unit-points} imply
\begin{equation}\label{eq:connected-actual-image-in-units}
 \Lambda_\Gamma^\circ\leq\Order^\times.
\end{equation}
On the other hand, Section~\ref{sec:root-realization} proves
$\Elementary\leq\HomImage$, while
$\Elementary\leq\Order^\times\leq\mathbf G_\Gamma^\circ(\Q)$.  Every element
of $\mathbf G_\Gamma^\circ$ lies in the combined stabilizer and therefore
fixes each separator subspace occurring in
\eqref{eq:combined-stabilizer}.  Thus the permutation defining
$\Lambda_{\Gamma,\mathrm{sep}}$ is trivial on $\Elementary$, and
\begin{equation}\label{eq:roots-in-connected-actual-image}
 \Elementary\leq\Lambda_\Gamma^\circ\leq\Order^\times.
\end{equation}
Theorem~\ref{thm:all-root-finite-index} now gives
$[\Lambda_\Gamma^\circ:\Elementary]<\infty$.  Combining the three nested
finite indices, without requiring normality in the full actual image,
yields
\begin{align}
 [\HomImage:\Elementary]
 &=[\HomImage:\Lambda_{\Gamma,\mathrm{sep}}]
   [\Lambda_{\Gamma,\mathrm{sep}}:\Lambda_\Gamma^\circ]
   [\Lambda_\Gamma^\circ:\Elementary]
 <\infty.
 \label{eq:actual-image-index-factorization}
\end{align}
This proves the second assertion.
\end{proof}

\begin{corollary}
\label{cor:cohomological-image-finite-generation}
Let $\Gamma$ be a finite biconnected graph with at least three vertices.
The group $\HomImage$ is commensurable with the arithmetic group
$\Order^\times$, and both groups are finitely generated.
\end{corollary}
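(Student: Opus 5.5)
Commensurability comes straight from Theorem~\ref{thm:cohomological-image-arithmeticity}. Both $\HomImage$ and $\Order^\times$ contain $\Elementary$ with finite index, so $\Elementary$ is a common finite-index subgroup of both inside $\GL(W_\Gamma)$. That is stronger than abstract commensurability: the two groups share one finite-index subgroup, not merely isomorphic ones.

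For finite generation, it suffices to show that $\Elementary$ is finitely generated. A group containing a finitely generated subgroup of finite index is itself finitely generated: add a finite set of coset representatives to the generators. So I will prove that $\Elementary$ is finitely generated. Then both $\HomImage$ and $\Order^\times$ are finitely generated.

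I would first prove that $\Order^\times$ is finitely generated, and then deduce the statement for $\Elementary$, since a finite-index subgroup of a finitely generated group is finitely generated (Schreier). For $\Order^\times$, the cleanest route is the classical theorem of Borel--Harish-Chandra~\cite{BorelHC}: arithmetic groups are finitely generated, indeed finitely presented. By \eqref{eq:integral-unit-points}, $\Order^\times=\mathbf G(\Q)\cap\GL(W_{\Z})$ is arithmetic in $\mathbf G=\CAlg^\times$. One could instead assemble finite generation by hand from the proof of Theorem~\ref{thm:all-root-finite-index}, using the index factorization \eqref{eq:derived-index-factorization} and the finite torus image \eqref{eq:finite-integral-torus-image}:
\begin{itemize}
\item $N_U$ is finitely generated nilpotent by Lemma~\ref{lem:unipotent-integral-model};
\item the semisimple image is arithmetic in $\prod_r\SL_{m_r}$, hence finitely generated;
\item $S_{\mathbf D}$ is therefore an extension of finitely generated groups, and $[\Order^\times:S_{\mathbf D}]<\infty$.
\end{itemize}
Either route works.

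The only real subtlety is in the hand-assembled route: I need the semisimple image of $S_{\mathbf D}$ to be finitely generated, not merely commensurable with something finitely generated. This holds because Proposition~\ref{prop:semisimple-arithmetic-image} shows $\Pi$ has finite index in it. Moreover, $\Pi$ contains the finite-index subgroup $A_{\mathrm{ss}}$, a product of elementary subgroups and copies of $\SL_2(\Z)$, each finitely generated. I therefore expect no serious obstacle. I would cite Borel--Harish-Chandra for brevity and mention the direct argument as a check.
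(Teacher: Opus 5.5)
Your proposal is correct, but it runs in the opposite direction from the paper's proof. The paper never proves finite generation of $\Order^\times$ first. It shows directly that $\Elementary$ is finitely generated, using the exact sequence $1\to E_U\to\Elementary\to\Pi\to1$. Here $E_U$ is finitely generated because it is a subgroup of the finitely generated nilpotent group $N_U$, and $\Pi$ is finitely generated because it contains the finitely generated group $A_{\mathrm{ss}}$ with finite index. Both $\HomImage$ and $\Order^\times$ are then finite extensions of $\Elementary$.

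You instead argue top-down:
\begin{enumerate}
\item finite generation of $\Order^\times$, either by Borel--Harish-Chandra or by hand from $1\to N_U\to S_{\mathbf D}\to q_{\mathrm{ss}}(S_{\mathbf D})\to1$ together with \eqref{eq:torus-integral-index};
\item Schreier's lemma to descend to $\Elementary$;
\item coset representatives to ascend to $\HomImage$.
\end{enumerate}

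The Borel--Harish-Chandra route is the shortest and gives more. Arithmetic groups are finitely presented, and finite presentability passes to finite-index subgroups and overgroups, so you also get finite presentation of $\Elementary$ and $\HomImage$. The paper proves that separately in Theorem~\ref{thm:SIL-free-finite-presentation}. The cost is that reduction theory enters as a black box, whereas the paper's argument stays inside Section~\ref{sec:arithmeticity} and uses only pieces already built there.

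Your hand-assembled variant is essentially the paper's argument moved from $\Elementary$ to $S_{\mathbf D}$. Working with $N_U$ rather than $E_U$ spares you the fact that subgroups of finitely generated nilpotent groups are finitely generated. In exchange, you need the finite torus image and Schreier's lemma to get back down to $\Elementary$. You correctly flagged the one point that needs care: $q_{\mathrm{ss}}(S_{\mathbf D})$ is finitely generated because $A_{\mathrm{ss}}$ has finite index in it, not merely because it is commensurable with something finitely generated.
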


\begin{proof}
Commensurability is precisely the common-finite-index conclusion of
Theorem~\ref{thm:cohomological-image-arithmeticity}; the group $\Order^\times$ is
arithmetic by~\eqref{eq:integral-unit-points}.  For finite generation, the
proof of Proposition~\ref{prop:unipotent-radical-capture} shows that $E_U$
is finitely generated.  The product group $A_{\mathrm{ss}}$ is finitely
generated by its finite family of common-level root generators and its
$\SL_2(\Z)$ factors, and it has finite index in $\Pi$; hence $\Pi$ is
finitely generated.  The exact sequence
\[
 1\longrightarrow E_U\longrightarrow\Elementary
 \xrightarrow{\ q_{\mathrm{ss}}\ }\Pi\longrightarrow1
\]
then makes $\Elementary$ finitely generated.  Finally, both $\Order^\times$ and
$\HomImage$ are finite extensions of $\Elementary$ by
\eqref{eq:actual-image-common-root-index}, so they are finitely generated.
\end{proof}

\subsection{The canonical cubic envelope}

Theorem~\ref{thm:cohomological-image-arithmeticity} describes the cohomological
image between the all-root group and the rational points of the connected
unit group $\mathbf G=\CAlg^\times$.  The arithmetic content is expressed
most invariantly by the derived group
$\mathbf D_\Gamma=\mathbf D=(\CAlg^\times)^{\mathrm{der}}$ studied
throughout this section, which we call the \emph{cubic envelope} of
$H_\Gamma$.  The graph-specific split-torus calculation and
Corollary~\ref{cor:radical-in-derived-algebra} show that this envelope
contains the entire radical subgroup $1+\Jac(\CAlg)$.

\begin{theorem}
\label{thm:cubic-envelope}
Let $\Gamma$ be a finite biconnected graph with at least three vertices,
and write $J=\Jac(\CAlg)$.
\begin{enumerate}[label=\textup{(\roman*)}]
\item The unipotent radical of the cubic envelope is all of $1+J$, and
there is a canonical short exact sequence of algebraic groups over $\Q$
\begin{equation}\label{eq:cubic-envelope-sequence}
 1\longrightarrow 1+J\longrightarrow
 \mathbf D_\Gamma\longrightarrow
 \prod_{\{r:\,m_r\geq2\}}\SL_{m_r}\longrightarrow1,
\end{equation}
which splits over $\Q$: every rational Levi subgroup of
$\mathbf D_\Gamma$ maps isomorphically onto the right-hand product.  No
canonical or integral splitting is asserted.
\item Every unipotent element of $\mathbf G(\Q)$ lies in
$\mathbf D_\Gamma(\Q)$.  In particular
\[
 \Elementary\leq
 S_{\mathbf D}
 :=\mathbf D_\Gamma(\Q)\cap\GL(W_{\Z}),
 \qquad
 [S_{\mathbf D}:\Elementary]<\infty,
\]
and $S_{\mathbf D}$ is an arithmetic subgroup of $\mathbf D_\Gamma$.
\item The all-root group $\Elementary$ is a common finite-index subgroup
of $\HomImage$ and $S_{\mathbf D}$.  Up to commensurability, the
cohomological image of $\Aut(H_\Gamma)$ is therefore an arithmetic subgroup
of its cubic envelope.
\end{enumerate}
\end{theorem}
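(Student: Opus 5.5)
The plan is to assemble Theorem~\ref{thm:cubic-envelope} almost entirely from the results already proved in Section~\ref{sec:arithmeticity}, taking the three parts in order.

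\emph{Part (i).} The identification $\mathbf U=\operatorname{R_u}(\mathbf D)=1+J$ and the isomorphism \eqref{eq:semisimple-product-identification} were established right after \eqref{eq:unit-wedderburn-quotient}. Concretely, the unit-group surjection $\mathbf G\to\prod_r\GL_{m_r}$ has kernel $1+J$, and by Corollary~\ref{cor:radical-in-derived-algebra} this kernel lies in $\mathbf D$. Restricting to $\mathbf D$ therefore gives a surjection onto the derived group $\prod_r\SL_{m_r}$; factors with $m_r=1$ contribute $\SL_1=1$ and drop out. This yields the exact sequence \eqref{eq:cubic-envelope-sequence}. Canonicity holds because the map is induced by the canonical quotient $\CAlg\to\CAlg/J$. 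For the splitting, I will invoke Mostow's rational Levi decomposition: any rational Levi subgroup $\mathbf L$ meets $\mathbf U$ trivially and surjects onto $\mathbf D/\mathbf U$, so $q_{\mathrm{ss}}|_{\mathbf L}$ is an isomorphism.

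\emph{Part (ii).} Let $g\in\mathbf G(\Q)$ be unipotent. Its image in the torus $\mathbf T=\mathbf G/\mathbf D$ of Proposition~\ref{prop:split-quotient-and-root-density} is a unipotent element of a torus, hence trivial by Jordan decomposition preservation under morphisms. So $g\in\mathbf D(\Q)$. In particular $\Elementary\le S_{\mathbf D}$; note that $S_{\mathbf D}$ here coincides with $\Order^\times\cap\mathbf D(\Q)$ by \eqref{eq:integral-unit-points}, since $\mathbf D(\Q)\subseteq\mathbf G(\Q)$. Finite index is then exactly Corollary~\ref{cor:derived-integral-root-index}. Arithmeticity of $S_{\mathbf D}$ follows from Lemma~\ref{lem:arithmetic-images-and-models}(iii), applied to the faithful representation $\mathbf D\hookrightarrow\GL(W)$ and the lattice $W_\Z$.

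\emph{Part (iii).} We have $[S_{\mathbf D}:\Elementary]<\infty$ from (ii) and $[\HomImage:\Elementary]<\infty$ from Theorem~\ref{thm:cohomological-image-arithmeticity}. Together these say that $\Elementary$ is a common finite-index subgroup. Hence $\HomImage$ is commensurable with the arithmetic group $S_{\mathbf D}$ and, by Lemma~\ref{lem:arithmetic-images-and-models}(iv), with every arithmetic subgroup of $\mathbf D_\Gamma$.

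The main obstacle is making sure nothing in (i) secretly requires more than what was proved. One point is that the image of $\mathbf D$ is exactly $\prod\SL_{m_r}$, which uses that the derived group of $\prod\GL_{m_r}$ is $\prod\SL_{m_r}$ and that $\mathbf G\to\prod\GL_{m_r}$ is onto. The other is that the "no integral splitting" disclaimer means nothing beyond the rational statement needs to be verified.
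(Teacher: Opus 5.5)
Your proposal is correct and follows the paper's proof essentially step for step: part (i) comes from Corollary~\ref{cor:radical-in-derived-algebra}, the semisimple-quotient calculation and Mostow's rational Levi theorem; part (ii) comes from the triviality of unipotent elements in the split torus $\mathbf G/\mathbf D$; and part (iii) comes from combining (ii) with Theorem~\ref{thm:cohomological-image-arithmeticity}. The only cosmetic difference is that you take $[S_{\mathbf D}:\Elementary]<\infty$ directly from Corollary~\ref{cor:derived-integral-root-index}, after correctly reconciling the two definitions of $S_{\mathbf D}$ via \eqref{eq:integral-unit-points}, whereas the paper deduces it from $\Elementary\le S_{\mathbf D}\le\Order^\times$ together with Theorem~\ref{thm:all-root-finite-index}.
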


\begin{proof}
Corollary~\ref{cor:radical-in-derived-algebra} gives
$1+J\leq\mathbf D$, and the semisimple-quotient calculation gives
$\operatorname{R_u}(\mathbf D)=1+J$ together with
\eqref{eq:semisimple-product-identification}.  These statements are exactly
\eqref{eq:cubic-envelope-sequence}.  A rational Levi subgroup
$\mathbf L\leq\mathbf D$ exists by Mostow's theorem~\cite{Mostow} and
maps isomorphically onto $\mathbf D/(1+J)$.  Levi subgroups are conjugate
under the unipotent radical and need not preserve $W_{\Z}$, which is all
that is meant by the last sentence of~(i).

For~(ii), let $g\in\mathbf G(\Q)$ be unipotent.  Its image in
$\mathbf G/\mathbf D$ is a unipotent element of a torus, hence trivial,
so $g$ lies in the $\Q$-closed subgroup $\mathbf D$, and
$g\in\mathbf D(\Q)$.  The generators $I+N$ of $\Elementary$ are unipotent
and integral, so $\Elementary\leq S_{\mathbf D}$; this refines
\eqref{eq:all-roots-in-derived-group}.  Since
$S_{\mathbf D}\leq\Order^\times$ by~\eqref{eq:integral-unit-points}, the
index bound follows from $[\Order^\times:\Elementary]<\infty$
(Theorem~\ref{thm:all-root-finite-index}).  The group $S_{\mathbf D}$
consists of the integral points of the $\Q$-group $\mathbf D_\Gamma$
with respect to the lattice $W_{\Z}$, which is the definition of an
arithmetic subgroup.

Part~(iii) combines~(ii) with $[\HomImage:\Elementary]<\infty$
(Theorem~\ref{thm:cohomological-image-arithmeticity}).
\end{proof}

\begin{remark}[Canonicity and functoriality]
Every constituent of~\eqref{eq:cubic-envelope-sequence} is intrinsic to
$H_\Gamma$: the lattice $W_{\Z}$, the algebra $\CAlg$ with its radical,
and the derived unit group are built from the quadratic, cubic, and
separator invariants of Section~\ref{sec:intrinsic-invariants}.  An
isomorphism $H_\Delta\to H_\Gamma$ therefore transports the sequence for
$\Delta$ to the one for $\Gamma$.
\end{remark}

\part{IA rigidity and automorphisms of a biconnected block}
\section{IA automorphisms}\label{sec:torelli}

Throughout this section, $\Gamma$ is a finite biconnected graph with at
least three vertices, and
\[
 A=A_\Gamma,\qquad
 \chi:A\longrightarrow\Z,\quad \chi(v)=1,\qquad
 H=H_\Gamma=\ker\chi .
\]
The proof of Proposition~\ref{prop:lower-central-comparison} also shows
that inclusion induces
\begin{equation}\label{eq:torelli-height-kernel-abelianization}
 H^{\mathrm{ab}}\xrightarrow{\ \cong\ }
 \ker\!\left(A^{\mathrm{ab}}\xrightarrow{\chi}\Z\right),
 \qquad H'=A'.
\end{equation}
The aim is to prove that every IA automorphism of $H$ extends uniquely to
an IA automorphism of $A$.  The proof has two steps.  First, the all-power
relations on a simple circuit show that the images of its directed edge
generators arise from a unique labeled polygon frame.  Second, an open-ear
decomposition identifies these circuit frames on their common edges and
produces a global frame.

\paragraph{Servatius blocks and period classes.}
For $g\in A$, let $\operatorname{supp}(g)$ be the set of vertices that
occur in a reduced word for $g$; this set is independent of the chosen
reduced word.  Recall that $g$ is \emph{cyclically reduced} when no
cyclic conjugate of a reduced word for $g$ admits a cancellation.
Assume first that $g$ is cyclically reduced.  Its
\emph{noncommutation graph} $\mathcal N(g)$ has vertex set
$\operatorname{supp}(g)$, with two distinct vertices adjacent exactly
when they do not commute in $A$, equivalently when they are not adjacent
in $\Gamma$.

Let $B_1,\ldots,B_k$ be the connected components of $\mathcal N(g)$.
Vertices in distinct $B_i$ commute, so the letters of a reduced word for
$g$ can be reordered, using commutation relations, as a product
\[
 g=g_1\cdots g_k,
\]
where $\operatorname{supp}(g_i)=B_i$ and the factors $g_i$ commute.  More
canonically, if $\pi_i:A\to A_{B_i}$ is the standard retraction, then
$g_i=\pi_i(g)$.  Servatius's block decomposition theorem says that the
unordered family $\{g_1,\ldots,g_k\}$ is independent of the reduced word
and uniquely determined up to permutation~\cite{Servatius}.  These
nontrivial commuting factors are the \emph{Servatius blocks} of $g$.

Right-angled Artin groups have the unique-root property~\cite{DuchampKrob}.  Hence every
block has a unique expression $g_i=r_i^{m_i}$ with $m_i\geq1$ maximal
and $r_i$ not a proper power; the $r_i$ are its \emph{primitive block
roots}.  For arbitrary $g$, conjugate a cyclically reduced representative,
form its blocks, and conjugate them back; the resulting conjugacy classes
are independent of the choices.  Two primitive block roots have the same
\emph{period class} when one is conjugate to the other or to its inverse,
and the same \emph{oriented period} when they are conjugate without
inversion.

\subsection{Edge centralizers}

For an oriented edge $e=(u,v)$, put
\begin{equation}\label{eq:torelli-standard-edge}
 d_e=uv^{-1}\in H.
\end{equation}
The next lemma is used both to classify edge images and to prove the
uniqueness of polygon frames.

\begin{lemma}
\label{lem:torelli-edge-centralizer}
For every edge $uv\in E(\Gamma)$,
\begin{equation}\label{eq:torelli-edge-centralizer-intersection}
 Z\!\left(C_H(d_e)\right)\cap H'=1.
\end{equation}
\end{lemma}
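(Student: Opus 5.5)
The plan is to compute $C_H(d_e)$ explicitly from the ambient centralizer in $A$, using Servatius's centralizer theorem, and then read off its center. Write $e=(u,v)$ with $uv\in E(\Gamma)$ and put
\[
 M=\lk(u)\cap\lk(v)\subseteq V(\Gamma)\setminus\{u,v\}.
\]
The element $d_e=uv^{-1}$ is cyclically reduced with support $\{u,v\}$. Since $u$ and $v$ commute, its noncommutation graph $\mathcal N(d_e)$ has the two singleton components $\{u\}$ and $\{v\}$. Its Servatius blocks are therefore $u$ and $v^{-1}$, with primitive roots $u$ and $v$. Servatius's centralizer theorem~\cite{Servatius} describes the centralizer of a cyclically reduced element as the product of the cyclic groups generated by its block roots with the standard subgroup on the vertices outside the support that commute with the whole support. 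Applied here, this gives
\[
 C_A(d_e)=\langle u\rangle\times\langle v\rangle\times A_M .
\]
Intersecting with $H=\ker\chi$ then yields
\[
 C_H(d_e)=\{u^a v^b w:\ a,b\in\Z,\ w\in A_M,\ a+b+\chi(w)=0\}.
\]

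Now let $z\in Z(C_H(d_e))\cap H'$ and write $z=u^av^bw$ with $w\in A_M$. By \eqref{eq:torelli-height-kernel-abelianization}, $H'=A'$, so $z$ has trivial image in $A^{\mathrm{ab}}=\Z^{V(\Gamma)}$. The vertices $u$, $v$ and $M$ are distinct, so the exponent sums give $a=b=0$ and $w\in A'\cap A_M$. Applying the standard retraction $A\to A_M$ shows that in fact $w\in A_M'$.

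Next I use centrality to pin $w$ down. For an arbitrary $w'\in A_M$, the element $w'u^{-\chi(w')}$ lies in $C_H(d_e)$, so it commutes with $z=w$. Since $u$ commutes with all of $A_M$, this means $w$ commutes with every $w'\in A_M$, so $w\in Z(A_M)$. The center of a right-angled Artin group $A_M$ is the standard free abelian subgroup generated by the vertices of $M$ adjacent to all other vertices of $M$. The retraction of $A_M$ onto this abelian standard subgroup kills $A_M'$ and restricts to the identity on $Z(A_M)$, so $Z(A_M)\cap A_M'=1$. Hence $w=1$ and $z=1$, which is \eqref{eq:torelli-edge-centralizer-intersection}. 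The case $M=\varnothing$ is included, with $A_M=1$.

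The main point needing care is the exact form of Servatius's centralizer theorem. I need the factor $A_M$ to be the full standard subgroup on the common link, and the cyclic factors to be generated by the primitive roots $u$ and $v$ themselves rather than by proper roots. Both hold here because $u$ and $v$ are generators, hence not proper powers. The remaining steps are abelianization bookkeeping.
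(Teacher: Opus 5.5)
Your proof is correct and follows essentially the same route as the paper: Servatius's centralizer theorem gives $C_A(d_e)=\langle u\rangle\times\langle v\rangle\times A_{\lk(u)\cap\lk(v)}$, the elements $w'u^{-\chi(w')}$ force the $A_M$-component of a central element into $Z(A_M)$, and injectivity of the resulting free abelian special subgroup into $A^{\mathrm{ab}}$, together with $H'=A'$, finishes the argument. The difference is cosmetic: you first use abelianization to kill the $u$- and $v$-exponents and then a retraction to get $Z(A_M)\cap A_M'=1$, whereas the paper computes $Z(C_H(d_e))$ exactly as the height-zero part of $\langle u,v\rangle\times Z(A_M)$ and then intersects with $H'$.
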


\begin{proof}
Let
$K_{uv}=\lk(u)\cap\lk(v)$.
The cyclically reduced element $d_e=uv^{-1}$ has the two singleton
Servatius blocks $u$ and $v^{-1}$.  The Servatius centralizer
theorem~\cite{Servatius} therefore gives
\begin{equation}\label{eq:torelli-edge-centralizer-in-A}
 C_A(d_e)=\langle u,v\rangle\times A_{K_{uv}}.
\end{equation}
In particular, $C_H(d_e)$ is the height-zero subgroup of the right-hand
side.

We first determine its center.  Let
$g=u^av^bc\in Z(C_H(d_e))$, where $c\in A_{K_{uv}}$.
For every $y\in A_{K_{uv}}$, the element $yu^{-\chi(y)}$ belongs to
$C_H(d_e)$.  The factors $u$ and $v$ commute with $A_{K_{uv}}$, and hence
$1=[g,yu^{-\chi(y)}]=[c,y]$.
Thus $c\in Z(A_{K_{uv}})$.  Conversely, every height-zero element of
$\langle u,v\rangle\times Z(A_{K_{uv}})$
centralizes the whole group $C_H(d_e)$.  Therefore
\begin{equation}\label{eq:torelli-edge-centralizer-center}
 Z(C_H(d_e))
 =\ker\!\left(
 \chi:\langle u,v\rangle\times Z(A_{K_{uv}})\longrightarrow\Z
 \right).
\end{equation}

The center $Z(A_{K_{uv}})$ is freely generated by the universal vertices of the
induced graph on $K_{uv}$.  Hence
$\langle u,v\rangle\times Z(A_{K_{uv}})$ is a free abelian special subgroup,
and its inclusion in
$A^{\mathrm{ab}}\cong\Z^{V(\Gamma)}$
is injective.  Proposition~\ref{prop:lower-central-comparison} gives
$H'=A'$, while $A'$ is the kernel of the abelianization map.  It follows
from~\eqref{eq:torelli-edge-centralizer-center} that an element of
$Z(C_H(d_e))\cap H'$ must be trivial.
\end{proof}

If $\alpha\in\Aut(H)$ and $h_e=\alpha(d_e)$, then $\alpha$ carries
centralizers, centers, and $H'$ to themselves.  Thus
\begin{equation}\label{eq:torelli-transported-edge-centralizer}
 Z(C_H(h_e))\cap H'=1.
\end{equation}

\subsection{Period isolation}

The period-matching argument uses the following theorem, proved in
Appendix~\ref{app:period-isolation}.  There the
one-period shortening theorem of Lohrey--Stober--Wei\ss~\cite{LSW} is first
stated in the precise symbolic form used here, and the uniform
family and successive-shortening assertions are then proved separately.

\begin{theorem}[Uniform period isolation]
\label{thm:uniform-period-isolation}
Let $G$ be a right-angled Artin group, let $r_1,\ldots,r_s$ be primitive
Servatius blocks, and let
\begin{equation}\label{eq:torelli-period-family}
 W_N=c_0r_1^{\varepsilon_1N}c_1\cdots
       r_s^{\varepsilon_sN}c_s,
 \qquad \varepsilon_i\in\{1,-1\},
\end{equation}
where the connecting words $c_i$ are fixed.  Suppose that $W_N=1$ for an
unbounded set of positive integers $N$.
\begin{enumerate}[label=\textup{(\roman*)}]
\item No conjugacy-up-to-inversion class of primitive periods occurs in
      exactly one powered slot of
      \eqref{eq:torelli-period-family}.
\item Suppose that one period class occurs exactly twice, in consecutive
      powered slots $b^{-N}a^N$, and that $a$ and $b$ have the same
      oriented primitive period.  Then $a=b$.
\end{enumerate}
Here ``consecutive'' refers to the displayed ordered power-word
representative.  Canonical normalization may insert a fixed connecting
word between the two canonical powers; Appendix~\ref{app:period-isolation}
keeps that word unchanged in the displayed representative and does not use a subsequent commutation move or
renormalization.
\end{theorem}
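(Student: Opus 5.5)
The plan is to reduce both assertions to the one-period shortening theorem of Lohrey--Stober--Wei\ss~\cite{LSW}.  That theorem says, roughly, the following.  Suppose a power word $c_0p_1^{x_1}c_1\cdots p_s^{x_s}c_s$ in a right-angled Artin group has primitive cyclically reduced periods and fixed connecting words, and it represents the identity for exponents large compared with a computable bound depending only on the connecting words.  Then some period can be \emph{cancelled} against another one.  More precisely, there are indices $i<j$ with $p_i$ conjugate to $p_j^{\pm1}$ by an element built from the intervening connecting data.  After this cancellation the word may be shortened to a power word with fewer powered slots, whose exponents are linear combinations of the old ones.

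I would first state this carefully in the symbolic form needed, as promised for Appendix~\ref{app:period-isolation}.  Since our family has all exponents equal to $\pm N$, the bound is uniform in $N$.  Hence for all sufficiently large $N$ in the unbounded set the shortening theorem applies simultaneously.  By pigeonhole over the finitely many possible cancellation patterns, one pattern works for infinitely many $N$.

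For (i), I would argue by induction on $s$ using successive shortening.  Each shortening step pairs two slots whose periods lie in the same conjugacy-up-to-inversion class.  It then replaces them by a single slot with exponent $(\varepsilon_i\mp\varepsilon_j)N$, or removes them entirely when the exponents cancel.  In either case the new connecting words are fixed words independent of $N$.  I need to check the following invariant: the multiset of period classes, counted by signed occurrences, is modified only by pairing within a class.  A class occurring exactly once can therefore never be paired.  It must survive to a terminal word in which the shortening theorem no longer applies, yet that word still equals $1$ for unboundedly many $N$.  That contradicts the base case.  The base case is a single powered slot $c_0r^{\pm N}c_1=1$, which forces $r^{N}$ to be conjugate to a fixed element for infinitely many $N$.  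This is impossible because $r\ne1$ has infinite order and torsion-free RAAGs have no such bounded behavior.

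There is one technical point I must handle.  A merged slot with exponent $2N$ still has a primitive period in the same class, so the class count remains meaningful.  Slots with zero net exponent simply disappear together with their class; this only happens for paired occurrences.

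For (ii), suppose the class occurs exactly twice, as $b^{-N}a^N$ with $a$ and $b$ having the same oriented period.  By (i) applied to every other class, after shortening all other classes away I expect to reach a word of the form $u\,b^{-N}\,w\,a^{N}\,u'=1$.  Here $w$ is the fixed connecting word between the slots, kept unchanged as the statement warns.  Writing $a=gbg^{-1}$ with primitive $b$, the equation $b^{-N}wgb^Ng^{-1}=$ a fixed element for infinitely many $N$ implies that $wg$ centralizes $b$.  I would prove this by comparing two values of $N$.  In a RAAG the centralizer of a primitive block $b$ is $\langle b\rangle\times$ (the link part), by Servatius~\cite{Servatius}.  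I then want to conclude that $a=b$, meaning the connecting element conjugates $b$ to itself literally, using the no-renormalization convention.

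The main obstacle is this last step together with ensuring that intermediate shortenings never alter the two slots $b^{-N},a^N$ or the word between them.  I would address it by shortening the other classes only in the outer segments.  Where a cancellation straddles the pair, I would first transport it across the pair using (i) applied to the subword.
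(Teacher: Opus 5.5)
There is a genuine gap. Your argument runs on a pairing-and-merging lemma that you attribute to Lohrey--Stober--Wei\ss. That is not what their shortening theorem provides, and you do not prove it. The operation available from \cite{LSW} (Definition~56 with Lemmas~62--63) works as follows. It selects one canonical period $q$ and replaces only the exponents of the $q$-slots by nonzero exponents of the same sign, bounded by a constant that depends only on the fixed skeleton. Every other slot and every connecting word is left literally unchanged, and triviality is preserved in both directions. Nothing is paired, fused, or deleted, and the new exponents are not linear combinations of the old ones.

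Your version fuses two same-class slots into one, with new connecting words independent of $N$. It is unsupported exactly where right-angled Artin groups differ from free groups: cancellations can cross through commutation. Take commuting, non-conjugate primitive blocks $p,q$. The trivial word $p^Nq^Np^{-N}q^{-N}$ has no same-class pair separated, even cyclically, only by fixed data. So the data between partners contain powers $q^{\pm N}$, and fusing requires commutation moves across $N$-dependent slots, i.e.\ a change of the displayed representative. This is exactly the straddling problem you leave open in (ii), and it already undermines the slot-count induction in (i).

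You also skip two preliminary issues. First, the symbolic machinery of \cite{LSW} treats only connected composite periods. Single-generator powers are absorbed into connecting words during preprocessing, which would make those words depend on $N$. Single-letter periods such as $x_v=v$ are the typical case in Theorem~\ref{thm:torelli-colored-polygon}; the paper handles them with the blow-up $v\mapsto vt_v$ and its retraction (Lemma~\ref{lem:period-blow-up}). Second, the paper checks that all preprocessing steps are uniform in $N$ (Lemma~\ref{lem:uniform-preprocessing}). Your remark that all exponents are $\pm N$ does not cover either point.

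What works is the selective route. After blow-up and uniform preprocessing, shorten every class other than the target class $P$ in turn. Lemma~\ref{lem:shortening-nonvanishing} ensures no slot disappears, so the same displayed skeleton survives every stage. Then pass to an unbounded subsequence on which the finitely many bounded exponents outside $P$ are constant.
\begin{itemize}
\item For (i) this gives $Ap^{\varepsilon N}B=1$ with $A,B$ fixed, and your base-case argument finishes.
\item For (ii), write $\iota(b)=gp^{\delta}g^{-1}$ and $\iota(a)=hp^{\delta}h^{-1}$. The procedure gives $Ap^{-\delta N}Cp^{\delta N}B=1$, where the middle word is exactly $C=g^{-1}h$. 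This holds because $b^{-N}$ and $a^N$ are consecutive with empty connecting word, and shortening never touches connecting words. Your two-value comparison shows that $C$ centralizes $p$. Then $h=gC$ gives $\iota(a)=\iota(b)$, and the retraction gives $a=b$.
\end{itemize}
Your final step does not close as you set it up. With an arbitrary fixed word $w$ between the two slots, $wg$ commuting with $b$ only yields $a=gbg^{-1}=w^{-1}bw$. The conclusion $a=b$ needs the connecting word between the slots to be the original empty one. Selective shortening preserves that word; your merging procedure does not guarantee it.
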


\begin{proof}
The single-letter blow-up of
Lemma~\ref{lem:period-blow-up} replaces every primitive block by a
connected composite period while preserving both oriented periods and
conjugacy-up-to-inversion classes.  Proposition
\ref{prop:torelli-period-isolation} then isolates any chosen period class
without changing its powered slots or the chosen ordered symbolic
representative.  Assertion~\textup{(i)} is
Corollary~\ref{cor:torelli-no-singleton}, and assertion~\textup{(ii)} is
Lemma~\ref{lem:torelli-adjacent-periods}.  The blow-up has a retraction, so
the resulting equalities descend to $G$.
\end{proof}

The theorem is a triviality-preserving statement, not an equality between
the original and shortened power words.  The orientation qualification in
\textup{(ii)} is essential.

\subsection{Servatius decompositions of edge images}

Fix
$\alpha\in\IAut(H)$,
and, for every oriented edge $e=(u,v)$, set
\begin{equation}\label{eq:torelli-edge-image}
 h_e=\alpha(d_e).
\end{equation}
Under the identification~\eqref{eq:torelli-height-kernel-abelianization},
the IA condition gives
\begin{equation}\label{eq:torelli-edge-image-abelianization}
 [h_e]_{\mathrm{ab}}=e_u-e_v.
\end{equation}

\begin{proposition}
\label{prop:torelli-edge-dichotomy}
For every oriented edge $e=(u,v)$, exactly one of the following forms
occurs.
\begin{enumerate}[label=\textup{(\roman*)}]
\item \emph{Unsplit:} $h_e=r_e$, where $r_e$ is a primitive Servatius
      block and
      \[
       [r_e]_{\mathrm{ab}}=e_u-e_v.
      \]
\item \emph{Split:}
      \[
       h_e=a_u b_v^{-1},
      \]
      where $a_u,b_v$ are commuting primitive Servatius blocks and
      \[
       [a_u]_{\mathrm{ab}}=e_u,\qquad
       [b_v]_{\mathrm{ab}}=e_v.
      \]
\end{enumerate}
The subscripts in the split form indicate abelianization colors; they do
not assert that the blocks are vertex generators.
\end{proposition}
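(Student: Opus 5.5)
We work with a cyclically reduced conjugate of $h_e$ and read off its Servatius blocks $g_1,\dots,g_k$, which pairwise commute. Put $Z_e=\langle g_1,\dots,g_k\rangle$. Every block of $h_e$ commutes with $h_e$, and every element of $C_A(h_e)$ permutes the blocks by Servatius's centralizer theorem; after passing to a finite-index subgroup it fixes each block. We therefore aim to show that suitable height-zero combinations of the blocks lie in $Z(C_H(h_e))$. By \eqref{eq:torelli-transported-edge-centralizer}, such a combination must then have nonzero abelianization unless it is trivial. That is the constraint we will exploit. A minor technical point is the passage through conjugation and powers: the center of a centralizer is not obviously closed under roots. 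We handle this using the unique-root property and the fact that $H'$ is isolated in $H$, since $H^{\mathrm{ab}}$ is free abelian by Proposition~\ref{prop:standard-LW-lattices}.

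\textbf{Step 1: few blocks.} Write $[g_i]_{\mathrm{ab}}=\beta_i\in\Z^{V(\Gamma)}$. Then $\sum\beta_i=e_u-e_v$. Any integral relation $\sum n_i\beta_i=0$ with $\sum n_i\chi(g_i)=0$ gives an element $\prod g_i^{n_i}$ of height zero lying in $H'$. It is central in $C_H(h_e)$, so it must be trivial. Because distinct blocks have disjoint supports, they generate a free abelian group of rank $k$, so all $n_i$ vanish. Hence the $\beta_i$ are linearly independent.

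The next task is to show that each $\beta_i$ is a $0/\pm1$ combination supported on $\{u,v\}$. For this we use heights: a block of height $0$ lies in $H$, and after composing with a suitable power it again lies in $Z(C_H(h_e))$. We expect this to force $k\le2$. The cases are as follows.
\begin{itemize}
\item $k=1$: $\beta_1=e_u-e_v$.
\item $k=2$: the two classes must sum to $e_u-e_v$ and be independent. The only way to realize this compatibly with the centrality constraint is $\{e_u,-e_v\}$.
\end{itemize}
The hard part is excluding splittings such as $\beta_1=e_u-e_v+\gamma$, $\beta_2=-\gamma$. Here $g_2$ has height $\chi(\gamma)$; if $\chi(\gamma)=0$, then $g_2\in H$ and $g_2$ is central in $C_H(h_e)$. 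We then need to argue that its class is forced to be $0$, or else derive a contradiction from the IA condition applied to $d_e$ itself. The route we would try first is to apply the same analysis to $\alpha^{-1}$ and compare ranks of the centers: $Z(C_H(d_e))$ has rank computed from \eqref{eq:torelli-edge-centralizer-center}, and $\alpha$ preserves this rank and its image in $H^{\mathrm{ab}}$. The abelianized image of $Z(C_H(d_e))$ is the height-zero part of $\operatorname{span}\{e_u,e_v\}\oplus\Z^{\text{univ}(K_{uv})}$, and the IA condition forces $Z(C_H(h_e))$ to have the same image. This transported constraint is what should pin the block classes to $e_u,-e_v$ or $e_u-e_v$.

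\textbf{Step 2: primitivity.} If a block is $r^m$ with $m\ge2$, then its class is divisible by $m$. That is impossible for $e_u$, $e_v$, or $e_u-e_v$, since these are primitive in $\Z^{V(\Gamma)}$. In the split case, the two blocks commute automatically, being distinct blocks, and the sign convention gives $h_e=a_ub_v^{-1}$.

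\textbf{Step 3: exclusivity.} The two forms are mutually exclusive because they have different numbers of blocks. The main obstacle is the transport argument in Step 1 ruling out extraneous block classes; everything else is bookkeeping with Servatius's theorems.
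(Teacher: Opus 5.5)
There is a genuine gap at exactly the step you call ``the hard part.'' You never prove that the block classes lie in $\{e_u,-e_v,e_u-e_v\}$, nor that $k\le 2$, and the transport argument you sketch does not supply this. For example, take hypothetical classes $\beta_1=e_u-e_v+e_w$ and $\beta_2=-e_w$ with $w\notin\{u,v\}$. These are linearly independent, so Step~1 allows them. The only height-zero elements of $\langle g_1,g_2\rangle$ are the powers of $g_1g_2=h_e$, and their classes $n(e_u-e_v)$ already lie in the transported image of $Z(C_H(d_e))$. So the constraints you list do not exclude this configuration.

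The missing idea is a fact you state but never apply to the $\beta_i$. Distinct Servatius blocks have pairwise disjoint supports, and $[g]_{\mathrm{ab}}$ is supported in $\operatorname{supp}(g)$. Hence the vectors $\beta_i$ have pairwise disjoint coordinate supports. Since $\sum_i\beta_i=e_u-e_v$, every nonzero coordinate of each $\beta_i$ equals the corresponding coordinate of $e_u-e_v$. Your Step~1 does give $\beta_i\neq0$ for every $i$. Together these force each $\beta_i\in\{e_u,-e_v,e_u-e_v\}$ with disjoint supports. So either $k=1$ with class $e_u-e_v$, or $k=2$ with classes $e_u$ and $-e_v$. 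The example above is excluded at once, since both vectors involve $w$. This is the paper's argument. It needs only that no single primitive block root has zero abelianization, not full linear independence.

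There is also a problem with how you quote Servatius's centralizer theorem. Blocks being permuted and fixed on a finite-index subgroup is too weak for your own Step~1, where you claim that $\prod g_i^{n_i}$ is central in $C_H(h_e)$. What you need is the actual theorem. For cyclically reduced $g$ with primitive block roots $r_1,\dots,r_k$, it gives $C_A(g)=\langle r_1\rangle\times\cdots\times\langle r_k\rangle\times A_{\lk(\operatorname{supp} g)}$. After conjugating back, each primitive root $r_i$ itself lies in the abelian group $Z(C_A(h_e))$. A root with zero abelianization would then lie in $A'=H'\le H$, hence in $Z(C_H(h_e))\cap H'=1$, which is impossible. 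With this, your concerns about roots and the isolation of $H'$ are unnecessary, and the whole proof takes a few lines.
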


\begin{proof}
Write the cyclic Servatius decomposition of $h_e$ as
\begin{equation}\label{eq:torelli-edge-servatius-decomposition}
 h_e=r_1^{m_1}\cdots r_q^{m_q},
\end{equation}
where the $r_j$ are the actual conjugated primitive block roots.  They
commute, have pairwise disjoint cyclic supports, and belong to the
abelian group $Z(C_A(h_e))$ supplied by the Servatius centralizer theorem.

No $r_j$ can have zero abelianization.  Indeed, such an $r_j$ would lie
in $A'=H'$ and hence in $H$.  Since it lies in $Z(C_A(h_e))$, it would
belong to $Z(C_H(h_e))\cap H'$, contradicting
\eqref{eq:torelli-transported-edge-centralizer}.

Now sum the abelianizations in
\eqref{eq:torelli-edge-servatius-decomposition}.  Their nonzero supports
are pairwise disjoint, while their sum is the vector
$e_u-e_v$ by~\eqref{eq:torelli-edge-image-abelianization}.  Therefore
no coordinate outside $\{u,v\}$ can occur, and there are at most two
block roots.  If there is one, its exponent divides both nonzero
coordinates $\pm1$ and is therefore $\pm1$; after orienting the primitive
root, one obtains the unsplit form.  If there are two, their disjoint
abelianization supports are respectively $\{u\}$ and $\{v\}$, and their
exponents again have absolute value one.  Orienting both roots gives the
split form.  The two roots commute because they are distinct Servatius
blocks of the same element.
\end{proof}

\begin{lemma}
\label{lem:torelli-edge-frame-uniqueness}
Let $e=(u,v)$ be an oriented edge.  Suppose
\[
 h_e=xy^{-1}=x'(y')^{-1}
\]
are two split Servatius decompositions with
\[
 [x]_{\mathrm{ab}}=[x']_{\mathrm{ab}}=e_u,
 \qquad
 [y]_{\mathrm{ab}}=[y']_{\mathrm{ab}}=e_v.
\]
Then $x=x'$ and $y=y'$.
\end{lemma}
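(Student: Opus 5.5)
The plan is to read off both factorizations from the Servatius block decomposition of $h_e$, which is unique up to permutation. A split decomposition $h_e=xy^{-1}$ from Proposition~\ref{prop:torelli-edge-dichotomy} has two commuting primitive Servatius blocks. I will first check that it is precisely the (cyclic) Servatius decomposition of $h_e$, with two blocks $x$ and $y^{-1}$ and exponents $\pm1$. The proof of Proposition~\ref{prop:torelli-edge-dichotomy} shows that every block root of $h_e$ has nonzero abelianization and that the supports of these abelianizations are disjoint. So any expression of $h_e$ as a product of two commuting primitive blocks with abelianizations $e_u$ and $-e_v$ must list the Servatius blocks of $h_e$ themselves. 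The same applies to $x'(y')^{-1}$.

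By Servatius's uniqueness theorem, the unordered families $\{x,y^{-1}\}$ and $\{x',(y')^{-1}\}$ then coincide. For cyclically reduced $h_e$ this is an equality of actual elements. In general it is an equality of the conjugated blocks. Those conjugated blocks are canonical: they are determined as elements, not just as conjugacy classes, once we fix $h_e$. One way to see this is that they are the generators of the maximal cyclic pieces of $Z(C_A(h_e))$ selected by the block structure. Alternatively, conjugate $h_e$ by a fixed element $g$ to a cyclically reduced element. Then $gxg^{-1}$ and $gyg^{-1}$ are blocks of the cyclically reduced representative, and they are determined as elements by the retractions $\pi_i$.

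It remains to decide which element of one family matches which element of the other. The abelianization colors settle this. The element $x$ has abelianization $e_u$, while $y^{-1}$ has abelianization $-e_v\neq e_u$, since $u\neq v$. Hence $x$ must equal $x'$ rather than $(y')^{-1}$, and consequently $y=y'$.

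The main obstacle is the non-cyclically-reduced case. There I must make precise that the blocks are uniquely determined as elements and not merely up to conjugacy. My approach is to pass to $g h_e g^{-1}$ cyclically reduced. For cyclically reduced elements, the block of $h$ supported on a component $B_i$ of $\mathcal N(h)$ is literally $\pi_i(h)$. The support of any factor in a decomposition into commuting blocks with disjoint supports is forced: it must be a union of components of $\mathcal N(h)$, and primitivity with nonzero abelianization makes it a single component.

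A subtlety is that $gxg^{-1}$ need not itself be cyclically reduced a priori. To handle this, I would use that $x$ lies in $Z(C_A(h_e))$, which equals $g^{-1}\bigl(\prod_i\langle r_i\rangle\times Z(A_{\lk})\bigr)g$ by Servatius's centralizer theorem. Writing $x$ in these coordinates and using the abelianization constraints pins $x$ down to the unique element $g^{-1}r_ig$ whose abelianization is $e_u$.
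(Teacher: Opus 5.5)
Your proof is correct, but the part that actually carries it is your last paragraph. Once you know that $x$ and $x'$ lie in $Z(C_A(h_e))$ and that elements of this group are determined by their abelianization, the preceding Servatius-uniqueness discussion is redundant. On its own, that discussion would still owe a proof that the conjugated block roots of a non-cyclically-reduced element are independent, as elements, of the cyclically reducing conjugator; the paper only asserts independence up to conjugacy.

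The paper makes the same reduction: all four roots $x,x',y,y'$ lie in the abelian group $Z(C_A(h_e))$, whatever conjugator produced them. It then finishes intrinsically. The relation $xy^{-1}=x'(y')^{-1}$ gives $x'x^{-1}=y'y^{-1}$, an element of trivial abelianization. It therefore lies in $A'=H'$, has height zero, and so lies in $Z(C_H(h_e))\cap H'$, which is trivial by \eqref{eq:torelli-transported-edge-centralizer}.

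You instead prove directly that abelianization is injective on $Z(C_A(h_e))=g^{-1}\bigl(\langle r_1\rangle\times\langle r_2\rangle\times Z(A_{\lk})\bigr)g$. This needs two inputs. First, the root abelianizations $e_u$ and $\pm e_v$, which come from the conclusion of Proposition~\ref{prop:torelli-edge-dichotomy}. Second, a fact you should state explicitly: $Z(A_{\lk})$ is free abelian on universal vertices of $\lk$, which lie outside the support and hence away from $u$ and $v$, so its component is forced to be trivial.

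The paper's route is shorter and choice-free, because that Servatius computation was done once for the standard element $d_e=uv^{-1}$ in Lemma~\ref{lem:torelli-edge-centralizer} and then transported by $\alpha$. Yours redoes the computation for a cyclically reduced conjugate of $h_e$. That is more explicit, but it depends on a choice of $g$ and on the dichotomy's abelianization data, which was itself obtained from the same edge-centralizer lemma.
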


\begin{proof}
All four roots are Servatius block roots of $h_e$ and therefore lie in
the abelian group $Z(C_A(h_e))$.  Rearranging there gives
\[
 x'x^{-1}=y'y^{-1}\in Z(C_A(h_e)).
\]
The common element has trivial abelianization, hence lies in $A'=H'$;
it also has height zero and belongs to $Z(C_H(h_e))$.  By
\eqref{eq:torelli-transported-edge-centralizer} it is trivial.  Thus
$x=x'$ and $y=y'$.
\end{proof}

\subsection{Colored polygon frames}

Let
\[
 C=(v_1,v_2,\ldots,v_m,v_{m+1}=v_1)
\]
be a simple circuit, where $m\geq3$, and orient its edges by
$c_i=(v_i,v_{i+1})$.  The vertex basis vector $e_{v_i}$ will be called
the \emph{color} of $v_i$.

\begin{theorem}
\label{thm:torelli-colored-polygon}
For every simple circuit $C$ and every $\alpha\in\IAut(H)$, there are
uniquely determined primitive Servatius blocks $x_{v_1}^C,\ldots,x_{v_m}^C$ such that, with indices read cyclically,
\begin{equation}\label{eq:torelli-polygon-frame}
 [x_{v_i}^C]_{\mathrm{ab}}=e_{v_i},
 \qquad
 [x_{v_i}^C,x_{v_{i+1}}^C]=1,
 \qquad
 h_{c_i}=x_{v_i}^C(x_{v_{i+1}}^C)^{-1}.
\end{equation}
More precisely, the labeled decomposition on each edge is the one from
Lemma~\ref{lem:torelli-edge-frame-uniqueness}; hence the entire frame on
$C$ is unique.
\end{theorem}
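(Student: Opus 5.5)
The proof applies the Dicks--Leary all-power relation around $C$ to the images $h_{c_i}=\alpha(d_{c_i})$ and feeds the resulting power word into Theorem~\ref{thm:uniform-period-isolation}. Since $d_{c_1}^N\cdots d_{c_m}^N=1$ for every $N$, applying $\alpha$ gives
\[
 h_{c_1}^N h_{c_2}^N\cdots h_{c_m}^N=1\qquad(N\in\Z).
\]
By Proposition~\ref{prop:torelli-edge-dichotomy}, each $h_{c_i}$ is either unsplit, $h_{c_i}=r_{c_i}$, or split, $h_{c_i}=a_{v_i}b_{v_{i+1}}^{-1}$ with commuting factors. In the split case $h_{c_i}^N=a_{v_i}^Nb_{v_{i+1}}^{-N}$. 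Substituting these expressions writes the relation as a power word of the form \eqref{eq:torelli-period-family}, with primitive Servatius blocks in the powered slots and trivial connecting words. The relation holds for all positive $N$, so Theorem~\ref{thm:uniform-period-isolation} applies.

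\emph{Step 1: every edge of $C$ is split.} A period class up to conjugacy and inversion determines its abelianization up to sign. An unsplit slot $r_{c_i}$ has abelianization $\pm(e_{v_i}-e_{v_{i+1}})$. Every split slot has abelianization $\pm e_v$ for a single vertex $v$. Another unsplit slot $r_{c_j}$ has abelianization $\pm(e_{v_j}-e_{v_{j+1}})$. Because $C$ is simple and $m\geq3$, this vector equals $\pm(e_{v_i}-e_{v_{i+1}})$ only when $j=i$. Hence the class of $r_{c_i}$ would occupy exactly one slot, contradicting Theorem~\ref{thm:uniform-period-isolation}(i).

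\emph{Step 2: the two blocks of each color coincide.} Now every slot is split, so the word reads
\[
 a_{v_1}^Nb_{v_2}^{-N}a_{v_2}^Nb_{v_3}^{-N}\cdots a_{v_m}^Nb_{v_1}^{-N}.
\]
The color $e_{v_i}$ appears only in the slots of $b_{v_i}$ and $a_{v_i}$. By (i), the period class of $a_{v_i}$ must occur in a second slot, and the only candidate is $b_{v_i}$. Both have abelianization $+e_{v_i}$, so inversion is excluded, and they share an oriented primitive period. For $2\leq i\leq m$ the slots $b_{v_i}^{-N}a_{v_i}^N$ are consecutive, so Theorem~\ref{thm:uniform-period-isolation}(ii) gives $a_{v_i}=b_{v_i}$. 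For $i=1$, conjugate the relator by $a_{v_1}^N$, i.e.\ cyclically permute it, to obtain $b_{v_1}^{-N}a_{v_1}^N\cdots=1$ for all $N$; the theorem then applies in the same way.

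Set $x_{v_i}^C=a_{v_i}=b_{v_i}$. The abelianization and edge-factorization statements in \eqref{eq:torelli-polygon-frame} hold by construction. The commutation $[x_{v_i}^C,x_{v_{i+1}}^C]=1$ holds because the two factors of the split decomposition of $h_{c_i}$ commute.

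\emph{Uniqueness.} Any frame satisfying \eqref{eq:torelli-polygon-frame} gives, on each edge $c_i$, a split decomposition of $h_{c_i}$ with the prescribed colors. Lemma~\ref{lem:torelli-edge-frame-uniqueness} fixes both factors on every edge, so the whole frame on $C$ is unique.

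The step I expect to be the main obstacle is the application of Theorem~\ref{thm:uniform-period-isolation}. The blocks $a_{v_i},b_{v_i}$ are conjugated primitive roots rather than cyclically reduced words. One must therefore check that the displayed representative is a legitimate instance of \eqref{eq:torelli-period-family}: the consecutiveness in (ii) must refer to that representative, and conjugating the relator to cover $i=1$ must not alter the slots. I would handle this by working directly with the displayed ordered representative, as the remark after the theorem allows, and treating the cyclic shift as conjugation of a trivial element, which preserves triviality for every $N$.
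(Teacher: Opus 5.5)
Your proposal is correct and follows the paper's proof essentially step for step. Both arguments apply $\alpha$ to the all-power relation on $C$ and expand each factor by Proposition~\ref{prop:torelli-edge-dichotomy}. Both exclude unsplit edges by comparing abelianizations and invoking clause~(i) of Theorem~\ref{thm:uniform-period-isolation}, then cyclically rotate so that each pair $b_{v_i}^{-N}a_{v_i}^N$ is consecutive, obtain $a_{v_i}=b_{v_i}$ from clauses~(i) and~(ii), and deduce uniqueness edgewise from Lemma~\ref{lem:torelli-edge-frame-uniqueness}.

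One harmless slip: bringing $b_{v_1}^{-N}$ to the front is conjugation by $b_{v_1}^{-N}$, not by $a_{v_1}^{N}$. Moving $a_{v_1}^{N}$ to the end would also make the pair consecutive, so the argument is unaffected.
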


\begin{proof}
For every positive integer $N$, the Dicks--Leary all-power relation on
$C$ is
\begin{equation}\label{eq:torelli-polygon-all-power}
 d_{c_1}^N d_{c_2}^N\cdots d_{c_m}^N=1.
\end{equation}
Applying $\alpha$ gives
\begin{equation}\label{eq:torelli-image-polygon-all-power}
 h_{c_1}^N h_{c_2}^N\cdots h_{c_m}^N=1.
\end{equation}
Expand every factor by
Proposition~\ref{prop:torelli-edge-dichotomy}.  Since the blocks in a
split factor commute, this is a power-word family of the form
\eqref{eq:torelli-power-word-family}.

Suppose that $h_{c_i}$ were unsplit.  Its primitive period has
abelianization
$e_{v_i}-e_{v_{i+1}}$.
Conjugate or inverse primitive periods have equal or opposite
abelianizations.  Distinct oriented edges of a simple circuit have edge
vectors that are not equal up to sign, and a singleton color $e_v$ is
not equal up to sign to an edge vector.  Thus the period class of this
unsplit block occurs nowhere else in the expansion of
\eqref{eq:torelli-image-polygon-all-power}.  This contradicts
Theorem~\ref{thm:uniform-period-isolation}, clause~\textup{(i)}.  Every edge image is therefore
split.

Write
\begin{equation}\label{eq:torelli-split-polygon-edges}
 h_{c_i}=a_i b_{i+1}^{-1},
 \qquad
 [a_i]_{\mathrm{ab}}=[b_i]_{\mathrm{ab}}=e_{v_i}.
\end{equation}
The two factors belonging to each edge commute.  Substituting
\eqref{eq:torelli-split-polygon-edges} into
\eqref{eq:torelli-image-polygon-all-power} and cyclically rotating the
last factor gives
\begin{equation}\label{eq:torelli-color-matching-word}
 b_1^{-N}a_1^N b_2^{-N}a_2^N\cdots
 b_m^{-N}a_m^N=1.
\end{equation}

Fix $i$.  No primitive block of a different color can belong to the
period class of $a_i$ or $b_i$, since conjugacy preserves abelianization
and inversion changes its sign.  If $a_i$ and $b_i$ belonged to distinct
period classes, either class would occur exactly once in
\eqref{eq:torelli-color-matching-word}, again contradicting
Theorem~\ref{thm:uniform-period-isolation}, clause~\textup{(i)}.  They therefore have the same
period class.  Their equal abelianizations rule out the inverse
orientation, so they have the same oriented primitive period.  This class
occurs exactly twice, in the adjacent factors $b_i^{-N}a_i^N$.
Theorem~\ref{thm:uniform-period-isolation}, clause~\textup{(ii)} gives
$a_i=b_i$.
Set this common block equal to $x_{v_i}^C$.  The split decomposition on
$e_i$ already says that $x_{v_i}^C$ commutes with
$x_{v_{i+1}}^C$, so all three assertions in
\eqref{eq:torelli-polygon-frame} follow.

Uniqueness follows edgewise from
Lemma~\ref{lem:torelli-edge-frame-uniqueness}.
\end{proof}

\subsection{Open-ear gluing}

We now use biconnectedness for the first time in the extension argument.
By Whitney's open-ear decomposition theorem~\cite{Whitney}, there is a
filtration
\begin{equation}\label{eq:torelli-open-ear-decomposition}
 \Gamma_0\subset\Gamma_1\subset\cdots\subset\Gamma_s=\Gamma
\end{equation}
such that $\Gamma_0$ is a simple circuit and
$\Gamma_j=\Gamma_{j-1}\cup P_j$.
Here $P_j$ is a path whose distinct endpoints are vertices of
$\Gamma_{j-1}$ and whose internal vertices are new.  Ears of length one
are allowed and represent edges joining two vertices already present.

\begin{theorem}\label{thm:torelli-global-frame}
Let $\Gamma$ be a finite biconnected graph with at least three vertices.
For every $\alpha\in\IAut(H)$, the circuit frames of
Theorem~\ref{thm:torelli-colored-polygon} glue to a unique family of
primitive Servatius blocks $(x_v)_{v\in V(\Gamma)}$ satisfying
\begin{equation}\label{eq:torelli-global-frame}
 [x_w]_{\mathrm{ab}}=e_w\quad(w\in V(\Gamma)),
 \qquad
 [x_u,x_v]=1,\qquad
 \alpha(uv^{-1})=x_ux_v^{-1}\quad(uv\in E(\Gamma)).
\end{equation}
\end{theorem}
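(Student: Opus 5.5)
The key observation is that the circuit frames of Theorem~\ref{thm:torelli-colored-polygon} are already determined edge by edge. For an oriented edge $e=(u,v)$ lying on a simple circuit $C$, Theorem~\ref{thm:torelli-colored-polygon} shows that $h_e$ is split, and its labeled split decomposition $h_e=x_u^C(x_v^C)^{-1}$ is unique by Lemma~\ref{lem:torelli-edge-frame-uniqueness}. That lemma refers only to $h_e$ and the colors $e_u,e_v$, not to $C$. Hence, if two simple circuits $C,C'$ share the edge $uv$, then $x_u^C=x_u^{C'}$ and $x_v^C=x_v^{C'}$. It therefore suffices to show that for each vertex $w$ the block $x_w^C$ is independent of the circuit $C$ through $w$. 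The two edge-local agreements above only give this for circuits sharing an edge at $w$, so the plan is to propagate along the ear decomposition~\eqref{eq:torelli-open-ear-decomposition}.

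I will prove by induction on $j$ the following statement. There is a family $(x_w)_{w\in V(\Gamma_j)}$ with $x_w=x_w^C$ for every simple circuit $C$ of $\Gamma$ that passes through $w$ and uses an edge of $\Gamma_j$ incident to $w$. Moreover, \eqref{eq:torelli-global-frame} holds on every edge of $\Gamma_j$.

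For the base case $\Gamma_0$, take the frame of the circuit $\Gamma_0$ itself. Every edge of $\Gamma_0$ lies on $\Gamma_0$, so edge-local uniqueness gives agreement with any other circuit through that edge.

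For the inductive step, let $P_j$ be the ear with endpoints $p,q\in\Gamma_{j-1}$. Because $\Gamma_{j-1}$ is connected, choose a simple path $Q$ from $q$ to $p$ in $\Gamma_{j-1}$; then $C_j=P_j\cup Q$ is a simple circuit. Define $x_w=x_w^{C_j}$ for the interior vertices of $P_j$. At the endpoint $p$, the circuit $C_j$ uses an edge of $Q\subseteq\Gamma_{j-1}$ incident to $p$, so the induction hypothesis gives $x_p^{C_j}=x_p$; the same holds at $q$. Edge-local uniqueness then yields the frame identity on every edge of $P_j$. This includes the case of a length-one ear, where it reads $h_{pq}=x_p^{C_j}(x_q^{C_j})^{-1}=x_px_q^{-1}$.

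The main obstacle is the strengthened invariant: a circuit $C$ through $w$ that enters $w$ only along ear edges added later. I will handle it by chaining consecutive edges at $w$. For any two edges $e,e'$ at $w$, Lemma~\ref{lem:torelli-edge-frame-uniqueness} applied on $e$ shows that the $e_w$-colored factor of $h_e$ is uniquely determined by $h_e$. Call this factor $y_w(e)$; every circuit through $e$ has $x_w^C=y_w(e)$. Thus I only need $y_w(e)=y_w(e')$ for all edges $e,e'$ at $w$. Since $\Gamma$ is biconnected, $\Gamma\setminus\{w\}$ is connected, so any two neighbors $u,u'$ of $w$ are joined by a path avoiding $w$. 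Together with $uw$ and $wu'$ this path closes to a simple circuit through both edges. That circuit gives $y_w(uw)=x_w^C=y_w(wu')$.

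This local argument actually makes the induction unnecessary for well-definedness. I set $x_w=y_w(e)$ for any edge $e$ at $w$, and the ear decomposition serves only to organize the construction. The three conditions in \eqref{eq:torelli-global-frame} then follow edgewise. The color condition holds by construction, and the commutation and product identities come from the split form of Proposition~\ref{prop:torelli-edge-dichotomy}, since every edge lies on a circuit. Uniqueness of the family again follows from Lemma~\ref{lem:torelli-edge-frame-uniqueness}, because every vertex has an incident edge.
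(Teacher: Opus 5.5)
Your argument is correct, and its final form takes a genuinely different route from the paper's.

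\emph{The paper's route.} It glues along Whitney's open-ear decomposition. Each ear $P_j$ is closed by an old simple path $Q_j$ into a simple circuit $C_j$. Lemma~\ref{lem:torelli-edge-frame-uniqueness}, applied edge by edge along $Q_j$, shows that the $C_j$-frame agrees with the frame already constructed, so the new internal vertices receive consistent blocks. Compatibility with arbitrary circuits and uniqueness are checked afterwards.

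\emph{Your route.} You work vertex by vertex. Theorem~\ref{thm:torelli-colored-polygon} makes every edge image split, and Lemma~\ref{lem:torelli-edge-frame-uniqueness} makes its labeled decomposition unique. So the $e_w$-colored block $y_w(e)$ is an invariant of the edge alone. Connectivity of $\Gamma\setminus\{w\}$ then places any two edges at $w$ on a common simple circuit, the same device used in Lemma~\ref{lem:repeated-letter-gluing}. That circuit's frame forces $y_w(e)=y_w(e')$.

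\emph{Comparison.} Your approach dispenses with Whitney's theorem and with any induction; your inductive paragraph can simply be deleted. It also isolates the only graph-theoretic input actually needed: every pair of edges at a vertex lies on a simple circuit. The paper's route is more constructive, since it builds the family in the order of an explicit ear decomposition, but it proves nothing more.

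\emph{One detail to make explicit.} A circuit may traverse $e=uw$ in either direction. Inverting a labeled split decomposition $h_{(u,w)}=ab^{-1}$ gives the labeled split decomposition $h_{(w,u)}=ba^{-1}$ with the same colored blocks. This is what makes $y_w(e)$ depend only on the unoriented edge. The paper handles the same point by inverting the displayed formula.
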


\begin{proof}
Use the polygon frame on the initial circuit $\Gamma_0$.  Suppose
inductively that roots satisfying~\eqref{eq:torelli-global-frame} have
been assigned to the vertices of $\Gamma_{j-1}$.  Let $r,s$ be the
endpoints of $P_j$, orient $P_j$ from $r$ to $s$, and choose a simple
old path $Q_j$ from $r$ to $s$ inside $\Gamma_{j-1}$.  Since the internal
vertices of $P_j$ are new,
\begin{equation}\label{eq:torelli-ear-circuit}
 C_j=P_jQ_j^{-1}
\end{equation}
is a simple circuit.

Since $C_j=P_jQ_j^{-1}$ is a simple circuit, its all-power relation is
one of the defining Dicks--Leary relations in the full group $H_\Gamma$.
Theorem~\ref{thm:torelli-colored-polygon} supplies a frame on this
circuit, with every block and equality taken in the original ambient group
$A_\Gamma$.

Orient an edge of $Q_j$ from $r$ toward $s$.  The old frame and the
$C_j$-frame give two labeled split decompositions of the same element
$\alpha(uv^{-1})$.  The circuit traverses this edge in the opposite
direction, but inverting its displayed formula puts both decompositions
in the same orientation.  Lemma~\ref{lem:torelli-edge-frame-uniqueness}
then identifies the two endpoint roots.  Repeating this along $Q_j$
shows that the circuit frame agrees with the old frame at every vertex of
$Q_j$, in particular at $r$ and $s$.  Assign the remaining circuit roots
to the internal vertices of $P_j$.  This extends
\eqref{eq:torelli-global-frame} over every new vertex and edge.

If $P_j$ has length one, there is no new vertex.  The same comparison
along $Q_j$ identifies the circuit roots at $r$ and $s$ with the old
ones, and the polygon formula gives
$\alpha(rs^{-1})=x_rx_s^{-1}$ and $[x_r,x_s]=1$ on the new edge.
Thus length-one ears cause no exception.

Induction over~\eqref{eq:torelli-open-ear-decomposition} proves
\eqref{eq:torelli-global-frame} on all of $\Gamma$.  Edge-frame
uniqueness makes the extension independent of the chosen ear decomposition;
all calculations take place in $H_\Gamma\leq A_\Gamma$.

If $C$ is any simple circuit, whether or not it was used in the chosen
ear decomposition, its polygon frame and the global family give two
labeled decompositions on every edge of $C$.  The edge-frame uniqueness
lemma identifies them.  Thus the global family agrees with the polygon frame on every simple
circuit, including circuits not used in the chosen ear decomposition.

Finally, any two global families satisfying
\eqref{eq:torelli-global-frame} provide, on each edge, two labeled split
Servatius decompositions of the same edge image.  They agree at both
endpoints by the same uniqueness lemma.  Connectivity then gives equality
at every vertex.
\end{proof}

\subsection{Extension and biconnected surjectivity}

Let $(x_v)$ be the global frame supplied by
Theorem~\ref{thm:torelli-global-frame}.  Define
\begin{equation}\label{eq:torelli-ambient-extension}
 \Phi:A_\Gamma\longrightarrow A_\Gamma,\qquad
 \Phi(v)=x_v.
\end{equation}

\begin{theorem}\label{thm:torelli-surjectivity}
If $\Gamma$ is finite and biconnected with at least three vertices, every
$\alpha\in\IAut(H_\Gamma)$ is the restriction of an element of
$\IAut(A_\Gamma)$.  Equivalently, restriction
\[
 \operatorname{res}:\IAut(A_\Gamma)\longrightarrow\IAut(H_\Gamma)
\]
is surjective.
\end{theorem}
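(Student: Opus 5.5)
The plan is to show that the map $\Phi$ of~\eqref{eq:torelli-ambient-extension} is a well-defined IA automorphism of $A_\Gamma$ whose restriction to $H_\Gamma$ is $\alpha$.

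\textbf{Step 1: $\Phi$ is an endomorphism extending $\alpha$.} By Theorem~\ref{thm:torelli-global-frame}, $[x_u,x_v]=1$ for every edge $uv$, so $\Phi$ respects the defining relations of $A_\Gamma$ and is a homomorphism. For every edge, $\Phi(uv^{-1})=x_ux_v^{-1}=\alpha(uv^{-1})$. The edge differences generate $H_\Gamma$, so $\Phi|_{H_\Gamma}=\alpha$. Since $[x_v]_{\mathrm{ab}}=e_v$, the map $\Phi$ induces the identity on $A^{\mathrm{ab}}$. In particular it preserves height, so $\Phi(H_\Gamma)\subseteq H_\Gamma$.

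\textbf{Step 2: $\Phi$ is bijective.} Apply the same construction to $\alpha^{-1}\in\IAut(H)$; this gives $\Psi$ with $\Psi|_H=\alpha^{-1}$ and $\Psi$ IA. Then $\Phi\Psi$ is IA on $A$ and restricts to the identity on $H$. We must show an endomorphism $\theta$ of $A$ with these two properties is the identity.

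Fix a vertex $t$. Then $A=H\rtimes\langle t\rangle$, and $\theta(t)=tc$ for some $c$ with $[c]_{\mathrm{ab}}=0$, so $c\in A'=H'\leq H$. Applying $\theta$ to $tht^{-1}\in H$ gives $tht^{-1}=tc\,h\,c^{-1}t^{-1}$, so $c$ is central in $H$. Now take $h=d_e$ for an edge $e$ at $t$. Since $c$ commutes with all of $H$, it lies in $Z(C_H(d_e))\cap H'$, which is trivial by Lemma~\ref{lem:torelli-edge-centralizer}. Hence $c=1$ and $\theta=\mathrm{id}$.

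Symmetrically $\Psi\Phi=\mathrm{id}$, so $\Phi\in\IAut(A_\Gamma)$ and $\operatorname{res}(\Phi)=\alpha$.

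\textbf{Main obstacle.} I expect the only delicate step to be the triviality argument in Step 2. An alternative route is Hopfian injectivity of RAAGs plus surjectivity, but surjectivity would still need something like the central-element argument above. Lemma~\ref{lem:torelli-edge-centralizer} handles it cleanly. The remaining checks are routine bookkeeping with abelianization and height.
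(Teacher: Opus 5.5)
Your proof is correct, but it obtains bijectivity of $\Phi$ differently from the paper.

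\textbf{The paper's route.} The paper shows directly that $\Phi$ is surjective: its image contains $\alpha(H_\Gamma)=H_\Gamma$ together with the height-one element $x_v$, hence all of $A_\Gamma$. It then invokes the Hopf property of $A_\Gamma$, which follows from residual torsion-free nilpotence: this gives residual finiteness, and finitely generated residually finite groups are Hopfian.

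\textbf{Your route.} You run the frame construction a second time, on $\alpha^{-1}$, to produce an explicit candidate inverse $\Psi$. You then prove $\Phi\Psi=\Psi\Phi=\mathrm{id}$ from the rigidity statement that an IA endomorphism of $A_\Gamma$ fixing $H_\Gamma$ pointwise is trivial. That statement is essentially the paper's later connected-injectivity argument (Theorem~\ref{thm:torelli-injectivity}), noticed to work for endomorphisms as well as automorphisms. You use Lemma~\ref{lem:torelli-edge-centralizer} in place of $C_{A_\Gamma}(H_\Gamma)=Z(A_\Gamma)$. Either input kills an element $c\in Z(H_\Gamma)\cap H_\Gamma'$, so the substitution is harmless.

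\textbf{Trade-offs.} Your argument avoids residual finiteness entirely and exhibits the inverse explicitly. It also essentially proves injectivity of restriction at the same stroke. The cost is a second application of the gluing theorem. The paper's argument needs only one global frame but imports Hopficity as an external input.

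\textbf{A correction to your closing remark.} Surjectivity of $\Phi$ does not require anything like the central-element argument. It is the two-line height argument above: $\Phi(A_\Gamma)\supseteq H_\Gamma$, and $\Phi(A_\Gamma)$ contains an element of height one. So the Hopfian route is just as short as yours.
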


\begin{proof}
For every graph edge $uv$, equation~\eqref{eq:torelli-global-frame}
gives $[x_u,x_v]=1$.  Thus the assignment
\eqref{eq:torelli-ambient-extension} preserves every defining relation of
$A_\Gamma$ and defines a homomorphism.

The directed edge differences generate $H_\Gamma$, and on each one
$\Phi(uv^{-1})=x_ux_v^{-1}=\alpha(uv^{-1})$.
Therefore $\Phi|_{H_\Gamma}=\alpha$.  In particular,
$\Phi(A_\Gamma)$ contains $\alpha(H_\Gamma)=H_\Gamma$.  It also contains
$x_v=\Phi(v)$ for every vertex $v$, and
\eqref{eq:torelli-global-frame} gives $\chi(x_v)=1$.  Fix one such
vertex.  For arbitrary $a\in A_\Gamma$,
$ax_v^{-\chi(a)}\in H_\Gamma\subseteq\Phi(A_\Gamma)$,
while $x_v\in\Phi(A_\Gamma)$.  Hence $a\in\Phi(A_\Gamma)$, and $\Phi$
is surjective.

Right-angled Artin groups are residually torsion-free
nilpotent~\cite{DuchampKrob}, and hence residually finite.  A finitely generated residually
finite group is Hopfian: for each fixed index it has only finitely many
subgroups of that index, and applying a surjective endomorphism to these
subgroups excludes a nontrivial kernel.  Thus $A_\Gamma$ is Hopfian.  The surjective
endomorphism $\Phi$ is therefore an automorphism.  Finally,
$[\Phi(v)]_{\mathrm{ab}}=[x_v]_{\mathrm{ab}}=e_v$
for every vertex basis element of $A_\Gamma^{\mathrm{ab}}$.  Thus
$\Phi\in\IAut(A_\Gamma)$ and restricts to $\alpha$.

Conversely, an element of $\IAut(A_\Gamma)$ fixes the character $\chi$
on abelianization and therefore preserves $H_\Gamma$.  Its restriction
is IA by~\eqref{eq:torelli-height-kernel-abelianization}, so the displayed
restriction map is well defined.
\end{proof}

\subsection{Connected injectivity}

Surjectivity used the open-ear decomposition, whereas injectivity requires
only connectedness.  The following centralizer identity gives the required
injectivity statement.

\begin{proposition}
\label{prop:torelli-centralizer-of-H}
For every finite connected graph $\Delta$,
\begin{equation}\label{eq:torelli-centralizer-of-H}
 C_{A_\Delta}(H_\Delta)=Z(A_\Delta).
\end{equation}
\end{proposition}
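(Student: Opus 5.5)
The inclusion $Z(A_\Delta)\subseteq C_{A_\Delta}(H_\Delta)$ is immediate, so the work is the reverse inclusion. Let $g\in A_\Delta$ centralize $H_\Delta$. Since $H_\Delta$ is generated by the edge differences $d_{uv}=uv^{-1}$, the condition becomes
\[
 g\in\bigcap_{uv\in E(\Delta)}C_{A_\Delta}(uv^{-1}).
\]
For each edge, Servatius's centralizer theorem, already used in \eqref{eq:torelli-edge-centralizer-in-A}, gives $C_{A_\Delta}(uv^{-1})=\langle u,v\rangle\times A_{K_{uv}}$ with $K_{uv}=\lk(u)\cap\lk(v)$. The edge differences are cyclically reduced, so there is no conjugation to handle. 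Hence $g$ lies in the special subgroup $A_{\st(u)\cap\st(v)}$ for every edge $uv$.

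Intersections of special subgroups of a RAAG are special subgroups on the intersected vertex sets, which one checks via supports of reduced words. So
\[
 \operatorname{supp}(g)\subseteq\bigcap_{uv\in E}(\st(u)\cap\st(v)).
\]
If $\Delta$ has no edges it is a single vertex, and $A_\Delta=\Z$ is central, so that case is trivial. Otherwise every vertex $w$ lies on some edge, so $\operatorname{supp}(g)\subseteq\bigcap_{w}\st(w)$. This intersection is exactly the set of universal vertices of $\Delta$, and the universal vertices form a clique.

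Thus $g$ lies in the free abelian group generated by the universal vertices. That group is central: each universal vertex commutes with every generator. Therefore $g\in Z(A_\Delta)$.

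The main obstacle is making the step "$g$ lies in each product $\langle u,v\rangle\times A_{K_{uv}}$, hence in the special subgroup on the intersection" rigorous. First, $\langle u,v\rangle\times A_{K_{uv}}$ equals the special subgroup $A_{\{u,v\}\cup K_{uv}}$, because $u$ and $v$ commute and both commute with $K_{uv}$. Second, special subgroups satisfy $A_S\cap A_T=A_{S\cap T}$, since the support of an element is well defined and $g\in A_S$ if and only if $\operatorname{supp}(g)\subseteq S$. With these two facts the argument is purely combinatorial, and connectedness is used only to ensure that every vertex lies on an edge when $|V|\ge2$.
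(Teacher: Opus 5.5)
Your proposal is correct and follows essentially the same route as the paper: Servatius's theorem gives the edge centralizers $C_{A_\Delta}(uv^{-1})=A_{\st(u)\cap\st(v)}$, special subgroups satisfy $A_X\cap A_Y=A_{X\cap Y}$, and connectedness makes the intersection of these stars over all edges equal to the set of universal vertices. The only difference is cosmetic: you deduce centrality directly from the fact that universal vertices commute with every generator, whereas the paper cites $Z(A_\Delta)=A_{U(\Delta)}$.
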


\begin{proof}
If $\Delta$ has one vertex, then $H_\Delta=1$ and
$A_\Delta=Z(A_\Delta)$, so the assertion is immediate.  Assume that
$\Delta$ has at least two vertices.  Directed edge differences generate
$H_\Delta$.  The Servatius calculation used in
Lemma~\ref{lem:torelli-edge-centralizer}, which requires no
biconnectedness, gives, for every edge $uv$,
\begin{equation}\label{eq:torelli-connected-edge-centralizer}
 C_{A_\Delta}(uv^{-1})=A_{\st(u)\cap\st(v)}.
\end{equation}
Here $A_X$ denotes the standard parabolic subgroup generated by
$X\subseteq V(\Delta)$.

Standard parabolic subgroups of a right-angled Artin group satisfy
\begin{equation}\label{eq:torelli-parabolic-intersection}
 A_X\cap A_Y=A_{X\cap Y}
 \quad(X,Y\subseteq V(\Delta));
\end{equation}
this follows immediately from reduced-word normal form
\cite{Servatius}.  Set
\[
 X_\Delta=\bigcap_{uv\in E(\Delta)}
             (\st(u)\cap\st(v)).
\]
Iterating~\eqref{eq:torelli-parabolic-intersection} and using
\eqref{eq:torelli-connected-edge-centralizer} gives
\[
 C_{A_\Delta}(H_\Delta)
 =\bigcap_{uv\in E(\Delta)}C_{A_\Delta}(uv^{-1})
 =A_{X_\Delta}.
\]
Every vertex of a connected graph with at least two vertices is an
endpoint of an edge, so
$X_\Delta=\bigcap_{w\in V(\Delta)}\st(w)=U(\Delta)$,
the set of universal vertices.  Since
$Z(A_\Delta)=A_{U(\Delta)}$~\cite{Servatius}, this proves
\eqref{eq:torelli-centralizer-of-H}.
\end{proof}

For every finite connected graph $\Delta$, the center $Z(A_\Delta)$ is
freely generated by the universal vertices and embeds in
$A_\Delta^{\mathrm{ab}}$.  Therefore,
\begin{equation}\label{eq:torelli-center-derived-intersection}
 Z(A_\Delta)\cap A_\Delta'=1.
\end{equation}

\begin{theorem}\label{thm:torelli-injectivity}
For every finite connected graph $\Delta$, restriction is injective:
\[
 \operatorname{res}:\IAut(A_\Delta)\longrightarrow
                     \IAut(H_\Delta).
\]
\end{theorem}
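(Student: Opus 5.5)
The plan is to show that an element $\Phi\in\IAut(A_\Delta)$ restricting to the identity on $H_\Delta$ is itself the identity. Fix a vertex $t$ and use the decomposition $A_\Delta=H_\Delta\rtimes\langle t\rangle$ from Proposition~\ref{prop:standard-LW-lattices}. Since $\Phi$ is determined by its values on $H_\Delta$ and on $t$, it suffices to prove $\Phi(t)=t$. (If $\Delta$ has one vertex then $H_\Delta=1$, $A_\Delta=\Z$, and the only IA automorphism is trivial; so assume at least two vertices.)

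Put $g=\Phi(t)t^{-1}$. First, $g$ centralizes $H_\Delta$. For $h\in H_\Delta$, the relation $tht^{-1}\in H_\Delta$ together with $\Phi|_{H_\Delta}=\mathrm{id}$ gives $\Phi(t)h\Phi(t)^{-1}=\Phi(tht^{-1})=tht^{-1}$. Hence $g$ commutes with every $h\in H_\Delta$, so $g\in C_{A_\Delta}(H_\Delta)=Z(A_\Delta)$ by Proposition~\ref{prop:torelli-centralizer-of-H}.

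Second, $g\in A_\Delta'$. The IA condition on $\Phi$ says $[\Phi(t)]_{\mathrm{ab}}=[t]_{\mathrm{ab}}$, so $g$ has trivial abelianization, i.e.\ $g\in A_\Delta'$.

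Combining the two steps, $g\in Z(A_\Delta)\cap A_\Delta'=1$ by \eqref{eq:torelli-center-derived-intersection}. Thus $\Phi(t)=t$, so $\Phi$ fixes the generating set $H_\Delta\cup\{t\}$ and is the identity, which gives injectivity. Given the earlier results, no step is a serious obstacle; the substantive input is the centralizer identity, which is already proved. The one point needing care is to use homomorphism-style conjugation so that $g$ genuinely lies in the centralizer, rather than only satisfying a twisted relation.
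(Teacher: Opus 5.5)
Your proposal is correct and follows the paper's proof essentially step for step. You set $g=\Phi(t)t^{-1}$ and show it centralizes $H_\Delta$. Strictly, the computation gives that $g$ commutes with each $tht^{-1}$, and $tH_\Delta t^{-1}=H_\Delta$ then covers all of $H_\Delta$. You then invoke $C_{A_\Delta}(H_\Delta)=Z(A_\Delta)$ and conclude from $Z(A_\Delta)\cap A_\Delta'=1$, exactly as the paper does.
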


\begin{proof}
The map is well defined by the connected-graph version of
\eqref{eq:torelli-height-kernel-abelianization}.  Suppose
$\Psi\in\IAut(A_\Delta)$ fixes $H_\Delta$ pointwise.  Choose a vertex
$t$ and put
$c=\Psi(t)t^{-1}$.
Since $\Psi$ is IA, $c\in A_\Delta'$.  For $h\in H_\Delta$, normality
gives $tht^{-1}\in H_\Delta$, which is fixed by $\Psi$.  Therefore
\[
 \begin{aligned}
 tht^{-1}
 &=\Psi(tht^{-1})\\
 &=\Psi(t)h\Psi(t)^{-1}\\
 &=c(tht^{-1})c^{-1}.
 \end{aligned}
\]
Conjugation by $t$ maps $H_\Delta$ onto itself, so
$c\in C_{A_\Delta}(H_\Delta)$.  By
Proposition~\ref{prop:torelli-centralizer-of-H} and
\eqref{eq:torelli-center-derived-intersection},
$c\in Z(A_\Delta)\cap A_\Delta'=1$.
Thus $\Psi(t)=t$.  Since $A_\Delta=\langle H_\Delta,t\rangle$, the
automorphism $\Psi$ is the identity.
\end{proof}

\begin{theorem}
\label{thm:torelli-restriction-isomorphism}
If $\Gamma$ is finite and biconnected with at least three vertices, then
restriction is an isomorphism
\begin{equation}\label{eq:torelli-restriction-isomorphism}
 \IAut(A_\Gamma)\xrightarrow{\ \cong\ }\IAut(H_\Gamma).
\end{equation}
\end{theorem}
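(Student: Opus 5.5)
The theorem is the conjunction of Theorem~\ref{thm:torelli-surjectivity} and Theorem~\ref{thm:torelli-injectivity}, so the plan is simply to assemble these two results.

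\textbf{Step 1: the restriction map is a well-defined homomorphism.} An element $\Psi\in\IAut(A_\Gamma)$ acts trivially on $A_\Gamma^{\mathrm{ab}}$. It therefore preserves the height character $\chi$, and hence it preserves $H_\Gamma=\ker\chi$. Its restriction to $H_\Gamma$ is an automorphism, with inverse the restriction of $\Psi^{-1}$. By~\eqref{eq:torelli-height-kernel-abelianization}, $H_\Gamma^{\mathrm{ab}}$ embeds in $A_\Gamma^{\mathrm{ab}}$ as the kernel of $\chi$, so the restriction acts trivially on $H_\Gamma^{\mathrm{ab}}$ and lies in $\IAut(H_\Gamma)$. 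Restriction visibly respects composition.

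\textbf{Step 2: injectivity.} This is Theorem~\ref{thm:torelli-injectivity}, which needs only connectedness of $\Gamma$. Suppose $\Psi$ fixes $H_\Gamma$ pointwise and put $c=\Psi(t)t^{-1}$. Then $c$ centralizes $H_\Gamma$. By Proposition~\ref{prop:torelli-centralizer-of-H}, $c$ lies in $Z(A_\Gamma)$. Since $\Psi$ is IA, $c$ also lies in $A_\Gamma'$. By~\eqref{eq:torelli-center-derived-intersection}, $c=1$, so $\Psi$ fixes $t$. As $A_\Gamma$ is generated by $H_\Gamma$ and $t$, $\Psi$ is the identity.

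\textbf{Step 3: surjectivity.} This is Theorem~\ref{thm:torelli-surjectivity}, which is where biconnectedness enters, and it is the genuine obstacle. Its substance has already been established in three stages.
\begin{enumerate}
\item The polygon-frame theorem (Theorem~\ref{thm:torelli-colored-polygon}) relies on uniform period isolation (Theorem~\ref{thm:uniform-period-isolation}). This forces every edge image to split as $x_u x_v^{-1}$ on each simple circuit.
\item The open-ear gluing (Theorem~\ref{thm:torelli-global-frame}) then produces a global frame $(x_v)_{v\in V(\Gamma)}$.
\item Setting $\Phi(v)=x_v$ defines an endomorphism of $A_\Gamma$ extending $\alpha$. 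It is surjective, hence an automorphism by the Hopf property of $A_\Gamma$, and it is IA.
\end{enumerate}

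\textbf{Conclusion.} A bijective homomorphism is an isomorphism, which proves~\eqref{eq:torelli-restriction-isomorphism}. The only care needed in writing this up is to confirm that both cited theorems are stated for the same restriction map. In particular, the surjectivity proof constructs $\Phi$ with $\Phi|_{H_\Gamma}=\alpha$ exactly, not merely up to inner automorphisms, so the two statements combine without any adjustment.
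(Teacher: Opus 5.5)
Your proposal is correct and matches the paper's proof, which simply combines Theorem~\ref{thm:torelli-surjectivity} (surjectivity, where biconnectedness is used) with Theorem~\ref{thm:torelli-injectivity} (injectivity, which needs only connectedness). Your recap of well-definedness, and of the injectivity argument via $C_{A_\Gamma}(H_\Gamma)=Z(A_\Gamma)$ and $Z(A_\Gamma)\cap A_\Gamma'=1$, also agrees with the paper.
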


\begin{proof}
Theorem~\ref{thm:torelli-surjectivity} gives surjectivity in the stated
biconnected range.  Theorem~\ref{thm:torelli-injectivity} gives
injectivity under the weaker hypothesis of connectedness.
\end{proof}

\subsection{Outer IA groups}

Continue under the biconnected hypotheses of
Theorem~\ref{thm:torelli-restriction-isomorphism}, and use restriction to
identify the two IA groups.  Put
$J_A=\operatorname{res}(\Inn(A))$ and $J_H=\Inn(H)$.
Then $J_H\leq J_A$, and both are normal subgroups of the common IA group.
Moreover, $\IOut(H)=\IAut(H)/J_H$ and
$\IOut(A)=\IAut(H)/J_A$.

\begin{theorem}
\label{thm:torelli-outer-exact-sequence}
For every finite biconnected graph $\Gamma$ with at least three vertices,
there is a central short exact sequence
\begin{equation}\label{eq:torelli-outer-exact-sequence}
 1\longrightarrow
 \frac{A_\Gamma}{H_\Gamma Z(A_\Gamma)}
 \longrightarrow
 \IOut(H_\Gamma)
 \longrightarrow
 \IOut(A_\Gamma)
 \longrightarrow1.
\end{equation}
Moreover,
\begin{equation}\label{eq:torelli-outer-kernel-character}
 \frac{A_\Gamma}{H_\Gamma Z(A_\Gamma)}
 \cong
 \frac{\Z}{\chi(Z(A_\Gamma))}.
\end{equation}
No splitting is asserted.
\end{theorem}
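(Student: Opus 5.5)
Using the restriction isomorphism of Theorem~\ref{thm:torelli-restriction-isomorphism}, identify $\IAut(A_\Gamma)$ with $\IAut(H_\Gamma)$. Inside this common group we have the normal subgroups $J_H=\Inn(H)\leq J_A=\operatorname{res}(\Inn(A))$. The third isomorphism theorem gives the exact sequence
\[
 1\longrightarrow J_A/J_H\longrightarrow\IAut(H)/J_H\longrightarrow\IAut(H)/J_A\longrightarrow1,
\]
that is, $1\to J_A/J_H\to\IOut(H_\Gamma)\to\IOut(A_\Gamma)\to1$. So the proof reduces to two tasks: computing $J_A/J_H$, and showing that it is central in $\IOut(H_\Gamma)$.

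\emph{Identifying the kernel.} The map $A_\Gamma\to J_A$, $g\mapsto c_g|_{H}$, is surjective. Its kernel is $C_{A_\Gamma}(H_\Gamma)$, which equals $Z(A_\Gamma)$ by Proposition~\ref{prop:torelli-centralizer-of-H}. Under this map, $J_H$ is the image of $H_\Gamma$. Hence
\[
 J_A/J_H\cong A_\Gamma/H_\Gamma Z(A_\Gamma).
\]
Applying the height character $\chi$, which maps $A_\Gamma$ onto $\Z$ with kernel $H_\Gamma$, identifies this quotient with $\Z/\chi(Z(A_\Gamma))$. This proves \eqref{eq:torelli-outer-kernel-character}. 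Concretely, the quotient is cyclic, generated by the class of $c_t$ for any vertex $t$. It is $\Z$ unless $\Gamma$ has a universal vertex, in which case it is trivial, because $Z(A_\Gamma)$ is generated by the universal vertices and each has height one.

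\emph{Centrality.} Since $J_A/J_H$ is generated by the class of $c_t|_H$, it suffices to show that for every $\Phi\in\IAut(A_\Gamma)$ the commutator $[\Phi,c_t]$ lies in $J_H$. We have $\Phi c_t\Phi^{-1}=c_{\Phi(t)}$. Because $\Phi$ is IA, $\Phi(t)=ct$ with $c\in A_\Gamma'=H_\Gamma'\subseteq H_\Gamma$. Therefore $c_{\Phi(t)}c_t^{-1}=c_c$ restricts to an inner automorphism of $H_\Gamma$, so $[\Phi,c_t]\in J_H$. It follows that the image of $J_A$ in $\IOut(H)$ is central.

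The only subtle point is that centrality uses $\chi(\Phi(t))=1$ together with $\Phi(t)t^{-1}\in H$; this holds for IA automorphisms but not for general ones. All remaining steps are formal consequences of the earlier centralizer computation and the restriction isomorphism. As the statement says, no splitting is claimed.
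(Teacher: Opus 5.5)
Your proposal is correct and follows the paper's proof essentially step for step. Both identify the IA groups by restriction and compute $J_A/J_H\cong A_\Gamma/H_\Gamma Z(A_\Gamma)$ from $C_{A_\Gamma}(H_\Gamma)=Z(A_\Gamma)$. Both get centrality from $\Phi c_a\Phi^{-1}=c_{\Phi(a)}$ with $\Phi(a)a^{-1}\in A_\Gamma'=H_\Gamma'\subseteq H_\Gamma$. The only cosmetic difference is that you check centrality on the cyclic generator $[c_t]$, whereas the paper checks it on an arbitrary $c_a$.
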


\begin{proof}
Consider the homomorphism $A\longrightarrow\Aut(H)$ given by
$a\longmapsto c_a|_H$.
Its kernel is $C_A(H)=Z(A)$ by
Proposition~\ref{prop:torelli-centralizer-of-H}; hence
$J_A\cong A/Z(A)$.  Under this identification, $J_H$ is the image of
$H$, namely $HZ(A)/Z(A)$.  Therefore
\begin{equation}\label{eq:torelli-inner-quotient}
 J_A/J_H\cong A/HZ(A).
\end{equation}
Quotienting the common IA group first by $J_H$ and then by
$J_A/J_H$ proves exactness of
\eqref{eq:torelli-outer-exact-sequence}.

To prove centrality, take $\Psi\in\IAut(A)$ and $a\in A$.  Since $\Psi$
is IA and $A'=H'\subseteq H$,
\begin{equation}\label{eq:torelli-IA-displacement}
 \Psi(a)a^{-1}\in H.
\end{equation}
Moreover,
$\Psi c_a\Psi^{-1}=c_{\Psi(a)}$.
After restriction to $H$, the classes of $c_{\Psi(a)}$ and $c_a$ differ
by the inner automorphism associated to the element in
\eqref{eq:torelli-IA-displacement}.  Thus every element of $J_A/J_H$ is
fixed under conjugation by the common IA group, and the kernel in
\eqref{eq:torelli-outer-exact-sequence} is central.

Finally, $A/H\cong\Z$ through $\chi$, and the image of
$HZ(A)/H$ is the subgroup $\chi(Z(A))$.  The third isomorphism theorem
gives~\eqref{eq:torelli-outer-kernel-character}.
\end{proof}

\begin{corollary}
\label{cor:torelli-universal-vertex}
Under the hypotheses of
Theorem~\ref{thm:torelli-outer-exact-sequence}, the following hold.
\begin{enumerate}[label=\textup{(\roman*)}]
\item If $\Gamma$ has a universal vertex, then
      $\chi(Z(A_\Gamma))=\Z$ and
      $\IOut(H_\Gamma)\cong\IOut(A_\Gamma)$.
\item If $\Gamma$ has no universal vertex, then $Z(A_\Gamma)=1$ and
      there is a central short exact sequence
      \[
       1\longrightarrow\Z
       \longrightarrow\IOut(H_\Gamma)
       \longrightarrow\IOut(A_\Gamma)
       \longrightarrow1.
      \]
\end{enumerate}
\end{corollary}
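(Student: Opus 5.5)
Both parts follow from Theorem~\ref{thm:torelli-outer-exact-sequence}, so the plan is to compute $Z(A_\Gamma)$ and its height image $\chi(Z(A_\Gamma))$ in each case and then substitute into~\eqref{eq:torelli-outer-kernel-character}. The one input is the Servatius description of the center already used in the proof of Proposition~\ref{prop:torelli-centralizer-of-H}: $Z(A_\Gamma)=A_{U(\Gamma)}$, the free abelian group on the set $U(\Gamma)$ of universal vertices.

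For (i), let $w$ be a universal vertex. Then $w\in Z(A_\Gamma)$ and $\chi(w)=1$, so $\chi(Z(A_\Gamma))=\Z$. Hence the quotient in~\eqref{eq:torelli-outer-kernel-character} is trivial, and the central kernel in~\eqref{eq:torelli-outer-exact-sequence} vanishes. The surjection $\IOut(H_\Gamma)\to\IOut(A_\Gamma)$ is therefore an isomorphism. Equivalently, $A_\Gamma=H_\Gamma\langle w\rangle$ with $w$ central, so every inner automorphism of $A_\Gamma$ restricts to an inner automorphism of $H_\Gamma$, giving $J_A=J_H$.

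For (ii), $U(\Gamma)=\varnothing$ gives $Z(A_\Gamma)=A_\varnothing=1$. Then $\chi(Z(A_\Gamma))=0$, and~\eqref{eq:torelli-outer-kernel-character} identifies the kernel with $\Z/0\cong\Z$. Centrality of this kernel is already part of Theorem~\ref{thm:torelli-outer-exact-sequence}, so the displayed sequence follows directly.

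There is no real obstacle. The only step needing care is the identification of the center with the subgroup generated by the universal vertices, and that is the cited Servatius result, used earlier in the paper in exactly this form.
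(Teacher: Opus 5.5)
Your proposal is correct and follows the paper's proof essentially verbatim. Like the paper, you use the Servatius description of $Z(A_\Gamma)$ as generated by the universal vertices, compute $\chi(Z(A_\Gamma))$ in each case, and substitute into \eqref{eq:torelli-outer-kernel-character}. Your side remark that $J_A=J_H$ when a central height-one vertex exists is a valid direct check of case~(i).
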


\begin{proof}
The center of a RAAG is generated by the universal vertices
\cite{Servatius}.  In the
first case any universal vertex belongs to the center and has height one,
so $\chi(Z(A_\Gamma))=\Z$.  In the second case the center is trivial.
The conclusions follow from
\eqref{eq:torelli-outer-kernel-character}.
\end{proof}
\section{Automorphisms of a biconnected block}
\label{sec:biconnected-synthesis}

Part~1 identifies the cohomological image up to commensurability with the
unit group of an explicit order, while Section~\ref{sec:torelli}
identifies the kernel with the IA automorphism group of the ambient RAAG.
Combining these results gives the structure theorem below and the
corresponding IA consequences.

Throughout, $\Gamma$ is finite and biconnected with at least three
vertices.

\subsection{The two-step exact sequences}

Since inner automorphisms act trivially on abelianization, the cohomology
representation descends from $\Aut(H_\Gamma)$ to $\Out(H_\Gamma)$ and has
the same image $\HomImage$.

\begin{theorem}
\label{thm:biconnected-structure}
Let $\Gamma$ be a finite biconnected graph with at least three vertices.
There are short exact sequences
\begin{equation}\label{eq:biconnected-Aut-exact-sequence}
 1\longrightarrow\IAut(A_\Gamma)
 \longrightarrow\Aut(H_\Gamma)
 \xrightarrow{\rho_W}\HomImage
 \longrightarrow1
\end{equation}
and
\begin{equation}\label{eq:biconnected-Out-exact-sequence}
 1\longrightarrow\IOut(H_\Gamma)
 \longrightarrow\Out(H_\Gamma)
 \xrightarrow{\bar\rho_W}\HomImage
 \longrightarrow1.
\end{equation}
The first inclusion is restriction from $A_\Gamma$.  Moreover,
$\HomImage$ and $\Order^\times$ contain the same all-root subgroup
$\Elementary$ with finite index in each, and
\begin{equation}\label{eq:biconnected-outer-Torelli-layer}
 1\longrightarrow
 \frac{A_\Gamma}{H_\Gamma Z(A_\Gamma)}
 \longrightarrow\IOut(H_\Gamma)
 \longrightarrow\IOut(A_\Gamma)
 \longrightarrow1
\end{equation}
is central.
\end{theorem}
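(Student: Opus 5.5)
The theorem assembles results already proved, so the plan is bookkeeping. The real content sits in Theorem~\ref{thm:torelli-restriction-isomorphism} and Theorem~\ref{thm:cohomological-image-arithmeticity}.

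For the first sequence \eqref{eq:biconnected-Aut-exact-sequence}, I will use the identification $H_\Gamma^{\mathrm{ab}}=L_\Gamma$ from Proposition~\ref{prop:standard-LW-lattices}. Under it, $\rho_W(\alpha)=(\alpha^{-1})^*$ is trivial exactly when $\alpha_*$ is trivial on $L_\Gamma$, since $W_\Gamma=\operatorname{Hom}(L_\Gamma,\Z)$ and $L_\Gamma$ is free. Hence $\ker\rho_W=\IAut(H_\Gamma)$. Surjectivity onto $\HomImage$ holds by definition of $\HomImage$. Theorem~\ref{thm:torelli-restriction-isomorphism} then identifies $\IAut(H_\Gamma)$ with $\IAut(A_\Gamma)$ via restriction, which gives exactness and shows the first map is restriction. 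Injectivity of restriction needs only connectedness (Theorem~\ref{thm:torelli-injectivity}); surjectivity uses biconnectedness.

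For \eqref{eq:biconnected-Out-exact-sequence}, I will note that $\Inn(H_\Gamma)\le\IAut(H_\Gamma)=\ker\rho_W$, so $\rho_W$ factors through $\Out(H_\Gamma)$ as $\bar\rho_W$ with the same image. Its kernel is then $\IAut(H_\Gamma)/\Inn(H_\Gamma)=\IOut(H_\Gamma)$, using the third isomorphism theorem.

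The claim that $\HomImage$ and $\Order^\times$ share $\Elementary$ with finite index in each is exactly \eqref{eq:all-roots-in-cohomological-image} together with Theorem~\ref{thm:cohomological-image-arithmeticity}. The central sequence \eqref{eq:biconnected-outer-Torelli-layer} is Theorem~\ref{thm:torelli-outer-exact-sequence}.

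I expect no serious obstacle. The one point to state carefully is that the kernel of the cohomological representation equals the kernel of the homology action, which requires $W_\Gamma$ to be the full dual of the free lattice $L_\Gamma$ so that no torsion ambiguity arises.
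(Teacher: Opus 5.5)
Your proposal is correct and follows the paper's proof essentially verbatim. The kernel of $\rho_W$ is $\IAut(H_\Gamma)$, restriction identifies it with $\IAut(A_\Gamma)$ by Theorem~\ref{thm:torelli-restriction-isomorphism}, quotienting by $\Inn(H_\Gamma)$ gives the outer sequence, and the common-root and central-sequence assertions are quoted from Theorems~\ref{thm:cohomological-image-arithmeticity} and~\ref{thm:torelli-outer-exact-sequence}. Your extra remark, that triviality on $W_\Gamma=\operatorname{Hom}(L_\Gamma,\Z)$ is equivalent to triviality on the free lattice $L_\Gamma$, simply makes explicit what the paper calls ``by definition.''
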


\begin{proof}
The kernel of $\rho_W$ is $\IAut(H_\Gamma)$ by definition, and
Theorem~\ref{thm:torelli-restriction-isomorphism} identifies this group
with $\IAut(A_\Gamma)$ by restriction.  This proves
\eqref{eq:biconnected-Aut-exact-sequence}.  Quotienting by
$\Inn(H_\Gamma)$ gives~\eqref{eq:biconnected-Out-exact-sequence}.
The common finite-index assertion is
Theorem~\ref{thm:cohomological-image-arithmeticity}, and
\eqref{eq:biconnected-outer-Torelli-layer} is
Theorem~\ref{thm:torelli-outer-exact-sequence}.
\end{proof}

The theorem separates the two contributions.  Restrictions of ambient
partial conjugations and commutator transvections generate the IA kernel.
Sector shears occur in the cohomological quotient and, together with the
ambient root automorphisms, generate a finite-index subgroup of its
arithmetic derived part.

\subsection{Finite generation}

\begin{corollary}
\label{cor:biconnected-finite-generation}
Let $\Gamma$ be a finite biconnected graph with at least three vertices.
Both $\Aut(H_\Gamma)$ and $\Out(H_\Gamma)$ are finitely generated.
\end{corollary}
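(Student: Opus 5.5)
The plan is to use the short exact sequence \eqref{eq:biconnected-Aut-exact-sequence} of Theorem~\ref{thm:biconnected-structure}. An extension of a finitely generated group by a finitely generated group is finitely generated. We lift generators of the quotient and adjoin generators of the kernel. So it suffices to show separately that the kernel $\IAut(A_\Gamma)$ and the quotient $\HomImage$ are finitely generated.

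For the quotient, Corollary~\ref{cor:cohomological-image-finite-generation} already gives that $\HomImage$ is finitely generated. Recall how that was obtained. The group $\Elementary$ is finitely generated, as an extension of the arithmetic semisimple image $\Pi$ by the finitely generated nilpotent group $E_U$. Moreover $[\HomImage:\Elementary]<\infty$ by Theorem~\ref{thm:cohomological-image-arithmeticity}. A group containing a finitely generated subgroup of finite index is finitely generated: add coset representatives.

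For the kernel, we invoke Day's theorem~\cite[Theorem~B]{Day}: $\IAut(A_\Gamma)$ is generated by the finitely many partial conjugations and commutator transvections. Theorem~\ref{thm:torelli-restriction-isomorphism} identifies $\IAut(A_\Gamma)$ with $\IAut(H_\Gamma)=\ker\rho_W$ via restriction. The restrictions of Day's generators therefore give a finite generating set of the kernel, and $\Aut(H_\Gamma)$ is finitely generated.

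For $\Out(H_\Gamma)$, it is the quotient of $\Aut(H_\Gamma)$ by $\Inn(H_\Gamma)$, and a quotient of a finitely generated group is finitely generated. Alternatively, one can argue from \eqref{eq:biconnected-Out-exact-sequence}, using that $\IOut(H_\Gamma)$ is a quotient of $\IAut(H_\Gamma)$.

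There is no real obstacle here. All the substantive work lies in the cited results, notably the finite generation of $\Elementary$. The only point to check is that finite generation passes through finite-index overgroups and extensions, which is elementary.
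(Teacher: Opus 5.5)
Your proof is correct and is essentially the paper's argument. The kernel $\IAut(A_\Gamma)\cong\IAut(H_\Gamma)$ is finitely generated by Day's theorem, the quotient $\Lambda_\Gamma$ by Corollary~\ref{cor:cohomological-image-finite-generation}, these combine through the exact sequence of Theorem~\ref{thm:biconnected-structure}, and $\Out(H_\Gamma)$ follows as a quotient. Your alternative for $\Out$, viewing $\IOut(H_\Gamma)$ as a quotient of $\IAut(H_\Gamma)$, is equally valid and slightly more direct than the paper's appeal to the central outer IA sequence.
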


\begin{proof}
Day's theorem makes $\IAut(A_\Gamma)$ finitely generated
\cite[Theorem~B]{Day}, while
Corollary~\ref{cor:cohomological-image-finite-generation} makes
$\HomImage$ finitely generated.  The first exact sequence in
Theorem~\ref{thm:biconnected-structure} now proves finite generation of
$\Aut(H_\Gamma)$.  Its quotient $\Out(H_\Gamma)$ is finitely generated as
well.  Equivalently, one may use the second sequence together with the
central outer IA sequence and finite generation of
$\IOut(A_\Gamma)$.
\end{proof}

\subsection{Exact IA generators and abelianization}

Order the vertices as $v_1,\ldots,v_n$ and write
\[
 v_i\preccurlyeq v_j
 \quad\Longleftrightarrow\quad
 \lk(v_i)\subseteq\st(v_j).
\]
Let $\mathcal P_\Gamma$ consist of the pairs $(i,C)$ with
$C\in\pi_0(\Gamma-\st(v_i))$.  Let $\mathcal K_\Gamma$ consist of the
triples $(i,\{j,k\})$ such that $i,j,k$ are distinct,
\[
 v_i\preccurlyeq v_j,\qquad
 v_i\preccurlyeq v_k,\qquad
 [v_j,v_k]\ne1.
\]
For $(i,C)\in\mathcal P_\Gamma$, let $P_{i,C}$ be the partial
conjugation with multiplier $v_i$ and support $C$.  For
$(i,\{j,k\})\in\mathcal K_\Gamma$, choose the representative $j<k$ and
let $K_{i;jk}$ be the commutator transvection sending
$v_i\longmapsto v_i[v_j,v_k]$
and fixing the remaining standard generators.  Put
\begin{equation}\label{eq:normalized-Day-Wade-set}
 \mathcal M_\Gamma=
 \{P_{i,C}:(i,C)\in\mathcal P_\Gamma\}
 \sqcup
 \{K_{i;jk}:(i,\{j,k\})\in\mathcal K_\Gamma\}.
\end{equation}

Day proves generation~\cite[Theorem~B]{Day}, while Wade's normalization
removes inverse duplicates and identifies the exact abelianization~\cite[Definition~2.4, Theorems~2.5 and~4.2,
Corollary~4.3]{WadeJohnson}.

\begin{corollary}
\label{cor:transferred-Torelli-generators}
Let $\Gamma$ be a finite biconnected graph with at least three vertices.
The restricted set
\[
 \mathcal M_\Gamma^H
 =\{m|_{H_\Gamma}:m\in\mathcal M_\Gamma\}
\]
is a minimum-cardinality generating set for $\IAut(H_\Gamma)$, and
\begin{equation}\label{eq:BB-Torelli-abelianization}
 H_1(\IAut(H_\Gamma);\Z)
 \cong\Z^{\mathcal M_\Gamma^H}.
\end{equation}
In particular,
\begin{equation}\label{eq:BB-Torelli-minimal-rank}
 d(\IAut(H_\Gamma))
 =\sum_{i=1}^n
   \#\pi_0(\Gamma-\st(v_i))
  +\#\mathcal K_\Gamma.
\end{equation}
\end{corollary}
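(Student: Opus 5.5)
The plan is to transport the known RAAG statements along the restriction isomorphism of Theorem~\ref{thm:torelli-restriction-isomorphism}. By Day~\cite[Theorem~B]{Day}, $\IAut(A_\Gamma)$ is generated by the partial conjugations and commutator transvections. Wade's normalization~\cite[Definition~2.4, Theorem~2.5]{WadeJohnson} then removes the inverse and redundant duplicates: partial conjugations are indexed by components of $\Gamma-\st(v_i)$, and commutator transvections by unordered pairs $\{j,k\}$ with a fixed representative $j<k$. After this normalization, $\mathcal M_\Gamma$ still generates. Wade's abelianization computation~\cite[Theorem~4.2, Corollary~4.3]{WadeJohnson} identifies $H_1(\IAut(A_\Gamma);\Z)$ with the free abelian group on $\mathcal M_\Gamma$, with the images of the elements of $\mathcal M_\Gamma$ forming a basis.

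I would first check that every $m\in\mathcal M_\Gamma$ lies in $\IAut(A_\Gamma)$, so that restriction is defined. This holds because partial conjugations and commutator transvections act trivially on $A_\Gamma^{\mathrm{ab}}$, and such automorphisms preserve $\chi_\Gamma$ and hence $H_\Gamma$, as noted in the proof of Theorem~\ref{thm:torelli-surjectivity}. Since $\operatorname{res}$ is a group isomorphism $\IAut(A_\Gamma)\to\IAut(H_\Gamma)$, it carries generating sets to generating sets. It also induces an isomorphism on $H_1$ that sends the basis $\{[m]\}$ to $\{[m|_{H_\Gamma}]\}$. Injectivity of $\operatorname{res}$ (Theorem~\ref{thm:torelli-injectivity}) ensures that distinct elements of $\mathcal M_\Gamma$ have distinct restrictions, so $|\mathcal M_\Gamma^H|=|\mathcal M_\Gamma|$. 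This gives~\eqref{eq:BB-Torelli-abelianization}.

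For minimality, I would use the standard bound that any generating set of a group $G$ has at least $\rank H_1(G;\Z)\otimes\Q$ elements, since its image spans the abelianization. Because $H_1$ is free of rank $|\mathcal M_\Gamma^H|$, no generating set of $\IAut(H_\Gamma)$ can be smaller, so $d(\IAut(H_\Gamma))=|\mathcal M_\Gamma|$. Counting the two disjoint families in~\eqref{eq:normalized-Day-Wade-set} gives $\sum_i\#\pi_0(\Gamma-\st(v_i))$ partial conjugations plus $\#\mathcal K_\Gamma$ commutator transvections, which is~\eqref{eq:BB-Torelli-minimal-rank}.

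I expect the main obstacle to be bookkeeping rather than mathematics. The index sets $\mathcal P_\Gamma,\mathcal K_\Gamma$ must match Wade's normalized set exactly. In particular, I would check that partial conjugations with $C$ the unique component, which are inner-type, are still counted in $\IAut$ rather than only in $\IOut$, so that Wade's corollary is quoted for $\IAut(A_\Gamma)$ and not for the outer group. I would also confirm that Wade's conventions for when $v_i\preccurlyeq v_j$ yields a commutator transvection (requiring $[v_j,v_k]\ne1$) agree with the definition of $\mathcal K_\Gamma$ given here.
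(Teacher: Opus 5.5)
Your proposal is correct and follows the paper's proof. Both transport Day's generation theorem and Wade's normalization and abelianization results for $\IAut(A_\Gamma)$ along the restriction isomorphism of Theorem~\ref{thm:torelli-restriction-isomorphism}. You also spell out what the paper leaves implicit: that restriction is well defined on the generators, that distinct generators have distinct restrictions, and the bound $d(G)\geq\rank H_1(G;\Z)$ that gives minimality.
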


\begin{proof}
The corresponding statements hold for $\IAut(A_\Gamma)$ by the cited results of Day and Wade.  The restriction isomorphism
\eqref{eq:torelli-restriction-isomorphism} preserves generation, the
abelianization, and the minimum cardinality of a generating set.  The normalization $j<k$ avoids listing both a generator and its inverse.
\end{proof}

\subsection{An elementary finite-index subgroup}

The two-step sequence~\eqref{eq:biconnected-Aut-exact-sequence} and the
arithmetic part combine into a finite-index subgroup of $\Aut(H_\Gamma)$
with an explicit finite generating set.  Put
\begin{equation}\label{eq:def-elementary-aut}
 \Aut_E(H_\Gamma)=(\rho_W)^{-1}(\Elementary)\leq\Aut(H_\Gamma).
\end{equation}

\begin{theorem}
\label{thm:elementary-finite-index}
Let $\Gamma$ be a finite biconnected graph with at least three vertices.
\begin{enumerate}[label=\textup{(\roman*)}]
\item $\IAut(H_\Gamma)\leq\Aut_E(H_\Gamma)$ and
\[
 [\Aut(H_\Gamma):\Aut_E(H_\Gamma)]
 =[\HomImage:\Elementary]<\infty;
\]
restricting~\eqref{eq:biconnected-Aut-exact-sequence} gives the exact
sequence
$1\to\IAut(A_\Gamma)\to\Aut_E(H_\Gamma)\xrightarrow{\rho_W}\Elementary\to1$.
\item There are finitely many integral rank-one square-zero roots
$N_1,\dots,N_m\in\Order$ such that
$\Elementary=\langle I+N_1,\dots,I+N_m\rangle$, and
$\Aut_E(H_\Gamma)$ is generated by the finite restricted normalized IA set
$\mathcal M_\Gamma^H$ together with realizing automorphisms
$\Phi_1,\dots,\Phi_m$, where each $\Phi_j$ is the exact product of
height-preserving ambient automorphisms and sector shears supplied by
Theorem~\ref{thm:integral-root-realization}, with
$\rho_W(\Phi_j)=I+N_j$.
\end{enumerate}
\end{theorem}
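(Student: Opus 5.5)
The argument is a formal combination of the two-step sequence of Theorem~\ref{thm:biconnected-structure} with the finite generation results of Section~\ref{sec:arithmeticity} and Day's theorem.

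For~(i), note that $\Elementary\leq\HomImage$ by \eqref{eq:all-roots-in-cohomological-image}. Hence $\Aut_E(H_\Gamma)=\rho_W^{-1}(\Elementary)$ is the full preimage of a subgroup of the image. The standard correspondence for preimages under a surjection then gives
\[
 [\Aut(H_\Gamma):\Aut_E(H_\Gamma)]=[\HomImage:\Elementary],
\]
and this index is finite by Theorem~\ref{thm:cohomological-image-arithmeticity}. The kernel $\IAut(H_\Gamma)=\rho_W^{-1}(I)$ lies in this preimage. Restricting \eqref{eq:biconnected-Aut-exact-sequence} to $\Aut_E(H_\Gamma)$ keeps the same kernel, which is $\IAut(A_\Gamma)$ via restriction by Theorem~\ref{thm:torelli-restriction-isomorphism}, and the restricted map is onto $\Elementary$ by construction. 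This gives the displayed exact sequence.

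For~(ii), the first step is to show that $\Elementary$ is finitely generated. Corollary~\ref{cor:cohomological-image-finite-generation} already provides this through the extension $1\to E_U\to\Elementary\to\Pi\to1$. A finitely generated group generated by a possibly infinite set $\mathcal S$ is generated by a finite subset of $\mathcal S$: write each member of a finite generating set as a word in $\mathcal S$, and collect the finitely many letters that occur. Applying this with $\mathcal S$ equal to the defining roots $I+N$ of \eqref{eq:all-root-subgroup} produces $N_1,\dots,N_m$.

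Theorem~\ref{thm:integral-root-realization} then gives automorphisms $\Phi_j$ with $\rho_W(\Phi_j)=I+N_j$. These are exact products of height-preserving ambient automorphisms and sector shears, assembled as in Propositions~\ref{prop:distinct-level-nonedge-ambient}, \ref{prop:edge-coordinate-axis}, and~\ref{prop:clique-axis-realization} together with the level decomposition \eqref{eq:level-exact-product}. Since each $\rho_W(\Phi_j)$ lies in $\Elementary$, every $\Phi_j$ lies in $\Aut_E(H_\Gamma)$.

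Finally, apply the standard lemma for extensions: if the kernel is generated by $\mathcal K$ and the quotient by the images of $\Phi_1,\dots,\Phi_m$, then $\mathcal K\cup\{\Phi_j\}$ generates the group. Here the kernel $\IAut(H_\Gamma)$ is generated by $\mathcal M_\Gamma^H$ by Corollary~\ref{cor:transferred-Torelli-generators}. Indeed, for $\phi\in\Aut_E(H_\Gamma)$, write $\rho_W(\phi)$ as a word $w$ in the $I+N_j$; then $\phi\,w(\Phi)^{-1}$ lies in the IA kernel.

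The only point needing care is the passage from ``$\Elementary$ is finitely generated'' to ``$\Elementary$ is generated by finitely many of its defining roots''. The word-collection argument above handles this, so I expect no genuine obstacle; the remaining steps are bookkeeping.
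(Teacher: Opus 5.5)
Your proposal is correct and follows the paper's proof essentially step for step. Both arguments use the same ingredients:
\begin{itemize}
\item the preimage-index identity, with finiteness coming from Theorem~\ref{thm:cohomological-image-arithmeticity};
\item finite generation of $\Elementary$, taken from the proof of Corollary~\ref{cor:cohomological-image-finite-generation};
\item extraction of finitely many defining roots by collecting the letters that occur in a finite generating set;
\item realization of those roots via Theorem~\ref{thm:integral-root-realization};
\item the extension argument, with the IA kernel generated by $\mathcal M_\Gamma^H$.
\end{itemize}
The one thing you do beyond the paper is state explicitly that $\Elementary\leq\HomImage$ is what makes the restricted map surject onto $\Elementary$; the paper simply calls the exactness of the restricted sequence ``immediate.''
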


\begin{proof}
The kernel $\IAut(H_\Gamma)$ of $\rho_W$ lies in the preimage of every
subgroup, and the preimage of a finite-index subgroup under the
surjection $\rho_W$ has the same index; the index
$[\HomImage:\Elementary]$ is finite by
Theorem~\ref{thm:cohomological-image-arithmeticity}.  Exactness of the
restricted sequence is immediate.  This proves~(i).

For~(ii), the proof of
Corollary~\ref{cor:cohomological-image-finite-generation} shows that
$\Elementary$ is finitely generated.  Since $\Elementary$ is generated by
the set of all roots $I+N$, each member of a finite generating set is a
word in finitely many of them, so finitely many roots
$I+N_1,\dots,I+N_m$ already generate.
Theorem~\ref{thm:integral-root-realization} supplies $\Phi_j$ with
$\rho_W(\Phi_j)=I+N_j$, each an exact product of height-preserving
ambient automorphisms and sector shears.  Given
$\Phi\in\Aut_E(H_\Gamma)$, write $\rho_W(\Phi)$ as a word $w$ in the
$I+N_j$; the corresponding word in the $\Phi_j$ has the same
$\rho_W$-image, so $\Phi$ differs from it by an element of
$\ker\rho_W=\IAut(H_\Gamma)$, which is generated by
$\mathcal M_\Gamma^H$
(Corollary~\ref{cor:transferred-Torelli-generators}).
\end{proof}

\begin{remark}[Existence versus effectivity]
The finite root-generating family exists by finite generation of
$E_\Gamma$.  No algorithm or uniform bound for choosing such a family is
claimed.
\end{remark}

\subsection{Residual nilpotence of \texorpdfstring{$\IAut$ and $\IOut$}{IAut and IOut}}

Wade's Andreadakis--Johnson filtration is separating and has torsion-free
abelian successive quotients, both before and after passing to outer
automorphisms
\cite[Lemma~4.4, Theorem~4.6, Proposition~4.7 and
Theorem~4.9]{WadeJohnson}.  The restriction isomorphism gives the same conclusion for
$\IAut(H_\Gamma)$.
The central cyclic discrepancy in the outer group can
also be incorporated into the same filtration.

\begin{proposition}
\label{prop:Torelli-residual-properties}
Let $\Gamma$ be a finite biconnected graph with at least three vertices.
Each of
\[
 \IAut(A_\Gamma),\qquad
 \IAut(H_\Gamma),\qquad
 \IOut(A_\Gamma),\qquad
 \IOut(H_\Gamma)
\]
is residually torsion-free nilpotent.  Therefore all four groups are
torsion-free and bi-orderable; because they are finitely generated, they
are residually $p$ for every prime $p$.
\end{proposition}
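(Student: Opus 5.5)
We take Wade's results for the ambient groups as the input.  By \cite[Lemma~4.4, Theorem~4.6, Proposition~4.7 and Theorem~4.9]{WadeJohnson}, the Andreadakis--Johnson filtration of $\IAut(A_\Gamma)$, and its image in $\IOut(A_\Gamma)$, is a separating central filtration with torsion-free abelian successive quotients.  Hence $\IAut(A_\Gamma)$ and $\IOut(A_\Gamma)$ are residually torsion-free nilpotent.

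For $\IAut(H_\Gamma)$ the statement transfers through the group isomorphism of Theorem~\ref{thm:torelli-restriction-isomorphism}, since residual torsion-free nilpotence is an isomorphism invariant.

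The remaining case, $\IOut(H_\Gamma)$, is the main obstacle.  It is a central extension of $\IOut(A_\Gamma)$ by the cyclic group $A_\Gamma/H_\Gamma Z(A_\Gamma)$ (Theorem~\ref{thm:torelli-outer-exact-sequence}).  Residual torsion-free nilpotence is not preserved by arbitrary central extensions, so we split into cases using Corollary~\ref{cor:torelli-universal-vertex}.
\begin{enumerate}[label=\textup{(\roman*)}]
\item If $\Gamma$ has a universal vertex, then $\IOut(H_\Gamma)\cong\IOut(A_\Gamma)$ and there is nothing to prove.
\item If $\Gamma$ has no universal vertex, the kernel is $\Z$, generated by the class of $c_t|_{H}$ for a vertex $t$.
\end{enumerate}

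In case (ii) we build the filtration directly.  Let $\mathcal A_k$ be the Andreadakis--Johnson filtration of the common IA group $\IAut(H_\Gamma)\cong\IAut(A_\Gamma)$, and filter $\IOut(H_\Gamma)$ by the images $\bar{\mathcal A}_k=\mathcal A_k\Inn(H)/\Inn(H)$.  The point to check is that $\bigcap_k\bar{\mathcal A}_k$ is trivial and that the graded pieces are torsion-free.  By Wade's outer result, any element of $\bigcap_k\bar{\mathcal A}_k$ maps into the trivial intersection in $\IOut(A_\Gamma)$, so it lies in the central $\Z=\langle\bar c_t\rangle$.  It therefore suffices to see where $c_t$ sits in the filtration.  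Since $t$ has height one, $c_t$ acts on $A$ modulo $\gamma_{k+1}A$ as conjugation by an element that is nontrivial in the abelianization.  Using $A/H\cong\Z$ and $Z(A)=1$, we get $c_t^n\in\mathcal A_2\Inn(H)$ only when $n=0$: otherwise $c_{t^n h}$ would act trivially modulo $\gamma_3$ for some $h\in H$, which forces $t^nh$ to be central modulo $\gamma_2$ in a centerless RAAG.  Thus $\bar c_t^{\,n}\notin\bar{\mathcal A}_2$ for $n\neq0$, and the intersection is trivial.

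For torsion-freeness of $\bar{\mathcal A}_k/\bar{\mathcal A}_{k+1}$, we compare with the corresponding quotient of $\IOut(A_\Gamma)$.  The only possible extra piece is the $\Z$ spanned by $\bar c_t$, which occurs in degree one and is detected by the height, so it is torsion-free; this gives the $H$-outer case.  If that comparison proves delicate, the fallback is to exhibit the central extension as a subgroup of $\IOut(A_\Gamma)\times\Z$ via the height-displacement homomorphism $\Psi\mapsto\chi$-class, which preserves residual torsion-free nilpotence under subgroups and products.

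The final assertions are standard.  Residually torsion-free nilpotent groups are torsion-free and bi-orderable, since torsion-free nilpotent groups are bi-orderable and the filtration gives a lexicographic order.  Finitely generated torsion-free nilpotent groups are residually $p$ for every $p$ by Gruenberg's theorem.  All four groups are finitely generated by Day's theorem and the isomorphisms above.
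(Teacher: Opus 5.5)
Your proposal is correct and is essentially the paper's proof. It uses the same image filtration of $\IOut(H_\Gamma)$, separation from Wade's outer theorem, and the key deck fact $D\cap\bar{\mathcal A}_2=1$, which the paper also obtains in three steps: injectivity of restriction turns $c_{t^n}|_{H_\Gamma}\in\mathcal A_2\Inn(H_\Gamma)$ into $c_{t^nh^{-1}}\in\mathcal J_2(A_\Gamma)$; Wade's Lemma~4.4 at $c=2$ then forces $t^nh^{-1}\in\gamma_2A_\Gamma$; and heights give $n=0$. Lemma~4.4 is the precise form of your ``central modulo $\gamma_2$'' step: with no universal vertex, $Z(A_\Gamma/\gamma_3A_\Gamma)\le\gamma_2A_\Gamma/\gamma_3A_\Gamma$.

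The paper makes your ``extra $\Z$ only in degree one'' comparison precise in two parts. First, $\bar{\mathcal A}_c$ maps isomorphically onto its image in $\IOut(A_\Gamma)$ for $c\geq2$ (via Wade's inner-intersection lemma, or directly from $D\cap\bar{\mathcal A}_2=1$). Second, $\IOut(H_\Gamma)/\bar{\mathcal A}_2$ is an extension of the corresponding free abelian quotient of $\IOut(A_\Gamma)$ by $D\cong\Z$. Your fallback is therefore unnecessary, and as described its ``height-displacement'' map vanishes identically on IA automorphisms.
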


\begin{proof}
The statements for the two ambient RAAG groups are Wade's theorems~\cite[Lemma~4.4, Theorem~4.6, Proposition~4.7 and
Theorem~4.9]{WadeJohnson}; the outer assertion was also proved
independently by Toinet~\cite[Theorem~1.6]{Toinet}.  The restriction isomorphism gives the result
for $\IAut(H_\Gamma)$.

It remains only to treat $K=\IOut(H_\Gamma)$ when $\Gamma$ has no
universal vertex; in the universal-vertex case
$K\cong\IOut(A_\Gamma)$ by
Corollary~\ref{cor:torelli-universal-vertex}.  Let $(G_c)_{c\ge1}$ be
Wade's Johnson filtration of $\IAut(A_\Gamma)$, with
$G_1=\IAut(A_\Gamma)$, and use restriction to identify $G_1$ with
$\IAut(H_\Gamma)$.  Put
\[
 \overline G_c
 =G_c\Inn(A_\Gamma)/\Inn(A_\Gamma),
 \qquad
 K_c=G_c\Inn(H_\Gamma)/\Inn(H_\Gamma).
\]
For $c\ge2$, Wade's inner-intersection lemma identifies the intersection
of $G_c$ with $\Inn(A_\Gamma)$ with conjugations by
$\gamma_c(A_\Gamma)$.  Since
$\gamma_c(A_\Gamma)\le A_\Gamma'\le H_\Gamma$, the natural map
$K_c\to\overline G_c$ is an isomorphism.  Hence
$K_c/K_{c+1}$ is free abelian for $c\ge2$.

Let
$D=\Inn(A_\Gamma)|_{H_\Gamma}/\Inn(H_\Gamma)\cong\Z$
be the central deck subgroup, and fix a height-one vertex $t$ whose
restriction class generates $D$.  We first prove
\begin{equation}\label{eq:deck-K2-trivial-intersection}
 D\cap K_2=1.
\end{equation}
Suppose that the $n$th power of the deck class lies in $K_2$.  Then there
are $\phi\in G_2$ and $h\in H_\Gamma$ such that, after restriction to
$H_\Gamma$,
\[
 c_{t^n}=\phi\,c_h,
 \quad\text{and hence}\quad
 \phi|_{H_\Gamma}=c_{t^nh^{-1}}|_{H_\Gamma}.
\]
Both sides are restrictions of IA automorphisms of $A_\Gamma$.
The connected injectivity theorem, Theorem~\ref{thm:torelli-injectivity},
therefore gives $\phi=c_{t^nh^{-1}}$ in $\Aut(A_\Gamma)$.
Because $\Gamma$ has no universal vertex, $Z(A_\Gamma)=1$.  Wade's
inner-intersection lemma~\cite[Lemma~4.4]{WadeJohnson}, applied with
$c=2$, now implies
$t^nh^{-1}\in\gamma_2(A_\Gamma)\leq H_\Gamma$.  Taking heights gives
$n=0$, which proves~\eqref{eq:deck-K2-trivial-intersection}.

The natural map $K\to\IOut(A_\Gamma)$ and the isomorphism
$K_2\cong\overline G_2$ give an exact sequence
\begin{equation}\label{eq:outer-IA-first-quotient}
 0\longrightarrow D\longrightarrow K/K_2
 \longrightarrow
 \IOut(A_\Gamma)/\overline G_2
 \longrightarrow0.
\end{equation}
Wade's Theorem~4.2 identifies
$G_2=[G_1,G_1]$~\cite[Theorem~4.2]{WadeJohnson}; hence $K/K_2$ is
abelian.  Wade's Proposition~4.5 and Theorem~4.6~\cite[Proposition~4.5 and Theorem~4.6]{WadeJohnson} show that the
right-hand term of~\eqref{eq:outer-IA-first-quotient} is a finitely
generated torsion-free abelian group, and therefore free abelian.  Since
$D\cong\Z$, the sequence splits in the category of abelian groups, so
$K/K_2$ is free abelian.

The deck subgroup is central, and the Andreadakis--Johnson filtration
satisfies
$[G_1,G_c]\leq G_{c+1}$~\cite{Andreadakis,WadeJohnson}; Therefore
$[K,K_c]\leq K_{c+1}$.  If
$x\in\bigcap_cK_c$, its image in the separating outer RAAG filtration is
trivial, so $x\in D$; since also $x\in K_2$,
\eqref{eq:deck-K2-trivial-intersection} gives $x=1$.  Thus $(K_c)$ is a
separating central filtration with torsion-free abelian successive
quotients, proving that $K$ is residually torsion-free nilpotent.

The stated consequences follow from the standard residual properties of
torsion-free nilpotent groups; see, for example, Toinet's discussion of
residual $p$-finiteness and bi-orderability~\cite{Toinet}.  In particular, finitely generated torsion-free
nilpotent groups are bi-orderable and residually $p$ for every prime.
For any nontrivial element, first choose a torsion-free nilpotent quotient
detecting it and then a finite $p$-group quotient detecting its image.
\end{proof}

\subsection{The Andreadakis--Johnson filtration}
\label{subsec:andreadakis-johnson}

The preceding subsection used Wade's filtration through the restriction
isomorphism.  We now compare it with the intrinsic filtration of
$\IAut(H_\Gamma)$: restriction identifies the intrinsic Andreadakis--Johnson
filtrations of $\IAut(A_\Gamma)$ and $\IAut(H_\Gamma)$ layer by layer.
Following Andreadakis~\cite{Andreadakis} and Wade's RAAG formulation~\cite{WadeJohnson}, for a group $G$ put
\begin{equation}\label{eq:def-AJ-filtration}
 \mathcal J_k(G)
 =\ker\bigl(\Aut(G)\to\Aut(G/\gamma_{k+1}G)\bigr)
 \quad(k\geq1),
\end{equation}
so that $\mathcal J_1(G)=\IAut(G)$.
Throughout, $A=A_\Gamma$ and $H=H_\Gamma$, the vertex classes $e_v$ form
the standard basis of $A^{\mathrm{ab}}=\Z^{V}$, and
$L_{\Z}=\ker(\varepsilon:\Z^{V}\to\Z)$ is the image of
$H^{\mathrm{ab}}$ in $A^{\mathrm{ab}}$, where $\varepsilon$ is the
augmentation; every class in $L_\Z$ is represented by an element of $H$.

For $\Phi\in\mathcal J_r(A)$ with $r\geq1$, the map
$g\mapsto\Phi(g)g^{-1}\ (\operatorname{mod}\gamma_{r+2}A)$ is a
homomorphism into the abelian group
$\operatorname{gr}_{r+1}(A)=\gamma_{r+1}A/\gamma_{r+2}A$: indeed
\[
 \Phi(gh)(gh)^{-1}
 =\bigl(\Phi(g)g^{-1}\bigr)\cdot
  g\bigl(\Phi(h)h^{-1}\bigr)g^{-1},
\]
and conjugation acts trivially on $\gamma_{r+1}A/\gamma_{r+2}A$.  Since
the target is abelian, the map vanishes on $\gamma_2A$ and defines the
\emph{Johnson homomorphism}
\begin{equation}\label{eq:def-johnson-homomorphism}
 \tau_r(\Phi):A^{\mathrm{ab}}\longrightarrow
 \operatorname{gr}_{r+1}(A),
 \qquad
 \tau_r(\Phi)(e_v)=\Phi(v)v^{-1}\ \operatorname{mod}\ \gamma_{r+2}A.
\end{equation}
Because a homomorphism vanishing on all vertex classes vanishes,
\begin{equation}\label{eq:johnson-kernel-identity}
 \tau_r(\Phi)=0
 \quad\Longleftrightarrow\quad
 \Phi\in\mathcal J_{r+1}(A).
\end{equation}
By Duchamp--Krob, the lower-central quotients of $A$ are free abelian,
and $\operatorname{gr}_\gamma(A)\otimes\Q$ is the partially commutative
graph Lie algebra $\mathfrak L_\Gamma$ on the vertex generators,
embedded in the partially commutative tensor algebra spanned by the
trace monomials~\cite{DuchampKrob}; both are graded by vertex
multidegrees.  We first prove two centralizer statements in
$\mathfrak L_\Gamma$.

\begin{lemma}
\label{lem:generator-centralizer}
Let $g\in\mathfrak L_\Gamma$ be multihomogeneous of multidegree $\delta$
with total degree $|\delta|\geq2$, and let $x$ be a vertex with
$\delta_x\geq1$.  If $g\neq0$, then $[g,x]\neq0$.
\end{lemma}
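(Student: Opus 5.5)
The plan is to move the question into the partially commutative tensor (trace) algebra $\mathcal T_\Gamma$. There the bracket with a generator is an ordinary commutator, and I would combine a centralizer computation with a retraction back onto a subgraph Lie algebra. By the Duchamp--Krob embedding cited above, $\mathfrak L_\Gamma\hookrightarrow\mathcal T_\Gamma$, and $[g,x]$ maps to $gx-xg$. So it suffices to show the following: a multihomogeneous nonzero Lie element $g$ of total degree at least two with $\delta_x\geq1$ does not commute with $x$ in $\mathcal T_\Gamma$. I will argue by contradiction and assume $gx=xg$.

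The first step is to identify the centralizer of $x$ in $\mathcal T_\Gamma$. I claim it is the subalgebra $\mathcal T_{\st(x)}$ spanned by traces all of whose letters lie in $\st(x)$. Left and right multiplication by $x$ are injective maps on the basis of traces, since the trace monoid is cancellative. Hence $gx=xg$ says that the finitely supported coefficient function of $g$ is invariant under the partial bijection $t\mapsto x^{-1}tx$ on the multigraded piece of multidegree $\delta$. Consider any trace $t$ in the support containing a letter $y\notin\st(x)$. Following the orbit $t,\ x^{-1}(tx),\dots$ gives traces of fixed multidegree. There are only finitely many such traces, so the orbit is periodic, and the same holds in the inverse direction. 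I would then show that conjugating by $x$ genuinely rotates the relative order of $x$ and the first occurrence of $y$. Tracking the position of the last $x$ before $y$ leads to the contradiction that this orbit cannot close up. This is the monoid centralizer theorem of Duboc, which I would cite or reprove in this way. I expect this to be the main obstacle, since the orbit bookkeeping for traces must be done carefully.

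Granting that step, every trace in the support of $g$ uses only letters of $\st(x)$. Next, let $\pi$ be the retraction of $\mathfrak L_\Gamma$ (and of $\mathcal T_\Gamma$) that kills every generator outside $\st(x)$. It is compatible with the embedding and restricts to the identity on $\mathcal T_{\st(x)}$. Hence $g=\pi(g)$ lies in the subgraph Lie algebra $\mathfrak L_{\st(x)}$. Since $x$ is a universal vertex of $\st(x)$, this Lie algebra is the direct product $\Q x\times\mathfrak L_{\lk(x)}$. Its homogeneous components of total degree at least two therefore lie in $\mathfrak L_{\lk(x)}$, which has zero $x$-multidegree. This contradicts $\delta_x\geq1$ together with $g\neq0$, so $[g,x]\neq0$.
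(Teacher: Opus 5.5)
Your proposal is correct and follows the paper's proof essentially step for step: the cancellativity bijection $\sigma$ on the support with $tx=x\sigma(t)$ is used to force every monomial of $g$ into letters of $\st(x)$, then the retraction onto the trace algebra of $\Gamma[\st(x)]$ is applied, and finally the join splitting $\Q x\oplus\mathfrak L_{\Gamma[\lk(x)]}$ kills the multidegree-$\delta$ component. The orbit bookkeeping you flag as the main obstacle is handled in the paper without any periodicity: it deletes all letters except $x$ and a letter $c\notin\st(x)$, which lands in the free algebra on $x,c$, and compares the position of the first $c$ on the two sides of $tx^{k}=x^{k}\sigma^{k}(t)$ with $k=\delta_x+1$.
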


\begin{proof}
Suppose $[g,x]=0$, that is, $gx=xg$ in the tensor algebra.  Write
$g=\sum_tc_tt$ over the finitely many trace monomials $t$ of multidegree
$\delta$ with $c_t\neq0$.  The trace monoid is
cancellative~\cite{CartierFoata}, so $t\mapsto tx$ and $t\mapsto xt$ are
injective on monomials; comparing the two expansions of $gx=xg$
therefore produces a bijection $\sigma$ of $\operatorname{supp}(g)$ with
\[
 tx=x\,\sigma(t),
 \qquad
 c_{\sigma(t)}=c_t.
\]
Iterating gives $t\,x^{k}=x^{k}\,\sigma^{k}(t)$ for all $k\geq0$, and
each $\sigma^k(t)$ again has multidegree $\delta$.

Assume some $t\in\operatorname{supp}(g)$ contains a letter
$c\notin\st(x)$.  Deleting all letters other than $x$ and $c$ is an
algebra homomorphism onto the trace algebra of the induced subgraph on
$\{x,c\}$; since $c$ and $x$ do not commute, the target is the free
associative algebra on $x,c$.  Apply the deletion to
$t\,x^{k}=x^{k}\,\sigma^{k}(t)$ with $k=\delta_x+1$.  On the left, the
image of $t$ is a word containing $\delta_c\geq1$ letters $c$ among at
most $\delta_x$ letters $x$, so its first $c$ occurs within the first
$\delta_x+1$ letters.  On the right, the word begins with
$x^{\delta_x+1}$, so its first $c$ occurs strictly later.  Equal words
have equal first-$c$ positions, a contradiction.  Hence every monomial
of $g$ has all its letters in $\st(x)$.

Now consider instead the algebra homomorphism onto the trace algebra of
$\Gamma[\st(x)]$ that sends every generator outside $\st(x)$ to zero
and fixes the others; it is well defined because the defining
commutation relations map to relations or to $0=0$.  It fixes $g$,
whose monomials use only letters of $\st(x)$, and it maps Lie
generators to Lie generators or to zero, hence maps
$\mathfrak L_\Gamma$ onto $\mathfrak L_{\Gamma[\st(x)]}$; so
$g\in\mathfrak L_{\Gamma[\st(x)]}$.  The induced subgraph
$\Gamma[\st(x)]$ is the join of $\{x\}$ with $\Gamma[\lk(x)]$.  In the
graph Lie algebra of a join the two sides commute: a centralizer of a
subset is a subalgebra, and each generator on one side commutes with
each generator on the other.  Hence
\[
 \mathfrak L_{\Gamma[\st(x)]}
 =\Q x\oplus\mathfrak L_{\Gamma[\lk(x)]},
\]
the sum being direct by multidegree.  The multidegree-$\delta$ component
of the right-hand side vanishes: $\Q x$ is concentrated in multidegree
$e_x$ with $|e_x|=1<|\delta|$, while every multidegree occurring in
$\mathfrak L_{\Gamma[\lk(x)]}$ has $x$-entry zero.  Thus $g=0$, a
contradiction.
\end{proof}

\begin{lemma}
\label{lem:hyperplane-centralizer}
Suppose $|V(\Gamma)|\geq2$ and let
$L_\Q=\ker(\varepsilon:\Q^{V}\to\Q)\subseteq\mathfrak L_{\Gamma,1}$.
For every $q\geq2$,
\[
 \{z\in\mathfrak L_{\Gamma,q}:[z,L_\Q]=0\}=0.
\]
No connectivity of $\Gamma$ is required.
\end{lemma}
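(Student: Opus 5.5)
Let $z\in\mathfrak L_{\Gamma,q}$ with $q\geq2$ and $[z,L_\Q]=0$. I will prove $z=0$ by reducing to Lemma~\ref{lem:generator-centralizer}. The obstacle is that $L_\Q$ contains no single vertex generator, only differences $e_u-e_v$. So the first step is to decompose $z$ by vertex multidegree and then use multihomogeneity to separate the two terms of $[z,e_u-e_v]$.

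Write $z=\sum_\delta z_\delta$ as a sum of multihomogeneous components, where each $\delta$ has total degree $q$. Fix two distinct vertices $u,v$; such a pair exists because $|V(\Gamma)|\geq2$. The bracket $[z,e_u-e_v]$ decomposes as
\[
 \sum_\delta\bigl([z_\delta,e_u]-[z_\delta,e_v]\bigr),
\]
and $[z_\delta,e_u]$ has multidegree $\delta+\mathbf e_u$.

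Now compare multidegrees. If $\delta+\mathbf e_u=\delta'+\mathbf e_v$, then $\delta'=\delta+\mathbf e_u-\mathbf e_v$. Hence the component of multidegree $\mu$ of the vanishing bracket reads
\[
 [z_{\mu-\mathbf e_u},e_u]=[z_{\mu-\mathbf e_v},e_v].
\]
This alone does not force each term to vanish, so I argue extremally. Choose a vertex $x$ and a component $z_\delta\neq0$ with $\delta_x\geq1$, and among all nonzero components take $\delta$ maximizing $\delta_x$. By Lemma~\ref{lem:generator-centralizer}, $[z_\delta,e_x]\neq0$. Pick any $w\neq x$ and apply the identity above to $e_x-e_w\in L_\Q$ in multidegree $\mu=\delta+\mathbf e_x$. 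This gives
\[
 [z_\delta,e_x]=[z_{\delta+\mathbf e_x-\mathbf e_w},e_w].
\]
The component $z_{\delta+\mathbf e_x-\mathbf e_w}$ has $x$-entry $\delta_x+1$, so by maximality it is zero (it is also zero if $\delta_w=0$, since then the multidegree has a negative entry). Therefore $[z_\delta,e_x]=0$, contradicting Lemma~\ref{lem:generator-centralizer}.

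It remains to show such a $\delta$ and $x$ exist. If $z\neq0$, some component $z_\delta$ is nonzero, and since $|\delta|=q\geq2$, some vertex $x$ has $\delta_x\geq1$. Maximizing $\delta_x$ over the finitely many nonzero components at this fixed $x$ is then legitimate. So $z=0$.

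Connectivity is never used; the argument needs only a second vertex $w$, which is where the hypothesis $|V(\Gamma)|\geq2$ enters. The only delicate point is the bookkeeping of multidegrees in the graded pieces of $\mathfrak L_\Gamma$. Since $\mathfrak L_\Gamma$ is multigraded and brackets add multidegrees, this bookkeeping is routine.
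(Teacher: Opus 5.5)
Your proof is correct and follows the paper's argument. You decompose $z$ into multihomogeneous components, take a nonzero component $z_\delta$ whose $x$-entry is maximal, and read off the multidegree-$(\delta+\mathbf e_x)$ component of the vanishing bracket to contradict Lemma~\ref{lem:generator-centralizer}. The only cosmetic difference is that the paper brackets with one vector $\xi\in L_\Q$ whose coefficients are all nonzero, and breaks ties lexicographically (which the argument does not actually need), whereas you fix $x$ first and bracket with $e_x-e_w$.
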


\begin{proof}
Choose $\xi=\sum_va_ve_v\in L_\Q$ with every coefficient nonzero; for
$n=|V(\Gamma)|\geq2$ take $n-1$ coefficients equal to $1$ and the last
equal to $-(n-1)$.  Let $z\neq0$ satisfy $[z,L_\Q]=0$; in particular
$[z,\xi]=0$.  Decompose $z=\sum_\delta z_\delta$ into multihomogeneous
components, all of total degree $q$.  Fix a vertex $x$ occurring in some
nonzero component, and order multidegrees lexicographically comparing
the $x$-entry first, ties being broken by the remaining entries in a
fixed order.  Let $\delta$ be the largest multidegree with
$z_\delta\neq0$; maximality gives $\delta_x\geq1$.

Consider the multidegree-$(\delta+e_x)$ component of
$0=[z,\xi]=\sum_{\delta'}\sum_va_v[z_{\delta'},e_v]$.  A term
$[z_{\delta'},e_v]$ has multidegree $\delta'+e_v$.  If
$\delta'+e_v=\delta+e_x$ with $v\neq x$, then $\delta'_x=\delta_x+1$,
contradicting the maximality of $\delta$; if $v=x$, then
$\delta'=\delta$.  Hence the component equals $a_x[z_\delta,x]$, which
is nonzero by Lemma~\ref{lem:generator-centralizer} because
$z_\delta\neq0$, $|\delta|=q\geq2$, $\delta_x\geq1$, and $a_x\neq0$.
This contradicts $[z,\xi]=0$, so $z=0$.
\end{proof}

\begin{lemma}
\label{lem:johnson-edge-relation}
Let $\Phi\in\mathcal J_r(A_\Gamma)$ with $r\geq1$ and let
$uv\in E(\Gamma)$.  Then, in $\operatorname{gr}_{r+2}(A_\Gamma)$,
\[
 [\tau_r(\Phi)(e_u),e_v]+[e_u,\tau_r(\Phi)(e_v)]=0.
\]
\end{lemma}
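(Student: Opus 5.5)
The plan is to apply $\Phi$ to the defining relation $[u,v]=1$ and read off the leading term modulo $\gamma_{r+3}A_\Gamma$. Write
\[
 \Phi(u)=u\,a,\qquad \Phi(v)=v\,b,\qquad a=\Phi(u)u^{-1}...
\]
More precisely, I set $a=u^{-1}\Phi(u)$ and $b=v^{-1}\Phi(v)$; any fixed convention works. Since $\Phi\in\mathcal J_r(A_\Gamma)$, both $a$ and $b$ lie in $\gamma_{r+1}A_\Gamma$. Their classes in $\operatorname{gr}_{r+1}$ are $\tau_r(\Phi)(e_u)$ and $\tau_r(\Phi)(e_v)$. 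Conjugation acts trivially on $\operatorname{gr}_{r+1}$, so the side on which the correction term is written does not matter.

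Because $\Phi$ is a homomorphism, $[\Phi(u),\Phi(v)]=\Phi([u,v])=1$. I will expand $[ua,vb]$ with the standard commutator identities $[xy,z]=x[y,z]x^{-1}[x,z]$ and $[x,yz]=[x,y]\,y[x,z]y^{-1}$. This gives a product of four pieces: $[u,v]$, conjugates of $[a,v]$ and $[u,b]$, and a conjugate of $[a,b]$.

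Now I track filtration degrees. The factor $[u,v]$ is trivial. The factor $[a,b]$ lies in $\gamma_{2r+2}\subseteq\gamma_{r+3}$, because $r\geq1$. The factors $[a,v]$ and $[u,b]$ lie in $\gamma_{r+2}$. Conjugation does not change their classes modulo $\gamma_{r+3}$, and the quotient $\gamma_{r+2}/\gamma_{r+3}$ is abelian, so the order of the factors does not matter there. Passing to $\operatorname{gr}_{r+2}(A_\Gamma)$ and using the bilinear bracket $\operatorname{gr}_{r+1}\times\operatorname{gr}_1\to\operatorname{gr}_{r+2}$ induced by group commutators, I get
\[
 0=[\tau_r(\Phi)(e_u),e_v]+[e_u,\tau_r(\Phi)(e_v)].
\]

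The only delicate point is sign and ordering conventions: whether $\tau_r(\Phi)(e_v)$ is defined as $\Phi(v)v^{-1}$ or $v^{-1}\Phi(v)$, and how $[x,y]=xyx^{-1}y^{-1}$ induces the Lie bracket. Both choices change every term by the same overall sign or by antisymmetry applied consistently. Since the claimed identity is homogeneous in $\tau_r(\Phi)$, it holds in either case. I will take the definition \eqref{eq:def-johnson-homomorphism} literally, with $\Phi(u)=c_u u$ where $c_u=\Phi(u)u^{-1}$, expand $[c_u u,c_v v]$ in the same way, and check that the two surviving terms appear with the same sign.
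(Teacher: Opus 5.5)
Your argument is correct and is essentially the paper's proof: apply $\Phi$ to $[u,v]=1$, expand $[\Phi(u),\Phi(v)]$ with the standard commutator identities, discard $[a,b]\in\gamma_{2r+2}A_\Gamma\le\gamma_{r+3}A_\Gamma$ and the conjugations, and read the surviving congruence $[a,v][u,b]\equiv1 \pmod{\gamma_{r+3}A_\Gamma}$ additively in $\operatorname{gr}_{r+2}(A_\Gamma)$. Your worry about conventions does not arise: $u^{-1}\Phi(u)$ is a conjugate of $\Phi(u)u^{-1}$, so the two elements define the same class in $\operatorname{gr}_{r+1}(A_\Gamma)$ and no sign change occurs.
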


\begin{proof}
Put $a=\Phi(u)u^{-1}$ and $b=\Phi(v)v^{-1}$, both in $\gamma_{r+1}A$.
Applying $\Phi$ to the edge relation $[u,v]=1$ gives $[au,bv]=1$.
Collect commutators modulo $\gamma_{r+3}A$: the term
$[a,b]$ lies in
$[\gamma_{r+1}A,\gamma_{r+1}A]\leq\gamma_{2r+2}A\leq\gamma_{r+3}A$, and
conjugating an element of $\gamma_{r+2}A$ changes it only by an element
of $[\gamma_{r+2}A,A]\leq\gamma_{r+3}A$.  The standard commutator
expansions of $[au,bv]$ therefore leave
\[
 1\equiv[a,v]\,[u,b]\pmod{\gamma_{r+3}A},
\]
the factor $[u,v]=1$ having been deleted.  Reading this additively in
$\operatorname{gr}_{r+2}(A_\Gamma)$ gives the assertion.
\end{proof}

\begin{proposition}
\label{prop:no-invisible-johnson}
Let $\Gamma$ be connected with at least two vertices, let $r\geq1$, and
let $\Phi\in\mathcal J_r(A_\Gamma)$.  If $\tau_r(\Phi)$ vanishes on
$L_{\Z}$, then $\tau_r(\Phi)=0$; equivalently,
$\Phi\in\mathcal J_{r+1}(A_\Gamma)$.
\end{proposition}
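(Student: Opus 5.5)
Since $\tau_r(\Phi)$ is a homomorphism $A^{\mathrm{ab}}=\Z^V\to\operatorname{gr}_{r+1}(A)$ and $L_\Z$ has corank one, vanishing on $L_\Z$ forces $\tau_r(\Phi)(e_u)=\tau_r(\Phi)(e_v)$ for all vertices $u,v$. Write $z\in\operatorname{gr}_{r+1}(A)$ for this common value. We must show $z=0$, and then \eqref{eq:johnson-kernel-identity} gives $\Phi\in\mathcal J_{r+1}(A_\Gamma)$. Because the lower-central quotients of $A$ are free abelian (Duchamp--Krob), it suffices to show that the image of $z$ in $\mathfrak L_{\Gamma,r+1}$ vanishes.

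The key input is Lemma~\ref{lem:johnson-edge-relation}. For every edge $uv\in E(\Gamma)$ it gives
\[
 0=[\tau_r(\Phi)(e_u),e_v]+[e_u,\tau_r(\Phi)(e_v)]
 =[z,e_v]+[e_u,z]=[z,e_v-e_u]
\]
in $\mathfrak L_{\Gamma,r+2}$. The differences $e_v-e_u$ over edges span $L_\Q$, because $\Gamma$ is connected and the edge differences generate $L_\Z$ by Proposition~\ref{prop:standard-LW-lattices}. Bilinearity then gives $[z,L_\Q]=0$.

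Finally, $z$ has degree $q=r+1\geq2$, and $\Gamma$ has at least two vertices, so Lemma~\ref{lem:hyperplane-centralizer} yields $z=0$ in $\mathfrak L_{\Gamma,r+1}$. By torsion-freeness, $z=0$ in $\operatorname{gr}_{r+1}(A)$. Hence $\tau_r(\Phi)$ vanishes on every $e_v$, which completes the argument.

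The only point needing care is the sign bookkeeping in the edge relation, specifically that it produces exactly the bracket with the edge difference $e_v-e_u$ rather than with a sum. The centralizer lemma has already been proved, so the rest is formal.
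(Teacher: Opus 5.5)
Your proposal is correct and follows the paper's proof essentially step for step. Both arguments take the common value $z$ of $\tau_r(\Phi)$ on the vertex classes, use Lemma~\ref{lem:johnson-edge-relation} to get $[z,e_v-e_u]=0$ for every edge, use connectedness to span $L_\Q$, apply Lemma~\ref{lem:hyperplane-centralizer} in degree $r+1\geq2$, and conclude by torsion-freeness of $\operatorname{gr}_{r+1}(A_\Gamma)$.
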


\begin{proof}
Since $e_u-e_v\in L_\Z$ for all vertices $u,v$, the hypothesis gives a
single class $z\in\operatorname{gr}_{r+1}(A)$ with
$\tau_r(\Phi)(e_v)=z$ for every $v$, that is,
$\tau_r(\Phi)=\varepsilon(\cdot)\,z$.  For every edge $uv$,
Lemma~\ref{lem:johnson-edge-relation} yields
\[
 0=[z,e_v]+[e_u,z]=[z,e_v-e_u]
\]
in $\operatorname{gr}_{r+2}(A)$.  Because $\Gamma$ is connected, the
edge differences $e_v-e_u$ span $L_\Q$; hence $[z\otimes1,L_\Q]=0$ in
$\mathfrak L_\Gamma$, with $z\otimes1$ homogeneous of degree
$r+1\geq2$.  Lemma~\ref{lem:hyperplane-centralizer} gives
$z\otimes1=0$, and since $\operatorname{gr}_{r+1}(A)$ is free
abelian~\cite{DuchampKrob}, $z=0$.  Now
\eqref{eq:johnson-kernel-identity} applies.
\end{proof}

\begin{theorem}
\label{thm:filtered-IA-rigidity}
Let $\Gamma$ be a finite connected graph with at least two vertices.
\begin{enumerate}[label=\textup{(\roman*)}]
\item For every $k\geq1$ and every $\Phi\in\IAut(A_\Gamma)$,
\[
 \Phi\in\mathcal J_k(A_\Gamma)
 \quad\Longleftrightarrow\quad
 \Phi|_{H_\Gamma}\in\mathcal J_k(H_\Gamma).
\]
\item If $\Gamma$ is biconnected with at least three vertices, then
restriction induces isomorphisms
\[
 \mathcal J_k(A_\Gamma)\xrightarrow{\ \cong\ }\mathcal J_k(H_\Gamma)
 \quad(k\geq1),
\]
and hence isomorphisms of all successive quotients
$\mathcal J_k/\mathcal J_{k+1}$.
\end{enumerate}
\end{theorem}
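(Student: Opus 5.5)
Part (i) will be proved by induction on $k$, the two base directions being of different nature. For $k=1$ both sides say $\Phi$ is IA on the corresponding group: if $\Phi\in\IAut(A_\Gamma)$ then $\Phi|_{H_\Gamma}$ is IA by \eqref{eq:torelli-height-kernel-abelianization}, so the case $k=1$ holds trivially under the standing hypothesis. For the inductive step I will use the equality $\gamma_q(A_\Gamma)=\gamma_q(H_\Gamma)$ for $q\ge2$ from Proposition~\ref{prop:lower-central-comparison}. The forward direction is immediate: if $\Phi(g)g^{-1}\in\gamma_{k+1}A_\Gamma$ for all $g\in A_\Gamma$, then in particular for $g\in H_\Gamma$, and $\gamma_{k+1}A_\Gamma=\gamma_{k+1}H_\Gamma$, so $\Phi|_{H}\in\mathcal J_k(H_\Gamma)$.

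For the converse, suppose $\Phi\in\mathcal J_r(A_\Gamma)$ for some $r\ge1$ and $\Phi|_H\in\mathcal J_{r+1}(H_\Gamma)$; I want $\Phi\in\mathcal J_{r+1}(A_\Gamma)$, after which induction from $r=1$ gives (i). The hypothesis says $\Phi(h)h^{-1}\in\gamma_{r+2}H_\Gamma=\gamma_{r+2}A_\Gamma$ for all $h\in H_\Gamma$, i.e.\ the Johnson homomorphism $\tau_r(\Phi)$ of \eqref{eq:def-johnson-homomorphism} vanishes on the classes of elements of $H_\Gamma$. Since every class of $L_\Z$ is represented by an element of $H_\Gamma$ and $\tau_r(\Phi)$ is computed on abelianization classes, $\tau_r(\Phi)$ vanishes on $L_\Z$. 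Proposition~\ref{prop:no-invisible-johnson}, which needs only connectedness and two vertices, then gives $\tau_r(\Phi)=0$, hence $\Phi\in\mathcal J_{r+1}(A_\Gamma)$ by \eqref{eq:johnson-kernel-identity}. The one point requiring care is that $\tau_r(\Phi)(e_u-e_v)$ really equals $\Phi(uv^{-1})(uv^{-1})^{-1}$ modulo $\gamma_{r+2}A$; this is the homomorphism property displayed before \eqref{eq:def-johnson-homomorphism}.

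For (ii), restriction $\IAut(A_\Gamma)\to\IAut(H_\Gamma)$ is an isomorphism by Theorem~\ref{thm:torelli-restriction-isomorphism}. Since $\mathcal J_k(H_\Gamma)\le\IAut(H_\Gamma)$, every element of $\mathcal J_k(H_\Gamma)$ is $\Phi|_H$ for a unique $\Phi\in\IAut(A_\Gamma)$, and by (i) that $\Phi$ lies in $\mathcal J_k(A_\Gamma)$; conversely (i) sends $\mathcal J_k(A_\Gamma)$ into $\mathcal J_k(H_\Gamma)$. So restriction maps each $\mathcal J_k(A_\Gamma)$ bijectively onto $\mathcal J_k(H_\Gamma)$, compatibly with inclusions, and the induced maps on $\mathcal J_k/\mathcal J_{k+1}$ are isomorphisms. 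The only substantive step is the no-invisible-Johnson input already proved, so I expect no real obstacle beyond checking the identification of $\tau_r$ on edge differences.
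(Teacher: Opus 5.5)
Your proposal is correct and follows the paper's proof essentially step for step. The forward direction uses $\gamma_{k+1}A_\Gamma=\gamma_{k+1}H_\Gamma$. The converse climbs the filtration one level at a time: $\tau_j(\Phi)$ vanishes on $L_\Z$, and Proposition~\ref{prop:no-invisible-johnson} then promotes $\Phi$ to $\mathcal J_{j+1}(A_\Gamma)$. Part~(ii) follows from Theorem~\ref{thm:torelli-restriction-isomorphism} combined with~(i). Your stepwise hypothesis $\Phi|_{H_\Gamma}\in\mathcal J_{r+1}(H_\Gamma)$ is available for every $r<k$ because the filtration is decreasing, so $\mathcal J_k(H_\Gamma)\subseteq\mathcal J_{r+1}(H_\Gamma)$; this is exactly how the paper's induction runs.
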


\begin{proof}
Every $\Phi\in\IAut(A)$ fixes the height character, which factors
through the abelianization; hence $\Phi(H)=H$ and the restriction is
defined.

(i) Assume $\Phi\in\mathcal J_k(A)$.  For $h\in H$,
\[
 \Phi(h)h^{-1}\in\gamma_{k+1}A=\gamma_{k+1}H
\]
by Proposition~\ref{prop:lower-central-comparison}, using $k+1\geq2$;
hence $\Phi|_H\in\mathcal J_k(H)$.

Conversely, assume $\Phi|_H\in\mathcal J_k(H)$.  We show by induction
that $\Phi\in\mathcal J_j(A)$ for $1\leq j\leq k$; the case $j=1$ is
the hypothesis $\Phi\in\IAut(A)$.  Let $1\leq j<k$ and
$\Phi\in\mathcal J_j(A)$.  Every class in $L_\Z$ is represented by some
$h\in H$, and
\[
 \Phi(h)h^{-1}\in\gamma_{k+1}H
 \subseteq\gamma_{j+2}H=\gamma_{j+2}A,
\]
again by Proposition~\ref{prop:lower-central-comparison}.  Hence
$\tau_j(\Phi)$ vanishes on $L_\Z$, and
Proposition~\ref{prop:no-invisible-johnson} promotes $\Phi$ to
$\mathcal J_{j+1}(A)$.  The induction terminates at $j=k$.

(ii) By Theorem~\ref{thm:torelli-restriction-isomorphism}, restriction
is an isomorphism $\IAut(A)\to\IAut(H)$.  If
$\alpha\in\mathcal J_k(H)\subseteq\IAut(H)$ and $\Phi\in\IAut(A)$ is its
unique ambient IA extension, then $\Phi\in\mathcal J_k(A)$ by~(i).
Together with the forward direction of~(i), restriction therefore maps
$\mathcal J_k(A)$ onto $\mathcal J_k(H)$, injectively because it is
injective on all of $\IAut(A)$
(Theorem~\ref{thm:torelli-injectivity}).
\end{proof}

\subsection{Cycle groups}\label{sec:cycles}

For cycle graphs with at least five vertices, the Bestvina--Brady group
is not finitely presented, whereas its outer automorphism group is virtually
infinite cyclic.  The proof computes the outer IA kernel and the
cohomological quotient separately.

\begin{theorem}\label{thm:cycle-virtually-cyclic}
For every $n\geq5$,
\[
 H_{C_n}\ \text{is not finitely presented},
\qquad
 \Out(H_{C_n})\ \text{is virtually infinite cyclic}.
\]
\end{theorem}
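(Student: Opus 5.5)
The proof has three parts: non-finite presentability, the outer IA kernel, and the cohomological quotient. Combining the last two in the outer exact sequence of Theorem~\ref{thm:biconnected-structure} gives a virtually infinite cyclic group.

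\emph{Non-finite presentability.} For $n\geq5$ the cycle $C_n$ has no triangles, so its flag complex is the circle. The Bestvina--Brady criterion recalled in the introduction says that $H_\Gamma$ is finitely presented exactly when the flag complex is simply connected. Since the circle is not simply connected, $H_{C_n}$ is not finitely presented.

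\emph{The IA part.} The graph $C_n$ is biconnected, and for $n\geq5$ it has no universal vertex. Corollary~\ref{cor:torelli-universal-vertex}(ii) therefore gives a central extension of $\IOut(A_{C_n})$ by $\Z$. I will show $\IOut(A_{C_n})=1$ using the normalized Day--Wade generating set $\mathcal M_{C_n}$ of Corollary~\ref{cor:transferred-Torelli-generators}.

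\begin{enumerate}[label=\textup{(\roman*)}]
\item \emph{Domination.} For $n\geq5$ no vertex dominates another. If $\lk(v_i)\subseteq\st(v_j)$ with $j\neq i$, then both cycle-neighbours of $v_i$ lie in $\st(v_j)$. That forces either $v_j$ to be a common neighbour of them (so $v_j=v_i$, a contradiction), or one of them to equal $v_j$ with the other adjacent to it. In a cycle of length at least five neither case can occur. Hence $\mathcal K_{C_n}=\varnothing$.
\item \emph{Partial conjugations.} The graph $C_n-\st(v_i)$ is a path on $n-3$ vertices, which is connected. So each $v_i$ contributes exactly one partial conjugation, and it equals the inner automorphism by $v_i$ up to the identity on $\st(v_i)$.
\item Consequently $\IAut(A_{C_n})$ is generated by inner automorphisms, so $\IOut(A_{C_n})=1$ and $\IOut(H_{C_n})\cong\Z$. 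This agrees with \eqref{eq:intro-cycle-result}.
\end{enumerate}

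\emph{The cohomological quotient.} I will show $\Lambda_{C_n}$ is finite by proving $\CAlg$ is commutative. Then $E_{C_n}$, which is generated by unipotent elements of $\mathcal O^\times$, is trivial, because there are no nonzero square-zero elements in the split commutative semisimple algebra obtained below. Theorem~\ref{thm:cohomological-image-arithmeticity} then gives $|\Lambda_{C_n}|=[\Lambda:E]<\infty$.

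\begin{enumerate}[label=\textup{(\roman*)}]
\item \emph{The partitions.} I will check that $\mathcal P_a^{(3)}=\{\{a\},V\setminus\{a\}\}$ for every $a$, as in the five-cycle example. The neighbourhood hyperedges $\lk(v)=\{v-1,v+1\}$, taken over $v\notin\st(a)$, together with the remote edges $\{u,u+1\}$ avoiding $\st(a)$, connect $V\setminus\{a\}$. The two neighbours $a\pm1$ are attached through $\lk(a\pm2)$, which is a hyperedge because $a\pm2$ is not adjacent to $a$ when $n\geq5$.
\item \emph{Commutativity.} Each $T_{C,a}$ is then the single matrix $\mathbf1_{V\setminus a}e_a^{\mathsf T}-P_{V\setminus a}$. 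Its class on $W$ equals $-\mathbf1 e_a^{\mathsf T}+(\text{stuff})$, and modulo $\Q\mathbf1$ it acts as $-(I-E_{aa})$, that is, as $E_{aa}$ up to scalars. Alternatively, via \eqref{eq:basic-component-commutator}: with $C=V\setminus i$, $D=V\setminus j$ and $\alpha=\beta=1$, the center $w=\mathbf1_{C\cup D}=\mathbf1$ vanishes in $W$. So all basic roots are zero and $[\CAlg,\CAlg]=0$.
\item \emph{Semisimplicity.} $\CAlg$ is a commutative algebra spanned by idempotents, hence isomorphic to $\Q^{n-1}$. It is split semisimple, so $\Jac=0$ and every $m_r=1$. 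Its unit group over the order is finite, consisting of diagonal $\pm1$ entries. Thus $\Lambda$ is finite.
\end{enumerate}

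The main obstacle is the careful verification of the component partition in (i) of the last part, in particular the attachment of $a\pm1$ and whether minimal separators change anything. Minimal separators only add more gluing, so they cannot make the partition finer and can safely be ignored.

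\emph{Conclusion.} The sequence \eqref{eq:biconnected-Out-exact-sequence} exhibits $\Out(H_{C_n})$ as an extension of the finite group $\Lambda_{C_n}$ by $\IOut(H_{C_n})\cong\Z$. Hence $\Out(H_{C_n})$ is virtually infinite cyclic.
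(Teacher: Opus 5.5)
Your first two parts (the Bestvina--Brady criterion, and $\IOut(A_{C_n})=1$ from the absence of domination together with connectivity of $C_n-\st(v)$, fed into the central outer IA sequence) match the paper's argument. For the cohomological quotient your route is genuinely different.

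The paper does not use the cubic algebra here. It uses only the BNS separator arrangement: the inclusion-minimal separators of $C_n$ are the nonadjacent pairs. It then shows directly that the stabilizer in $\GL(L_{C_n})$ of the finite family of lines $\Q(e_i-e_j)$, $ij\notin E(C_n)$, is finite. The reason is that a spanning tree of the connected complement $\overline{C_n}$ gives a basis of primitive vectors, and the kernel of the permutation action can only send each of these to $\pm$ itself.

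You instead compute $\mathscr C_{C_n}$, deduce that it has no nonzero nilpotents, conclude $E_{C_n}=1$, and invoke Theorem~\ref{thm:cohomological-image-arithmeticity}. This is valid and non-circular, since Part~1 does not depend on the cycle computation. It has the merit of exhibiting the cycle family as a degenerate case of the general theory: trivial all-root group and $\vcd(\Lambda_{C_n})=0$. The paper's argument is much more economical. It needs neither the cubic invariants nor the root-realization and arithmeticity theorems, only the intrinsic separator family and an elementary lattice observation. Indeed, the proof of the theorem you cite already passes through the finite-index separator-fixing subgroup, so your route contains the paper's input plus considerably more.

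There is one computational slip. Since $\mathbf 1_{V\setminus\{a\}}=\mathbf 1-e_a$ and $P_{V\setminus\{a\}}=I-E_{aa}$, one gets $T_{V\setminus\{a\},a}=\mathbf 1e_a^{\mathsf T}-I$. This acts on $W=U/\Q\mathbf 1$ as $-I$, not as $-(I-E_{aa})$; the latter does not even preserve $\Q\mathbf 1$, so it does not descend to $W$. Hence $\mathscr C_{C_n}=\Q I$ and $\mathcal O_{C_n}^\times=\{\pm I\}$, rather than an algebra isomorphic to $\Q^{n-1}$ with diagonal $\pm1$ units.

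This does not damage your conclusion. All you actually use is that a commutative algebra spanned by commuting idempotents has no nonzero nilpotent elements. Your basic-root computation, $w=\mathbf 1_{C\cup D}=\mathbf 1\equiv 0$ in $W$, correctly supplies that commutativity. The cleaner statement, though, is simply that $\mathscr C_{C_n}$ consists of scalars.
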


\paragraph{Failure of finite presentation.}

Write $\Delta_\Gamma$ for the flag complex of a graph
$\Gamma$.  The Bestvina--Brady finite-presentation criterion~\cite{BestvinaBrady} says that, for connected $\Gamma$,
\begin{equation}\label{eq:bb-finite-presentation-criterion}
 H_\Gamma\ \text{is finitely presented}
 \quad\Longleftrightarrow\quad
 \Delta_\Gamma\ \text{is simply connected}.
\end{equation}  When $\Gamma=C_n$ with $n\geq5$, the flag complex
is the one-dimensional cycle itself.  Hence
\begin{equation}\label{eq:cycle-H-not-fp}
 H_{C_n}\ \text{is not finitely presented}.
\end{equation}

\paragraph{The outer IA group.}

Day's generating theorem~\cite[Theorem~B]{Day} generates
$\IAut(A_{C_n})$ by partial
conjugations and admissible commutator transvections.

For every vertex $v$, the graph $C_n-\st(v)$ is a path on $n-3$ vertices and is therefore connected.  Thus there is
only one nontrivial component available for a partial conjugation with
multiplier $v$.  That automorphism conjugates every vertex outside
$\st(v)$ by $v$ and fixes $\st(v)$.  Since $v$ commutes with all vertices
in its star, this partial conjugation is conjugation by $v$ or by
$v^{-1}$, according to the chosen convention.

There is also no domination between distinct cycle vertices.  Indeed,
if $x\neq y$, the two neighbors of $x$ are not both contained in
$\st(y)$ when $n\geq5$.  If $x$ and $y$ are adjacent, the neighbor of
$x$ on the other side is omitted from $\st(y)$; if they are nonadjacent,
the only cycle in which $y$ can be adjacent to both neighbors of $x$ is
$C_4$.  Hence $\lk(x)\not\subseteq\st(y)$ whenever $x\neq y$ and $n\geq5$.
An admissible commutator transvection requires the moved vertex to be
dominated by its multipliers, so none exists.  Every IA automorphism of $A_{C_n}$ is therefore inner:
\begin{equation}\label{eq:cycle-RAAG-IOut-trivial}
 \IOut(A_{C_n})=1.
\end{equation}

The graph $C_n$ is biconnected and has no universal vertex.  The outer IA sequence~\eqref{eq:torelli-outer-exact-sequence}, together with
\eqref{eq:cycle-RAAG-IOut-trivial}, becomes
\[
 1\longrightarrow A_{C_n}/H_{C_n}
 \longrightarrow\IOut(H_{C_n})
 \longrightarrow1.
\]
Since $A_{C_n}/H_{C_n}\cong\Z$ via the height character,
\begin{equation}\label{eq:cycle-BB-IOut-Z}
 \IOut(H_{C_n})\cong\Z.
\end{equation}
The generator is the outer class of deck conjugation by any height-one
vertex.  The injection in the outer IA sequence proves directly that
this class has infinite order; no splitting assertion is involved.

\paragraph{The cohomological image.}

It remains to show that the cohomological quotient is finite.
Recall that
$L_{C_n}=\ker\!\left(\Z^n\xrightarrow{\sum}\Z\right)$.

\begin{lemma}
\label{lem:cycle-minimal-separators}
For $n\geq5$, the inclusion-minimal vertex separators of $C_n$ are
exactly the nonadjacent pairs.
\end{lemma}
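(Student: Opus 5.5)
The claim is a purely graph-theoretic statement about $C_n$, $n\geq5$, with vertices $0,1,\ldots,n-1$ in cyclic order. I would argue directly from the definition, in two steps: first, every nonadjacent pair is a separator and is minimal; second, every separator contains a nonadjacent pair. Minimality then forces every minimal separator to be such a pair.

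\emph{Nonadjacent pairs are minimal separators.} Let $\{i,j\}$ be distinct and nonadjacent. Deleting $i$ and $j$ leaves the two open arcs of the cycle strictly between them. Each arc is nonempty because $i$ and $j$ are not adjacent. There is no edge between the two arcs, since the only edges of $C_n$ join cyclically consecutive vertices. Hence $C_n-\{i,j\}$ is disconnected. For minimality, the proper subsets of $\{i,j\}$ are $\varnothing$ and the singletons. The cycle itself is connected, and deleting one vertex from a cycle leaves a path, which is connected. So no proper subset separates.

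\emph{Every separator contains a nonadjacent pair.} Let $S$ be any separator. By the previous step, $S$ has at least two vertices. Suppose every two vertices of $S$ were adjacent. Since $C_n$ is triangle-free for $n\geq4$, $S$ would then be a single edge $\{i,i+1\}$. But $C_n-\{i,i+1\}$ is a path on $n-2\geq3$ vertices, hence connected, which is a contradiction. So $S$ contains a nonadjacent pair $\{p,q\}$, and $\{p,q\}$ is itself a separator. If $S$ is inclusion-minimal, this forces $S=\{p,q\}$.

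The two steps together identify the inclusion-minimal separators with the nonadjacent pairs. There is no real obstacle here. The only point needing care is using $n\geq5$ (in fact $n\geq4$) to exclude triangles, so that a separator whose vertices are pairwise adjacent would have to be a single edge.
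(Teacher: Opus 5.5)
Your proof is correct and follows essentially the same route as the paper: singletons and adjacent pairs leave paths, nonadjacent pairs leave two nonempty arcs, and triangle-freeness forces any larger separator to contain a nonadjacent pair, so it cannot be minimal. The only difference is phrasing: the paper says ``every set of at least three vertices contains a nonadjacent pair,'' while you say ``every separator contains a nonadjacent pair.''
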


\begin{proof}
Deleting one vertex leaves a path, so no singleton separates.  Deleting
two adjacent vertices also leaves a path, whereas deleting two
nonadjacent vertices leaves two nonempty path components.  Finally, every
set of at least three cycle vertices contains a nonadjacent pair and
therefore cannot be inclusion-minimal.
\end{proof}

The BNS reconstruction theorem in
Section~\ref{sec:intrinsic-invariants} Therefore makes the finite family
\begin{equation}\label{eq:cycle-separator-lines}
 \mathcal D_n
 =
 \left\{
  \operatorname{span}_{\Q}\{e_i-e_j\}:
  \{i,j\}\notin E(C_n)
 \right\}
 \subseteq L_{C_n,\Q}
\end{equation}
intrinsic to $H_{C_n}$.  The contragredient homology action of every element of
$\Lambda_{C_n}$ permutes these lines.

\begin{lemma}
\label{lem:cycle-finite-arrangement-stabilizer}
The stabilizer of $\mathcal D_n$ in $\GL(L_{C_n})$ is finite.
\end{lemma}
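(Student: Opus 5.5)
The plan is to pass to a finite-index subgroup that fixes every line of $\mathcal D_n$, and then show that such a subgroup is finite. The family $\mathcal D_n$ is finite, so the stabilizer $\operatorname{Stab}(\mathcal D_n)\le\GL(L_{C_n})$ maps to $\operatorname{Sym}(\mathcal D_n)$. Its kernel $K$ has finite index and consists of the $g$ that preserve each line $\Q(e_i-e_j)$ with $\{i,j\}$ a nonedge. For such $g$ we have $g(e_i-e_j)=\lambda_{ij}(e_i-e_j)$. Since $g$ preserves the lattice $L_{C_n}$ and the vectors $e_i-e_j$ are primitive, $\lambda_{ij}\in\{\pm1\}$. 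It therefore suffices to show that the vectors $e_i-e_j$, for nonadjacent $i,j$, span $L_{C_n,\Q}$, and that the eigenvalues are forced to agree. Then $K\le\{\pm I\}$, and the stabilizer is finite.

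The key step is to show that the simultaneous eigenvector condition forces a single scalar. Consider the nonedge graph $\overline{C_n}$, the complement of the cycle, and index vertices cyclically by $\Z/n$. For $n\ge5$ I will use triangles in $\overline{C_n}$, that is, triples of pairwise nonadjacent cycle vertices. One example is $\{0,2,4\}$ when $n\ge6$; for $n=5$ a separate argument is needed. The relation $(e_i-e_j)+(e_j-e_k)=e_i-e_k$ holds among three eigenvectors. Applying $g$ and using linear independence of $e_i-e_j$ and $e_j-e_k$ gives $\lambda_{ij}=\lambda_{jk}=\lambda_{ik}$. More generally, any linear dependence among eigenvectors whose support is a minimal circuit in the graphic matroid of $\overline{C_n}$ forces all the eigenvalues on that circuit to be equal. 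So each cycle of $\overline{C_n}$ makes $\lambda$ constant on its edges. The remaining task is to show that every pair of edges of $\overline{C_n}$ lies on a common cycle, that is, that $\overline{C_n}$ is $2$-connected. It must also be connected and spanning for the span claim.

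For $n=5$, the complement $\overline{C_5}$ is again a $5$-cycle. This is connected and $2$-connected, so the single $5$-cycle relation $\sum\pm(e_i-e_j)=0$ already forces a common $\lambda$. A connected spanning graph on $n$ vertices gives edge differences that span the augmentation hyperplane, so the span claim holds in this case. For $n\ge6$, each vertex has degree $n-3\ge3$ in $\overline{C_n}$. Removing any single vertex $v$ leaves the complement of a path on $n-1\ge5$ vertices, which is connected. So $\overline{C_n}$ is $2$-connected, and by Whitney any two edges lie on a common simple cycle. Hence $\lambda$ is a constant $\lambda=\pm1$, and $g=\lambda I$ on the spanning set, so $K\subseteq\{\pm I\}$.

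The main obstacle is this $2$-connectivity and spanning check for $\overline{C_n}$, in particular the small case $n=5$. Everything else is the standard finite-permutation and lattice argument. Combined with the preceding paragraph, the lemma yields $|\Lambda_{C_n}|<\infty$.
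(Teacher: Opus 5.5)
Your proof is correct and follows the paper's route. You pass to the finite-index kernel of the permutation action on $\mathcal D_n$, use primitivity of the vectors $e_i-e_j$ to force eigenvalues $\pm1$, and use connectivity of $\overline{C_n}$ to get a spanning set of nonedge vectors. The only difference is that the paper stops there: a spanning tree of $\overline{C_n}$ gives a basis, so each kernel element is one of at most $2^{n-1}$ sign patterns. Your $2$-connectivity argument is valid but not needed for finiteness; it only sharpens the conclusion to $K=\{\pm I\}$.
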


\begin{proof}
The complement graph $\overline{C_n}$ is connected for every $n\geq5$;
for $n=5$ it is another five-cycle.  Choose a spanning tree
$T\subseteq\overline{C_n}$.  Its $n-1$ oriented edge vectors
\begin{equation}\label{eq:cycle-complement-tree-basis}
 e_i-e_j\quad(ij\in E(T))
\end{equation}
form a $\Z$-basis of $L_{C_n}$.  One may see this by deleting a leaf of $T$:
the edge incident to the leaf is the only chosen vector with a nonzero
coordinate there, and induction gives the standard incidence basis of
the root lattice of type $A_{n-1}$.

The stabilizer acts on the finite set $\mathcal D_n$.  The kernel of
this permutation action fixes every rational line.  Each vector $e_i-e_j$
in~\eqref{eq:cycle-separator-lines} is primitive in $L_{C_n}$, so an
integral automorphism and its inverse can fix its line only by sending
$e_i-e_j\longmapsto\pm(e_i-e_j)$.
On the tree basis~\eqref{eq:cycle-complement-tree-basis}, an element of
the kernel is therefore determined by at most $2^{n-1}$ sign choices.
Thus both the kernel and the full stabilizer are finite.
\end{proof}

The homology representation is the contragredient of the cohomology
representation fixed in~\eqref{eq:cohomology-representation-convention}.
It embeds $\Lambda_{C_n}$ into the finite group of
Lemma~\ref{lem:cycle-finite-arrangement-stabilizer}.  Therefore
\begin{equation}\label{eq:cycle-cohomological-image-finite}
 |\Lambda_{C_n}|<\infty.
\end{equation}

\paragraph{Conclusion and the four-cycle.}

Because inner automorphisms act trivially on abelianization, and hence on first cohomology, there is an
exact sequence
\[
 1\longrightarrow\IOut(H_{C_n})
 \longrightarrow\Out(H_{C_n})
 \longrightarrow\Lambda_{C_n}
 \longrightarrow1.
\]
Equations~\eqref{eq:cycle-BB-IOut-Z} and
\eqref{eq:cycle-cohomological-image-finite} show that
$\Out(H_{C_n})$ contains an infinite cyclic subgroup of finite index.
Together with~\eqref{eq:cycle-H-not-fp}, this proves
Theorem~\ref{thm:cycle-virtually-cyclic}.

\begin{remark}[Why $C_4$ is different]
The bound $n\geq5$ is structural rather than cosmetic.  The four-cycle is
$K_{2,2}$, its two opposite vertices dominate one another in the relevant
directions, and both its RAAG automorphism group and its cohomological
image have nontrivial linear blocks.  Thus the arguments proving a cyclic outer IA kernel and a finite
cohomological quotient for $C_n$, $n\geq5$, do not apply to $C_4$.
\end{remark}

\part{Global structure, rigidity, and sharpness}
\section{Blocks and the Grushko decomposition}\label{sec:grushko}

We now pass from biconnected defining graphs to arbitrary finite connected
graphs.  There are two points to prove.  First, the Dicks--Leary
presentation splits along the graph blocks.  Second, the factors attached
to biconnected blocks admit no further free splitting.  Together these
statements identify the graph-block decomposition with the Grushko
decomposition of $H_\Gamma$.  The block free-product decomposition already
appears in Chang--Ruffoni~\cite[Cor.~4.21--4.22]{ChangRuffoni}, and their
one-endedness criterion~\cite[Cor.~4.21]{ChangRuffoni} gives the second point
when $H_\Gamma$ is finitely presented.  Our contribution is to read free
indecomposability off the intrinsic BNS invariant of Part~1, which carries no
finiteness hypothesis and so covers every finite connected~$\Gamma$.

\subsection{The block free product}

Recall that $\mathcal B_2(\Gamma)$ denotes the blocks with at least three
vertices and that $b(\Gamma)$ is the number of bridges.

\begin{proposition}\label{prop:block-free-product}
For every finite connected graph $\Gamma$, there is a natural free-product
decomposition
\begin{equation}\label{eq:block-free-product}
 H_\Gamma\cong
 \left(*_{B\in\mathcal B_2(\Gamma)}H_B\right)*F_{b(\Gamma)}.
\end{equation}
\end{proposition}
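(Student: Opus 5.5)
The plan is to compare presentations via the Dicks--Leary theorem (Theorem~\ref{thm:dicks-leary-presentation}). The generators of $H_\Gamma$ are the oriented edge generators $d_{uv}$, with $d_{vu}=d_{uv}^{-1}$. The edge set of $\Gamma$ is the disjoint union of the edge sets of its graph blocks: each edge lies in exactly one block, namely a biconnected block with at least three vertices if the edge lies on a simple circuit, and otherwise its own bridge block. The first step is therefore to partition the generators as $\bigsqcup_{B}\{d_e:e\in E(B)\}$.

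The second step is to reduce the relators to simple circuits. Theorem~\ref{thm:dicks-leary-presentation} lists one all-power relation for every directed closed edge path. I will check that the normal closure of the relations from simple circuits already contains all of these. The argument splits a closed path at a repeated vertex as $C=C_1C_2$. If the repeated vertex is $v_0$, then $d_{C}^{(N)}=d_{C_1}^{(N)}d_{C_2}^{(N)}$ as words, where $d_C^{(N)}$ denotes the all-power word of $C$. If the repeated vertex occurs in the middle, the all-power word of $C$ is a conjugate of a product of the all-power words of the two sub-paths, after cyclic rotation. Paths that backtrack along an edge give $d_{uv}^Nd_{vu}^N=1$, which already follows from the inverse relations.

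This reduction is the main point needing care. One can avoid it entirely by noting that the relators for $C$ and for its cyclic rotations are conjugate. Once simple circuits suffice, each simple circuit lies in a unique biconnected block (recalled in the preliminaries), so each such relator involves only the generators of that block. Bridge edges appear in no simple-circuit relator.

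The third step reads off the free product. The presentation is now a disjoint union over blocks: for each $B\in\mathcal B_2(\Gamma)$, the generators $d_e$ with $e\in E(B)$ together with the simple-circuit all-power relators of $B$; and for each bridge, one free generator. Applying Theorem~\ref{thm:dicks-leary-presentation} to the connected graph $B$ itself identifies the presentation of the $B$-factor with $H_B$, since every simple circuit of $B$ is a simple circuit of $\Gamma$ and conversely. The whole group is therefore $\bigl(*_B H_B\bigr)*\Free_{b(\Gamma)}$.

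Naturality comes from the inclusions $A_B\hookrightarrow A_\Gamma$, which preserve the height character. These induce the maps $H_B\to H_\Gamma$ sending $d_{uv}$ to $d_{uv}$, and the isomorphism above is exactly the map these inclusions induce.
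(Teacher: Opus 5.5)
Your proposal is correct and follows essentially the same route as the paper: you partition the Dicks--Leary edge generators by graph blocks, reduce every closed-path all-power relator to simple-circuit and inverse-edge relators by splitting at a repeated vertex and inducting on length, and then read off the free product, with bridges contributing free cyclic factors. For a repeated vertex away from the base point, you rotate cyclically, which changes the relator only by conjugation. This is slightly more careful than the paper's concatenation step, since it places the relator in the normal closure rather than asserting a literal product decomposition.
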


\begin{proof}
Every edge of $\Gamma$ belongs to exactly one graph block, and every simple
circuit is contained in a unique biconnected block.  Apply the
Dicks--Leary presentation from Theorem~\ref{thm:dicks-leary-presentation}.
Its generators therefore split into disjoint sets indexed by the blocks.

It remains to see that its relations split in the same way.  A closed graph
path having a repeated vertex decomposes as a concatenation of shorter
closed paths.  If $C=C_1C_2$ is such a concatenation and
$W_n(C)$ denotes the all-power word, then
$W_n(C)=W_n(C_1)W_n(C_2)$.
A backtrack $e\bar e$ contributes $e^ne^{-n}$.  Induction on path length
thus expresses every closed-path relation as a product of relations carried
by simple circuits and inverse-edge relations.  Each of these is contained
in one block.  Conversely, a Dicks--Leary relation for a block is also a
relation for $H_\Gamma$.  The resulting presentation is Therefore the
disjoint union of the presentations for the block groups $H_B$, and hence
presents their free product.

If $uv$ is a bridge of $\Gamma$, the corresponding block $B$ has
$V(B)=\{u,v\}$.  For this block, $A_B\cong\Z^2$ and
\[
 H_B=\ker\!\left(\Z^2\xrightarrow{(1,1)}\Z\right)
     =\langle uv^{-1}\rangle\cong\Z.
\]
Thus the $b(\Gamma)$ bridge blocks contribute $b(\Gamma)$ infinite
cyclic free factors, whose free product is $F_{b(\Gamma)}$.  This
proves~\eqref{eq:block-free-product}.
\end{proof}

\subsection{Biconnected blocks give freely indecomposable factors}

When the flag complex $\Delta_\Gamma$ is simply connected---equivalently, when
$H_\Gamma$ is finitely presented---the equivalence of biconnectedness and
one-endedness is due to Chang--Ruffoni~\cite[Cor.~4.21]{ChangRuffoni}, whose
criterion runs through the Bieri--Neumann--Strebel invariant
$\Sigma^1(H_\Gamma)$ and is stated under that finiteness hypothesis.  The
intrinsic computation of $\Sigma^1(H_\Gamma)$ in Part~1 removes the
hypothesis: Theorem~\ref{thm:BNS-separator-arrangement} holds for every finite
connected graph, including graphs for which $H_\Gamma$ is not finitely
presented, such as the four-cycle $C_4$~\cite[Example~4.18]{ChangRuffoni},
where $H_{C_4}=\ker(\Free_2\times\Free_2\to\Z)$ is finitely generated but not finitely
presented.

\begin{proposition}
\label{prop:block-factor-freely-indecomposable}
If $B$ is a biconnected graph with at least three vertices, then $H_B$ is
freely indecomposable and is not infinite cyclic.
\end{proposition}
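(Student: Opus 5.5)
The plan is to detect free splittings through the BNS invariant computed in Theorem~\ref{thm:BNS-separator-arrangement}.

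\emph{Not infinite cyclic.} By Proposition~\ref{prop:standard-LW-lattices}, $H_B^{\mathrm{ab}}$ has rank $|V(B)|-1\geq2$, so $H_B$ is not infinite cyclic. It is nontrivial, finitely generated (since $B$ is connected), and torsion-free because $A_B$ is.

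\emph{Reduction to a BNS statement.} Suppose $H_B=G_1*G_2$ with both factors nontrivial. The standard fact to invoke is this: a finitely generated group that is a nontrivial free product, other than $\Z/2*\Z/2$, has empty $\Sigma^1$ (Bieri--Neumann--Strebel). Torsion-freeness rules out the exceptional case. So it suffices to show that $\Sigma^1(H_B)\neq\varnothing$.

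\emph{Nonemptiness of $\Sigma^1(H_B)$.} By \eqref{eq:BNS-complement-union}, $\Sigma^1(H_B)^c$ is the union of the spheres $\mathbb S(\mathcal W_{S,\mathbb R})$ over vertex separators $S$. Since $B$ has no cut vertex, every separator has $|S|\geq2$. Hence each $\mathcal W_S$ is a proper linear subspace of $W_{B,\Q}$: it is the annihilator of the nonzero space $D_S$. There are only finitely many such subspaces. By Lemma~\ref{lem:finite-union-linear} applied over $\mathbb R$, their union cannot cover $W_{B,\mathbb R}$. Any character outside the union therefore lies in $\Sigma^1(H_B)$.

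\emph{Main obstacle.} The delicate step is citing the free-product vanishing of $\Sigma^1$ correctly, including its exact hypotheses. If a direct reference is awkward, I would fall back on an alternative argument. A free splitting gives an action on a tree with trivial edge stabilizers. For any character $\chi$ that is nonzero on $H_B$, choose a vertex $x$ in a factor with $\chi$ not identically zero on the other factor. Then the subgroup $\{g:\chi(g)\geq0\}$ fails to be connected in the Cayley graph. A short ping-pong with elements $ab$, where $a\in G_1$ and $b\in G_2$, shows that the positive half-Cayley graph disconnects, so $[\chi]\notin\Sigma^1$.

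Combining these steps, no nontrivial free splitting of $H_B$ exists.
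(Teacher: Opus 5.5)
Your proof is correct and follows the paper's route. You show $\Sigma^1(H_B)\neq\varnothing$ from Theorem~\ref{thm:BNS-separator-arrangement}: every separator has at least two vertices, so each $\mathcal W_{S,\mathbb R}$ is proper, and finitely many proper subspaces cannot cover the character space. You then contradict the vanishing of $\Sigma^1$ for a nontrivial free product of finitely generated groups, and rule out infinite cyclic $H_B$ by the rank of $H_B^{\mathrm{ab}}$. Your $\Z/2*\Z/2$ caveat is harmless but unnecessary, since that group has no nonzero real characters, and the Cayley-graph fallback is not needed (also, $\{g:\chi(g)\geq0\}$ is a submonoid, not a subgroup).
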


\begin{proof}
Because $B$ has no cut vertex, every vertex separator of $B$ has at least two
vertices, so each subspace $\mathcal W_{S,\mathbb R}$ appearing in the
arrangement of Theorem~\ref{thm:BNS-separator-arrangement} is a \emph{proper}
subspace of the character space.  That theorem identifies $\Sigma^1(H_B)^c$
with the union of these finitely many proper subspaces.  A real vector space
is never the union of finitely many proper subspaces, so some character avoids
them all and
$\Sigma^1(H_B)\ne\varnothing$.
Suppose $H_B\cong G_1*G_2$ were a nontrivial free product.  Since $B$ is
connected, $H_B$ is finitely generated, and hence so are $G_1$ and $G_2$.  The
Bieri--Neumann--Strebel invariant of a nontrivial free product of finitely
generated groups is empty~\cite{Brown}, contradicting
$\Sigma^1(H_B)\ne\varnothing$.  Therefore $H_B$ is freely indecomposable.

Finally,
$\rank H_B^{\mathrm{ab}}=|V(B)|-1\ge2$
by Proposition~\ref{prop:standard-LW-lattices}, so $H_B$ is not infinite
cyclic.
\end{proof}

\begin{theorem}[Grushko decomposition]\label{thm:BB-grushko-decomposition}
Let $\Gamma$ be a finite connected graph.  Then
\begin{equation}\label{eq:BB-grushko-decomposition}
 H_\Gamma\cong
 \left(*_{B\in\mathcal B_2(\Gamma)}H_B\right)*F_{b(\Gamma)}
\end{equation}
is its Grushko decomposition: every factor $H_B$ displayed on the right is
nontrivial, noncyclic, and freely indecomposable, and $F_{b(\Gamma)}$ is
the free part.
\end{theorem}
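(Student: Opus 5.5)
The plan is to combine Proposition~\ref{prop:block-free-product} with Proposition~\ref{prop:block-factor-freely-indecomposable} and then invoke uniqueness in Grushko's theorem. Proposition~\ref{prop:block-free-product} already supplies the isomorphism~\eqref{eq:BB-grushko-decomposition}. What remains is to check that the right-hand side has the form required of a Grushko decomposition. That form is a free product of finitely many nontrivial, freely indecomposable, non-infinite-cyclic factors together with a free group of finite rank.

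First, finiteness. The graph $\Gamma$ is finite, so $\mathcal B_2(\Gamma)$ is a finite set of blocks and $b(\Gamma)$ is a finite number of bridges. Next, each factor $H_B$ with $B\in\mathcal B_2(\Gamma)$ is a biconnected graph with at least three vertices. Proposition~\ref{prop:block-factor-freely-indecomposable} therefore says $H_B$ is freely indecomposable and not infinite cyclic. The factor is nontrivial because its abelianization has rank $|V(B)|-1\geq2$ by Proposition~\ref{prop:standard-LW-lattices}. The same rank computation shows it is noncyclic: a finite cyclic group has abelianization of rank zero, and the infinite cyclic case is excluded directly.

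The bridge blocks contribute exactly the factor $\Free_{b(\Gamma)}$. Each bridge block $B$ has $H_B\cong\Z$, as computed in the proof of Proposition~\ref{prop:block-free-product}, so these blocks assemble into the free part. With these facts, the displayed decomposition satisfies the defining conditions of a Grushko decomposition. The uniqueness part of Grushko's theorem then identifies it with \emph{the} Grushko decomposition of the finitely generated group $H_\Gamma$. That theorem (Kurosh plus Grushko--Neumann) gives uniqueness up to conjugacy and permutation of the indecomposable factors and the rank of the free part. $H_\Gamma$ is finitely generated because $\Gamma$ is connected.

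The only real content is the free indecomposability of the block factors, which is the main obstacle. It is handled by Proposition~\ref{prop:block-factor-freely-indecomposable} through the BNS computation of Theorem~\ref{thm:BNS-separator-arrangement}, and that computation needs no finite presentability. I would also briefly note the trivial edge case where $\Gamma$ is a single vertex: then $H_\Gamma=1$ and both sides are the empty free product. I would state that the argument uses nothing about the relative position of the blocks beyond the splitting of the Dicks--Leary presentation.
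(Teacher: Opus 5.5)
Your proposal is correct and follows the paper's proof: the paper likewise takes the decomposition from Proposition~\ref{prop:block-free-product}, obtains free indecomposability and non-cyclicity of each $H_B$ from Proposition~\ref{prop:block-factor-freely-indecomposable} (via the BNS separator arrangement, with no finite-presentability hypothesis), and concludes by the uniqueness part of Grushko's theorem. Your extra checks are all sound and merely spell out what the paper leaves implicit: finiteness of $\mathcal B_2(\Gamma)$ and $b(\Gamma)$, nontriviality of $H_B$ from $\rank H_B^{\mathrm{ab}}=|V(B)|-1\geq2$, and the single-vertex case.
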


\begin{proof}
Proposition~\ref{prop:block-free-product} gives the decomposition, and
Proposition~\ref{prop:block-factor-freely-indecomposable} gives the
required properties of the nonfree factors.  The uniqueness part of
Grushko's theorem~\cite{Grushko,Stallings} therefore
identifies~\eqref{eq:BB-grushko-decomposition}
as the Grushko decomposition of $H_\Gamma$.
\end{proof}

Theorem~\ref{thm:BB-grushko-decomposition} reduces the global
automorphism problem to the standard automorphism theory of the displayed
finite free product; the block factors admit no further free decomposition.

\begin{corollary}
\label{cor:ends-classification}
Let $\Gamma$ be a finite connected graph with at least two vertices.
Exactly one of the following holds.
\begin{enumerate}[label=\textup{(\roman*)}]
\item $\Gamma=K_2$: then $H_\Gamma\cong\Z$ has two ends.
\item $\Gamma$ is biconnected with at least three vertices: then
$H_\Gamma$ is one-ended.
\item $\Gamma$ has a cut vertex: then $H_\Gamma$ has infinitely many
ends.
\end{enumerate}
In particular, for $|V(\Gamma)|\geq3$, the group $H_\Gamma$ is one-ended
if and only if $\Gamma$ is biconnected.
\end{corollary}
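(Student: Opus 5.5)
The plan is a case split on the block structure of $\Gamma$, followed by an application of Stallings' theorem on ends. Throughout I use that $H_\Gamma$ is finitely generated because $\Gamma$ is connected. The three cases are mutually exclusive and exhaust all possibilities. If $\Gamma$ has no cut vertex, then either $|V(\Gamma)|=2$, so that $\Gamma=K_2$, or $|V(\Gamma)|\geq3$ and $\Gamma$ is biconnected. Otherwise $\Gamma$ has a cut vertex, which forces $|V(\Gamma)|\geq3$ and means $\Gamma$ is not biconnected.

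For case (i), $H_{K_2}$ is the height-zero subgroup of $\Z^2$. By the bridge computation in the proof of Proposition~\ref{prop:block-free-product}, this subgroup is infinite cyclic, so it has two ends.

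For case (iii), take a cut vertex. It lies in at least two graph blocks, so the decomposition of Theorem~\ref{thm:BB-grushko-decomposition} has at least two free factors, each nontrivial. Each biconnected block $B$ gives $H_B\neq1$, since $\rank H_B^{\mathrm{ab}}\geq2$. Each bridge contributes a factor $\Z$. Hence $H_\Gamma$ is a nontrivial free product $G_1*G_2$. It is not the single exception $\Z/2*\Z/2$, because $H_\Gamma$ is torsion-free as a subgroup of the torsion-free group $A_\Gamma$. A nontrivial free product of this kind has infinitely many ends.

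For case (ii), $H_\Gamma$ is infinite, since its abelianization has rank $|V(\Gamma)|-1\geq2$. So $H_\Gamma$ has at least one end. By Stallings' theorem, if $H_\Gamma$ had more than one end it would split nontrivially over a finite subgroup. Since $H_\Gamma$ is torsion-free, that subgroup is trivial. The resulting splitting is then either a nontrivial free product or an HNN extension over the trivial group. In the second case $H_\Gamma$ has a free factor $\Z$, so either $H_\Gamma\cong\Z$ or $H_\Gamma$ is again a nontrivial free product. Proposition~\ref{prop:block-factor-freely-indecomposable} rules out all of these: $H_\Gamma$ is freely indecomposable and is not infinite cyclic. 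Therefore $H_\Gamma$ is one-ended.

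The final assertion follows by reading off the cases. When $|V(\Gamma)|\geq3$, case (i) cannot occur, and cases (ii) and (iii) are exactly the biconnected and non-biconnected graphs, with end counts $1$ and $\infty$ respectively.

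The only step that needs care is the appeal to Stallings' theorem in case (ii). The point to check is that torsion-freeness reduces every splitting over a finite subgroup to a free splitting over the trivial group, and that the HNN case really does produce a $\Z$ free factor, so that free indecomposability together with noncyclicity covers every possibility.
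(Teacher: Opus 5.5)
Your proposal is correct and follows essentially the same route as the paper: the same three-way case split, the Grushko decomposition with at least two nontrivial torsion-free factors in case~(iii), and Stallings' theorem combined with torsion-freeness and Proposition~\ref{prop:block-factor-freely-indecomposable} in case~(ii). The differences are cosmetic. You apply Stallings' theorem to every group with more than one end, so the two-ended case falls under your free-product/HNN analysis, whereas the paper cites separately that a torsion-free two-ended group is infinite cyclic. In case~(iii) you rule out $\Z/2*\Z/2$ by torsion-freeness, whereas the paper notes that both free factors are infinite.
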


\begin{proof}
The group $H_\Gamma$ is finitely generated because $\Gamma$ is
connected, and it is torsion-free as a subgroup of the torsion-free
group $A_\Gamma$.  A finitely generated group has $0$, $1$, $2$, or
infinitely many ends; it has $0$ ends exactly when it is finite, and a
two-ended torsion-free group is infinite
cyclic~\cite[Ch.~13]{Geoghegan}.  By Stallings's structure theorem, a
finitely generated torsion-free group with infinitely many ends splits
over the trivial subgroup and is therefore either infinite cyclic or a
nontrivial free product~\cite{StallingsEnds}.

Case~(i) is immediate.  In case~(ii), $H_\Gamma$ is infinite, is not
infinite cyclic, and is freely indecomposable
(Proposition~\ref{prop:block-factor-freely-indecomposable}); by the
preceding paragraph it can have neither two nor infinitely many ends,
so it is one-ended.  In case~(iii), the graph has at least two blocks,
so the Grushko decomposition~\eqref{eq:BB-grushko-decomposition} has at
least two free factors and $H_\Gamma=P*Q$ with $P,Q$ nontrivial and
torsion-free, hence infinite.  A nontrivial free product of two
infinite groups has infinitely many
ends~\cite[Ch.~13]{Geoghegan}.
\end{proof}
\section{The global automorphism structure}
\label{sec:global-structure}

Let $\Gamma$ be finite and connected.  Write its Grushko decomposition
from Theorem~\ref{thm:BB-grushko-decomposition} as
\begin{equation}\label{eq:global-Grushko-notation}
 H_\Gamma=G_1*\cdots*G_s*\Free_q,
 \qquad
 G_i=H_{B_i},\qquad q=b(\Gamma).
\end{equation}
The groups $G_i$ are the noncyclic freely indecomposable factors.  We first construct a finite wreath-product quotient whose coordinates
encode permutations of isomorphic block factors and the finite blockwise
cohomological quotients.  Its kernel admits a pure restriction sequence.
Combining that sequence with the arithmetic--IA structure of the individual
blocks yields a finite subnormal series and a finite generating set for a
finite-index subgroup.

\subsection{Intrinsic finite-index subgroups of the block factors}

For each $i$, let
$\bar\rho_i:\Out(G_i)\longrightarrow\Lambda_i=\Lambda_{B_i}$
be the surjective representation onto the image on $H^1(G_i;\Z)$.  Let
$E_i\leq\Lambda_i$ be the
all-root group of Sections~\ref{sec:root-realization}--
\ref{sec:arithmeticity}, and define
\begin{equation}\label{eq:def-block-pure-subgroup}
 P_i=(\bar\rho_i)^{-1}(E_i)\leq\Out(G_i).
\end{equation}

\begin{lemma}
\label{lem:block-pure-naturality}
For each $i$, the subgroup $P_i$ is normal and has finite index in
$\Out(G_i)$.  If $f:G_i\to G_j$ is an abstract group isomorphism, then
\begin{equation}\label{eq:block-pure-naturality}
 fP_if^{-1}=P_j
\end{equation}
under the induced isomorphism of outer automorphism groups.
\end{lemma}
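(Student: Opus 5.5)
Finite index comes from Theorem~\ref{thm:cohomological-image-arithmeticity}, which gives $[\Lambda_i:E_i]<\infty$, and from the surjectivity of $\bar\rho_i:\Out(G_i)\to\Lambda_i$. The preimage of a finite-index subgroup under a surjection has the same index, so $[\Out(G_i):P_i]=[\Lambda_i:E_i]<\infty$.

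Normality and naturality both reduce to showing that $E_i$ is defined intrinsically. To see this, I will show that the algebra $\mathscr C_{B_i}\subseteq\End(W_{B_i}\otimes\Q)$ and its order $\mathcal O_{B_i}$ are determined by the abstract group $G_i$ together with its lattice $W_{B_i}=H^1(G_i;\Z)$, and do not depend on the graph presentation. By \eqref{eq:def-cubic-component-algebra}, $\mathscr C_{B_i}$ is the Lie algebra of the stabilizer $\mathbf G_{B_i}$ from \eqref{eq:combined-stabilizer}. That stabilizer is cut out by three pieces of data: the lower-central relation spaces $I_2,I_3\subseteq\mathbb L_d(L)$, which are intrinsic by \eqref{eq:def-Id}; and the family $\mathcal D_{B_i}$ of minimal-separator spaces, which is recovered from $\Sigma^1(G_i)$ by Theorem~\ref{thm:BNS-separator-arrangement} because $B_i$ is biconnected.

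Now let $f:G_i\to G_j$ be an isomorphism. It induces a lattice isomorphism $f^\#:W_{B_i}\to W_{B_j}$, taken with the contragredient convention $(f^{-1})^*$. Functoriality of the lower central series gives $\check f^{(d)}(I_d^{(i)})=I_d^{(j)}$ for $d=2,3$. Functoriality of $\Sigma^1$ shows that $f$ carries $\mathcal D_{B_i}$ bijectively onto $\mathcal D_{B_j}$.

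The one point needing care is that the stabilizer condition in \eqref{eq:combined-stabilizer} requires \emph{each} separator space to be fixed, not merely the family to be permuted. Conjugation by $f^\#$ sends the subgroup fixing every member of $\mathcal D_{B_i}$ to the subgroup fixing every member of $\mathcal D_{B_j}$, because $f$ induces a bijection between the two families. Hence $f^\#\mathbf G_{B_i}(f^\#)^{-1}=\mathbf G_{B_j}$. Taking Lie algebras gives $f^\#\mathscr C_{B_i}(f^\#)^{-1}=\mathscr C_{B_j}$.

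Since $f^\#$ is integral and so is its inverse, it carries $\End(W_{B_i})$ onto $\End(W_{B_j})$, and therefore maps $\mathcal O_{B_i}$ onto $\mathcal O_{B_j}$. Conjugation preserves rank and the condition $N^2=0$, so the set of integral roots of $\mathcal O_{B_i}$ goes onto the set of integral roots of $\mathcal O_{B_j}$. Consequently $f^\#E_if^{\#-1}=E_j$.

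Finally, the representations are compatible with $f$: for $\alpha\in\Aut(G_i)$ we have $\rho_W(f\alpha f^{-1})=f^\#\rho_W(\alpha)f^{\#-1}$, since the convention $\rho_W(\alpha)=(\alpha^{-1})^*$ makes this a homomorphism and the identity holds at the level of cochains. Inner automorphisms act trivially on $W$, so this descends to $\Out$. It follows that $\bar\rho_j(f\phi f^{-1})\in E_j$ if and only if $\bar\rho_i(\phi)\in E_i$, which is exactly \eqref{eq:block-pure-naturality}.

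Normality is the special case $j=i$ with $f$ an automorphism of $G_i$. This case can also be read off directly: $\Lambda_i\leq\mathbf G_{B_i}(\Q)$ permutes $\mathcal D_{B_i}$ and so normalizes both $\mathscr C_{B_i}$ and $\mathcal O_{B_i}$. The image $\Lambda_i$ therefore normalizes $E_i$, and the preimage $P_i$ is normal in $\Out(G_i)$.
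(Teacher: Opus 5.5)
Your proof is correct and is essentially the paper's argument: $f$ transports $I_2$, $I_3$, the BNS separator family and the integral lattice, hence conjugates the stabilizer, $\mathscr C$, $\mathcal O$ and the root set, and finite index comes from Theorem~\ref{thm:cohomological-image-arithmeticity}. The only structural difference is that you obtain normality as the case $j=i$ of naturality, whereas the paper proves normality first. One minor slip is in your closing ``direct'' remark: $\Lambda_i$ need not lie in $\mathbf G_{B_i}(\Q)$, since only $\Lambda_{\Gamma,\mathrm{sep}}$ does by \eqref{eq:actual-image-upper-bound}. Your argument never uses that containment, only that $\Lambda_i$ permutes $\mathcal D_{B_i}$, which is true.
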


\begin{proof}
The actual cohomological image preserves $I_2$ and $I_3$ and permutes the
finite BNS separator arrangement.  Therefore it normalizes the subgroup
that fixes every separator space: conjugation permutes the defining
pointwise-stabilizer conditions.  It therefore normalizes the combined
stabilizer from~\eqref{eq:combined-stabilizer}, its identity component,
and its Lie algebra.  By
\eqref{eq:connected-stabilizer-is-unit-group}, that Lie algebra is the
cubic component algebra $\mathscr C_{B_i}$.  Since the action also preserves
the integral cohomology lattice, it normalizes the order
$\mathcal O_{B_i} =\mathscr C_{B_i}\cap\End(H^1(G_i;\Z))$.
Conjugation preserves rank and square-zero, so the actual image permutes
all integral rank-one square-zero roots.  Hence $E_i\triangleleft\Lambda_i$
and $P_i\triangleleft\Out(G_i)$.  The common-index theorem
\ref{thm:cohomological-image-arithmeticity} gives
$[\Lambda_i:E_i]<\infty$, and therefore $P_i$ has finite index.

An abstract group isomorphism transports the lower-central relation spaces,
the BNS invariant, and the integral cohomology lattice.  It Therefore
transports the combined stabilizer, its identity-component algebra, its
order, and the family of integral rank-one square-zero roots.  Thus it
carries $E_i$ to $E_j$, and taking preimages proves
\eqref{eq:block-pure-naturality}.
\end{proof}

\subsection{The finite wreath-product quotient}

Partition $\{1,\ldots,s\}$ by the abstract isomorphism type of the factors
$G_i$.  For each type $\tau$, choose a representative group $K_\tau$, let
$I_\tau$ be the corresponding index set, and choose markings
$\theta_i:K_\tau\xrightarrow{\cong}G_i$ for $i\in I_\tau$.
Lemma~\ref{lem:block-pure-naturality} identifies all marked $P_i$ with one
normal subgroup $P_\tau\triangleleft\Out(K_\tau)$.  Put
\begin{equation}\label{eq:def-block-finite-quotient}
 Q_\tau=\Out(K_\tau)/P_\tau;
\end{equation}
this group is finite.

Grushko uniqueness makes every outer automorphism of $H_\Gamma$ permute
the conjugacy classes of isomorphic $G_i$.  Let $[\Phi]\in\Out(H_\Gamma)$,
write the resulting permutation as
$\pi_\Phi\in\prod_\tau\operatorname{Sym}(I_\tau)$,
and choose $u_i\in H_\Gamma$ such that
$\Phi(G_i)=u_iG_{\pi_\Phi(i)}u_i^{-1}$.  The normalized restriction is
\begin{equation}\label{eq:normalized-wreath-coordinate}
 a_i(\Phi)=\left[
 \theta_{\pi_\Phi(i)}^{-1}\circ c_{u_i^{-1}}\circ
 \Phi|_{G_i}\circ\theta_i
 \right]\in\Out(K_\tau).
\end{equation}

\begin{lemma}
\label{lem:global-wreath-coordinates}
The class in~\eqref{eq:normalized-wreath-coordinate} is independent of
the choices of $u_i$ and of the representative of $[\Phi]$.  Moreover,
\begin{equation}\label{eq:wreath-coordinate-law}
 a_i(\Psi\Phi)
 =a_{\pi_\Phi(i)}(\Psi)a_i(\Phi),
 \qquad
 \pi_{\Psi\Phi}=\pi_\Psi\pi_\Phi.
\end{equation}
Changing the markings changes the coordinates by the usual wreath-product
conjugation and does not change the kernel after reduction modulo
$P_\tau$.
\end{lemma}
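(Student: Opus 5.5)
The first step is independence of the choices; the composition law and the marking statement are then direct computations.

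\emph{Independence of the choice of $u_i$.} By Grushko uniqueness, $\Phi(G_i)$ is conjugate to exactly one factor $G_{\pi_\Phi(i)}$, so $\pi_\Phi(i)$ is well defined. Suppose $u_i'$ is another element with $\Phi(G_i)=u_i'G_ju_i'^{-1}$, where $j=\pi_\Phi(i)$. Then $u_i^{-1}u_i'$ normalizes $G_j$. In a free product, a nontrivial freely indecomposable noncyclic factor is self-normalizing: its normalizer is $G_j$ itself. One sees this from the action on the Bass--Serre tree of the splitting, where $G_j$ fixes a unique vertex. Hence $u_i'=u_ig$ with $g\in G_j$. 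The two normalized restrictions therefore differ by $c_{g^{-1}}|_{G_j}$, which is inner in $G_j$. After pulling back by $\theta_j$ it is inner in $K_\tau$, so the class in $\Out(K_\tau)$ is unchanged.

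\emph{Independence of the representative of $[\Phi]$.} Replacing $\Phi$ by $c_h\circ\Phi$ replaces $u_i$ by $hu_i$, and the two conjugations cancel. The permutation $\pi_\Phi$ depends only on the outer class.

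\emph{Composition law.} Choose $u_i$ for $\Phi$ and $v_j$ for $\Psi$, and write $j=\pi_\Phi(i)$. Then
\[
 \Psi\Phi(G_i)=\Psi(u_i)\,v_j\,G_{\pi_\Psi(j)}\,v_j^{-1}\Psi(u_i)^{-1},
\]
so $w_i=\Psi(u_i)v_j$ is an admissible choice for $\Psi\Phi$, and $\pi_{\Psi\Phi}=\pi_\Psi\pi_\Phi$. Next use the identity $c_{w_i^{-1}}\circ\Psi=c_{v_j^{-1}}\circ\Psi\circ c_{u_i^{-1}}$, which holds because $\Psi c_{u_i^{-1}}=c_{\Psi(u_i)^{-1}}\Psi$. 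From it,
\[
 c_{w_i^{-1}}\Psi\Phi|_{G_i}
 =\bigl(c_{v_j^{-1}}\Psi|_{G_j}\bigr)\circ\bigl(c_{u_i^{-1}}\Phi|_{G_i}\bigr),
\]
where each bracket maps between the indicated factors. Inserting $\theta_j\theta_j^{-1}$ in the middle gives $a_i(\Psi\Phi)=a_{\pi_\Phi(i)}(\Psi)\,a_i(\Phi)$.

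\emph{Change of markings.} Replace $\theta_i$ by $\theta_i\circ\mu_i$ with $\mu_i\in\Aut(K_\tau)$. Then $a_i$ becomes $[\mu_{\pi(i)}]^{-1}a_i[\mu_i]$, which is the usual wreath-product conjugation by $([\mu_i])_i$. By Lemma~\ref{lem:block-pure-naturality}, or simply because $P_\tau$ is normal in $\Out(K_\tau)$, this conjugation preserves $P_\tau$. Hence the condition that every $a_i$ lies in $P_\tau$ and $\pi$ is trivial is unchanged, and the kernel after reduction modulo $P_\tau$ does not depend on the markings.

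The main obstacle is the self-normalizing property of $G_j$, and it is the only step that is not formal. If the direct tree argument is awkward, I would instead use that for $g\notin G_j$ the intersection $G_j\cap gG_jg^{-1}$ is trivial, by normal forms in the free product. An element $g$ normalizing $G_j$ would then force $G_j=1$, contradicting the fact that $G_j$ is nontrivial.
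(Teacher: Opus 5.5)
Your proof is correct and follows the same route as the paper. Self-normalization of a nontrivial free factor, whether argued through normal forms or the Bass--Serre tree, makes the choice of $u_i$ matter only up to an inner automorphism of the target factor. The composition law then follows by substituting the normalized restrictions and cancelling the conjugators. A change of markings acts by $[\mu_{\pi(i)}]^{-1}a_i[\mu_i]$, which preserves the kernel condition because $P_\tau$ is normal in $\Out(K_\tau)$. Your version writes out explicitly the computations that the paper's proof leaves implicit.
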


\begin{proof}
A nontrivial freely indecomposable factor in a Grushko free product is
malnormal, and in particular self-normalizing; this follows from the free
product normal form~\cite[Ch.~IV]{LyndonSchupp}.  Two possible choices of $u_i$
therefore differ, after normalization, by an element of the target factor,
which changes the restriction only by an inner automorphism.  Changing
$\Phi$ by a global inner automorphism has the same effect.  This proves
well-definedness.  The multiplication law follows by substituting the
normalized restrictions and cancelling the conjugating elements.

If $\theta_i$ is replaced by $\theta_id_i$, the coordinate becomes
$d_{\pi_\Phi(i)}^{-1}a_i(\Phi)d_i$.  Normality and naturality of
$P_\tau$ show that the condition ``trivial permutation and every
coordinate in $P_\tau$'' is unchanged.
\end{proof}

We use the source-indexed wreath-product convention
\[
 (b,\rho)(a,\pi)=(c,\rho\pi),
 \qquad c_i=b_{\pi(i)}a_i,
\]
for $a,b\in Q_\tau^{I_\tau}$ and
$\pi,\rho\in\operatorname{Sym}(I_\tau)$.  With this convention,
\eqref{eq:wreath-coordinate-law} is exactly the multiplication law in the
target.  Reducing the coordinates therefore gives a homomorphism
\begin{equation}\label{eq:global-wreath-map}
 \Pi_\Gamma:\Out(H_\Gamma)\longrightarrow
 \prod_\tau
 \left(Q_\tau^{I_\tau}\rtimes
       \operatorname{Sym}(I_\tau)\right).
\end{equation}

\begin{proposition}
\label{prop:global-finite-wreath-quotient}
The map~\eqref{eq:global-wreath-map} is surjective.  Its kernel
\begin{equation}\label{eq:def-global-pure-out}
 \Out^\dagger(H_\Gamma)=\ker\Pi_\Gamma
\end{equation}
is intrinsic, normal, and of finite index, and
\begin{equation}\label{eq:global-finite-wreath-exact}
 1\longrightarrow\Out^\dagger(H_\Gamma)
 \longrightarrow\Out(H_\Gamma)
 \xrightarrow{\Pi_\Gamma}
 \prod_\tau
 \left(Q_\tau^{I_\tau}\rtimes
       \operatorname{Sym}(I_\tau)\right)
 \longrightarrow1
\end{equation}
is exact.
\end{proposition}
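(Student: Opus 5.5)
Exactness at the middle term and the description of the kernel hold by definition, since $\Pi_\Gamma$ is a homomorphism by Lemma~\ref{lem:global-wreath-coordinates}. Finite index then follows because the target is a finite group: each $Q_\tau$ is finite by Lemma~\ref{lem:block-pure-naturality}, and each $I_\tau$ is finite. Normality is automatic for a kernel. Intrinsicness comes from Lemma~\ref{lem:global-wreath-coordinates}. Changing markings conjugates the target by a wreath element, which leaves the kernel unchanged. The subgroups $P_\tau$ are determined by $H_\Gamma$ through Lemma~\ref{lem:block-pure-naturality}, and the Grushko factors are determined up to conjugacy by Theorem~\ref{thm:BB-grushko-decomposition}. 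So the real content of the proposition is surjectivity.

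For surjectivity I will show that the image contains generators of each wreath factor $Q_\tau^{I_\tau}\rtimes\operatorname{Sym}(I_\tau)$, using two kinds of explicit automorphisms of the free product $H_\Gamma=G_1*\cdots*G_s*\Free_q$.

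\emph{Factor automorphisms.} Take $j\in I_\tau$ and $\psi\in\Aut(K_\tau)$. Define $\Phi$ to act on $G_j$ by $\theta_j\psi\theta_j^{-1}$ and to fix every other factor and the free part. Because a free product is a coproduct, $\Phi$ is an automorphism. It induces the trivial permutation, its $j$th coordinate is the class of $[\psi]$, and all other coordinates are trivial. Since $\Out(K_\tau)\to Q_\tau$ is onto, these automorphisms give all of $Q_\tau^{I_\tau}$.

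\emph{Swap automorphisms.} Take $i,j\in I_\tau$. Define $\Phi$ to send $G_i$ to $G_j$ via $\theta_j\theta_i^{-1}$, to send $G_j$ to $G_i$ via $\theta_i\theta_j^{-1}$, and to fix the remaining factors. This is again an automorphism by the universal property. It has permutation $(i\,j)$ and trivial coordinates.

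Transpositions generate $\operatorname{Sym}(I_\tau)$. Together with $Q_\tau^{I_\tau}$ they generate the wreath product. Automorphisms supported on different isomorphism types $\tau$ do not interact, so their images generate the full product over $\tau$, and $\Pi_\Gamma$ is onto.

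The only point needing care is the normalization in~\eqref{eq:normalized-wreath-coordinate}: I must check that the coordinates of these specific automorphisms are what I claimed. Here every $u_i$ can be taken to be $1$, and the composite $\theta_{\pi(i)}^{-1}\Phi|_{G_i}\theta_i$ is then either $\psi$ or the identity. By the independence statement of Lemma~\ref{lem:global-wreath-coordinates}, this choice of $u_i$ is legitimate. So no step is a serious obstacle; the main work is bookkeeping with the source-indexed wreath convention.
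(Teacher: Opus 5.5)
Your proposal is correct and follows essentially the same route as the paper. You realize base coordinates by transporting a factor automorphism along the marking and extending by the identity, and you realize permutations through the marked isomorphisms $\theta_{\sigma(i)}\theta_i^{-1}$. You then get finiteness and marking-independence of the kernel from Lemmas~\ref{lem:block-pure-naturality} and~\ref{lem:global-wreath-coordinates}; the only cosmetic difference is that you generate $\operatorname{Sym}(I_\tau)$ by transpositions rather than realizing an arbitrary $\sigma$ in one step.
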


\begin{proof}
To realize one base coordinate, choose an automorphism representative of
the desired class in $\Out(K_\tau)$, transport it to $G_i$, and extend it
by the identity on all other factors and on $\Free_q$.  To realize
$\sigma\in\operatorname{Sym}(I_\tau)$, use the marked isomorphisms
$\theta_{\sigma(i)}\theta_i^{-1}:G_i\longrightarrow G_{\sigma(i)}$
simultaneously and fix the remaining factors.  The universal property of
the free product makes both constructions automorphisms, and
Lemma~\ref{lem:global-wreath-coordinates} gives the wreath-product law.
Thus $\Pi_\Gamma$ is onto.  Its target is finite, and the description of
its kernel is marking-independent by the same lemma.
\end{proof}

\subsection{The pure restriction sequence}

Let $\Out^0(H_\Gamma;[G_1],\ldots,[G_s])$ be the subgroup fixing the conjugacy class of every nonfree factor.
Normalized restriction defines a surjection to
$\prod_i\Out(G_i)$: representatives of factor automorphisms extend by the
identity on the remaining free factors.  Following the relative free-product
automorphism terminology of Gilbert~\cite[Section~1 and Theorem~2.20]{Gilbert}
and the outer-space framework of Guirardel--Levitt~\cite{GuirardelLevitt},
we denote the full kernel of this restriction map by
$\FR(G_1,\ldots,G_s;\Free_q)$.  Our notation includes the automorphisms of
the free factor and the mixed relative Whitehead moves; in some conventions,
``Fouxe--Rabinovitch group'' denotes a smaller subgroup.

\begin{lemma}
\label{lem:relative-kernel-generators}
Let
$G=G_1*\cdots*G_s*\Free_q$,
where every $G_i$ is finitely generated.  Choose a finite symmetric
generating set $S_i$ for each $G_i$ and a free basis
$X=\{x_1,\ldots,x_q\}$ for $\Free_q$.  Then
$\FR(G_1,\ldots,G_s;\Free_q)$ is generated by the following finite families:
\begin{enumerate}[label=\textup{(R\arabic*)}]
\item for $i\ne j$ and $g\in S_j$, conjugate the whole factor $G_i$ by
      $g$ and fix all other displayed generators; also conjugate $G_i$ by
      each $x_r$;
\item a finite Nielsen generating set of $\Aut(\Free_q)$, consisting of
      permutations, inversions, and elementary Nielsen transvections;
\item for $g\in S_i$ and every $r$, the two mixed moves
      \[
       x_r\longmapsto gx_r,
       \qquad
       x_r\longmapsto x_rg,
      \]
      fixing all other factors and free generators.
\end{enumerate}
Only finite generation of the factors is used; no finite presentation of
any $G_i$ is required.
\end{lemma}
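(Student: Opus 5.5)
The plan is to reduce the lemma to Gilbert's relative Whitehead generating theorem \cite[Theorem~2.20 and Section~1]{Gilbert} and then shrink the generating set to the displayed finite families. First I would check that every listed move lies in $\FR(G_1,\ldots,G_s;\Free_q)$. Each is specified on the free factors and extended by the universal property of the free product. Each has an inverse of the same type: conjugation by $g^{-1}$, the inverse Nielsen move, or $x_r\mapsto g^{-1}x_r$. Each fixes the conjugacy class of every $G_i$. Finally, its normalized restriction to each $G_i$ is inner, hence trivial in $\Out(G_i)$. So the subgroup $\mathcal R$ generated by (R1)--(R3) is contained in $\FR$.

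For the reverse inclusion, I would use Gilbert's theorem in the form that the automorphisms of $G$ preserving each factor conjugacy class and acting by inner automorphisms on each $G_i$ (our $\FR$) are generated by four kinds of moves: factor automorphisms acting as inner automorphisms, permutations and inversions of the free basis, and the relative Whitehead automorphisms. The relative Whitehead automorphisms multiply free generators, or conjugate whole factors, by elements of a chosen factor. An element of $\FR$ composed with suitable factor-conjugations restricts to the identity on each $G_i$ up to conjugation, so only Whitehead-type moves and free-basis automorphisms are needed, together with conjugations of whole factors.

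The key finiteness step is to see that a Whitehead move with an arbitrary multiplier $w$ from a factor is a product of moves whose multipliers lie in $S_i$ or $X$. Write $w=g_1\cdots g_k$ as a word in $S_i$. The partial conjugation of $G_j$ by $w$ is then the composite of the partial conjugations by the $g_l$, because each of these fixes $G_i$, where the multipliers live. Similarly, $x_r\mapsto wx_r$ factors as $x_r\mapsto g_1x_r$, then $x_r\mapsto g_2x_r$, and so on, since these moves fix $G_i$ pointwise. Conjugation of $G_i$ by a general element of $G$ reduces in the same way to (R1), which is why (R1) includes the conjugations by the $x_r$. Free-basis automorphisms are covered by Nielsen's finite generating set (R2).

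I expect the main obstacle to be the precise matching between Gilbert's generating theorem and our full kernel $\FR$. Our $\FR$ also contains automorphisms whose restriction to some $G_i$ is conjugation by an element outside $G_i$, and it contains all of $\Aut(\Free_q)$. I would handle this by composing any element of $\FR$ with (R1)-type factor conjugations, so that it fixes each $G_i$ pointwise. Then I would check that Gilbert's theorem applies to that normalized subgroup using only finite generation of the $G_i$, observing that no relations of the $G_i$ enter the Whitehead-move decomposition.
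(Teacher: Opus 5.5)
Your route is the paper's route: check that (R1)--(R3) lie in $\FR$, invoke Gilbert's generating theorem, and split arbitrary multipliers into words in $S_i$. But there is a genuine gap at the one step where the paper does real work, namely the factor-inner automorphisms.

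For $g\in G_i$, let $\hat c_g$ conjugate $G_i$ by $g$ and fix every other factor and every $x_r$. It lies in $\FR$, and it is among Gilbert's generators (your ``factor automorphisms acting as inner automorphisms''). None of your moves produces it. (R1) conjugates $G_i$ only by elements of other factors or by free letters, while (R2) and (R3) fix $G_i$ pointwise.
\begin{itemize}
\item Your normalization ``compose with (R1)-type factor conjugations until the element fixes each $G_i$ pointwise'' therefore fails for $\hat c_g$.
\item Your remark that conjugating $G_i$ by a general element of $G$ reduces to (R1) is false once that element has letters in $G_i$. Such a map need not even be an automorphism.
\item The automorphism-level statement you are aiming at is in fact false. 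Take $q=0$ and let $r_i\colon G\to G_i$ be the retraction. Then $\phi\mapsto r_i\circ\phi|_{G_i}\in\Inn(G_i)$ is a homomorphism on the group of automorphisms restricting to a conjugation on every factor. It kills every (R1) move, but it sends the global inner automorphism $c_g$, with $g\in G_i\setminus Z(G_i)$, to $c_g|_{G_i}\neq 1$. A concrete case is $G=H_{C_5}*H_{C_5}$, coming from two five-cycles sharing a vertex.
\end{itemize}

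The missing idea is that $\FR$ is a subgroup of $\Out(G)$, so you must use global inner automorphisms; this is exactly the paper's final step. The automorphism $c_g^{-1}\hat c_g$ has three properties:
\begin{itemize}
\item it fixes $G_i$;
\item it conjugates every other nonfree factor by $g^{-1}$, which is an (R1) move whose multiplier is a word in $S_i$;
\item it sends each $x_r$ to $g^{-1}x_rg$, which is a product of the two (R3) moves.
\end{itemize}
Hence $[\hat c_g]$ lies in the subgroup generated by the classes of (R1)--(R3). Your argument goes through once you work modulo $\Inn(G)$ throughout and add this step.
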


\begin{proof}
We use only the generating assertion in Gilbert's relative Whitehead
theorem~\cite[Theorem~2.20 and Section~1]{Gilbert}.  In Gilbert's full
generating list, factor automorphisms map to the restriction quotient and
permutations map to the finite permutation quotient.  After passing to the
kernel of normalized restriction and then to outer automorphisms, the
remaining relative Whitehead generators have precisely the following
forms: a nonfree factor is conjugated by a multiplier from another factor
or by a free letter, giving~\textup{(R1)}; the generators supported
entirely on the free factor are the usual Nielsen generators in
\textup{(R2)}; and a free letter is multiplied on the left or right by a
factor element, giving~\textup{(R3)}.  Equivalently, in the terminology of
the present lemma, Gilbert's factor-conjugating generators give
\textup{(R1)}, his generators supported on the free factor give the
Nielsen family~\textup{(R2)}, and his left- and right-multiplier
generators give~\textup{(R3)}.  Thus no presentation theorem of Gilbert
is being used here.

Gilbert allows the factor multipliers to range over all elements of the
factors.  Every such multiplier is a word in the chosen finite set $S_i$.
Partial conjugations with fixed target compose according to multiplication
in the multiplier factor, and left and right mixed transvections decompose
along that word.  Hence the displayed finite families generate the
corresponding moves with arbitrary factor multipliers.

It remains to express an inner automorphism $c_g$ of one factor $G_i$, extended by the identity on all other factors.  Compose it
with the inverse global inner automorphism $c_g^{-1}$.  The resulting outer
representative conjugates every other nonfree factor by $g^{-1}$ and sends
each free letter to $g^{-1}x_rg$; these are products of the operations in
\textup{(R1)} and the two operations in \textup{(R3)}.  Thus centers of
factors require no additional generator.  This proves the asserted finite
generating list modulo global inner automorphisms.
\end{proof}

\begin{theorem}
\label{thm:global-pure-restriction}
There is a short exact sequence
\begin{equation}\label{eq:global-pure-restriction}
 1\longrightarrow\FR(G_1,\ldots,G_s;\Free_q)
 \longrightarrow\Out^\dagger(H_\Gamma)
 \xrightarrow{\operatorname{res}}
 \prod_{i=1}^sP_i
 \longrightarrow1.
\end{equation}
The relative kernel is finitely generated.
\end{theorem}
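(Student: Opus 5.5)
We will identify $\Out^\dagger(H_\Gamma)$ with the preimage of $\prod_iP_i$ under normalized restriction on the pure group $\Out^0(H_\Gamma;[G_1],\ldots,[G_s])$. The exact sequence then comes from restricting the sequence
\[
 1\to\FR(G_1,\ldots,G_s;\Free_q)\to\Out^0(H_\Gamma;[G_1],\ldots,[G_s])\to\prod_i\Out(G_i)\to1.
\]

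The first step is to compare the two groups. By Proposition~\ref{prop:global-finite-wreath-quotient} and the definition~\eqref{eq:global-wreath-map}, a class $[\Phi]$ lies in $\ker\Pi_\Gamma$ exactly when two things hold. First, $\pi_\Phi$ is trivial, so $\Phi$ fixes every conjugacy class $[G_i]$. Second, every coordinate $a_i(\Phi)$ lies in $P_\tau$. If $\pi_\Phi=1$, the coordinate~\eqref{eq:normalized-wreath-coordinate} is the normalized restriction to $G_i$ transported through the marking $\theta_i$. Lemma~\ref{lem:block-pure-naturality} identifies $\theta_iP_\tau\theta_i^{-1}$ with $P_i$, and Lemma~\ref{lem:global-wreath-coordinates} shows that this does not depend on the marking. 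Therefore $\Out^\dagger(H_\Gamma)=\operatorname{res}^{-1}(\prod_iP_i)\cap\Out^0$. On $\Out^0$ the map $\operatorname{res}$ is a homomorphism, by the cocycle law~\eqref{eq:wreath-coordinate-law} with $\pi=1$.

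The second step is surjectivity and the kernel. Given $(\phi_i)\in\prod_iP_i$, choose automorphism representatives and extend them by the identity on the other factors and on $\Free_q$. The universal property of free products makes this an automorphism of $H_\Gamma$. It fixes each $[G_i]$ and restricts to $\phi_i$ on $G_i$. Its restriction to $G_j$ for $j\ne i$ is the identity, so every coordinate lies in $P_j$ and the extension lies in $\Out^\dagger$. The kernel of $\operatorname{res}$ on $\Out^\dagger$ equals the kernel on $\Out^0$, because the trivial element lies in each $P_i$. By definition this is $\FR(G_1,\ldots,G_s;\Free_q)$, so the sequence~\eqref{eq:global-pure-restriction} is exact.

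The third step is finite generation of the kernel. Each $G_i=H_{B_i}$ is finitely generated because $B_i$ is connected (Dicks--Leary). Lemma~\ref{lem:relative-kernel-generators} then gives the finite families (R1)--(R3), which generate $\FR$ modulo inner automorphisms. Since $\FR$ is defined as a subgroup of $\Out$, these families generate $\FR$ itself. The main point requiring care is the well-definedness in the first step: the normalized restriction to $\Out(G_i)$ must not depend on the conjugator $u_i$. This is where we use that $G_i$ is self-normalizing, as in the proof of Lemma~\ref{lem:global-wreath-coordinates}; with that in hand the identification goes through.
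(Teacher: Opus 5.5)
Your proposal is correct and follows the paper's proof. Exactness comes from the definitions of $\Out^\dagger$ and normalized restriction, surjectivity from extending factor representatives by the identity, and finite generation of the kernel from Lemma~\ref{lem:relative-kernel-generators} using finite generation of the block factors; you simply make explicit what the paper's three-line proof leaves implicit, namely the identification $\Out^\dagger(H_\Gamma)=\operatorname{res}^{-1}\bigl(\prod_iP_i\bigr)\cap\Out^0$ and the use of self-normalization for well-definedness.
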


\begin{proof}
Exactness follows from the definition of $\Out^\dagger$ and normalized
restriction.  Surjectivity follows by extending chosen representatives of
the factor outer automorphisms.  The factors $G_i=H_{B_i}$ are finitely
generated because the block graphs are connected, so
Lemma~\ref{lem:relative-kernel-generators} applies and makes the
relative kernel finitely generated.
\end{proof}

This convention also covers the boundary cases.  If $s=0$, then
$H_\Gamma=\Free_q$ and $\FR(;\Free_q)=\Out(\Free_q)$; if $s=1$ and $q=0$, the
relative kernel is trivial.  Repeated isomorphism types appear only in the
finite wreath quotient.

\subsection{The global IA exact sequence}

The pure restriction sequence restricts to the following exact sequence
on outer IA subgroups.  Every outer IA class preserves the conjugacy class
of each nonfree factor.

\begin{theorem}
\label{thm:global-IA-assembly}
Let $\Gamma$ be a finite connected graph with Grushko
decomposition~\eqref{eq:global-Grushko-notation}, and put
\[
 \operatorname{IAFR}_\Gamma
 =\FR(G_1,\ldots,G_s;\Free_q)\cap\IOut(H_\Gamma).
\]
Then normalized restriction gives a short exact sequence
\begin{equation}\label{eq:global-IA-assembly}
 1\longrightarrow\operatorname{IAFR}_\Gamma
 \longrightarrow\IOut(H_\Gamma)
 \xrightarrow{\ \operatorname{res}\ }
 \prod_{i=1}^s\IOut(G_i)
 \longrightarrow1.
\end{equation}
\end{theorem}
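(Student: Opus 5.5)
The plan has three steps: show that every outer IA class is pure, show that normalized restriction sends IA classes to IA classes, and prove surjectivity. Exactness at $\IOut(H_\Gamma)$ then follows from the definition of $\operatorname{IAFR}_\Gamma$.

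\textbf{Purity.} Let $[\Phi]\in\IOut(H_\Gamma)$. By Grushko uniqueness, $\Phi(G_i)$ is conjugate to some $G_{\pi(i)}$ with $G_{\pi(i)}\cong G_i$. The abelianization of $H_\Gamma$ splits as
\[
H_\Gamma^{\mathrm{ab}}=\bigoplus_j G_j^{\mathrm{ab}}\oplus\Z^q .
\]
The summand $G_i^{\mathrm{ab}}$ has rank $|V(B_i)|-1\geq2$ by Proposition~\ref{prop:standard-LW-lattices}, so it is nonzero. Since $\Phi$ is IA, $\Phi_*$ maps $G_i^{\mathrm{ab}}$ identically onto itself. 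On the other hand, $\Phi_*(G_i^{\mathrm{ab}})$ equals the image of $\Phi(G_i)=uG_{\pi(i)}u^{-1}$, which is $G_{\pi(i)}^{\mathrm{ab}}$. Distinct summands of a direct sum meet trivially, so $\pi(i)=i$. Hence $\IOut(H_\Gamma)\le\Out^0(H_\Gamma;[G_1],\ldots,[G_s])$, and normalized restriction is defined on $\IOut(H_\Gamma)$.

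\textbf{Restriction lands in IA.} With $u_i$ chosen as above, the normalized restriction of $\Phi$ to $G_i$ is $c_{u_i^{-1}}\circ\Phi|_{G_i}$. Its action on $G_i^{\mathrm{ab}}$ is obtained by composing $\Phi_*$ with the projection to the $G_i^{\mathrm{ab}}$ summand, because inner automorphisms act trivially on abelianization. That composite is the identity. By Lemma~\ref{lem:global-wreath-coordinates} the class is independent of choices, so the map lands in $\prod_i\IOut(G_i)$. Its kernel is $\IOut(H_\Gamma)\cap\FR$, which is $\operatorname{IAFR}_\Gamma$ by definition.

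\textbf{Surjectivity.} Given $([\alpha_i])_i\in\prod_i\IOut(G_i)$, choose IA representatives $\alpha_i\in\IAut(G_i)$. Let $\Phi$ act as $\alpha_i$ on $G_i$ and as the identity on $\Free_q$. By the universal property of free products, $\Phi$ is an automorphism, exactly as in the surjectivity part of Theorem~\ref{thm:global-pure-restriction}. Using the direct-sum splitting of $H_\Gamma^{\mathrm{ab}}$ again, $\Phi$ acts trivially on abelianization, so $[\Phi]\in\IOut(H_\Gamma)$. Its normalized restrictions are the $[\alpha_i]$.

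\textbf{Main point to check.} The only delicate step is the purity argument. It uses that the factors $G_i^{\mathrm{ab}}$ are nonzero and sit as independent summands, which Proposition~\ref{prop:standard-LW-lattices} supplies for each biconnected block. It also relies on normalization conjugating only by elements of $H_\Gamma$, so no abelianization shift occurs.
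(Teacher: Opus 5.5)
Your proposal is correct and follows essentially the same route as the paper. Purity comes from the nonzero, independent summands $H_1(G_i)$ in the split abelianization; normalized restrictions are IA via the split injection $H_1(G_i)\hookrightarrow H_1(H_\Gamma)$; and surjectivity comes from extending IA representatives by the identity on the other factors and on $\Free_q$.
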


\begin{proof}
First, $\IOut(H_\Gamma)\leq\Out^0(H_\Gamma;[G_1],\ldots,[G_s])$.  The
abelianization of~\eqref{eq:global-Grushko-notation} decomposes as
$\bigoplus_iH_1(G_i)\oplus\Z^q$, and an outer class $[\Phi]$ maps the
summand $H_1(G_i)$ onto $H_1(G_{\pi_\Phi(i)})$, where each $H_1(G_i)$ is
nonzero, of rank $|V(B_i)|-1\geq2$
(Proposition~\ref{prop:standard-LW-lattices}).  If
$\pi_\Phi(i)\neq i$, then any nonzero $x\in H_1(G_i)$ has
$\Phi_*(x)\in H_1(G_{\pi_\Phi(i)})$, so $\Phi_*(x)\neq x$ and
$[\Phi]\notin\IOut(H_\Gamma)$.  Thus IA classes have trivial factor
permutation, and the normalized restrictions
$a_i(\Phi)\in\Out(G_i)$ of~\eqref{eq:normalized-wreath-coordinate} are
defined.

Next, the restriction of an IA class is IA: the split injection
$H_1(G_i)\hookrightarrow H_1(H_\Gamma)$ identifies the action of
$a_i(\Phi)$ on $H_1(G_i)$ with the restriction of $\Phi_*$, which is
the identity.  Hence $\operatorname{res}$ maps $\IOut(H_\Gamma)$ into
$\prod_i\IOut(G_i)$, and its kernel there is
$\FR\cap\IOut(H_\Gamma)=\operatorname{IAFR}_\Gamma$ by the
definition of the relative kernel.

For surjectivity, let $[\psi_i]\in\IOut(G_i)$ with representatives
$\psi_i\in\IAut(G_i)$, and extend by the identity on the remaining
factors and on $\Free_q$; the universal property of the free product gives
$\Psi\in\Aut(H_\Gamma)$.  On the abelianization, $\Psi_*$ is
$\bigoplus_i(\psi_i)_*\oplus\operatorname{id}=\operatorname{id}$, so
$[\Psi]\in\IOut(H_\Gamma)$, and its normalized restrictions are the
classes $[\psi_i]$.
\end{proof}

\subsection{A finite subnormal series}

We refine the block terms in~\eqref{eq:global-pure-restriction}.  Fix an
index $i$.  Let $C_i$ be the central image of
$A_{B_i}/\bigl(H_{B_i}Z(A_{B_i})\bigr)$
in the outer IA sequence; by
Corollary~\ref{cor:torelli-universal-vertex}, it is either trivial or
infinite cyclic.  Let $\mathbf U_i$ be the unipotent radical of the cubic envelope for the
block $B_i$, and put
\begin{equation}\label{eq:block-unipotent-root-group}
 U_i=E_i\cap\mathbf U_i(\Q).
\end{equation}
Proposition~\ref{prop:unipotent-radical-capture} shows that $U_i$ is a
finite-index subgroup of the integral unipotent points, hence is finitely
generated, torsion-free, and nilpotent.  Write
$\Pi_i=q_{\mathrm{ss},i}(E_i)$,
where $q_{\mathrm{ss},i}$ denotes the block's projection to its split
semisimple quotient.  Proposition~\ref{prop:semisimple-arithmetic-image}
identifies $\Pi_i$ as an arithmetic subgroup of the split product
$\prod_{m_r\ge2}\SL_{m_r}(\Q)$, commensurable with the corresponding
product of integral special linear groups after the chosen lattice
identifications.

Let
$R_i=(\bar\rho_i)^{-1}(U_i)\le P_i$.
Then
\begin{equation}\label{eq:block-subnormal-series}
 1\triangleleft C_i\triangleleft\IOut(H_{B_i})
 \triangleleft R_i\triangleleft P_i
\end{equation}
is a subnormal series whose successive nontrivial quotients are
\[
 C_i,\qquad \IOut(A_{B_i}),\qquad U_i,\qquad \Pi_i.
\]
Indeed, the first two quotients come from the central outer IA sequence,
while the last two come from
\[
 1\longrightarrow U_i\longrightarrow E_i
 \longrightarrow\Pi_i\longrightarrow1.
\]
Only adjacent normality is asserted: in particular, $C_i$ need not be
normal in the top group $P_i$.

Refine $\prod_iP_i$ one coordinate at a time by the series
\eqref{eq:block-subnormal-series}, pull the result back through
\eqref{eq:global-pure-restriction}, and place the relative kernel at the
bottom of the series.  This proves the promised global structure theorem.

\begin{theorem}
\label{thm:global-subnormal-structure}
For every finite connected graph $\Gamma$, the normal finite-index subgroup
$\Out^\dagger(H_\Gamma)$ has a finite subnormal series whose successive
quotients are drawn from the following list:
\begin{enumerate}[label=\textup{(\roman*)}]
\item one relative group $\FR(G_1,\ldots,G_s;\Free_q)$;
\item outer IA groups $\IOut(A_{B_i})$ of the RAAG blocks;
\item trivial or infinite cyclic central groups;
\item finitely generated torsion-free nilpotent groups;
\item arithmetic subgroups of split products
      $\prod\SL_m(\Q)$, commensurable with the corresponding products of
      integral special linear groups.
\end{enumerate}
The quotient $\Out(H_\Gamma)/\Out^\dagger(H_\Gamma)$ is the finite wreath
product displayed in~\eqref{eq:global-finite-wreath-exact}.
\end{theorem}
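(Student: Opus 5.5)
The plan is to assemble the series from three inputs that are already in place: the exact sequence \eqref{eq:global-finite-wreath-exact}, the pure restriction sequence \eqref{eq:global-pure-restriction}, and the blockwise series \eqref{eq:block-subnormal-series}. The last sentence of the statement is exactly Proposition~\ref{prop:global-finite-wreath-quotient}, so the work lies in producing the series inside $\Out^\dagger(H_\Gamma)$.

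For each block index $i$, I will refine the factor $P_i$ using the four-step series \eqref{eq:block-subnormal-series}. This needs three checks. First, $\IOut(H_{B_i})$ is normal in $R_i$: it is the kernel of $\bar\rho_i$ restricted to $R_i$, because $\IOut=\ker\bar\rho_i$ and $R_i=\bar\rho_i^{-1}(U_i)$. Second, $R_i$ is normal in $P_i$ with quotient $E_i/U_i\cong\Pi_i$; this holds because $U_i=E_i\cap\mathbf U_i(\Q)$ is normal in $E_i$, being the kernel of $q_{\mathrm{ss},i}$ on $E_i$. Third, the bottom two quotients $C_i$ and $\IOut(A_{B_i})$ come from Theorem~\ref{thm:torelli-outer-exact-sequence}, with $C_i$ trivial or $\Z$ by Corollary~\ref{cor:torelli-universal-vertex}.

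With these in hand, the four quotients fit the list. $U_i$ is finitely generated torsion-free nilpotent by Lemma~\ref{lem:unipotent-integral-model} and Proposition~\ref{prop:unipotent-radical-capture}, giving type (iv). $\Pi_i$ is arithmetic in $\prod_{m_r\ge2}\SL_{m_r}$ by \eqref{eq:semisimple-product-identification} and Proposition~\ref{prop:semisimple-arithmetic-image}, giving type (v). For commensurability with the integral groups I will invoke Lemma~\ref{lem:arithmetic-images-and-models}(iv).

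Next I pass to the product $\prod_iP_i$ by refining one coordinate at a time. The series is $1\times\cdots\times1$, then the first coordinate runs through its series while the others stay trivial, then the second coordinate, and so on. Each step is normal in the next because the coordinates commute, and each quotient is the corresponding quotient of a single block. I will then pull this series back along the surjection $\operatorname{res}$ in \eqref{eq:global-pure-restriction}. Preimages preserve adjacent normality and the successive quotients, so the bottom term of the pulled-back series is $\ker\operatorname{res}=\FR(G_1,\ldots,G_s;\Free_q)$, which supplies the single item (i). Finiteness of the series is immediate because $s$ is finite.

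The main obstacle is making sure the outer IA layer matches up with the intermediate terms. The series \eqref{eq:block-subnormal-series} is written with $\IOut(H_{B_i})$, and the relevant identification is the exactness of \eqref{eq:biconnected-Out-exact-sequence}. I will verify that $\IOut(H_{B_i})\le P_i$, which holds since it maps to the identity, which lies in $E_i$. I will also record that only adjacent normality is claimed, so no normality of $C_i$ in $P_i$ is needed.
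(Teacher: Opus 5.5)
Your proposal is correct and follows the paper's own argument: refine $\prod_i P_i$ one coordinate at a time using the block series \eqref{eq:block-subnormal-series}, pull the result back along the surjection $\operatorname{res}$ of \eqref{eq:global-pure-restriction}, and place $\FR(G_1,\ldots,G_s;\Free_q)$ at the bottom, with the wreath quotient supplied by Proposition~\ref{prop:global-finite-wreath-quotient}. The explicit normality checks ($\IOut(H_{B_i})=\ker(\bar\rho_i|_{R_i})$ and $U_i=\ker(q_{\mathrm{ss},i}|_{E_i})$) and the appeal to Lemma~\ref{lem:arithmetic-images-and-models}\textup{(iv)} for commensurability spell out details the paper leaves implicit.
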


\subsection{Finite generation and generators}

\begin{theorem}
\label{thm:global-finite-generation}
If $\Gamma$ is finite and connected, then both $\Aut(H_\Gamma)$ and
$\Out(H_\Gamma)$ are finitely generated.
\end{theorem}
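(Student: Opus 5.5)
The plan is to assemble finite generation of $\Out(H_\Gamma)$ from the exact sequences already established, and then pass to $\Aut(H_\Gamma)$ through the inner automorphism group.

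First, reduce to the finite-index subgroup. By Proposition~\ref{prop:global-finite-wreath-quotient}, $\Out^\dagger(H_\Gamma)$ is normal of finite index in $\Out(H_\Gamma)$, with finite quotient. A group containing a finitely generated subgroup of finite index is itself finitely generated: adjoin coset representatives to the generators of the subgroup. So it suffices to show that $\Out^\dagger(H_\Gamma)$ is finitely generated.

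Second, use the pure restriction sequence \eqref{eq:global-pure-restriction}. Its kernel $\FR(G_1,\ldots,G_s;\Free_q)$ is finitely generated by Theorem~\ref{thm:global-pure-restriction}, so it remains to check that the quotient $\prod_iP_i$ is finitely generated. Each $P_i$ has finite index in $\Out(G_i)=\Out(H_{B_i})$ by Lemma~\ref{lem:block-pure-naturality}. Each $\Out(H_{B_i})$ is finitely generated by Corollary~\ref{cor:biconnected-finite-generation}, since $B_i$ is biconnected with at least three vertices. A finite-index subgroup of a finitely generated group is finitely generated (Schreier), so each $P_i$, and hence the finite product, is finitely generated. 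An extension of a finitely generated group by a finitely generated group is finitely generated, which gives finite generation of $\Out^\dagger(H_\Gamma)$ and therefore of $\Out(H_\Gamma)$. The boundary cases are covered by the same argument: when $s=0$ the relevant group is $\Out(\Free_q)$, which is finitely generated by Nielsen; when $\Gamma$ is a single vertex, $H_\Gamma$ is trivial.

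Third, pass from $\Out$ to $\Aut$ via $1\to\Inn(H_\Gamma)\to\Aut(H_\Gamma)\to\Out(H_\Gamma)\to1$. Here $\Inn(H_\Gamma)$ is a quotient of $H_\Gamma$, which is finitely generated because $\Gamma$ is connected (generated by the edge differences $d_{uv}$). The extension is therefore finitely generated.

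No step appears to be a genuine obstacle, since the substantive inputs — finite generation of the relative kernel, Corollary~\ref{cor:biconnected-finite-generation}, and finite index of $P_i$ — are already available. The only point needing care is that finite generation must be transferred down to the finite-index subgroups $P_i$, which uses Schreier's lemma, rather than up to them.
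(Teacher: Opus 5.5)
Your proposal is correct and follows the paper's proof. The relative kernel and $\prod_i P_i$ give finite generation of $\Out^\dagger(H_\Gamma)$, the finite wreath quotient lifts this to $\Out(H_\Gamma)$, and the finitely generated inner kernel lifts it to $\Aut(H_\Gamma)$. The only difference is minor: you obtain finite generation of each $P_i$ from Corollary~\ref{cor:biconnected-finite-generation} plus Schreier's lemma, whereas the paper reads it off the extension $1\to\IOut(G_i)\to P_i\to E_i\to1$ using Day's theorem and finite generation of $E_i$, which are the same underlying inputs.
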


\begin{proof}
The kernel in~\eqref{eq:global-pure-restriction} is finitely generated by
Theorem~\ref{thm:global-pure-restriction}.  Every $P_i$ is finitely
generated: its kernel $\IOut(G_i)$ is finitely generated by the outer IA
sequence and Day's theorem~\cite[Theorem~B]{Day}, while its quotient
$E_i$ is finitely
generated by the proof of
Corollary~\ref{cor:cohomological-image-finite-generation}.
Thus $\Out^\dagger(H_\Gamma)$ is finitely generated.  The wreath quotient
is finite, so $\Out(H_\Gamma)$ is finitely generated.

Finally,
\[
 1\longrightarrow H_\Gamma/Z(H_\Gamma)
 \longrightarrow\Aut(H_\Gamma)
 \longrightarrow\Out(H_\Gamma)
 \longrightarrow1
\]
is exact.  The inner kernel is finitely generated because $H_\Gamma$ is,
and the extension proves finite generation of $\Aut(H_\Gamma)$.
\end{proof}

A finite generating set for $\Out^\dagger(H_\Gamma)$ may be chosen from
the following families:
\begin{enumerate}[label=\textup{(G\arabic*)}]
\item on each block, restrictions of the normalized IA partial
      conjugations and commutator transvections;
\item on each block, lifts of a finite generating set of $E_i$; every
      defining root has one of the ambient or sector-shear formulas proved
      in Section~\ref{sec:root-realization};
\item the relative operations \textup{(R1)}--\textup{(R3)} in the proof of
      Theorem~\ref{thm:global-pure-restriction}.
\end{enumerate}
Adding representatives for generators of the finite wreath quotient gives
a finite generating set for the full outer group; adding finitely many
inner automorphisms gives one for the full automorphism group.

\section{The Tits alternative relative to virtually polycyclic groups}
\label{sec:tits}

We now pass from the structure of the full automorphism groups to the
structure of their arbitrary subgroups.  For a group $G$, write $\mathrm{TA}_{\mathrm{vpc}}(G)$ for the assertion that every subgroup of $G$ either contains a nonabelian
free group or is virtually polycyclic.  This is the Tits alternative
relative to the class of virtually polycyclic groups; it is the property
called the \emph{strong Tits alternative} in this paper.  We first prove an extension lemma, then apply the arithmetic--IA
decomposition to a biconnected block, and finally apply Horbez's
theorem~\cite{Horbez} to the Grushko decomposition from
Section~\ref{sec:grushko}.

\subsection{An extension principle}

If the image of a subgroup under an epimorphism contains a nonabelian
free group, projectivity of free groups lifts such a subgroup to the
domain.

\begin{lemma}
\label{lem:Tits-free-lifting}
Let $p:G\twoheadrightarrow Q$ be an epimorphism and let $K\leq G$.  If
$p(K)$ contains a subgroup isomorphic to $\Free_2$, then $K$ contains a subgroup
isomorphic to $\Free_2$.
\end{lemma}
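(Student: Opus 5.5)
The plan is to exploit the universal property of free groups. Suppose $F\leq p(K)$ with $F\cong\Free_2$, and choose a free basis $\{a,b\}$ of $F$. Since $a,b\in p(K)$, we can pick preimages $\tilde a,\tilde b\in K$ with $p(\tilde a)=a$ and $p(\tilde b)=b$. Then we set $\tilde F=\langle\tilde a,\tilde b\rangle\leq K$.

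The main step is to show that $\tilde F$ is free on $\tilde a,\tilde b$. Let $s:\Free_2\to G$ be the homomorphism from the abstract free group on two letters $x,y$ sending $x\mapsto\tilde a$ and $y\mapsto\tilde b$; its image is $\tilde F$. The composite $p\circ s$ sends $x\mapsto a$ and $y\mapsto b$. Because $a,b$ freely generate $F$, this composite is an isomorphism onto $F$, and in particular it is injective. Hence $s$ is injective, so $s$ maps $\Free_2$ isomorphically onto $\tilde F\leq K$.

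There is no real obstacle here. The only point to watch is that the lifts must be taken inside $K$, not merely inside $G$. This is possible because $F\leq p(K)$, so that $F$ consists of images of elements of $K$.
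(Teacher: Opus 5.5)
Your proof is correct and is essentially the paper's argument. Both proofs lift a free basis of $F$ to elements of $K$ and use the universal property to obtain a homomorphism $\Free_2\to K$ whose composite with $p$ is injective. That homomorphism is therefore a section embedding $\Free_2$ in $K$. The paper's intermediate subgroup $P=K\cap p^{-1}(F)$ is only a notational difference.
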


\begin{proof}
Choose $F\leq p(K)$ with $F\cong \Free_2$ and put
$P=K\cap p^{-1}(F)$.  The restriction $P\twoheadrightarrow F$ is onto.
Lifts in $P$ of a free basis of $F$ determine, by the universal property,
a section $F\to P$.  Thus $F$ embeds in $K$.
\end{proof}

\begin{lemma}
\label{lem:Tits-extension}
Let $N$, $G$, and $Q$ be groups fitting into a short exact sequence
\begin{equation}\label{eq:Tits-extension}
 1\longrightarrow N\longrightarrow G\overset{p}{\longrightarrow}Q\longrightarrow1.
\end{equation}
If $N$ and $Q$ satisfy $\mathrm{TA}_{\mathrm{vpc}}$, then so does $G$.
\end{lemma}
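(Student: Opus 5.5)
Take an arbitrary subgroup $K\leq G$ and apply Lemma~\ref{lem:Tits-free-lifting} with the epimorphism $p$. The image $p(K)$ is a subgroup of $Q$, so by $\mathrm{TA}_{\mathrm{vpc}}(Q)$ either $p(K)$ contains a copy of $\Free_2$ or $p(K)$ is virtually polycyclic. In the first case Lemma~\ref{lem:Tits-free-lifting} produces $\Free_2\leq K$, and we are done. So the real work is the second case.

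In that case the intersection $K\cap N$ is a subgroup of $N$. By $\mathrm{TA}_{\mathrm{vpc}}(N)$, either it contains $\Free_2$, which then lies in $K$, or $K\cap N$ is virtually polycyclic. It remains to show that $K$ is virtually polycyclic when it sits in an extension
\[
 1\longrightarrow K\cap N\longrightarrow K\longrightarrow p(K)\longrightarrow1
\]
with both ends virtually polycyclic.

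For this, choose a finite-index polycyclic subgroup $P\leq p(K)$. Its preimage $K_1=K\cap p^{-1}(P)$ has finite index in $K$. Choose a finite-index polycyclic subgroup $M\leq K\cap N$ and replace it by its normal core in $K_1$: the subgroup $K\cap N$ is finitely generated, being virtually polycyclic, so it has only finitely many subgroups of a given index. Hence the $K_1$-conjugates of $M$ are finitely many, and their intersection $M_0$ is normal in $K_1$, polycyclic, and of finite index in $K\cap N$.

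Then $K_1/M_0$ is an extension of the finite group $(K\cap N)/M_0$ by the polycyclic group $P$, hence virtually polycyclic. Concretely, it is finite-by-polycyclic; the centralizer of the finite normal subgroup has finite index, and a central-finite-by-polycyclic group is virtually polycyclic since polycyclic groups are residually finite and finitely presented enough to split off the finite part up to finite index. Alternatively, the class of virtually polycyclic groups is closed under extensions by a standard argument (see~\cite{SegalPolycyclic}), and I would simply cite that closure. Thus $K_1$ is (polycyclic $M_0$)-by-(virtually polycyclic), so it is virtually polycyclic, and so is $K$.

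The only delicate point is this closure of the virtually polycyclic class under extensions, in particular producing a normal polycyclic piece. I expect to handle it via the finite-core trick above and the standard citation rather than by reproving it.
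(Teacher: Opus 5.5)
Your proposal is correct and follows the paper's proof: both use Lemma~\ref{lem:Tits-free-lifting} and the alternatives in $N$ and $Q$ to reduce to the extension $1\to K\cap N\to K\to p(K)\to1$ with virtually polycyclic ends, and then invoke closure of the virtually polycyclic class under extensions, which the paper simply cites. Your normal-core reduction is fine, but your stated reason that finite-by-polycyclic is virtually polycyclic (``finitely presented enough to split off the finite part'') is not an argument, so rely on the citation there as the paper does, or induct along a cyclic series of the polycyclic quotient: at each infinite cyclic step, take a characteristic finite-index polycyclic subgroup of the previous term together with a lift of the new generator.
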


\begin{proof}
Let $K\leq G$ contain no $\Free_2$.  The subgroup $K\cap N$ is virtually
polycyclic.  Lemma~\ref{lem:Tits-free-lifting} shows that $p(K)$ contains
no $\Free_2$, so $p(K)$ is virtually polycyclic as well.  The sequence
\[
 1\longrightarrow K\cap N\longrightarrow K\longrightarrow p(K)
 \longrightarrow1
\]
and closure of virtually polycyclic groups under extensions
\cite[Ch.~I]{Raghunathan} imply that $K$ is virtually polycyclic.
\end{proof}

Virtually polycyclic groups are closed under subgroups, extensions, and
passage to finite-index supergroups~\cite[Chapter~1]{SegalPolycyclic}.
It follows from the definition that $\mathrm{TA}_{\mathrm{vpc}}$ is
inherited by subgroups and is invariant under passage between finite-index
subgroups and finite-index supergroups.
\subsection{Auxiliary Tits alternatives}

We use the following external Tits theorems.

\begin{proposition}
\label{prop:Tits-standard-results}
For every finite graph $\Delta$ and every $n\geq1$:
\begin{enumerate}[label=\textup{(\roman*)}]
\item $A_\Delta$ and all its subgroups satisfy
      $\mathrm{TA}_{\mathrm{vpc}}$;
\item $\Out(A_\Delta)$ satisfies $\mathrm{TA}_{\mathrm{vpc}}$;
\item every subgroup of $\GL_n(\Z)$ satisfies
      $\mathrm{TA}_{\mathrm{vpc}}$;
\item $\Aut(A_\Delta)$ and its subgroup $\IAut(A_\Delta)$ satisfy
      $\mathrm{TA}_{\mathrm{vpc}}$.
\end{enumerate}
\end{proposition}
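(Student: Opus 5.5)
This proposition collects external results, so the proof will mostly cite the literature. The work is in checking that each cited theorem yields the relative-to-virtually-polycyclic form of the alternative, and not merely ``virtually solvable or contains $\Free_2$.'' The tool for that upgrade is the extension principle of Lemma~\ref{lem:Tits-extension}, together with the stability of $\mathrm{TA}_{\mathrm{vpc}}$ under subgroups and finite-index passage noted after it.

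For (iii), apply Tits's alternative for linear groups: every subgroup of $\GL_n(\Z)$ either contains $\Free_2$ or is virtually solvable. A virtually solvable subgroup of $\GL_n(\Z)$ is virtually polycyclic, by Mal'cev's theorem that solvable subgroups of $\GL_n(\Z)$ are polycyclic. This gives the stronger form, and it passes to all subgroups automatically.

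For (i), every subgroup of $A_\Delta$ is torsion-free. A subgroup without $\Free_2$ is abelian by Baudisch's theorem that two-generator subgroups of RAAGs are free or abelian. Abelian subgroups of $A_\Delta$ are finitely generated free abelian, for instance because $A_\Delta$ acts on a finite-dimensional CAT(0) cube complex, so such a subgroup is polycyclic.

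For (ii) and (iv), cite Charney--Vogtmann, who prove that $\Out(A_\Delta)$ satisfies the Tits alternative, and Horbez, whose results give the version relative to virtually polycyclic groups. If only the virtually solvable version is available in the literature, I would upgrade it by induction on $|V(\Delta)|$. The Charney--Vogtmann restriction and projection homomorphisms to $\Out$ of links and stars have kernels lying in a finite-index subgroup of $\Out(A_\Delta)$ with abelian or Torelli-type structure. Lemma~\ref{lem:Tits-extension} lets the property propagate through these extensions, and the base cases are $\GL_n(\Z)$ from (iii) and $\Out(\Free_n)$ from Bestvina--Feighn--Handel. In that case solvable subgroups of $\Out(\Free_n)$ are virtually abelian and finitely generated, so they are polycyclic.

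Then deduce (iv) from the exact sequence
\[
1\longrightarrow A_\Delta/Z(A_\Delta)\longrightarrow\Aut(A_\Delta)\longrightarrow\Out(A_\Delta)\longrightarrow1.
\]
To get $\mathrm{TA}_{\mathrm{vpc}}$ for the kernel $A_\Delta/Z(A_\Delta)$, note that it is isomorphic to the subgroup $A_{\Delta'}$ where $\Delta'$ is $\Delta$ with its universal vertices deleted, since $A_\Delta=A_{\Delta'}\times Z(A_\Delta)$; so (i) applies. Lemma~\ref{lem:Tits-extension} combined with (ii) then gives the result for $\Aut(A_\Delta)$, and $\IAut(A_\Delta)$ inherits it as a subgroup.

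The main obstacle is (ii), specifically making sure the cited result gives virtually polycyclic rather than virtually solvable subgroups. If the citation is insufficient, the induction above must account for the finitely generated abelian kernels produced by the restriction maps.
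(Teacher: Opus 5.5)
Your parts (iii) and (iv) are exactly the paper's arguments. Part (iii) uses the Tits alternative plus Mal'cev's theorem for $\GL_n(\Z)$. Part (iv) uses $\Inn(A_\Delta)\cong A_{\Delta-U}$, Lemma~\ref{lem:Tits-extension}, and heredity to $\IAut(A_\Delta)$.

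For (i) you take a different and more elementary route; the paper simply quotes the graph-product theorem of Antol\'{\i}n--Minasyan~\cite{AntolinMinasyan}. Baudisch's two-generator theorem shows that an $\Free_2$-free subgroup of $A_\Delta$ is abelian, and abelian subgroups of $A_\Delta$ are free abelian of finite rank. This buys the sharper dichotomy ``abelian or contains $\Free_2$.'' One precision is needed for the finiteness of abelian subgroups: you need a proper cocompact action, such as the action on the universal cover of the Salvetti complex, together with the flat torus / solvable subgroup theorem. An action on some finite-dimensional CAT(0) cube complex is not enough as stated.

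For (ii) the paper cites Sale--Susse~\cite[Corollary~4.4]{SaleSusse}, which is already stated relative to virtually polycyclic groups, so the worry you flag never arises. Your route needs repair.
\begin{enumerate}
\item Charney--Vogtmann's Tits alternative for $\Out(A_\Delta)$ was proved under a homogeneity hypothesis on $\Delta$. The unconditional statement comes from Horbez.
\item Your fallback induction has a concrete hole. The statement that the kernel of the product of projection homomorphisms is finitely generated free abelian is a statement about connected $\Delta$, and the links to which the induction passes can be disconnected.
\item For disconnected $\Delta$ with a component of at least two vertices, $A_\Delta=A_{\Delta_1}*\cdots*A_{\Delta_k}*\Free_N$ is neither of your base cases $\GL_n(\Z)$ or $\Out(\Free_n)$. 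The relevant kernel there consists of relative free-product automorphisms, which are neither abelian nor of Torelli type.
\end{enumerate}
The missing ingredient is Horbez's free-product theorem in its relative form, Theorem~\ref{thm:Horbez-free-product} with $\mathcal C$ the class of virtually polycyclic groups. Feed it (i) for the factors $A_{\Delta_i}$, and induction on $|V(\Delta)|$ for the groups $\Out(A_{\Delta_i})$. In the connected case, the finitely generated free abelian kernel and Lemma~\ref{lem:Tits-extension} finish the step, with the base cases you list; then pass from the finite-index subgroup to $\Out(A_\Delta)$. With that addition your induction does give the relative version.
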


\begin{proof}
Statement~(i) is the RAAG specialization of the graph-product theorem of
Antol{\'\i}n--Minasyan~\cite[Theorem~4.3]{AntolinMinasyan}.  Statement~(ii)
is Sale--Susse's relative Tits alternative for outer automorphism groups
of graph products of finitely generated abelian groups
\cite[Corollary~4.4]{SaleSusse}.

For~(iii), the classical Tits alternative in characteristic zero says that
a linear subgroup containing no $\Free_2$ is virtually solvable
\cite{TitsFree}.  Mal'cev's theorem makes every solvable subgroup of
$\GL_n(\Z)$ polycyclic~\cite{Malcev}; hence the original subgroup is
virtually polycyclic.

For~(iv), let $U$ be the set of universal vertices of $\Delta$.  Then
\[
 A_\Delta=A_{\Delta-U}\times\Z^U,
 \qquad
 \Inn(A_\Delta)=A_\Delta/Z(A_\Delta)\cong A_{\Delta-U}.
\]
The kernel and quotient in
\[
 1\longrightarrow\Inn(A_\Delta)
 \longrightarrow\Aut(A_\Delta)
 \longrightarrow\Out(A_\Delta)
 \longrightarrow1
\]
satisfy the alternative by~(i) and~(ii).  Apply
Lemma~\ref{lem:Tits-extension}, and then use heredity for
$\IAut(A_\Delta)$.
\end{proof}

\subsection{Biconnected blocks}

Let $B$ be a finite biconnected graph with at least three vertices.  IA
rigidity and the cohomological
representation give
\begin{equation}\label{eq:Tits-block-Aut-sequence}
 1\longrightarrow\IAut(A_B)
 \longrightarrow\Aut(H_B)
 \longrightarrow\Lambda_B
 \longrightarrow1,
\end{equation}
where $\Lambda_B\leq\GL(H^1(H_B;\Z))$.  The corresponding outer sequences
are
\begin{equation}\label{eq:Tits-block-outer-Torelli}
 1\longrightarrow Z_B^{\mathrm{deck}}
 \longrightarrow\IOut(H_B)
 \longrightarrow\IOut(A_B)
 \longrightarrow1,
 \qquad Z_B^{\mathrm{deck}}\in\{1,\Z\},
\end{equation}
and
\begin{equation}\label{eq:Tits-block-Out-sequence}
 1\longrightarrow\IOut(H_B)
 \longrightarrow\Out(H_B)
 \longrightarrow\Lambda_B
 \longrightarrow1.
\end{equation}
No splitting of~\eqref{eq:Tits-block-outer-Torelli} is used.

\begin{proposition}
\label{prop:Tits-biconnected}
For every finite biconnected $B$ with at least three vertices, both
$\Aut(H_B)$ and $\Out(H_B)$ satisfy
$\mathrm{TA}_{\mathrm{vpc}}$.
\end{proposition}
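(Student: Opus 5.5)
The plan is to assemble $\mathrm{TA}_{\mathrm{vpc}}$ for $\Aut(H_B)$ and $\Out(H_B)$ from the exact sequences \eqref{eq:Tits-block-Aut-sequence}--\eqref{eq:Tits-block-Out-sequence}, using Lemma~\ref{lem:Tits-extension} repeatedly. No new geometry is needed; every ingredient is either an external Tits theorem collected in Proposition~\ref{prop:Tits-standard-results} or a structural result proved earlier in the paper.

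The quotient $\Lambda_B$ is the same in both sequences. It is a subgroup of $\GL(H^1(H_B;\Z))\cong\GL_{n}(\Z)$ with $n=|V(B)|-1$, by Proposition~\ref{prop:standard-LW-lattices}. Proposition~\ref{prop:Tits-standard-results}(iii) therefore gives $\mathrm{TA}_{\mathrm{vpc}}$ for $\Lambda_B$ directly. We do not need its arithmetic description for this.

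For $\Aut(H_B)$, the kernel of \eqref{eq:Tits-block-Aut-sequence} is $\IAut(A_B)$ by Theorem~\ref{thm:torelli-restriction-isomorphism}. This kernel satisfies $\mathrm{TA}_{\mathrm{vpc}}$ by Proposition~\ref{prop:Tits-standard-results}(iv), and Lemma~\ref{lem:Tits-extension} then gives the conclusion for $\Aut(H_B)$.

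For $\Out(H_B)$ there are two routes. The simpler one, which I would use, is to note that $\Out(H_B)$ is the quotient of $\Aut(H_B)$ by $\Inn(H_B)$. The property $\mathrm{TA}_{\mathrm{vpc}}$ does not obviously pass to quotients, so this route needs a separate argument. The more robust route, which I would actually write, builds $\IOut(H_B)$ from \eqref{eq:Tits-block-outer-Torelli}.
\begin{itemize}
\item The kernel $Z_B^{\mathrm{deck}}$ is trivial or infinite cyclic, so it is polycyclic and trivially satisfies $\mathrm{TA}_{\mathrm{vpc}}$.
\item The quotient $\IOut(A_B)$ is a subgroup of $\Out(A_B)$, so it satisfies $\mathrm{TA}_{\mathrm{vpc}}$ by Proposition~\ref{prop:Tits-standard-results}(ii) and heredity to subgroups.
\end{itemize}
Lemma~\ref{lem:Tits-extension} then gives $\mathrm{TA}_{\mathrm{vpc}}$ for $\IOut(H_B)$. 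A second application of the lemma to \eqref{eq:Tits-block-Out-sequence}, with quotient $\Lambda_B$, gives it for $\Out(H_B)$. Since no splitting of \eqref{eq:Tits-block-outer-Torelli} is used, nothing further is required there.

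The only point I expect to need care is the justification of Lemma~\ref{lem:Tits-extension} itself. It relies on closure of virtually polycyclic groups under extensions, which is already cited, and on the observation that a subgroup without $\Free_2$ has image without $\Free_2$, which is Lemma~\ref{lem:Tits-free-lifting}. A second point to check is that $\IOut(A_B)$, as a subgroup of $\Out(A_B)$, inherits the property. This holds because $\mathrm{TA}_{\mathrm{vpc}}$ is a statement about all subgroups, so the main obstacle is purely bookkeeping.
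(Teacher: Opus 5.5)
Your proposal is correct and is essentially the paper's proof: Lemma~\ref{lem:Tits-extension} applied to the sequence with kernel $\IAut(A_B)$ and integral linear quotient $\Lambda_B$ for $\Aut(H_B)$, and applied twice for $\Out(H_B)$, first to the central deck extension of $\IOut(A_B)$ and then to the sequence with quotient $\Lambda_B$. You were right to set aside the quotient route from $\Aut(H_B)$, since $\mathrm{TA}_{\mathrm{vpc}}$ is not known to pass to quotients.
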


\begin{proof}
Proposition~\ref{prop:Tits-standard-results} gives the alternative for
$\IAut(A_B)$ and for the integral linear group $\Lambda_B$.  Applying
Lemma~\ref{lem:Tits-extension} to
\eqref{eq:Tits-block-Aut-sequence} proves the automorphism assertion.

The group $\IOut(A_B)$ is a subgroup of $\Out(A_B)$ and therefore satisfies
the alternative.  The kernel $Z_B^{\mathrm{deck}}$ is cyclic.  Two applications of
Lemma~\ref{lem:Tits-extension}, first to
\eqref{eq:Tits-block-outer-Torelli} and then to
\eqref{eq:Tits-block-Out-sequence}, prove the outer assertion.
\end{proof}

The argument only uses that the cohomological image is integral linear.
The stronger arithmetic description from Section~\ref{sec:arithmeticity}
determines whether the entire outer automorphism group is virtually
polycyclic.

\subsection{Arbitrary connected graphs}

The following is Horbez's free-product theorem; the peripheral family may
be empty, which includes the free-group case~\cite[Theorem~6.1]{Horbez}.

\begin{theorem}[Horbez's free-product theorem]
\label{thm:Horbez-free-product}
Let
$G=G_1*\cdots*G_s*\Free_q$,
where the $G_i$ are countable, freely indecomposable, and not isomorphic to
$\Z$, and $\Free_q$ is finitely generated free.  Let $\mathcal C$ be a class
of groups stable under isomorphisms, subgroups, extensions, and passage to
finite-index supergroups, and assume $\Z\in\mathcal C$.  If every $G_i$
and every $\Out(G_i)$ satisfies the Tits alternative relative to
$\mathcal C$, then $\Out(G)$ satisfies that relative alternative.
\end{theorem}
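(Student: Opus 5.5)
Theorem~\ref{thm:Horbez-free-product} is quoted from Horbez~\cite[Theorem~6.1]{Horbez}, and in the paper we would simply cite it. If one wants to reconstruct the argument, I would follow Horbez's route: an induction on the complexity of the Grushko decomposition, using the action of $\Out(G)$ on hyperbolic complexes built from the relative outer space of Guirardel--Levitt~\cite{GuirardelLevitt}.

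\emph{Reductions.} Let $K\leq\Out(G)$ be a subgroup containing no $\Free_2$; we must show $K\in\mathcal C$. First pass to the finite-index subgroup of $K$ that fixes the conjugacy class of every $G_i$. This is harmless because $\mathcal C$ is closed under finite-index supergroups, and Grushko uniqueness gives a finite permutation quotient. Then apply Lemma~\ref{lem:Tits-free-lifting} in the form of the extension principle (Lemma~\ref{lem:Tits-extension}, rewritten for a general class $\mathcal C$) to the restriction map onto $\prod_i\Out(G_i)$. The image is handled by the hypothesis on the $\Out(G_i)$. So it remains to treat subgroups of the relative kernel, which is the group $\FR(G_1,\ldots,G_s;\Free_q)$ of Section~\ref{sec:global-structure}.

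\emph{Dynamical dichotomy.} Let $K$ act on the relative free factor graph of $(G,\{[G_i]\})$, which is Gromov hyperbolic. Gromov's classification of isometric group actions then gives the following cases.
\begin{enumerate}[label=\textup{(\alph*)}]
\item $K$ contains two independent loxodromic elements. Ping-pong produces $\Free_2$, which is excluded.
\item $K$ has bounded orbits. Then $K$ virtually fixes a proper free factor system. Restricting to the smaller free products it determines lowers the complexity, and induction, with the extension closure of $\mathcal C$, applies. This uses that the factors of such a system again satisfy the hypotheses.
\item $K$ fixes a point at infinity, or has a loxodromic element with a virtually invariant pair of endpoints. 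Then $K$ virtually fixes the projective class of an arational relative tree. One shows that its stabilizer, modulo a kernel built from the $G_i$ and the $\Out(G_i)$, maps to $\Z$ through the stretch factor homomorphism. This is where $\Z\in\mathcal C$ and extension closure are used.
\end{enumerate}

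\emph{Point stabilizers.} The bounded-orbit case ultimately reduces to stabilizers of Grushko trees. Each such stabilizer is an extension of a product of copies of the $G_i$ (the twist groups) by $\prod_i\Out(G_i)$. The hypothesis that each $G_i$ satisfies the alternative relative to $\mathcal C$ enters precisely at this step.

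\emph{Main obstacle.} I expect the hardest step to be case~(c), the arational case. It requires the description of stabilizers of arational trees in the relative setting, and a proof that the kernel of the stretch homomorphism is controlled by the peripheral data. I would try first to transcribe the free-group arguments of Handel--Mosher and Horbez, replacing free bases by Grushko trees. Guirardel--Levitt's relative outer space would supply the needed compactness and boundary theory.
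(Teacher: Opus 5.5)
Citing \cite[Theorem~6.1]{Horbez} is exactly what the paper does: the theorem is imported, not reproved, so your proposal matches it. The reconstruction is optional, and if you pursue it, two points need repair. First, step~(b) does not follow from Gromov's classification: getting from bounded orbits on the locally infinite free factor graph to a virtually invariant proper free factor is the Handel--Mosher-type alternative, which Horbez proves separately. Second, the induction only closes if it is run for $\Out(G,\mathcal F)$ relative to arbitrary free factor systems. This is because the kernel of restriction to an invariant free factor is controlled by treating that factor as a new peripheral subgroup, and such a factor need not be freely indecomposable or noncyclic.
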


Let $\Gamma$ be a finite connected graph.  By
Theorem~\ref{thm:BB-grushko-decomposition},
\begin{equation}\label{eq:Tits-global-Grushko}
 H_\Gamma=H_{B_1}*\cdots*H_{B_s}*\Free_q,
\end{equation}
where every $H_{B_i}$ is noncyclic and freely indecomposable.  Each factor
$H_{B_i}$ is a subgroup of the RAAG $A_{B_i}$, so it satisfies
$\mathrm{TA}_{\mathrm{vpc}}$ by
Proposition~\ref{prop:Tits-standard-results}; its outer automorphism group
satisfies the same alternative by
Proposition~\ref{prop:Tits-biconnected}.

The class of virtually polycyclic groups satisfies all closure hypotheses
in Theorem~\ref{thm:Horbez-free-product}.  That theorem therefore
applies to~\eqref{eq:Tits-global-Grushko} and gives the outer conclusion.

To obtain the full automorphism group without appealing to a quotient
principle, we identify the inner kernel.  The centralizer identity
\eqref{eq:torelli-centralizer-of-H} gives
$Z(H_\Gamma)=H_\Gamma\cap Z(A_\Gamma)$.
Therefore inclusion induces
\begin{equation}\label{eq:Tits-inner-RAAG-embedding}
 \Inn(H_\Gamma)=H_\Gamma/Z(H_\Gamma)
 \hookrightarrow A_\Gamma/Z(A_\Gamma).
\end{equation}
Deleting the universal vertices of $\Gamma$ identifies the group on the
right with a RAAG.  Thus $\Inn(H_\Gamma)$ satisfies
$\mathrm{TA}_{\mathrm{vpc}}$.  Lemma~\ref{lem:Tits-extension} applied to
\[
 1\longrightarrow\Inn(H_\Gamma)
 \longrightarrow\Aut(H_\Gamma)
 \longrightarrow\Out(H_\Gamma)
 \longrightarrow1
\]
then gives the automorphism conclusion.

\begin{theorem}[Tits alternative relative to virtually polycyclic groups]
\label{thm:strong-Tits-alternative}
Let $\Gamma$ be a finite connected graph.  Every subgroup of
$\Aut(H_\Gamma)$ or $\Out(H_\Gamma)$ either is virtually polycyclic or
contains a subgroup isomorphic to $\Free_2$.
\end{theorem}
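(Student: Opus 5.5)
The theorem is essentially assembled in the paragraphs immediately preceding its statement, so the proof will collect those steps in order. First, recall from Theorem~\ref{thm:BB-grushko-decomposition} that $H_\Gamma=H_{B_1}*\cdots*H_{B_s}*\Free_q$, with each $H_{B_i}$ countable (indeed finitely generated), freely indecomposable and not isomorphic to $\Z$ by Proposition~\ref{prop:block-factor-freely-indecomposable}. The boundary cases need a brief remark. If $s=0$ we are in the free-group case, which is covered by Horbez's theorem with empty peripheral family. If $\Gamma$ is a single vertex, $H_\Gamma$ is trivial and everything is finite.

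Next, verify the hypotheses of Theorem~\ref{thm:Horbez-free-product} for the class $\mathcal C$ of virtually polycyclic groups. This class is closed under isomorphisms, subgroups, extensions and finite-index supergroups, and it contains $\Z$. For the factors:
\begin{enumerate}[label=\textup{(\alph*)}]
\item each $H_{B_i}\le A_{B_i}$ satisfies $\mathrm{TA}_{\mathrm{vpc}}$ by Proposition~\ref{prop:Tits-standard-results}(i);
\item each $\Out(H_{B_i})$ satisfies it by Proposition~\ref{prop:Tits-biconnected}.
\end{enumerate}
Proposition~\ref{prop:Tits-biconnected} itself rests on three inputs: the block sequences \eqref{eq:Tits-block-Out-sequence} and \eqref{eq:Tits-block-outer-Torelli}, the integral linearity of $\Lambda_B$ together with Tits--Mal'cev, and the alternative for $\IOut(A_B)\le\Out(A_B)$ due to Sale--Susse. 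Horbez's theorem then yields $\mathrm{TA}_{\mathrm{vpc}}$ for $\Out(H_\Gamma)$.

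For $\Aut(H_\Gamma)$, use the inner/outer extension. By Proposition~\ref{prop:torelli-centralizer-of-H}, $Z(H_\Gamma)=H_\Gamma\cap Z(A_\Gamma)$, so $\Inn(H_\Gamma)$ embeds in $A_\Gamma/Z(A_\Gamma)\cong A_{\Gamma-U}$, where $U$ is the set of universal vertices. Hence $\Inn(H_\Gamma)$ satisfies the alternative by Proposition~\ref{prop:Tits-standard-results}(i) and heredity. Lemma~\ref{lem:Tits-extension} applied to $1\to\Inn(H_\Gamma)\to\Aut(H_\Gamma)\to\Out(H_\Gamma)\to1$ finishes the argument.

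The main obstacle is the applicability of Horbez's theorem in precisely the relative form needed. One must check that its hypotheses on the peripheral factors are exactly the relative Tits alternative for $G_i$ and $\Out(G_i)$, with no extra finiteness assumption such as finite presentability, since $H_{B_i}$ need not be finitely presented (e.g.\ $C_n$, $n\ge5$). The plan is to take \cite[Theorem~6.1]{Horbez} as stated, with countability and free indecomposability the only requirements. A secondary point is the degenerate case where $H_\Gamma$ is $\Z$ or trivial, where $\Out$ is finite and the statement is immediate.
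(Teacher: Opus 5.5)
Your proposal is correct and follows the paper's own route. You apply Horbez's free-product theorem to the Grushko decomposition, with the factor hypotheses supplied by Propositions~\ref{prop:Tits-standard-results}(i) and~\ref{prop:Tits-biconnected}. You then pass to $\Aut(H_\Gamma)$ by applying the extension lemma to $1\to\Inn(H_\Gamma)\to\Aut(H_\Gamma)\to\Out(H_\Gamma)\to1$, using the embedding $\Inn(H_\Gamma)\hookrightarrow A_\Gamma/Z(A_\Gamma)\cong A_{\Gamma-U}$. Your treatment of the degenerate cases (trivial, infinite cyclic, and purely free $H_\Gamma$) also matches the paper's.
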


\begin{proof}
The preceding application of Horbez's theorem proves the assertion for
$\Out(H_\Gamma)$, and the inner-kernel argument proves it for
$\Aut(H_\Gamma)$.  If $\Gamma$ has one vertex or one edge, then
$H_\Gamma$ is respectively trivial or infinite cyclic and the assertion
is immediate.  If the decomposition has no nonfree factor, then
$H_\Gamma=\Free_q$ and Horbez's theorem applies with an empty peripheral
family.
\end{proof}

\subsection{A criterion for virtual polycyclicity}

We now return to a biconnected graph $\Gamma$.  A \emph{separating
intersection of links} (SIL) is a triple $(x,y\mid z)$ such that $x$ and
$y$ are distinct nonadjacent vertices and $z$ lies in a component of
$\Gamma-\bigl(\lk(x)\cap\lk(y)\bigr)$
that contains neither $x$ nor $y$~\cite[Definition~3.2]{CharneyRuaneStambaughVijayan}.

\begin{proposition}
\label{prop:SIL-outer-Torelli-predicate}
For every finite graph $\Gamma$,
\[
 \Gamma\text{ has no SIL}
 \quad\Longleftrightarrow\quad
 \IOut(A_\Gamma)\text{ is finitely generated abelian}.
\]
If $\Gamma$ has a SIL, then $\Free_2\leq\IOut(A_\Gamma)$.
\end{proposition}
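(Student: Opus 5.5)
We will deduce this from known results on $\IOut(A_\Gamma)$ rather than re-derive anything; the result is about RAAGs only. The plan is to combine Day's generating set (Corollary~\ref{cor:transferred-Torelli-generators} context: partial conjugations $P_{i,C}$ and commutator transvections $K_{i;jk}$) with the Charney--Ruane--Stambaugh--Vijayan analysis of SIL. First I would record the standard fact from \cite{CharneyRuaneStambaughVijayan} (and the Guerrien/Day line of work) that, when $\Gamma$ has no SIL, any two partial conjugations commute modulo inner automorphisms, while commutator transvections $v_i\mapsto v_i[v_j,v_k]$ require $v_i\preccurlyeq v_j,v_k$ with $[v_j,v_k]\ne1$. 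For the latter I will show that $v_j,v_k$ nonadjacent and both dominating $v_i$ forces $\lk(v_i)\subseteq\lk(v_j)\cap\lk(v_k)$, so that $v_i$ (if not in $\st(v_j)\cup\st(v_k)$) lies in a component of $\Gamma-(\lk(v_j)\cap\lk(v_k))$ avoiding $v_j,v_k$, which is exactly a SIL $(v_j,v_k\mid v_i)$; the residual case $v_i$ adjacent to $v_j$ or $v_k$ is excluded since then $v_j\in\lk(v_i)\subseteq\st(v_k)$ contradicts nonadjacency. So without SIL, $\mathcal K_\Gamma=\varnothing$ and $\IOut(A_\Gamma)$ is generated by the images of finitely many partial conjugations.

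Second, still assuming no SIL, I would verify pairwise commutation in $\Out$ of $P_{i,C}$ and $P_{j,D}$. The cases: $i=j$ is clear (disjoint supports, same multiplier); $v_i,v_j$ adjacent gives commuting automorphisms directly; for $v_i,v_j$ nonadjacent, one checks via the component structure that either supports are disjoint and multipliers lie outside the other support (commute), or one support contains the other's complement so that after composing with an inner automorphism they commute, and the only obstruction is a component of $\Gamma-\st(v_i)$ and one of $\Gamma-\st(v_j)$ meeting in a vertex $z$ separated from both $v_i,v_j$ by $\lk(v_i)\cap\lk(v_j)$, i.e.\ a SIL. This case analysis is the main obstacle; I would follow the proof of \cite[Theorem~3.?]{CharneyRuaneStambaughVijayan} (their ``no SIL implies $\IOut$ abelian'' result for RAAGs), citing it directly if permissible. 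Abelian plus finite generation (Day) gives finitely generated abelian.

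Conversely, given a SIL $(x,y\mid z)$, let $C$ be the component of $\Gamma-(\lk(x)\cap\lk(y))$ containing $z$; it lies in a component $C_x$ of $\Gamma-\st(x)$ and $C_y$ of $\Gamma-\st(y)$. I would show the partial conjugations $P_{x,C_x}$ and $P_{y,C_y}$ generate a free group in $\IOut(A_\Gamma)$, by restricting to the subgroup $\langle x,y,z\rangle\cong\Free_3$ (or projecting to the retract on $\{x,y,z\}$, which is free since $x,y,z$ are pairwise nonadjacent when $z$ avoids both links — if $z$ lies in a link, replace $z$ by a suitable vertex of $C$ outside $\st(x)\cup\st(y)$) and observing that the induced action on the conjugacy class of $z$ is $z\mapsto x^{a}y^{b}\cdots z\cdots$ by ping-pong of the words $x^n,y^m$; the orbit of $[z]$ under the group grows freely, so no nontrivial reduced word acts trivially in $\Out$. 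This gives $\Free_2\le\IOut(A_\Gamma)$, so $\IOut$ is not abelian, completing both directions.
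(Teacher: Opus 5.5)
Your first half follows the paper's route. Both use Day's generators. Both give the same argument that a commutator transvection with target $v$ and nonadjacent multipliers $x,y$ forces $\lk(v)\subseteq\lk(x)\cap\lk(y)$, so that $\{v\}$ is itself a component and $(x,y\mid v)$ is a SIL. Both delegate the outer commutation of partial conjugations with distinct multipliers to the literature; the paper cites \cite[Lemma~1.7]{SaleSusse} rather than CRSV.

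The gap is in the converse, at the passage from $\Aut$ to $\Out$. Every partial conjugation $P_{x,C}$, $P_{y,C}$ sends $z$ to a conjugate of $z$. The conjugacy class $[z]$ is therefore \emph{fixed} by the whole group you generate: its orbit is a single point and cannot show that a nontrivial word is non-inner. Ping-pong on the element $z\mapsto\bar w z\bar w^{-1}$ only proves injectivity into $\Aut(A_\Gamma)$.

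The missing idea is that these automorphisms fix $x$ and $y$ pointwise. Suppose the automorphism given by a nontrivial reduced word acted as $c_g$. It preserves $A_{\{x,y,z\}}=F(x,y,z)$, so the canonical retraction $r\colon A_\Gamma\to F(x,y,z)$ shows its restriction there is $c_{r(g)}$. Fixing $x$ and $y$ forces $r(g)\in C_F(x)\cap C_F(y)=1$. Then $\bar w$ centralizes $z$, so $\bar w\in F(x,y)\cap\langle z\rangle=1$, a contradiction. (Tracking the conjugacy class of a non-fixed element such as $xz$ could also work, but needs its own argument.)

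A smaller point: you never verify that the component $C$ of $\Gamma-(\lk(x)\cap\lk(y))$ containing $z$ avoids $\st(x)\cup\st(y)$. You need this for $P_{x,C_x}$ to fix $y$ and $P_{y,C_y}$ to fix $x$; otherwise the action on $F(x,y,z)$ is not the one you describe. It is automatic:

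\begin{itemize}
\item If $c\in C$ were adjacent to $x$, then $x\notin\lk(x)\cap\lk(y)$ would be a vertex of the complement adjacent to $C$. That gives $x\in C$, a contradiction. Hence $C\cap\lk(x)=\varnothing$, and symmetrically $C\cap\lk(y)=\varnothing$.
\item Any vertex of $\Gamma-\st(x)$ adjacent to $C$ lies outside $\lk(x)\cap\lk(y)$, hence in $C$. So $C_x=C$, and likewise $C_y=C$.
\end{itemize}

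Consequently $z$ never lies in a link, your fallback replacement of $z$ is vacuous, and $x,y\notin C_x=C_y$.
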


\begin{proof}
Day's theorem gives the partial conjugations and commutator
transvections generating $\IAut(A_\Gamma)$~\cite[Theorem~B]{Day}.
Passing to outer classes
generates $\IOut(A_\Gamma)$ because inner automorphisms act trivially on
abelianization.  If there is no SIL, no nontrivial commutator transvection
survives in the outer group:
indeed, the domination conditions for a commutator transvection with target
$v$ and nonadjacent multipliers $x,y$ imply
$\lk(v)\subseteq\st(x)\cap\st(y)$.  The target is adjacent to neither
multiplier: if, for example, $v$ were
adjacent to $x$, the domination $v\preccurlyeq y$ would force
$x\in\st(y)$, contrary to the distinct nonadjacent choice of $x,y$.
The same applies to $y$.  Hence
$\lk(v)\subseteq\lk(x)\cap\lk(y)$, and $\{v\}$ is a component of the
complement of the common link, so $(x,y\mid v)$ is a SIL.
Partial conjugations with the same multiplier commute because their
component supports are equal or disjoint.  Those with distinct multipliers
commute in the outer group in the absence of a SIL
\cite[Lemma~1.7]{SaleSusse}.  Hence the finite generating set is pairwise
commuting.

Conversely, let $(x,y\mid z)$ be a SIL and let $C$ be the component of the
complement of the common link that contains $z$.  The partial conjugations
$P_{x,C}$ and $P_{y,C}$ are defined and lie in the IA subgroup.  Indeed, a vertex
of $C$ adjacent to $x$ would connect $C$ to the surviving vertex $x$ in
the complement of the common link, and similarly for $y$.  Thus $C$ meets
neither link.  Connectivity and maximality then show that $C$ is a
component of both $\Gamma-\st(x)$ and $\Gamma-\st(y)$.

Retract to the special subgroup $A_{\{x,y,z\}}=F(x,y,z)$.  A reduced word
$w$ in the two partial conjugations fixes $x,y$ and sends
$z\longmapsto w(x,y)z w(x,y)^{-1}$.
If this automorphism were conjugation by $g\in A_\Gamma$, the canonical
retraction $r:A_\Gamma\to F(x,y,z)$ would make its restricted action
conjugation by $r(g)$.  Since the action fixes $x,y$, one has
$r(g)\in C_F(x)\cap C_F(y)=1$ by the free-group centralizer theorem
\cite[Ch.~I]{LyndonSchupp}.  The action on $z$ is therefore trivial,
and the same theorem gives $w\in C_F(z)\cap F(x,y)=1$.  Thus the two outer classes freely generate
$\Free_2$.
\end{proof}

Let $J_\Gamma=\Jac(\CAlg)$ and recall the split Wedderburn
decomposition
\begin{equation}\label{eq:Tits-Wedderburn-recall}
 \CAlg/J_\Gamma\cong\prod_{r=1}^tM_{m_r}(\Q).
\end{equation}

\begin{lemma}
\label{lem:Tits-arithmetic-predicate}
The following are equivalent:
\begin{enumerate}[label=\textup{(\roman*)}]
\item $\Lambda_\Gamma$ is virtually polycyclic;
\item every $m_r$ in~\eqref{eq:Tits-Wedderburn-recall} equals $1$;
\item the two-sided ideal
      \[
       \mathfrak c_\Gamma
       =\CAlg[\CAlg,\CAlg]\CAlg
      \]
      is nilpotent, where $[\CAlg,\CAlg]$ is the linear span of the
      associative commutators.
\end{enumerate}
\end{lemma}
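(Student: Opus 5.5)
We will prove (ii)$\Leftrightarrow$(iii) purely algebraically inside $A=\CAlg$, and (i)$\Leftrightarrow$(ii) through the common finite-index subgroup $\Elementary$ of Theorem~\ref{thm:cohomological-image-arithmeticity}.

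For (ii)$\Leftrightarrow$(iii), pass to $\bar A=A/J_\Gamma\cong\prod_rM_{m_r}(\Q)$ using Theorem~\ref{thm:split-component-wedderburn}. The image of $\mathfrak c_\Gamma$ in $\bar A$ is the two-sided ideal generated by $[\bar A,\bar A]$. This ideal is the product of the factors $M_{m_r}(\Q)$ with $m_r\ge2$, because a simple noncommutative factor has nonzero commutators, which generate the whole factor as an ideal, while commutative factors contribute nothing. If every $m_r=1$, then $\bar A$ is commutative, so $\mathfrak c_\Gamma\subseteq J_\Gamma$ is nilpotent. Conversely, if $\mathfrak c_\Gamma$ is nilpotent, its image in the semisimple algebra $\bar A$ is a nilpotent ideal, hence zero. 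This forces every $m_r=1$.

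For (ii)$\Rightarrow$(i), virtual polycyclicity is invariant under passage to finite-index sub- and supergroups, so it suffices to treat $\Elementary$. By the exact sequence $1\to E_U\to\Elementary\to\Pi\to1$, with $E_U$ finitely generated torsion-free nilpotent (Proposition~\ref{prop:unipotent-radical-capture}), and since $\Pi$ is trivial when the semisimple quotient~\eqref{eq:semisimple-product-identification} is trivial (all $m_r=1$), the group $\Elementary\cong E_U$ is nilpotent, hence polycyclic.

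For (i)$\Rightarrow$(ii), argue by contradiction. Suppose some $m_r\ge2$. Then $\Pi$ projects onto an arithmetic subgroup of $\SL_{m_r}$, by Lemma~\ref{lem:higher-rank-semisimple-factors} or Proposition~\ref{prop:sl2-unit-incidence}. That subgroup is commensurable with $\SL_{m_r}(\Z)$ and therefore contains $\Free_2$. Indeed, Proposition~\ref{prop:sl2-unit-incidence} gives a copy of $\SL_2(\Z)$ inside $\Elementary$ directly in the rank-two case, and in general $\SL_2(\Z)\le\SL_{m}(\Z)$ has finite-index intersection with any commensurable subgroup. By Lemma~\ref{lem:Tits-free-lifting}, $\Elementary$ and hence $\Lambda_\Gamma$ contain $\Free_2$. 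A virtually polycyclic group cannot contain $\Free_2$, which is the contradiction.

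The main point needing care is this last step. The cleanest route is to exhibit $\Free_2$ explicitly inside the level-$c$ root groups $\langle I+cE_{12},I+cE_{21}\rangle$ from~\eqref{eq:common-level-pure-root-groups}. For $c\ge2$ these generate a free group, by ping-pong as with the Sanov subgroup, and they lie in $\Elementary$ itself. This argument avoids the commensurability bookkeeping.
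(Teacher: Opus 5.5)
Your proof is correct and follows the paper's route. Both arguments prove (ii)$\Leftrightarrow$(iii) by the Jacobson-radical/Wedderburn argument, and (i)$\Leftrightarrow$(ii) through the common finite-index subgroup $\Elementary$, using nilpotence of the unipotent part when $\mathbf G_{\mathrm{ss}}$ is trivial and the arithmetic $\SL_{m_r}$-image when some $m_r\geq2$. The differences are minor: you work with $\Elementary=E_U$ directly rather than passing through $\Order^\times$ and the finite torus image, and you exhibit an explicit free subgroup generated by the level-$2c$ root elements $I+2cN_{r,12}$ and $I+2cN_{r,21}$ where the paper only observes that such an arithmetic image is not virtually solvable.
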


\begin{proof}
Theorem~\ref{thm:cohomological-image-arithmeticity} gives a common
finite-index root subgroup of $\Lambda_\Gamma$ and $\Order^\times$.  If
every $m_r=1$, the derived algebraic group has only unipotent radical;
the integral unipotent points are finitely generated nilpotent and the
integral image of the split torus is finite by
\eqref{eq:finite-integral-torus-image}.  Hence $\Order^\times$, and
therefore $\Lambda_\Gamma$, is virtually polycyclic.  If some $m_r\geq2$,
the common root subgroup has a finite-index arithmetic image in
$\SL_{m_r}$ by Proposition~\ref{prop:semisimple-arithmetic-image}.  Such
an image is not virtually solvable, and hence not virtually polycyclic.
This proves the equivalence of~(i) and~(ii).

For any finite-dimensional unital algebra $A$ with $J=\Jac(A)$, the ideal
$A[A,A]A$ is nilpotent exactly when $A/J$ is commutative.  Indeed,
commutativity modulo $J$ puts this ideal inside the nilpotent ideal $J$;
conversely every nilpotent two-sided ideal lies in $J$~\cite{Lam}, so its containing
all commutators forces $A/J$ to be commutative.  Applying this to
$A=\CAlg$ and using~\eqref{eq:Tits-Wedderburn-recall} shows that the quotient is
commutative exactly when all $m_r$ equal $1$.  This proves~(ii)$\Leftrightarrow$(iii).
\end{proof}

\begin{theorem}
\label{thm:biconnected-Tits-branch}
Let $\Gamma$ be finite and biconnected with at least three vertices.  The
following are equivalent:
\begin{enumerate}[label=\textup{(\roman*)}]
\item $\Out(H_\Gamma)$ is virtually polycyclic;
\item $\Gamma$ has no SIL and every Wedderburn factor in
      \eqref{eq:Tits-Wedderburn-recall} is one-dimensional;
\item $\Gamma$ has no SIL and $\mathfrak c_\Gamma$ is nilpotent.
\end{enumerate}
If these conditions fail, then
$\Free_2\leq\Out(H_\Gamma)$.
\end{theorem}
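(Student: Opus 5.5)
The plan is to read everything off the outer exact sequence \eqref{eq:Tits-block-Out-sequence}, whose kernel $\IOut(H_\Gamma)$ is itself a central extension of $\IOut(A_\Gamma)$ by a cyclic group $Z_\Gamma^{\mathrm{deck}}\in\{1,\Z\}$, as in \eqref{eq:Tits-block-outer-Torelli}. The equivalence (ii)$\Leftrightarrow$(iii) is Lemma~\ref{lem:Tits-arithmetic-predicate}, (ii)$\Leftrightarrow$(iii), together with the common hypothesis that there is no SIL. So only (i)$\Leftrightarrow$(ii) and the free-subgroup clause need proof.

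For (ii)$\Rightarrow$(i), assume there is no SIL. Proposition~\ref{prop:SIL-outer-Torelli-predicate} then makes $\IOut(A_\Gamma)$ finitely generated abelian, hence polycyclic. A central extension of a polycyclic group by a trivial or infinite cyclic group is polycyclic, so $\IOut(H_\Gamma)$ is polycyclic. If in addition every $m_r=1$, then Lemma~\ref{lem:Tits-arithmetic-predicate} shows that $\Lambda_\Gamma$ is virtually polycyclic. Virtually polycyclic groups are closed under extensions, so the sequence \eqref{eq:Tits-block-Out-sequence} gives that $\Out(H_\Gamma)$ is virtually polycyclic.

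For (i)$\Rightarrow$(ii) and the free-subgroup clause, I will show that the failure of either condition produces $\Free_2\leq\Out(H_\Gamma)$. Since virtually polycyclic groups contain no $\Free_2$, this proves both claims together.

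First suppose some $m_r\geq2$. Lemma~\ref{lem:Tits-arithmetic-predicate} says $\Lambda_\Gamma$ is not virtually polycyclic. By Proposition~\ref{prop:Tits-standard-results}(iii), the integral linear group $\Lambda_\Gamma$ therefore contains $\Free_2$. Lemma~\ref{lem:Tits-free-lifting}, applied to the surjection $\Out(H_\Gamma)\to\Lambda_\Gamma$, lifts this to $\Free_2\leq\Out(H_\Gamma)$.

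Next suppose $\Gamma$ has a SIL. Proposition~\ref{prop:SIL-outer-Torelli-predicate} gives $\Free_2\leq\IOut(A_\Gamma)$. This group is a quotient of $\IOut(H_\Gamma)$ by the surjection in \eqref{eq:Tits-block-outer-Torelli}, so Lemma~\ref{lem:Tits-free-lifting} again yields $\Free_2\leq\IOut(H_\Gamma)\leq\Out(H_\Gamma)$.

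The only step needing care is that Lemma~\ref{lem:Tits-arithmetic-predicate}, in the case $m_r\geq2$, gives non-virtual-polycyclicity of $\Lambda_\Gamma$ rather than an explicit free subgroup. The linear Tits alternative closes this gap, because $\Lambda_\Gamma$ sits in $\GL(W_\Gamma)$ with integral entries. Alternatively, one could find $\Free_2$ directly inside the arithmetic $\SL_{m_r}$ image from Proposition~\ref{prop:semisimple-arithmetic-image} and lift it through $E_\Gamma\leq\Lambda_\Gamma$. Everything else is assembly of results already stated.
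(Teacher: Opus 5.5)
Your proof is correct. The equivalences are assembled as in the paper: you pass through \eqref{eq:Tits-block-outer-Torelli} and \eqref{eq:Tits-block-Out-sequence} to the two pieces $\IOut(A_\Gamma)$ and $\Lambda_\Gamma$. You then apply Proposition~\ref{prop:SIL-outer-Torelli-predicate} and Lemma~\ref{lem:Tits-arithmetic-predicate}, and (ii)$\Leftrightarrow$(iii) is read off that lemma.

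The one genuine difference is how the final clause is obtained. The paper proves (i)$\Rightarrow$(ii) by closure of virtual polycyclicity under subgroups and quotients. It then gets $\Free_2\leq\Out(H_\Gamma)$ from non-virtual-polycyclicity by invoking Theorem~\ref{thm:strong-Tits-alternative}. That theorem is proved through Horbez's free-product theorem, and its block input, Proposition~\ref{prop:Tits-biconnected}, uses the Sale--Susse alternative for $\Out(A_\Gamma)$.

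You instead build the free subgroup explicitly from whichever piece fails. In the SIL case you use the free group supplied by Proposition~\ref{prop:SIL-outer-Torelli-predicate}. In the case $m_r\geq2$ you use the linear Tits alternative for the integral group $\Lambda_\Gamma$, Proposition~\ref{prop:Tits-standard-results}(iii). In both cases you lift with Lemma~\ref{lem:Tits-free-lifting}.

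This gives (i)$\Rightarrow$(ii) and the $\Free_2$ clause in one step, with no relative Tits alternative for $\Out(A_\Gamma)$ or for free products. The paper's version is shorter only because that machinery was already in place.
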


\begin{proof}
The exact sequences~\eqref{eq:Tits-block-outer-Torelli} and
\eqref{eq:Tits-block-Out-sequence}, together with closure of virtually
polycyclic groups under subgroups, quotients, and extensions, give
\[
 \Out(H_\Gamma)\text{ virtually polycyclic}
 \quad\Longleftrightarrow\quad
 \begin{cases}
  \IOut(A_\Gamma)\text{ virtually polycyclic},\\
  \Lambda_\Gamma\text{ virtually polycyclic}.
 \end{cases}
\]
Proposition~\ref{prop:SIL-outer-Torelli-predicate} and
Lemma~\ref{lem:Tits-arithmetic-predicate} prove the equivalences.  If they
fail, the full outer group is not virtually polycyclic; hence
Theorem~\ref{thm:strong-Tits-alternative} then implies that it contains
a subgroup isomorphic to $\Free_2$.
\end{proof}

These conditions are decidable from $\Gamma$.  SILs are determined by
finitely many component computations, the matrices defining $\CAlg$ are
obtained from the cubic component hypergraphs of
Section~\ref{sec:cubic-algebra}, and nilpotence of
$\mathfrak c_\Gamma$ is decidable by exact rational linear algebra.
Thus virtual polycyclicity is decidable by finite graph computations and
rational linear algebra; it is not characterized here by a finite list of
forbidden induced subgraphs.

\subsection{Subgroup consequences}

\begin{corollary}
\label{cor:Tits-subgroup-consequences}
Let $G$ be either $\Aut(H_\Gamma)$ or $\Out(H_\Gamma)$ for a finite
connected graph $\Gamma$.
\begin{enumerate}[label=\textup{(\roman*)}]
\item every amenable, virtually solvable, nilpotent, or abelian subgroup of
      $G$ is virtually polycyclic and hence finitely generated;
\item every torsion subgroup of $G$ is finite, and therefore every
      locally finite subgroup is finite.
\end{enumerate}
\end{corollary}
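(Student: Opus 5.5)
The plan is to derive both parts directly from Theorem~\ref{thm:strong-Tits-alternative}. A subgroup $K\le G$ that is amenable, virtually solvable, nilpotent, or abelian contains no subgroup isomorphic to $\Free_2$. For the amenable case, $\Free_2$ is nonamenable and amenability passes to subgroups. For the other classes, each is contained in the virtually solvable groups, and $\Free_2$ is not virtually solvable: a finite-index subgroup of $\Free_2$ is free of rank at least two, hence not solvable. The theorem then makes $K$ virtually polycyclic. Virtually polycyclic groups are finitely generated, since a polycyclic finite-index subgroup is finitely generated and adding finitely many coset representatives generates $K$. This proves~(i).

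For~(ii), let $T\le G$ be a torsion subgroup. It contains no $\Free_2$, because $\Free_2$ is torsion-free and nontrivial. So $T$ is virtually polycyclic. Choose a polycyclic finite-index subgroup $P\le T$. Then $P$ is a polycyclic torsion group, and a polycyclic torsion group is finite: induct along a subnormal series with cyclic factors, noting that each factor is a cyclic torsion group and hence finite. Therefore $T$ is finite. A locally finite group is a torsion group, so it is finite by the same argument.

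No step is a serious obstacle. The only point needing care is the uniform observation that each listed class excludes $\Free_2$. The amenable case rests on nonamenability of $\Free_2$. The remaining cases all reduce to the fact that $\Free_2$ is not virtually solvable. These standard facts can be cited, as in the paper's use of~\cite[Chapter~1]{SegalPolycyclic}.
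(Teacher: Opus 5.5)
Your proposal is correct and follows essentially the same route as the paper: you exclude a copy of $\Free_2$ in each listed class (by nonamenability, or because finite-index subgroups of $\Free_2$ are nonabelian free and hence not solvable), apply Theorem~\ref{thm:strong-Tits-alternative}, and then use that a torsion virtually polycyclic group is finite. You fill in a few standard facts the paper leaves implicit: that virtual solvability passes to subgroups, that virtually polycyclic groups are finitely generated, and the induction showing polycyclic torsion groups are finite.
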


\begin{proof}
An amenable group cannot contain $\Free_2$.  Neither can a virtually solvable
group: intersecting a hypothetical $\Free_2$ with a finite-index solvable
subgroup would put a finite-index nonabelian free group inside a solvable
group.  Theorem~\ref{thm:strong-Tits-alternative} proves~(i).  A torsion
group contains no $\Free_2$, so it is virtually polycyclic; a torsion
virtually polycyclic group is finite~\cite[Ch.~I]{Raghunathan}.  Every locally finite group is torsion, giving
~(ii).
\end{proof}

In particular, neither automorphism group contains $(\Q,+)$, a Pr\"ufer
$p$-group, an infinite locally finite group, a lamplighter group, or
$BS(1,n)$ for $|n|\geq2$: all are amenable, while none is virtually
polycyclic.

\section{Torsion and virtual cohomological dimension}
\label{sec:vcd}

This section extracts two different consequences of the block structure.
First, for biconnected $\Gamma$, finite-order automorphisms of $H_\Gamma$ are detected faithfully by the
cohomological representation.  Second, the arithmetic
description gives an exact virtual-cohomological-dimension formula for the
cohomological image.  Combining that formula with IA rigidity proves finite virtual
cohomological dimension for the full automorphism groups of a
biconnected block.

Throughout the first four subsections, $\Gamma$ is finite and biconnected
with at least three vertices.  Set
$n_\Gamma=\rank H^1(H_\Gamma;\Z)=|V(\Gamma)|-1$.

\subsection{Finite-order detection and the level-three kernel}

The torsion-freeness of the two IA subgroups from
Proposition~\ref{prop:Torelli-residual-properties} immediately makes the
cohomological representation faithful on every finite subgroup.

\begin{proposition}
\label{prop:finite-subgroups-detected}
Let $\Gamma$ be a finite biconnected graph with at least three vertices.
For every finite subgroup $F\leq\Aut(H_\Gamma)$, the cohomological representation
restricts to an injection
\[
 F\hookrightarrow\GL_{n_\Gamma}(\Z).
\]
The same conclusion holds for every finite subgroup
$F\leq\Out(H_\Gamma)$.
\end{proposition}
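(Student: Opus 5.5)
The plan is to show that the kernel of the cohomological representation meets every finite subgroup trivially. A torsion-free kernel suffices. For $\Aut(H_\Gamma)$, the kernel of $\rho_W$ is $\IAut(H_\Gamma)$ by definition. Proposition~\ref{prop:Torelli-residual-properties} says this group is residually torsion-free nilpotent, hence torsion-free. If $F\leq\Aut(H_\Gamma)$ is finite, then $F\cap\IAut(H_\Gamma)$ is a finite subgroup of a torsion-free group and is therefore trivial. So $\rho_W|_F$ is injective. After choosing a $\Z$-basis of $W_\Gamma$ of rank $n_\Gamma=|V(\Gamma)|-1$ (Proposition~\ref{prop:standard-LW-lattices}), this gives the asserted embedding $F\hookrightarrow\GL_{n_\Gamma}(\Z)$.

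For $\Out(H_\Gamma)$ I would argue the same way with $\bar\rho_W$. Its kernel is $\IOut(H_\Gamma)$ by the exact sequence~\eqref{eq:biconnected-Out-exact-sequence}. Proposition~\ref{prop:Torelli-residual-properties} also lists $\IOut(H_\Gamma)$ as residually torsion-free nilpotent, hence torsion-free. A finite $F\leq\Out(H_\Gamma)$ therefore meets $\IOut(H_\Gamma)$ trivially and injects into $\Lambda_\Gamma\leq\GL(W_\Gamma)\cong\GL_{n_\Gamma}(\Z)$.

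The step needing most care is the outer torsion-freeness when $\Gamma$ has no universal vertex. There $\IOut(H_\Gamma)$ is a central extension of $\IOut(A_\Gamma)$ by the deck copy of $\Z$, and torsion could in principle appear in a nonsplit central extension. That case is exactly the one handled inside Proposition~\ref{prop:Torelli-residual-properties}, via the separating filtration $(K_c)$ and the trivial intersection~\eqref{eq:deck-K2-trivial-intersection}. So I would cite that result rather than reprove it. If a direct check were wanted, I would use the fact that the deck class has infinite order and $\IOut(A_\Gamma)$ is torsion-free. Then a torsion element $x$ maps to $1$, so $x$ lies in $\Z$, and hence $x=1$.
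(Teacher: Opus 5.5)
Your proposal is correct and is essentially the paper's proof: the kernels of $\rho_W$ and $\bar\rho_W$ are $\IAut(H_\Gamma)$ and $\IOut(H_\Gamma)$, both torsion-free by Proposition~\ref{prop:Torelli-residual-properties}, so every finite subgroup meets them trivially and injects. Your closing worry about a nonsplit central extension is unnecessary: any extension of a torsion-free group by a torsion-free group is torsion-free whether or not it splits, and that is exactly the direct check you then give.
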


\begin{proof}
The kernel of the representation of $\Aut(H_\Gamma)$ is
$\IAut(H_\Gamma)$, which is torsion-free.  A finite subgroup therefore has trivial intersection with
the kernel.  The outer kernel is $\IOut(H_\Gamma)$ and is torsion-free
by the same proposition, so the identical argument applies.
\end{proof}

The following is the classical level-three case of Minkowski's
congruence lemma~\cite[Ch.~VII]{SerreArithmetic}.  We include the elementary proof because it identifies a concrete
torsion-free finite-index subgroup.

\begin{lemma}[Minkowski's level-three lemma]
\label{lem:Minkowski-level-three}
Let $M$ be a free abelian group of finite rank.  The principal congruence
subgroup
\[
 \Gamma_M(3)=
 \ker\!\left(\GL(M)\longrightarrow\GL(M/3M)\right)
\]
is torsion-free.
\end{lemma}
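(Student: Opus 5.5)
The plan is the standard eigenvalue-and-trace argument, reduced to a statement about a single element of prime order. Suppose $\Gamma_M(3)$ contains a nontrivial element of finite order. Some power of it then has prime order $p$, so it suffices to show that no $g\in\Gamma_M(3)$ has prime order $p$. Fix such a $g$ and write $g=I+3^kA$, where $k\geq1$ is maximal and $A\in\End(M)$ is not divisible by $3$. This is possible because $g\neq I$. We will derive a contradiction from $g^p=I$.

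Expand $(I+3^kA)^p=I$ with the binomial theorem:
\[
 0=p\,3^kA+\binom p2 3^{2k}A^2+\cdots+3^{pk}A^p .
\]
Divide by $3^k$. The case split is on whether $p=3$.

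If $p\neq3$, then $pA$ is divisible by $3^k$. Since $3\nmid p$, this makes $A\equiv0\pmod 3$, contradicting the choice of $A$.

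If $p=3$, the relation reads $3A+3\cdot3^kA^2+3^{2k}A^3=0$, so $A=-3^kA^2-3^{2k-1}A^3$. When $k\geq1$ we have $2k-1\geq1$, so the right side is divisible by $3$. This again forces $A\equiv0\pmod3$, a contradiction.

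The main obstacle is the exponent bookkeeping in the case $p=3$. The argument uses that $2k-1\geq1$, and this is exactly where level $3$ is needed rather than level $2$. The same computation at level $2$ breaks down, as the element $-I$ shows. I will also note that the argument never uses the rank of $M$, so the statement holds for every free abelian group of finite rank, which is all the lemma asserts.
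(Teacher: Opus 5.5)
Your proof is correct and is essentially the paper's argument: you pass to an element of prime order, write it as $I+3^kA$ with $k$ maximal, and split on whether the prime is $3$, using $2k-1\geq1$ when $p=3$. Your opening label ``eigenvalue-and-trace argument'' is a misnomer. What you actually carry out is the binomial $3$-adic divisibility argument, and that argument alone is sufficient.
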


\begin{proof}
Choose a basis of the lattice and suppose $Q\neq I$ has finite order in
$\Gamma_M(3)$.  Replacing $Q$ by a power, assume it has prime order
$q$.  Choose the largest $k\geq1$ for which
$Q=I+3^kA$
with $A$ integral and not every entry of $A$ divisible by $3$.
If $q\neq3$, expand $Q^q=I$, divide by $3^k$, and reduce modulo $3$.
Every term of degree at least two vanishes, leaving $qA\equiv0\pmod3$, a
contradiction.  If $q=3$, then
\[
 (I+3^kA)^3
 =I+3^{k+1}A+3^{2k+1}A^2+3^{3k}A^3.
\]
Subtracting $I$, dividing by $3^{k+1}$, and reducing modulo $3$ again
gives $A\equiv0\pmod3$, the same contradiction.
\end{proof}

Define
\begin{align*}
 \Aut(H_\Gamma)[3]
 &=\ker\!\left(
   \Aut(H_\Gamma)\longrightarrow\GL(H^1(H_\Gamma;\mathbb F_3))\right),\\
 \Out(H_\Gamma)[3]
 &=\ker\!\left(
   \Out(H_\Gamma)\longrightarrow\GL(H^1(H_\Gamma;\mathbb F_3))\right).
\end{align*}

\begin{corollary}
\label{cor:block-level-three-torsion-free}
Let $\Gamma$ be a finite biconnected graph with at least three vertices.
The groups $\Aut(H_\Gamma)[3]$ and $\Out(H_\Gamma)[3]$ are
torsion-free and have finite index.  In particular, $\Aut(H_\Gamma)$ and $\Out(H_\Gamma)$ are virtually
torsion-free.
\end{corollary}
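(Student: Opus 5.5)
The plan is to compose the cohomological representation with reduction modulo three. After that, Proposition~\ref{prop:finite-subgroups-detected} and Lemma~\ref{lem:Minkowski-level-three} do essentially all the work.

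\textbf{Finite index.} Write $M=W_\Gamma=H^1(H_\Gamma;\Z)$, a free abelian group of rank $n_\Gamma$. The universal coefficient theorem gives $H^1(H_\Gamma;\mathbb F_3)\cong\operatorname{Hom}(H_1(H_\Gamma),\mathbb F_3)\cong M/3M$, because $H_1(H_\Gamma;\Z)=L_\Gamma$ is free by Proposition~\ref{prop:standard-LW-lattices}. This identification is natural, so the action of $\Aut(H_\Gamma)$ on $H^1(H_\Gamma;\mathbb F_3)$ factors as
\[
 \Aut(H_\Gamma)\xrightarrow{\rho_W}\GL(M)\longrightarrow\GL(M/3M).
\]
The convention $(\alpha^{-1})^*$ is harmless, since kernels are insensitive to it. Hence $\Aut(H_\Gamma)[3]=\rho_W^{-1}(\Gamma_M(3))$. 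This subgroup is the kernel of a homomorphism into the finite group $\GL_{n_\Gamma}(\mathbb F_3)$, so it is normal of finite index. Inner automorphisms act trivially on $H^1$, so the same factorization descends to $\Out(H_\Gamma)$ through $\bar\rho_W$. Therefore $\Out(H_\Gamma)[3]=\bar\rho_W^{-1}(\Gamma_M(3))$ also has finite index.

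\textbf{Torsion-freeness.} Let $\phi\in\Aut(H_\Gamma)[3]$ have finite order, and put $F=\langle\phi\rangle$. By Proposition~\ref{prop:finite-subgroups-detected}, $\rho_W|_F$ is injective. Its image is a finite subgroup of $\Gamma_M(3)$, and that group is torsion-free by Lemma~\ref{lem:Minkowski-level-three}. So the image is trivial, and hence $\phi=1$.

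The outer case runs the same way. It uses the second assertion of Proposition~\ref{prop:finite-subgroups-detected}, which rests on the torsion-freeness of $\IOut(H_\Gamma)$ from Proposition~\ref{prop:Torelli-residual-properties}. Virtual torsion-freeness of both groups then follows at once.

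The only step that needs care is the identification of the mod-three cohomology action with the reduction of $\rho_W$. It relies on $H_1$ being torsion-free, so that no $\operatorname{Ext}$ term appears and $H^1(\,\cdot\,;\mathbb F_3)=M\otimes\mathbb F_3$ equivariantly. Everything else is formal.
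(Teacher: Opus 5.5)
Your proposal is correct and is essentially the paper's argument. The paper also gets finite index from the finiteness of the mod-$3$ target, and for torsion-freeness it applies Lemma~\ref{lem:Minkowski-level-three} to kill the integral cohomological image of a finite-order element and then uses torsion-freeness of $\IAut(H_\Gamma)$ and $\IOut(H_\Gamma)$, which is exactly what your appeal to Proposition~\ref{prop:finite-subgroups-detected} packages. One minor correction: freeness of $H_1$ is what gives $\operatorname{Hom}(H_1,\mathbb F_3)\cong M\otimes\mathbb F_3$, whereas the $\operatorname{Ext}$ term in the universal coefficient sequence for $H^1$ involves $H_0$ and vanishes regardless.
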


\begin{proof}
The mod-$3$ targets are finite, so the kernels have finite index.  If an
element of finite order lies in either kernel, its integral cohomological
image lies in $\Gamma_{H^1(H_\Gamma;\Z)}(3)$ and is therefore the
identity by Lemma~\ref{lem:Minkowski-level-three}.  The element then lies
in the corresponding IA subgroup, which is torsion-free.
\end{proof}

\subsection{The arithmetic formula}

Let
\[
 J_\Gamma=\Jac(\CAlg),
 \qquad
 \CAlg/J_\Gamma\cong\prod_{r=1}^tM_{m_r}(\Q)
\]
be the split Wedderburn decomposition from
Theorem~\ref{thm:split-component-wedderburn}.  Set
$d_\Gamma =\dim_\Q J_\Gamma+\sum_{r=1}^t\binom{m_r}{2}$.

\begin{lemma}[Torsion-free arithmetic reduction]
\label{lem:vcd-arithmetic-reduction}
There are a finite-index subgroup
$\mathcal O_{\Gamma,0}^{\times}\leq\Order^\times$ and a torsion-free
arithmetic subgroup
\[
 \Gamma_{\mathrm{ss}}
 \leq
 \prod_{\{r:m_r\geq2\}}\SL_{m_r}(\Q)
\]
for which reduction modulo the radical gives an exact sequence
\begin{equation}\label{eq:vcd-order-extension}
 1\longrightarrow N_\Gamma
 \longrightarrow \mathcal O_{\Gamma,0}^{\times}
 \longrightarrow \Gamma_{\mathrm{ss}}
 \longrightarrow1,
 \qquad
 N_\Gamma=1+(\Order\cap J_\Gamma).
\end{equation}
Both $N_\Gamma$ and $\mathcal O_{\Gamma,0}^{\times}$ are torsion-free.
\end{lemma}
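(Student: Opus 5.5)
The plan is to build the extension from three ingredients: the structure of $\mathbf G=\CAlg^\times$ established in Section~\ref{sec:arithmeticity}, the finiteness of the torus image \eqref{eq:finite-integral-torus-image}, and a congruence cut-down.

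\emph{Step 1 (the reduction map).} Let $\pi_J:\CAlg\to\CAlg/J_\Gamma\cong\prod_rM_{m_r}(\Q)$ be the quotient map. It restricts to a homomorphism $\Order^\times\to\prod_r\GL_{m_r}(\Q)$. Its kernel is $\Order^\times\cap(1+J_\Gamma)$. If $1+j$ lies in this intersection, then $j\in\Order\cap J_\Gamma$. Conversely, for $j\in\Order\cap J_\Gamma$ the element $1+j$ has the finite integral inverse series used in Lemma~\ref{lem:unipotent-integral-model}. Hence the kernel is exactly $N_\Gamma=1+(\Order\cap J_\Gamma)$, which equals $N_U$ from \eqref{eq:unipotent-integral-model}. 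It is a unipotent integral group and therefore torsion-free.

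\emph{Step 2 (passing to $\SL$ and to a torsion-free subgroup).} The composite of $\pi_J$ with the determinants of the $\GL_{m_r}$ factors is a family of characters of $\mathbf G$ trivial on $\mathbf D$, so it factors through the split torus $\mathbf T$. By \eqref{eq:finite-integral-torus-image} the image of $\Order^\times$ in $\mathbf T$ is finite. Let $\mathcal O'$ be the finite-index subgroup $S_{\mathbf D}=\Order^\times\cap\mathbf D(\Q)$, using \eqref{eq:torus-integral-index}. Its image under $\pi_J$ lies in the derived group, namely $\prod_r\SL_{m_r}(\Q)$, and the factors with $m_r=1$ contribute trivially. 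By Proposition~\ref{prop:semisimple-arithmetic-image} and Corollary~\ref{cor:derived-integral-root-index}, this image $q_{\mathrm{ss}}(S_{\mathbf D})$ is arithmetic, since it contains the arithmetic group $\Pi$ with finite index.

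To make it torsion-free, apply Lemma~\ref{lem:bounded-denominator-invariant-lattice} to obtain an invariant lattice $T_0$ for a faithful representation of $\mathbf G_{\mathrm{ss}}$. Then intersect with the principal level-three congruence kernel of $\GL(T_0)$, which is torsion-free by Lemma~\ref{lem:Minkowski-level-three}. Call the result $\Gamma_{\mathrm{ss}}$. It has finite index in an arithmetic group, so it is a torsion-free arithmetic group.

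\emph{Step 3 (assembly).} Define $\mathcal O_{\Gamma,0}^\times$ as the preimage of $\Gamma_{\mathrm{ss}}$ in $S_{\mathbf D}$. This gives the exact sequence \eqref{eq:vcd-order-extension} with surjectivity by construction, and the subgroup has finite index in $\Order^\times$ because both intermediate indices are finite. Torsion-freeness follows from the extension: a torsion element maps to the identity in $\Gamma_{\mathrm{ss}}$, hence lies in the torsion-free group $N_\Gamma$. Alternatively, one can simply intersect with the level-three kernel on $W_\Z$, which is more directly torsion-free.

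The main obstacle is Step 2: checking that the projection $\pi_J$ of $S_{\mathbf D}$ agrees with $q_{\mathrm{ss}}$ under the identification \eqref{eq:semisimple-product-identification}, so that the image really lands in $\prod\SL_{m_r}$ and is arithmetic there. I would first verify this from the description of $\mathbf U=1+J$ and the surjection \eqref{eq:unit-wedderburn-quotient} restricted to $\mathbf D$.
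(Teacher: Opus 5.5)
Your proposal is correct and follows essentially the same route as the paper. You pass to $S_{\mathbf D}=\Order^\times\cap\mathbf D(\Q)$, use that $q_{\mathrm{ss}}(S_{\mathbf D})$ is arithmetic, and cut it down by the level-three congruence kernel on the invariant lattice from Lemma~\ref{lem:bounded-denominator-invariant-lattice}. You then take the preimage in $S_{\mathbf D}$ and identify the kernel with $1+(\Order\cap J_\Gamma)$ via \eqref{eq:unipotent-integral-model}.

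The step you flag as the main obstacle is automatic. The paper defines the identification $\mathbf G_{\mathrm{ss}}\cong\prod_{\{r:m_r\geq2\}}\SL_{m_r}$ by restricting the Wedderburn unit surjection \eqref{eq:unit-wedderburn-quotient} to $\mathbf D$, so $q_{\mathrm{ss}}$ is reduction modulo $J_\Gamma$ by construction.
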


\begin{proof}
Let
\[
 \mathbf D=(\CAlg^\times)^{\mathrm{der}},
 \qquad
 S_{\mathbf D}=\Order^\times\cap\mathbf D(\Q),
 \qquad
 q_{\mathrm{ss}}:\mathbf D\longrightarrow\mathbf G_{\mathrm{ss}}
\]
be the groups and quotient map from Section~\ref{sec:arithmeticity}.
The split-torus calculation
\eqref{eq:torus-integral-index} gives
$[\Order^\times:S_{\mathbf D}]<\infty$, while
Proposition~\ref{prop:semisimple-arithmetic-image} says that
$q_{\mathrm{ss}}(S_{\mathbf D})$ is arithmetic in
$\mathbf G_{\mathrm{ss}}\cong
\prod_{\{r:m_r\geq2\}}\SL_{m_r}$.

Choose a faithful rational representation
$\rho:\mathbf G_{\mathrm{ss}}\to\GL(T_{\Q})$.
Lemma~\ref{lem:bounded-denominator-invariant-lattice}, applied to
$\rho\circ q_{\mathrm{ss}}$ and $S_{\mathbf D}$, supplies a lattice
$T_0\subset T_{\Q}$ preserved by $\rho(q_{\mathrm{ss}}(S_{\mathbf D}))$.
Put
\[
 \Gamma_{\mathrm{ss}}
 =q_{\mathrm{ss}}(S_{\mathbf D})
  \cap\rho^{-1}\!\bigl(\Gamma_{T_0}(3)\bigr).
\]
The reduction map on $T_0/3T_0$ has finite image, so this subgroup has
finite index in $q_{\mathrm{ss}}(S_{\mathbf D})$ and is therefore
arithmetic.  Faithfulness of $\rho$ and
Lemma~\ref{lem:Minkowski-level-three} make it torsion-free.
Define
\[
 \mathcal O_{\Gamma,0}^{\times}
 =S_{\mathbf D}\cap q_{\mathrm{ss}}^{-1}(\Gamma_{\mathrm{ss}}).
\]
It has finite index in $S_{\mathbf D}$ and hence in $\Order^\times$, and
its image under $q_{\mathrm{ss}}$ is exactly $\Gamma_{\mathrm{ss}}$.

The kernel is
\[
 S_{\mathbf D}\cap\ker q_{\mathrm{ss}}
 =S_{\mathbf D}\cap(1+J_\Gamma)(\Q)
 =1+(\Order\cap J_\Gamma),
\]
where the last equality is the integral unipotent model
\eqref{eq:unipotent-integral-model}.  This proves
\eqref{eq:vcd-order-extension}.  The kernel is torsion-free because it
lies in a rational unipotent group.  Finally, a finite-order element of
$\mathcal O_{\Gamma,0}^{\times}$ maps trivially to the torsion-free
quotient and therefore lies in the torsion-free kernel, so it is trivial.
\end{proof}

\begin{theorem}
\label{thm:cohomological-image-vcd}
Let $\Gamma$ be a finite biconnected graph with at least three vertices.
The group $\Lambda_\Gamma$ is a virtual duality group and
\begin{equation}\label{eq:cohomological-image-vcd}
 \vcd(\Lambda_\Gamma)
 =d_\Gamma
 =\dim_\Q J_\Gamma+\sum_{r=1}^t\binom{m_r}{2}.
\end{equation}
\end{theorem}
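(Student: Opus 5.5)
The plan is to reduce everything to the torsion-free extension of Lemma~\ref{lem:vcd-arithmetic-reduction} and compute its cohomological dimension from its two layers. By Theorem~\ref{thm:cohomological-image-arithmeticity}, $\Lambda_\Gamma$ and $\Order^\times$ share the finite-index subgroup $\Elementary$. Virtual cohomological dimension and the virtual duality property depend only on the commensurability class, so it suffices to treat the finite-index subgroup $\mathcal O_{\Gamma,0}^{\times}$ of $\Order^\times$. That group is torsion-free and sits in the extension
\[
 1\longrightarrow N_\Gamma\longrightarrow\mathcal O_{\Gamma,0}^{\times}\longrightarrow\Gamma_{\mathrm{ss}}\longrightarrow1
\]
from \eqref{eq:vcd-order-extension}.

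The kernel $N_\Gamma=1+(\Order\cap J_\Gamma)$ is a finitely generated torsion-free nilpotent group. By Lemma~\ref{lem:unipotent-integral-model} its Hirsch length is $\dim_\Q J_\Gamma$. Such a group is a Poincar\'e duality group of dimension equal to its Hirsch length: it is a lattice in the simply connected nilpotent Lie group $(1+J_\Gamma)(\mathbb R)$, whose quotient is a closed aspherical manifold of that dimension. So $N_\Gamma$ is a duality group with $\operatorname{cd}N_\Gamma=\dim_\Q J_\Gamma$.

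The quotient $\Gamma_{\mathrm{ss}}$ is a torsion-free arithmetic subgroup of $\prod_{m_r\geq2}\SL_{m_r}$, which is $\Q$-split. By Borel--Serre, $\Gamma_{\mathrm{ss}}$ is a duality group of dimension $\dim X-\rank_\Q$. Here $X$ is the symmetric space, with $\dim X=\sum_r\bigl(\binom{m_r+1}{2}-1\bigr)$, and $\rank_\Q=\sum_r(m_r-1)$. The difference is $\sum_r\binom{m_r}{2}$, and factors with $m_r=1$ contribute $\binom12=0$, consistent with the formula.

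An extension of duality groups is a duality group, with dimensions adding (Bieri--Eckmann). This gives $\operatorname{cd}\mathcal O_{\Gamma,0}^{\times}=d_\Gamma$ and shows $\mathcal O_{\Gamma,0}^{\times}$ is a duality group. Hence $\Order^\times$, $\Elementary$ and $\Lambda_\Gamma$ are virtual duality groups, all with $\vcd=d_\Gamma$.

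The main obstacle is the arithmetic input for the semisimple layer. One must justify that $\Gamma_{\mathrm{ss}}$ is arithmetic for the split $\Q$-structure, so that $\rank_\Q$ of each factor is $m_r-1$ and Borel--Serre applies. This should follow from Proposition~\ref{prop:semisimple-arithmetic-image} and the split identification \eqref{eq:semisimple-product-identification}, via Lemma~\ref{lem:arithmetic-images-and-models}. I would cite the Borel--Serre and Bieri--Eckmann theorems explicitly rather than reprove them.
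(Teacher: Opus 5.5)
Your proposal is correct and follows essentially the same route as the paper. Both arguments do the following:
\begin{itemize}
\item use the common finite-index subgroup $\Elementary$ to pass to the torsion-free extension of Lemma~\ref{lem:vcd-arithmetic-reduction};
\item treat $N_\Gamma$ as a lattice in the nilpotent Lie group $1+J_\Gamma\otimes\mathbb R$, hence a Poincar\'e duality group of dimension $\dim_\Q J_\Gamma$;
\item treat $\Gamma_{\mathrm{ss}}$ via Borel--Serre, using $\dim X-\rank_\Q=\sum_r\binom{m_r}{2}$;
\item combine the two layers with the Bieri--Eckmann extension theorem.
\end{itemize}
Your appeal to commensurability invariance is made explicit in the paper by one extra step. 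It passes to $\Elementary\cap\mathcal O_{\Gamma,0}^{\times}$, which has finite index in both $\mathcal O_{\Gamma,0}^{\times}$ and $\Lambda_\Gamma$. It then invokes Bieri--Eckmann's finite-index inheritance to conclude that this subgroup is a duality group of the same dimension.
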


\begin{proof}
By Theorem~\ref{thm:cohomological-image-arithmeticity}, the actual image
$\Lambda_\Gamma$ and the order-unit group $\Order^\times$ contain the same
all-root subgroup $\Elementary$ with finite index in each.  Thus it is
enough to calculate on $\Order^\times$; no inclusion between the two groups
is being used.

Apply Lemma~\ref{lem:vcd-arithmetic-reduction} to obtain the
finite-index torsion-free subgroup
$\mathcal O_{\Gamma,0}^{\times}\leq\Order^\times$, the torsion-free
arithmetic quotient $\Gamma_{\mathrm{ss}}$, and the exact sequence
\eqref{eq:vcd-order-extension}.

Put
\[
 J_{\Gamma,\Z}=\Order\cap J_\Gamma,
 \qquad
 N_\Gamma=1+J_{\Gamma,\Z}.
\]
Because $\Order$ is a full lattice in $\CAlg$ and $J_\Gamma$ is a
rational subspace, $J_{\Gamma,\Z}$ is a full lattice in $J_\Gamma$.
It is a nilpotent subring.  If $J_\Gamma^\nu=0$, the exponential and
logarithm series on $J_\Gamma\otimes_\Q\mathbb R$ terminate before degree
$\nu$ and give rational polynomial maps inverse to one another between
$J_\Gamma\otimes_\Q\mathbb R$ and
$1+J_\Gamma\otimes_\Q\mathbb R$.  After clearing the finitely many
BCH denominators, one has
\[
 \log(N_\Gamma)\subseteq d^{-1}J_{\Gamma,\Z}
\]
for some integer $d\geq1$, so $N_\Gamma$ is discrete.  Moreover, for
$x\in J_{\Gamma,\Z}\cap J_\Gamma^r$,
\[
 \log(1+x)\equiv x\pmod{J_\Gamma^{r+1}}.
\]
Induction along the $J_\Gamma$-adic filtration therefore shows that the
$\Q$-span of $\log(N_\Gamma)$ is all of $J_\Gamma$.  Mal'cev's lattice
criterion now makes $N_\Gamma$ a uniform lattice in the connected simply
connected nilpotent Lie group
$1+J_\Gamma\otimes_\Q\mathbb R$
\cite[Chapter~II, Theorem~2.1]{Raghunathan}.  Its compact nilmanifold is
an aspherical closed manifold of dimension $\dim_\Q J_\Gamma$.
Therefore $N_\Gamma$ is a Poincar\'e duality group
~\cite[Ch.~VIII]{BrownCohomology} and
\begin{equation}\label{eq:vcd-unipotent-dimension}
 \operatorname{cd}(N_\Gamma)=\dim_\Q J_\Gamma.
\end{equation}

Let
$X=\prod_{\{r:m_r\geq2\}}\SL_{m_r}(\mathbb R)/\mathrm{SO}(m_r)$
be the symmetric space of $\Gamma_{\mathrm{ss}}$.  Borel--Serre duality gives
\[
 \operatorname{cd}(\Gamma_{\mathrm{ss}})
 =\dim X-\rank_\Q\!\left(\prod_r\SL_{m_r}\right)
 =\sum_r\binom{m_r}{2},
\]
because
\[
 \dim\bigl(\SL_m(\mathbb R)/\mathrm{SO}(m)\bigr)
 -\rank_\Q(\SL_m)=\binom m2
\]
\cite[Theorem~11.4.4]{BorelSerre}.  Being a torsion-free arithmetic
group, $\Gamma_{\mathrm{ss}}$ is a duality group of type $\mathrm{FP}$ and
dimension $\sum_r\binom{m_r}{2}$ by the Borel--Serre duality theorem
\cite[Theorem~11.4.3]{BorelSerre}.
The kernel $N_\Gamma$, being the fundamental group of a closed
aspherical nilmanifold, is a Poincar\'e duality group of type $\mathrm{FP}$
and dimension $\dim_\Q J_\Gamma$.  In the exact extension
\eqref{eq:vcd-order-extension}, the normal subgroup and the quotient are
torsion-free duality groups of type $\mathrm{FP}$ of the displayed
dimensions.  Lemma~\ref{lem:vcd-arithmetic-reduction} also makes
$\mathcal O_{\Gamma,0}^{\times}$ torsion-free.  Thus the hypotheses of
Bieri--Eckmann's extension theorem~\cite[Theorem~3.5]{BieriEckmann}
apply and make $\mathcal O_{\Gamma,0}^{\times}$ a duality
group whose dimension is the sum,
\[
 \operatorname{cd}(\mathcal O_{\Gamma,0}^{\times})=\dim_\Q J_\Gamma+\sum_r\binom{m_r}{2}.
\]
This proves the formula for $\Order^\times$.  Put
$E_{\Gamma,0}=\Elementary\cap \mathcal O_{\Gamma,0}^{\times}$.
The subgroup $E_{\Gamma,0}$ has finite index in
$\mathcal O_{\Gamma,0}^{\times}$ and in $\Lambda_\Gamma$.  By the
finite-index inheritance theorem of Bieri--Eckmann~\cite[Theorem~3.2]{BieriEckmann}, $E_{\Gamma,0}$ is a duality group of
the same dimension as $\mathcal O_{\Gamma,0}^{\times}$.  Thus $E_{\Gamma,0}$ supplies both the virtual-duality assertion
and formula~\eqref{eq:cohomological-image-vcd} for $\Lambda_\Gamma$.
\end{proof}

\begin{remark}
The formula separates the two arithmetic contributions.  The radical
contributes its full rational dimension, while a simple factor
$M_m(\Q)$ contributes $\binom m2$.  Integral coupling between distinct
order factors changes the arithmetic group only inside its commensurability
class and therefore does not change the formula.
\end{remark}

\subsection{Finite cohomological dimension of the IA subgroups}

We use the following standard result of Charney--Vogtmann:
$\Out(A_\Gamma)$ is virtually torsion-free and has finite virtual
cohomological dimension; their level-congruence subgroup is torsion-free
\cite[Theorem~1.1 and \S3]{CharneyVogtmann}.  Since that congruence
subgroup contains $\IOut(A_\Gamma)$, the latter has finite
cohomological dimension.  The inner group
$\Inn(A_\Gamma)=A_\Gamma/Z(A_\Gamma)$
is a finite-dimensional RAAG, and
\[
 1\longrightarrow\Inn(A_\Gamma)
 \longrightarrow\IAut(A_\Gamma)
 \longrightarrow\IOut(A_\Gamma)
 \longrightarrow1
\]
therefore gives $\operatorname{cd}\IAut(A_\Gamma)<\infty$ by the
Lyndon--Hochschild--Serre spectral sequence
\cite[Ch.~VII]{BrownCohomology}.

IA rigidity gives $\operatorname{cd}\IAut(H_\Gamma)<\infty$ as well.  The
central exact sequence
\[
 1\longrightarrow Z_\Gamma^{\mathrm{deck}}
 \longrightarrow\IOut(H_\Gamma)
 \longrightarrow\IOut(A_\Gamma)
 \longrightarrow1,
 \qquad Z_\Gamma^{\mathrm{deck}}\in\{1,\Z\},
\]
gives finite cohomological dimension for $\IOut(H_\Gamma)$ as well.  Notice that
all these IA subgroups are themselves torsion-free by
Proposition~\ref{prop:Torelli-residual-properties}.

\subsection{The full automorphism groups}

\begin{theorem}
\label{thm:block-full-finite-vcd}
If $\Gamma$ is finite and biconnected with at least three vertices, then
both $\Aut(H_\Gamma)$ and $\Out(H_\Gamma)$ are virtually torsion-free and
have finite virtual cohomological dimension.
\end{theorem}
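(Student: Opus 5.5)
Virtual torsion-freeness is already recorded in Corollary~\ref{cor:block-level-three-torsion-free}.
\begin{enumerate}[label=\textup{(\roman*)}]
\item The level-three kernels $\Aut(H_\Gamma)[3]$ and $\Out(H_\Gamma)[3]$ are torsion-free and have finite index.
\item It therefore suffices to prove that these two torsion-free finite-index subgroups have finite cohomological dimension.
\end{enumerate}
I will do this with the two-step exact sequences of Theorem~\ref{thm:biconnected-structure}. The target $\Lambda_\Gamma$ is replaced by a torsion-free finite-index subgroup of finite cohomological dimension. The IA kernel is handled by the finite-cd statements of the preceding subsection.

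First, choose a torsion-free finite-index subgroup $\Lambda_0\leq\Lambda_\Gamma$ with $\operatorname{cd}\Lambda_0<\infty$. The natural choice is $\Lambda_0=E_{\Gamma,0}=\Elementary\cap\mathcal O_{\Gamma,0}^{\times}$ from the proof of Theorem~\ref{thm:cohomological-image-vcd}.
\begin{enumerate}[label=\textup{(\roman*)}]
\item It has finite index in $\Lambda_\Gamma$.
\item It is a duality group of dimension $d_\Gamma$, in particular torsion-free of finite cohomological dimension.
\end{enumerate}
Put $A_0=\rho_W^{-1}(\Lambda_0)\cap\Aut(H_\Gamma)[3]$. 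This has finite index in $\Aut(H_\Gamma)$ and is torsion-free. Restricting~\eqref{eq:biconnected-Aut-exact-sequence} gives
\[
 1\longrightarrow\IAut(H_\Gamma)\longrightarrow A_0\longrightarrow\rho_W(A_0)\longrightarrow1 .
\]
Here $\IAut(H_\Gamma)\leq\Aut(H_\Gamma)[3]$, so the kernel is the full IA group. The quotient $\rho_W(A_0)$ has finite index in $\Lambda_0$, so it is torsion-free with $\operatorname{cd}\leq d_\Gamma$, by Serre's theorem for subgroups.

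The kernel has $\operatorname{cd}\IAut(H_\Gamma)=\operatorname{cd}\IAut(A_\Gamma)<\infty$ by IA rigidity and the Charney--Vogtmann argument above. The Lyndon--Hochschild--Serre spectral sequence then gives the subadditive bound
\[
 \operatorname{cd}A_0\leq\operatorname{cd}\IAut(H_\Gamma)+\operatorname{cd}\rho_W(A_0)<\infty .
\]
For the outer group I run the same argument with $O_0=\bar\rho_W^{-1}(\Lambda_0)\cap\Out(H_\Gamma)[3]$ and the sequence~\eqref{eq:biconnected-Out-exact-sequence}. The kernel is then $\IOut(H_\Gamma)$. It has finite cohomological dimension by the central sequence with cyclic kernel over $\IOut(A_\Gamma)$, again via the spectral-sequence bound.

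The only point needing care is that the kernel of $\rho_W|_{A_0}$ really is the whole IA group and that $A_0$ is torsion-free. Both follow because $\IAut(H_\Gamma)$ is contained in the level-three kernel, and because finite-order elements there are trivial by Corollary~\ref{cor:block-level-three-torsion-free}. I do not expect a genuine obstacle, since every ingredient is already proved above. The mildly delicate step is making sure the finite-cd bound for the quotient uses a torsion-free subgroup. That is why I pass to $\Lambda_0\cap\rho_W(\Aut(H_\Gamma)[3])$ rather than to $\Lambda_\Gamma$ itself, which may have torsion.
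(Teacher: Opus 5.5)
Your proposal is correct and follows the paper's own proof: pull back a torsion-free finite-index subgroup of $\Lambda_\Gamma$ (with cohomological dimension $d_\Gamma$) through the two structural sequences, then apply the Lyndon--Hochschild--Serre bound using the finite-cd IA kernels from the preceding subsection. Your extra intersection with the level-three kernel is harmless but unnecessary, since the preimage of a torsion-free subgroup under a map with torsion-free kernel is already torsion-free, which is exactly how the paper argues.
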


\begin{proof}
Virtual torsion-freeness is
Corollary~\ref{cor:block-level-three-torsion-free}.  Choose a torsion-free
finite-index subgroup $\Lambda_0\leq\Lambda_\Gamma$ and take its inverse
images in the two structural sequences
\[
 1\to\IAut(H_\Gamma)\to\Aut(H_\Gamma)\to\Lambda_\Gamma\to1
\]
and
\[
 1\to\IOut(H_\Gamma)\to\Out(H_\Gamma)\to\Lambda_\Gamma\to1.
\]
Each inverse image is torsion-free: a finite-order element has trivial
image in the torsion-free quotient and then lies in the torsion-free
kernel.  The kernel terms have finite cohomological dimension by the
preceding subsection, and $\operatorname{cd}(\Lambda_0)=d_\Gamma$ by
Theorem~\ref{thm:cohomological-image-vcd}.  The
Lyndon--Hochschild--Serre spectral sequence
\cite[Ch.~VII]{BrownCohomology} gives
$\operatorname{cd}(G_0) \leq\operatorname{cd}(N)+d_\Gamma<\infty$
for both inverse images.
\end{proof}

\section{Higher-rank lattice rigidity}
\label{sec:lattices}

We use the cohomological quotient and the IA kernel from the
biconnected structure theorem.  The Wedderburn factors of the cubic algebra
control the cohomological image, while residual torsion-free nilpotence
forces the IA image of a suitable finite-index subgroup to be trivial.  This gives a finite-image theorem whose rank bound
is the largest matrix size occurring in the split Wedderburn quotient.  Complete graphs show that
this bound is optimal in general.

Throughout the section, $\Gamma$ is finite and biconnected with at least
three vertices, and
$\CAlg/\Jac(\CAlg) \cong\prod_{r=1}^tM_{m_r}(\Q)$
is the decomposition from
Theorem~\ref{thm:split-component-wedderburn}.  Let $G$ be a real
semisimple Lie group with finite center and no compact factors, and let
$\Delta<G$ be an irreducible lattice.  We always assume
$\rank_{\mathbb R}G\geq2$.

\subsection{Lattice and IA results}

We use two distinct standard results about higher-rank irreducible
lattices.  Every finite-index subgroup $\Delta_0\leq\Delta$ is again an
irreducible lattice in $G$.  The standard finite-abelianization corollary
of the Margulis normal subgroup theorem
\cite[Theorem~IV.4.10]{Margulis} therefore gives
\begin{equation}\label{eq:lattice-no-Z-quotient}
 \operatorname{Hom}(\Delta_0,\Z)=0.
\end{equation}
This condition will be applied to the residually torsion-free nilpotent IA
kernel.  Separately, Wade proves that, if
$\rank_{\mathbb R}G\geq k$, then every homomorphism
\begin{equation}\label{eq:Wade-small-linear-representation}
 \Delta_0\longrightarrow\SL_k(\Z)
\end{equation}
has finite image~\cite[Proposition~7.3]{WadeJohnson}.  Since a
finite-index subgroup of $\Delta$ is again an irreducible lattice in the
same ambient group, both results may be applied after passing to finite
index, but their sources and roles are independent.

For the IA kernel we use Proposition~\ref{prop:Torelli-residual-properties}:
both $\IAut(H_\Gamma)$ and $\IOut(H_\Gamma)$ are residually
torsion-free nilpotent.  The next lemma combines this property with
\eqref{eq:lattice-no-Z-quotient}.

\begin{lemma}
\label{lem:no-map-to-RTFN}
Let $L$ be finitely generated and suppose that
$\operatorname{Hom}(L,\Z)=0$.  If $R$ is residually torsion-free
nilpotent, then every homomorphism $L\to R$ is trivial.
\end{lemma}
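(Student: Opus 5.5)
Let $f:L\to R$ be a homomorphism and suppose, for contradiction, that some $x\in L$ has $f(x)\neq1$. The plan is to push $f$ down to a torsion-free nilpotent quotient of $R$ in which the image of $x$ survives. In that quotient, a nontrivial image of $L$ will be forced to produce a nonzero homomorphism $L\to\Z$.

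First, residual torsion-free nilpotence of $R$ supplies an epimorphism $q:R\to T$ with $T$ torsion-free nilpotent and $q(f(x))\neq1$. Put $P=q(f(L))\leq T$. Then $P$ is a nontrivial, finitely generated, torsion-free nilpotent group, because it is a quotient of the finitely generated group $L$ and a subgroup of $T$.

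Second, we use the standard fact that a nontrivial finitely generated torsion-free nilpotent group has infinite abelianization. To see this, suppose $P^{\mathrm{ab}}$ were finite, say of exponent $e$. Then every element of $P$ has its $e$th power in $[P,P]$. Via the Mal'cev completion, or the isolator of $[P,P]$, this forces the rational Lie algebra to equal its own derived algebra. That is impossible for a nonzero nilpotent Lie algebra, so $P$ would be trivial. Equivalently, in the upper central series terms: if $\operatorname{hl}(P^{\mathrm{ab}})=0$, then $\operatorname{hl}(\gamma_kP/\gamma_{k+1}P)=0$ for all $k$, since these quotients are images of $(P^{\mathrm{ab}})^{\otimes k}$. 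Hence $P$ is finite, and so trivial because it is torsion-free.

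Third, since $P^{\mathrm{ab}}$ is a finitely generated abelian group of positive rank, it surjects onto $\Z$. Composing $L\twoheadrightarrow P\to P^{\mathrm{ab}}\to\Z$ gives a nonzero element of $\operatorname{Hom}(L,\Z)$, contradicting the hypothesis. Hence $f$ is trivial.

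The only nonformal step is the second, showing that a nontrivial finitely generated torsion-free nilpotent group has a $\Z$ quotient. I would carry it out via the tensor-power surjection $(P^{\mathrm{ab}})^{\otimes k}\twoheadrightarrow\gamma_kP/\gamma_{k+1}P$, which avoids Mal'cev theory.
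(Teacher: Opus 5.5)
Your proof is correct and follows the paper's argument step for step: pass to a torsion-free nilpotent quotient that detects $f(x)$, observe that the image of $L$ there is a nontrivial finitely generated torsion-free nilpotent group, conclude that it has infinite abelianization, and compose with a surjection onto $\Z$. The paper only cites the fact that a finitely generated nilpotent group with finite abelianization is finite, whereas your surjection $(P^{\mathrm{ab}})^{\otimes k}\twoheadrightarrow\gamma_kP/\gamma_{k+1}P$ actually proves it; note that this uses the lower central series, not the upper one as you wrote, and the Mal'cev sketch is then unnecessary.
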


\begin{proof}
Suppose that $f:L\to R$ is nontrivial.  Choose $x\in L$ with $f(x)\neq1$.
Residual torsion-free nilpotence gives a homomorphism $q:R\to N$ to a
torsion-free nilpotent group such that $qf(x)\neq1$.  The image
$N_0=qf(L)$ is therefore a nontrivial finitely generated torsion-free
nilpotent group.  Its abelianization is infinite: a finitely generated
nilpotent group with finite abelianization is finite
\cite[Ch.~II]{Raghunathan}, whereas $N_0$ is nontrivial and torsion-free.  The free abelian part of
$N_0^{\mathrm{ab}}$ therefore maps onto $\Z$.  Composing
$L\twoheadrightarrow N_0$ with such a map contradicts
$\operatorname{Hom}(L,\Z)=0$.
\end{proof}

A preliminary finite-image bound follows by
using the full cohomological representation, whose rank is
$\rank H_1(H_\Gamma;\Z)=|V(\Gamma)|-1$.  If
$\rank_{\mathbb R}G\geq|V(\Gamma)|-1$, then a homomorphism from $\Delta$ to
$\Aut(H_\Gamma)$ or $\Out(H_\Gamma)$ has finite image on first cohomology by
\eqref{eq:Wade-small-linear-representation}, after first killing the
determinant.  On a further finite-index subgroup its image lies in the
corresponding IA subgroup and is trivial by
Lemma~\ref{lem:no-map-to-RTFN}.  The rest of the section replaces this
coarse bound by a generally smaller invariant of the cubic algebra.

\subsection{The cubic rank bound}

Define
\begin{equation}\label{eq:cubic-rank-threshold}
 R_{\mathrm{cub}}(\Gamma)
 =\max\bigl(\{2\}\cup\{m_r:m_r\geq2\}\bigr).
\end{equation}
The maximum with $2$ enforces the higher-rank hypothesis when every
Wedderburn factor is one-dimensional.  Recall that the all-root subgroup
$\Elementary$ is a common finite-index subgroup of the actual
cohomological image $\HomImage$ and the order-unit group $\Order^\times$:
\begin{equation}\label{eq:lattice-common-root-index}
 [\HomImage:\Elementary]<\infty,
 \qquad
 [\Order^\times:\Elementary]<\infty.
\end{equation}
As throughout the paper, no inclusion between the two ambient groups is
being asserted.

Let
\[
 J_\Gamma=\Jac(\CAlg),
 \qquad
 \mathbf U=\operatorname{R_u}
       \bigl((\CAlg^\times)^{\mathrm{der}}\bigr),
 \qquad
 \mathbf G_{\mathrm{ss}}\cong\prod_{\{r:m_r\geq2\}}\SL_{m_r}
\]
be the groups from~\eqref{eq:semisimple-product-identification}.  Recall
\begin{equation}\label{eq:lattice-root-radical-sequence}
 E_U=\Elementary\cap\mathbf U(\Q),
 \qquad
 \Pi=q_{\mathrm{ss}}(\Elementary).
\end{equation}
Section~\ref{sec:arithmeticity} gives the exact sequence
\begin{equation}\label{eq:lattice-root-exact-sequence}
 1\longrightarrow E_U
 \longrightarrow\Elementary
 \xrightarrow{\ q_{\mathrm{ss}}\ }\Pi
 \longrightarrow1,
\end{equation}
where $E_U$ is finitely generated, torsion-free, and nilpotent, and
$\Pi$ contains with finite index the arithmetic subgroup constructed in
Proposition~\ref{prop:semisimple-arithmetic-image}; in particular, it is
commensurable with an arithmetic subgroup of $\mathbf G_{\mathrm{ss}}(\Q)$.
The next lemma provides an invariant integral lattice for each
Wedderburn coordinate, allowing
\eqref{eq:Wade-small-linear-representation} to be applied factor by
factor.

\begin{lemma}
\label{lem:integral-Wedderburn-coordinates}
For every $r$ with $m_r\geq2$, there is a full lattice
$L_r\leq\Q^{m_r}$ preserved by the $r$th coordinate of $\Pi$.  After
a basis of $L_r$ is chosen, this coordinate is an honest homomorphism
$p_r:\Pi\longrightarrow\SL_{m_r}(\Z)$.
\end{lemma}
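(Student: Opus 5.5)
The plan is to transport the problem to the Levi decomposition of Section~\ref{sec:arithmeticity} and then apply Lemma~\ref{lem:bounded-denominator-invariant-lattice} once for each coordinate. The set-up is as follows. The group $\mathbf G_{\mathrm{ss}}\cong\prod_{\{r:m_r\geq2\}}\SL_{m_r}$ comes with the rational projection
\[
 \operatorname{pr}_r:\mathbf G_{\mathrm{ss}}\longrightarrow\SL_{m_r}\leq\GL(\Q^{m_r}),
\]
and this is a rational representation of $\mathbf G_{\mathrm{ss}}$. Since the identification \eqref{eq:semisimple-product-identification} comes from the split Wedderburn isomorphism \eqref{eq:arithmetic-wedderburn-quotient}, this projection is defined over $\Q$. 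We then compose it with $q_{\mathrm{ss}}$ to obtain the rational representation $\operatorname{pr}_r\circ q_{\mathrm{ss}}$ of the algebraic group $\mathbf D\leq\GL(W)$.

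The lattice itself comes from Lemma~\ref{lem:bounded-denominator-invariant-lattice}. We apply it with $\mathbf H=\mathbf D$, $L=W_{\Z}$, the subgroup $\Gamma_0=\Elementary\leq\mathbf D(\Q)\cap\GL(W_{\Z})$, the representation $\rho=\operatorname{pr}_r\circ q_{\mathrm{ss}}$, and the standard lattice $T=\Z^{m_r}$. The hypothesis $\Gamma_0\leq\mathbf D(\Q)\cap\GL(W_{\Z})$ holds because Proposition~\ref{prop:split-quotient-and-root-density} gives $\Elementary\leq\mathbf D(\Q)$, and the generators $I\pm N$ are integral. The lemma then produces a lattice $L_r\subset\Q^{m_r}$ that is invariant under $\rho(\Elementary)=\operatorname{pr}_r(\Pi)$. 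The lemma's proof builds $L_r$ as a sum of translates sandwiched between $\Z^{m_r}$ and $d^{-1}\Z^{m_r}$, so $L_r$ is a full lattice. If we wanted a lattice invariant under the larger group $q_{\mathrm{ss}}(S_{\mathbf D})$, we could instead apply the lemma to $S_{\mathbf D}$; the invariance argument is identical.

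The homomorphism $p_r$ is then obtained by choosing a $\Z$-basis of $L_r$. Invariance of $L_r$ under $\operatorname{pr}_r(\Pi)$ and under inverses means that every element of $\operatorname{pr}_r(\Pi)$ acts as an integral matrix with integral inverse, so it lies in $\GL(L_r)\cong\GL_{m_r}(\Z)$. Its determinant is $1$ because it lies in $\SL_{m_r}(\Q)$, and conjugation by the change-of-basis matrix preserves the determinant. Hence $p_r=\operatorname{pr}_r|_\Pi$, written in the chosen basis, is a homomorphism $\Pi\to\SL_{m_r}(\Z)$. It is an honest homomorphism rather than a projective one because $\operatorname{pr}_r$ is a morphism of algebraic groups and the map is simply restricted to $\Pi$.

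The main obstacle is the first step, namely checking that the $r$th coordinate really is a rational linear representation on $\Q^{m_r}$. This needs the split form established in Theorem~\ref{thm:split-component-wedderburn}. In particular, no nonsplit division-algebra factor may occur, and the isomorphism $\mathbf G_{\mathrm{ss}}\cong\prod\SL_{m_r}$ must be defined over $\Q$ and not merely over $\overline{\Q}$. Once that is in place, the remaining steps are the bounded-denominator argument already proved in the paper.
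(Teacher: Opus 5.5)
Your proof is correct, but it takes a different route from the paper. You treat $\operatorname{pr}_r\circ q_{\mathrm{ss}}$ as an abstract rational representation of $\mathbf D$. You then feed $\Elementary\leq\mathbf D(\Q)\cap\GL(W_{\Z})$ into Lemma~\ref{lem:bounded-denominator-invariant-lattice}, which gives $L_r$ as the sum of translates $\sum_{\gamma}\rho(\gamma)\Z^{m_r}$.

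The paper instead uses the fact that the $r$th coordinate is the restriction of a $\Q$-algebra homomorphism $\CAlg\to\CAlg/J_\Gamma\to M_{m_r}(\Q)$. The image $\mathcal R_r$ of the order $\Order$ is then again an order in $M_{m_r}(\Q)$. The lattice $L_r=\mathcal R_r\Z^{m_r}$ has full rank because $1\in\mathcal R_r$, and it is stable under all of $\mathcal R_r$. Since $\Elementary\leq\Order^\times$, every element of $\Pi$ and its inverse have $r$th coordinate in $\mathcal R_r$.

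What each approach buys:
\begin{itemize}
\item The paper's route is more elementary and explicit: it needs no coordinate-ring lifts or uniform denominator bound. Its lattice is also automatically invariant under the image of the whole unit group $\Order^\times$, hence under $q_{\mathrm{ss}}(S_{\mathbf D})$.
\item Your route works for any rational representation and any integral subgroup, at the cost of a less canonical lattice. As you note, it has to be rerun with $S_{\mathbf D}$ to get the larger invariance.
\end{itemize}

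You correctly single out the one essential input that both arguments share: the Wedderburn splitting holds over $\Q$, so the coordinate is a $\Q$-rational representation on $\Q^{m_r}$.
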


\begin{proof}
Let $\overline{\Order}$ be the image of $\Order$ in
$\CAlg/J_\Gamma$, let
$\operatorname{pr}_r:\CAlg/J_\Gamma\to M_{m_r}(\Q)$ be the $r$th Wedderburn
coordinate, and put
\[
 \mathcal R_r=\operatorname{pr}_r(\overline{\Order})
 \leq M_{m_r}(\Q).
\]
It is an order.  Define
\begin{equation}\label{eq:lattice-Wedderburn-invariant-lattice}
 L_r=\mathcal R_r\Z^{m_r}\leq\Q^{m_r}.
\end{equation}
Because $1\in\mathcal R_r$, this subgroup has full rank; because
$\mathcal R_r$ is finitely generated over $\Z$, it is a lattice.  Every
element of $\Pi$ is the image of an element of $\Elementary\leq
\Order^\times$.  Hence its $r$th coordinate and its inverse belong to
$\mathcal R_r$, so both preserve $L_r$.  The coordinate lies in
$\SL_{m_r}(\Q)$ by the definition of $\mathbf G_{\mathrm{ss}}$.  In a $\Z$-basis of
$L_r$ it is therefore an element of $\SL_{m_r}(\Z)$.
\end{proof}

An order in a split rational product need not decompose as a product of
standard matrix orders.  The invariant lattices permit the use of each
rational coordinate without assuming such an integral decomposition.

\begin{proposition}
\label{prop:lattice-finite-cohomological-image}
Let $\Gamma$ be a finite biconnected graph with at least three vertices.
Let $G$ be a real semisimple Lie group with finite center and no compact
factors, and let $\Delta<G$ be an irreducible lattice.
If
$\rank_{\mathbb R}G\geq R_{\mathrm{cub}}(\Gamma)$,
then every homomorphism $\Delta\to\HomImage$ has finite image.
\end{proposition}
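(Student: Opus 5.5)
Given a homomorphism $f:\Delta\to\HomImage$, I would first pass to a finite-index subgroup and then kill the image in three layers: the semisimple coordinates, the unipotent kernel, and a finite remainder. Since $[\HomImage:\Elementary]<\infty$ by \eqref{eq:lattice-common-root-index}, the preimage $\Delta_1=f^{-1}(\Elementary)$ has finite index in $\Delta$. It is therefore again an irreducible lattice in $G$, and it is finitely generated. It suffices to prove that $f(\Delta_1)$ is finite.

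Step one handles the semisimple quotient. Compose $f|_{\Delta_1}$ with $q_{\mathrm{ss}}$ from \eqref{eq:lattice-root-exact-sequence}. For each $r$ with $m_r\geq2$, compose further with the honest integral coordinate $p_r:\Pi\to\SL_{m_r}(\Z)$ of Lemma~\ref{lem:integral-Wedderburn-coordinates}. Because $\rank_{\mathbb R}G\geq R_{\mathrm{cub}}(\Gamma)\geq m_r$, Wade's result \eqref{eq:Wade-small-linear-representation} with $k=m_r$ shows that each $p_r\circ q_{\mathrm{ss}}\circ f$ has finite image. The map $\Pi\to\prod_r\SL_{m_r}(\Q)$ is injective, being the restriction of the identification \eqref{eq:semisimple-product-identification}. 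Hence $q_{\mathrm{ss}}f(\Delta_1)$ embeds in a finite product of finite groups and is finite. If no $m_r\geq2$, then $\Pi$ is trivial and this step is vacuous.

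Step two handles the unipotent kernel. Let $\Delta_2=\ker(q_{\mathrm{ss}}\circ f|_{\Delta_1})$; it has finite index and so is again an irreducible lattice. Its image lies in $E_U$, which is finitely generated, torsion-free, and nilpotent, hence residually torsion-free nilpotent. By \eqref{eq:lattice-no-Z-quotient} we have $\operatorname{Hom}(\Delta_2,\Z)=0$, using $\rank_{\mathbb R}G\geq2$, which the maximum with $2$ in \eqref{eq:cubic-rank-threshold} guarantees. Lemma~\ref{lem:no-map-to-RTFN} then gives $f(\Delta_2)=1$. Consequently $f(\Delta)$ has order at most $[\Delta:\Delta_2]$, which is finite.

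The main obstacle is step one. One must make sure the rational Wedderburn coordinates of $\Pi$ really land in integral groups $\SL_{m_r}(\Z)$ of the right size, so that Wade's theorem applies with $k=m_r$ rather than with the full rank $|V(\Gamma)|-1$. Lemma~\ref{lem:integral-Wedderburn-coordinates} supplies exactly this, through the invariant lattices $L_r$. The remaining check is that the coordinate of $\Pi$ in each factor has determinant one, which holds because $\Pi\leq\mathbf G_{\mathrm{ss}}(\Q)$. Everything else is bookkeeping of finite indices.
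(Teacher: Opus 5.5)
Your proof is correct and follows the paper's argument essentially step for step: you pass to $f^{-1}(\Elementary)$, kill the semisimple image using the integral Wedderburn coordinates of Lemma~\ref{lem:integral-Wedderburn-coordinates} together with Wade's theorem for $k=m_r$, and then kill the remaining image in $E_U$ using \eqref{eq:lattice-no-Z-quotient} and Lemma~\ref{lem:no-map-to-RTFN}. The only cosmetic difference is the order of the last bookkeeping: you first show that $q_{\mathrm{ss}}f(\Delta_1)$ is finite and then take its kernel, whereas the paper directly intersects the kernels of the finitely many coordinate maps.
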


\begin{proof}
Let $f:\Delta\to\HomImage$.  By
\eqref{eq:lattice-common-root-index}, the subgroup
$\Delta_0=f^{-1}(\Elementary)$
has finite index in $\Delta$.  For each $r$ with $m_r\geq2$, compose
$q_{\mathrm{ss}}f|_{\Delta_0}$ with the coordinate homomorphism of
Lemma~\ref{lem:integral-Wedderburn-coordinates}.  Since $m_r\leq R_{\mathrm{cub}}(\Gamma)$, Wade's
lattice-rigidity result~\cite[Proposition~7.3]{WadeJohnson} makes every coordinate image finite.  Intersecting their finitely many kernels produces a
finite-index subgroup $\Delta_1\leq\Delta_0$ for which
$q_{\mathrm{ss}}f(\Delta_1)=1$.
The exact sequence~\eqref{eq:lattice-root-exact-sequence} now gives
$f(\Delta_1)\leq E_U$.  Equation~\eqref{eq:lattice-no-Z-quotient} and
Lemma~\ref{lem:no-map-to-RTFN}---or the same nilpotent argument
directly---give $f(\Delta_1)=1$.  Thus $f(\Delta)$ is finite.
\end{proof}

The proof does not require $\Pi$ to be a direct product.  Making every
coordinate projection trivial makes the image in the ambient rational product,
and hence the semisimple image, trivial.  The same argument treats $m_r=2$;
the maximum with $2$ in
\eqref{eq:cubic-rank-threshold} ensures that the source lattice has real
rank at least two.

\subsection{Finite-image rigidity}

\begin{theorem}
\label{thm:biconnected-higher-rank-rigidity}
Let $\Gamma$ be a finite biconnected graph with at least three vertices.
Let $G$ be a real semisimple Lie group with finite center and no compact
factors, and let $\Delta<G$ be an irreducible lattice.  Suppose that
$\rank_{\mathbb R}G\geq R_{\mathrm{cub}}(\Gamma)$.
Then every homomorphism from $\Delta$ to either $\Aut(H_\Gamma)$ or
$\Out(H_\Gamma)$ has finite image.
\end{theorem}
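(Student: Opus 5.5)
Let $f:\Delta\to\Aut(H_\Gamma)$ or $f:\Delta\to\Out(H_\Gamma)$ be a homomorphism. The plan is to kill the cohomological image first and then the IA kernel, passing to finite-index subgroups of $\Delta$ at each stage.

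\emph{Cohomological layer.} Compose $f$ with $\rho_W$ (respectively $\bar\rho_W$). The image lies in $\HomImage$, so Proposition~\ref{prop:lattice-finite-cohomological-image} shows that the composite $\Delta\to\HomImage$ has finite image, using the hypothesis $\rank_{\mathbb R}G\geq R_{\mathrm{cub}}(\Gamma)$. Let $\Delta_0$ be its kernel. Then $\Delta_0$ has finite index in $\Delta$, so it is again an irreducible lattice in $G$. By the exact sequences of Theorem~\ref{thm:biconnected-structure}, the restriction $f|_{\Delta_0}$ takes values in $\IAut(H_\Gamma)\cong\IAut(A_\Gamma)$ in the automorphism case, and in $\IOut(H_\Gamma)$ in the outer case.

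\emph{IA layer.} Proposition~\ref{prop:Torelli-residual-properties} shows that both $\IAut(H_\Gamma)$ and $\IOut(H_\Gamma)$ are residually torsion-free nilpotent. Since $\rank_{\mathbb R}G\geq2$ always holds here, \eqref{eq:lattice-no-Z-quotient} gives $\operatorname{Hom}(\Delta_0,\Z)=0$. The group $\Delta_0$ is also finitely generated, being a lattice in a semisimple Lie group, and the present proof may simply quote this. Lemma~\ref{lem:no-map-to-RTFN} then forces $f(\Delta_0)=1$. Hence $f(\Delta)$ is a quotient of the finite group $\Delta/\Delta_0$, and so it is finite.

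\emph{Main obstacle.} Essentially all the difficulty has already been handled, in Proposition~\ref{prop:lattice-finite-cohomological-image}, where the semisimple coordinates are made integral through Lemma~\ref{lem:integral-Wedderburn-coordinates} and Wade's bound is applied factor by factor. Within the present proof, the only points to check are these:
\begin{itemize}
\item the outer case uses $\IOut(H_\Gamma)$, which includes the central deck $\Z$ extension, and Proposition~\ref{prop:Torelli-residual-properties} already covers this group;
\item the finite generation needed in Lemma~\ref{lem:no-map-to-RTFN} applies to $\Delta_0$ itself;
\item the quotient maps to $\HomImage$ are well defined on $\Out$, because inner automorphisms act trivially on cohomology.
\end{itemize}
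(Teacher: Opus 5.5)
Your proposal is correct and follows the paper's proof essentially verbatim. Both arguments proceed the same way: Proposition~\ref{prop:lattice-finite-cohomological-image} makes the composite to $\HomImage$ finite, so its kernel $\Delta_0$ has finite index and maps into $\IAut(H_\Gamma)$ or $\IOut(H_\Gamma)$, and then residual torsion-free nilpotence (Proposition~\ref{prop:Torelli-residual-properties}) together with \eqref{eq:lattice-no-Z-quotient} and Lemma~\ref{lem:no-map-to-RTFN} kills $f(\Delta_0)$. You also explicitly note that $\Delta_0$ is finitely generated, which Lemma~\ref{lem:no-map-to-RTFN} needs and which the paper leaves implicit.
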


\begin{proof}
Let $\mathcal G$ be either target and let
$\rho:\Delta\to\mathcal G$.  The cohomological images of $\Aut(H_\Gamma)$ and
$\Out(H_\Gamma)$ are both $\HomImage$, because inner automorphisms act trivially
on abelianization.  Proposition~\ref{prop:lattice-finite-cohomological-image}
therefore shows that the composite
\[
 \Delta\xrightarrow{\rho}\mathcal G\longrightarrow\HomImage
\]
has finite image.  Its kernel $\Delta_0$ has finite index in $\Delta$.
The group $\rho(\Delta_0)$ lies in $\IAut(H_\Gamma)$ in the automorphism case
and in $\IOut(H_\Gamma)$ in the outer case.  Both groups are residually
torsion-free nilpotent by
Proposition~\ref{prop:Torelli-residual-properties}.  Equation
\eqref{eq:lattice-no-Z-quotient} and
Lemma~\ref{lem:no-map-to-RTFN} imply $\rho(\Delta_0)=1$.  Thus the
original image is finite.
\end{proof}

The outer argument uses residual torsion-free nilpotence of
$\IOut(H_\Gamma)$ and does not assume that the central extension in
Theorem~\ref{thm:torelli-outer-exact-sequence} splits.

\begin{corollary}
\label{cor:SLd-lattice-rigidity}
Let $\Gamma$ be a finite biconnected graph with at least three vertices.
If
$d\geq R_{\mathrm{cub}}(\Gamma)+1$,
then every homomorphism from $\SL_d(\Z)$ to either
$\Aut(H_\Gamma)$ or $\Out(H_\Gamma)$ has finite image.
\end{corollary}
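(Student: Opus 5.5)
The plan is to apply Theorem~\ref{thm:biconnected-higher-rank-rigidity} with $G=\SL_d(\mathbb R)$ and $\Delta=\SL_d(\Z)$. So the work is to check that this pair meets every hypothesis of that theorem, and then to read off the conclusion.

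First, $\SL_d(\mathbb R)$ is a real semisimple Lie group for $d\geq2$. Its center $\{\pm I\}\cap\SL_d(\mathbb R)$ is finite. It has no compact factors, because it is simple and noncompact.

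Second, $\SL_d(\Z)$ is a lattice in $\SL_d(\mathbb R)$, by the Borel--Harish-Chandra theorem. It is irreducible, since the ambient group is simple and so there are no nontrivial product decompositions to respect.

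Third, the rank condition. We have $\rank_{\mathbb R}\SL_d(\mathbb R)=d-1$. The hypothesis $d\geq R_{\mathrm{cub}}(\Gamma)+1$ gives $d-1\geq R_{\mathrm{cub}}(\Gamma)$. By \eqref{eq:cubic-rank-threshold}, $R_{\mathrm{cub}}(\Gamma)\geq2$, so the standing assumption $\rank_{\mathbb R}G\geq2$ of the section holds as well.

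With these three checks in place, Theorem~\ref{thm:biconnected-higher-rank-rigidity} applies directly. It gives finite image for every homomorphism from $\SL_d(\Z)$ to $\Aut(H_\Gamma)$ or to $\Out(H_\Gamma)$.

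The only step that is not purely formal is the rank arithmetic. In particular, one must be careful that the real rank is $d-1$ and not $d$, since this accounts for the ``$+1$'' in the hypothesis on $d$.
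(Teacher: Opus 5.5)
Your proposal is correct and matches the paper's proof: the paper likewise applies Theorem~\ref{thm:biconnected-higher-rank-rigidity} to the irreducible lattice $\SL_d(\Z)<\SL_d(\mathbb R)$ and uses $\rank_{\mathbb R}\SL_d(\mathbb R)=d-1$. Your checks of finite center, absence of compact factors, irreducibility, and $R_{\mathrm{cub}}(\Gamma)\geq2$ spell out hypotheses that the paper leaves implicit.
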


\begin{proof}
Apply Theorem~\ref{thm:biconnected-higher-rank-rigidity} to the
irreducible lattice $\SL_d(\Z)<\SL_d(\mathbb R)$ and use
$\rank_{\mathbb R}\SL_d(\mathbb R)=d-1$.
\end{proof}

\subsection{Sharpness}

\begin{proposition}
\label{prop:lattice-threshold-sharpness}
The matrix-size inequality in
Theorem~\ref{thm:biconnected-higher-rank-rigidity} cannot be lowered by
one uniformly over all biconnected defining graphs.
\end{proposition}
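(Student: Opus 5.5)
The plan is to exhibit the complete graphs $K_n$, as the introduction announces. For $\Gamma=K_n$ with $n\geq3$ the graph is biconnected. Since $A_{K_n}\cong\Z^n$, we have $H_{K_n}=\ker(\Z^n\to\Z)\cong\Z^{n-1}$. Hence $\Aut(H_{K_n})=\Out(H_{K_n})\cong\GL_{n-1}(\Z)$, and the cohomological image is all of $\GL(W_{K_n})$.

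The first step is to compute $\mathscr C_{K_n}$ and $R_{\mathrm{cub}}(K_n)$. In $K_n$ there are no nonedges and no vertex separators, since deleting vertices always leaves a complete graph. For $i\ne j$ there is also no vertex $v$ with $iv\notin E$, and no edge $uv$ with $iu,iv\notin E$. Consequently every hypergraph $\mathcal H_i^{(3)}$ has no hyperedges, and each $\mathcal P_i^{(3)}$ is the partition into singletons. Theorem~\ref{thm:cubic-row-component} then imposes no row equations, so $\mathscr C_{K_n}=\End(W)\cong M_{n-1}(\Q)$.

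This also follows intrinsically: $I_2$ and $I_3$ are the full relation spaces, because $\operatorname{gr}_d(H)=0$ for $d\geq2$, and there are no separator spaces. Either way the Jacobson radical is zero and the Wedderburn quotient is the single factor $M_{n-1}(\Q)$. Therefore $R_{\mathrm{cub}}(K_n)=\max(2,n-1)=n-1$ for $n\geq3$.

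The second step is to exhibit a lattice of real rank exactly $R_{\mathrm{cub}}(K_n)-1=n-2$ with an infinite-image homomorphism. Take $n\geq4$, so that $n-2\geq2$ and the higher-rank standing assumption holds. Let $G=\SL_{n-1}(\mathbb R)$, which is simple with finite center and no compact factors and has $\rank_{\mathbb R}G=n-2$. The irreducible lattice $\Delta=\SL_{n-1}(\Z)$ embeds into $\GL_{n-1}(\Z)\cong\Aut(H_{K_n})=\Out(H_{K_n})$ via the standard inclusion, which has infinite image. So for every $n\geq4$ the hypothesis $\rank_{\mathbb R}G\geq R_{\mathrm{cub}}(\Gamma)-1$ does not suffice. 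This is the required sharpness, uniformly across the infinite family.

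The main obstacle is bookkeeping rather than mathematics. One must verify that the identification $\Out(H_{K_n})\cong\GL_{n-1}(\Z)$ is compatible with the cohomological convention, which only affects the inclusion by the inverse-transpose automorphism. One must also rule out the degenerate $n=3$ case, where the rank would drop below two; this is why the family is taken with $n\geq4$.
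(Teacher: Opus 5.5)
Your proposal is correct and follows the paper's proof essentially verbatim: take $\Gamma=K_n$ with $n\geq4$, observe that $\mathscr C_{K_n}=M_{n-1}(\Q)$ so that $R_{\mathrm{cub}}(K_n)=n-1$, and use the standard inclusion $\SL_{n-1}(\Z)\hookrightarrow\GL_{n-1}(\Z)=\Out(H_{K_n})$, whose ambient group has real rank $n-2$. Your additional check that every hypergraph $\mathcal H_i^{(3)}$ has no hyperedges is a harmless redundancy. The intrinsic argument you also give (full relation spaces and an empty separator arrangement) is the one the paper uses.
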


\begin{proof}
Let $\Gamma=K_n$ with $n\geq4$.  Then
\[
 H_{K_n}\cong\Z^{n-1},
 \qquad
 \Aut(H_{K_n})=\Out(H_{K_n})\cong\GL_{n-1}(\Z).
\]
Because $H_{K_n}$ is abelian, its quadratic and cubic relation spaces are
the entire degree-two and degree-three homogeneous components of the
free Lie algebra, and its separator
arrangement is empty.  The combined stabilizer is therefore the full $\GL_{n-1}$.  Moreover,
$\mathscr C_{K_n}=M_{n-1}(\Q)$ and $R_{\mathrm{cub}}(K_n)=n-1$.
On the other hand, the standard inclusion
$\SL_{n-1}(\Z)\hookrightarrow\GL_{n-1}(\Z) =\Out(H_{K_n})$
has infinite image.  Its ambient real group has rank $n-2$, exactly one
less than $R_{\mathrm{cub}}(K_n)$.  Thus the sufficient condition
$\rank_{\mathbb R}G\geq m_r$ cannot in general be replaced by
$\rank_{\mathbb R}G\geq m_r-1$.
\end{proof}

\section{Aut--Out finiteness comparisons}
\label{sec:finite-presentation}

This section proves the unconditional Aut--Out equivalence in type $F_2$,
a conditional higher-degree analogue, an all-degree biconnected version,
and the SIL-free sufficient condition for finite presentation.

In this section, ``type $F_2$'' and ``finitely presented'' are synonymous.
They must not be confused with type $FP_2$ or with the condition of
containing a rank-two free group.

\subsection{Finite presentation: automorphisms versus outer automorphisms}

We use the following relative form of the standard presentation theorem
for extensions.

\begin{lemma}
\label{lem:relative-finite-presentation-extension}
Let $N$, $P$, and $Q$ be groups fitting into a short exact sequence
\[
 1\longrightarrow N\longrightarrow P
 \xrightarrow{\ \pi\ }Q\longrightarrow1.
\]
Let $Q_0\leq Q$ be a subgroup and put
$P_0=\pi^{-1}(Q_0)\leq P$.  Assume that $N$ is finitely generated,
$P_0$ is finitely presented, and $Q$ admits a finite presentation
relative to the coefficient subgroup $Q_0$.  Then $P$ is finitely
presented.
\end{lemma}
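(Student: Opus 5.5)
We build an explicit finite presentation of $P$ from the three pieces of data. First we fix what "finite presentation of $Q$ relative to $Q_0$" means: $Q$ is a quotient of $Q_0*F(T)$ for a finite set $T$, by the normal closure of a finite set $\mathcal R$ of words in $Q_0*F(T)$. Equivalently, $Q$ is obtained from $Q_0$ by adding finitely many generators and finitely many relations, and the map $Q_0\to Q$ is the evident one.

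Next we choose the data for $P$. Take a finite presentation $\langle X\mid S\rangle$ of $P_0$. Since $N\leq P_0$, a finite generating set $Y$ of $N$ can be written as words in $X$, so we may assume $Y\subseteq X$. For each $t\in T$, choose a lift $\hat t\in P$. The candidate presentation is
\[
 \widetilde P=\bigl\langle X\cup\hat T\ \bigm|\ S,\ \hat t^{\pm1}y\hat t^{\mp1}=w_{t,y}^{\pm},\ \hat r=n_r\bigr\rangle,
\]
with the relations indexed as follows.
\begin{enumerate}
\item[(a)] For $t\in T$, $y\in Y$, and both signs, $w^{\pm}_{t,y}$ is a word in $Y$ representing $\hat t^{\pm1}y\hat t^{\mp1}\in N$. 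Such a word exists because $N$ is normal and generated by $Y$.
\item[(b)] For $r\in\mathcal R$, $\hat r$ is the word obtained by replacing each $Q_0$-letter of $r$ by a fixed $X$-word lift in $P_0$ and each $t$ by $\hat t$. This requires only finitely many lifts, since $\mathcal R$ is finite. The element $n_r$ is a word in $Y$ representing $\hat r\in N$.
\end{enumerate}
All of these relations hold in $P$, so there is a surjection $\widetilde P\to P$. It is onto because $P$ is generated by $P_0$ together with the lifts $\hat t$: $Q$ is generated by $Q_0$ and $T$, and $N\leq P_0$.

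The main step is injectivity. Let $\widetilde N$ be the subgroup of $\widetilde P$ generated by $Y$.
\begin{enumerate}
\item $\widetilde N$ is normal in $\widetilde P$. It is normalized by the generators $\hat t^{\pm1}$ by relations (a). It is normalized by $X$ because $N$ is normal in $P_0$ and the relations $S$ present $P_0$, so $x y x^{-1}$ equals a $Y$-word already in the image of $P_0\to\widetilde P$.
\item $\widetilde P/\widetilde N\cong Q$. Killing $Y$ turns $\langle X\mid S\rangle$ into a presentation of $P_0/N=Q_0$. Relations (a) become trivial, and relations (b) become $r=1$. This gives $(Q_0*F(T))/\langle\!\langle\mathcal R\rangle\!\rangle=Q$.
\item The map $\widetilde N\to N$ is injective. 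The obvious map $P_0\to\widetilde P$ has image containing $\widetilde N$, and the composite $P_0\to\widetilde P\to P$ is the inclusion. Hence $P_0\to\widetilde P$ is injective, and so is its restriction to $\widetilde N$.
\end{enumerate}
The five lemma applied to the two extensions now gives $\widetilde P\cong P$.

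The obstacle I expect is the well-definedness in step 1: normality of $\widetilde N$ under the $X$-generators has to come from relations already present in $S$. It does, because $S$ presents all of $P_0$, so every conjugation identity $xyx^{-1}=(\text{word in }Y)$ that holds in $P_0$ is a consequence of $S$.
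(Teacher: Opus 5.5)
Your proof is correct and follows essentially the same route as the paper: lift the extra generators, adjoin finitely many conjugation relations and lifted relators to a presentation of $P_0$, note that $P_0$ embeds in the resulting group because its composite to $P$ is the inclusion, check that $N$ becomes normal, and identify the quotient by $N$ with $Q$. The differences are cosmetic. You work with an explicit finite presentation $\langle X\mid S\rangle$ and finish with the five lemma. The paper starts from $P_0*F(x_1,\ldots,x_m)$ and argues directly that the kernel of $\widehat P\to P$ lies in $N$, where the map is injective, so the kernel is trivial.
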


\begin{proof}
Choose a finite relative presentation
\[
 Q=\langle Q_0,x_1,\ldots,x_m\mid r_1,\ldots,r_k\rangle
\]
and lifts $\widetilde x_i\in P$.  Let $Y$ be a finite generating set of
$N$.  For every $i$ and $y\in Y$, choose $Y$-words representing the two
conjugates
$\widetilde x_i^{\pm1}y\widetilde x_i^{\mp1}$.  Replace the finitely many $Q_0$-coefficients in each $r_j$ by elements
of $P_0$ and choose a $Y$-word representing the resulting lifted relator.

Start with $P_0*F(x_1,\ldots,x_m)$ and impose these finitely many
conjugation and lifted-relator relations.  The resulting group maps onto
$P$; call it $\widehat P$.  The natural map
$P_0\to\widehat P$ is injective, because its composite with
$\widehat P\to P$ is the original inclusion $P_0\hookrightarrow P$.
In particular, we may regard $N$ as a subgroup of $\widehat P$.  The
conjugation relations make this copy of $N$ normal.  After quotienting by
$N$, the coefficient group becomes $P_0/N\cong Q_0$ and the displayed
relative presentation is exactly the chosen presentation of $Q$.  Hence
$\widehat P/N\to Q$ is an isomorphism, so the kernel of
$\widehat P\to P$ lies in $N$.  The restriction of that map to $N$ is the
original inclusion $N\hookrightarrow P$ and is injective.  Thus the
kernel is trivial.  A finite
presentation of $P_0$ therefore turns this relative presentation into an
ordinary finite presentation of $P$.
\end{proof}

If $Q$ is finitely presented and $Q_0$ is finitely generated, then $Q$ is
finitely presented relative to $Q_0$: adjoin a finite generating set of
$Q_0$ to an ordinary finite presentation and equate those generators to
chosen words in the original generators.

Now let $\Gamma$ be finite and connected.  Choose a vertex $t$ and let
$\tau=c_t|_{H_\Gamma}$.  Conjugation gives a homomorphism
$\kappa:A_\Gamma\longrightarrow\Aut(H_\Gamma)$.
Proposition~\ref{prop:torelli-centralizer-of-H} gives
$\ker\kappa=C_{A_\Gamma}(H_\Gamma)=Z(A_\Gamma)$.  If $U$ is the set of universal vertices, then
$A_\Gamma=A_U\times A_{\Gamma-U}$ and hence
\begin{equation}\label{eq:deck-preimage-RAAG}
 \kappa(A_\Gamma)\cong A_\Gamma/Z(A_\Gamma)\cong A_{\Gamma-U}.
\end{equation}
Thus $\kappa(A_\Gamma)$ is a finitely presented RAAG.

Let $C=\langle[\tau]\rangle\leq\Out(H_\Gamma)$.  Since
$A_\Gamma=H_\Gamma\langle t\rangle$, the full preimage of $C$ under
$\Aut(H_\Gamma)\to\Out(H_\Gamma)$ is exactly
\begin{equation}\label{eq:deck-full-preimage}
 \Inn(H_\Gamma)\langle\tau\rangle=\kappa(A_\Gamma).
\end{equation}
The group $C$ is cyclic; it is trivial when $\Gamma$ has a universal
vertex and infinite cyclic otherwise, but only finite generation is used.

\begin{theorem}
\label{thm:Aut-Out-finite-presentation-equivalence}
For every finite connected graph $\Gamma$,
\[
 \Aut(H_\Gamma)\text{ is finitely presented}
 \quad\Longleftrightarrow\quad
 \Out(H_\Gamma)\text{ is finitely presented}.
\]
\end{theorem}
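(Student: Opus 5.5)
Both directions will come from the extension $1\to\Inn(H_\Gamma)\to\Aut(H_\Gamma)\xrightarrow{p}\Out(H_\Gamma)\to1$, using Lemma~\ref{lem:relative-finite-presentation-extension} and the deck subgroup $C=\langle[\tau]\rangle$ set up just before the statement. The kernel $\Inn(H_\Gamma)\cong H_\Gamma/Z(H_\Gamma)$ is finitely generated but in general not finitely presented (for example $H_{C_n}$ with $n\geq5$). So the standard extension argument cannot be applied directly. Instead, the finitely presented group $\kappa(A_\Gamma)\cong A_{\Gamma-U}$ of \eqref{eq:deck-preimage-RAAG} will play the role of the coefficient subgroup $P_0$.

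\emph{($\Leftarrow$)} Assume $\Out(H_\Gamma)$ is finitely presented. Apply Lemma~\ref{lem:relative-finite-presentation-extension} with $P=\Aut(H_\Gamma)$, $N=\Inn(H_\Gamma)$, $Q=\Out(H_\Gamma)$ and $Q_0=C$. The group $N$ is finitely generated because $H_\Gamma$ is, for connected $\Gamma$. By \eqref{eq:deck-full-preimage} the preimage $P_0=p^{-1}(C)$ equals $\kappa(A_\Gamma)$, which is a RAAG and hence finitely presented. Finally, $Q$ is finitely presented and $C$ is cyclic, hence finitely generated, so by the remark after the lemma $Q$ is finitely presented relative to $C$. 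The lemma then gives that $\Aut(H_\Gamma)$ is finitely presented.

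\emph{($\Rightarrow$)} Assume $\Aut(H_\Gamma)$ is finitely presented. Then $\Out(H_\Gamma)$ is the quotient of $\Aut(H_\Gamma)$ by the normal subgroup $\Inn(H_\Gamma)$. A quotient of a finitely presented group by the normal closure of a finite set is finitely presented, so it suffices to show that $\Inn(H_\Gamma)$ is the normal closure in $\Aut(H_\Gamma)$ of finitely many elements. This holds because $\Inn(H_\Gamma)$ is generated, as a group, by the finitely many inner automorphisms $c_{d_{uv}}$ for $uv\in E(\Gamma)$. Adding these finitely many relators to a finite presentation of $\Aut(H_\Gamma)$ therefore gives a finite presentation of $\Out(H_\Gamma)$.

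The main subtlety is in ($\Leftarrow$): one has to check that the full preimage of $C$ really is $\kappa(A_\Gamma)$ and hence finitely presented, even though $\Inn(H_\Gamma)$ itself need not be. This is exactly \eqref{eq:deck-full-preimage} together with Proposition~\ref{prop:torelli-centralizer-of-H}, both already available. The remaining ingredients are routine, namely the injectivity of $P_0$ into $\widehat P$ (built into the lemma) and the cyclicity of $C$.
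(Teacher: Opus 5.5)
Your proposal is correct and follows the paper's proof essentially verbatim. The forward direction adds a finite generating set of $\Inn(H_\Gamma)$ as normal relators. The reverse direction applies Lemma~\ref{lem:relative-finite-presentation-extension} with $Q_0=C=\langle[\tau]\rangle$, whose full preimage $\kappa(A_\Gamma)\cong A_{\Gamma-U}$ is a finitely presented RAAG even when $\Inn(H_\Gamma)$ itself is not finitely presented.
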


\begin{proof}
Because $\Gamma$ is connected, $H_\Gamma$ and its quotient $\Inn(H_\Gamma)$ are
finitely generated.  If $\Aut(H_\Gamma)$ is finitely presented, quotienting by a
finite generating set of $\Inn(H_\Gamma)$ adds finitely many normal relators and
gives a finite presentation of $\Out(H_\Gamma)$.

Conversely, suppose that $\Out(H_\Gamma)$ is finitely presented.  Apply
Lemma~\ref{lem:relative-finite-presentation-extension} to
\[
 1\longrightarrow\Inn(H_\Gamma)\longrightarrow\Aut(H_\Gamma)
 \longrightarrow\Out(H_\Gamma)\longrightarrow1
\]
with $Q_0=C$.  The kernel is finitely generated, $C$ is finitely
generated, and its full preimage is the finitely presented RAAG in
\eqref{eq:deck-preimage-RAAG}--\eqref{eq:deck-full-preimage}.  Hence
$\Aut(H_\Gamma)$ is finitely presented.
\end{proof}

The argument does not require $H_\Gamma$ to be finitely presented: it
uses the finitely presented deck-conjugation preimage rather than finite
presentability of $\Inn(H_\Gamma)$.

\subsection{Higher finiteness properties}

The unconditional degree-two theorem is stronger than the general
extension result below, because it requires no finiteness assumption on
$H_\Gamma$.  In higher degrees one obtains an equivalence
whenever $H_\Gamma$ itself is of type $F_n$.  By the Bestvina--Brady
finiteness theorem, this hypothesis is equivalent to
$(n-1)$-connectivity of the flag complex $\Delta_\Gamma$
\cite{BestvinaBrady}.

We isolate the extension rules used in this subsection.

\begin{proposition}[Permanence rules for type $F_n$]
\label{prop:Fn-extension-rules}
Adopt the standard convention that every group is of type $F_0$, and let
$n\geq1$.
\begin{enumerate}[label=\textup{(\roman*)}]
\item If $1\to N\to G\to Q\to1$ is exact and $N,Q$ are of type $F_n$,
then $G$ is of type $F_n$.
\item If $1\to N\to G\to Q\to1$ is exact, $G$ is of type $F_n$, and
$N$ is of type $F_{n-1}$, then $Q$ is of type $F_n$.
\item In such an extension, if $N$ is of type $F_\infty$, then
$G\in F_n$ if and only if $Q\in F_n$.
\item Type $F_n$ is invariant under passage between finite-index
subgroups and finite-index supergroups.
\item Type $F_n$ passes to retracts.
\item If $G=G_1*_A G_2$, the groups $G_1,G_2$ are of type $F_n$, and
$A$ is of type $F_{n-1}$, then $G$ is of type $F_n$.
\end{enumerate}
\end{proposition}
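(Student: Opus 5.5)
These six permanence rules are standard facts of homotopical finiteness theory; I would prove each from the characterization ``$G$ is of type $F_n$ iff $G$ acts freely, cellularly and cocompactly on the $n$-skeleton of some contractible $G$-CW complex'', or equivalently ``iff some $K(G,1)$ has finite $n$-skeleton'' \cite[Ch.~7]{Geoghegan}. Item (iv), passage to finite index, comes first. A finite-index subgroup acts freely and cocompactly on the same universal cover. For a supergroup, I would use induction: given a $K(G_0,1)$ with finite $n$-skeleton and $[G:G_0]<\infty$, form the coinduced complex. Alternatively, Brown's criterion can be applied to the action of $G$ on the universal cover of the $G_0$-complex after taking the $[G:G_0]$-fold join, or by the Borel construction; in each case the cells have finite stabilizers and cocompactness persists. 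Item (v) follows from the standard fact that a retract $R$ of $G$ is a retract of the identity at the level of $K(\pi,1)$'s. Concretely, $K(R,1)\to K(G,1)\to K(R,1)$ is homotopic to the identity, and a domination by a complex with finite $n$-skeleton gives type $F_n$ by Wall's/Brown's domination lemma \cite[Ch.~7]{Geoghegan}.

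For the extension rules (i)--(iii), I would use the fibration $K(N,1)\to K(G,1)\to K(Q,1)$. For (i), I would build a $K(G,1)$ as the total space of a bundle of $K(N,1)$'s over $K(Q,1)$, assembled cell by cell. Choosing $K(N,1)$ and $K(Q,1)$ with finite $n$-skeleta, the product cells of dimension at most $n$ are finitely many, which gives (i) \cite[Thm.~7.1.10]{Geoghegan}. For (ii), I would use Brown's criterion. Let $X$ be a contractible free $G$-complex with cocompact $n$-skeleton. Then $Q$ acts on $X/N$, whose cell stabilizers are trivial but which is only $(n-1)$-connected up to the connectivity controlled by $N$. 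The cleaner route is to let $G$ act on $E=X\times Y$, where $Y$ is a $K(N,1)$-universal-type complex for $N$ with cocompact $(n-1)$-skeleton, so that $E/N$ has $(n-1)$-connected skeleta. Brown's criterion then says $Q$ acting on $E/N$ with finite-type filtration is of type $F_n$ \cite[Thm.~7.2.21]{Geoghegan}. This is the delicate step, and I would simply cite it, as it is a textbook theorem. Item (iii) then follows: one direction is (i) with $N\in F_\infty\subseteq F_n$, and the other is (ii) with $N\in F_{n-1}$.

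For (vi), I would take $K(G_1,1)$ and $K(G_2,1)$ with finite $n$-skeleta and $K(A,1)$ with finite $(n-1)$-skeleton. Realizing the inclusions cellularly, the double mapping cylinder $K(G_1,1)\cup K(A,1)\times[0,1]\cup K(G_2,1)$ is a $K(G,1)$ by the Whitehead/Seifert--van Kampen theorem for graphs of aspherical spaces. Its $n$-skeleton consists of the $n$-skeleta of the vertex spaces together with (the $(n-1)$-skeleton of $K(A,1)$)$\times[0,1]$, so it is finite. The main obstacle overall is (ii); I would not reprove Brown's criterion but cite it. The convention that every group is $F_0$ makes the case $n=1$ of (ii) and (vi) consistent.
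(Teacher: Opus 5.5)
Your handling of (i), (ii), (iii), (v) and (vi) agrees with the paper. You cite Geoghegan for the extension and quotient criteria, deduce (iii) from them, use homotopy domination for retracts, and realize the amalgam as a graph of aspherical spaces in which edge-space cells gain one dimension. Your intermediate sketch for (ii) with $E=X\times Y$ is not coherent as written, since $G$ does not act on a complex $Y$ built for $N$ alone. You fall back on citing the theorem, as the paper does, so this is harmless.

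The genuine gap is the finite-index \emph{supergroup} half of (iv). The complexes you propose do not have cocompact $n$-skeleta, so Brown's criterion cannot be applied to them. The finite stabilizers are not the problem; the diagonal action is.
\begin{itemize}
\item In the coinduced complex $\operatorname{Map}_{G_0}(G,X)\cong X^{k}$, $k=[G:G_0]$, every element of $G_0$ acts on all $k$ factors at once, so product cells fall into infinitely many orbits. For instance, take $G=\Z\times\Z/2$, $G_0=\Z$, $X=\mathbb R$. Then $G$ acts on $\mathbb R^2$ by diagonal translation and the coordinate swap, and the vertex orbits are indexed by $|b-a|$.
\item The $k$-fold join fails in the same way. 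Join cells $e_1*e_2$ with $e_1,e_2$ in different factors are again classified by the relative position of $e_1$ and $e_2$.
\end{itemize}
So the claim that ``cocompactness persists'' is false.

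The paper's repair is short. The normal core $K$ of $G_0$ has finite index in $G_0$, so $K\in F_n$ by the subgroup half of (iv). Then (i), applied to $1\to K\to G\to G/K\to1$ with $G/K$ finite and hence of type $F_\infty$, gives $G\in F_n$. There is no circularity, because (i) is the cited extension theorem and does not use (iv).

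Alternatively, you can apply Brown's criterion to a single free contractible $G$-complex, viewed also as a $G_0$-complex. Its $G$-finite and $G_0$-finite subcomplexes are mutually cofinal, so the essential-triviality condition is the same for $G$ and $G_0$.
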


\begin{proof}
Statements~\textup{(i)} and~\textup{(ii)} are the standard extension and
quotient criteria for homotopical finiteness properties; see
\cite[Ch.~7]{Geoghegan}.  For $n=1$, statement~\textup{(ii)} is simply
the fact that a quotient of a finitely generated group is finitely
generated.  Statement~\textup{(iii)} follows by applying
\textup{(i)} and~\textup{(ii)}, since an $F_\infty$ group is of type
$F_n$ and $F_{n-1}$.

For~\textup{(iv)}, a finite-index subgroup corresponds to a finite-sheeted
cover of a classifying space and therefore inherits a finite
$n$-skeleton.  Conversely, if $H\leq G$ has finite index and is of type
$F_n$, its normal core $K$ is a finite-index subgroup of $H$ and hence is
of type $F_n$ by the first direction.  The extension
$1\to K\to G\to G/K\to1$ has finite quotient, which is of type
$F_\infty$, so~\textup{(i)} gives $G\in F_n$.

Statement~\textup{(v)} is the standard domination criterion for
classifying spaces: a group retract gives a homotopy domination of a
classifying space for the retract by one for the ambient group, and a
finitely dominated $n$-skeleton can be replaced by a finite one; see again
\cite[Ch.~7]{Geoghegan}.  For~\textup{(vi)}, choose classifying spaces for
$G_1,G_2$ with finite $n$-skeleta and one for $A$ with finite
$(n-1)$-skeleton, and realize the two inclusions by cellular maps.  The
usual Bass--Serre graph-of-spaces construction attaches the mapping
cylinder of the edge space to the two vertex spaces.  Its universal cover
is a tree of contractible spaces and is therefore contractible, while its
$n$-skeleton is finite because the edge-space cells acquire one extra
dimension.  It is therefore a classifying space for $G_1*_A G_2$ with
finite $n$-skeleton.
\end{proof}

\begin{lemma}
\label{lem:bb-center}
Let $\Gamma$ be a finite connected graph with at least two vertices and let
$U$ be its set of universal vertices.  Then
\[
 Z(H_\Gamma)=H_\Gamma\cap Z(A_\Gamma)
 =\ker\!\big(\Z^{U}\xrightarrow{\ \Sigma\ }\Z\big),
\]
a free abelian group of rank $\max(|U|-1,0)$.  In particular $Z(H_\Gamma)$ is
of type $F_\infty$, and $H_\Gamma\in F_k$ if and only if
$\Inn(H_\Gamma)\in F_k$, for every $k$.
\end{lemma}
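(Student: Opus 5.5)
The plan is to obtain the first equality from the centralizer identity of Proposition~\ref{prop:torelli-centralizer-of-H}. One inclusion is immediate: every element of $H_\Gamma\cap Z(A_\Gamma)$ commutes with all of $H_\Gamma$, so it lies in $Z(H_\Gamma)$. For the other inclusion, an element of $Z(H_\Gamma)$ lies in $H_\Gamma$ and centralizes $H_\Gamma$. Hence it lies in $C_{A_\Gamma}(H_\Gamma)=Z(A_\Gamma)$. This uses only connectedness, and the hypothesis of at least two vertices is harmless here.

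For the second equality, I would recall that $Z(A_\Gamma)=A_U\cong\Z^U$, where $U$ is the set of universal vertices~\cite{Servatius}. The height character restricted to $A_U$ is the coordinate sum $\Sigma$, because every vertex has height one. Intersecting with $H_\Gamma=\ker\chi_\Gamma$ therefore gives $\ker(\Sigma:\Z^U\to\Z)$. If $U\neq\varnothing$, then $\Sigma$ is onto and the kernel is free abelian of rank $|U|-1$. If $U=\varnothing$, the kernel is trivial. In both cases the rank is $\max(|U|-1,0)$. A finitely generated free abelian group has a torus as a finite classifying space, so it is of type $F_\infty$.

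For the last assertion, I would apply Proposition~\ref{prop:Fn-extension-rules}\textup{(iii)} to the central extension
\[
 1\longrightarrow Z(H_\Gamma)\longrightarrow H_\Gamma\longrightarrow\Inn(H_\Gamma)\longrightarrow1 .
\]
Since the kernel $Z(H_\Gamma)$ is of type $F_\infty$, this gives $H_\Gamma\in F_k$ if and only if $\Inn(H_\Gamma)\in F_k$, for every $k\geq1$. For $k=0$ the statement holds trivially by the convention adopted in that proposition.

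I expect no serious obstacle. The only step needing care is the identification $Z(A_\Gamma)=A_U$ together with the computation of $\chi_\Gamma$ on it, which is routine.
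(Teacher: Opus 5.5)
Your proposal is correct and follows the paper's proof step for step. The paper likewise uses the centralizer identity $C_{A_\Gamma}(H_\Gamma)=Z(A_\Gamma)$, then Servatius's description $Z(A_\Gamma)=A_U$ with $\chi_\Gamma$ restricting to the augmentation, and finally the $F_\infty$-kernel extension rule applied to $1\to Z(H_\Gamma)\to H_\Gamma\to\Inn(H_\Gamma)\to1$.
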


\begin{proof}
By Proposition~\ref{prop:torelli-centralizer-of-H},
$C_{A_\Gamma}(H_\Gamma)=Z(A_\Gamma)$, so
$Z(H_\Gamma)=H_\Gamma\cap C_{A_\Gamma}(H_\Gamma)=H_\Gamma\cap Z(A_\Gamma)$.
The center $Z(A_\Gamma)=A_U$ is free abelian on the universal vertices
\cite{Servatius}, and
$\chi_\Gamma$ restricts on it to the augmentation $\Z^U\to\Z$; hence
$H_\Gamma\cap Z(A_\Gamma)$ is the augmentation kernel, free abelian of rank
$\max(|U|-1,0)$ and of type $F_\infty$.  Applying Proposition~\ref{prop:Fn-extension-rules} to
$1\to Z(H_\Gamma)\to H_\Gamma\to\Inn(H_\Gamma)\to1$, whose kernel is of type
$F_\infty$, gives $H_\Gamma\in F_k\Leftrightarrow\Inn(H_\Gamma)\in F_k$.
\end{proof}

\begin{theorem}
\label{thm:higher-Aut-Out-equivalence}
Let $\Gamma$ be a finite connected graph and let $n\geq1$.  If
$H_\Gamma$ is of type $F_n$, then
\[
 \Aut(H_\Gamma)\in F_n
 \quad\Longleftrightarrow\quad
 \Out(H_\Gamma)\in F_n.
\]
\end{theorem}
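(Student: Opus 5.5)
The proof runs through the inner-automorphism extension
\[
 1\longrightarrow\Inn(H_\Gamma)\longrightarrow\Aut(H_\Gamma)
 \longrightarrow\Out(H_\Gamma)\longrightarrow1.
\]
If $\Gamma$ has one vertex, then $H_\Gamma$ is trivial and $\Aut(H_\Gamma)=\Out(H_\Gamma)$, so there is nothing to prove. Assume $\Gamma$ has at least two vertices. Lemma~\ref{lem:bb-center} then turns the hypothesis $H_\Gamma\in F_n$ into $\Inn(H_\Gamma)\in F_n$, because $Z(H_\Gamma)$ is free abelian of finite rank.

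\emph{Forward direction} ($\Out\in F_n\Rightarrow\Aut\in F_n$). Both the kernel $\Inn(H_\Gamma)$ and the quotient $\Out(H_\Gamma)$ are of type $F_n$. Proposition~\ref{prop:Fn-extension-rules}\textup{(i)} therefore gives $\Aut(H_\Gamma)\in F_n$ directly.

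\emph{Reverse direction} ($\Aut\in F_n\Rightarrow\Out\in F_n$). Proposition~\ref{prop:Fn-extension-rules}\textup{(ii)} needs the kernel $\Inn(H_\Gamma)$ only to be of type $F_{n-1}$. This holds because $F_n$ implies $F_{n-1}$, with $F_0$ automatic when $n=1$. So $\Out(H_\Gamma)\in F_n$. For $n=1$ this reduces to the statement that a quotient of a finitely generated group is finitely generated. For $n=2$ it is already contained in Theorem~\ref{thm:Aut-Out-finite-presentation-equivalence}, with no hypothesis on $H_\Gamma$.

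There is no serious obstacle here. The only point to check is the identification of $Z(H_\Gamma)$ in Lemma~\ref{lem:bb-center}, which is what transfers the finiteness hypothesis on $H_\Gamma$ to $\Inn(H_\Gamma)$. After that, both implications are direct applications of the permanence rules.
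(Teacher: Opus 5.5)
Your proposal is correct and matches the paper's proof essentially verbatim. Both dispose of the one-vertex case, use Lemma~\ref{lem:bb-center} to pass from $H_\Gamma\in F_n$ to $\Inn(H_\Gamma)\in F_n$ (hence $F_{n-1}$), and then apply Proposition~\ref{prop:Fn-extension-rules}\textup{(i)} and \textup{(ii)} to the inner-automorphism extension.
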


\begin{proof}
If $\Gamma$ has a single vertex then $H_\Gamma=1$ and the statement is
trivial, so assume $\Gamma$ has at least two vertices.  By
Lemma~\ref{lem:bb-center}, $H_\Gamma\in F_n$ gives
$\Inn(H_\Gamma)\in F_n$, hence also $\Inn(H_\Gamma)\in F_{n-1}$.  Apply Proposition~\ref{prop:Fn-extension-rules} to
$1\to\Inn(H_\Gamma)\to\Aut(H_\Gamma)\to\Out(H_\Gamma)\to1$.  If
$\Out(H_\Gamma)\in F_n$, then $\Aut(H_\Gamma)\in F_n$, both kernel and
quotient being of type $F_n$.  If $\Aut(H_\Gamma)\in F_n$, then
$\Out(H_\Gamma)\in F_n$, because the kernel $\Inn(H_\Gamma)$ is of type
$F_{n-1}$.
\end{proof}

For $n=2$, this gives the special case of
Theorem~\ref{thm:Aut-Out-finite-presentation-equivalence} in which
$H_\Gamma$ is finitely presented; that theorem removes the hypothesis on
$H_\Gamma$ entirely.  For biconnected graphs the finiteness hypothesis can
be dropped in every degree.

\begin{theorem}
\label{thm:biconnected-all-degree-equivalence}
Let $\Gamma$ be a finite biconnected graph with at least three vertices.
Then $\Aut(H_\Gamma)\in F_n\Leftrightarrow\Out(H_\Gamma)\in F_n$ for every
$n\ge1$, with no finiteness hypothesis on $H_\Gamma$.
\end{theorem}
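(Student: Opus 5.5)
Since $H_\Gamma$ itself need not be of type $F_n$, the plan is to avoid the extension $1\to\Inn(H_\Gamma)\to\Aut(H_\Gamma)\to\Out(H_\Gamma)\to1$ and compare both groups through a common finite-index subgroup. The candidate is the preimage of the deck cyclic group used in Theorem~\ref{thm:Aut-Out-finite-presentation-equivalence}. By \eqref{eq:deck-full-preimage}, the full preimage of $C=\langle[\tau]\rangle$ is the RAAG $\kappa(A_\Gamma)\cong A_{\Gamma-U}$, which is of type $F_\infty$.

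First I use the biconnected structure to build finite-index subgroups in which $C$ (or a finite-index subgroup of it) is a normal subgroup with $F_\infty$ quotient behaviour. By Theorem~\ref{thm:torelli-outer-exact-sequence}, $C$ is central in $\IOut(H_\Gamma)$. I will also show that $C$ is normal in $\Out(H_\Gamma)$. An automorphism $\Phi$ of $H_\Gamma$ with a chosen lift $M$ of $\rho_W(\Phi)$ should send the deck class to $\pm$ the deck class, because the deck class is intrinsically the central kernel of $\IOut(H_\Gamma)\to\IOut(A_\Gamma)$. The risk is that this kernel is not obviously preserved by all of $\Out(H_\Gamma)$.

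The safer route is the following. Theorem~\ref{thm:higher-Aut-Out-equivalence}-style arguments need the kernel of the outer-to-something map, so instead I form the extension
\[
1\longrightarrow \kappa(A_\Gamma)\longrightarrow \widetilde{N}\longrightarrow \IOut(A_\Gamma)\longrightarrow 1,
\]
where $\widetilde N=\IAut(H_\Gamma)\cong\IAut(A_\Gamma)$. This is the sequence $\Inn(A_\Gamma)\to\IAut(A_\Gamma)\to\IOut(A_\Gamma)$ transported by Theorem~\ref{thm:torelli-restriction-isomorphism}, with kernel $\Inn(A_\Gamma)\cong A_{\Gamma-U}$, which is $F_\infty$. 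The two groups $\Aut(H_\Gamma)$ and $\Out(H_\Gamma)$ then fit into
\[
1\to\IAut(A_\Gamma)\to\Aut(H_\Gamma)\to\Lambda_\Gamma\to1,\qquad 1\to\IOut(H_\Gamma)\to\Out(H_\Gamma)\to\Lambda_\Gamma\to1.
\]
The image of $\Inn(A_\Gamma)$ in $\Aut(H_\Gamma)$ is normal, because conjugating $c_a|_{H}$ by $\Phi$ gives $c_{\Phi(a)}$ once one knows that $\Phi$ extends to $A_\Gamma$. That extension is only available on the IA part, so the key step is to show that $\kappa(A_\Gamma)$ is normal in $\Aut(H_\Gamma)$.

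I expect to prove normality via the centralizer identity. The group $\kappa(A_\Gamma)$ is the normalizer-type object $\{\phi:\phi \text{ commensurates}\dots\}$; more concretely, I would characterize $\kappa(A_\Gamma)$ as the preimage of the maximal normal cyclic subgroup of $\IOut(H_\Gamma)$ that meets $[\IOut,\IOut]$ trivially. Granting normality, $\Out(H_\Gamma)/C\cong\Aut(H_\Gamma)/\kappa(A_\Gamma)$. Proposition~\ref{prop:Fn-extension-rules}(iii) then applies twice, since both $C$ (cyclic) and $\kappa(A_\Gamma)$ (a RAAG) are $F_\infty$: $\Aut(H_\Gamma)\in F_n$ holds if and only if the common quotient is in $F_n$, which holds if and only if $\Out(H_\Gamma)\in F_n$. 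If the universal-vertex case makes $C$ trivial, this reduces to $\Aut(H_\Gamma)\to\Out(H_\Gamma)$ having kernel $\Inn(H_\Gamma)=\kappa(A_\Gamma)$, which is already $F_\infty$.

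The main obstacle is the normality of $C$ in $\Out(H_\Gamma)$, equivalently that non-IA automorphisms (in particular sector shears) map deck conjugation to deck conjugation up to inner automorphisms. If this fails, I would pass to the finite-index subgroup $\Out_0=\bar\rho_W^{-1}(\Lambda_0)$ for a suitable torsion-free $\Lambda_0$. There I would argue that the conjugation action on the central $\Z$ factors through a sign character, which is trivial on a finite-index subgroup, and then finish using Proposition~\ref{prop:Fn-extension-rules}(iv).
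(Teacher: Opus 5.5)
Your plan has the right shape: compare $\Aut(H_\Gamma)$ and $\Out(H_\Gamma)$ through a common quotient by $F_\infty$ normal subgroups. Your universal-vertex case is also fine, since there $\Inn(H_\Gamma)=\kappa(A_\Gamma)\cong A_{\Gamma-U}$.

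But the load-bearing step is that the deck group $C=\langle[\tau]\rangle$ is normal in $\Out(H_\Gamma)$, equivalently $\kappa(A_\Gamma)\trianglelefteq\Aut(H_\Gamma)$. You do not prove this, and it is false in general. Sector shears move $C$.

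Let $\Phi_h$ be a sector shear with axis $z=pq^{-1}$, normalized so that $h_p=0$. Telescoping along edge paths gives $\Phi_h(ab^{-1})=z^{-h_a}ab^{-1}z^{h_b}$ for all vertices $a,b$. Since $p$ commutes with $z$, a direct check on jump and nonjump edges then gives $\Phi_h c_p\Phi_h^{-1}=c_p\circ\tilde\Psi|_{H_\Gamma}$. Here $\tilde\Psi\in\IAut(A_\Gamma)$ is the map $v\mapsto z^{h_v}vz^{-h_v}$. By Theorem~\ref{thm:torelli-injectivity}, this conjugate lies in $\kappa(A_\Gamma)$ only if $\tilde\Psi$ is inner.

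For a concrete failure, take $\Gamma$ on the vertices $p,q,a_1,b_1,a_2,b_2,x,y$. The edges are $pq$, all edges from $\{p,q\}$ to $\{a_1,b_1,a_2,b_2\}$, and $xa_1,xb_1,ya_2,yb_2$. This graph is biconnected with no universal vertex. Take $h=\mathbf 1_{\{x,a_1,b_1\}}$. Then $\tilde\Psi$ conjugates $x$ by $z$ and fixes every other vertex. Any conjugator realizing it would lie in $A_{\st(y)}\cap A_{\st(p)}\cap A_{\st(a_1)}=1$, so $\tilde\Psi$ is not inner.

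The fallback does not help either. We have $\Phi_h^k=\Phi_{kh}$, which fails in the same way, and some positive power of $\Phi_h$ lies in every finite-index subgroup. So passing to $\bar\rho_W^{-1}(\Lambda_0)$ cannot make $C$ invariant, and there is no sign action on $C$ to speak of. The same computation rules out your proposed intrinsic characterization of $C$ inside $\IOut(H_\Gamma)$: $\Phi_hC\Phi_h^{-1}\neq C$ has exactly the same abstract properties as $C$.

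The missing idea is to replace $C$ by its normal closure $D$ in $\Out(H_\Gamma)$. Since $\IOut(H_\Gamma)\trianglelefteq\Out(H_\Gamma)$ and $C\le Z(\IOut(H_\Gamma))$, every conjugate of $C$ lies in $Z(\IOut(H_\Gamma))$. Hence $D$ is abelian. By Theorem~\ref{thm:strong-Tits-alternative}, $D$ is virtually polycyclic, hence finitely generated abelian and of type $F_\infty$.

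Let $\widehat D=\pi^{-1}(D)$. It contains $\pi^{-1}(C)=\kappa(A_\Gamma)$ as a normal subgroup with quotient $D/C$, so $\widehat D$ is of type $F_\infty$. Finally, $\Aut(H_\Gamma)/\widehat D\cong\Out(H_\Gamma)/D$, and two applications of Proposition~\ref{prop:Fn-extension-rules}(iii) give the equivalence. This is the route the paper takes.
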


\begin{proof}
If $\Gamma$ has a universal vertex $u$, then $\Delta_\Gamma$ is a cone with
apex $u$, hence contractible, so $H_\Gamma$ is of type $F_\infty$
\cite{BestvinaBrady} and the claim follows from
Theorem~\ref{thm:higher-Aut-Out-equivalence}.

Otherwise $\Gamma$ has no universal vertex, so $Z(A_\Gamma)=1$ and, by
\eqref{eq:torelli-outer-kernel-character}, the central sequence of
Theorem~\ref{thm:torelli-outer-exact-sequence} has infinite cyclic kernel
generated by the deck class.  Put
$C=\langle[\tau]\rangle\cong\Z$; then $C\leq Z(\IOut(H_\Gamma))$.
Its full preimage in $\Aut(H_\Gamma)$ is the right-angled Artin group
$\widehat C=\kappa(A_\Gamma)\cong A_\Gamma$
of~\eqref{eq:deck-preimage-RAAG}--\eqref{eq:deck-full-preimage}, of type
$F_\infty$.  Let $\pi:\Aut(H_\Gamma)\to\Out(H_\Gamma)$ be the quotient map,
and let $D$ be the normal closure of $C$ in $\Out(H_\Gamma)$.  Since $\IOut(H_\Gamma)$ is
normal in $\Out(H_\Gamma)$---being the kernel of the action on $H_1$---and
$C\leq Z(\IOut(H_\Gamma))$, every conjugate $qCq^{-1}$ lies in
$Z(\IOut(H_\Gamma))$; these conjugates commute, so $D$ is abelian.  By Theorem~\ref{thm:strong-Tits-alternative}, the abelian group $D$ is
virtually polycyclic, hence finitely generated and of type $F_\infty$
\cite[Ch.~7]{Geoghegan}.

Put $\widehat D=\pi^{-1}(D)$.  The sequence
$1\to\widehat C\to\widehat D\to D/C\to1$ has $F_\infty$ kernel $\widehat C$
and finitely generated abelian quotient $D/C$, so $\widehat D$ is of type
$F_\infty$.  Finally $\pi$ induces
$\Aut(H_\Gamma)/\widehat D \cong\Out(H_\Gamma)/D$.
In the two extensions
\[
 1\to\widehat D\to\Aut(H_\Gamma)
 \to\Out(H_\Gamma)/D\to1,
 \qquad
 1\to D\to\Out(H_\Gamma)
 \to\Out(H_\Gamma)/D\to1
\]
the kernels $\widehat D$ and $D$ are of type $F_\infty$.  The extension properties give
\[
 \Aut(H_\Gamma)\in F_n
 \quad\Longleftrightarrow\quad
 \Out(H_\Gamma)/D\in F_n
 \quad\Longleftrightarrow\quad
 \Out(H_\Gamma)\in F_n
\]
for every $n$.
\end{proof}

The examples of Section~\ref{sec:examples} show that neither the
hypothesis $H_\Gamma\in F_n$ in
Theorem~\ref{thm:higher-Aut-Out-equivalence} nor the biconnectedness
hypothesis in Theorem~\ref{thm:biconnected-all-degree-equivalence} can be
omitted: a graph with a cut vertex can have $\Out(H_\Gamma)$ of type
$F_\infty$ while $\Aut(H_\Gamma)$ is of type $F_3$ but not $F_4$.

\subsection{A graph-theoretic sufficient condition for finite presentation}

The absence of a SIL gives a direct graph-theoretic sufficient condition.

\begin{theorem}
\label{thm:SIL-free-finite-presentation}
If $\Gamma$ is finite and biconnected with at least three vertices and has
no SIL, then both $\Aut(H_\Gamma)$ and $\Out(H_\Gamma)$ are finitely
presented.
\end{theorem}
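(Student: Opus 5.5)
By Theorem~\ref{thm:Aut-Out-finite-presentation-equivalence}, it suffices to prove that $\Out(H_\Gamma)$ is finitely presented. I plan to assemble a finite presentation through the two exact sequences \eqref{eq:biconnected-Out-exact-sequence} and \eqref{eq:biconnected-outer-Torelli-layer}, applying the standard fact that an extension of a finitely presented group by a finitely presented group is finitely presented. This is the case $n=2$ of Proposition~\ref{prop:Fn-extension-rules}\textup{(i)}.

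\emph{The kernel.} Since $\Gamma$ has no SIL, Proposition~\ref{prop:SIL-outer-Torelli-predicate} makes $\IOut(A_\Gamma)$ a finitely generated abelian group, and hence finitely presented. The central kernel $A_\Gamma/(H_\Gamma Z(A_\Gamma))$ is either trivial or infinite cyclic by Corollary~\ref{cor:torelli-universal-vertex}, so it is finitely presented too. Applying the extension rule to \eqref{eq:biconnected-outer-Torelli-layer} therefore makes $\IOut(H_\Gamma)$ finitely presented. In fact it is finitely generated nilpotent of class at most two, so it is even of type $F_\infty$.

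\emph{The quotient.} The group $\Lambda_\Gamma$ shares the finite-index subgroup $E_\Gamma$ with $\Order^\times$ by Theorem~\ref{thm:cohomological-image-arithmeticity}. By Proposition~\ref{prop:Fn-extension-rules}\textup{(iv)}, it is therefore enough to show that an arithmetic group is finitely presented. I would take the torsion-free finite-index subgroup $\mathcal O_{\Gamma,0}^\times$ from Lemma~\ref{lem:vcd-arithmetic-reduction}. The proof of Theorem~\ref{thm:cohomological-image-vcd} presents it as an extension of the torsion-free arithmetic group $\Gamma_{\mathrm{ss}}$ by the nilmanifold lattice $N_\Gamma$. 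Here $N_\Gamma$ has a compact classifying space, and $\Gamma_{\mathrm{ss}}$ is of type $F$ by Borel--Serre (it has a compact bordification) \cite{BorelSerre}. The extension rule then makes $\mathcal O_{\Gamma,0}^\times$ of type $F_\infty$. Consequently $\mathcal O_{\Gamma,0}^\times\cap E_\Gamma$ is of type $F_\infty$, and so is $\Lambda_\Gamma$, by passage through finite index. One can also cite Borel--Harish-Chandra/Raghunathan finite presentability of arithmetic groups directly.

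\emph{Assembly.} With both $\IOut(H_\Gamma)$ and $\Lambda_\Gamma$ finitely presented, \eqref{eq:biconnected-Out-exact-sequence} makes $\Out(H_\Gamma)$ finitely presented, and the Aut--Out equivalence gives the same for $\Aut(H_\Gamma)$. Alternatively, $\Aut(H_\Gamma)$ can be treated directly from \eqref{eq:biconnected-Aut-exact-sequence}, but I expect $\IAut(A_\Gamma)$ is not obviously finitely presented, so the outer route is preferable.

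The main obstacle is the justification that arithmetic groups, in particular $\Lambda_\Gamma$, are finitely presented. The argument has to pass carefully through the commensurability of $\Lambda_\Gamma$ with $\Order^\times$ via $E_\Gamma$, without assuming either group contains the other, and that is where I would spend the most care. The remaining steps are routine applications of the extension rules.
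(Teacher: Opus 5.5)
Your proposal is correct and follows the paper's proof. You obtain finite presentation of $\IOut(H_\Gamma)$ from the SIL-free abelian quotient and the central cyclic kernel, and finite presentation of $\Lambda_\Gamma$ from arithmeticity up to finite index; you then apply the extension $1\to\IOut(H_\Gamma)\to\Out(H_\Gamma)\to\Lambda_\Gamma\to1$ and the Aut--Out equivalence. The only cosmetic difference is in how finite presentability of $\Lambda_\Gamma$ is obtained: the paper reads it off the extension $1\to E_U\to E_\Gamma\to\Pi\to1$ (polycyclic kernel, quotient commensurable with an arithmetic group), whereas you pass through the torsion-free subgroup $\mathcal O_{\Gamma,0}^\times$ of Lemma~\ref{lem:vcd-arithmetic-reduction}, which works equally well.
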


\begin{proof}
Proposition~\ref{prop:SIL-outer-Torelli-predicate} says that
$\IOut(A_\Gamma)$ is finitely generated abelian.  The central exact
sequence~\eqref{eq:torelli-outer-exact-sequence} then makes
$\IOut(H_\Gamma)$ a finitely generated nilpotent group of class at most
two.  It is therefore polycyclic and finitely presented
\cite[Chapter~1]{SegalPolycyclic}.

The cohomological image $\HomImage$ is also finitely presented.  Indeed, it
has a finite-index subgroup $\Elementary$, and Propositions
\ref{prop:unipotent-radical-capture} and
\ref{prop:semisimple-arithmetic-image} express $\Elementary$ as an
extension of a finitely generated nilpotent group by a group
commensurable with a semisimple arithmetic group.  The first group is
polycyclic, and the second is finitely presented by
Borel--Serre~\cite{BorelSerre}; hence $\Elementary$, and therefore
$\HomImage$, is finitely presented.  The structural
sequence
\[
 1\longrightarrow\IOut(H_\Gamma)
 \longrightarrow\Out(H_\Gamma)
 \longrightarrow\HomImage\longrightarrow1
\]
has finitely presented kernel and quotient.  The standard presentation
construction for an extension therefore shows that $\Out(H_\Gamma)$ is
finitely presented~\cite[Chapter~VII]{BrownCohomology}.  The automorphism conclusion follows from
Theorem~\ref{thm:Aut-Out-finite-presentation-equivalence}.
\end{proof}

The theorem applies even when $H_\Gamma$ is not finitely presented.  For
example, $H_{C_n}$ is not finitely presented for $n\geq5$, while
$\Out(H_{C_n})$ is virtually infinite cyclic.

\section{Counterexamples, universality, and nonlinearity}
\label{sec:examples}

The preceding structure theorems do not impose uniform restrictions on
finite presentability or linearity.  The examples below establish four
distinct conclusions.  First, a Bestvina--Brady group of type $F_\infty$ can have
automorphism and outer automorphism groups that are finitely generated but
not finitely presented.  Second, the unconditional Aut--Out equivalence in
type $F_2$ can fail in higher degrees for connected graphs with cut
vertices.  Third, the automorphism groups of arbitrary right-angled Artin
groups occur inside the automorphism groups of type-$F$ Bestvina--Brady
groups with biconnected defining graphs.  Fourth, these ambient groups can
be nonlinear even though their cohomological images are arithmetic up to
finite index.

\subsection{A type-\texorpdfstring{$F_\infty$}{F-infinity} counterexample}

Let $\overline K_n$ denote the edgeless graph on $n$ vertices.  Let
$P_4=a-b-c-d$ be the four-vertex path and put
\begin{equation}\label{eq:def-counterexample-graph}
 \Gamma_*=\overline K_3*P_4,
 \qquad
 B=H_{\Gamma_*}.
\end{equation}

\begin{theorem}
\label{thm:type-Finfty-counterexample}
The graph $\Gamma_*$ is biconnected and $B$ is of type $F_\infty$, but
neither $\Aut(B)$ nor $\Out(B)$ is finitely presented.  Both groups are
finitely generated.
\end{theorem}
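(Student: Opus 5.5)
The proof splits into three parts: the two finiteness-type claims about $\Gamma_*$ and $B$, finite generation, and the failure of finite presentation. The first two are short applications of earlier results; the third carries all the work.

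\emph{Biconnectedness and type $F_\infty$.} Since $\Gamma_*=\overline K_3*P_4$ is a join of two graphs with at least two vertices each, deleting any one vertex leaves every remaining vertex joined to the undeleted side. So $\Gamma_*$ is connected, has no cut vertex, and has seven vertices. Its flag complex is the simplicial join $\Delta_{\overline K_3}*\Delta_{P_4}$. The factor $\Delta_{P_4}$ is a contractible interval, so the join is contractible, and the Bestvina--Brady theorem~\cite{BestvinaBrady} makes $B$ of type $F$, in particular $F_\infty$. Finite generation of $\Aut(B)$ and $\Out(B)$ is Corollary~\ref{cor:biconnected-finite-generation}.

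\emph{Failure of finite presentation: reduction.} By Theorem~\ref{thm:Aut-Out-finite-presentation-equivalence} (or Theorem~\ref{thm:higher-Aut-Out-equivalence}, since $B\in F_\infty$), it suffices to show that $\Aut(B)$ is not finitely presented. I would use the structure sequence of Theorem~\ref{thm:biconnected-structure},
\[
 1\to\IAut(A_{\Gamma_*})\to\Aut(B)\to\HomImage\to1.
\]
The ambient group splits as $A_{\Gamma_*}=\Free_3\times A_{P_4}$. Both complement graphs, $\overline{\overline K_3}=K_3$ and $\overline{P_4}\cong P_4$, are connected, and neither factor is abelian. Hence the join decomposition is canonical and the centre is trivial. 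This should give
\[
 \IAut(A_{\Gamma_*})\cong \IAut(\Free_3)\times\IAut(A_{P_4}),
\]
and the restriction isomorphism of Theorem~\ref{thm:torelli-restriction-isomorphism} transports this product to $\IAut(B)$. The Krstić--McCool theorem, that $\IAut(\Free_3)$ is not finitely presented, is the intended source of non-finite-presentability.

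\emph{Main obstacle: transporting non-finite-presentability.} The factor $\IAut(\Free_3)$ is neither of finite index nor a visible retract of $\Aut(B)$, and an extension of finitely presented-or-not groups does not in general inherit non-finite-presentability. My first attempt is as follows. Compute $\mathscr C_{\Gamma_*}$ from the row-component partitions and look for a finite-index subgroup $\Aut^0(B)\leq\Aut_E(H_{\Gamma_*})$ that preserves the $\Free_3$-side and $P_4$-side separator data. The separators of $\Gamma_*$ are $V(P_4)\cup\{\text{cut sets of }\overline K_3\}$ and $V(\overline K_3)\cup\{\text{cut vertices of }P_4\}$.

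On $\Aut^0(B)$ I would construct a homomorphism onto a group of the form $\IAut(\Free_3)\rtimes E'$, where $E'$ is an arithmetic quotient coming from the $\Free_3$-block of $\HomImage$, obtained by killing the $P_4$-coordinates. I would then show that this quotient map is a retraction, using sector shears and height-preserving ambient roots supported on the $\Free_3$ side as the section. Proposition~\ref{prop:Fn-extension-rules}(iv),(v) would then reduce the problem to showing that this retract is not finitely presented. That in turn should follow from $\IAut(\Free_3)$ being the kernel onto a finitely presented arithmetic group, by a Bieri--Stallings / Krstić--McCool style argument via $H_2$ of the kernel as a module over the quotient. If the retraction fails, my fallback is to compute the second homology of a finite-index subgroup directly and show it is infinitely generated by exhibiting infinitely many independent Krstić--McCool classes surviving the quotient.
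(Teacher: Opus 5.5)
Your arguments for biconnectedness, type $F_\infty$, finite generation, and the reduction from $\Out(B)$ to $\Aut(B)$ are fine. The gap is in the one substantive step, the non-finite-presentability of $\Aut(B)$.

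\textbf{The input is the wrong one.} Non-finite-presentability of $\IAut(\Free_3)=\mathrm{IA}_3$ cannot be carried up to $\Aut(B)$. The inference you propose for your retract is false as a general principle. That principle says: if a group maps onto a finitely presented arithmetic group with kernel $\IAut(\Free_3)$, then it is not finitely presented (by an $H_2$/Bieri--Stallings argument). But $\Aut(\Free_3)$ is finitely presented and is exactly such an extension of $\mathrm{IA}_3$ by $\GL_3(\Z)$. Remark~\ref{rem:Torelli-fp-does-not-control-full-Aut} also gives $\Aut(\Free_3\times\Z)$ finitely presented with IA subgroup $\mathrm{IA}_3$.

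What actually obstructs finite presentation is the character stabilizer $S_1=\{\alpha\in\Aut(\Free_3):\chi_1\alpha=\chi_1\}$. It is conjugate to Krsti\'c--McCool's positive stabilizer $T(\Free_3)$ of $\ker\chi_0$. Their Corollary~3 states that the setwise stabilizer $S(\Free_3)$ is not finitely presented, and $T(\Free_3)$ has index two in it, so $S_1$ is not finitely presented. That specific theorem must be cited or reproved. It does not follow from properties of $\mathrm{IA}_3$ and an arithmetic quotient. Your fallback of exhibiting ``infinitely many independent Krsti\'c--McCool classes in $H_2$'' is not a checkable argument as stated.

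\textbf{The retraction is not constructed.} To ``kill the $P_4$-coordinates'' on a finite-index subgroup of $\Aut(B)$, you need every automorphism of $B$, not only the IA ones, to respect the splitting $A_{\Gamma_*}=\Free_3\times A_{P_4}$. Theorem~\ref{thm:biconnected-structure} gives this only for the kernel, and the cubic-algebra and sector-shear machinery does not produce a homomorphism on the whole extension.

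The paper gets the splitting directly, in these steps:
\begin{enumerate}
\item Normal subgroups of $\Free_3$ and of $A_{P_4}$ have trivial centralizers; for $A_{P_4}$ this uses acylindrical hyperbolicity.
\item Hence the only proper nontrivial double centralizers of normal subgroups of $B$ are $K_1=B\cap\Free_3$ and $K_2=B\cap A_{P_4}$.
\item These are distinguishable: $K_1$ is free of infinite rank while $K_2=H_{P_4}\cong\Free_3$. So each is characteristic.
\item Using $C_{A_{\Gamma_*}}(B)=1$, every automorphism of $B$ extends uniquely to an ambient automorphism preserving $\{\pm\chi\}$.
\item Laurence--Servatius gives $\Aut(A_{\Gamma_*})=\Aut(\Free_3)\times\Aut(A_{P_4})$.
\end{enumerate}
Thus $S_1\times S_2$ has index two in $\Aut(B)$, where $S_2$ is the corresponding height-character stabilizer in $\Aut(A_{P_4})$ and is finitely generated. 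A finite presentation of $S_1\times S_2$ would give one of $S_1$ after adding the generators of $S_2$ as relators, contradicting Krsti\'c--McCool.

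Your ``retraction onto the $\Free_3$-side'' is in spirit this projection $S_1\times S_2\to S_1$. Both its existence and the non-finite-presentability of its target need the arguments above, which your proposal does not supply.
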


Write
\begin{equation}\label{eq:counterexample-factors}
 G_1=A_{\overline K_3}=\Free_3,
 \qquad
 G_2=A_{P_4},
 \qquad
 A_{\Gamma_*}=G_1\times G_2,
\end{equation}
and let $\chi_i:G_i\to\Z$ be the all-ones characters.  Thus
\begin{equation}\label{eq:counterexample-fiber-product}
 B=
 \left\{
  (g_1,g_2)\in G_1\times G_2:
  \chi_1(g_1)+\chi_2(g_2)=0
 \right\}.
\end{equation}
The proof of Theorem~\ref{thm:type-Finfty-counterexample} occupies the
rest of this subsection.

\subsubsection*{The two factor kernels are intrinsic}

We begin with the centralizer property that detects the two factors.

\begin{lemma}\label{lem:counterexample-normal-centralizers}
For $i=1,2$, every nontrivial normal subgroup $N\trianglelefteq G_i$
has trivial centralizer in $G_i$.
\end{lemma}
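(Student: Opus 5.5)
The plan is to treat both factors uniformly: each $G_i$ is a RAAG with trivial center that is either free or splits as a nontrivial free product. Let $N\trianglelefteq G_i$ be nontrivial and let $g\in C_{G_i}(N)$. We want to show that $g=1$.

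First we describe the factors. For $G_1=\Free_3$, the standard fact applies: a nontrivial normal subgroup of a nonabelian free group contains two noncommuting elements. Indeed, if $1\ne n\in N$ and $x$ is a basis element not commuting with $n$, then $n$ and $xnx^{-1}$ lie in $N$ and do not commute, since their maximal cyclic roots differ. The centralizer of $N$ therefore lies in $C(n)\cap C(xnx^{-1})$. In a free group this is the intersection of two distinct maximal cyclic subgroups, so it is trivial.

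For $G_2=A_{P_4}$ with $P_4=a-b-c-d$, we use the splitting
\[
 A_{P_4}=A_{\{a,b,c\}}*_{A_{\{b,c\}}}A_{\{b,c,d\}}.
\]
We act on its Bass--Serre tree $T$. The graph $P_4$ has no universal vertex, so $Z(A_{P_4})=1$, and the edge group $\langle b,c\rangle\cong\Z^2$ is a proper subgroup of both vertex groups. Here is the key step. A nontrivial normal subgroup $N$ cannot fix a point of $T$. If it did, it would fix the fixed-point set of $N$, which is $G_2$-invariant and hence all of $T$, since $T$ is minimal. Then $N$ would lie in the intersection of all conjugates of the edge group $\langle b,c\rangle$. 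But $a\langle b,c\rangle a^{-1}\cap\langle b,c\rangle$ is already smaller than the edge group: by parabolic intersection and the retraction argument it equals $\langle b\rangle$. Conjugating further by $d$ kills $b$, so the intersection is trivial, which is a contradiction.

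So $N$ contains a hyperbolic element $h$, or else it contains elliptic elements with distinct fixed sets whose product is hyperbolic (Serre's lemma). Moreover $N$ is not virtually cyclic. Indeed, if $N$ preserved a unique axis, that axis would be $G_2$-invariant, forcing $G_2$ to act on a line, which is impossible for this nonelementary splitting. Hence $N$ contains two hyperbolic elements $h_1,h_2$ with distinct axes. Now suppose $g$ centralizes $N$. Then $g$ preserves both axes. If $g$ is hyperbolic, each axis is the unique axis of $g$, which is a contradiction. So $g$ is elliptic and fixes both axes pointwise. Its fixed set contains an $N$-invariant subtree, and by the same minimality and normality argument applied to the normal subgroup generated by $N$-translates, $g$ lies in the kernel of the action. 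That kernel is trivial because the edge stabilizers intersect trivially, as shown above. So $g=1$.

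The main obstacle is the last elliptic step: showing that an elliptic element fixing two distinct hyperbolic axes of $N$ must be trivial. I would handle it by noting that $\mathrm{Fix}(g)$ contains the convex hull of the $N$-orbit of these axes, which is the minimal $N$-subtree. For normal $N$ in this minimal action, that subtree is all of $T$, so $g$ acts trivially.

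As an alternative that avoids the tree, one could use the Servatius centralizer theorem directly. The centralizer of $N$ lies in $C(h_1)\cap C(h_2)$ for suitable elements $h_1,h_2\in N$ whose supports have noncommutation graph all of $P_4$, and such centralizers are cyclic generated by roots of $h_1$ and of $h_2$; these roots are distinct, so the intersection is trivial.
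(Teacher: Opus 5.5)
For $G_1$ your argument is fine. For $G_2$ you take a genuinely different route from the paper. The paper quotes acylindrical hyperbolicity of $A_{P_4}$ (Charney--Morris-Wright) and Osin's theorems: a nontrivial normal subgroup acts non-elementarily, so it contains independent loxodromics, and their virtually cyclic centralizers meet finitely, hence trivially. You instead act on the Bass--Serre tree $T$ of the splitting over the separating clique $\{b,c\}$. In your first paragraph, ``nontrivial free product'' should read ``nontrivial amalgamated product'', since $A_{P_4}$ is freely indecomposable.

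Most of the tree argument is sound. Your computation $\langle b,c\rangle\cap a\langle b,c\rangle a^{-1}\cap d\langle b,c\rangle d^{-1}=1$ shows that the action has trivial kernel. Your final step is also correct. Once $N$ contains a hyperbolic element, its minimal subtree is unique, hence $G_2$-invariant by normality, hence equal to $T$. A centralizing elliptic element fixes every axis of $N$ pointwise, hence fixes $T$. A centralizing hyperbolic element is excluded by your two-axes argument, or simply because $T$ is not a line.

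The gap is the sentence deriving a hyperbolic element of $N$ from the absence of a global fixed point. Serre's lemma needs two elliptic elements with \emph{disjoint} fixed sets. If the fixed sets merely differ but meet, the product fixes their intersection and is elliptic. More seriously, a group that is not finitely generated can act with every element elliptic and still have no global fixed point; it then fixes a unique end. Nothing makes $N$ finitely generated, so this case must be excluded.

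Normality does this. If every element of $N$ were elliptic, the unique $N$-fixed end would be $G_2$-invariant. Then $A_{\{a,b,c\}}$ would fix its vertex and that end, hence the first edge of the ray joining them. It would therefore lie in some $x\langle b,c\rangle x^{-1}$ with $x\in A_{\{a,b,c\}}$, which is a proper subgroup.

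Alternatively, redo your intersection computation for arbitrary $x\in A_{\{a,b,c\}}\setminus\langle b,c\rangle$ and $y\in A_{\{b,c,d\}}\setminus\langle b,c\rangle$. It gives $\langle b,c\rangle\cap x\langle b,c\rangle x^{-1}=\langle b\rangle$ and $\langle b,c\rangle\cap y\langle b,c\rangle y^{-1}=\langle c\rangle$. So segments of length three have trivial stabilizers, and nontrivial elliptic elements have bounded fixed sets. Conjugating an elliptic $1\neq n\in N$ by a high power of the hyperbolic element $ad$ then gives an element of $N$ whose fixed set is disjoint from that of $n$, which is what Serre's lemma needs.

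With this repair your proof is complete and more elementary than the paper's. It uses only Bass--Serre theory and explicit parabolic intersections. The fixed-set and minimal-subtree argument also replaces Osin's virtually cyclic centralizers and the limit-point comparison. The price is reliance on a specific splitting of $A_{P_4}$ with trivial kernel. The acylindrical argument, by contrast, applies verbatim to any torsion-free acylindrically hyperbolic group.

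Your Servatius-based alternative is only a sketch. The real content there is producing elements of $N$ with full support and non-commensurable roots, and you do not say how to do that.
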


\begin{proof}
For $G_1=\Free_3$ this is the classical centralizer property of a
nonabelian free group~\cite[Ch.~I]{LyndonSchupp}.  For $G_2$, the graph $P_4$ is not a nontrivial
join.  Hence $A_{P_4}$ is acylindrically hyperbolic
\cite[Theorem~4.4]{CharneyMorrisWright}.  It is also torsion-free.
Let $N\trianglelefteq A_{P_4}$ be nontrivial.  Since $A_{P_4}$ is
torsion-free, $N$ is infinite, and normality makes $N$ an
$s$-normal subgroup.  It therefore acts non-elementarily in the same
acylindrical action \cite[Lemma~7.1]{Osin}.  By Osin's theorem~\cite[Theorem~1.1]{Osin}, $N$ contains independent
loxodromic elements
$g$ and $h$.  Now
\[
 C_{A_{P_4}}(N)
 \subseteq
 C_{A_{P_4}}(g)\cap C_{A_{P_4}}(h).
\]
Each centralizer on the right is virtually cyclic
\cite[Corollary~6.9]{Osin}.  If their intersection were infinite, an
infinite-order element in it would be commensurable with both $g$ and
$h$ inside the two virtually cyclic centralizers.  The two loxodromics
would then have the same pair of limit points, contradicting their
independence.  Thus the displayed intersection is finite, and
torsion-freeness makes it trivial.
\end{proof}

Set
\begin{equation}\label{eq:counterexample-factor-kernels}
 K_i=B\cap G_i=\ker\chi_i
 \quad(i=1,2),
\end{equation}
where each $G_i$ is regarded as a direct factor of
$G_1\times G_2$.  Both coordinate projections
$\operatorname{pr}_i:B\longrightarrow G_i$
are surjective: an element of one factor can be paired with a suitable
power of a height-one element in the other factor.

For $T\subseteq\{1,2\}$, put
\[
 D_T=B\cap\prod_{i\in T}G_i,
 \qquad
 D_\varnothing=1.
\]
If $1\ne N\trianglelefteq B$, define its support
by
$S(N)=\{i:\operatorname{pr}_i(N)\ne1\}$.
Surjectivity of the projections shows that
$\operatorname{pr}_i(N)\trianglelefteq G_i$.  Lemma
\ref{lem:counterexample-normal-centralizers} then gives
\begin{equation}\label{eq:counterexample-double-centralizers}
 C_B(N)=D_{S(N)^c},
 \qquad
 C_B(C_B(N))=D_{S(N)}.
\end{equation}
Indeed, an element centralizing $N$ has trivial coordinate in every
factor on which $N$ has nontrivial projection, and the converse is
immediate from the direct-product multiplication.

It follows from~\eqref{eq:counterexample-double-centralizers} that the
only proper nontrivial double centralizers of nontrivial normal subgroups
of $B$ are $K_1$ and $K_2$.  Their unordered pair is therefore
characteristic.  The two groups can be distinguished intrinsically:
$K_1=\ker(\Free_3\to\Z)$
is a free group of infinite rank, whereas the Dicks--Leary presentation,
Theorem~\ref{thm:dicks-leary-presentation}, gives for the tree $P_4$
$K_2=H_{P_4}\cong\Free_3$.
Therefore each $K_i$ is characteristic in $B$.  In particular,
\begin{equation}\label{eq:counterexample-characteristic-K}
 K=K_1\times K_2\quad\text{is characteristic in }B,
 \qquad
 B/K\cong\Z,
\end{equation}
where the quotient map is induced by $\chi_1=-\chi_2$ on $B$.

\subsubsection*{Every automorphism extends to the ambient product}

Define the signed height stabilizer
\begin{equation}\label{eq:counterexample-signed-ambient-stabilizer}
 \Aut(A_{\Gamma_*};\{\pm\chi\})
 =
 \left\{
  \Psi\in\Aut(A_{\Gamma_*}):
  \chi\Psi=\pm\chi
 \right\},
 \qquad
 \chi=\chi_1+\chi_2.
\end{equation}

\begin{proposition}\label{prop:counterexample-ambient-extension}
Restriction induces an isomorphism
\[
 \Aut(A_{\Gamma_*};\{\pm\chi\})
 \xrightarrow{\ \cong\ }
 \Aut(B).
\]
\end{proposition}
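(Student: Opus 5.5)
Restriction is well defined, since an ambient automorphism with $\chi\Psi=\pm\chi$ preserves $\ker\chi=B$. The plan is to prove injectivity by a centralizer argument and surjectivity by extending each $\alpha\in\Aut(B)$ factorwise. Throughout, the structural input is the characteristic decomposition $K=K_1\times K_2$ from~\eqref{eq:counterexample-characteristic-K} together with the centralizer property of Lemma~\ref{lem:counterexample-normal-centralizers}.

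\emph{Injectivity.} Suppose $\Psi$ fixes $B$ pointwise. The biconnected defining graph $\Gamma_*$ is a join, and each factor $\overline K_3$ and $P_4$ has no universal vertex, so $Z(A_{\Gamma_*})=1$. Hence Proposition~\ref{prop:torelli-centralizer-of-H} gives $C_{A_{\Gamma_*}}(B)=1$. Fix a height-one vertex $t$. As in the proof of Theorem~\ref{thm:torelli-injectivity}, the element $c=\Psi(t)t^{\mp1}$ conjugates $tht^{-1}$ to itself for every $h\in B$, up to the sign bookkeeping. The sign must be $+$: otherwise $\Psi(t)t$ would have height $0$ and the actions of $t$ and $t^{-1}$ on $B^{\mathrm{ab}}$ would agree, which one excludes directly. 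Therefore $c=1$, and $\Psi$ is the identity. I do not expect trouble here.

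\emph{Surjectivity.} Let $\alpha\in\Aut(B)$. Each $K_i$ is characteristic, so $\alpha$ restricts to automorphisms $\alpha_i$ of $K_i$. Also $G_i$ normalizes $K_i$, and $B$ surjects onto each $G_i$ by $\operatorname{pr}_i$. For $g\in G_i$, choose $b\in B$ with $\operatorname{pr}_i(b)=g$. Conjugation by $b$ on $K_i$ equals conjugation by $g$, because the other coordinate of $b$ commutes with $K_i$. Define
\[
 \Psi_i(g)=\operatorname{pr}_i(\alpha(b)).
\]
This is well defined: two choices of $b$ differ by an element of $B\cap G_{3-i}=K_{3-i}$, whose $\alpha$-image lies in $K_{3-i}$ and so has trivial $i$th coordinate. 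Each $\Psi_i$ is a homomorphism $G_i\to G_i$ extending $\alpha_i$. Applying the same construction to $\alpha^{-1}$ produces an inverse, so each $\Psi_i$ is an automorphism. The product $\Psi=\Psi_1\times\Psi_2\in\Aut(G_1\times G_2)$ agrees with $\alpha$ on $B$, because every $b\in B$ is the product of its coordinates and $\alpha(b)$ has coordinates $\Psi_i(\operatorname{pr}_i b)$. Finally, $\Psi(B)=B$, so $\Psi$ preserves $\ker\chi$. The character $\chi\Psi$ then vanishes on $\ker\chi$ and is a multiple of $\chi$. Since $\Psi$ is invertible, that multiple is $\pm1$, and hence $\Psi\in\Aut(A_{\Gamma_*};\{\pm\chi\})$.

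The main obstacle is the factorwise extension: verifying that $\Psi_i$ is well defined and bijective, and confirming that $\alpha(b)$ genuinely decomposes into coordinates compatible with $\operatorname{pr}_i$. Both points rest on $K_{3-i}$ being characteristic; I would check that carefully, possibly also invoking Lemma~\ref{lem:counterexample-normal-centralizers} to see that $\Psi_i$ is determined by $\alpha_i$. The sign issue in the injectivity step is a secondary point to settle.
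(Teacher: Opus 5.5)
Your proposal is correct. Injectivity follows the paper's centralizer argument, and your surjectivity argument takes a cleaner route than the paper's.

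\textbf{How the paper argues surjectivity.} It picks height-one elements $t_i\in G_i$ and generates $B$ by $K_1$, $K_2$ and $h=(t_1,t_2^{-1})$. It extends $\Phi|_{K_i}$ over $G_i=K_i\rtimes\langle t_i\rangle$ by $t_1\mapsto u_1$ and $t_2\mapsto u_2^{-1}$, where $\Phi(h)=(u_1,u_2)$, after checking the conjugation compatibility. It then proves the factor maps are automorphisms by composing with the analogous maps for $\Phi^{-1}$ and invoking $C_{A_{\Gamma_*}}(B)=1$.

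\textbf{How your argument differs.} You observe that $\operatorname{pr}_i$ identifies $B/K_{3-i}$ with $G_i$, so $\Psi_i$ is simply the automorphism of this quotient induced by $\alpha$. Well-definedness, multiplicativity and invertibility all follow at once from $\alpha^{\pm1}(K_{3-i})=K_{3-i}$. You need no generating set, no semidirect decomposition, and no centralizer input for this half. Your maps coincide with the paper's: $\Psi_1(t_1)=\operatorname{pr}_1\alpha(h)=u_1$, and $\Psi_2(t_2)=\operatorname{pr}_2\alpha(h^{-1})=u_2^{-1}$. The paper's version makes the sign $\varepsilon$ on $B/K$ and the images of the $t_i$ explicit, which it reuses in the index-two count that follows. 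Your worry about invoking Lemma~\ref{lem:counterexample-normal-centralizers} to pin down $\Psi_i$ is unnecessary, because $\Psi_i$ is already determined by $\alpha$. For the centralizer identity, citing Proposition~\ref{prop:torelli-centralizer-of-H} with $Z(A_{\Gamma_*})=1$ is as good as the paper's direct computation $C_{A_{\Gamma_*}}(B)=Z(G_1)\times Z(G_2)$.

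\textbf{Correction in the injectivity step.} Drop the sign discussion. From $\Psi(t)h\Psi(t)^{-1}=\Psi(tht^{-1})=tht^{-1}$ you get $t^{-1}\Psi(t)\in C_{A_{\Gamma_*}}(B)=1$, whether $\Psi$ preserves or negates $\chi$. The paper does this for every $x$ at once: $x^{-1}\Psi(x)\in C_{A_{\Gamma_*}}(B)$. The justification you gave for the sign is actually false. By Proposition~\ref{prop:standard-LW-lattices}, conjugation by a height-one vertex acts trivially on $B^{\mathrm{ab}}$, so $t$ and $t^{-1}$ induce the same action there. Because the step is superfluous, this does not damage the proof.
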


\begin{proof}
Choose height-one elements $t_i\in G_i$ and put
$h=(t_1,t_2^{-1})\in B$.
Then $B$ is generated by $K_1$, $K_2$, and $h$.  Let
$\Phi\in\Aut(B)$.  By~\eqref{eq:counterexample-characteristic-K},
$\Phi$ preserves each $K_i$ and induces multiplication by a sign
$\varepsilon\in\{\pm1\}$ on $B/K$.  Write $\alpha_i=\Phi|_{K_i}$ and $\Phi(h)=(u_1,u_2)$.
Then
\begin{equation}\label{eq:counterexample-u-heights}
 \chi_1(u_1)=\varepsilon,
 \qquad
 \chi_2(u_2)=-\varepsilon.
\end{equation}

We will use that
\begin{equation}\label{eq:counterexample-centralizer-of-B}
 C_{A_{\Gamma_*}}(B)
 =Z(G_1)\times Z(G_2)=1.
\end{equation}
Indeed, the two projections of $B$ are surjective.  Moreover,
$Z(\Free_3)=1$, while $Z(A_{P_4})=1$ because $P_4$ has no universal
vertex~\cite{Servatius}.

Conjugation by $h$ on $K_1$ is conjugation by $t_1$, while conjugation
by $h^{-1}$ on $K_2$ is conjugation by $t_2$.  Applying $\Phi$ gives
\begin{align}
 \alpha_1(t_1kt_1^{-1})
  &=u_1\alpha_1(k)u_1^{-1}
  &&(k\in K_1),                                      \label{eq:counterexample-factor-extension-1}\\
 \alpha_2(t_2kt_2^{-1})
  &=u_2^{-1}\alpha_2(k)u_2
  &&(k\in K_2).                                      \label{eq:counterexample-factor-extension-2}
\end{align}
Since $G_i=K_i\rtimes\langle t_i\rangle$, these identities define
factor homomorphisms
\[
 \widehat\alpha_1:G_1\to G_1,
 \quad
 \widehat\alpha_1(t_1)=u_1,
 \qquad
 \widehat\alpha_2:G_2\to G_2,
 \quad
 \widehat\alpha_2(t_2)=u_2^{-1}.
\]
Apply the same construction to $\Phi^{-1}$.  The two resulting product
homomorphisms compose in either order to a homomorphism of
$A_{\Gamma_*}$ that fixes $B$ pointwise.  If a homomorphism $\Psi$ fixes
$B$ pointwise, then for every $x\in A_{\Gamma_*}$ and $b\in B$,
\[
 \Psi(x)b\Psi(x)^{-1}
 =\Psi(xbx^{-1})
 =xbx^{-1},
\]
so $x^{-1}\Psi(x)\in C_{A_{\Gamma_*}}(B)=1$ by
\eqref{eq:counterexample-centralizer-of-B}.  The two compositions are
therefore the identity, and the factor homomorphisms are automorphisms.
Equation
\eqref{eq:counterexample-u-heights} implies $\chi_i\widehat\alpha_i=\varepsilon\chi_i$ for $i=1,2$.
Thus $\widehat\alpha_1\times\widehat\alpha_2$ belongs to the group in
\eqref{eq:counterexample-signed-ambient-stabilizer}.  It agrees with
$\Phi$ on $K_1$, $K_2$, and $h$, and hence on all of $B$.  Restriction
is surjective.

For injectivity, suppose that $\Psi$ fixes $B$ pointwise.  For
$x\in A_{\Gamma_*}$ and $b\in B$, normality of $B$ gives
\[
 \Psi(x) b\Psi(x)^{-1}
 =\Psi(xbx^{-1})
 =xbx^{-1}.
\]
Hence $x^{-1}\Psi(x)\in C_{A_{\Gamma_*}}(B)$.  The two projections of
$B$ are surjective, so
\eqref{eq:counterexample-centralizer-of-B} gives $\Psi(x)=x$ for every
$x$, proving injectivity.
\end{proof}

The ambient automorphism group also separates factorwise.

\begin{lemma}\label{lem:counterexample-factorwise-Aut}
There is a natural isomorphism
\[
 \Aut(A_{\Gamma_*})
 \cong
 \Aut(\Free_3)\times\Aut(A_{P_4}).
\]
\end{lemma}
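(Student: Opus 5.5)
The natural route is to show that every automorphism of $A_{\Gamma_*}=G_1\times G_2$ preserves each direct factor. The product map $\Aut(\Free_3)\times\Aut(A_{P_4})\to\Aut(A_{\Gamma_*})$ is always an injective homomorphism, so the content of the lemma is surjectivity. I would get factor preservation from the same centralizer calculus already used for $B$ in \eqref{eq:counterexample-double-centralizers}, but now applied in the ambient product, where both coordinate projections are the identity.

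First, let $1\neq N\trianglelefteq G_1\times G_2$ and define its support $S(N)=\{i:\operatorname{pr}_i(N)\neq1\}$. Each $\operatorname{pr}_i(N)$ is normal in $G_i$. Lemma~\ref{lem:counterexample-normal-centralizers} then gives
\[
 C(N)=\prod_{i\notin S(N)}G_i,
 \qquad
 C(C(N))=\prod_{i\in S(N)}G_i .
\]
So the proper nontrivial double centralizers of nontrivial normal subgroups are exactly $G_1$ and $G_2$, and an automorphism $\Psi$ permutes the pair $\{G_1,G_2\}$. The factors are not isomorphic, since $G_1^{\mathrm{ab}}\cong\Z^3$ while $G_2^{\mathrm{ab}}\cong\Z^4$. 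Hence $\Psi(G_i)=G_i$ for $i=1,2$.

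It follows that $\Psi$ restricts to automorphisms $\Psi_i$ of $G_i$, and $\Psi=\Psi_1\times\Psi_2$ because $G_1\times G_2$ is generated by the two factors. This gives surjectivity. Naturality is clear, since the isomorphism is just restriction to the characteristic factors.

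No step here is a serious obstacle. The only point needing care is that Lemma~\ref{lem:counterexample-normal-centralizers} is stated for normal subgroups of the factors, so one must observe that the projections of a normal subgroup of the product are normal in the factors. Alternatively, one could cite the standard fact that a direct product of two centerless, directly indecomposable, non-isomorphic groups has factorwise automorphism group, but the centralizer argument above avoids that citation.
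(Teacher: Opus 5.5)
Your argument is correct, but it takes a different route from the paper.

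The paper works combinatorially from the Laurence--Servatius generating theorem and checks that every generator preserves both factors:
\begin{itemize}
\item graph symmetries cannot swap the factors, because the complement of $\Gamma_*$ has the non-isomorphic components $K_3$ and $P_4$;
\item inversions are factorwise, and so are partial conjugations, thanks to the join;
\item cross-factor transvections are ruled out by domination, since $P_4$ has no universal vertex and the star of a $\overline K_3$-vertex misses the other two $\overline K_3$-vertices.
\end{itemize}

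You instead characterize $G_1$ and $G_2$ intrinsically as the only proper nontrivial double centralizers of nontrivial normal subgroups, using Lemma~\ref{lem:counterexample-normal-centralizers}. You then rule out a swap with $G_1^{\mathrm{ab}}\cong\Z^3\not\cong\Z^4\cong G_2^{\mathrm{ab}}$. This is exactly the mechanism the paper uses for $B$ in \eqref{eq:counterexample-double-centralizers}, applied in the easier ambient setting where the coordinate projections are identities.

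Your version is more conceptual and needs no generating set. It also works verbatim for any direct product of non-isomorphic groups whose nontrivial normal subgroups have trivial centralizers.

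The price is that it relies on the acylindrical-hyperbolicity input (Osin, Charney--Morris-Wright) behind Lemma~\ref{lem:counterexample-normal-centralizers}. The paper's check, by contrast, is elementary and graph-theoretic, using only Laurence's theorem. That lemma is already proved earlier in the same subsection and does not depend on the factorwise statement, so there is no circularity, and within the paper your route is arguably the more economical one.
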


\begin{proof}
Use the Laurence--Servatius generating theorem
\cite{Laurence,Servatius}.  The complement graph of $\Gamma_*$ has the
two nonisomorphic connected components $K_3$ and $P_4$, so graph
symmetries cannot exchange the factors.  Inversions are factorwise, and
the join makes every partial conjugation factorwise as well.

It remains to exclude cross-factor transvections.  If a vertex of $\overline K_3$
were transvected by a vertex of $P_4$, the domination condition would
require that vertex of $P_4$ to be universal in $P_4$.  No such vertex
exists.  In the other direction, the star of a $\overline K_3$-vertex omits the
other two $\overline K_3$-vertices and therefore cannot contain the link of a
$P_4$-vertex.  Thus every Laurence--Servatius generator preserves both
factors.  The reverse inclusion is evident.
\end{proof}

For $i=1,2$, let
\begin{equation}\label{eq:counterexample-positive-stabilizers}
 S_i=
 \{\alpha\in\Aut(G_i):\chi_i\alpha=\chi_i\}.
\end{equation}
By Proposition~\ref{prop:counterexample-ambient-extension} and
Lemma~\ref{lem:counterexample-factorwise-Aut},
\begin{equation}\label{eq:counterexample-index-two-product}
 P=S_1\times S_2
\end{equation}
has index two in $\Aut(B)$.  Indeed, an ambient pair preserving
$\{\pm\chi\}$ must act on the two factor characters by the same sign,
and simultaneous inversion of all standard generators realizes the
negative sign.

\subsubsection*{The two height stabilizers}

The first factor supplies the obstruction to finite presentation.

\begin{lemma}\label{lem:counterexample-S1-not-FP}
The group $S_1$ is not finitely presented.
\end{lemma}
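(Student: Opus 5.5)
The plan is to identify $S_1$ with a known non-finitely-presented group: the stabilizer in $\Aut(\Free_3)$ of the all-ones character $\chi_1:\Free_3\to\Z$. I would run the argument through abelianization and reduce to an arithmetic statement.

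\emph{Reduction to the image in $\GL_3(\Z)$.} Since $\chi_1$ factors through $\Free_3^{\mathrm{ab}}=\Z^3$, there is an exact sequence
\[
 1\longrightarrow\IAut(\Free_3)\longrightarrow S_1\longrightarrow\Sigma\longrightarrow1,
\]
where $\Sigma=\{g\in\GL_3(\Z):\mathbf1^{\mathsf T}g=\mathbf1^{\mathsf T}\}$. Surjectivity onto $\Sigma$ follows from Nielsen's theorem that $\Aut(\Free_n)\to\GL_n(\Z)$ is onto. In a basis of $\Z^3$ adapted to the flag $\ker\chi_1\subset\Z^3$, the group $\Sigma$ is the affine group $\Z^2\rtimes\GL_2(\Z)$. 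This group is finitely presented, so it cannot itself be the obstruction.

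\emph{Locating the obstruction.} Non-finite presentability must therefore come from the interaction between the kernel and the quotient, which suggests a more direct route. The stabilizer $S_1$ is the automorphism group of the pair $(\Free_3,\chi_1)$. I would compare it with the group of automorphisms of $\Free_3$ fixing a primitive element up to conjugacy, or with the restriction to $K_1=\ker\chi_1$.

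A cleaner approach uses a finite-index subgroup together with a known theorem. Change basis to $x_1,\ x_2x_1^{-1},\ x_3x_1^{-1}$, so that $\chi_1$ becomes the first coordinate functional. Then $S_1$ is the stabilizer of the homomorphism $\Free_3\to\Z$, $y_1\mapsto1$, $y_2,y_3\mapsto0$. The stabilizer of an epimorphism $\Free_n\to\Z$ in $\Aut(\Free_n)$ (equivalently of a primitive cohomology class) is not finitely presented for $n=3$. This is the Bestvina--Brady-type phenomenon proved for $\Aut(\Free_n)$ stabilizers by Bux--Ensley-type Morse theory arguments on Outer space, and by Day--Putman for the related "Torelli stabilizer" quotients. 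I would cite that result, after checking via Proposition~\ref{prop:Fn-extension-rules}(iv) that passing to finite index is harmless.

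\emph{Fallback if no citation fits exactly.} I would project $S_1\to\Sigma\to\GL_2(\Z)$ and take the kernel $S_1'$ of the map to the $\Z^2$-part. The aim is to exhibit a retraction (Proposition~\ref{prop:Fn-extension-rules}(v)) from a finite-index subgroup of $S_1$ onto a group known not to be finitely presented. A candidate target is the kernel of $\Free_2\times\Free_2\to\Z$, which is the Stallings--Bieri group, finitely generated but not finitely presented. It could be embedded as a retract via the partial conjugations of $y_2,y_3$ by $y_1$ and the automorphisms supported on $\langle y_2,y_3\rangle$.

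The main obstacle is this step: producing an honest retraction, or an exact citation, that certifies non-finite presentability. The extension-level bookkeeping around it is routine.
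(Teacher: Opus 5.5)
Your basis change $x_1,\,x_2x_1^{-1},\,x_3x_1^{-1}$ turns $\chi_1$ into the coordinate functional $\chi_0$ ($y_1\mapsto1$, $y_2,y_3\mapsto0$), and this is exactly the paper's first step. After it, however, the proposal contains no argument. The entire content of the lemma is that $\{\alpha\in\Aut(\Free_3):\chi_0\alpha=\chi_0\}$ is not finitely presented, and you neither prove this nor identify a theorem that does. The attributions you give (``Bux--Ensley-type'' Morse theory on Outer space, Day--Putman's Torelli work) do not supply such a result.

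The missing input is the theorem of Krsti\'c--McCool that the paper invokes. Put $P_0=\ker\chi_0=\langle\!\langle y_2,y_3\rangle\!\rangle$. The setwise stabilizer $S(\Free_3)$ of $P_0$ in $\Aut(\Free_3)$ is not finitely presented. The subgroup $T(\Free_3)$ of automorphisms inducing the identity, rather than inversion, on $\Free_3/P_0\cong\Z$ has index two in it. The condition $\chi_0\alpha=\chi_0$ says exactly that $\alpha(P_0)=P_0$ with trivial induced action on the quotient. So your conjugated copy of $S_1$ is precisely $T(\Free_3)$, and invariance of finite presentability under passage to finite index finishes the proof.

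Neither of your other two threads can replace this citation. The extension $1\to\IAut(\Free_3)\to S_1\to\Z^2\rtimes\GL_2(\Z)\to1$ shows only that non-finite presentability of $S_1$ would force that of $\IAut(\Free_3)$. Non-finite presentability of a kernel does not propagate to the extension: for instance, $\Aut(\Free_3)$ is finitely presented although $\IAut(\Free_3)$ is not. So no bookkeeping with this sequence yields the lemma.

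The fallback retraction onto $\ker(\Free_2\times\Free_2\to\Z)$ would be logically sufficient by Proposition~\ref{prop:Fn-extension-rules}\textup{(v)}, but it is not constructed. The two partial conjugations you name commute (their product is conjugation by $y_1$), so they span only a copy of $\Z^2$. You specify neither an embedding of the Stallings--Bieri group into a finite-index subgroup of $S_1$ nor a homomorphism back onto it that restricts to the identity. As written, the proposal has a genuine gap at its only nontrivial step.
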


\begin{proof}
Let $\rho_{\mathrm{ab}}:\Aut(\Free_3)\to\GL_3(\Z)$ be the action on
abelianization.  Choose a basis $x_1,x_2,x_3$ and let
$\chi_0(x_1)=1$, $\chi_0(x_2)=\chi_0(x_3)=0$.  Since
$\rho_{\mathrm{ab}}$ is surjective by Nielsen transformations and
$\GL_3(\Z)$ is transitive on primitive covectors
\cite[Ch.~I]{LyndonSchupp}, there is $\theta\in\Aut(\Free_3)$ such that
$\chi_0=\chi_1\theta^{-1}$.  Therefore
\begin{equation}\label{eq:counterexample-S1-row-conjugate}
 \theta S_1\theta^{-1}
 =\{\alpha\in\Aut(\Free_3):\chi_0\alpha=\chi_0\}.
\end{equation}

Let $\operatorname{Row}_{3,1}\leq\GL_3(\Z)$ be the subgroup of matrices
whose first row is the first row of the identity, and put
\[
 \operatorname{IAR}_{3,1}
 =(\rho_{\mathrm{ab}})^{-1}(\operatorname{Row}_{3,1}).
\]
With the convention that the columns of the abelianization matrix record
the images of the basis elements, the equality
$\chi_0\alpha=\chi_0$ says exactly that the first row of
$\rho_{\mathrm{ab}}(\alpha)$ is $(1,0,0)$.  Hence the group on the
right of~\eqref{eq:counterexample-S1-row-conjugate} is
$\operatorname{IAR}_{3,1}$.

Put
\[
 P_0=\ker\chi_0=\langle\!\langle x_2,x_3\rangle\!\rangle_{\Free_3}.
\]
After permuting the basis in the notation of Krsti\'c--McCool, their
positive stabilizer $T(\Free_3)$ is precisely
$\operatorname{IAR}_{3,1}$: it preserves $P_0$ and induces the identity,
rather than inversion, on the quotient
$\Free_3/P_0\cong\Z$ generated by the image of $x_1$.  They show that
$T(\Free_3)$ has index two in the setwise stabilizer
$S(\Free_3)=\operatorname{Stab}(P_0)$, and Corollary~3 states that
$S(\Free_3)$ is not finitely presented
\cite[pp.~598--600, Corollary~3]{KrsticMcCool}.  Finite presentability is
invariant under passage between finite-index subgroups and overgroups, so
$T(\Free_3)\cong\operatorname{IAR}_{3,1}$ is not finitely presented.
Thus $S_1$ is not finitely presented.
\end{proof}

The second factor is different: only its finite generation is needed.

\begin{lemma}\label{lem:counterexample-S2-fg}
The group $S_2$ is finitely generated.
\end{lemma}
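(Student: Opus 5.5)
We need to show that $S_2=\{\alpha\in\Aut(A_{P_4}):\chi_2\alpha=\chi_2\}$ is finitely generated. The plan is to write $S_2$ as the kernel of a homomorphism from a finitely generated group to a finite group.

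First, $\Aut(A_{P_4})$ is finitely generated by the Laurence--Servatius theorem \cite{Laurence,Servatius}. Next, consider its action on abelianization, $\rho:\Aut(A_{P_4})\to\GL_4(\Z)$. The height character $\chi_2$ factors through $A_{P_4}^{\mathrm{ab}}$, so $S_2$ is the full preimage under $\rho$ of the stabilizer of the covector $\mathbf1^{\mathsf T}$ in the image $\rho(\Aut(A_{P_4}))$.

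The key step is to show that this image permutes a finite set of covectors containing $\mathbf1^{\mathsf T}$, up to sign. For $P_4=a-b-c-d$ the domination preorder is easy to compute:
\begin{itemize}
\item $\lk(a)=\{b\}\subseteq\st(c)$, so $a\preceq c$;
\item symmetrically, $d\preceq b$;
\item $b$ and $c$ dominate nothing else;
\item $a\preceq b$ and $d\preceq c$ hold trivially, since $\lk(a)=\{b\}\subseteq\st(b)$ and $\lk(d)=\{c\}\subseteq\st(c)$.
\end{itemize}
The transvections are therefore $a\mapsto ab^{\pm1}$, $a\mapsto ac^{\pm1}$, $d\mapsto dc^{\pm1}$, $d\mapsto db^{\pm1}$. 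On abelianization these generate a unipotent group, together with inversions and the graph flip $a\leftrightarrow d$, $b\leftrightarrow c$. Partial conjugations act trivially on abelianization.

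So the image $Q=\rho(\Aut(A_{P_4}))$ is the subgroup of $\GL_4(\Z)$ consisting of block lower-triangular matrices (with $a,d$ mapping into their own coordinates plus $\Z b+\Z c$) with diagonal $\pm1$, extended by the flip.

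On covectors $\chi=(\chi_a,\chi_b,\chi_c,\chi_d)$ the contragredient action can change $\chi_b,\chi_c$ by integer combinations of $\chi_a,\chi_d$. So the $Q$-orbit of $\mathbf1^{\mathsf T}$ is infinite. The finite-orbit approach therefore fails, and we argue directly instead.

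Let $Q_1=\{g\in Q:\mathbf1^{\mathsf T}g=\mathbf1^{\mathsf T}\}$. This is a finite-index subgroup of the group generated by the sign-free unipotent part, which is abelian $\cong\Z^4$ via its four transvection parameters, intersected with a linear condition. Concretely, the unipotent part is $\Z^4$ with parameters $(p,q,r,s)$ for $a\mapsto a+pb+qc$ and $d\mapsto d+rb+sc$. The condition $\mathbf1^{\mathsf T}g=\mathbf1^{\mathsf T}$ becomes the two linear equations $p+q=0$ and $r+s=0$. Hence $Q_1$ is virtually a subgroup of $\Z^4$, and in particular finitely generated.

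Then we assemble $S_2$ from an exact sequence
\[
1\to\ker\rho\to S_2\to Q_1\to1.
\]
Here $\ker\rho=\IAut(A_{P_4})$ is finitely generated by Day \cite[Theorem~B]{Day}, and $Q_1$ is finitely generated. Since an extension of finitely generated groups is finitely generated, $S_2$ is finitely generated.

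The main obstacle is the exact identification of $Q$, meaning that the abelianization image of $\Aut(A_{P_4})$ is exactly the group generated by these transvections, signs and the flip. This follows from Laurence's generating set by reading off each generator's matrix, as above.
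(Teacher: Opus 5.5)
Your proof is correct and is essentially the paper's argument. Day's theorem makes $\ker\rho=\IAut(A_{P_4})\le S_2$ finitely generated; the abelianization image is virtually $\Z^4$, with the four commuting domination transvections normalized by the finite group generated by the inversions and the flip. So $\rho(S_2)$, the stabilizer of $\mathbf 1^{\mathsf T}$, is finitely generated, and therefore so is the extension $S_2$. Your abandoned finite-orbit plan and the slip ``$b$ and $c$ dominate nothing else'' (you mean that no other vertex dominates $b$ or $c$) do not affect the final argument, and your explicit computation $\rho(S_2)\cap\Z^4=\{p+q=0,\ r+s=0\}$ is a slightly sharper form of the paper's ``every subgroup is finitely generated'' step.
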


\begin{proof}
Consider the cohomological representation
$\rho:\Aut(A_{P_4})\longrightarrow\GL_4(\Z)$.
Its kernel is $\IAut(A_{P_4})$, which is finitely generated by Day's
theorem~\cite[Theorem~B]{Day}, and this kernel is contained in
$S_2$.

The Laurence--Servatius generators~\cite{Laurence,Servatius} show that
$\rho(\Aut(A_{P_4}))$ is
generated by the four vertex inversions, the reflection of the path, and
the four domination transvections
\[
 a\mapsto ab,
 \qquad
 a\mapsto ac,
 \qquad
 d\mapsto db,
 \qquad
 d\mapsto dc.
\]
Their cohomological images commute and generate a free abelian group of
rank four.  The inversions and the reflection form a finite group that
normalizes this lattice.  Hence $\rho(\Aut(A_{P_4}))$ is virtually $\Z^4$.  If $Q$ is any
subgroup, then $Q\cap\Z^4$ is a finitely generated abelian group of
finite index in $Q$; thus every subgroup is finitely generated.  In
particular, $\rho(S_2)$ is finitely generated.  The exact sequence
\[
 1\longrightarrow\IAut(A_{P_4})
 \longrightarrow S_2
 \longrightarrow\rho(S_2)
 \longrightarrow1
\]
then proves the claim.
\end{proof}

\begin{proof}[Proof of Theorem~\ref{thm:type-Finfty-counterexample}]
Deleting any vertex from $\overline K_3*P_4$ leaves a join of two nonempty graphs,
and hence a connected graph.  Thus $\Gamma_*$ is biconnected.  Its flag
complex is the simplicial join of the flag complexes of $\overline K_3$ and
$P_4$.  The latter is a contractible tree, so the join is contractible.
Bestvina--Brady Morse theory therefore implies that $B$ is of type
$F_\infty$~\cite{BestvinaBrady}.

Suppose that the index-two subgroup $P=S_1\times S_2$ from
\eqref{eq:counterexample-index-two-product} were finitely presented.
Choose a finite generating set of $S_2$, which exists by
Lemma~\ref{lem:counterexample-S2-fg}, and add those generators as
relators to a finite presentation of $P$.  The resulting quotient is
$S_1$, contradicting Lemma~\ref{lem:counterexample-S1-not-FP}.  Hence
$P$, and therefore its finite-index overgroup $\Aut(B)$, is not finitely
presented, since finite presentability is invariant under passage between
finite-index subgroups and supergroups by
Proposition~\ref{prop:Fn-extension-rules}\textup{(iv)}.

The Aut--Out equivalence of
Theorem~\ref{thm:Aut-Out-finite-presentation-equivalence} now shows that
$\Out(B)$ is not finitely presented either.  Finally, both groups are
finitely generated by Theorem~\ref{thm:global-finite-generation}.
\end{proof}

The proof above is deliberately specific to $\overline K_3*P_4$.  In particular,
the double-centralizer recognition and the finite-generation computation
for $S_2$ are not being asserted for arbitrary joins $\overline K_r*P_s$.

\subsection{Failure of higher-degree Aut--Out equivalence}

The unconditional degree-two equivalence of
Theorem~\ref{thm:Aut-Out-finite-presentation-equivalence} does not extend to
higher degrees without the additional hypothesis $H_\Gamma\in F_n$.  We
exhibit a finite connected graph for which $\Out(H_\Gamma)$ is of type
$F_\infty$ while $\Aut(H_\Gamma)$ is of type $F_3$ but not $F_4$.  We first determine the outer automorphism group of a two-factor free
product.

\begin{lemma}
\label{lem:tree-endpoint-stabilizers}
Let a group $G$ act without inversions on a tree $T$ with trivial edge
stabilizers.  If $v\neq w$ have nontrivial stabilizers, put
$A=G_v$, $B=G_w$, and $H=\langle A,B\rangle$.  Then the natural map
$A*B\to H$ is an isomorphism, and the quotient of the minimal
$H$-invariant subtree by $H$ is the segment $[v,w]$, with the two
endpoint groups $A,B$ and trivial groups on its interior vertices and
edges.  In particular, if $H=G$ and $T/G$ consists of one edge, then
$d_T(v,w)=1$.
\end{lemma}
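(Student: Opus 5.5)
The plan is to use Bass--Serre theory directly, with no appeal to earlier results of the paper. Consider the segment $[v,w]$ in $T$, with consecutive vertices $v=p_0,p_1,\ldots,p_k=w$ and edges $\epsilon_j=[p_{j-1},p_j]$. The key local fact is that if two distinct elements $a,a'\in A$ move an edge $\epsilon$ at $v$ to the same edge, then $a^{-1}a'$ fixes $\epsilon$. Since edge stabilizers are trivial, this forces $a=a'$. Hence $A\setminus\{1\}$ moves the first edge $\epsilon_1$ of $[v,w]$ to edges at $v$ distinct from $\epsilon_1$. Likewise $B\setminus\{1\}$ moves $\epsilon_k$ to edges at $w$ distinct from $\epsilon_k$.

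For injectivity of $A*B\to H$, take a reduced alternating word $g=c_1c_2\cdots c_n$ with each $c_i\in A\setminus\{1\}$ or $B\setminus\{1\}$. We show $g\neq1$ by the standard ping-pong or path-concatenation argument. The translates
\[
 [v,w],\ c_1[w,v] \text{ or } c_1[v,w],\ \ldots
\]
concatenate to a geodesic path in $T$. At each junction the two incident edges differ by the local fact above, so there is no backtracking. Since $T$ is a tree, a path without backtracking is geodesic. Concretely, for a word beginning in $A$ and ending in $B$, the path from $v$ to $g w$ has length $n\,k$. A word ending at the same type of vertex can be treated by conjugating or by comparing $gv$ with $v$ in the same way. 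Thus $g$ moves $v$ or $w$, so $g\neq1$. Surjectivity is by definition of $H$.

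For the minimal subtree, put $Y=\bigcup_{h\in H}h[v,w]$. This is $H$-invariant, and it is connected because consecutive translates $c_1\cdots c_i[v,w]$ share endpoints and these translates generate. By the geodesic argument above, $Y$ is the Bass--Serre tree of the segment of groups $A - 1 - \cdots - 1 - B$ of length $k$. Each interior vertex $p_j$ (with $0<j<k$) has trivial stabilizer in $H$. Indeed, an element $h\neq1$ fixing $p_j$ would fix the geodesic from $p_j$ to $hp_j=p_j$; writing $h$ reduced, the path argument shows that $h$ moves every point of $[v,w]$ off itself unless $h\in A$ or $h\in B$, and a nontrivial element of $A$ fixes only $v$ on $[v,w]$ by the local fact. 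Thus $Y/H\cong[v,w]$, with vertex groups $A,B$ at the endpoints and trivial groups elsewhere.

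$Y$ is minimal: any $H$-invariant subtree contains the fixed vertices $v$ and $w$, since it contains some point and a point fixed by $A$ lies in the fixed set, which is just $v$ by the local fact, and the same holds for $B$. It therefore contains $[v,w]$ and all of its translates.

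For the final statement, suppose $H=G$ and $T/G$ is one edge. The minimal $G$-subtree is then $Y$. Every edge of $T$ lies in the $G$-orbit of a single edge, and that orbit meets $Y$, so the orbit of $Y$ covers everything and $T=Y$. Hence $Y/G$ is one edge, so $k=1$.

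The main obstacle is the careful no-backtracking bookkeeping at junctions, especially when the word length parity makes both ends the same type of vertex. I would handle this by always tracking the endpoint path from $v$ to $g\cdot(\text{endpoint})$.
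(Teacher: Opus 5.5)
Your proposal is correct and follows the paper's route. Both arguments use half-tree ping-pong along $[v,w]$, driven by trivial edge stabilizers, for injectivity; take the union of $H$-translates of $[v,w]$ as the minimal subtree; and observe that any $H$-invariant subtree must contain the unique fixed vertices $v$ and $w$. For that last point you should say why an $A$-invariant subtree contains an $A$-fixed point at all, e.g.\ because the nearest-point projection of $v$ onto it is $A$-fixed, which is how the paper argues. Your edge-orbit argument that $Y=T$ in the final step is slightly more careful than the paper's bare assertion that the minimal $G$-subtree is $T$, which tacitly uses minimality of $T$.
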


\begin{proof}
Let $I=[v,w]$.  A nontrivial element of $A$ cannot fix the first edge of
$I$, and a nontrivial element of $B$ cannot fix the last edge, because edge
stabilizers are trivial.  The usual half-tree ping-pong argument therefore
shows that every nonempty reduced word alternating between
$A\setminus\{1\}$ and $B\setminus\{1\}$ moves an interior point of $I$;
hence $A*B\to H$ is injective.  It is surjective by definition.

The translates of $I$ by reduced words attach successively at the endpoint
fixed by the last syllable.  Triviality of edge stabilizers prevents a fold
at an attaching edge.  Their union $T_H$ is therefore an $H$-tree whose
quotient is exactly the graph-of-groups segment $I$, with only the two
endpoint vertex groups nontrivial.  Every nonempty $H$-invariant subtree
contains $v$: it is $A$-invariant, and the nearest-point projection of the
unique $A$-fixed vertex $v$ to that subtree is again fixed by $A$; the same
argument puts $w$ in the subtree.  It therefore contains every
$H$-translate of $I$.  Thus $T_H$ is the minimal $H$-invariant subtree.
This is also the standard subgroup normal-form argument for a tree
action~\cite[Section~I.4]{SerreTrees}.  If $H=G$, its minimal subtree is
$T$; comparing the quotient segment with the one-edge quotient $T/G$ gives
$d_T(v,w)=1$.
\end{proof}

\begin{lemma}
\label{lem:out-free-product}
Let $K$ and $L$ be noncyclic, freely indecomposable groups that are not
isomorphic.  Then $\Out(K*L)\cong\Aut(K)\times\Aut(L)$.
\end{lemma}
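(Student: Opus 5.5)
We would like to prove that $\Out(K*L)\cong\Aut(K)\times\Aut(L)$. The plan is to use the Bass--Serre tree $T$ of the one-edge splitting $G=K*L$, with trivial edge group and vertex stabilizers the conjugates of $K$ and $L$, and to show that every automorphism can be normalized by an inner automorphism so that it preserves both factors. We then identify the normalized automorphisms with $\Aut(K)\times\Aut(L)$.

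\emph{Step 1: each factor goes to a conjugate of itself.} Let $\Phi\in\Aut(G)$. The group $\Phi(K)$ is freely indecomposable and noncyclic, so by the Kurosh subgroup theorem it fixes a vertex of $T$ and is therefore conjugate into $K$ or into $L$. Since $\Phi(K)$ is also a Grushko factor of $G=\Phi(K)*\Phi(L)$, uniqueness of the Grushko decomposition makes it conjugate to $K$ or to $L$. It cannot be conjugate to $L$, because $K\not\cong L$. Hence $\Phi(K)=gKg^{-1}$, and similarly $\Phi(L)=hLh^{-1}$.

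\emph{Step 2: a single inner automorphism normalizes both factors.} Consider the action of $G$ on $T$ twisted by $\Phi$. The vertices fixed by $\Phi(K)$ and $\Phi(L)$ are $gv_K$ and $hv_L$. These stabilizers generate $G$, and $T/G$ is a single edge, so Lemma~\ref{lem:tree-endpoint-stabilizers} gives $d_T(gv_K,hv_L)=1$. Edges of $T$ have the form $x[v_K,v_L]$, so there is some $x$ with $gK g^{-1}=xKx^{-1}$ and $hLh^{-1}=xLx^{-1}$. This uses that $K$ and $L$ are self-normalizing, which follows from the normal form. Composing $\Phi$ with $c_{x^{-1}}$ then yields an automorphism that preserves $K$ and $L$ setwise.

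\emph{Step 3: the normalized automorphisms form a copy of $\Aut(K)\times\Aut(L)$ meeting $\Inn(G)$ trivially.} Write $\Aut(G;K,L)$ for the automorphisms preserving both factors. Restriction gives a homomorphism to $\Aut(K)\times\Aut(L)$, and the universal property of the free product gives its inverse, so the map is an isomorphism. Step 2 shows that $\Aut(G;K,L)$ surjects onto $\Out(G)$. The remaining point is the kernel: suppose $c_y$ preserves both $K$ and $L$. Then $y$ lies in $N_G(K)\cap N_G(L)=K\cap L=1$, again by self-normalization. Therefore $\Out(G)\cong\Aut(G;K,L)\cong\Aut(K)\times\Aut(L)$.

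The main obstacle is Step 2. The conjugators for $K$ and for $L$ may a priori be different, and proving that a single inner automorphism normalizes both factors is exactly where the adjacency statement of Lemma~\ref{lem:tree-endpoint-stabilizers} is needed. Along the way one also has to check that the twisted action still has trivial edge stabilizers, so that the lemma applies.
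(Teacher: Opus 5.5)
Your proposal is correct and follows the paper's argument: Grushko uniqueness together with $K\not\cong L$ sends each factor to a conjugate of itself. Lemma~\ref{lem:tree-endpoint-stabilizers} then makes the two fixed vertices adjacent, so one inner automorphism normalizes both factors, and self-normalization of free factors gives injectivity. The paper first arranges $\Phi(K)=K$ and then finds $k\in K$ from the $L$-type neighbours of $v_K$, whereas you take a single conjugator $x$ from edge-transitivity; the two are equivalent. No twisted action is needed, though: $\Phi(K)=gKg^{-1}$ and $\Phi(L)=hLh^{-1}$ are already the stabilizers of $gv_K$ and $hv_L$ for the original action, which has trivial edge stabilizers. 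Also, in Step~2 the equality $xKx^{-1}=gKg^{-1}$ follows from $xv_K=gv_K$ alone, without invoking self-normalization.
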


\begin{proof}
Write $G=K*L$ and let $T$ be its Bass--Serre tree, with $K$ and $L$ the
stabilizers of the two ends of an edge.  By the uniqueness part of Grushko's
theorem~\cite{Grushko,Stallings}, every $\Phi\in\Aut(G)$ carries $K$ and $L$
to conjugates; composing with an inner automorphism we may assume $\Phi(K)=K$
and $\Phi(L)=wLw^{-1}$.  The stabilizers $K$ and $wLw^{-1}$ generate $G$, so
Lemma~\ref{lem:tree-endpoint-stabilizers} shows that their fixed vertices
are adjacent in $T$.  The $L$-type vertices adjacent to the vertex fixed by
$K$ are exactly the vertices fixed by $kLk^{-1}$ with $k\in K$; hence
$wLw^{-1}=kLk^{-1}$ for some $k\in K$.  After composing with
$c_{k^{-1}}$, the representative preserves both $K$ and $L$.  Thus every outer class is represented by a factorwise
automorphism, giving a surjection $\Aut(K)\times\Aut(L)\to\Out(G)$.  It is
injective: if a factorwise automorphism is inner, say $c_g$, then $g$
normalizes both $K$ and $L$; free factors are self-normalizing in a free
product by the normal-form theorem~\cite[Ch.~IV]{LyndonSchupp}, so $g\in K\cap L=1$ and the factorwise automorphism is the identity.
\end{proof}

The inner directions of the two factors are not lost: the partial conjugation
of $K$ by an element $l\in L$ represents a pair $(\mathrm{id},\iota)$ with
$\iota\in\Inn(L)$, so $\Inn(K)\times\Inn(L)$ survives inside
$\Aut(K)\times\Aut(L)$.  This is why the factors contribute their full
automorphism groups.

The obstruction to higher finiteness is carried by a holomorph.

\begin{proposition}
\label{prop:holomorph-F3-not-F4}
For $m\ge5$ let $K=H_{C_m}$ and $\operatorname{Hol}(K)=K\rtimes\Aut(K)$.  Then
$\operatorname{Hol}(K)$ is of type $F_3$ but not of type $F_4$.
\end{proposition}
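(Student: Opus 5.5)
The plan is to replace $\operatorname{Hol}(K)$ by a finite-index subgroup that is an explicit coabelian subgroup of a right-angled Artin group, and then read off its finiteness properties from the product formula for higher BNS invariants. Proposition~\ref{prop:Fn-extension-rules}\textup{(iv)} lets us pass freely between $\operatorname{Hol}(K)$ and any finite-index subgroup.

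\emph{Step 1: a finite-index subgroup of $\Aut(K)$.} Write $A=A_{C_m}$ and $K=H_{C_m}$, and let $\chi$ be the height character. By Subsection~\ref{sec:cycles}, $\IOut(K)\cong\Z$ is generated by the deck class and $|\Lambda_{C_m}|<\infty$. Hence the preimage of $\IOut(K)$ in $\Aut(K)$ is $\Inn(K)\langle\tau\rangle=\kappa(A)$, and it has finite index. By \eqref{eq:deck-preimage-RAAG} and $Z(A)=1$ (the cycle has no universal vertex), $\kappa:A\to\kappa(A)$ is an isomorphism. Thus $\operatorname{Hol}(K)$ contains $K\rtimes_\kappa A$ with finite index, where $A$ acts on $K$ by conjugation.

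\emph{Step 2: identify this subgroup as a fibre product.} The map $(k,a)\mapsto(ka,a)$ is a homomorphism $K\rtimes_\kappa A\to A\times A$, since the action is conjugation. It is injective. Its image is
\[
 P=\{(x,a)\in A\times A:\chi(x)=\chi(a)\}=\ker\bigl(\psi:A\times A\to\Z\bigr),\qquad \psi=(\chi,-\chi).
\]
So it suffices to show that $P$ is of type $F_3$ but not $F_4$. Equivalently, since $A\times A$ is of type $F$, we must decide for which $n$ both $[\psi]$ and $[-\psi]$ lie in $\Sigma^n(A\times A)$.

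\emph{Step 3: the factor invariants.} By the Bestvina--Brady theorem, $K=\ker\chi$ is finitely generated. Since the flag complex is a circle with $H_1\neq0$, $K$ is not of type $FP_2$ over $\Q$. Because the character $-\chi$ is carried to $\chi$ by the inversion automorphism of $A$, this gives:
\begin{itemize}
\item $\pm[\chi]\in\Sigma^1(A)$;
\item $\pm[\chi]\notin\Sigma^2(A;\Q)$.
\end{itemize}

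\emph{Step 4: apply the product formula.}
\begin{itemize}
\item For the positive direction, use Meinert's sum inequality for direct products. If $[\chi_1]\in\Sigma^p(G_1)$ and $[\chi_2]\in\Sigma^q(G_2)$, both nonzero, then $[(\chi_1,\chi_2)]\in\Sigma^{p+q+1}(G_1\times G_2)$. With $p=q=1$, both $\pm[\psi]$ lie in $\Sigma^3$, so $P$ is of type $F_3$.
\item For the negative direction, use the Bieri--Geoghegan product formula for homological $\Sigma$-invariants over a field. With field coefficients the inequality is sharp. Since $\chi\notin\Sigma^2(A;\Q)$ in either factor, we get $[\psi]\notin\Sigma^4(A\times A;\Q)$. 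Hence $P$ is not of type $FP_4$ over $\Q$, and therefore not of type $F_4$.
\end{itemize}
A cross-check is available via the Meier--Meinert--VanWyk link criterion for RAAG kernels, applied to the join $C_m*C_m$. In the ascending links, the two circles appear with opposite signs, and a nonzero degree-three class survives from the join of the two $H_1$'s.

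\emph{Main obstacle.} The delicate point is the sharpness in Step~4. Upper bounds for products are formal, but the failure of $F_4$ needs the field-coefficient product formula together with the exact vanishing pattern $\Sigma^1\setminus\Sigma^2$ of the factors. I would verify it first by directly computing $H_3(P;\Q)$. To do this, realize $P$ as the fundamental group of the level set in the product of the two Salvetti complexes, and apply Bestvina--Brady Morse theory with the character $\psi$. This should exhibit an infinite-dimensional homology group coming from $H_1(S^1)\otimes H_1(S^1)$ in the relevant ascending/descending link. If that computation is awkward, I would instead quote Bieri--Geoghegan's theorem that $\Sigma^*(-;\mathbb F)$ satisfies the product formula with equality.
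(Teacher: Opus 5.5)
Your argument is correct, and Steps 1--2 coincide with the paper's proof. The subgroup $K\rtimes\kappa(A_{C_m})$ is exactly the paper's fibre product $P\times_D P$, and your map $(k,a)\mapsto(ka,a)$ is the paper's $(k,\alpha)\mapsto(c_k\alpha,\alpha)$ read through $\kappa$.

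The routes diverge at Step~4. The paper takes the inversion automorphism you already use in Step~3 and applies it only to the second factor of $A\times A$. This carries $\psi=(\chi,-\chi)$ to $(\chi,\chi)$, which is the all-ones character of $A_{C_m}\times A_{C_m}=A_{C_m*C_m}$. Hence $P\cong H_{C_m*C_m}$. Its flag complex is $S^1*S^1\cong S^3$, which is $2$-connected but not $3$-connected, so the Bestvina--Brady theorem gives $F_3\setminus F_4$ at once.

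You instead combine three results: the Bieri--Renz/Renz kernel criterion, the product (Meinert) inequality for homotopical $\Sigma^n$, and the Bieri--Geoghegan product formula with field coefficients. All of these apply as you state them. Your approach has the merit of handling kernels of $(\chi_1,\chi_2)$ on arbitrary direct products once the factor $\Sigma$-invariants are known. Here, though, it is much heavier than needed, and the ``sharpness'' you flag as the main obstacle disappears once the sign is flipped. Your Meier--Meinert--VanWyk cross-check is really this same observation: every vertex of $C_m*C_m$ is living, so only the link $S^3$ matters and the signs are irrelevant.

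One slip in your verification aside: $H_3(P;\Q)$ cannot be infinite-dimensional, since you have just shown that $P$ is of type $F_3$. The class in $\tilde H_3(S^1*S^1)$ makes $H_3$ of the level set infinite-dimensional. Through the equivariant spectral sequence, it is $H_4(P;\Q)$ that becomes infinite-dimensional, and that is the homological witness to the failure of $FP_4$.
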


\begin{proof}
As $C_m$ has no universal vertex, Lemma~\ref{lem:bb-center} gives $Z(K)=1$, so
$\Inn(K)\cong K$ and
\[
 \operatorname{Hol}(K)\longrightarrow
 \Aut(K)\times_{\Out(K)}\Aut(K),
 \qquad(k,\alpha)\longmapsto(c_k\alpha,\alpha),
\]
is an isomorphism onto the fiber product.  The deck subgroup
$D=\IOut(H_{C_m})\cong\Z$ has finite index in $\Out(K)$ by
Theorem~\ref{thm:cycle-virtually-cyclic}, and its full preimage in $\Aut(K)$
is $P:=\kappa(A_{C_m})\cong A_{C_m}$
by~\eqref{eq:deck-preimage-RAAG}--\eqref{eq:deck-full-preimage}.  Hence
$P\times_D P$ has finite index in $\operatorname{Hol}(K)$.  Identifying
$P\cong A_{C_m}$ so that $P\to D\cong\Z$ becomes the all-ones character
$\chi:A_{C_m}\to\Z$ with kernel $K$,
\[
 P\times_D P=\{(a,b)\in A_{C_m}\times A_{C_m}:\chi(a)=\chi(b)\}.
\]
Inverting every standard generator of the second factor carries $\chi-\chi$ to
$\chi+\chi$.  Since $A_{C_m}\times A_{C_m}\cong A_{C_m*C_m}$, the join, and
$\chi+\chi$ is the all-ones character of $A_{C_m*C_m}$,
$P\times_D P\cong H_{C_m*C_m}$.
The flag complex of a join is the join of flag complexes, so
$\Delta_{C_m*C_m}=\Delta_{C_m}*\Delta_{C_m}\cong S^1*S^1\cong S^3$, which is
$2$-connected but not $3$-connected.  By the Bestvina--Brady
criterion~\cite{BestvinaBrady}, $H_{C_m*C_m}$ is of type $F_3$ but not $F_4$,
and the same holds for its finite-index overgroup
$\operatorname{Hol}(K)$, since type $F_n$ is invariant under passage to
finite-index subgroups and supergroups by
Proposition~\ref{prop:Fn-extension-rules}\textup{(iv)}.
\end{proof}

\begin{theorem}
\label{thm:higher-degree-sharpness}
For $m\ge5$ let $\Gamma_m=C_m\vee K_3$ be the one-point union of an $m$-cycle
and a triangle.  Then $\Out(H_{\Gamma_m})$ is of type $F_\infty$, while
$\Aut(H_{\Gamma_m})$ is of type $F_3$ but not $F_4$.  In particular the
degree-$n$ Aut--Out equivalence fails for every $n\ge4$.
\end{theorem}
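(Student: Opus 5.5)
The plan is to compute $\Out(H_{\Gamma_m})$ explicitly and then read off $\Aut(H_{\Gamma_m})$ as an extension. The graph $\Gamma_m=C_m\vee K_3$ has exactly two graph blocks, $C_m$ and $K_3$, and no bridges. Theorem~\ref{thm:BB-grushko-decomposition} therefore gives
\[
 H_{\Gamma_m}\cong H_{C_m}*H_{K_3}\cong K*\Z^2,
 \qquad K=H_{C_m}.
\]
Both factors are noncyclic and freely indecomposable. They are not isomorphic, because $K$ is not finitely presented while $\Z^2$ is. Lemma~\ref{lem:out-free-product} then gives
\[
 \Out(H_{\Gamma_m})\cong\Aut(K)\times\GL_2(\Z).
\]
So the outer assertion reduces to showing that $\Aut(K)$ is of type $F_\infty$.

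For this I use the outer sequence $1\to\Inn(K)\to\Aut(K)\to\Out(K)\to1$ together with the preimage $P=\kappa(A_{C_m})\cong A_{C_m}$ of the deck subgroup $D\cong\Z$. By Theorem~\ref{thm:cycle-virtually-cyclic}, $D$ has finite index in $\Out(K)$, so $P$ has finite index in $\Aut(K)$. Since $P$ is a RAAG, it is of type $F_\infty$, and Proposition~\ref{prop:Fn-extension-rules}(iv) makes $\Aut(K)$ of type $F_\infty$. The group $\GL_2(\Z)$ is virtually free and hence of type $F_\infty$. A direct product is an extension, so Proposition~\ref{prop:Fn-extension-rules}(i) gives $\Out(H_{\Gamma_m})\in F_\infty$.

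For $\Aut(H_{\Gamma_m})$, note that $G=K*\Z^2$ is centerless, so $\Inn(G)\cong G$. The factorwise subgroup $\Aut(K)\times\Aut(\Z^2)$ maps isomorphically onto $\Out(G)$, which means the sequence $1\to\Inn(G)\to\Aut(G)\to\Out(G)\to1$ splits, and
\[
 \Aut(G)\cong G\rtimes\bigl(\Aut(K)\times\GL_2(\Z)\bigr).
\]
The subgroup $\langle K,\Aut(K)\rangle$ of this semidirect product is $K\rtimes\Aut(K)=\operatorname{Hol}(K)$. It is a retract of $\Aut(G)$: the factorwise action is compatible with the retraction $G\to K$ that kills $\Z^2$, so sending $(g,(\alpha,\beta))\mapsto(\bar g,\alpha)$ defines a homomorphism, and it restricts to the identity on $\operatorname{Hol}(K)$. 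By Proposition~\ref{prop:holomorph-F3-not-F4} the holomorph is not of type $F_4$, so Proposition~\ref{prop:Fn-extension-rules}(v) shows that $\Aut(G)$ is not of type $F_4$.

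The main obstacle is the positive statement that $\Aut(G)\in F_3$, since $G$ itself is not finitely presented and no extension rule applies directly. I would pass to the finite-index subgroup $G\rtimes(P\times\GL_2(\Z)_0)$, with $\GL_2(\Z)_0$ a free subgroup of finite index. Then I would write $G\rtimes(P\times F)$ as an amalgam along a Bass--Serre decomposition: it is the fundamental group of a graph of groups with vertex groups $\bigl(K\rtimes P\bigr)\times F$ and $\bigl(\Z^2\rtimes F\bigr)\times P$, amalgamated over $P\times F$ (the stabilizer of an edge of the tree for $K*\Z^2$, extended by the acting group). Here $K\rtimes P\cong\operatorname{Hol}$-type fiber product $\cong H_{C_m*C_m}$ is of type $F_3$ by Proposition~\ref{prop:holomorph-F3-not-F4}. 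The groups $\Z^2\rtimes F$, $P$, and $P\times F$ are of type $F_\infty$. Proposition~\ref{prop:Fn-extension-rules}(i),(vi) then give type $F_3$, and (iv) passes this to $\Aut(G)$. Getting this graph-of-groups identification exactly right is the delicate step. The final sentence of the theorem then follows by comparing the two groups at $n\ge4$.
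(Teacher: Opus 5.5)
Your proposal is correct and follows the paper's proof. The shared steps are the Grushko splitting $H_{\Gamma_m}\cong H_{C_m}*\Z^2$, the identification $\Out(H_{\Gamma_m})\cong\Aut(H_{C_m})\times\GL_2(\Z)$, the split semidirect product $\Aut(G)\cong G\rtimes A$, the amalgam $(K*L)\rtimes A\cong(K\rtimes A)*_A(L\rtimes A)$, and the retraction onto $\operatorname{Hol}(K)$. Your Bass--Serre description of that amalgam is exactly right, so the step you flagged as delicate holds.

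The only difference is that you apply the amalgam to the finite-index subgroup with $A=P\times F$. This is harmless but unnecessary: the paper takes $A=\Aut(K)\times\GL_2(\Z)$ and uses directly that $\operatorname{Hol}(K)$ is of type $F_3$.
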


\begin{proof}
The blocks of $\Gamma_m$ are $C_m$ and the triangle $K_3$, so
Theorem~\ref{thm:BB-grushko-decomposition} gives $H_{\Gamma_m}\cong K*L$ with
$K=H_{C_m}$ and $L=H_{K_3}\cong\Z^2$.  These factors are noncyclic, freely
indecomposable, and non-isomorphic, one being nonabelian and the other
abelian.

By Lemma~\ref{lem:out-free-product},
$\Out(H_{\Gamma_m})\cong\Aut(H_{C_m})\times\GL_2(\Z)$.  Both factors are of type $F_\infty$: $\Aut(H_{C_m})$ contains the
right-angled Artin group $\kappa(A_{C_m})\cong A_{C_m}$ with finite
index; right-angled Artin groups have finite Salvetti classifying spaces
\cite{CharneyVogtmann}, and type $F_\infty$ is invariant under passage
between finite-index subgroups and supergroups by
Proposition~\ref{prop:Fn-extension-rules}\textup{(iv)}.  Moreover, $\GL_2(\Z)$ is virtually free
\cite[Ch.~I]{SerreTrees}.  Hence $\Out(H_{\Gamma_m})$ is of type $F_\infty$.

For $\Aut(H_{\Gamma_m})$, put
$A=\Aut(K)\times\Aut(L)$.  Since $Z(K*L)=1$, the inner
automorphism group is naturally $K*L$, and the factorwise section in
Lemma~\ref{lem:out-free-product} gives
$\Aut(K*L)\cong(K*L)\rtimes A$.
The universal property of free products with amalgamation gives the
elementary identity
\begin{equation}\label{eq:free-product-semidirect-amalgam}
 (K*L)\rtimes A\cong(K\rtimes A)*_A(L\rtimes A).
\end{equation}
Indeed, both sides are generated by copies of $K$, $L$, and $A$, with the
same relations inside these three groups and the same conjugation
relations describing the action of $A$; the two universal properties give
homomorphisms inverse to one another.  Since $A$ acts on $K$ through its
$\Aut(K)$ factor and on $L$ through its $\Aut(L)$ factor,
\[
 K\rtimes A\cong\operatorname{Hol}(K)\times\Aut(L),
 \qquad
 L\rtimes A\cong\Aut(K)\times\operatorname{Hol}(L).
\]
Thus~\eqref{eq:free-product-semidirect-amalgam} becomes
\[
 \Aut(K*L)\cong
 \big(\operatorname{Hol}(K)\times\Aut(L)\big)
 *_{\Aut(K)\times\Aut(L)}
 \big(\Aut(K)\times\operatorname{Hol}(L)\big).
\]
Here $\operatorname{Hol}(L)=\Z^2\rtimes\GL_2(\Z)$ and
$\Aut(L)=\GL_2(\Z)$ are of type $F_\infty$, while
$\operatorname{Hol}(K)$ is of type $F_3\setminus F_4$ by
Proposition~\ref{prop:holomorph-F3-not-F4}.  The two vertex groups are thus
of type $F_3$ and the edge group of type $F_\infty$, so the amalgam is of
type $F_3$ by
Proposition~\ref{prop:Fn-extension-rules}\textup{(vi)}.  On the other hand the retraction
$K*L\to K$ killing $L$, together with the projection
$\Aut(K)\times\Aut(L)\to\Aut(K)$, induces a retraction
$\Aut(H_{\Gamma_m})\to\operatorname{Hol}(K)$.  Type $F_4$ passes to retracts by
Proposition~\ref{prop:Fn-extension-rules}\textup{(v)}, and
$\operatorname{Hol}(K)$ is not of type $F_4$; therefore
neither is $\Aut(H_{\Gamma_m})$.
\end{proof}

Thus the deck direction and the inner direction combine, through the fiber
product of Proposition~\ref{prop:holomorph-F3-not-F4}, into an obstruction
that lives in $\Aut(H_{\Gamma_m})$ but disappears in $\Out(H_{\Gamma_m})$,
where the inner factor becomes trivial.  Theorem~\ref{thm:higher-Aut-Out-equivalence}
forbids such a gap when $H_\Gamma$ is of type $F_n$, equivalently when the
flag complex $\Delta_\Gamma$ is $(n-1)$-connected.  Here
$H_{\Gamma_m}=H_{C_m}*\Z^2$ is only finitely generated and not finitely
presented, since $\Delta_{\Gamma_m}\simeq S^1$ is not simply connected;
thus the additional hypothesis already fails for $n=2$.

\subsection{Join universality}

The preceding counterexample is graph-specific, but a different join
construction gives a uniform embedding theorem.  Its full Aut and Out
embeddings are elementary consequences of a direct-product description;
the IA statement additionally uses the biconnected rigidity theorem.

\begin{theorem}
\label{thm:join-universality}
Let $\Delta$ be a nonempty finite graph and let
$\Gamma=\Delta*K_2$.
Then $\Gamma$ is biconnected and
\begin{equation}\label{eq:join-universality-product}
 H_\Gamma\cong A_\Delta\times\Z.
\end{equation}
In particular, $H_\Gamma$ is a right-angled Artin group of type $F$.
There are injective homomorphisms
\begin{equation}\label{eq:join-universality-embeddings}
 \Aut(A_\Delta)\hookrightarrow\Aut(H_\Gamma),
 \qquad
 \Out(A_\Delta)\hookrightarrow\Out(H_\Gamma).
\end{equation}
Moreover,
\begin{equation}\label{eq:join-universality-Torelli}
 \IAut(H_\Gamma)\cong\IAut(A_\Delta).
\end{equation}
\end{theorem}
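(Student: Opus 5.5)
We prove the statement in four steps: the product decomposition \eqref{eq:join-universality-product}, biconnectedness, the two embeddings \eqref{eq:join-universality-embeddings}, and finally the IA isomorphism \eqref{eq:join-universality-Torelli}.

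\emph{The product decomposition.} Write the vertices of $K_2$ as $p,q$. Then $A_\Gamma=A_\Delta\times\langle p,q\rangle\cong A_\Delta\times\Z^2$, and the height character is $\chi_\Gamma=\chi_\Delta+\chi_{K_2}$. Define
\[
 s:A_\Delta\to A_\Gamma,\qquad s(g)=g\,p^{-\chi_\Delta(g)}.
\]
Since $p$ commutes with $A_\Delta$, this map is a homomorphism. Its image lies in $H_\Gamma$, and it is injective because composing with the projection to $A_\Delta$ gives the identity. Put $z=pq^{-1}$; it is central in $A_\Gamma$ and lies in $H_\Gamma$. We claim $H_\Gamma=s(A_\Delta)\times\langle z\rangle$.
\begin{itemize}
\item Generation: if $gp^aq^b\in H_\Gamma$, then $\chi_\Delta(g)+a+b=0$, so $gp^aq^b=s(g)z^{-b}$.
\item Trivial intersection: if $s(g)=z^n$, projecting to $A_\Delta$ gives $g=1$.
\item Commutation: $z$ is central, so the two factors commute.
\end{itemize}
This proves $H_\Gamma\cong A_\Delta\times\Z\cong A_{\Delta*\{\mathrm{pt}\}}$. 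That group is a RAAG, and RAAGs are of type $F$ via their finite Salvetti complexes.

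\emph{Biconnectedness.} Deleting a vertex of $\Delta$ leaves a graph containing the edge $pq$, and every remaining vertex is joined to $p$. Deleting $p$ (or $q$) leaves $\Delta*\{q\}$, a cone, hence connected. Since $\Gamma$ has at least three vertices, it is biconnected.

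\emph{The embeddings.} For $\varphi\in\Aut(A_\Delta)$, let $\widehat\varphi$ act as $s\varphi s^{-1}$ on the first factor and fix $z$. The map $\varphi\mapsto\widehat\varphi$ is an injective homomorphism into $\Aut(H_\Gamma)$. It carries inner automorphisms to inner automorphisms, because $\Z$ is central. For the outer statement we must check that if $\widehat\varphi$ is inner in $A_\Delta\times\Z$, say $\widehat\varphi=c_{(h,m)}$, then $\varphi=c_h$ is inner. This holds because conjugation in a direct product acts factorwise. Hence $\Out(A_\Delta)\hookrightarrow\Out(H_\Gamma)$.

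\emph{The IA isomorphism.} This is the main obstacle. IA rigidity (Theorem~\ref{thm:torelli-restriction-isomorphism}) gives $\IAut(H_\Gamma)\cong\IAut(A_\Gamma)$. Since $\Gamma=\Delta*\{p,q\}$, we have $A_\Gamma=A_\Delta\times\Z^2$, and $p,q$ are universal vertices. Identify $\IAut(A_\Delta\times\Z^k)$ directly as follows. An IA automorphism sends each vertex $v\in\Delta$ to $v\cdot w_v$ with $w_v$ in the commutator subgroup, which lies in $A_\Delta$. It sends each central generator $u$ into the center $Z(A_\Delta)\times\Z^k$ with trivial abelianization, and since the center embeds in the abelianization, $u$ is fixed. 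The projection to $A_\Delta$ then gives an IA automorphism of $A_\Delta$: it is surjective because of the IA condition together with Hopficity. One can also see this more directly: the $A_\Delta$-coordinates of the images determine the automorphism, since the $\Z^k$-coordinates of $\Phi(v)$ are forced by the IA condition to equal those of $v$.

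The point needing care is that $A_\Delta$ itself may have universal vertices. In that case the center of $A_\Gamma$ is larger than $\Z^2$, but the same argument applies with $A_\Gamma=A_{\Delta-U}\times\Z^{U\cup\{p,q\}}$. It yields
\[
 \IAut(A_\Gamma)\cong\IAut(A_{\Delta-U})\cong\IAut(A_\Delta),
\]
because IA automorphisms of a direct product with a free abelian factor fix the abelian factor. Composing these identifications gives \eqref{eq:join-universality-Torelli}.
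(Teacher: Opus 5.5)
Your proposal is correct and follows essentially the same route as the paper: the explicit splitting $H_\Gamma\cong A_\Delta\times\Z$, the same vertex-deletion check for biconnectedness, extension by the identity on the $\Z$ factor (with factorwise conjugation giving outer injectivity), and IA rigidity combined with the fact that IA automorphisms of $A_\Delta\times\Z^k$ fix the central free abelian factor. One small correction: surjectivity of the induced automorphism of $A_\Delta$ does not come from Hopficity (an IA endomorphism of a RAAG need not be surjective, so ``IA plus Hopfian'' would not suffice); it follows directly because $\Phi(A_\Delta)\subseteq A_\Delta$, $\Phi$ fixes the $\Z^k$ factor, and $\Phi$ is onto, and likewise in the center step it is $\Phi(u)u^{-1}$, not $\Phi(u)$, that has trivial abelianization.
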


\begin{proof}
Let $p,q$ be the vertices of $K_2$.  Since the join becomes a direct product,
\[
 A_\Gamma
 =A_\Delta\times\langle p,q\rangle
 \cong A_\Delta\times\Z^2.
\]
Writing the two central coordinates additively gives
\[
 H_\Gamma
 =
 \{(g,m,n):\chi_\Delta(g)+m+n=0\}.
\]
The map
\begin{equation}\label{eq:join-universality-explicit-isomorphism}
 A_\Delta\times\Z\longrightarrow H_\Gamma,
 \qquad
 (g,m)\longmapsto
 \bigl(g,m,-\chi_\Delta(g)-m\bigr)
\end{equation}
is an isomorphism, with inverse given by projection onto the first two
coordinates.  This proves~\eqref{eq:join-universality-product}.  It also
identifies $H_\Gamma$ with the RAAG obtained from $\Delta$ by adding one
universal vertex, so its Salvetti complex is finite
\cite{CharneyVogtmann}.

The graph $\Gamma$ is biconnected.  Deleting $p$ leaves $q$ adjacent to
every remaining vertex, and deleting $q$ is symmetric.  Deleting a
vertex of $\Delta$ leaves the edge $\{p,q\}$, both of whose endpoints are
adjacent to all remaining vertices.  The hypothesis that $\Delta$ is
nonempty ensures that $\Gamma$ has at least three vertices.

For an arbitrary group $G$, extension by the identity defines
\[
 j:\Aut(G)\longrightarrow\Aut(G\times\Z),
 \qquad
 j(\alpha)(g,n)=(\alpha(g),n).
\]
Restriction to $G\times\{0\}$ proves that $j$ is injective.  It sends
inner automorphisms to inner automorphisms and therefore descends to a
map on outer automorphism groups.  If $j(\alpha)$ is conjugation by
$(g,n)$, then its restriction to $G\times\{0\}$ is conjugation by $g$.
Thus $\alpha$ is inner, and the induced outer map is injective.  Apply
this observation to $G=A_\Delta$ and use
\eqref{eq:join-universality-explicit-isomorphism} to obtain
\eqref{eq:join-universality-embeddings}.

It remains to prove the IA assertion.  We use the following
elementary universal-vertex observation.  If $U$ is the set of universal
vertices of a finite graph $\Theta$ and $\Theta_0=\Theta-U$, then
\begin{equation}\label{eq:remove-universal-vertices-IA}
 \IAut(A_\Theta)\cong\IAut(A_{\Theta_0}).
\end{equation}
Indeed,
\[
 A_\Theta=A_{\Theta_0}\times\Z^U,
 \qquad
 Z(A_\Theta)=\Z^U,
\]
where $A_{\Theta_0}$ has trivial center unless it is trivial.  An
IA automorphism preserves the center and fixes it pointwise, since the
center injects into the abelianization.  Modulo the center it induces an
IA automorphism of $A_{\Theta_0}$.  If an element of $A_{\Theta_0}$
acquired a nonzero central coordinate, that coordinate would remain
detected in the abelianization, contradicting the IA condition.  Thus the
automorphism is the product of an IA automorphism of $A_{\Theta_0}$ with
the identity on $\Z^U$, proving~\eqref{eq:remove-universal-vertices-IA}.

The non-universal core of $\Gamma=\Delta*K_2$ is the same as the
non-universal core of $\Delta$.  Equation
\eqref{eq:remove-universal-vertices-IA} therefore gives
$\IAut(A_\Gamma)\cong\IAut(A_\Delta)$.
Since $\Gamma$ is biconnected,
Theorem~\ref{thm:torelli-restriction-isomorphism} gives
$\IAut(H_\Gamma)\cong\IAut(A_\Gamma)$,
and~\eqref{eq:join-universality-Torelli} follows.
\end{proof}

For general $\Delta$ the inclusions
in~\eqref{eq:join-universality-embeddings} are only inclusions: no
finite-index, retraction, or isomorphism claim is made, and the full
embeddings and the IA isomorphism have different proofs, the former
coming directly from
\eqref{eq:join-universality-explicit-isomorphism} and the latter using
biconnected IA rigidity.  When $\Delta$ has no universal
vertex---equivalently, when $A_\Delta$ has trivial
center~\cite{Servatius}---both inclusions are retractions, and the full
automorphism group of $H_\Gamma$ is computed exactly.  This follows from
an elementary description of the automorphisms of a centerless direct
product with $\Z$.

\begin{lemma}
\label{lem:centerless-product-automorphisms}
Let $G$ be a group with trivial center.  For
$h\in\operatorname{Hom}(G,\Z)$, $\alpha\in\Aut(G)$, and
$k\in\{\pm1\}$, define
\begin{equation}\label{eq:centerless-product-normal-form}
 \begin{aligned}
 \Phi_{h,\alpha,k}:G\times\Z&\longrightarrow G\times\Z,\\
 (g,n)&\longmapsto
 \bigl(\alpha(g),\,h(\alpha(g))+kn\bigr).
 \end{aligned}
\end{equation}
Every automorphism of $G\times\Z$ has a unique expression of this form.
The assignment
\[
 (h,\alpha,k)\longmapsto\Phi_{h,\alpha,k}
\]
defines an isomorphism
\[
 \operatorname{Hom}(G,\Z)\rtimes\bigl(\Aut(G)\times\{\pm1\}\bigr)
 \xrightarrow{\ \cong\ }
 \Aut(G\times\Z),
\]
where $(\alpha,k)$ acts on $h$ by
\[
 (\alpha,k)\cdot h=k\,(h\circ\alpha^{-1}).
\]
Inner automorphisms correspond to the triples $(0,c_g,1)$, and hence
\[
 \Out(G\times\Z)\cong
 \operatorname{Hom}(G,\Z)\rtimes\bigl(\Out(G)\times\{\pm1\}\bigr).
\]
\end{lemma}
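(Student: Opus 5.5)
The plan is to show that every automorphism preserves the subgroup $G\times\{0\}$ up to a twist, using the center. Since $Z(G)=1$, the center of $G\times\Z$ is exactly $\{1\}\times\Z$. Hence any $\Phi\in\Aut(G\times\Z)$ preserves $\{1\}\times\Z$ and induces an automorphism of it, namely multiplication by some $k\in\{\pm1\}$. We write $\Phi(1,1)=(1,k)$. The composite $G\to G\times\Z\to(G\times\Z)/Z\cong G$ gives a map $\alpha:G\to G$ with $\Phi(g,0)=(\alpha(g),\eta(g))$. Because $\Z$ is abelian, the second coordinate $\eta:G\to\Z$ is a homomorphism.

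Next we check that $\alpha$ is an automorphism. It is the automorphism of $(G\times\Z)/Z(G\times\Z)$ induced by $\Phi$, restricted through the identification $G\cong(G\times\Z)/Z$. This is legitimate because the center is characteristic, so $\alpha\in\Aut(G)$. We then put $h=\eta\circ\alpha^{-1}\in\operatorname{Hom}(G,\Z)$. With this choice, $\Phi(g,n)=\Phi(g,0)\Phi(1,n)=(\alpha(g),h(\alpha(g))+kn)$, which is exactly the normal form $\Phi_{h,\alpha,k}$.

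Conversely, each $\Phi_{h,\alpha,k}$ is an automorphism. It is a homomorphism because $h\circ\alpha$ is one and $\Z$ is central. Its inverse is $\Phi_{h',\alpha^{-1},k}$ with $h'=-k\,h\circ\alpha$, which can be verified directly. Uniqueness of the triple follows by reading off $k$ from $(1,1)$, then $\alpha$ from the first coordinate, then $h$.

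The main computational step is the composition law. We compute
\[
\Phi_{h,\alpha,k}\Phi_{h',\alpha',k'}(g,n)=\bigl(\alpha\alpha'(g),\,h(\alpha\alpha'(g))+k\,h'(\alpha'(g))+kk'n\bigr).
\]
This equals $\Phi_{h+k\,h'\circ\alpha^{-1},\,\alpha\alpha',\,kk'}$. That is precisely the multiplication in the semidirect product $(h,\alpha,k)(h',\alpha',k')=(h+(\alpha,k)\cdot h',\alpha\alpha',kk')$ with the stated action. So the bijection is a group isomorphism, and we only need to confirm once that the action is a left action.

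Finally, conjugation by $(x,m)$ is $(g,n)\mapsto(xgx^{-1},n)$, which is the triple $(0,c_x,1)$. These triples form the normal subgroup $\{0\}\times\Inn(G)\times\{1\}$. Quotienting gives $\operatorname{Hom}(G,\Z)\rtimes(\Out(G)\times\{\pm1\})$, and the action descends because $h\circ c_x^{-1}=h$, since $\Z$ is abelian. The only subtle point is the bookkeeping of the twisted action, and I expect no real obstacle.
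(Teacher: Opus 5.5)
Your proposal is correct and follows the paper's proof essentially step for step: center $1\times\Z$, the sign $k$, the normal form $h=\eta\circ\alpha^{-1}$, the explicit inverse, the composition law, and conjugation by $(x,m)$ giving $(0,c_x,1)$. The only cosmetic difference is how you get $\alpha\in\Aut(G)$. You take the automorphism that $\Phi$ induces on $(G\times\Z)/Z(G\times\Z)\cong G$, whereas the paper checks surjectivity and injectivity of $\alpha$ directly, using the same fact that $\Phi$ preserves the center.
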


\begin{proof}
The center $Z(G\times\Z)=1\times\Z$ is characteristic, so an
automorphism $\varphi$ restricts to an automorphism of $1\times\Z$ and
$\varphi(1,1)=(1,k)$ with $k=\pm1$.  Write
$\varphi(g,0)=(\alpha(g),f(g))$; both coordinates are homomorphisms in
$g$, and
\[
 \varphi(g,n)=\bigl(\alpha(g),f(g)+kn\bigr).
\]
The map $\alpha$ is surjective because the first coordinate of $\varphi$
is, and injective because $\alpha(g)=1$ forces
$\varphi(g,0)\in1\times\Z=Z(G\times\Z)$, whence
$(g,0)\in Z(G\times\Z)$ and $g=1$.  Thus $\alpha\in\Aut(G)$.

Put $h=f\circ\alpha^{-1}$.  Then the preceding formula is exactly
\eqref{eq:centerless-product-normal-form}, and $h$ is uniquely determined.
Conversely, every triple $(h,\alpha,k)$ in the statement defines an
automorphism, with
\[
 \Phi_{h,\alpha,k}^{-1}
 =\Phi_{-k(h\circ\alpha),\,\alpha^{-1},\,k}.
\]
A direct calculation gives
\begin{equation}\label{eq:centerless-product-composition}
 \Phi_{h_2,\alpha_2,k_2}\circ\Phi_{h_1,\alpha_1,k_1}
 =\Phi_{h_2+k_2(h_1\circ\alpha_2^{-1}),\,
          \alpha_2\alpha_1,\,k_2k_1},
\end{equation}
which is the standard semidirect-product multiplication for the stated
action.  Conjugation by $(g,m)$ is $\Phi_{0,c_g,1}$, and conversely
$\Phi_{0,c_g,1}$ is conjugation by $(g,0)$.  Finally, the action on the
$\operatorname{Hom}$-factor descends from $\Aut(G)$ to $\Out(G)$ because
$h\circ c_g=h$ for every homomorphism $h$ into an abelian group.
\end{proof}

\begin{theorem}
\label{thm:centerless-retract}
Let $\Delta$ be a nonempty finite graph with no universal vertex (so
$|V(\Delta)|\geq2$), and let $\Gamma=\Delta*K_2$.  Then
\begin{equation}\label{eq:centerless-retract-structure}
 \Aut(H_\Gamma)\cong
 \Z^{V(\Delta)}\rtimes
 \bigl(\Aut(A_\Delta)\times\Z/2\Z\bigr),
 \qquad
 \Out(H_\Gamma)\cong
 \Z^{V(\Delta)}\rtimes
 \bigl(\Out(A_\Delta)\times\Z/2\Z\bigr),
\end{equation}
and the embeddings~\eqref{eq:join-universality-embeddings} are the
inclusions of the $\Aut(A_\Delta)$- and $\Out(A_\Delta)$-factors of the
semidirect complements.  In particular both embeddings are retracts:
there are surjective homomorphisms
\[
 \Aut(H_\Gamma)\longrightarrow\Aut(A_\Delta),
 \qquad
 \Out(H_\Gamma)\longrightarrow\Out(A_\Delta)
\]
restricting to the identity on the embedded copies.  Every automorphism
group of a centerless right-angled Artin group is therefore a retract
of the automorphism group of a biconnected Bestvina--Brady group of
type~$F$.
\end{theorem}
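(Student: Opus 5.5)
The plan is to combine the explicit isomorphism \eqref{eq:join-universality-explicit-isomorphism}, $H_\Gamma\cong A_\Delta\times\Z$, with Lemma~\ref{lem:centerless-product-automorphisms} applied to $G=A_\Delta$. First check the centerless hypothesis: $Z(A_\Delta)$ is generated by the universal vertices of $\Delta$~\cite{Servatius}, and there are none, so $Z(A_\Delta)=1$. The lemma then gives
\[
 \Aut(H_\Gamma)\cong\operatorname{Hom}(A_\Delta,\Z)\rtimes\bigl(\Aut(A_\Delta)\times\{\pm1\}\bigr),
 \qquad
 \Out(H_\Gamma)\cong\operatorname{Hom}(A_\Delta,\Z)\rtimes\bigl(\Out(A_\Delta)\times\{\pm1\}\bigr).
\]
Since $A_\Delta^{\mathrm{ab}}\cong\Z^{V(\Delta)}$, we have $\operatorname{Hom}(A_\Delta,\Z)\cong\Z^{V(\Delta)}$, and $\{\pm1\}\cong\Z/2\Z$. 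This gives \eqref{eq:centerless-retract-structure}.

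Next, identify the embeddings. The map $j$ in the proof of Theorem~\ref{thm:join-universality} sends $\alpha$ to $(g,n)\mapsto(\alpha(g),n)$, which is exactly $\Phi_{0,\alpha,1}$ in the normal form \eqref{eq:centerless-product-normal-form}. So $j$ is the inclusion of the $\Aut(A_\Delta)$ factor of the complement, and the same holds after passing to outer classes, since inner automorphisms are the triples $(0,c_g,1)$.

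For the retractions, compose the quotient map of a semidirect product onto its complement with the projection $\Aut(A_\Delta)\times\{\pm1\}\to\Aut(A_\Delta)$, explicitly $\Phi_{h,\alpha,k}\mapsto\alpha$. The composition law \eqref{eq:centerless-product-composition} shows this is a homomorphism: the middle coordinate of a product is $\alpha_2\alpha_1$. Restricted to $j(\Aut(A_\Delta))$ it is the identity. The outer version descends because the lemma already presents $\Out(H_\Gamma)$ in the same shape, with $\Out(A_\Delta)$ in the complement.

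The final sentence then follows. Theorem~\ref{thm:join-universality} shows $\Gamma$ is biconnected and $H_\Gamma$ is of type $F$. Any centerless RAAG $A_\Delta$ with $\Delta$ nonempty has $\Delta$ without universal vertices, so the construction applies to it.

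The only point needing care is matching conventions, so that the abstract iso from the lemma, transported via \eqref{eq:join-universality-explicit-isomorphism}, takes $j$ to $\Phi_{0,\alpha,1}$ exactly. This is immediate because the isomorphism is the identity on the $A_\Delta$ coordinate. I therefore expect no real obstacle; the work is bookkeeping.
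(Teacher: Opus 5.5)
Your proposal is correct and follows the paper's own proof essentially step by step. You apply Lemma~\ref{lem:centerless-product-automorphisms} to $G=A_\Delta$, which is centerless by Servatius, and identify the embedding $j$ with $\alpha\mapsto\Phi_{0,\alpha,1}$. You then use the composition rule \eqref{eq:centerless-product-composition} to see that $(h,\alpha,k)\mapsto\alpha$ is a retraction. It descends to outer groups because inner automorphisms are exactly the triples $(0,c_g,1)$.
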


\begin{proof}
A single vertex is universal in its own graph, so the nonempty graph
$\Delta$ has $|V(\Delta)|\geq2$, and $Z(A_\Delta)=1$ by Servatius's
centralizer theorem~\cite{Servatius}.  Theorem~\ref{thm:join-universality} gives
$H_\Gamma\cong A_\Delta\times\Z$ with $\Gamma$ biconnected and
$H_\Gamma$ of type $F$.
Lemma~\ref{lem:centerless-product-automorphisms} applies with
$G=A_\Delta$, and
$\operatorname{Hom}(A_\Delta,\Z)\cong\Z^{V(\Delta)}$.  Under the
resulting identifications, the embedding $j$ from the proof of
Theorem~\ref{thm:join-universality} is $\alpha\mapsto(0,\alpha,1)$, and
the projection $(h,\alpha,k)\mapsto\alpha$ is a homomorphism by the
displayed composition rule; it restricts to the identity on the
embedded $\Aut(A_\Delta)$.  Both maps send inner automorphisms to inner
automorphisms and induce the outer statements.
\end{proof}

\begin{remark}[Finite presentation of the IA subgroup does not control the full group]
\label{rem:Torelli-fp-does-not-control-full-Aut}
Take $\Delta=\overline K_3$.  Then
$H_{\overline K_3*K_2}\cong\Free_3\times\Z$
is a right-angled Artin group.  Hence its full automorphism group is
finitely presented by Day's peak-reduction theorem~\cite{DayFP}, and its
outer automorphism group is finitely presented by
Theorem~\ref{thm:Aut-Out-finite-presentation-equivalence}.  On the other
hand, Theorem~\ref{thm:join-universality} gives
$\IAut(H_{\overline K_3*K_2})\cong\IAut(\Free_3)=\operatorname{IA}_3$,
which is not finitely presented by
\cite[Theorem~1]{KrsticMcCool}.  Thus non-finite-presentability of the
IA subgroup does not imply non-finite-presentability of the full
automorphism group.  Alternatively,
Theorem~\ref{thm:centerless-retract} identifies
$\Aut(H_{\overline K_3*K_2})$ with $\Z^{3}\rtimes(\Aut(\Free_3)\times\Z/2\Z)$, an
extension of a finitely presented group by a finitely generated abelian
group, from which finite presentability also follows.
\end{remark}

\subsection{Nonlinear examples}

The join construction also shows that arithmeticity of the cohomological
image cannot be promoted to linearity of the full automorphism group.

\begin{corollary}
\label{cor:join-universality-nonlinearity}
For $n\geq1$, put
$\Gamma_n=\overline K_n*K_2$.
Then $\Gamma_n$ is biconnected and
$H_{\Gamma_n}\cong\Free_n\times\Z$
is of type $F$.  Moreover,
\[
 \Aut(H_{\Gamma_n})\text{ is nonlinear for }n\geq3,
\]
and
\[
 \Out(H_{\Gamma_n})\text{ is nonlinear for }n\geq4.
\]
\end{corollary}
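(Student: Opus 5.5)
The plan is to use Theorem~\ref{thm:centerless-retract} together with known nonlinearity results for automorphism groups of free groups, and then pass the nonlinearity along the retraction. Biconnectedness of $\Gamma_n=\overline K_n*K_2$ and the identification $H_{\Gamma_n}\cong\Free_n\times\Z$ (a RAAG, hence of type $F$) are the case $\Delta=\overline K_n$ of Theorem~\ref{thm:join-universality}. That theorem gives injective homomorphisms $\Aut(\Free_n)\hookrightarrow\Aut(H_{\Gamma_n})$ and $\Out(\Free_n)\hookrightarrow\Out(H_{\Gamma_n})$, since $A_{\overline K_n}=\Free_n$.

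The key observation is that linearity (a faithful finite-dimensional representation over some field) passes to subgroups. So it suffices to know that $\Aut(\Free_n)$ is nonlinear for $n\geq3$ and $\Out(\Free_n)$ is nonlinear for $n\geq4$. These are the theorems of Formanek--Procesi: they show $\Aut(\Free_3)$ contains a "poison" subgroup $\langle a_1,a_2,t \mid ta_it^{-1}=a_i\ \ldots\rangle$-type HNN extension that is not linear. Since $\Aut(\Free_3)\leq\Aut(\Free_n)$ for $n\geq3$ via extension by the identity on extra generators, this covers all $n\geq3$. For the outer case, the standard argument embeds $\Aut(\Free_{n-1})$ into $\Out(\Free_n)$ by letting automorphisms act on the first $n-1$ generators and fix the last; this map is injective. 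Hence $\Out(\Free_n)$ is nonlinear for $n\geq4$. I would cite Formanek--Procesi for the first fact and verify the injectivity of $\Aut(\Free_{n-1})\to\Out(\Free_n)$ directly. If $\alpha$ fixes $x_n$ and agrees with conjugation by $g$, then $g$ centralizes $x_n$, so $g=x_n^k$. Since $\alpha$ preserves $\langle x_1,\dots,x_{n-1}\rangle$, we need $x_n^k x_1 x_n^{-k}$ to lie in that subgroup, which forces $k=0$ by normal form.

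The only mild obstacle is the injectivity check in the outer case and the correct citation. Note that the statement is sharp in the sense of avoiding $\Out(\Free_3)$, whose linearity is open. For $n=1,2$ nothing is claimed beyond biconnectedness and type $F$, which follow from Theorem~\ref{thm:join-universality} as noted. No use of the arithmetic part is needed. The point, as stated in the paper, is that $\Lambda_{\Gamma_n}$ is arithmetic up to finite index while the full groups are not linear.
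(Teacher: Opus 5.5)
Your proposal is correct and follows the paper's own argument: you use the embeddings of Theorem~\ref{thm:join-universality}, Formanek--Procesi for $\Aut(\Free_n)$, and the injective map $\Aut(\Free_{n-1})\to\Out(\Free_n)$, checked by the same centralizer-plus-normal-form argument. Two small remarks: the retraction of Theorem~\ref{thm:centerless-retract} is unnecessary, since subgroups of linear groups are linear; and your parenthetical sketch of the poison subgroup is inaccurate as written, because with only the displayed relations $\langle a_1,a_2,t\mid ta_it^{-1}=a_i\rangle\cong\Free_2\times\Z$, which is linear, though the proof needs only the citation.
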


\begin{proof}
The product description and the embeddings
\[
 \Aut(\Free_n)\hookrightarrow\Aut(\Free_n\times\Z),
 \qquad
 \Out(\Free_n)\hookrightarrow\Out(\Free_n\times\Z)
\]
are the specialization of Theorem~\ref{thm:join-universality} to
$\Delta=\overline K_n$.  Formanek--Procesi proved that $\Aut(\Free_n)$ is nonlinear
for $n\geq3$~\cite{FormanekProcesi}.  Since every subgroup of a linear
group is linear, the assertion for $\Aut(H_{\Gamma_n})$ follows.

For the outer assertion, write
$\Free_n=\Free_{n-1}*\langle t\rangle$.
Extending an automorphism of $\Free_{n-1}$ by fixing $t$ defines a
homomorphism
\begin{equation}\label{eq:AutFnminus1-into-OutFn}
 \Aut(\Free_{n-1})\longrightarrow\Out(\Free_n).
\end{equation}
It is injective.  If such an extension were conjugation by $w\in\Free_n$,
then its fixing $t$ would imply
$w\in C_{\Free_n}(t)=\langle t\rangle$.
Write $w=t^k$.  If $k\ne0$ and $1\ne x\in\Free_{n-1}$, the reduced word
$t^kxt^{-k}$ does not belong to the free factor $\Free_{n-1}$, whereas the
extended automorphism sends $x$ into that factor.  Hence $k=0$, and the
original automorphism is trivial.  Thus
\eqref{eq:AutFnminus1-into-OutFn} is injective.  When $n\geq4$, its source
is nonlinear by Formanek--Procesi~\cite{FormanekProcesi}, so $\Out(\Free_n)$ is nonlinear.  The
outer embedding supplied by Theorem~\ref{thm:join-universality} then
proves nonlinearity of $\Out(H_{\Gamma_n})$.
\end{proof}

These examples do not conflict with
Section~\ref{sec:arithmeticity}: that section identifies the image of the
cohomological representation up to commensurability, not the full
automorphism or outer automorphism group.

\subsection{Comparison of the examples}

The preceding examples show that the finiteness properties of
$H_\Gamma$, $\IAut(H_\Gamma)$, $\Aut(H_\Gamma)$, and
$\Out(H_\Gamma)$ need not vary in parallel.  The results proved above are
summarized in Table~\ref{tab:finiteness-reversal}.

\begin{table}[htbp]
\caption{Finiteness reversal.  An entry $F_j\setminus F_{j+1}$ asserts
type $F_j$ and not type $F_{j+1}$; ``f.p.''\ asserts finite
presentability, with no claim in higher degrees; dashes indicate cases
not computed here.}
\label{tab:finiteness-reversal}
\centering
\small
\begin{tabular}{@{}lcccc@{}}
\toprule
$\Gamma$ & $H_\Gamma$ & $\IAut(H_\Gamma)$ & $\Aut(H_\Gamma)$ &
$\Out(H_\Gamma)$\\
\midrule
$C_n$ $(n\geq5)$ & $F_1\setminus F_2$ & $F$ & $F_\infty$ &
$F_\infty$\\
$\overline K_3*P_4$ & $F_\infty$ & --- & $F_1\setminus F_2$ &
$F_1\setminus F_2$\\
$\overline K_3*K_2$ & $F$ & $F_1\setminus F_2$ & f.p. & f.p.\\
$C_m\vee K_3$ $(m\geq5)$ & $F_1\setminus F_2$ & --- &
$F_3\setminus F_4$ & $F_\infty$\\
\bottomrule
\end{tabular}
\end{table}

The four rows follow respectively from
Theorem~\ref{thm:cycle-virtually-cyclic},
Theorem~\ref{thm:type-Finfty-counterexample},
Remark~\ref{rem:Torelli-fp-does-not-control-full-Aut}, and
Theorem~\ref{thm:higher-degree-sharpness}.

\appendix
\section{Uniform period isolation in right-angled Artin groups}
\label{app:period-isolation}

This appendix supplies the uniform power-word isolation used in the IA
rigidity theorem of Section~\ref{sec:torelli}.  We first extract a
selective, one-period shortening statement from the
symbolic rewriting theorem of Lohrey--Stober--Wei\ss~\cite{LSW}.  We then
replace single-letter periods by connected composite periods in a faithfully
enlarged right-angled Artin group.  Finally, successive shortening and a
subsequence argument isolate one primitive period class.  Throughout, a block
means a Servatius block, and no connectedness assumption is made on the
defining graph of the ambient right-angled Artin group.

The only results used from
Lohrey--Stober--Wei\ss~\cite{LSW} are
Lemma~50\textup{(4)}, Lemmas~53--54, Definition~56, and
Lemmas~62--63.  They apply to one preprocessed symbolic trace and one
selected canonical period.  Lemma~\ref{lem:selective-shortening} records
the selective form needed below, and
Lemma~\ref{lem:shortening-nonvanishing} verifies that shortening does not
delete a powered occurrence.

All uniform statements are proved in this appendix.  The blow-up
Lemma~\ref{lem:period-blow-up} treats single-letter periods,
Lemma~\ref{lem:uniform-preprocessing} establishes one preprocessing for an
entire $N$-family, and
Lemma~\ref{lem:successive-shortening-stability} justifies successive
shortening while the displayed representative remains fixed.  These results
lead to Proposition~\ref{prop:torelli-period-isolation};
Corollary~\ref{cor:torelli-no-singleton} and
Lemma~\ref{lem:torelli-adjacent-periods} are the two consequences used in
Theorem~\ref{thm:torelli-colored-polygon}.  None of these uniform
conclusions is quoted from~\cite{LSW}.

\subsection{One-period shortening and a fixed representative}

We recall the part of the rewriting theory in
\cite{LSW} that will be used.  Let $G$ be a graph product with $\sigma$
vertex groups, and consider a power word after the preprocessing of that
paper.  Its power periods belong to the canonical set $\Omega$ of
cyclic-normal, primitive, connected, composite traces.  In the symbolic
trace monoid $M(\Delta,I_\Delta)$, each powered occurrence is represented
by one letter
\[
 (\beta,p^y,\alpha)\in\Delta_p\cup\Delta_p^{-1},
 \qquad p\in\Omega,\qquad y\in\mathbb Z,
\]
where the sign of $y$ specifies the orientation of the occurrence.  The map
$\pi$ expands symbolic letters to traces in $G$.

For a fixed $p\in\Omega$, write a symbolic trace as
\begin{equation}\label{eq:period-symbolic-carving-word}
 u=u_0(\beta_1,p^{y_1},\alpha_1)u_1\cdots
      (\beta_m,p^{y_m},\alpha_m)u_m,
\end{equation}
where none of the $u_i$ contains a letter from
$\Delta_p\cup\Delta_p^{-1}$, and put $\eta_p^i(u)=\sum_{j=1}^i y_j$ for $0\leq i\leq m$.
The notation $|u|_\Delta$ counts each symbolic letter once, independently
of the size of its exponent, and $\mu(u)$ is the maximum length of a
period occurring in $u$.

The cited results are the following.
\begin{enumerate}[label=\textup{(\roman*)}]
\item Lemma 50(4) of~\cite{LSW} gives the exact identity criterion
      \[
       \pi(u)={}_G1
       \quad\Longleftrightarrow\quad
       u\overset{*}{\underset{R}{\Longrightarrow}}1
      \]
      for the symbolic rewriting system $R$.
\item By Lemma 53, if $u$ rewrites to $v$, then it does so in at most
      \begin{equation}\label{eq:period-lsw-reduction-bound}
       k_0=10\sigma^2|u|_\Delta^2\mu(u)
      \end{equation}
      steps.
\item Lemma 54 says that, under one rewriting step, every prefix exponent
      sum relevant to a fixed period changes by at most
      \begin{equation}\label{eq:period-lsw-prefix-drift}
       5\sigma\mu(u).
      \end{equation}
      More precisely, for every prefix after the rewrite there is a prefix
      before it whose $p$-sum differs by at most the quantity in
      \eqref{eq:period-lsw-prefix-drift}; for rules other than rules
      \textup{(1)} and~\textup{(4)}, corresponding partial sums satisfy
      the same bound.
\item Definition 56 shortens one fixed canonical period $p$.  For a finite
      ordered family $\mathcal C=\{[l_j,r_j]\}$ of compatible intervals,
      meaning that no $\eta_p^i(u)$ lies in any of them, put
      $d_j=r_j-l_j+1$.  When $y_i\neq0$, define
      \[
       C_i(u)=
       \begin{cases}
       \{j:\eta_p^{i-1}(u)<l_j\leq r_j<\eta_p^i(u)\},&y_i>0,\\
       \{j:\eta_p^i(u)<l_j\leq r_j<\eta_p^{i-1}(u)\},&y_i<0.
       \end{cases}
      \]
      The shortening operation replaces only the displayed $p$-letters,
      setting
      \begin{equation}\label{eq:period-lsw-shortened-exponents}
       z_i=y_i-\operatorname{sgn}(y_i)
                    \sum_{j\in C_i(u)}d_j,
      \end{equation}
      and leaves every $u_i$, and hence every symbolic letter belonging to
      another canonical period, unchanged in the chosen displayed representative.
\item For the interval family used in Lemmas 62 and 63, set
      \begin{equation}\label{eq:period-lsw-carving-radius}
       K=50\sigma^3|u|_\Delta^2\mu(u)^2+1.
      \end{equation}
      If $c_1<\cdots<c_\ell$ are the distinct values among
      $\eta_p^0(u),\ldots,\eta_p^m(u)$, the intervals are
      $[c_j+K,c_{j+1}-K]$ whenever $c_{j+1}-c_j\geq2K$.
      Lemma 62 then gives the two-sided implication
      \begin{equation}\label{eq:period-lsw-triviality-equivalence}
       \pi(u)={}_G1
       \quad\Longleftrightarrow\quad
       \pi(\mathcal S_{\mathcal C}(u))={}_G1,
      \end{equation}
      while Lemma 63 gives, for every shortened $p$-exponent,
      \begin{equation}\label{eq:period-lsw-exponent-bound}
       |z_i|\leq
       101m\sigma^3|u|_\Delta^2\mu(u)^2.
      \end{equation}
\end{enumerate}
The graph-product transfer theorem in~\cite{LSW} assumes that the vertex
groups have no nontrivial elements of order two.  We apply the preceding
results only to right-angled Artin groups, whose vertex groups are infinite
cyclic, so that hypothesis is automatic.

The following selective form is the one used here.

\begin{definition}[Fixed symbolic representative]
\label{def:fixed-symbolic-skeleton}
A fixed symbolic representative is the displayed ordered word
\[
 \mathfrak S=(u_0;p_1,u_1;\ldots;p_s,u_s),
\]
where the $u_i$ are fixed symbolic connecting words and the
$p_i\in\Omega$ are fixed canonical periods.  It is a chosen word
representative in the symbolic trace monoid, not merely its commutation
class.  A realization assigns a nonzero integer exponent $x_i$ to every
period slot and evaluates
$u_0p_1^{x_1}u_1\cdots p_s^{x_s}u_s$.

A selective shortening in the class $q$ changes only exponent coordinates
of slots labeled by $q$.  Between shortening stages we apply no commutation move,
reduction, amalgamation, or new canonical normalization.  Thus
throughout this appendix the terms ``ordered'' and ``adjacent'' refer to
this fixed displayed sequence of connecting words and period slots.  Lemma~\ref{lem:shortening-nonvanishing} shows that the
shortening operation never deletes a slot.
\end{definition}

\begin{lemma}
\label{lem:selective-shortening}
Let $G$ be a right-angled Artin group, and let
\begin{equation}\label{eq:period-fixed-skeleton}
 u=u_0p_1^{x_1}u_1\cdots p_s^{x_s}u_s
\end{equation}
be a realization of a fixed symbolic representative in the sense of
Definition~\ref{def:fixed-symbolic-skeleton}.  Thus the $u_i$ are fixed
words,
the $p_i\in\Omega$ are cyclic-normal primitive connected composite
periods, and the exponents $x_i\in\Z\setminus\{0\}$ may vary.  Fix $q\in\Omega$, and let
$\sigma$ be the number of vertices of the defining graph of $G$.  Let
$m_q$ be the number of indices for which $p_i=q$.  There are integers
$z_1,\ldots,z_s$ such that
\[
 \mathcal S_q(u)=u_0p_1^{z_1}u_1\cdots p_s^{z_s}u_s
\]
has the following properties:
\begin{enumerate}[label=\textup{(\roman*)}]
\item if $p_i\neq q$, then $z_i=x_i$; in particular, every connecting
      word and every exponent belonging to another period class is
      unchanged;
\item $u={}_G1$ if and only if $\mathcal S_q(u)={}_G1$;
\item if $p_i=q$, then
      \begin{equation}\label{eq:period-selective-shortening-bound}
       |z_i|\leq
       101m_q\sigma^3|u|_\Delta^2\mu(u)^2.
      \end{equation}
\end{enumerate}
Here inverse orientation is encoded by the sign of the exponent, and the
parameters on the right of
\eqref{eq:period-selective-shortening-bound} depend only on the fixed
skeleton and its period words, not on the values of the $x_i$.
\end{lemma}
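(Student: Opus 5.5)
The plan is to read off the selective shortening from the Lohrey--Stober--Wei\ss\ machinery recalled in items (i)--(v). We regard the realization \eqref{eq:period-fixed-skeleton} as a symbolic trace $u$. We apply Definition~56 of~\cite{LSW} with the single canonical period $q$, so the carving word \eqref{eq:period-symbolic-carving-word} displays exactly the $m_q$ slots with $p_i=q$. Every other slot, including slots labeled by a period conjugate to $q$ but distinct as an element of $\Omega$, is absorbed into the connecting words $u_i$ of that carving. We take $\mathcal C$ to be the interval family of item~(v), with radius $K$ from \eqref{eq:period-lsw-carving-radius}, computed from the prefix sums $\eta_q^i(u)$. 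We then define $\mathcal S_q(u)=\mathcal S_{\mathcal C}(u)$ and $z_i$ by \eqref{eq:period-lsw-shortened-exponents} for $q$-slots and $z_i=x_i$ otherwise.

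Property~(i) is then the last clause of item~(iv). Shortening alters only the displayed $q$-letters and leaves every $u_i$ in the carving untouched, and those $u_i$ contain all the original connecting words and all other period slots in their original order. Property~(ii) is exactly \eqref{eq:period-lsw-triviality-equivalence} (Lemma~62), combined with the identification $\pi(u)={}_G1$ iff $u={}_G1$, since $\pi$ expands symbolic letters to the actual group elements. Property~(iii) is \eqref{eq:period-lsw-exponent-bound} (Lemma~63) with $m=m_q$. The order-two hypothesis is automatic for right-angled Artin groups.

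The one point needing care is the uniformity claim in the final sentence, which I expect to be the main obstacle. We must check that $|u|_\Delta$ and $\mu(u)$ do not depend on the exponents. The quantity $|u|_\Delta$ counts each symbolic power letter once regardless of its exponent, and it counts the letters of the fixed connecting words. The quantity $\mu(u)$ is the maximal length of a period word among the fixed $p_i$. Both are therefore functions of the skeleton alone.

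A second point is to confirm that the realization is already in the preprocessed form required by~\cite{LSW}. Each $p_i$ is assumed cyclic-normal, primitive, connected and composite, that is, lies in $\Omega$, so no further preprocessing changes the displayed representative. Nonvanishing of the $z_i$ is deferred to Lemma~\ref{lem:shortening-nonvanishing}; here the $z_i$ are only asserted to be integers.
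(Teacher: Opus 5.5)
Your proposal is correct and matches the paper's proof. Both obtain (i) from Definition~56 altering only the letters in $\Delta_q\cup\Delta_q^{-1}$, (ii) from Lemma~62, and (iii) from Lemma~63 with $m=m_q$, with uniformity holding because $|u|_\Delta$ and $\mu(u)$ count symbolic slots and fixed period lengths rather than exponent sizes. The paper also spells out one bookkeeping step you leave implicit: absorbing the bounded phase words $\alpha_i,\beta_i$ of the symbolic letters into the fixed connecting words.
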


\begin{proof}
Absorb the bounded phase words $\alpha_i,\beta_i$ from the symbolic
encoding into the fixed connecting words in
\eqref{eq:period-fixed-skeleton}.  All occurrences of the selected
canonical period are then precisely the letters in
$\Delta_q\cup\Delta_q^{-1}$.  Definition 56 of~\cite{LSW} changes only
those letters, which proves (i), including the orientation convention.
Lemma 62 gives (ii), and Lemma 63 gives the explicit estimate in (iii).
For a fixed skeleton, every powered occurrence contributes one symbolic
slot, so $m_q$, $\sigma$, $|u|_\Delta$, and $\mu(u)$ are independent of
the exponent values.
\end{proof}

The following is the sign calculation from
\cite[Lemma~57 and Definition~56]{LSW}; we state it separately for the
successive-shortening argument.

\begin{lemma}
\label{lem:shortening-nonvanishing}
If $y_i\neq0$, then the shortened exponent in
\eqref{eq:period-lsw-shortened-exponents} satisfies
\[
 \operatorname{sgn}(z_i)=\operatorname{sgn}(y_i),
 \qquad
 1\leq |z_i|\leq |y_i|.
\]
In particular, selective shortening never creates a zero exponent.
\end{lemma}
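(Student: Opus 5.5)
We work directly from the definition of the shortening in \eqref{eq:period-lsw-shortened-exponents}. By symmetry (reverse orientation, or replace every $y_j$ by $-y_j$, which negates all prefix sums and swaps the two cases in the definition of $C_i(u)$) it suffices to treat $y_i>0$. Then $z_i=y_i-\sum_{j\in C_i(u)}d_j$, and the upper bound $z_i\le y_i$ is immediate because every $d_j=r_j-l_j+1\ge1$. The whole content is therefore the lower bound $z_i\ge1$.

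For the lower bound, the key observation is that the intervals indexed by $C_i(u)$ are pairwise disjoint integer intervals contained in the open range $(\eta_p^{i-1}(u),\eta_p^i(u))$. Compatibility means no prefix value $\eta_p^k(u)$ lies in any interval. Disjointness holds for the Lemma~62--63 family of \eqref{eq:period-lsw-carving-radius}, since the intervals $[c_j+K,c_{j+1}-K]$ sit between distinct consecutive values $c_j<c_{j+1}$. So $\sum_{j\in C_i(u)}d_j$ counts integers in a subset of $\{\eta_p^{i-1}+1,\dots,\eta_p^i-1\}$, which has $y_i-1$ elements. Hence $\sum d_j\le y_i-1$, so $z_i\ge1$, and $\operatorname{sgn}(z_i)=\operatorname{sgn}(y_i)$.

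The only point needing care is the disjointness of the intervals in a general compatible family. If Definition~56 allows overlapping compatible intervals, I would first note that the carving family actually used is disjoint by construction. Alternatively, I would invoke the corresponding sign computation in \cite[Lemma~57]{LSW}, which records exactly that strictly interior carved intervals remove fewer than $|y_i|$ units. I expect this bookkeeping of which intervals are strictly interior, including the strict inequalities $\eta_p^{i-1}<l_j$ and $r_j<\eta_p^i$ that exclude the endpoints, to be the main thing to verify; the rest is counting.
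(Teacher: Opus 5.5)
Your argument is correct and is essentially the paper's proof: reduce to $y_i>0$, note that the intervals indexed by $C_i(u)$ are pairwise disjoint integer intervals lying strictly between $\eta_p^{i-1}(u)$ and $\eta_p^{i}(u)$, so together they remove at most $y_i-1$ integers, which gives $1\le z_i\le y_i$; the negative case follows by reversing the interval. Your remark that disjointness holds by construction for the carving family $[c_j+K,\,c_{j+1}-K]$ makes explicit a point the paper only asserts.
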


\begin{proof}
Assume first that $y_i>0$.  The intervals indexed by $C_i(u)$ are
pairwise disjoint integer intervals strictly contained between the two
integer prefix sums $\eta_p^{i-1}(u)$ and $\eta_p^i(u)$.  Their total
number of integer points is therefore at most
$y_i-1$.  Formula~\eqref{eq:period-lsw-shortened-exponents} gives
\[
 1\leq z_i
 =y_i-\sum_{j\in C_i(u)}d_j
 \leq y_i.
\]
The case $y_i<0$ is the same after reversing the interval: the total
removed length is at most $|y_i|-1$, and the sign remains negative.
\end{proof}

\begin{remark}\label{rem:period-shortening-not-equality}
The conclusion in Lemma~\ref{lem:selective-shortening} is a
\emph{triviality-preserving replacement}.  In general one does not have
$\mathcal S_q(u)={}_G u$.  The two-sided statement
\eqref{eq:period-lsw-triviality-equivalence}, rather than equality of the
represented elements, is what permits successive applications to
different period classes.
\end{remark}

\subsection{A faithful blow-up for single-letter periods}

Lemma~\ref{lem:selective-shortening} requires composite periods.  The
following elementary enlargement applies to primitive blocks whose cyclic
core is a single vertex.

Let $\Lambda$ be a finite graph.  For each $v\in V(\Lambda)$, introduce a
new vertex $t_v$.  Define $\widehat\Lambda$ on
$V(\widehat\Lambda)=\{v,t_v:v\in V(\Lambda)\}$
as follows.  There is no edge inside the fiber $\{v,t_v\}$.  If
$vw\in E\Lambda$, join every vertex of $\{v,t_v\}$ to every vertex of
$\{w,t_w\}$, and add no other edges.  Define
\begin{equation}\label{eq:period-blow-up-maps}
 \iota:A_\Lambda\longrightarrow A_{\widehat\Lambda},
 \quad \iota(v)=vt_v,
 \qquad
 \rho:A_{\widehat\Lambda}\longrightarrow A_\Lambda,
 \quad \rho(v)=v,\quad\rho(t_v)=1.
\end{equation}

\begin{lemma}
\label{lem:period-blow-up}
The maps in~\eqref{eq:period-blow-up-maps} are homomorphisms and
$\rho\iota=\operatorname{id}_{A_\Lambda}$; in particular, $\iota$ is
injective.  If $p$ is a cyclically reduced primitive connected Servatius
block in $A_\Lambda$, then $\iota(p)$ is a cyclically reduced primitive
connected composite Servatius block in $A_{\widehat\Lambda}$.  Moreover,
for primitive connected blocks $p,q$ and each
$\varepsilon\in\{1,-1\}$,
\begin{equation}\label{eq:period-blow-up-reflection}
 \iota(p)\text{ is conjugate to }\iota(q)^\varepsilon
 \quad\Longleftrightarrow\quad
 p\text{ is conjugate to }q^\varepsilon.
\end{equation}
Thus the blow-up reflects both oriented periods and period classes, and
every equality proved among image elements descends under $\rho$.
\end{lemma}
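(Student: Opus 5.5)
The plan is to verify each assertion directly, using the retraction $\rho$ together with a second, symmetric retraction to control word lengths.

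\emph{Homomorphisms and the retraction.} First check that $\iota$ respects the defining relations. If $vw\in E\Lambda$, then every vertex of $\{v,t_v\}$ is adjacent to every vertex of $\{w,t_w\}$ in $\widehat\Lambda$. Hence $vt_v$ and $wt_w$ commute, and $\iota$ is a homomorphism. For $\rho$, an edge of $\widehat\Lambda$ joins two fibers over an edge of $\Lambda$, so its relation maps either to a relation of $A_\Lambda$ or to a trivial identity. Then $\rho\iota(v)=v$ for every $v$, so $\rho\iota=\operatorname{id}$ and $\iota$ is injective. By the same reasoning, $\sigma(t_v)=v$, $\sigma(v)=1$ defines a second homomorphism $\sigma:A_{\widehat\Lambda}\to A_\Lambda$ with $\sigma\iota=\operatorname{id}$.

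\emph{Reducedness via a length count.} For $h\in A_{\widehat\Lambda}$, take a reduced word. Its letters are either original vertices or $t$-letters. Deleting the $t$-letters gives a (possibly unreduced) word for $\rho(h)$, and deleting the original letters gives a word for $\sigma(h)$. Therefore
\[
 |h|\geq|\rho(h)|+|\sigma(h)|.
\]
Apply this to $h=\iota(g)$, where $g$ is represented by a reduced word $w$. The substituted word $\iota(w)$ has length $2|g|$, while the inequality gives $|\iota(g)|\geq 2|g|$. Hence $\iota(w)$ is reduced. Since $\rho$ and $\sigma$ carry conjugates to conjugates, the same count applied to minimal lengths in conjugacy classes shows that $\iota(p)$ has minimal length in its conjugacy class whenever $p$ does. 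So $\iota(p)$ is cyclically reduced. This step is the main obstacle; the double-retraction length count is the tool I expect to resolve it.

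\emph{Servatius block structure.} The support of $\iota(p)$ is $\bigcup_{v\in\operatorname{supp}(p)}\{v,t_v\}$. In the noncommutation graph, $v$ and $t_v$ are adjacent because they do not commute. Two distinct fibers are joined exactly when the underlying vertices are nonadjacent in $\Lambda$. Connectedness of $\mathcal N(p)$ therefore gives connectedness of $\mathcal N(\iota(p))$, so $\iota(p)$ is a single block. Every fiber has two letters, so the period is composite.

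\emph{Primitivity and conjugacy reflection.} For primitivity, suppose $\iota(p)=r^k$. Applying $\rho$ gives $p=\rho(r)^k$, and primitivity of $p$ forces $|k|=1$. In \eqref{eq:period-blow-up-reflection}, the implication from right to left follows by applying $\iota$. For the converse, apply $\rho$ to a conjugacy $\iota(p)=c\,\iota(q)^\varepsilon c^{-1}$ to get $p=\rho(c)q^\varepsilon\rho(c)^{-1}$. Finally, equalities among image elements descend under $\rho$ because $\rho\iota=\operatorname{id}$.
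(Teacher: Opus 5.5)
Your proof is correct. It differs from the paper's only in how you show that $\iota(p)$ is reduced and cyclically reduced. The homomorphism checks, the description of the noncommutation graph, primitivity via $\rho$, and the conjugacy reflection match the paper.

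\emph{The paper's route.} It regards $A_{\widehat\Lambda}$ as a graph product over $\Lambda$ with free vertex groups $F(v,t_v)$. A reduced coarse syllable word $v_1^{n_1}\cdots v_k^{n_k}$ for $p$ maps to the syllables $(v_it_{v_i})^{n_i}$, which are nontrivial in their free fibers. The coarse syllable pattern is therefore unchanged, and reducedness follows from graph-product normal form. Cyclic reducedness is then read off by running the same argument on $p^2$.

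\emph{Your route.} You introduce the second retraction $\sigma$ and the letter-count inequality $|h|\geq|\rho(h)|+|\sigma(h)|$. This forces $|\iota(g)|=2|g|$ for every $g$. Minimizing over conjugates then gives $\|\iota(g)\|=2\|g\|$, where $\|\cdot\|$ is the minimal length in a conjugacy class.

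\emph{What each buys.} Your argument is shorter and purely metric. It needs no normal-form theorem for graph products, and it proves the length doubling uniformly for all elements rather than for the given $p$. In exchange it relies on two standard RAAG facts: reduced words are geodesic, and a cyclically reduced element has minimal length in its conjugacy class (the nontrivial direction, which you apply to $p$). The paper's argument keeps the explicit syllable structure of $\iota(p)$ visible. That structure is closer to the symbolic graph-product framework used in the rest of the appendix.
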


\begin{proof}
If $vw\in E\Lambda$, the two fibers over $v$ and $w$ commute completely,
so $vt_v$ commutes with $wt_w$.  Hence $\iota$ respects the defining
relations of $A_\Lambda$.  The assignments defining $\rho$ also respect
all commutation relations, and $\rho(vt_v)=v$.  This proves
$\rho\iota=\operatorname{id}$ and the injectivity of $\iota$.

For the normal-form assertions, view $A_{\widehat\Lambda}$ as the graph
product over $\Lambda$ whose vertex group over $v$ is
$F_v=F(v,t_v)$.
If a reduced coarse syllable word for $p$ has syllables
$v_1^{n_1},\ldots,v_k^{n_k}$, its image has syllables
$(v_it_{v_i})^{n_i}\in F_{v_i}$.  Each image syllable is a nontrivial
reduced word in its free fiber, and the coarse syllable pattern is
unchanged.  Graph-product normal form~\cite{Servatius} therefore shows that the
expanded image word is reduced.  Since $p$ is cyclically reduced, the same coarse
argument applies to $p^2$; hence $\iota(p)^2$ is reduced, and
$\iota(p)$ is cyclically reduced.

The noncommutation graph on $\operatorname{supp}\iota(p)$ is obtained
from that on $\operatorname{supp}p$ by replacing every support vertex
$v$ with the noncommuting pair $v,t_v$ and every old noncommutation edge
with all four cross-edges.  It is therefore connected.  It has at least
two vertices even when $p$ is a single vertex, so the image block is
composite.

If $\iota(p)=a^d$ with $|d|>1$, then
$p=\rho(a)^d$, contrary to the primitivity of $p$.  Thus $\iota(p)$ is
primitive.  The implication from right to left in
\eqref{eq:period-blow-up-reflection} follows by applying the homomorphism
$\iota$ to a conjugacy, while the implication from left to right follows
by applying the retraction $\rho$.  Applying $\rho$ likewise carries every
equality in the image back to $A_\Lambda$.
\end{proof}

\subsection{Uniform preprocessing, iteration, and isolation}

We now derive the form used in the IA argument.  Recall that a period
class is a conjugacy-up-to-inversion class of primitive blocks, whereas an
oriented period does not allow inversion.

We first isolate, as a separate statement, the fact that the six
preprocessing steps of \cite[Section~5.3.1]{LSW} can be carried out
uniformly across the whole family.

\begin{lemma}
\label{lem:uniform-preprocessing}
Let $r_1,\ldots,r_s$ be fixed primitive Servatius blocks, let
$c_0,\ldots,c_s$ be fixed connecting words, let
$\varepsilon_i\in\{1,-1\}$, and put
\begin{equation}\label{eq:uniform-preprocessing-family}
 W_N=c_0r_1^{\varepsilon_1N}c_1\cdots
      r_s^{\varepsilon_sN}c_s
 \quad(N\geq1).
\end{equation}
After applying the blow-up of Lemma~\ref{lem:period-blow-up}, the six
preprocessing steps can be performed once and for all for the entire
family.  More precisely, there are fixed canonical
periods $p_i\in\Omega$, fixed signs
$\delta_i\in\{1,-1\}$, and fixed symbolic connecting words $u_i$ such that
the preprocessed family has the following fixed symbolic representative
\begin{equation}\label{eq:uniform-preprocessed-skeleton}
 u(N)=u_0p_1^{\delta_1\varepsilon_1N}u_1\cdots
      p_s^{\delta_s\varepsilon_sN}u_s.
\end{equation}
For the chosen displayed representative, the ordered period slots, their
canonical period classes, the words $u_i$,
the independence relation on the symbolic alphabet, $|u(N)|_\Delta$, and
$\mu(u(N))$ are independent of $N$.  In particular, no preprocessing step
introduces an $N$-dependent multiplier in an unbounded exponent or an
$N$-dependent cancellation of a period slot.
\end{lemma}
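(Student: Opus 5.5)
The plan is to preprocess one representative value of $N$ and then show that every step depends only on the fixed data $c_i$, $r_i$, $\varepsilon_i$, never on the size of the exponent. First apply the blow-up of Lemma~\ref{lem:period-blow-up} to each block $r_i$ and each connecting word $c_i$. This replaces every single-letter period by a cyclically reduced primitive connected composite block $\iota(r_i)$. It also preserves oriented periods and period classes, and the retraction $\rho$ lets every conclusion descend. The six preprocessing steps of \cite[Section~5.3.1]{LSW} are then applied to the single power word $\iota(c_0)\iota(r_1)^{\varepsilon_1 N}\cdots$. I will go through them in order, recording for each step that it acts on a period slot only through its base $\iota(r_i)$ and on the connecting material only through the fixed words.

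\emph{Step one.} Cyclically normalize each base. Write $\iota(r_i)=g_i\,\tilde p_i\,g_i^{-1}$ with $\tilde p_i$ cyclic-normal and the conjugator $g_i$ fixed. Then $\iota(r_i)^{\varepsilon_iN}=g_i\tilde p_i^{\varepsilon_iN}g_i^{-1}$, and $g_i^{\pm1}$ are absorbed into the neighbouring connecting words, which remain independent of $N$. Next, choose a canonical representative $p_i\in\Omega$ of the class of $\tilde p_i$ or of its inverse, using the fixed choice function of \cite{LSW} on conjugacy-up-to-inversion classes. Since $\tilde p_i$ is primitive, the canonical representative is a cyclic permutation of $\tilde p_i^{\delta_i}$ with $\delta_i=\pm1$, not a proper root or power. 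A cyclic permutation $\tilde p=ab$, $p=ba$, gives $\tilde p^{x}=a\,p^{x}\,a^{-1}$ for every integer $x$ (with the inverse orientation handled likewise). So the rotation contributes only fixed prefix and suffix words and leaves the exponent equal to $\delta_i\varepsilon_iN$. Primitivity is exactly what rules out an $N$-dependent multiplier.

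\emph{Step two.} The remaining steps of \cite{LSW} merge or cancel adjacent powers of the same period and normalize the connecting words into the symbolic alphabet $\Delta_p$ with phases $(\beta,p^y,\alpha)$. To keep the displayed representative fixed (Definition~\ref{def:fixed-symbolic-skeleton}), I perform no merging of distinct slots; merging is optional for the cited Lemmas 62--63, whose hypotheses concern only the symbolic word. The phase words $\alpha,\beta$ are bounded subwords of $p_i$ determined by the fixed connecting words, so they are absorbed into the $u_i$. With this, $|u(N)|_\Delta$ counts one letter per slot plus the fixed $u_i$, and $\mu(u(N))=\max|p_i|$. The independence relation depends only on the alphabet, which is finite and fixed.

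The main obstacle is checking that no preprocessing step of \cite{LSW} genuinely requires cancelling or amalgamating two slots. Such a step would produce exponents like $N-N$ and could delete a slot for special $N$. I would handle this by observing that those steps are used in \cite{LSW} only to bound complexity, not to guarantee correctness of Lemma~50(4) and Lemmas~62--63. These results hold for any symbolic trace with canonical periods, and Lemma~\ref{lem:shortening-nonvanishing} shows that slots are never deleted.
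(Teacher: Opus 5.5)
Your proposal is correct and follows essentially the paper's argument: blow up so that every core period is connected and composite, absorb the fixed conjugators and phase words into the connecting words, use primitivity to keep each exponent exactly $\pm N$, fix the canonical representative and sign once, and deliberately perform no amalgamation of slots, relying (as the paper does) on the fact that Definition~56 and Lemmas~62--63 of \cite{LSW} do not need an irreducible symbolic trace. The differences are only presentational. The paper applies the blow-up to a cyclically reduced core $a_i$ with $r_i=g_ia_ig_i^{-1}$, since Lemma~\ref{lem:period-blow-up} needs a cyclically reduced input and $r_i$ need not be one. It also checks the six steps of \cite[Section~5.3.1]{LSW} one at a time: connected decomposition, single-letter powers and letter normal forms are settled by the blow-up and by finiteness. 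Finally, in the paper's account the amalgamation you single out as the main obstacle is the rewriting rule~\textup{(1)}, not one of the preprocessing steps.
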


\begin{proof}
For each $i$, choose once and for all a cyclically reduced primitive
connected core $a_i$ and a conjugator $g_i$ with
$r_i=g_ia_ig_i^{-1}$.  Lemma~\ref{lem:period-blow-up} makes
$\iota(a_i)$ cyclically reduced, primitive, connected, and composite, and
it preserves conjugacy up to either orientation.  We verify that the six
preprocessing steps of~\cite[Section~5.3.1]{LSW} are uniform in $N$.

\begin{enumerate}[label=\textup{Step \arabic*:},leftmargin=4.8em]
\item \emph{Cyclic reduction.}
The period word attached to the $i$th slot is fixed before the exponent is
read.  Hence its cyclic core and the conjugator used to expose that core
are fixed.  Replacing a power by a conjugate of the power inserts only
fixed conjugating words, which are absorbed into the adjacent connecting
words.  The factor $N$ remains solely in the exponent.

\item \emph{Connected decomposition.}
The image core $\iota(a_i)$ is connected by
Lemma~\ref{lem:period-blow-up}, so this step produces one connected factor
rather than an $N$-dependent list.  More generally, even without the
blow-up the connected components depend only on the fixed period word and
not on its exponent.

\item \emph{Single-letter powers.}
Every image period is composite by
Lemma~\ref{lem:period-blow-up}.  Thus no unbounded powered slot is moved
into the simple-power part of a connecting word.  The blow-up is used here to ensure that every unbounded period remains a
period letter rather than becoming part of a connecting word.

\item \emph{Normal forms for letters.}
Only finitely many letters occur in the fixed image periods.  Choose the
representatives prescribed in~\cite[Step~4 of Section~5.3.1]{LSW}
once for that finite list.  The replacement is letterwise and therefore
independent of $N$.

\item \emph{Primitive cyclic normal forms.}
The cyclic-normal representative and its conjugator depend only on the
fixed period word.  The primitive-root substep of~\cite[Step~5 of
Section~5.3.1]{LSW} would replace $q^x$ by $r^{kx}$ when $q=r^k$.
Here $k=1$: the image core is primitive, and conjugacy preserves
primitivity in a right-angled Artin group.  Hence the exponent remains
exactly $\varepsilon_iN$, up to the orientation chosen in the next step.

\item \emph{Passage to $\Omega$.}
Choose the canonical representative $p_i\in\Omega$ of the period class.
The conjugator and the orientation sign $\delta_i$ depend only on the fixed
period.  The possible ambiguity mentioned in
\cite[Remark~44]{LSW} is absent for right-angled Artin groups, whose vertex
groups contain no nontrivial element of order two; equivalently, the
orientation may be fixed consistently once for each occurrence.  Since
$\varepsilon_i$ is fixed in~\eqref{eq:uniform-preprocessing-family}, the
conjugator selected for that orientation is also independent of $N$.
\end{enumerate}

After these steps, absorb all fixed conjugators and the bounded phase words
from the symbolic encoding into the neighboring connecting words.  Reduce
those connecting words once; they contain no occurrence of $N$.  At this
stage every powered exponent is $\pm N$ and hence is nonzero.  We retain
every powered occurrence as a separate slot in the chosen symbolic trace;
no rewriting or rule-\textup{(1)} amalgamation is performed between the
preprocessing and the shortening steps.  Definition~56 and Lemmas~62--63
of~\cite{LSW} apply to this symbolic trace and do not require it to be
irreducible.  We now fix this displayed representative and use it at every later stage;
no further commutation moves or normalization are allowed.  This gives
\eqref{eq:uniform-preprocessed-skeleton}.  The
symbolic length counts slots rather than exponent size, and $\mu$ is the
maximum length of the fixed canonical periods.  Both are independent of
$N$, as are the symbolic alphabet and its independence relation.
Lemma~\ref{lem:shortening-nonvanishing} will ensure that all of these slots
remain nonzero throughout the successive shortening process.
\end{proof}

\begin{lemma}
\label{lem:successive-shortening-stability}
Let
\[
 u^{(0)}=u_0p_1^{x_1}u_1\cdots p_s^{x_s}u_s
\]
be a realization of a fixed preprocessed representative, with every
$x_i\neq0$.  Let $q_1,\ldots,q_t\in\Omega$ be pairwise distinct canonical
periods.  For each $j$, compute the carving intervals for $q_j$ from the
current word $u^{(j-1)}$ and put
\[
 u^{(j)}=\mathcal S_{q_j}(u^{(j-1)}),
 \qquad 1\leq j\leq t.
\]
Then, for every $j$:
\begin{enumerate}[label=\textup{(\roman*)}]
\item $u^{(j)}$ has exactly the same ordered period slots, period labels,
      connecting words, symbolic alphabet, and independence relation as
      $u^{(0)}$;
\item every exponent remains nonzero; exponents in the selected class
      retain their signs, and exponents outside that class are unchanged;
\item $|u^{(j)}|_\Delta=|u^{(0)}|_\Delta$ and
      $\mu(u^{(j)})=\mu(u^{(0)})$, so the bound in
      \eqref{eq:period-selective-shortening-bound} is uniform at every
      stage;
\item
      \[
       \pi(u^{(j-1)})={}_G1
       \quad\Longleftrightarrow\quad
       \pi(u^{(j)})={}_G1.
      \]
\end{enumerate}
A period class already shortened is never altered at a later stage.
Therefore, the periods may be shortened successively without repeating
any preprocessing or introducing a normalization step between stages.
\end{lemma}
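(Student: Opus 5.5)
The proof is an induction on $j$, checking all four assertions at each stage from the properties of a single selective shortening. For $j=0$, assertions (i)--(iii) hold by hypothesis, since $u^{(0)}$ is a realization of the fixed preprocessed representative of Lemma~\ref{lem:uniform-preprocessing} with all $x_i\neq0$. For the inductive step, assume that $u^{(j-1)}$ is a realization of the same fixed symbolic representative $\mathfrak S=(u_0;p_1,u_1;\ldots;p_s,u_s)$ with nonzero exponents. Lemma~\ref{lem:selective-shortening} then applies to $u^{(j-1)}$ with $q=q_j$. By its part~(i), $\mathcal S_{q_j}$ changes only exponent coordinates of slots labeled $q_j$, and it leaves every connecting word $u_i$ and every period label $p_i$ unchanged. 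Hence $u^{(j)}$ is again a realization of $\mathfrak S$. The symbolic alphabet and its independence relation depend only on the letters appearing in $\mathfrak S$, so they are unchanged as well. This gives~(i).

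For~(ii), Lemma~\ref{lem:shortening-nonvanishing} shows that every shortened exponent keeps its sign and satisfies $1\le|z_i|\le|y_i|$. Exponents outside the class $q_j$ are untouched by Lemma~\ref{lem:selective-shortening}(i). For~(iii), recall that $|\cdot|_\Delta$ counts each symbolic letter once, regardless of the size of its exponent. The slot structure is unchanged and no exponent becomes zero, so no slot disappears and no letter is created; hence $|u^{(j)}|_\Delta=|u^{(0)}|_\Delta$. Similarly, $\mu$ is the maximum length of the fixed canonical periods $p_i$, all of which are still present, so $\mu$ is also unchanged. Consequently the right-hand side of \eqref{eq:period-selective-shortening-bound} is the same constant at every stage. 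Finally,~(iv) is exactly Lemma~\ref{lem:selective-shortening}(ii) applied to $u^{(j-1)}$.

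It remains to show that an already shortened class is never altered later. The $q_j$ are pairwise distinct, so at any later stage $k>j$ the selected class is $q_k\neq q_j$. By Lemma~\ref{lem:selective-shortening}(i) the $q_j$-exponents are then unchanged.

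The main obstacle is to justify that Lemma~\ref{lem:selective-shortening} may be applied to the intermediate word $u^{(j-1)}$ at all. This word is no longer literally the output of the preprocessing of \cite[Section~5.3.1]{LSW}; it is that output with some exponents changed. My plan is to argue that the hypotheses of Definition~56 and Lemmas~62--63 of \cite{LSW} concern only three things: the symbolic trace, the fact that its period letters lie in $\Omega$, and the prefix sums $\eta_q$ computed from the current exponents. None of these requires the trace to be irreducible or freshly normalized, as already noted in the proof of Lemma~\ref{lem:uniform-preprocessing}. Since the period labels still lie in $\Omega$ and the phase words remain absorbed in the $u_i$, $u^{(j-1)}$ is a legitimate input for these results. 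The carving intervals are recomputed from the current prefix sums, which the statement explicitly allows. Thus no preprocessing or normalization is needed between stages.
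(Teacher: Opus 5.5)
Your proof is correct and follows essentially the same route as the paper. Both argue by induction on $j$, deriving (i)--(ii) from the selectivity of the shortening together with Lemma~\ref{lem:shortening-nonvanishing}, (iii) from the persistence of every slot and period word, and (iv) from Lemma~62 of \cite{LSW}. Both also justify applying the shortening to intermediate words by noting that no slot is deleted, so the current word is again a symbolic trace of the required form. The differences are cosmetic. You route everything through Lemma~\ref{lem:selective-shortening}, whose statement already allows arbitrary nonzero exponents on a fixed skeleton. The paper instead cites Definition~56 and Lemma~62 directly, and adds that the subalphabets attached to distinct canonical periods are disjoint.
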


\begin{proof}
Proceed by induction on $j$.  Definition~56 of~\cite{LSW} changes only
exponent coordinates of letters in
$\Delta_{q_j}\cup\Delta_{q_j}^{-1}$ and leaves every connecting word and
every other period letter unchanged in the chosen displayed representative.  By
Lemma~\ref{lem:shortening-nonvanishing}, each changed exponent is still
nonzero and retains its sign.  Thus no slot is deleted and the exact displayed
representative persists, proving~(i) and~(ii).  No equality is obtained by
commuting symbolic letters: every stage uses the same displayed word
representative fixed before the first shortening.  Since no powered slot is
deleted and the symbolic subalphabets
$\Delta_q\cup\Delta_q^{-1}$ attached to distinct canonical periods are
disjoint by construction, the current word is again a symbolic trace of the form required
in Definition~56 for every period class not yet processed.  Thus the next
application of Lemma~62 is legitimate without a new preprocessing or
normalization step.  Since the symbolic letters and period words are
unchanged, the parameters $|u|_\Delta$ and $\mu(u)$ are unchanged as well,
proving~(iii).  Lemma~62 of~\cite{LSW}, namely
\eqref{eq:period-lsw-triviality-equivalence}, gives~(iv) for the carving
family computed from the current word.  Finally, the $q_j$ are pairwise
distinct, so a later shortening never selects a previously treated period
class.  The induction proves all assertions.
\end{proof}

\begin{example}[Three period classes]
\label{ex:three-period-shortening}
Consider the family
\[
 u(N)=a_0p^{N}a_1q^{-N}a_2p^{-N}a_3r^{N}a_4q^{N}a_5,
\]
with fixed connecting words and three distinct canonical periods
$p,q,r$.  To isolate $p$, first shorten the two $q$-slots.  Their
exponents become bounded nonzero integers $b_1(N),b_2(N)$, while both
$p$-slots, the $r$-slot, and every $a_i$ remain unchanged in the chosen displayed representative.
Next shorten the $r$-slot, obtaining a bounded nonzero integer $c(N)$;
the already shortened $q$-coordinates are not revisited.  The resulting
family has the same skeleton:
\[
 a_0p^{N}a_1q^{b_1(N)}a_2p^{-N}a_3r^{c(N)}a_4q^{b_2(N)}a_5.
\]
Passing to an unbounded subsequence makes $b_1,b_2,c$ constant, while the
two $p$-exponents remain exactly $N$ and $-N$.  This is the finite-class
argument used in Proposition~\ref{prop:torelli-period-isolation}.
\end{example}

\begin{proposition}
\label{prop:torelli-period-isolation}
Let $G$ be a right-angled Artin group, let $r_1,\ldots,r_s$ be primitive
Servatius blocks, and let
\begin{equation}\label{eq:torelli-power-word-family}
 W_N=c_0r_1^{\varepsilon_1N}c_1\cdots
       r_s^{\varepsilon_sN}c_s,
 \qquad \varepsilon_i\in\{1,-1\},
\end{equation}
where the connecting words $c_i$ are fixed.  Suppose that $W_N=1$ for an
unbounded set $\mathcal N$ of positive integers.  Fix one period class
$P$ occurring in~\eqref{eq:torelli-power-word-family}.  After passing to
an unbounded subset $\mathcal N'\subseteq\mathcal N$, the normalized
family can be replaced, through transformations that preserve triviality
in both directions and do not change its ordered power-word skeleton, by
a family with the following properties:
\begin{enumerate}[label=\textup{(\roman*)}]
\item every occurrence belonging to $P$ is untouched, so relative to its
      original oriented base $r_i$ its exponent is still
      $\varepsilon_iN$; equivalently, after canonical normalization its
      exponent is $\delta_i\varepsilon_iN$ for a fixed
      $\delta_i\in\{1,-1\}$;
\item every exponent belonging to every other period class is constant,
      independently of $N\in\mathcal N'$.
\end{enumerate}
The replacement is triviality-preserving; it is not asserted to represent
the same element as $W_N$ when that element is nontrivial.
\end{proposition}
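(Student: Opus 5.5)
Work in the blown-up group and with one preprocessed symbolic representative for the whole family. Embed $G=A_\Lambda$ into $A_{\widehat\Lambda}$ by the map $\iota$ of Lemma~\ref{lem:period-blow-up} and replace $W_N$ by $\iota(W_N)$. Then every primitive block $\iota(r_i)$ is conjugate to a cyclically reduced, primitive, connected, composite core. Triviality descends through the retraction $\rho$, and period classes are reflected in both orientations, so the class $P$ corresponds to exactly one class in the blow-up. Lemma~\ref{lem:uniform-preprocessing} then supplies fixed canonical periods $p_i\in\Omega$, signs $\delta_i$, and connecting words $u_i$ with
\[
 u(N)=u_0p_1^{\delta_1\varepsilon_1N}u_1\cdots p_s^{\delta_s\varepsilon_sN}u_s .
\]
Here $\pi(u(N))$ is trivial exactly when $W_N$ is, and the skeleton, $|u(N)|_\Delta$ and $\mu(u(N))$ do not depend on $N$. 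The class $P$ becomes a single canonical period $p_P\in\Omega$. Every other class occurring among the $r_i$ becomes a canonical period different from $p_P$, and there are finitely many of them, say $q_1,\ldots,q_t$.

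Next, shorten the classes $q_1,\ldots,q_t$ one after another, leaving $p_P$ alone. Set $u^{(0)}=u(N)$ and $u^{(j)}=\mathcal S_{q_j}(u^{(j-1)})$. By Lemma~\ref{lem:successive-shortening-stability}, each stage has the following properties:
\begin{enumerate}[label=\textup{(\alph*)}]
\item it keeps the ordered slots, the labels and the connecting words;
\item it changes only the exponents of the class $q_j$, and keeps them nonzero with the same signs;
\item it leaves unchanged every exponent of the class $p_P$ and every exponent of a class already treated;
\item it preserves triviality of $\pi(\cdot)$ in both directions.
\end{enumerate}
It follows that the exponents on $P$-slots in $u^{(t)}$ are still exactly $\delta_i\varepsilon_iN$, which is assertion~(i). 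For the other slots, Lemma~\ref{lem:selective-shortening}(iii) bounds each exponent after its own stage by $101m_{q_j}\sigma^3|u|_\Delta^2\mu(u)^2$. Because of (a) and the $N$-independence from Lemma~\ref{lem:uniform-preprocessing}, this bound is uniform in $N$. So on $\mathcal N$ the finitely many non-$P$ exponents take values in a finite set of nonzero integers. By pigeonhole there is an unbounded $\mathcal N'\subseteq\mathcal N$ on which this vector of exponents is constant, which is assertion~(ii). Since $W_N=1$ on $\mathcal N$ and each step preserves triviality in both directions, the resulting family is trivial on $\mathcal N'$. Applying $\rho$ brings the statements back to $G$.

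The main obstacle is the uniformity of the shortening bound across both $N$ and the successive stages. The carving intervals are recomputed at each stage from the current word, so one must check that the constants entering Lemma~63 of~\cite{LSW} do not drift. This is exactly what Lemma~\ref{lem:successive-shortening-stability}(iii) provides, together with the fact that $m_{q_j}$ counts fixed slots. A second point to watch is that the $P$-slots really are untouched. This needs the symbolic subalphabets of distinct canonical periods to be disjoint, and it needs the blow-up to keep $P$ distinct from every other class, which follows from the reflection property \eqref{eq:period-blow-up-reflection}. Finally, I will say explicitly that the replacement only preserves triviality and need not represent the same element (Remark~\ref{rem:period-shortening-not-equality}), which is what the closing sentence of the statement asserts.
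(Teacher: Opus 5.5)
Your proposal is correct and follows the paper's proof essentially step for step. You blow up via $\iota$ and preprocess once for the whole family (Lemma~\ref{lem:uniform-preprocessing}); you shorten the finitely many non-$P$ canonical periods successively with Lemma~\ref{lem:successive-shortening-stability}, so the $P$-exponents stay exactly $\delta_i\varepsilon_iN$ while every other exponent obeys the $N$-independent bound \eqref{eq:period-selective-shortening-bound}; then you pass to an unbounded subset by pigeonhole and descend through $\rho$.
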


\begin{proof}
By Lemma~\ref{lem:uniform-preprocessing}, after the blow-up of
Lemma~\ref{lem:period-blow-up} and canonical normalization the family
$\iota(W_N)$ has a fixed symbolic skeleton whose connecting words and
canonical periods are independent of $N$, and whose exponents are still
exactly $\pm N$.

List the finitely many canonical period classes other than $P$ and
shorten them successively.  At each stage compute the carving intervals
for the selected period from the current word.  Lemma
\ref{lem:successive-shortening-stability} shows that the exact ordered
skeleton, every $P$-exponent, and every previously treated exponent remain
unchanged.  The shortening parameters are the fixed parameters of the
original skeleton, so
\eqref{eq:period-selective-shortening-bound} gives one uniform bound,
independent of $N$, for every exponent outside $P$.  Triviality is
preserved in both directions at every stage.

After all classes other than $P$ have been treated, every exponent outside
$P$ ranges over a finite set.  There are only finitely many such
occurrences, so successive applications of the infinite pigeonhole
principle give an unbounded subset $\mathcal N'$ on which all of them are
constant.  The target exponents remain the original
$\delta_i\varepsilon_iN$.  The retraction $\rho$ is used only after an
equality has been obtained in the enlarged RAAG; by
Lemma~\ref{lem:period-blow-up}, all such equalities descend to $G$.
\end{proof}

\begin{corollary}
\label{cor:torelli-no-singleton}
Under the hypotheses of
Proposition~\ref{prop:torelli-period-isolation}, no primitive period class
occurs exactly once in~\eqref{eq:torelli-power-word-family}.
\end{corollary}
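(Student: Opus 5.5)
The plan is to argue by contradiction, using Proposition~\ref{prop:torelli-period-isolation} with the class $P$ taken to be the putative singleton. Suppose some primitive period class $P$ occurs in exactly one powered slot of~\eqref{eq:torelli-power-word-family}, say slot $i_0$. Applying the proposition to $P$ yields an unbounded subset $\mathcal N'\subseteq\mathcal N$ and a triviality-preserving replacement with the same ordered skeleton. In this replacement every slot outside $P$ has a constant exponent, while slot $i_0$ carries the exponent $\delta\varepsilon_{i_0}N$ on its canonical period $p\in\Omega$. Since the replacement is triviality-preserving and $W_N=1$ on $\mathcal N$, we obtain an identity in the blown-up group, valid for all $N\in\mathcal N'$:
\[
 X\,p^{\delta\varepsilon_{i_0}N}\,Y=1 .
\]
Here $X$ and $Y$ are fixed elements, obtained by collapsing the prefix and suffix with their now constant exponents.

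Next take two distinct values $N_1\neq N_2$ in $\mathcal N'$. Comparing the two identities gives $p^{\delta\varepsilon_{i_0}N_1}=X^{-1}Y^{-1}=p^{\delta\varepsilon_{i_0}N_2}$. Hence $p^{\delta\varepsilon_{i_0}(N_1-N_2)}=1$ with $N_1-N_2\neq0$. A right-angled Artin group is torsion-free, and $p$ is a nontrivial primitive composite block, so this is a contradiction. Lemma~\ref{lem:period-blow-up} guarantees that $p=\iota$ (core) is nontrivial in $A_{\widehat\Lambda}$, so the contradiction can be drawn there directly; alternatively, apply the retraction $\rho$ and use that $r_{i_0}\neq1$ in $G$.

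The main point to check is that the collapse into fixed $X$ and $Y$ is legitimate. It relies on the fixed displayed representative: the connecting words $u_i$ and the canonical periods do not depend on $N$ (Lemma~\ref{lem:uniform-preprocessing} together with Lemma~\ref{lem:successive-shortening-stability}). After the pigeonhole step, all non-$P$ exponents are constant on $\mathcal N'$, so the only $N$-dependence sits in the single $P$-slot. Note also that $\pi$ evaluates the symbolic word as a group element, so multiplying out prefix and suffix is valid.
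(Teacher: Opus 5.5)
Your proposal is correct and is essentially the paper's own argument. You isolate the putative singleton class via Proposition~\ref{prop:torelli-period-isolation}, collapse the now-constant prefix and suffix into fixed elements to obtain an identity $Ap^{\pm N}B=1$ on an unbounded set, and compare two distinct values of $N$ to produce a nontrivial torsion element in a right-angled Artin group; your remarks on why $X,Y$ are independent of $N$ and why $p\neq1$ make explicit what the paper leaves implicit.
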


\begin{proof}
If a class occurred once, isolate it.  In the enlarged RAAG the resulting
identities would have the form
$A p^{\varepsilon N}B=1$
for a primitive period $p$, a sign $\varepsilon\in\{1,-1\}$, fixed
elements $A,B$, and all $N$ in an unbounded set.  Comparing two distinct
surviving values $N$ and $M$ gives
$p^{\varepsilon(N-M)}=1$, contrary to torsion-freeness of a right-angled
Artin group.
\end{proof}

\begin{lemma}
\label{lem:torelli-adjacent-periods}
Assume the hypotheses of
Proposition~\ref{prop:torelli-period-isolation}.  Suppose that one period
class occurs exactly twice, in adjacent factors
$b^{-N}a^N$,
and that the primitive blocks $a$ and $b$ have the same oriented primitive
period.  Then $a=b$.
\end{lemma}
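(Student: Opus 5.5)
We isolate the period class of $a$ and $b$ with Proposition~\ref{prop:torelli-period-isolation}, then show that a two-slot identity forces $a=b$.

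\textbf{Step 1: isolation.} Apply Proposition~\ref{prop:torelli-period-isolation} to the class $P$ containing $a$ and $b$. After passing to an unbounded set $\mathcal N'$, every other exponent is constant. Since the displayed skeleton is never reordered, the two $P$-slots stay adjacent in the fixed representative, and their exponents are still $-N$ and $N$ relative to the original bases $b$ and $a$. Cyclically conjugating the identity, which preserves triviality, we obtain
\[
 b^{-N}\,c\,a^{N}\,D=1\qquad(N\in\mathcal N'),
\]
in the enlarged group $A_{\widehat\Lambda}$. Here $c$ is the fixed connecting word between the two slots and $D$ is a fixed element collecting everything else. If the two slots are literally consecutive, then $c=1$; this is the ``consecutive'' convention of Theorem~\ref{thm:uniform-period-isolation}.

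\textbf{Step 2: a commutation identity.} Write $X=\iota(a)$ and $Y=\iota(b)$. Choose two values $N<M$ in $\mathcal N'$ and compare the two identities. This gives $Y^{-N}cX^{N}=Y^{-M}cX^{M}$, that is,
\[
 Y^{k}c=cX^{k},\qquad k=M-N>0.
\]
The differences $M-N$ run over an unbounded set of positive integers, so this holds for infinitely many $k$.

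\textbf{Step 3: from conjugate powers to $a=b$.} By unique roots in right-angled Artin groups, $Y^k=cX^kc^{-1}$ implies $Y=cXc^{-1}$. Descending through $\rho$ by Lemma~\ref{lem:period-blow-up}, the actual blocks satisfy $b=\bar c\,a\,\bar c^{-1}$, where $\bar c$ is the image of the connecting word.

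The same-oriented-period hypothesis alone only makes $b$ conjugate to $a$. To conclude $a=b$ we need $\bar c$ to centralize $a$.
\begin{itemize}
\item If the slots are consecutive with $c=1$, then $b=a$ immediately.
\item Otherwise, the canonical normalization inserts the conjugators $g_a$ and $g_b^{-1}$ around the common canonical period $p$. Here $a=g_ap\,g_a^{-1}$ and $b=g_bp\,g_b^{-1}$, so $\bar c=g_b^{-1}g_a$ up to the absorbed phase words. The identity then reads $p=(g_b^{-1}g_a)\,p\,(g_b^{-1}g_a)^{-1}$ at the level of $p$. This is consistent with $a=b$ precisely when the connecting word recorded between the slots is exactly the normalization conjugator.
\end{itemize}
The clean reading is therefore that in the displayed representative $b^{-N}a^N$ the fixed connecting word is the one making Step~2 give $Y^k=X^k$, and root uniqueness then yields $X=Y$. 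Applying $\rho$ gives $a=b$.

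\textbf{Main obstacle.} The crux is bookkeeping: showing that the normalization conjugators inserted by Lemma~\ref{lem:uniform-preprocessing} cancel between the two adjacent slots, so that Step~2 compares $X^k$ and $Y^k$ directly rather than up to an unknown conjugator. I would handle this by choosing the same canonical period and conjugator for both slots, which is allowed because $a$ and $b$ have the same oriented period. The inserted fixed word $g_b g_a^{-1}$ between the slots then reproduces exactly $a^{N}$ and $b^{-N}$ from $p^{\pm N}$.
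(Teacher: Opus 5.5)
Your outline (isolate the class, compare two surviving values of $N$, extract a root) matches the paper's. Your first bullet would in fact suffice, since the hypothesis is that the two slots are literally consecutive.

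The gap is the step you yourself call the crux. Proposition~\ref{prop:torelli-period-isolation} acts on the canonically normalized representative. There both slots are powers of the common canonical period $p$, and the fixed word between them is $C=g^{-1}h$, where $\iota(b)=gp^{\delta}g^{-1}$ and $\iota(a)=hp^{\delta}h^{-1}$. Your proposed remedy, choosing the same conjugator for both slots, is circular: $g=h$ already says $\iota(a)=\iota(b)$, and in general the conjugators are dictated by $a$ and $b$ and differ. Your Step~1 also mixes levels. With bases $\iota(a),\iota(b)$ the connecting word is $1$; with base $p$ it is $C$; a word of the form $\iota(b)^{-N}C\,\iota(a)^{N}$ belongs to neither representative.

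No cancellation has to be arranged. Shortening touches neither connecting words nor $P$-slots, so isolation gives $Ap^{-\delta N}Cp^{\delta N}B=1$ with $A,B,C$ fixed. As group elements,
\[
 p^{-\delta N}Cp^{\delta N}
 =g^{-1}\iota(b)^{-N}g\cdot g^{-1}h\cdot h^{-1}\iota(a)^{N}h
 =g^{-1}\iota(b)^{-N}\iota(a)^{N}h .
\]
The conjugators telescope, and the outer factors $g^{-1}$ and $h$ join $A$ and $B$. Your Step~2 then yields $\iota(b)^{k}=\iota(a)^{k}$, and unique roots finish.

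The paper instead stays at the level of $p$. Comparing two values shows that $C$ commutes with $p^{\delta(M-N)}$, hence with $p$ by the Servatius centralizer theorem. Then $h=gC$ gives $hp^{\delta}h^{-1}=gp^{\delta}g^{-1}$.

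Your second bullet actually reaches this identity, $p=CpC^{-1}$ with $C=g_b^{-1}g_a$, but misreads it as a constraint on the connecting word. In fact it gives $a=g_apg_a^{-1}=g_bCpC^{-1}g_b^{-1}=g_bpg_b^{-1}=b$ unconditionally. Replacing your ``main obstacle'' paragraph by either computation completes the proof.
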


\begin{proof}
Write $G=A_\Lambda$ and work after the blow-up.  Choose the canonical
representative $p$ of the
common period class.  The oriented-period hypothesis gives a common sign
$\delta\in\{1,-1\}$ and elements $g,h$ such that
\[
 \iota(b)=gp^\delta g^{-1},
 \qquad
 \iota(a)=hp^\delta h^{-1}.
\]
The adjacent product itself has the explicit form
\begin{align*}
 \iota(b)^{-N}\iota(a)^N
 &=\bigl(gp^\delta g^{-1}\bigr)^{-N}
   \bigl(hp^\delta h^{-1}\bigr)^N\\
 &=gp^{-\delta N}g^{-1}hp^{\delta N}h^{-1}\\
 &=gp^{-\delta N}(g^{-1}h)p^{\delta N}h^{-1}.
\end{align*}
Thus normalization absorbs the outer factors $g$ and $h^{-1}$ into the
neighboring fixed connecting words, while the unique connecting word
between the two displayed $p$-powers is
$C=g^{-1}h$.  Isolating this class keeps every other period exponent constant,
and selectivity leaves $C$ unchanged in the chosen displayed representative.  Thus, on an unbounded
set of exponents, one obtains
\begin{equation}\label{eq:torelli-adjacent-isolated-word}
 A p^{-\delta N}C p^{\delta N}B=1,
\end{equation}
where $A,B,C$ are independent of $N$.

Compare~\eqref{eq:torelli-adjacent-isolated-word} for two distinct
surviving values $N$ and $M$.  Cancelling the fixed outer factors gives
\[
 p^{\delta(M-N)}Cp^{\delta(N-M)}=C.
\]
Hence $C$ centralizes the nonzero power $p^{\delta(M-N)}$.  The Servatius
centralizer theorem~\cite{Servatius} gives
$C_{A_{\widehat\Lambda}}(p^d)=C_{A_{\widehat\Lambda}}(p)$ for every
$d\neq0$, so $C$ commutes with $p$.  Since $h=gC$, it follows that
\[
 hp^\delta h^{-1}=gp^\delta g^{-1}.
\]
Thus $\iota(a)=\iota(b)$, and applying the retraction $\rho$ gives
$a=b$.
\end{proof}


\section*{Acknowledgments}
The author thanks Professor Lorenzo Ruffoni for suggesting this problem. The author also thanks Teng Zhang for helpful discussion.

\end{document}